\newtheorem{thm}{Theorem}[section]
\newtheorem{lem}[thm]{Lemma}
\newtheorem{cor}[thm]{Corollary}
\newtheorem{prop}[thm]{Proposition}
\theoremstyle{definition}
\newtheorem{definition}[thm]{Definition}
\newtheorem{rem}[thm]{Remark}
\def\Xint#1{\mathchoice
{\XXint\displaystyle\textstyle{#1}}%
{\XXint\textstyle\scriptstyle{#1}}%
{\XXint\scriptstyle\scriptscriptstyle{#1}}%
{\XXint\scriptscriptstyle\scriptscriptstyle{#1}}%
\!\int}
\def\XXint#1#2#3{{\setbox0=\hbox{$#1{#2#3}{\int}$ }
\vcenter{\hbox{$#2#3$ }}\kern-.57\wd0}}
\def\dashint{\Xint-}
\def\Yint#1{\mathchoice
    {\YYint\displaystyle\textstyle{#1}}%
    {\YYint\textstyle\scriptstyle{#1}}%
    {\YYint\scriptstyle\scriptscriptstyle{#1}}%
    {\YYint\scriptscriptstyle\scriptscriptstyle{#1}}%
      \!\iint}
\def\YYint#1#2#3{{\setbox0=\hbox{$#1{#2#3}{\iint}$}
    \vcenter{\hbox{$#2#3$}}\kern-.51\wd0}}
\def\longdash{{-}\mkern-3.5mu{-}} 
\def\fiint{\Yint\longdash}
\newcommand*\dd{\mathop{}\!\mathrm{d}}
\newcommand{\Rn}{\mathbb{R}^{n}}
\newcommand{\R}{\mathbb{R}}
\newcommand{\N}{\mathbb{N}}
\newcommand{\C}{\mathbb{C}}
\newcommand{\abs}[1]{\left| #1\right|}
\newcommand{\norm}[1]{\left\lVert #1\right\rVert}
\newcommand{\meas}[1]{\left\lvert #1\right\rvert}
\newcommand{\ind}[1]{\mathbbm{1}_{#1}}
\newcommand{\RNum}[1]{%
  \textup{\uppercase\expandafter{\romannumeral#1}}%
}
\newcommand{\ball}{\textup{B}}
\newcommand{\Hn}{\mathbb{R}^{1+n}_{+}}
\renewcommand{\Re}[1]{\operatorname{Re}{{#1}}}
\newcommand{\diam}[1]{\textup{diam}(#1)}
\newcommand{\dist}{\textup{dist}}
\newcommand{\clos}[1]{\overline{#1}}
\newcommand{\inner}[2]{\left\langle #1 , #2 \right\rangle_{\textup{L}^{2}}}
\newcommand{\supp}[1]{\textup{supp}\left(#1\right)}
\newcommand{\eps}{\varepsilon}
\newcommand{\sgn}[1]{\textup{sgn}(#1)}
\newcommand{\Ell}[1]{\mathrm{L}^{#1}(\Rn)}
\newcommand{\hlmax}[1]{\mathcal{M}\left(#1\right)}
\newcommand{\dom}[1]{\mathsf{D}(#1)}
\newcommand{\ran}[1]{\mathsf{R}(#1)}
\newcommand{\kernel}[1]{\mathsf{N}(#1)}
\newcommand{\El}[1]{\mathrm{L}^{#1}}
\newcommand{\Dz}[1]{\textup{H}_{V}^{#1}}
\newcommand{\BMO}{\textup{BMO}}
\newcommand{\av}[1]{\textup{av}_{#1}}
\newcommand{\bb}[1]{\mathbb{#1}}
\newcommand{\set}[1]{\left\{#1\right\}}
\newcommand{\loc}{\textup{loc}}
\newcommand{\smcp}{C^{\infty}_{c}(\Rn)}
\newcommand{\dvg}{\textup{div}}
\newcommand{\invH}{\dot{H_{0}}^{\mkern-9mu -1/2}}
\newcommand{\dotH}{\dot{H_{0}}^{\mkern-8mu 1/2}}
\DeclareDocumentCommand{\tent}{o m}{\textup{T}^{#2}\IfValueT{#1}{{#1}^}}
\numberwithin{equation}{section}
\author{Arnaud Dumont}
\author{Andrew J. Morris}
\address{A. Dumont, School of Mathematics, University of Birmingham, Edgbaston, B15 2TT, UK}
\email{axd461@bham.ac.uk}
\address{A. J. Morris, School of Mathematics, University of Birmingham, Edgbaston, B15 2TT, UK}
\email{a.morris.2@bham.ac.uk}
\date{\today}
\thanks{This work was supported by the Royal Society (IES$\backslash$R3$\backslash$193232).}
\thanks{No data were created or analysed in this study.}
\subjclass[2020]{35J25 (Primary) 35J10, 35J25, 42B37, 47D06, 47A60 (Secondary)}
\keywords{Boundary value problems; Schrödinger equations; Riesz transforms; Hardy spaces}
\begin{document}
\title[Boundary value problems for singular Schrödinger equations]{Boundary value problems and Hardy spaces for singular Schrödinger equations with block structure
}

\setcounter{tocdepth}{1}

\begin{abstract}
We obtain Riesz transform bounds and characterise operator-adapted Hardy spaces to solve boundary value problems for singular Schrödinger equations $-\textup{div}(A\nabla u)+aVu=0$ in the upper half-space $\Hn$ with boundary dimension $n\geq 3$. The coefficients $(A,a,V)$ are assumed to be independent of the transversal direction to the boundary, and consist of a complex-elliptic pair $(A,a)$ that is bounded and measurable with a certain block structure, and a non-negative singular potential $V$ in the reverse Hölder class $\textup{RH}^{q}(\Rn)$ for $q\geq \max\set{\frac{n}{2},2}$. This block structure is significant because it allows for coefficients that are not symmetric but for which $\El{2}(\R^n)$-solvability persists due to recently obtained Kato square root type estimates. We find extrapolation intervals for exponents $p$ around 2 on which the Dirichlet problem is well-posed for boundary data in $\El{p}(\R^n)$, and the associated Regularity problem is well-posed for boundary data in Sobolev spaces $\dot{\mathcal{V}}^{1,p}(\R^n)$ that are adapted to the potential $V$, when $p>1$.  
The well-posedness of these Dirichlet problems and related estimates then allow us to solve the corresponding Neumann problem with boundary data in $\Ell{p}$. The results permit boundary data in the Dziuba\`{n}ski–Zienkiewicz Hardy space $\textup{H}^1_{V}(\R^n)$ and adapted Hardy--Sobolev spaces $\dot{\textup{H}}^{1,p}_{V}(\Rn)$ when $p\leq 1$. We also obtain comparability of square functions and nontangential maximal functions for the solutions with their boundary data.
\end{abstract}
\maketitle
\tableofcontents

\section{Introduction}
We consider singular Schrödinger equations $-\textup{div}(A\nabla u)+\widetilde{V}u=0$ in the upper half-space $\Hn:=\set{(t,x)\in\R \times\Rn : t>0 }$, with boundary $\partial\R^{1+n}_+\cong \Rn$, which is the prototypical domain for understanding solvability of associated boundary value problems on Lipschitz domains. The consideration of $t$-independent coefficients $A(t,x)=A(x)$ and $\widetilde{V}(t,x)=
\widetilde{V}(x)$ is also significant for allowing minimal regularity coefficients, as the necessary transversal regularity determined by Caffarelli--Fabes--Kenig in~\cite{CFK} is then typically reached via perturbation. Amid recent efforts to characterise solvability for such equations when the principal coefficient matrix $A$ is not symmetric and the potential $\widetilde{V}\equiv0$, results for block structure coefficients 
\begin{equation}\label{block structure of the coefficients}
    A=
    \begin{bmatrix}
        A_{\perp \perp } & 0\\
        0 & A_{\parallel\parallel}
    \end{bmatrix}
    \text{, where $A_{\perp \perp }:\Rn\to \C$ and $A_{\parallel\parallel}:\Rn\to \C^{n\times n}$},
\end{equation}
have been indispensable (e.g., \cite{DHP,HKMP}). In that case, solvability for $\El{2}$-data and extrapolation to $\El{p}$-type data are both deeply connected to the resolution of the Kato square root conjecture by Auscher--Hofmann--Lacey--McIntosh--Tchamitchian in~\cite{Auscher_Hofmann_Lacey_McIntosh_Tchamitchian2002}. A comprehensive treatment of the state of the art can be found in the monograph by Auscher--Egert~\cite{AuscherEgert}, which unifies and improves a variety of techniques to treat  Dirichlet, Regularity and Neumann problems for many boundary data spaces, including those of $\El{p}$-type, in ranges likely to be the best possible.

These boundary value problems are far less understood in the presence of a non-zero singular potential $\widetilde{V}\in\El{1}_{\loc}(\Rn)$ when the Schrödinger equation can be expressed in the form
\begin{equation}\label{general div form schrodinger equ}
    -\sum_{i=0}^{n}\sum_{j=0}^{n}\partial_{i}\left(A_{ij}\partial_{j}u\right)\left(t,x\right) +\widetilde{V}(x)u(t,x)=0,
\end{equation}
where $\partial_{0}:=\partial_{t}$ and $\left(\partial_{1},\ldots,\partial_{n}\right)=\nabla_{x}$ on $\Hn$. The $\El{2}$-solvability for Regularity and Neumann problems was recently obtained by Morris--Turner in~\cite{MorrisTurner} for a class of reverse Hölder potentials. The approach once again exploited the connection with the Kato conjecture but the results showed how the presence of a singular potential fundamentally changes the approach. This paper extrapolates those results to obtain $\El{p}$-type solvability for Dirichlet, Regularity and Neumann problems, utilising 
tools for singular potentials from~\cite{MorrisTurner} to extend the extrapolation framework in~\cite{AuscherEgert}.

More specifically, the principal coefficients $A$ are assumed to be $t$-independent and measurable with the block structure~\eqref{block structure of the coefficients}. The potential $\widetilde{V}$ is assumed to be $t$-independent and given by $\widetilde{V}(t,\cdot)=aV$ for some measurable function $a:\Rn\to \C$ and non-negative function $V\in\El{1}_{\loc}(\Rn)$. This allows us to express~\eqref{general div form schrodinger equ} in the form
\begin{equation}\label{block coefficient schrodinger equation simplified}
    \mathscr{H}u:=-\partial_{t}\left(A_{\perp\perp}\partial_{t} u\right) -\dvg_{x}\left({A_{\parallel\parallel}\nabla_{x}u}\right) +aVu=0.
\end{equation}
We assume that there are constants $0<\lambda\leq \Lambda<\infty$ with the bound
\begin{equation}\label{boundedness of coefficients}
    \max\left\{\norm{A}_{\El{\infty}(\Rn;\mathcal{L}(\C^{n+1}))}, \norm{a}_{\El{\infty}(\Rn)}\right\}\leq \Lambda
\end{equation}
and ellipticity
\begin{align}\label{ellipticity assumption}
\begin{split}
    \Re\int_{\Rn} 
    \bigg(A(x)\begin{bmatrix}
        f(x)\\
        \nabla_{x}g(x)
    \end{bmatrix}\cdot\overline{\begin{bmatrix}
        f(x)\\
        \nabla_{x}g(x)
    \end{bmatrix}}
    &+a(x)V(x)\abs{g(x)}^{2}\bigg)\dd x \\
    &\geq\lambda \int_{\Rn}\bigg(\abs{f(x)}^{2}+\abs{\nabla_{x}g(x)}^{2}+V(x)\abs{g(x)}^2\bigg) \dd x
    \end{split}
\end{align}
for all $f$, $g\in C^{\infty}_{c}(\Rn)$. Finally, we assume that the potential $V$ belongs to the reverse Hölder class $\textup{RH}^{q}(\Rn)$ for some $q\geq \max\{\frac{n}{2},2\}$. This means that there is $C>0$ such that
\begin{equation}\label{reverse Holder ineq for V first occurence}
    \left(\dashint_{B}V^{q}\right)^{\frac{1}{q}}\leq C\dashint_{B}V
\end{equation}
for every ball $B\subset \Rn$, where the left-hand side of \eqref{reverse Holder ineq for V first occurence} is understood as $\norm{V}_{\El{\infty}(B)}$ when $q=\infty$. 

There is a growing literature devoted to the study of boundary value problems for such perturbations of divergence form elliptic equations (and systems) with complex coefficients. One of the earliest results in this direction was obtained by Shen \cite{Shen_Neumann_SchrodingerBVP}, who used the method of layer potentials to prove well-posedness of the $\El{p}$-Neumann problem for the Schrödinger equation $-\Delta u + Vu=0 $ above a Lipschitz graph for $p\in(1,2]$ and potentials $V\in \textup{RH}^{\infty}(\Rn)$. This was improved by Tao--Wang \cite{Tao_Wang_2004} to include potentials $V\in\textup{RH}^{n}(\Rn)$. Tao \cite{Tao_Regularity_Problems} has also considered the corresponding Regularity problem. More recently, Morris--Turner \cite{MorrisTurner} proved well-posedness of the $\El{2}$-Regularity and Neumann problems for the Schrödinger equation \eqref{block coefficient schrodinger equation simplified} on the upper half-space $\Hn$ for potentials $V\in\textup{RH}^{q}(\Rn)$ when $q\geq \max\{\frac{n}{2},2\}$ and $n\geq3$. The introduction of \cite{MorrisTurner} contains a more detailed review of the literature concerning square integrable boundary data. For $\El{p}$-data in a related context, we also highlight the recent series of papers by Bortz--Hofmann--Luna~Garc\'{i}a--Mayboroda--Poggi \cite{Bortz_Hofmann_Garcia_Mayboroda_Poggi_PART1,Bortz_Hofmann_Garcia_Mayboroda_Poggi_PART2}, which develops a theory of abstract layer potentials to prove well-posedness of the $\El{p}$-Dirichlet, Regularity and Neumann problems on the upper half-space for equations with $t$-independent coefficients and lower-order terms in certain Lebesgue-critical spaces. Finally, for real, symmetric and $t$-independent coefficients in the region above a Lipschitz graph, we mention the recent work of Geng--Xu \cite{geng2024schrodingerequationsbinftypotentials} proving unique solvability of the $\El{p}$-Regularity and $\El{p}$-Neumann problems for Schrödinger equations with potentials $V\in\textup{RH}^{\infty}(\R^n)$. 

Solutions of \eqref{block coefficient schrodinger equation simplified} have to be understood in a weak sense, which we will now make precise. Let $\Omega\subset\R^{1+n}$ be an arbitrary open set. Following \cite{MorrisTurner}, we utilise the $V$-adapted Sobolev space $\mathcal{V}^{1,2}_{\textup{loc}}(\Omega)$, defined as the space of functions $u\in \textup{W}^{1,2}_{\loc}(\Omega)$ such that $V^{1/2}u\in\El{2}_{\loc}(\Omega).$ 
These spaces are studied in Section~\ref{section on definition of Sobolev spaces adapted to singular potentials}. For $g\in\El{2}_{\loc}(\Omega)$, we say that a function $u\in\mathcal{V}^{1,2}_{\textup{loc}}(\Omega)$ is a weak solution of $\mathscr{H}u=g$ in $\Omega$ if
\begin{equation}\label{definition of weak solutions in general open subset of R1+n}
    \iint_{\Omega} \left(A \begin{bmatrix}
    \partial_{t}u\\
    \nabla_{x}u
\end{bmatrix}\cdot \overline{\begin{bmatrix}
    \partial_{t}\varphi\\
    \nabla_{x}\varphi
\end{bmatrix}} +aVu\overline{\varphi} \right)\dd t\dd x = \iint_{\Omega}g\overline{\varphi}\dd t\dd x
\end{equation}
for all smooth compactly supported  functions $\varphi\in C^{\infty}_{c}(\Omega)$. 

In order to pose the relevant boundary value problems on $\Hn$, we need to introduce the following maximal function, which measures the size of solutions on conical nontangential approach regions. For a function $F\in\El{2}_{\loc}(\Hn;\C^{m})$, we consider the nontangential maximal function $N_{*}(F): \Rn\to [0,\infty]$  defined as
\begin{equation*}
    N_{*}(F)(x):=\sup_{t>0}\left(\fiint_{W(t,x)}\abs{F(s,y)}^{2}\dd s\dd y\right)^{1/2}
\end{equation*}
for all $x\in\Rn$, where $W(t,x):=(\frac{t}{2},2t)\times B(x,t)$ is a Whitney box (or cylinder) of scale $t>0$ above the ball $B(x,t):=\set{y\in\Rn : \abs{y-x}<t}$. We also require the conical square function $S(F):\Rn\to[0,\infty]$ defined as 
\begin{equation*}
    S(F)(x):=\left(\iint_{\Gamma_{1}(x)}\abs{F(s,y)}^{2}\frac{\dd s\dd y}{s^{1+n}}\right)^{1/2}
\end{equation*}
for all $x\in\Rn$, where $\Gamma_{a}(x):=\set{(s,y)\in\Hn : \abs{x-y}<as}$ is a cone of aperture $a>0$ with vertex at $x$.

Our main results require a systematic treatment and new characterisations for the $\El{2}$-based versions of the Hardy spaces $\textup{H}^{p}_{V}(\Rn)$, for $p\in(0,\infty)$, introduced by Dziuba\'nski--Zienkiewicz~\cite{Dziubanski_Hp_Spaces} and adapted to the Schrödinger operator $H_{0}:=-\Delta +V$ on the boundary $\partial\Hn \cong \Rn $. This is the focus of Section~\ref{section on Hardy spaces adapted to schrodinger operators}. In particular, we consider the pre-completed versions of these spaces $\textup{H}^{p}_{V,\textup{pre}}(\Rn)$ and in Section~\ref{subsubsection with definition of completion H^{1}_{V}} we also prove that the completion of $\textup{H}^{1}_{V,\textup{pre}}(\Rn)$ can be realised as a subspace of $\Ell{1}$. In Theorem~\ref{square function characterisation of Dziubanski Hardy spaces}, we prove a new square function characterisation for $\textup{H}^{p}_{V,\textup{pre}}(\Rn)$ in the full range $p\in(\frac{n}{n+1},\infty)$. This enables us to obtain the interpolation result for these spaces in Lemma~\ref{lemma notion of HpV boundedness interpolates}, which is crucial for adapting the extrapolation framework in \cite{AuscherEgert}.

We may now state the boundary value problems treated in this paper. For the Dirichlet problem, we consider boundary data $f\in\Ell{p}$ for $p\in(1,\infty)$, and $f\in A_{\perp\perp}^{-1}\textup{H}^{1}_{V}(\Rn)$ for the endpoint case $p=1$ (see \eqref{properties of b(x)} below).
For $p\in[1,\infty)$, the Dirichlet problem $\left(\mathcal{D}\right)^{\mathscr{H}}_{p}$ is to find a function $u\in\mathcal{V}^{1,2}_{\loc}(\Hn)$ such that
\begin{equation*}
    \left(\mathcal{D}\right)^{\mathscr{H}}_{p}\hspace{0.5cm}\left\{\begin{array}{l}
        \mathscr{H}u=0  \text{ weakly in } \Hn,\\
        N_{*}(u)\in\Ell{p},\\
        \lim_{t\to 0^{+}}\fiint_{W(t ,x)}\abs{u(s,y)-f(x)}\dd s\dd y = 0 \text{ for a.e. } x\in\Rn.
    \end{array}\right.
\end{equation*}
The convergence of the solution $u$ towards the boundary data $f$ required here is a weak type of non-tangential convergence. Indeed, note that $\Gamma_{1}(x)\subseteq \cup_{t>0}W(t,x)\subseteq \Gamma_{2}(x)$ for all $x\in\Rn$.

For the Regularity problem, we distinguish the cases $p>1$ and $p\leq 1$. For $p\in(1,\infty)$, we consider boundary data $f\in\dot{\mathcal{V}}^{1,p}(\Rn)$, meaning that $f\in\El{1}_{\loc}(\Rn)$ is such that 
\[
\nabla_{\mu}f:=\begin{bmatrix}
    \nabla_{x}f\\
    V^{1/2}f
\end{bmatrix}\in\El{p}(\Rn;\C^{n+1}).
\]
These homogeneous $V$-adapted Sobolev spaces are defined and studied in Section~\ref{section on definition of Sobolev spaces adapted to singular potentials}.  The density of smooth compactly supported functions $C^{\infty}_{c}(\Rn)$ in $\dot{\mathcal{V}}^{1,p}(\Rn)$ is crucial for our results, and when $p=n$ we find that it relies on the full strength of the improved Fefferman--Phong inequality obtained by Auscher--Ben Ali in \cite[Lemma 2.1]{Auscher-BenAli} (see Propositions~\ref{improved Fefferman--Phong inequality} and~\ref{density lemma in homogeneous V adapted Sobolev spaces}).

For $p\in(\frac{n}{n+1},1]$, we consider boundary data $f\in\dot{\textup{H}}^{1,p}_{V}(\Rn)$, which is a Hardy--Sobolev space defined as a completion of the normed space 
\[
\dot{\textup{H}}^{1,p}_{V,\textup{pre}}(\Rn):=\set{f\in\dom{H_{0}^{1/2}} : \norm{f}_{\dot{\textup{H}}^{1,p}_{V}} := \norm{H_{0}^{1/2}f}_{\textup{H}^{p}_{V}} < \infty}.
\]
The spaces $\dot{\textup{H}}^{1,p}_{V,\textup{pre}}(\Rn)$ are studied in Section~\ref{section on hardy sobolev spaces adapted to schrodinger operators}, and we construct their completion $\dot{\textup{H}}^{1,p}_{V}(\Rn)$ as a subspace of $\Ell{p^{*}}$ in Section~\ref{subsection definition of the completion of DZiubanski H^{1,p} spaces}, where $p^{*}$ is the upper Sobolev exponent of $p$ in dimension $n$ (see Section~\ref{subsubsection on definition of exponents and order relations}).
For $p\in(\frac{n}{n+1},\infty)$, the Regularity problem $\left(\mathcal{R}\right)^{\mathscr{H}}_{p}$ is to find a function $u\in\mathcal{V}_{\loc}^{1,2}(\Hn)$ such that
\begin{equation*}
    \left(\mathcal{R}\right)^{\mathscr{H}}_{p}\hspace{0.5cm}\left\{\begin{array}{l}
        \mathscr{H}u=0  \text{ weakly in } \Hn,\\
        N_{*}(\nabla_{\mu}u)\in\Ell{p},\\
        \lim_{t\to 0^{+}}\fiint_{W(t ,x)}\abs{u(s,y)-f(x)}\dd s\dd y = 0 \text{ for a.e. } x\in\Rn.
    \end{array}\right.
\end{equation*}

We also study the corresponding Neumann problem $\left(\mathcal{N}\right)^{\mathscr{H}}_{p}$ for $p\in[1,\infty)$. Given boundary data $g\in\Ell{p}$ if $p>1$, or $g\in\textup{H}^{1}_{V}(\Rn)$ if $p=1$, the Neumann problem $\left(\mathcal{N}\right)^{\mathscr{H}}_{p}$ is to find a function $u\in\mathcal{V}_{\loc}^{1,2}(\Hn)$ such that
\begin{equation*}
\left(\mathcal{N}\right)^{\mathscr{H}}_{p}\hspace{0.5cm}\left\{\begin{array}{l}
        \mathscr{H}u=0  \text{ weakly in } \Hn,\\
        N_{*}(\nabla_{\mu}u)\in\Ell{p},\\
        \lim_{t\to 0^{+}}\fiint_{W(t ,x)}\abs{(A_{\perp\perp}\partial_{t}u)(s,y)-g(x)}\dd s\dd y = 0 \text{ for a.e. } x\in\Rn.
    \end{array}\right.
\end{equation*}

The boundary value problems $\left(\mathcal{D}\right)^{\mathscr{H}}_{p}$, $\left(\mathcal{R}\right)^{\mathscr{H}}_{p}$, and $\left(\mathcal{N}\right)^{\mathscr{H}}_{p}$ are called
\emph{solvable} if for each boundary datum there exists at least one solution $u$ with the respective
properties specified above. In the case $V\not\equiv0$, they are called \emph{uniquely posed} if for each boundary datum there exists at most one such solution $u$ in $\mathcal{V}^{1,2}_{\loc}(\Hn)$. In the case $V\equiv0$, it is instead required that there exists at most one such solution $u$ in $W^{1,2}_{\loc}(\Hn)$ modulo constants. The boundary
value problems are called \emph{well-posed} if they are solvable and uniquely posed. 

We also note that \cite[Theorem 1.1]{MorrisTurner} implies that the Regularity problem $\left(\mathcal{R}\right)^{\mathscr{H}}_{2}$ is well-posed if $V\not\equiv 0$ is such that $V\in\textup{RH}^{q}(\Rn)$ for $q\geq \max\set{\frac{n}{2},2}$ and $n\geq 3$. While the type of convergence of the solution towards the boundary data obtained in the aforementioned reference is \emph{a priori} different to that considered here, it can be shown to imply the convergence required in $\left(\mathcal{R}\right)^{\mathscr{H}}_{2}$. We provide details at the start of Section~\ref{subsection on existence for the regularity problem}. 

Before stating the main results of the paper, we shall give an overview of the method that we will employ to solve the boundary value problems $\left(\mathcal{D}\right)^{\mathscr{H}}_{p}$, $\left(\mathcal{R}\right)^{\mathscr{H}}_{p}$ and $\left(\mathcal{N}\right)^{\mathscr{H}}_{p}$, adapting the machinery of Auscher--Egert \cite{AuscherEgert} to treat the Schrödinger equation. 

The first step is the following reduction.
Taking $g=0$ in the ellipticity assumption \eqref{ellipticity assumption}, we see that $\Re{A_{\perp\perp}(x)}\geq \lambda>0$ for almost every $x\in\Rn$. As a consequence, $b(x):=A_{\perp\perp}^{-1}(x)$ exists with
\begin{equation}\label{properties of b(x)}
    \abs{b(x)}\leq \lambda^{-1} \text{ and } \Re{b(x)}\geq \lambda\Lambda^{-2}
\end{equation}
for almost every $x\in\Rn$. The $t$-independence of $A_{\perp\perp}$ shows that the equation~\eqref{block coefficient schrodinger equation simplified} can be written in the form $\partial_{t}^{2}u=Hu$, where $H$ is the differential operator formally defined as
\begin{equation*}
    H:=-b \dvg_{x}\left({A_{\parallel\parallel}\nabla_{x}}\right) +baV
\end{equation*}
on the boundary $\partial\Hn \cong \Rn $. This suggests that solutions $u$ to the boundary value problems can be constructed, at least when $f\in\Ell{2}$, using the holomorphic functional calculus for the operator $H$ on $\Ell{2}$ to define the Poisson semigroup solution $u(t,\cdot):=e^{-tH^{1/2}}f$ for $t>0$. This will be made precise in Theorem~\ref{Poisson semigroup extension is a weak solution}.

Some operator theoretic properties of the operator $H$ on $\El{2}(\Rn)$ are needed to support this strategy.  In the case $V\equiv 0$, the solution of the Kato square root conjecture is essential for solving the Regularity problem for boundary data in $\dot{\textup{W}}^{1,2}(\R^n)$. Those results were extended recently to Schrödinger operators by Morris--Turner in \cite{MorrisTurner}. They used the first-order approach, originally developed for boundary value problems by Auscher--Axelsson--McIntosh in~\cite{AAM_2008}, to prove quadratic estimates for so-called $DB$-type operators (see \cite[Theorem~3.2]{MorrisTurner}) and applied those to obtain well-posedness results for $\El{2}$-Regularity and Neumann boundary value problems for the Schrödinger equation~\eqref{general div form schrodinger equ} on the upper half-space $\Hn$ when $n\geq 3$.
Importantly for our purposes, these quadratic estimates imply that $H$ has a bounded holomorphic functional calculus on $\Ell{2}$, with square root domain $\dom{H^{1/2}}=\mathcal{V}^{1,2}(\Rn)$ and estimate
\begin{equation}\label{first non rigourous apparition of the Kato square root type result for schrodinger operator}
    \norm{H^{1/2}f}_{\El{2}}\eqsim \norm{\nabla_{\mu}f}_{\El{2}}
\end{equation}
for all $f\in\mathcal{V}^{1,2}(\Rn)$, when $V\in\textup{RH}^{q}(\Rn)$ for some $q\geq \max\{\frac{n}{2},2\}$ (see~\cite[Corollary 3.21]{MorrisTurner}). 

Four critical numbers $p_{\pm}(H)$ and $q_{\pm}(H)$ govern the $\El{p}$-extrapolation of solvability for~\eqref{block coefficient schrodinger equation simplified}. 
Analogous to \cite[Section~4]{AuscherEgert}, we define $(p_{-}(H),p_{+}(H))$ and $(q_{-}(H),q_{+}(H))$ to be the largest open intervals of exponents $p\in\left(\frac{n}{n+1},\infty\right)$ for which the respective family
\[
\set{A_{\perp\perp}(1+t^{2}H)^{-1}A_{\perp\perp}^{-1}}_{t>0}
\quad\text{and}\quad 
\set{t\nabla_{\mu}(1+t^{2}H)^{-1}A_{\perp\perp}^{-1}}_{t>0}
\]
is uniformly bounded on $\textup{H}^{p}_{V}(\Rn)$ (recall that $\textup{H}^{p}_{V}(\Rn)=\Ell{p}$ when $p>1$).
These intervals are studied in Section~\ref{section on critical numbers and their properties}. We note that they both contain $p=2$, whilst estimates for the endpoints in some prototypical cases are obtained in Remark \ref{remark about the value of critical numbers in some particular cases}. It is also important to note that the use of the letter $q$ in both $q_{\pm}(H)$ and $V\in\textup{RH}^{q}(\R^n)$ is an unfortunate consequence of the literature, which we keep for consistency.

One of the main tools used in \cite{AuscherEgert} to establish the non-tangential maximal function and conical square function $\El{p}$-bounds for Poisson semigroup solutions is a class of $H$-adapted Hardy spaces $\bb{H}^{s,p}_{H}(\Rn)$ defined for $s\in\R$ and $p\in(0,\infty)$. The introduction of \cite{Duong_Li_2013} provides a useful overview of the history of such operator-adapted spaces. 

An essential step in the extrapolation theory for $\El{p}$-type solvability is then the identification of the $H$-adapted Hardy spaces $\bb{H}^{p}_{H}(\Rn):=\bb{H}^{0,p}_{H}(\Rn)$ and $\bb{H}^{1,p}_{H}(\Rn)$ with “concrete" function spaces for boundary data, such as $\El{p}(\Rn)$ and $\dot{\mathcal{V}}^{1,p}(\Rn)$ when $p>1$. To identify these spaces when $p\leq 1$, we introduce a new class of molecules for $\textup{H}^{p}_{V,\textup{pre}}(\Rn)$ which have supports concentrated on cubes $Q\subset\Rn$ of arbitrary size. This is in contrast to the $\textup{H}^{p}_{V}$-molecules considered previously, such as in \cite[Section~3.2]{BJL_T1_Schrodinger}, which typically restrict the size of such cubes according to the critical radius function in \eqref{definition of Shen's critical radius function}. Instead, we impose a cancellation condition determined only by the scaling in the Fefferman--Phong inequality (see Definition~\ref{molecule Dziubanski hardy space} and Proposition~\ref{improved Fefferman--Phong inequality}). This substitute is vital to the proof of Lemma~\ref{abstract molecules are concrete dziubanski molecules}. An important ingredient for these identifications when $p>1$ is the $\El{p}$-boundedness of the \emph{Riesz transform} $R_H := \nabla_{\mu}H^{-1/2}$. The square-root estimate \eqref{first non rigourous apparition of the Kato square root type result for schrodinger operator} shows that this operator has a bounded extension mapping from $\Ell{2}$ into $\El{2}(\Rn;\C^{n+1})$. The $\El{p}$-extrapolation of these bounds is investigated in Section~\ref{section on riesz transform bounds}. Our main result in this direction is the following theorem. 
\begin{thm}\label{thm boundedness of Riesz transforms on Lp full range}
    If $n\geq 3$, $q\geq \max\{\frac{n}{2},2\}$ and $V\in \textup{RH}^{q}(\Rn)$, then the Riesz transform $R_{H}$ extends to a bounded operator on  $\Ell{p}$ for all $p\in \left(p_{-}(H) ,q_{+}(H)\right)\cap(1,2q]$.
\end{thm}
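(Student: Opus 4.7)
I would prove this theorem by extrapolating the $\Ell{2}$-boundedness of $R_H$, which follows immediately from the Kato-type estimate~\eqref{first non rigourous apparition of the Kato square root type result for schrodinger operator}, to $\Ell{p}$ in two regimes — above and below $p=2$ — by adapting the Auscher--Egert extrapolation framework \cite{AuscherEgert} to the Schr\"odinger setting. The intersection $(p_{-}(H), q_{+}(H)) \cap (1, 2q]$ arises naturally from three constraints: $p > p_{-}(H)$ controls the resolvent family $\{A_{\perp\perp}(1+t^2H)^{-1}A_{\perp\perp}^{-1}\}_{t>0}$ on $\Ell{p}$; $p < q_{+}(H)$ controls the gradient-resolvent family $\{t\nabla_\mu(1+t^2H)^{-1}A_{\perp\perp}^{-1}\}_{t>0}$ on $\Ell{p}$; and $p\leq 2q$ permits the transfer of $\nabla_x$-bounds into $V^{1/2}$-bounds via the reverse H\"older hypothesis on $V$.

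For $p \in (2, q_{+}(H)) \cap (2, 2q]$: via a Calder\'on reproducing formula for the holomorphic functional calculus of $H$, one reduces the $\Ell{p}$-boundedness of $R_H$ to $\Ell{p}$ bounds for conical square functions of $t\nabla_\mu(1+t^2H)^{-1}f$. These follow from the uniform $\Ell{p}$ boundedness of the gradient-resolvent family in the range $p<q_{+}(H)$, combined with $\Ell{p}$-$\Ell{2}$ off-diagonal estimates for the resolvent and its $\nabla_\mu$. The upper endpoint $p=2q$ enters when treating the $V^{1/2}$-component of $\nabla_\mu$: since $V\in\textup{RH}^{q}$ implies $V^{1/2}\in\textup{RH}^{2q}$, a self-improving argument converts bounds on $\nabla_x(1+t^2H)^{-1}f$ into the matching bounds on $V^{1/2}(1+t^2H)^{-1}f$ precisely when $p\leq 2q$.

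For $p \in (p_{-}(H), 2)$: I would combine a Blunck--Kunstmann-type Calder\'on--Zygmund decomposition, using the off-diagonal $\Ell{p}$-$\Ell{2}$ estimates for $(1+t^2H)^{-1}$ available when $p>p_{-}(H)$, with interpolation between the $\Ell{2}$-bound and an $\textup{H}^{1}_{V}(\Rn)$ endpoint. The endpoint at $p=1$ is obtained by showing that $R_H$ maps $\textup{H}^{1}_{V}(\Rn)$-molecules to uniformly $\Ell{1}$-bounded functions, leveraging the molecular characterisation of Definition~\ref{molecule Dziubanski hardy space} and the square function characterisation of Theorem~\ref{square function characterisation of Dziubanski Hardy spaces}. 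The interpolation result of Lemma~\ref{lemma notion of HpV boundedness interpolates} then yields the full intermediate range $(\max\{1,p_{-}(H)\},2]$.

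The main obstacle is the proper treatment of the $V^{1/2}$-component of $\nabla_\mu$, which is absent in the classical $V\equiv 0$ theory and is not pointwise controlled by $\nabla_x$. Handling it requires the improved Fefferman--Phong inequality (Proposition~\ref{improved Fefferman--Phong inequality}) together with the reverse H\"older bound on $V$, and it is precisely this mechanism that pins down the sharp upper endpoint $p\leq 2q$. A secondary difficulty is that, in this complex-elliptic Schr\"odinger setting, pointwise heat kernel bounds are unavailable and one must argue entirely through abstract holomorphic functional calculus and off-diagonal estimates for $(1+t^2H)^{-1}$ and $t\nabla_\mu(1+t^2H)^{-1}$.
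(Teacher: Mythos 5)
Your sketch captures the general extrapolation framework (truncated Riesz transforms via the Calder\'on--McIntosh reproducing formula, a Blunck--Kunstmann-type criterion above $p=2$, off-diagonal estimates below), but it misses the single genuinely new ingredient that makes the case $p>2$ work, and without it the argument fails. When you verify the second Blunck--Kunstmann condition for the approximants $A_r=1-\varphi(r^2H)$, you must estimate $\bigl(\dashint_B\abs{\nabla_{\mu}(1+r^2H)^{-\omega k}g}^{p_0}\bigr)^{1/p_0}$, and the standard move is to split $g=(g-(g)_B)+(g)_B$. The oscillation part is handled by off-diagonal estimates and Poincar\'e, as you indicate. But the constant part produces the term $\abs{(g)_B}\bigl(\dashint_B\abs{\nabla_{\mu}(1+r^2H)^{-\omega k}(1)}^{p_0}\bigr)^{1/p_0}$, which in the classical case $V\equiv0$ vanishes by the conservation property $(1+r^2L)^{-1}1=1$. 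Here conservation fails, and moreover $\nabla_{\mu}1=(0,V^{1/2})\neq 0$, so this term is genuinely present. The paper's key contribution is the cancellation bound $\bigl(\dashint_B\abs{\nabla_{\mu}(1+r^2H)^{-m}(1)}^{p_0}\bigr)^{1/p_0}\lesssim\min\{r^{-1},(\dashint_BV)^{1/2}\}$, proved via the conservation substitute $\int_{\Rn}A_{\perp\perp}H(1+t^2H)^{-k}f=\int_{\Rn}aV(1+t^2H)^{-k}f$ (Lemma~\ref{cancellation lemma}) together with duality, $\El{p_0'}$--$\El{s_\beta'}$ off-diagonal estimates for the dual gradient-resolvent family, and the reverse H\"older property of $V$; it is only when this is combined with the Fefferman--Phong inequality that the constant-part term is absorbed into $\bigl(\dashint_Q\abs{\nabla_{\mu}g}^2\bigr)^{1/2}$. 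Your proposed mechanism for the endpoint $2q$ --- ``a self-improving argument converts bounds on $\nabla_x(1+t^2H)^{-1}f$ into matching bounds on $V^{1/2}(1+t^2H)^{-1}f$'' --- does not correspond to anything in the actual proof and does not address this term: the $V^{1/2}$-component of the gradient-resolvent family is already controlled by the definition of $q_+(H)$, and the constraint $p\leq 2q$ enters instead in the H\"older/reverse-H\"older estimates $\norm{V^{1/2}}_{\El{p_0}(B)}\lesssim\meas{B}^{1/p_0}(\dashint_BV)^{1/2}$ needed inside the proof of the cancellation bound, which force $p_0\leq 2\beta$ for some $\beta>q$ with $V\in\textup{RH}^{\beta}$.

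Two smaller remarks. First, for $p\in(p_-(H)\vee 1,2]$ the paper does not pass through an $\textup{H}^1_V$ endpoint and molecular estimates at all; it runs the weak-type $(p,p)$ Blunck--Kunstmann argument of \cite[Section~7.1]{AuscherEgert} verbatim, directly from the $\El{2}$-bound. Your route via an $\textup{H}^1_V\to\El{1}$ endpoint would require proving a new boundedness result that the paper never establishes, and it would not by itself reach down to $p_-(H)$ when $p_-(H)>1$. Second, your reduction for $p>2$ to ``conical square functions of $t\nabla_{\mu}(1+t^2H)^{-1}f$'' is not the mechanism used: the proof goes through pointwise maximal-function domination of averages of $R_H^{\eps}A_rf$ and $R_H^{\eps}(1-A_r)f$, not through tent-space estimates.
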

This result is a significant extension of~\cite[Theorem 7.3]{AuscherEgert}. 
The $\El{p}$-bounds for $R_{H}$ in the range $p\in (1,2]$ follow from a modification of the arguments used therein to treat the case $V\equiv 0$. 
The $\El{p}$-bounds in the range $p\in (2,\infty)$ are more subtle, however, because difficulties arise from the absence of the conservation property $(1+t^2H)^{-1}1=1$, which is exploited when $V\equiv 0$ (see ~\cite[Corollary 5.4]{AuscherEgert}) but fails when $V \not\equiv 0$. These difficulties materialise as an additional term that we estimate by combining the local Fefferman--Phong inequality (see Proposition~\ref{improved Fefferman--Phong inequality}), with a novel “cancellation" bound~\eqref{cancellation bound} that controls averages of $t\nabla_{\mu}(1+t^{2}H)^{-k}(1)$ on balls of radius $t$ for sufficiently large integers $k\geq1$. This is achieved by utilising an extension of the aforementioned conservation property that holds even when $V\not\equiv0$ (see~Lemma~\ref{cancellation lemma}) as well as certain off-diagonal estimates for $\set{t\nabla_{\mu}(1+t^{2}H)^{-k}}_{t>0}$ (see Lemma~\ref{ode resolvents lemma for riesz transform bounds}.(ii)). The later estimates don't appear to have a precedent in the literature. Altogether, this allows us to prove Theorem~\ref{thm boundedness of Riesz transforms on Lp full range} in Section~\ref{subsection on the proof of Lp bounds for riesz trsform}.

Once Theorem~\ref{thm boundedness of Riesz transforms on Lp full range} is at hand, we can proceed to identify the $H$-adapted Hardy spaces $\bb{H}^{p}_{H}(\Rn)$ and $\bb{H}^{1,p}_{H}(\Rn)$ in a range of exponents $p\in(\frac{n}{n+1},\infty)$ determined by the critical numbers $p_{\pm}(H)$ and $q_{\pm}(H)$. More specifically, if $V\in\textup{RH}^{q}(\Rn)$ for some $q>\max\{\frac{n}{2},2\}$ and $\delta:=2-\frac{n}{q}>0$, then we obtain the following identifications with equivalent $p$-(quasi)norms in Theorem~\ref{summary of identifications of H adapted Hardys spaces}:
\begin{equation*}
\begin{array}{ll}
 \bb{H}^{p}_{H}(\Rn)=b\textup{H}^{p}_{V,\textup{pre}}(\Rn), &  \textup{ if }p\in(p_{-}(H)\vee \frac{n}{n+\delta/2}\,,\,p_{+}(H));\\
 \bb{H}^{1,p}_{H}(\Rn)=\dot{\mathcal{V}}^{1,p}(\Rn)\cap\El{2}(\Rn), & \textup{ if }p\in (p_{-}(H)_{*}, q_{+}(H))\cap(1,2q]; \\  
\bb{H}^{1,p}_{H}(\Rn)\cap\mathcal{V}^{1,2}(\Rn)=\dot{\textup{H}}^{1,p}_{V,\textup{pre}}(\Rn),& \textup{ if }p\in(p_{-}(H)_{*}\vee \frac{n}{n+\delta/2}\, ,1]\cap \mathcal{I}(V).
    \end{array}
        \end{equation*}
Note that $p_{-}(H)_{*}$ is the lower Sobolev exponent of $p_{-}(H)$ in dimension $n$ (see Section~\ref{subsubsection on definition of exponents and order relations}).
We recall that the subscript “pre" in the notation $\textup{H}^{p}_{V,\textup{pre}}(\Rn)$ and $\dot{\textup{H}}^{1,p}_{V,\textup{pre}}(\Rn)$ is used to indicate that these are pre-completed spaces, which are subspaces of $\Ell{2}$ and $\mathcal{V}^{1,2}(\Rn)$, respectively. Moreover, we note that $\textup{H}^{p}_{V,\textup{pre}}(\Rn)=\Ell{p}\cap \Ell{2}$ with equivalent $p$-norms when $p>1$ (see Lemma~\ref{identification of Dziubanski hardy spaces for p>=1}).
 
These identifications provide an analogue of \cite[Theorem 9.7]{AuscherEgert}. A significant difference is the presence of two new endpoints in the intervals of identification, namely $\frac{n}{n+\delta/2}=(1+\frac{1}{n}-\frac{1}{2q})^{-1}$ and $2q$, which depend only on the reverse Hölder property of $V$  through the exponent $q$ and dimension $n$. Another distinction is the introduction of an additional subset of exponents $\mathcal{I}(V)$, which is used to obtain an identification of $\bb{H}^{1,p}_{H}(\Rn)$ when $p\leq 1$. An exponent $p\in \left(\frac{n}{n+1},1\right]$ is defined to belong to $\mathcal{I}(V)$ when
\begin{equation}\label{reverse Riesz transform bound from definition of I(V)}
    \norm{\nabla_{\mu}f}_{\textup{H}^{p}_{V}}\gtrsim \norm{(-\Delta 
 +V)^{1/2}f}_{\textup{H}^{p}_{V}} 
\end{equation}
for all $f\in\mathcal{V}^{1,2}(\Rn)$ with the property that each of the $n+1$ components of $\nabla_{\mu}f$ is in $\textup{H}^{p}_{V,\textup{pre}}(\Rn)$. This set allows us to circumvent the need to establish atomic decompositions of $\nabla_{\mu}f$ with atoms belonging to the range of $\nabla_{\mu}$. Instead, we establish Theorem~\ref{thm atomic decomposition of hardy sobolev spaces} as a substitute, after having defined $\dot{\textup{H}}^{1,p}_{V}(\Rn)$ accordingly using the fractional power $H_{0}^{1/2}$ in place of $\nabla_{\mu}$, when $p\leq 1$. These reverse Riesz transform bounds on $\textup{H}^{p}_{V}(\Rn)$ are studied in Section~\ref{subsection on reverse riesz transform bounds on Dziubanski hardy space for no coefficients case}, where a non-trivial class of potentials $V$ satisfying \eqref{reverse Riesz transform bound from definition of I(V)} is exhibited (see Theorem~\ref{full boundedness criteria for adjoint riesz transforms on hardy dziubanski spaces} and Section~\ref{subsubsection on the S alpha smoothness class of potentials}).
It is important to note, however, that the set $\mathcal{I}(V)$ is not required for the \textit{well-posedness} of the Regularity problem in Theorem~\ref{existence and uniqueness result Regularity Lp} below. This is because the only relevant embedding for the associated existence result is the continuous inclusion $\dot{\textup{H}}^{1,p}_{V,\textup{pre}}(\Rn)\subseteq\bb{H}^{1,p}_{H}(\Rn)\cap\mathcal{V}^{1,2}(\Rn)$, which is shown to hold when ${p\in(p_{-}(H)_{*}\vee \frac{n}{n+1},1]}$ in Lemma~\ref{Lemma continuous inclusion V adapted hardy sobolev space for p<=1}. 

Once Theorem~\ref{summary of identifications of H adapted Hardys spaces} is established, we obtain estimates for Poisson semigroup solutions when the boundary data is in $\El{2}(\Rn)\cap \textup{X}(\Rn)$, for each relevant boundary data space $\textup{X}(\Rn)$, in Section~\ref{section estimates towards the dirichelt and regularity problems}. In contrast to \cite[Sections 17.1 and 17.2]{AuscherEgert}, significant  difficulties arise when proving the reverse nontangential maximal function estimates when $p\leq 1$ and these ultimately restrict our consideration to potentials $V\in\textup{RH}^{\infty}(\Rn)$ in that case
(see Propositions \ref{estimate non tangential maximal function on poisson extension solution} and \ref{proposition non tangential max function estimate regularity problem for p<1}). This is because the presence of the potential $V$ implies that the maximal operator used in the definition of the Hardy space $\textup{H}^{p}_{V,\textup{pre}}(\Rn)$ may no longer be associated with convolution operators and must instead be treated using the functional calculus of $H_{0}=-\Delta +V$. We circumvent this challenge by combining the finite propagation speed property of $H_{0}$ (in the sense \cite[Definition 3.3]{HLMMY_HardySpacesDaviesGaffney}) with pointwise bounds on its fundamental solution (see Lemma \ref{lemma L^infty estimate nabla mu from L^infty estimate on H_0}).

We then extend the estimates of Section~\ref{section estimates towards the dirichelt and regularity problems} by density, utilising the approach from \cite[Section~17.3]{AuscherEgert}, to prove solvability of $\left(\mathcal{D}\right)^{\mathscr{H}}_{p}$ and $\left(\mathcal{R}\right)^{\mathscr{H}}_{p}$ for general boundary data in Section~\ref{section on existence for the dirichlet and regularit problems}. To prove uniqueness for the Dirichlet problems, we follow the method from Chapters 20 and 21 of \cite{AuscherEgert}, which originates in \cite{Auscher_Egert_Uniqueness}. This is the content of Section~\ref{section on Uniqueness for the Dirichlet and Regularity problems}. Finally, we treat the Neumann problem in Section \ref{section proof well posedness neumann}.

Our main result for the Dirichlet problem is the following theorem.
\begin{thm}\label{existence and uniqueness result Dirichlet Lp}
Let $n\geq 3$, $q\geq\max\{\frac{n}{2},2\}$ and $V\in\textup{RH}^{q}(\Rn)$. Suppose that $p\geq 1$ and
\[
p_{-}(H)<p<p_{+}(H)^{*}.
\]
If $p>1$, then 
$\left(\mathcal{D}\right)^{\mathscr{H}}_{p}$ is well-posed and the solution $u$ for data $f$ in $\Ell{p}$ satisfies
\[
\norm{N_{*}(u)}_{\El{p}}\eqsim \norm{A_{\perp\perp}f}_{\El{p}}\eqsim \norm{S\left(t\nabla_{\mu}u\right)}_{\El{p}}.
\]
If $p=1$, then $\left(\mathcal{D}\right)^{\mathscr{H}}_{1}$ is well-posed and the solution $u$ for data $f$ in $\textup{H}^{1}_{V}(\Rn)$ satisfies
\[
\norm{N_{*}(u)}_{\El{1}}\lesssim \norm{A_{\perp\perp}f}_{\textup{H}^{1}_{V}}\eqsim \norm{S\left(t\nabla_{\mu}u\right)}_{\El{1}}.
\]
Moreover, the reverse nontangential maximal function bound holds when $V\in\textup{RH}^{\infty}(\Rn)$.
\end{thm}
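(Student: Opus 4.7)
The plan is to follow the four-step program outlined in the Introduction: reduce the second-order equation to a first-order semigroup form, construct Poisson semigroup solutions for a dense subclass of boundary data, extend the estimates by density, and finally prove uniqueness by adapting the method of Auscher--Egert.

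First, exploiting the $t$-independence of $A$ and the block structure, the equation $\mathscr{H}u=0$ is recast as $\partial_t^2 u=Hu$, so that for any $f\in\Ell{2}\cap X(\Rn)$, where $X(\Rn)=\Ell{p}$ if $p>1$ and $X(\Rn)=A_{\perp\perp}^{-1}\textup{H}^1_V(\Rn)$ if $p=1$, the formula $u(t,\cdot):=e^{-tH^{1/2}}f$ defines a weak solution of $\mathscr{H}u=0$ by Theorem~\ref{Poisson semigroup extension is a weak solution}. The almost-everywhere nontangential convergence $\fiint_{W(t,x)}|u-f|\to 0$ at the boundary follows from Lebesgue-type differentiation arguments combined with strong continuity of $e^{-tH^{1/2}}$ on $\Ell{p}$ for $p$ in the extrapolation interval $(p_{-}(H),p_{+}(H))$.

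Next, I would invoke the identification $\bb{H}^{p}_H(\Rn)=b\textup{H}^{p}_{V,\textup{pre}}(\Rn)$ from Theorem~\ref{summary of identifications of H adapted Hardys spaces}, together with the non-tangential maximal function and conical square function estimates developed in Section~\ref{section estimates towards the dirichelt and regularity problems}, to derive
\[
\norm{N_*(u)}_{\Ell{p}}\lesssim \norm{A_{\perp\perp}f}_{X(\Rn)}\quad\text{and}\quad \norm{S(t\nabla_\mu u)}_{\Ell{p}}\eqsim \norm{A_{\perp\perp}f}_{X(\Rn)}.
\]
The square function equivalence rests on the bounded holomorphic functional calculus of $H$ on $\bb{H}^{p}_H(\Rn)$, applied to the holomorphic profile $z\mapsto z^{1/2}e^{-z^{1/2}}$. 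The reverse nontangential estimate $\norm{A_{\perp\perp}f}_{\Ell{p}}\lesssim \norm{N_*(u)}_{\Ell{p}}$ in the case $p>1$ is then extracted from the boundary convergence via standard $\Ell{p}$-duality applied to averages on Whitney boxes. Extension from $\Ell{2}\cap X$ to all of $X$ proceeds by density, propagating both the integrability of $u$ and the nontangential convergence at the boundary as in \cite[Section~17.3]{AuscherEgert}. At the endpoint $p=1$, the reverse bound requires $V\in\textup{RH}^\infty(\Rn)$, since the maximal function defining $\textup{H}^1_V$ is not of convolution type; one must instead exploit the finite propagation speed of $H_0$ together with the kernel bounds of Lemma~\ref{lemma L^infty estimate nabla mu from L^infty estimate on H_0} to circumvent this obstruction.

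For uniqueness, I would take the difference of two solutions with common boundary data, so that $u$ has zero boundary trace, satisfies $\mathscr{H}u=0$ weakly, and still has $N_*(u)\in\Ell{p}$. Following the strategy of Chapters~20--21 of \cite{AuscherEgert}, the goal is to prove that any such $u$ coincides with the semigroup extension $e^{-tH^{1/2}}f_0$ of some abstract boundary trace $f_0\in\bb{H}^p_H(\Rn)$; combined with the nontangential convergence to zero at the boundary, this then forces $f_0=0$ and hence $u\equiv 0$. The principal obstacle is the realisation of this abstract Fatou-type representation in the present Schrödinger setting while simultaneously controlling the endpoint $p=1$ on the nonstandard Hardy space $\textup{H}^1_V$, where the reverse $N_*$-bound described above under the $\textup{RH}^\infty$ assumption constitutes the main remaining technical difficulty.
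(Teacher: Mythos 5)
Your construction of the solution and the forward estimates follows the paper's outline, but two genuine gaps remain. The most serious is uniqueness. You assert that, "following Chapters 20--21 of \cite{AuscherEgert}", one should show that a solution with vanishing boundary trace coincides with the Poisson extension $e^{-tH^{1/2}}f_0$ of an abstract trace $f_0\in\bb{H}^{p}_{H}$ --- but that is not what those chapters do, and you concede that realising such a Fatou-type representation is "the principal obstacle". Under the hypotheses of $\left(\mathcal{D}\right)^{\mathscr{H}}_{p}$ one controls only $N_{*}(u)$, not $N_{*}(\nabla_{\mu}u)$, so there is no route to a semigroup representation of $u$ itself; this is exactly why the paper avoids it. The method actually used in Section~\ref{section on Uniqueness for the Dirichlet and Regularity problems} is a duality argument: reduce to showing $\langle u,\partial_{t}\widetilde{G}\rangle_{\El{2}(\Hn)}=0$ for all $\widetilde{G}\in C^{\infty}_{c}(\Hn)$ (Lemma~\ref{first elementary basic uniqueness result}); solve the \emph{adjoint} equation $\dot{\mathscr{H}}^{*}F=\partial_{t}\widetilde{G}$ globally on $\R^{1+n}$ by Lax--Milgram on $\dot{\mathcal{X}}^{1,2}(\R^{1+n})$; represent the slices $F(t,\cdot)$ via the Poisson semigroup of $H^{\sharp}$ (Lemma~\ref{representation formula of slices of weak solution by Poisson semigroup}); compare $F$ with the Poisson extension of its own trace with quantitative decay in $t$ (Lemma~\ref{quantitative comparison between solution and Poisson extension of its trace}); and finally kill the error terms produced by a cut-off $\theta_{\eps,M,R}$ using the tent-space bounds of Lemma~\ref{technical lemma for uniqueness of solutions} together with Caccioppoli and the reverse H\"older property of weak solutions. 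None of this is present in your proposal, so well-posedness is not established.

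The second gap concerns the range of $p$. Your derivation of the estimates rests on the identification $\bb{H}^{p}_{H}=b\textup{H}^{p}_{V,\textup{pre}}$ and the functional calculus of $H$ on that space, which is available only for $p<p_{+}(H)$, whereas the theorem allows $p$ up to $p_{+}(H)^{*}$. For $2<p<p_{+}(H)^{*}$ the upper square function bound in Proposition~\ref{SQuare function estimate dirichlet poisson semigroup} is obtained by splitting $e^{-\sqrt{z}}=\phi(z)+(1+z)^{-2}$ with $\phi(z)=e^{-\sqrt{z}}-(1+z)^{-2}$, noting that $v=\phi(t^{2}H)f$ solves $\mathscr{H}v=g$ with a controllable right-hand side, and applying Caccioppoli's inequality \eqref{Caccioppoli's inequality}; the matching lower bound is proved by duality against the auxiliary operator $\widetilde{H}_{0}=b^{*}H_{0}$, using $p_{-}(\widetilde{H}_{0})\vee 1=1$. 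Likewise, for $p\in[p_{+}(H),p_{+}(H)^{*})$ the uniform $\textup{H}^{p}_{V}$-continuity must be replaced by the local $\El{2}$ control of Proposition~\ref{local estimate time and space Poisson solution p>p+}. Finally, the almost everywhere convergence of $\El{2}$-Whitney averages does not follow from strong $\El{p}$-continuity and Lebesgue differentiation alone: it comes from the $\El{2}$ result \eqref{non tangential convergence of Whitney averages towards the boundary data in first order bisectorial functional calculus} of Theorem~\ref{non tangential function L2 estimates Morris Turner} combined with the density argument driven by the maximal function bound.
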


We obtain the existence and further properties of solutions to the Dirichlet problem, including non-tangential convergence of $\El{2}$-averages to the boundary data, and uniform $\textup{H}^{p}_{V}$-estimates, in Theorem~\ref{existence result for Dirichlet problem again} below. The uniqueness is contained in Theorem~\ref{uniqueness result for dirichlet problem}.

Our main result for the Regularity problem is the following theorem.
\begin{thm}\label{existence and uniqueness result Regularity Lp}
Let $n\geq 3$, $q\geq\max\{\frac{n}{2},2\}$ and $V\in\textup{RH}^{q}(\Rn)$. Suppose that 
\[
p_{-}(H)_{*}<p<q_{+}(H) \quad\text{and}\quad \frac{n}{n+1}<p\leq 2q.
\]
If $p>1$, then $\left(\mathcal{R}\right)^{\mathscr{H}}_{p}$ is well-posed and the solution $u$ for data $f$ in $\dot{\mathcal{V}}^{1,p}(\Rn)$ satisfies 
\[
\norm{N_{*}\left(\nabla_{\mu}u\right)}_{\El{p}} \eqsim \norm{\nabla_{\mu}f}_{\El{p}}\eqsim \norm{S(t\nabla_{\mu}\partial_{t}u)}_{\El{p}}.
\]
If $p\leq 1$, then $\left(\mathcal{R}\right)^{\mathscr{H}}_{p}$ is well-posed and the solution $u$ for data $f$ in $\dot{\textup{H}}^{1,p}_{V}(\Rn)$ satisfies 
\[
\norm{N_{*}\left(\nabla_{\mu}u\right)}_{\El{p}}\lesssim\norm{f}_{\dot{\textup{H}}^{1,p}_{V}}
\quad\text{and}\quad
\norm{S(t\nabla_{\mu}\partial_{t}u)}_{\El{p}}\lesssim\norm{f}_{\dot{\textup{H}}^{1,p}_{V}}.
\]
Moreover, the reverse square function bound holds when ${p\in\left(p_{-}(H)_{*}\vee \frac{n}{n+\delta/2}, 1\right]\cap\mathcal{I}(V)}$, whilst the reverse nontangential maximal function bound holds when $p\in\mathcal{I}(V)$ and $V\in\textup{RH}^{\infty}(\Rn)$.
\end{thm}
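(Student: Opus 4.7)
The plan is to follow the extrapolation scheme for the Regularity problem developed in Chapters 17--21 of Auscher--Egert, adapted to the Schrödinger setting via the operator-theoretic tools assembled earlier in the paper. I would first construct a candidate solution via the Poisson semigroup $u(t,\cdot):=e^{-tH^{1/2}}f$ and confirm through Theorem~\ref{Poisson semigroup extension is a weak solution} that $u\in\mathcal{V}^{1,2}_{\loc}(\Hn)$ solves $\mathscr{H}u=0$ weakly whenever $f$ lies in a convenient dense subspace of the boundary datum space: take $f\in \dot{\mathcal{V}}^{1,p}(\Rn)\cap\mathcal{V}^{1,2}(\Rn)$ when $p>1$, and $f\in \dot{\textup{H}}^{1,p}_{V,\textup{pre}}(\Rn)$ when $p\leq 1$.

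Next I would establish the two-sided estimates for $p>1$ by invoking the identification $\bb{H}^{1,p}_H(\Rn)\cap\mathcal{V}^{1,2}(\Rn)=\dot{\mathcal{V}}^{1,p}(\Rn)\cap\El{2}(\Rn)$ (with equivalent norms) furnished by Theorem~\ref{summary of identifications of H adapted Hardys spaces} on $(p_{-}(H)_{*},q_{+}(H))\cap (1,2q]$, combined with the abstract tent-space characterisations of $\bb{H}^{1,p}_H$ applied to the map $f\mapsto t\nabla_\mu\partial_t e^{-tH^{1/2}}f$ and to $\nabla_\mu e^{-tH^{1/2}}f$. This yields $\norm{N_*(\nabla_\mu u)}_{\El{p}}\eqsim \norm{\nabla_\mu f}_{\El{p}}\eqsim\norm{S(t\nabla_\mu\partial_tu)}_{\El{p}}$ on the dense subspace; extension to all $f\in\dot{\mathcal{V}}^{1,p}(\Rn)$ would use density of $\mathcal{V}^{1,2}(\Rn)$, which at the endpoint $p=n$ relies on the improved Fefferman--Phong inequality (Proposition~\ref{improved Fefferman--Phong inequality}). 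For $p\leq 1$, only the continuous inclusion $\dot{\textup{H}}^{1,p}_{V,\textup{pre}}(\Rn)\subseteq \bb{H}^{1,p}_H(\Rn)\cap\mathcal{V}^{1,2}(\Rn)$ from Lemma~\ref{Lemma continuous inclusion V adapted hardy sobolev space for p<=1} is available universally, so I would only obtain the one-sided bounds $\norm{N_*(\nabla_\mu u)}_{\El{p}}\lesssim \norm{f}_{\dot{\textup{H}}^{1,p}_V}$ and $\norm{S(t\nabla_\mu\partial_tu)}_{\El{p}}\lesssim \norm{f}_{\dot{\textup{H}}^{1,p}_V}$, with reverse bounds only on $\mathcal{I}(V)$ where \eqref{reverse Riesz transform bound from definition of I(V)} gives the converse inclusion.

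Nontangential convergence of Whitney averages $\fiint_{W(t,x)}|u-f|\to 0$ would then follow from the $\El{p}$-bound on $N_*(\nabla_\mu u)$ combined with strong continuity of $t\mapsto e^{-tH^{1/2}}f$ at $t=0$ in $\El{2}$, via a standard density argument reducing to smooth compactly supported $f$. For uniqueness I would adapt the method from Chapters~20--21 of \cite{AuscherEgert}: starting from a solution $u\in\mathcal{V}^{1,2}_{\loc}(\Hn)$ with $N_*(\nabla_\mu u)\in\El{p}$ and vanishing averaged boundary trace, use a Green's-type identity and a Poisson representation derived from the $\El{2}$-well-posedness of Morris--Turner to conclude $u\equiv 0$, with the case $V\equiv 0$ handled separately modulo constants.

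The hardest step I anticipate is the reverse nontangential maximal function bound when $p\leq 1$, treated in Proposition~\ref{proposition non tangential max function estimate regularity problem for p<1}. When $V\not\equiv 0$ the maximal operator defining $\textup{H}^{p}_{V,\textup{pre}}(\Rn)$ is no longer of convolution type, so the usual convolution-kernel pointwise comparisons underpinning the $V\equiv 0$ theory of Auscher--Egert are unavailable and one must argue instead through the functional calculus of $H_0=-\Delta+V$, exploiting finite propagation speed of $\cos(t\sqrt{H_0})$ together with pointwise fundamental solution bounds (Lemma~\ref{lemma L^infty estimate nabla mu from L^infty estimate on H_0}). It is precisely this step that forces the restriction $V\in\textup{RH}^\infty(\Rn)$ in the reverse $N_*$ bound for $p\leq 1$. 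A secondary technical obstacle is the absence of $(1+t^2H)^{-1}1=1$ when $V\not\equiv 0$, which I expect to bypass using the cancellation bound~\eqref{cancellation bound} and its supporting Lemma~\ref{cancellation lemma}, together with the atomic decomposition Theorem~\ref{thm atomic decomposition of hardy sobolev spaces} which substitutes for the unavailable decomposition of $\nabla_\mu f$ into atoms in the range of $\nabla_\mu$.
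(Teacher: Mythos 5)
Your proposal is correct and follows essentially the same route as the paper: Poisson semigroup solutions estimated through the identifications of the adapted Hardy spaces $\bb{H}^{1,p}_{H}$ (Theorem~\ref{summary of identifications of H adapted Hardys spaces} and Figure~\ref{figure relations between adapted Hardy spaces}), density extension via Proposition~\ref{density lemma in homogeneous V adapted Sobolev spaces} and the Fefferman--Phong inequality, the one-sided bounds for $p\leq 1$ through Lemma~\ref{Lemma continuous inclusion V adapted hardy sobolev space for p<=1} with reverse bounds restricted to $\mathcal{I}(V)$ and $V\in\textup{RH}^{\infty}(\Rn)$ via finite propagation speed and Lemma~\ref{lemma L^infty estimate nabla mu from L^infty estimate on H_0}, and uniqueness by the Auscher--Egert duality scheme with the Poisson representation of $\dot{\mathscr{H}}^{-1}$. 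You have correctly identified both the main chain of lemmas and the genuinely delicate points, so no substantive gap remains.
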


We obtain the existence and further properties of solutions to the Regularity problem in Theorems~\ref{existence and uniqueness result Regularity Lp again} and~\ref{existence and uniqueness result Regularity for p<1} below. The uniqueness is contained in Theorem~\ref{uniqueness result for regularity problem}.

Our main result for the Neumann problem is the following theorem.
\begin{thm}\label{existence and uniqueness result Neumann Lp}
Let $n\geq 3$, $q\geq\max\{\frac{n}{2},2\}$ and $V\in\textup{RH}^{q}(\Rn)$. Suppose that  
\[
p_{-}(H)<p<q_{+}(H) \quad \text{and}\quad 1\leq p\leq 2q.
\]
If $p>1$, then $\left(\mathcal{N}\right)^{\mathscr{H}}_{p}$ is well-posed and the solution $u$ for data $g$ in $\Ell{p}$ satisfies 
\[
\norm{N_{*}\left(\nabla_{\mu}u\right)}_{\El{p}} \eqsim \norm{g}_{\El{p}}\eqsim \norm{S(t\nabla_{\mu}\partial_{t}u)}_{\El{p}}.
\]
If $p=1\in\mathcal{I}(V)$, then $\left(\mathcal{N}\right)^{\mathscr{H}}_{1}$ admits a solution $u$ for data $g$ in $\textup{H}^{1}_{V}(\Rn)$ satisfying 
\[
\norm{N_{*}\left(\nabla_{\mu}u\right)}_{\El{1}}\lesssim\norm{g}_{\textup{H}^{1}_{V}}\eqsim 
\norm{S(t\nabla_{\mu}\partial_{t}u)}_{\El{1}}.
\]
Moreover, the reverse nontangential maximal function bound holds when $V\in\textup{RH}^{\infty}(\Rn)$.
\end{thm}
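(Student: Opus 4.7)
The plan is to solve the Neumann problem by reducing it to the Regularity problem via the Poisson semigroup of $H$. Given boundary data $g$, I set $f:=-H^{-1/2}(bg)$ with $b=A_{\perp\perp}^{-1}$ from \eqref{properties of b(x)}, and take as candidate solution the Poisson semigroup extension $u(t,x):=e^{-tH^{1/2}}f(x)$. Functional calculus gives $\partial_{t}u=-H^{1/2}e^{-tH^{1/2}}f=e^{-tH^{1/2}}(bg)$, so the conormal derivative $A_{\perp\perp}\partial_{t}u=A_{\perp\perp}e^{-tH^{1/2}}(bg)$ is designed to converge nontangentially to $A_{\perp\perp}(bg)=g$ as $t\to0^{+}$.

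\textbf{Existence and estimates for $p>1$.} For $g\in\Ell{p}\cap\Ell{2}$, Theorem~\ref{thm boundedness of Riesz transforms on Lp full range} applied to $bg\in\Ell{p}\cap\Ell{2}$ gives
\[
\norm{\nabla_{\mu}f}_{\El{p}}=\norm{R_{H}(bg)}_{\El{p}}\lesssim\norm{g}_{\El{p}},
\]
placing $f$ in $\dot{\mathcal{V}}^{1,p}(\Rn)\cap\mathcal{V}^{1,2}(\Rn)$. Since $u=e^{-tH^{1/2}}f$ is a weak solution of $\mathscr{H}u=0$ (Theorem~\ref{Poisson semigroup extension is a weak solution}) with Dirichlet trace $f$, the Regularity problem estimates of Theorem~\ref{existence and uniqueness result Regularity Lp} (whose hypotheses are compatible with those assumed here) yield
\[
\norm{N_{*}(\nabla_{\mu}u)}_{\El{p}}\eqsim\norm{\nabla_{\mu}f}_{\El{p}}\eqsim\norm{S(t\nabla_{\mu}\partial_{t}u)}_{\El{p}}.
\]
The complementary direction $\norm{g}_{\El{p}}\lesssim\norm{\nabla_{\mu}f}_{\El{p}}$ is obtained from the reverse Riesz inequality $\norm{H^{1/2}f}_{\El{p}}\lesssim\norm{\nabla_{\mu}f}_{\El{p}}$ combined with $H^{1/2}f=-bg$ and $b^{-1}=A_{\perp\perp}\in\El{\infty}$. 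The hypothesis $p\leq 2q$ ensures that the dual exponent $p'$ lies in the Riesz-boundedness range of Theorem~\ref{thm boundedness of Riesz transforms on Lp full range}, and the reverse Riesz bound follows by duality applied to the adjoint of $H$. Nontangential convergence of $A_{\perp\perp}\partial_{t}u$ to $g$ then reduces to the Dirichlet-type convergence of $e^{-tH^{1/2}}(bg)$ at the boundary, which is furnished by Theorem~\ref{existence and uniqueness result Dirichlet Lp}. The result extends to arbitrary $g\in\Ell{p}$ by density together with the uniform estimates.

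\textbf{Uniqueness and the endpoint $p=1$.} If $u\in\mathcal{V}^{1,2}_{\loc}(\Hn)$ solves $(\mathcal{N})^{\mathscr{H}}_{p}$ with zero data and $N_{*}(\nabla_{\mu}u)\in\Ell{p}$, the uniqueness framework for the Regularity problem from Theorem~\ref{existence and uniqueness result Regularity Lp}, combined with the Poisson representation for solutions with $\El{p}$-control of $N_{*}(\nabla_{\mu}\cdot)$, produces a Dirichlet trace $f\in\dot{\mathcal{V}}^{1,p}(\Rn)$ with $u=e^{-tH^{1/2}}f$. The vanishing Neumann condition gives $A_{\perp\perp}H^{1/2}f=0$, hence $H^{1/2}f=0$, and the two-sided Riesz transform bounds force $f=0$, so $u\equiv 0$. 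For the endpoint $p=1\in\mathcal{I}(V)$, the scheme carries over with $\Ell{1}$ replaced by $\textup{H}^{1}_{V}(\Rn)$ and $\dot{\mathcal{V}}^{1,p}$ replaced by $\dot{\textup{H}}^{1,1}_{V,\textup{pre}}(\Rn)$, by means of the identification $\bb{H}^{1,1}_{H}\cap\mathcal{V}^{1,2}=\dot{\textup{H}}^{1,1}_{V,\textup{pre}}$ from Theorem~\ref{summary of identifications of H adapted Hardys spaces}, valid precisely when $1\in\mathcal{I}(V)$. The required Riesz transform bound on $\textup{H}^{1}_{V}$ is supplied by Theorem~\ref{full boundedness criteria for adjoint riesz transforms on hardy dziubanski spaces}, while the reverse nontangential maximal function estimate at $p=1$ is inherited from the corresponding case of Theorem~\ref{existence and uniqueness result Regularity Lp} under the further assumption $V\in\textup{RH}^{\infty}(\Rn)$.

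\textbf{Main obstacle.} The central difficulty is the reverse Riesz inequality needed to invert the Neumann-to-Dirichlet map $f\mapsto -A_{\perp\perp}H^{1/2}f$, rather than the forward direction. For $p>1$ this is a duality argument that hinges on Theorem~\ref{thm boundedness of Riesz transforms on Lp full range} at the dual exponent, so the restriction $p\leq 2q$ is imposed exactly to keep $p'$ in the boundedness range. At $p=1$ no such duality is available, which is why the reverse bound must be encoded as the membership $1\in\mathcal{I}(V)$; without it one cannot recover $\norm{g}_{\textup{H}^{1}_{V}}$ from $\norm{\nabla_{\mu}f}_{\textup{H}^{1}_{V}}$, and the equivalences in the theorem degrade to one-sided estimates.
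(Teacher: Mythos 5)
Your overall reduction is the same as the paper's: pass from Neumann data $g$ to Regularity data $f$ by inverting the map $f\mapsto -A_{\perp\perp}H^{1/2}f$, take the Poisson extension, and read off the estimates from the Regularity theorem. The genuine gap is in your justification of the invertibility, i.e.\ of the reverse Riesz inequality $\norm{H^{1/2}f}_{\El{p}}\lesssim\norm{\nabla_{\mu}f}_{\El{p}}$. You claim this follows by duality from Theorem~\ref{thm boundedness of Riesz transforms on Lp full range} applied to the adjoint at the exponent $p'$, and that "$p\leq 2q$ ensures that $p'$ lies in the Riesz-boundedness range". This is false: $p\leq 2q$ is an upper bound on $p$ and gives no upper bound on $p'$. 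For $p$ close to $1$ the dual exponent $p'$ is arbitrarily large, so the requirement $p'\leq 2q$ fails, and there is no duality relation available between $q_{+}(H^{\sharp})$ and $p_{-}(H)$ when $V\not\equiv 0$ (the paper explicitly does not know whether $p_{-}(H)=q_{-}(H)$ in this setting). The correct route — and the one the paper takes in Lemma~\ref{lemma extension of square root of H to isomorphism} — is through the identifications $\bb{H}^{1,p}_{H}=\dot{\mathcal{V}}^{1,p}\cap\El{2}$ and $\bb{H}^{p}_{H}=b\textup{H}^{p}_{V,\textup{pre}}$ of Theorem~\ref{summary of identifications of H adapted Hardys spaces} together with $\norm{f}_{\bb{H}^{1,p}_{H}}\eqsim\norm{H^{1/2}f}_{\bb{H}^{p}_{H}}$ from Figure~\ref{figure relations between adapted Hardy spaces}; for $p\leq 2$ the reverse bound ultimately rests on the square-function extrapolation of Lemma~\ref{extrapolation lemma} and Corollary~\ref{continuous inclusion L^p subset abstract H^p}, not on duality (duality only enters for $p>2$, where $p'<2\leq 2q$ automatically). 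Equivalently, the two-sided estimate $\norm{A_{\perp\perp}H^{1/2}f}_{\El{p}}\eqsim\norm{\nabla_{\mu}f}_{\El{p}}$ you need is already recorded as item (vi) of Theorem~\ref{existence and uniqueness result Regularity Lp again}; citing that would close the gap. Note also that the constraint $p\leq 2q$ is there for the \emph{forward} Riesz bound at $p$ itself (the $V^{1/2}H^{-1/2}$ component), not for the dual exponent.

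Your uniqueness sketch also diverges from the paper in a way that hides the main difficulty: you assert that any solution with $N_{*}(\nabla_{\mu}u)\in\El{p}$ "is" a Poisson extension of a trace $f\in\dot{\mathcal{V}}^{1,p}(\Rn)$, but producing that trace is precisely what has to be proved. The paper instead uses $t$-independence to show $\partial_{t}u$ solves the Dirichlet problem $(\mathcal{D})^{\mathscr{H}}_{p}$ with zero data, concludes $\partial_{t}u=0$ from Theorem~\ref{uniqueness result for regularity problem}, identifies $u$ with a $t$-independent function $f$, verifies $f\in\dot{\mathcal{V}}^{1,p}(\Rn)$ from the nontangential control, and only then invokes the Regularity estimates to force $f$ to be constant. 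Finally, at $p=1$ the theorem claims only existence; uniqueness is left open (see Remark~\ref{remark uniqueness Neumann p=1}, the obstruction being that $\nabla_{\mu}f\in\textup{H}^{1}(\Rn)^{n+1}$ is not known to imply $f\in\dot{\textup{H}}^{1,1}_{V}(\Rn)$), so "the scheme carries over" must not be read as a uniqueness claim there.
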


We deduce these results for the Neumann problem as an application of Theorems~\ref{existence and uniqueness result Dirichlet Lp} and \ref{existence and uniqueness result Regularity Lp} in Section~\ref{section proof well posedness neumann} below. Our method provides a new self-contained proof even in the case $V\equiv0$ treated by Auscher--Egert~\cite{AuscherEgert}, since it no longer relies on the results in~\cite{Auscher_Mourgoglou_BoundaryValueProblems} and \cite{AuscherStahlhut_FunctionalCalculus_Dirac}.
The open question of uniqueness for $\left(\mathcal{N}\right)^{\mathscr{H}}_{1}$ is discussed in Remark \ref{remark uniqueness Neumann p=1} after the proof of Theorem \ref{existence and uniqueness result Neumann Lp}.

We stress that throughout the paper $V$ will always denote a \textit{non-negative} locally integrable function. Theorems \ref{thm boundedness of Riesz transforms on Lp full range}, \ref{existence and uniqueness result Dirichlet Lp}, \ref{existence and uniqueness result Regularity Lp} and \ref{existence and uniqueness result Neumann Lp} are proved in \cite{AuscherEgert} in the case $V\equiv 0$. This means that we can restrict our attention to the case $V\not\equiv 0$, or equivalently  $\int_{\Rn}V\in(0,\infty]$,
which often allows us to avoid working modulo constants.

The paper is organised as follows. In Section~\ref{section on preliminaries}, we introduce preliminary material that will be used throughout the paper. Section~\ref{subsection on reverse Holder potentials} collects some important properties of reverse Hölder potentials $V$, whilst  Sobolev spaces adapted to singular potentials $V$ are studied in Section~\ref{section on definition of Sobolev spaces adapted to singular potentials}. The rest of Section~\ref{section on preliminaries} serves as a brief overview of the holomorphic functional calculus for the (bi)sectorial operators that we will consider, along with the construction of the corresponding adapted Hardy spaces. In Section~\ref{section on Hardy spaces adapted to schrodinger operators}, we introduce the Hardy and Hardy--Sobolev spaces $\textup{H}^{p}_{V}(\Rn)$ and $\dot{\textup{H}}^{1,p}_{V}(\Rn)$, which are adapted to the Schrödinger operator $H_{0}:=-\Delta +V$. Topics covered include atomic decompositions (Sections \ref{section on atomic decomposition of hardy dziubanski spaces} and \ref{section on atomic decompositions of Hardy sobolev spaces for schrodinger operators}), interpolation, and duality (Section~\ref{section on families of operators and bddness on dziubanski hardy spaces}). Section~\ref{section on critical numbers and their properties} is concerned with properties of the critical numbers $p_{\pm}(H)$ and $q_{\pm}(H)$, as well as presenting a class of potentials $V$ for which the set $\mathcal{I}(V)$ is non-empty. Theorem~\ref{thm boundedness of Riesz transforms on Lp full range} is proved in Section~\ref{section on riesz transform bounds}, and the identifications of $\bb{H}^{p}_{H}(\Rn)$ and $\bb{H}^{1,p}_{H}(\Rn)$ are established in Section~\ref{section on identification of H adapted Hardy spaces}. Some elementary properties and interior estimates for weak solutions of $\mathscr{H}u=0$ are collected in Section~\ref{section on properties of weak solutions and interior estimates}, before proving the required estimates for Poisson semigroup solutions in Section~\ref{section estimates towards the dirichelt and regularity problems}. Finally, Theorems~\ref{existence and uniqueness result Dirichlet Lp} and~\ref{existence and uniqueness result Regularity Lp} follow from Section~\ref{section on existence for the dirichlet and regularit problems} (existence and estimates) and Section~\ref{section on Uniqueness for the Dirichlet and Regularity problems} (uniqueness), and Theorem \ref{existence and uniqueness result Neumann Lp} is proved in Section \ref{section proof well posedness neumann}.

\subsection*{Acknowledgements}
We would like to thank Pascal Auscher, Tim Böhnlein and Moritz Egert for insightful discussions that have contributed to this paper. This research was conducted at the University of Birmingham with further support provided by the Royal Society (IES$\backslash$R3$\backslash$193232) and we thank the Technical University of Darmstadt for hosting our associated research visit.

\section{Preliminaries}\label{section on preliminaries}
In this first section, we introduce the notation that we will be using, the definitions and properties of the function spaces that we need, as well as some operator-theoretic objects needed to carry out the strategy outlined in the introduction. 

\subsection{Notation and function spaces}
In this section, we set out the notation that we will use throughout the text. We also recall the definitions of some well-known function spaces. 
\subsubsection{Exponents and order relations}\label{subsubsection on definition of exponents and order relations}

For $p\in\left[1,\infty\right]$, we denote the Hölder conjugate $p'\in\left[1,\infty\right]$ such that $\frac{1}{p'}+\frac{1}{p}=1$, with the convention that $\frac{1}{\infty}=0$. For $p_{0},p_{1}\in(0,\infty]$ and an interpolating index $\theta\in[0,1]$, we define $[p_{0},p_{1}]_{\theta}\in (0,\infty]$ by the relation $\frac{1}{[p_{0},p_{1}]_{\theta}}:=\frac{1-\theta}{p_{0}}+\frac{\theta}{p_1}$. For $p\in\left(0,\infty\right)$, we let 
$p^{*}\in\left(0,\infty\right]$ be the Sobolev exponent for $\R^n$ defined as $p^{*}:=\infty$ if $p\geq n$, and $p^{*}:=\frac{np}{n-p}$ if $p<n$. For $p\in\left(0,\infty\right]$, we let $p_{*}\in\left(0,\infty\right)$ be the lower Sobolev exponent defined as $p_{*}:=\frac{np}{n+p}$. 

We shall also be using the notation $a\vee b := \max\set{a,b}$ and $a\wedge b:= \min\set{a,b}$ for $a,b\in\R$. We say that $x\lesssim y$ if there exists $C>0$, independent of $x$ and $y$, such that $x\leq Cy$. We define $x\gtrsim y$ in a similar way, and use the notation $x\eqsim y $ to express that both $x\lesssim y$ and $x\gtrsim y$ hold.
\subsubsection{Geometry in $\Rn$}\label{subsubsection on geometry in Rn}
For all $x\in\Rn$, we let $\abs{x}:=\left(\sum_{i=1}^{n}\abs{x_{i}}^{2}\right)^{1/2}$ denote the Euclidean norm of $x$. For any $x\in \Rn$ and $r>0$, we let 
\begin{equation*}
    B(x,r):=\set{y\in\Rn : \abs{y-x}<r}
\end{equation*}
be the ball centred at $x$ with radius $r$. The distance between two subsets $E,F\subseteq \Rn$ is $\dist(E,F):=\inf\set{\abs{x-y} : x\in E, y\in F}$. For cubes $Q\subset\Rn$ or balls $B\subset\Rn$, and any real $r>0$, the set $rQ$ (or $rB$) denotes the cube (or the ball) with same centre as $Q$, and with sidelength (radius) equal to $r$ times the sidelength of $Q$ (or the radius of $B$). For a given cube $Q\subset\Rn$, we let $\ell(Q)$ denote its sidelength. For cubes $Q\subset\Rn$ (or balls $B\subset\Rn$), and integers $j\geq 1$, consider the annular regions $C_{j}(Q)$ (or $C_{j}(B)$), defined as $C_{1}(Q):=4Q$ and $C_{j}(Q):=2^{j+1}Q\setminus 2^{j}Q$ for all $j\geq 2$. A collection $\set{A_{k}}_{k\in \mathcal{I}}$ of subsets of $\Rn$ is said to have bounded overlap if $\sum_{k\in\mathcal{I}}\ind{A_{k}}(x)\lesssim 1$ for all $x\in\Rn$ (we let $\ind{A}$ denote the indicator function of a set $A$).

\subsubsection{Operators and quasinormed spaces}
Given quasinormed spaces $X$ and $Y$, we let $\mathcal{L}(X,Y)$ denote the space of all bounded linear operators $T:X\to Y$ equipped with the usual operator quasinorm  $\norm{T}_{\mathcal{L}(X,Y)}$. The domain, range and null-space of an operator $T$ are denoted respectively as $\dom{T}$, $\ran{T}$ and $\kernel{T}$. We also consider the intersection $X\cap Y$ with the quasinorm $\|z\|_{X\cap Y}:=\max\set{\|z\|_{X},\|z\|_{Y}}$ for all $z\in X\cap Y$.

\subsubsection{Lebesgue spaces and distribution spaces}\label{subsubsection lebesgue spaces and distributions}
We shall occasionally use the notation $\El{p}_{c}(\Rn)$ for the space of compactly supported functions in $\Ell{p}:=\El{p}(\Rn,\C)$. If the Lebesgue measure of $A$, denoted $\meas{A}$, is finite, we let $\textup{av}_{A}f:=\left(f\right)_{A}:=\dashint_{A}f:=\frac{1}{\meas{A}}\int_{A}f(x)\dd x$ denote the average of a measurable function $f$ over the set $A$. 

For an open subset $O\subseteq \Rn,$ we let $C^{\infty}_{c}(O)$ denote the space of smooth functions with compact support in $O$. The space of Schwartz functions on $\Rn$ is denoted by $\mathcal{S}(\Rn)$. Both spaces are equipped with the usual Fréchet topology, and their topological duals are the distribution spaces denoted by $\mathcal{D}'(\Rn)$ and $\mathcal{S}'(\Rn)$, respectively.

For $1\leq p\leq \infty$, the Sobolev space $\textup{W}^{1,p}(O)$ is the space of all functions $f\in\El{p}(O)$ with distributional gradient $\nabla f\in\El{p}(O;\C^{n})$, equipped with the usual norm
\begin{equation*}
    \norm{f}_{\textup{W}^{1,p}(O)}:=\left(\norm{f}^{p}_{\El{p}(O)} + \norm{\nabla f}_{\El{p}(O)}^{p}  
 \right)^{1/p}.
\end{equation*}
The corresponding local Sobolev spaces are denoted by $\textup{W}_{\loc}^{1,p}(O)$. The homogeneous Sobolev space $\dot{\textup{W}}^{1,p}(O)$ is the space of all distributions $f\in\mathcal{D}'(O)$ (or equivalently functions $f\in\El{1}_{\loc}(O)$) such that $\nabla_{x}f\in\El{p}(O;\C^{n})$, equipped with the seminorm $\norm{f}_{\dot{\textup{W}}^{1,p}}:=\norm{\nabla f}_{\El{p}(O)}$.

\subsubsection{Tent spaces $\tent{s,p}$}\label{section on tent spaces}
These function spaces have been introduced by Coifman--Meyer--Stein \cite{Coifman_Meyer_Stein_TentSpaces}. We refer the reader to \cite{Amenta_tent_spaces_doubling} and \cite{Amenta_tent_spaces2018} for detailed proofs. For $x\in\Rn$, we introduce the cone of unit aperture with vertex at $x$, $\Gamma(x):=\set{(t,y)\in\Hn : \abs{x-y}<t}$.

For a measurable function $F:\Hn \to \C$, we define the corresponding standard conical square function $S(F):\Rn\to [0,\infty]$ by
\begin{equation*}
    S(F)(x):=\left(\iint_{\Gamma(x)}\abs{F(t,y)}^{2}\frac{\dd t\dd y}{t^{1+n}}\right)^{1/2}
\end{equation*}
for all $x\in\Rn$. For $s\in\R$ and $p\in(0,\infty)$, the tent space $\tent{s,p}$ consists of all functions $F\in\El{2}_{\loc}(\Hn)$ such that $\norm{F}_{\tent{s,p}}:=\norm{S\left(t^{-s}F\right)}_{\Ell{p}}<\infty$.
The maps $\norm{\cdot}_{\tent{s,p}}$ are quasinorms (norms when $p\geq 1$), and all tent spaces are quasi Banach spaces, or Banach spaces when $p\geq 1$ (see \cite[Proposition 3.5]{Amenta_tent_spaces_doubling}). Their topology is finer than that of $\El{2}_{\loc}(\Hn)$.
When $s=0$, we simply write $\tent{0,p}=:\tent{p}$. The space $\tent{p}$ coincides with the space $\tent{p}_{2}$ introduced in \cite{Coifman_Meyer_Stein_TentSpaces}.

It follows from Fubini's theorem that $\tent{2}$ and $\El{2}(\Hn;\frac{\dd x\dd t}{t})$ are equal as sets, with equivalent norms. When $1<p<\infty$, the $\El{2}(\Hn;\frac{\dd x\dd t}{t})$ inner-product 
\begin{equation*}
    \langle F,G\rangle_{\El{2}(\Hn;\frac{\dd x\dd t}{t})}:=\iint_{\Hn} F(t,x)\clos{G(t,x)}\frac{\dd x\dd t}{t}
\end{equation*}
identifies the dual of $\tent{s,p}$ with $\tent{-s,p'}$, see \cite[Proposition 1.9]{Amenta_tent_spaces2018}. In particular, there is a $C>0$ such that 
\begin{equation}\label{estimate duality in tent spaces}
    \abs{\langle F,G\rangle_{\El{2}(\Hn;\frac{\dd x\dd t}{t})}}\leq C\norm{F}_{\tent{s,p}}\norm{G}_{\tent{-s,p'}}
\end{equation}
for all $F,G\in\El{2}(\Hn,\frac{\dd x\dd t}{t})$.

When $s=0$ and $0<p\leq 1$, the tent spaces $\tent{p}$ admit atomic decompositions. A $\tent{p}$-atom associated with a cube $Q\subset\Rn$ is a measurable function $A:\Hn\to \C$ with support in $Q\times (0,\ell(Q))$ such that 
$\left(\int_{0}^{\ell(Q)}\int_{Q}\abs{A(s,y)}^{2}\frac{\dd y\dd s}{s}\right)^{1/2}\leq \ell(Q)^{\frac{n}{2}-\frac{n}{p}}$. 
The atomic decomposition \cite[Proposition 5]{Coifman_Meyer_Stein_TentSpaces} says that there is $C\geq 0$ (depending only on $n$ and $p$) such that every $F\in\tent{p}$ can be written as $F=\sum_{i=0}^{\infty}\lambda_{i}A_{i}$ with unconditional convergence in $\El{2}_{\loc}(\Hn)$, where each $A_{i}$ is a $\tent{p}$-atom and $\norm{(\lambda_{i})_{i}}_{\ell^{p}}\leq C\norm{F}_{\tent{p}}$. Moreover, if $F\in\tent{p}\cap\tent{2}$, then the atomic decomposition also converges in $\tent{2}$ and thus in $\El{2}(\Hn,\frac{\dd x\dd t}{t})$. The unconditional convergence in $\El{2}_{\loc}(\Hn)$ is not explicitly stated in \cite[Proposition 5]{Coifman_Meyer_Stein_TentSpaces}, but follows from the construction. Indeed, the decomposition has the form $F=\sum_{i\geq 0}F\ind{\Delta_{i}}$, where $\set{\Delta_{i}}_{i\geq 0}$ is a family of disjoint measurable subsets of $\Hn$ such that $\Hn=\bigcup_{i\geq 0}\Delta_{i}$, see equation~(4.5) in \cite{Coifman_Meyer_Stein_TentSpaces}. This also justifies convergence of the atomic decomposition of $F\in\tent{p}$ in the $\tent{p}$ norm.

For $p\in(0,\infty)$, we shall occasionally need the tent space $\tent{0,p}_{\infty}$ defined as the set of functions $u\in\El{2}_{\loc}(\Hn)$ with finite (quasi)norm $\norm{u}_{\tent{0,p}_{\infty}}:=\norm{N_{*}(u)}_{\El{p}}<\infty$. Again, these are (quasi)Banach spaces whose topologies are finer than that of $\El{2}_{\loc}(\Hn)$.

\subsubsection{Fefferman--Stein Hardy spaces $\textup{H}^{p}(\Rn)$}\label{subsubsection on hardy spaces}
For $p\in(0,1]$, we let $\textup{H}^{p}(\Rn)$ denote the real Hardy space of Fefferman--Stein (see, e.g., \cite[Chapter \RNum{3}]{Stein_HarmonicAnalysis_93}). It is useful to think of these spaces in terms of atomic decompositions. We restrict ourselves to the range $\frac{n}{n+1}<p\leq 1$. An $\El{\infty}$-atom for $\textup{H}^{p}$ is a measurable function $a:\Rn\to\C$ supported in a cube $Q\subset\Rn$, such that $\norm{a}_{\El{\infty}}\leq \meas{Q}^{-1/p}$ and $\int_{\Rn}a=0$. 
The atomic decomposition of the space $\textup{H}^{p}$ (see \cite[Section~\RNum{3}.3.2]{Stein_HarmonicAnalysis_93}) states that every $f\in\textup{H}^{p}(\Rn)$ can be decomposed as $f=\sum_{k=1}^{\infty}\lambda_{k}a_{k}$,
where the sum converges absolutely in $\textup{H}^{p}$, the $a_k$ are $\El{\infty}$-atoms for $\textup{H}^{p}$, and the scalars $\set{\lambda_{k}}_{k\geq 1}$ satisfy
$\sum_{k=1}^{\infty}\abs{\lambda_{k}}^{p}\eqsim \norm{f}^{p}_{\textup{H}^{p}}$. It is useful to note that if $f\in\textup{H}^{p}(\Rn)\cap\Ell{2}$, then the decomposition can be taken to also converge in the $\El{2}$ norm (see \cite[Section~2.4]{AuscherEgert} and the references therein).

\subsection{Reverse Hölder weights}\label{subsection on reverse Holder potentials}

 For $n\geq 1$ and $q\in(1,\infty)$, the reverse Hölder class $\textup{RH}^{q}(\Rn)$ is the vector space consisting of  locally integrable functions $V:\Rn\to \left[0,\infty\right)$ for which there exists $C\geq 0$ such that
\begin{equation}\label{reverse Holder ineq for V}
\left(\dashint_{B}V^{q}\right)^{1/q}\leq C\dashint_{B}V
\end{equation}
for every ball $B\subset \Rn$. The infimum of all such $C\geq 0$ will be denoted by $\llbracket V\rrbracket_{q}$. The reverse Hölder class $\textup{RH}^{\infty}(\Rn)$ is defined by replacing the left-hand side of (\ref{reverse Holder ineq for V}) with $\norm{V}_{\El{\infty}(B)}$. Note that replacing balls with cubes, or indeed any family of sets with bounded eccentricity, gives the same class $\textup{RH}^{q}(\Rn)$ by item (ii) of Lemma~\ref{self-improvement property of reverse holder weights} below.
It follows from Hölder's inequality that $\textup{RH}^{q}(\Rn)\subseteq \textup{RH}^{p}(\Rn)$ for all $1<p\leq q\leq \infty$. As examples of reverse Hölder weights, we mention that $x\mapsto\abs{x}^{-\alpha}$ belongs to $\textup{RH}^{q}(\Rn)$ for all $\alpha>0$ such that $1<q<\frac{n}{\alpha}$, and that for any polynomial $P:\Rn\to \C$ and $a>0$, it holds that $\abs{P}^{a}\in\textup{RH}^{\infty}(\Rn)$ (see Proposition~\ref{polynomials are RH^{infty}}).
The self-improvement, doubling and two other well-known properties of reverse Hölder weights are collected below. 

\begin{lem}\label{self-improvement property of reverse holder weights}
    If $n\geq 1$, $q>1$ and $V\in\textup{RH}^{q}(\Rn)$, then the following properties hold:
    \begin{enumerate}[label=\emph{(\roman*)}]
        \item There exists $\eps>0$ such that $V\in\textup{RH}^{q+\eps}(\Rn)$.
        \item There exists $\kappa>0$ such that for all balls $B\subset\Rn$ it holds that 
    $\int_{2B}V \leq \kappa\int_{B}V$.
    \item For each $s\in(0,1)$ it holds that $V^{s}\in\textup{RH}^{\frac{1}{s}}(\Rn)$.
    \item There exists $C>0$ such that for all $x\in\Rn$ and all $0<r<R$ it holds that 
    \begin{equation*}
        \frac{1}{r^{n-2}}\int_{B(x,r)}V\leq C\left(\frac{R}{r}\right)^{\frac{n}{q}-2}\frac{1}{R^{n-2}}\int_{B(x,R)}V.
    \end{equation*}   
    \end{enumerate} 
\end{lem}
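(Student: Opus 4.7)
The plan is to prove the four claims largely independently, tackling them in order of increasing depth: (iv) is a one-line combination of Hölder and reverse Hölder; (ii) and (iii) follow from an iterative doubling argument and a log-convexity interpolation respectively; while (i), which is Gehring's lemma, is the main obstacle and must be imported from the theory of Muckenhoupt weights.

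For (iv), the conclusion is equivalent after rearrangement to $\int_{B(x,r)} V \leq C(r/R)^{n/q'} \int_{B(x,R)} V$. I would apply Hölder with exponents $q'$ and $q$ on $B(x,r)$, bound the resulting $L^q$-integral by its value on $B(x,R)$ via monotonicity, and then invoke the reverse Hölder inequality on $B(x,R)$ to obtain
\[
\int_{B(x,r)} V \leq \bigl|B(x,r)\bigr|^{1/q'}\Bigl(\int_{B(x,R)}V^q\Bigr)^{1/q} \leq \llbracket V\rrbracket_q\bigl|B(x,r)\bigr|^{1/q'}\bigl|B(x,R)\bigr|^{-1/q'}\int_{B(x,R)}V.
\]
For (iii), the condition $V^s\in\textup{RH}^{1/s}(\Rn)$ unfolds to the reverse Jensen-type inequality $(\dashint_B V)^s \leq C\dashint_B V^s$. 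I would derive this from the log-convexity of $L^p(\mu)$-norms in $1/p$, for $\mu$ the normalized Lebesgue measure on $B$: writing $1 = (1-\theta)/s + \theta/q$ with $\theta = q(1-s)/(q-s)\in(0,1)$ yields
\[
\dashint_B V \leq \bigl(\dashint_B V^s\bigr)^{(1-\theta)/s}\bigl(\dashint_B V^q\bigr)^{\theta/q},
\]
after which the second factor is bounded by $\llbracket V\rrbracket_q^\theta(\dashint_B V)^\theta$ thanks to the reverse Hölder hypothesis, and the resulting $(\dashint_B V)^\theta$ on the right can be absorbed into the left-hand side.

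For (ii), doubling cannot be obtained from a single application of reverse Hölder with the ratio $|2B|/|B|$: a Hölder--reverse-Hölder computation on the annulus $2B\setminus B$ only yields $V(2B\setminus B) \leq \llbracket V\rrbracket_q(1-2^{-n})^{1/q'}V(2B)$, which rearranges into $V(2B) \leq \kappa V(B)$ only when $\llbracket V\rrbracket_q(1-2^{-n})^{1/q'}<1$. The plan to circumvent this is to run the same annular estimate on $(1+\eta)B$ in place of $2B$, observe that $(1-(1+\eta)^{-n})^{1/q'}\eqsim(n\eta)^{1/q'}$ becomes smaller than $\llbracket V\rrbracket_q^{-1}$ by choosing $\eta$ small enough, and then iterate the resulting incremental doubling $\lceil\log 2/\log(1+\eta)\rceil$ times to bridge from $B$ to $2B$. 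Alternatively, one could invoke the $A_\infty$ property of reverse Hölder weights and its well-known implication of doubling, but the iteration has the benefit of being self-contained.

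Finally, (i) is Gehring's lemma, the genuinely non-trivial ingredient and the main obstacle. The approach is to apply the Calderón--Zygmund decomposition to the level sets $\{x\in B_0 : V(x) > \lambda\}$ for $\lambda$ large compared to $\dashint_{B_0}V$, use the reverse Hölder inequality on the resulting maximal stopping cubes to extract a good-$\lambda$ bound for the distribution function of $V$, and integrate by Cavalieri's principle to yield local integrability of $V^{q+\eps}$ with $\eps=\eps(n,q,\llbracket V\rrbracket_q)>0$. As this is classical, I would simply cite the standard statement (e.g.\ from Grafakos' \emph{Classical Fourier Analysis}) rather than reproduce the stopping-time argument.
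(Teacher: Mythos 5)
Your proposal is correct. The paper itself offers no arguments here: it disposes of all four items by citation (Stein's monograph for (i) and (ii), Auscher--Ben Ali for (iii), Shen for (iv)), so any self-contained treatment is by construction a different route. Your arguments for (ii)--(iv) are the standard ones and check out: in (iv) the rearranged target $\int_{B(x,r)}V\lesssim (r/R)^{n/q'}\int_{B(x,R)}V$ does follow from the displayed H\"older/reverse-H\"older chain; in (iii) the exponent $\theta=q(1-s)/(q-s)$ lies in $(0,1)$ precisely because $q>1$, the Lyapunov interpolation is legitimate for the sub-unit exponent $s$ (the relevant H\"older exponent is $(q-s)/(q-1)\geq 1$), and the absorption of $(\dashint_B V)^{\theta}$ is harmless since $V\in\El{1}_{\loc}$; and in (ii) you correctly diagnose that the naive annulus estimate at ratio $2$ need not close, since $\llbracket V\rrbracket_{q}\geq 1$ while $(1-2^{-n})^{1/q'}$ can be arbitrarily close to $1$, and the fix of shrinking to ratio $1+\eta$ and iterating $O(\eta^{-1})$ times is a valid hole-filling argument. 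For (i) you, like the paper, defer to the literature (Gehring's lemma), which is reasonable; your sketch of the good-$\lambda$/Calder\'on--Zygmund mechanism is the standard proof, and the only point worth keeping in mind is that the gain $\eps$ and the new constant $\llbracket V\rrbracket_{q+\eps}$ must depend only on $n$, $q$ and $\llbracket V\rrbracket_{q}$ so that the improved inequality is uniform over all balls — which the scale-invariance of the argument provides and which you state. What your route buys is self-containedness for three of the four items at the cost of a page of routine computation; what the paper's route buys is brevity.
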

\begin{proof}
    See 6.6(a) in Section~6 of \cite[Chapter \RNum{5}]{Stein_HarmonicAnalysis_93} for (i), Section~1 of the same reference for (ii), \cite[Proposition 11.1]{Auscher-BenAli} for (iii), and \cite[Lemma 1.2]{Shen_Schrodinger} for (iv).
\end{proof}

The following singularity property for reverse H\"older weights $V$ that are not equal to $0$ almost everywhere, henceforth denoted $V\not\equiv 0$, is essential in the proof of Proposition~\ref{embedding homogeneous V spaces in L ^p* for p<n} below.
\begin{lem}\label{non integrability property of reverse holder potentials}
    If $n\geq 1$, $q>1$, $V\in\textup{RH}^{q}(\Rn)$, $V\not\equiv 0$ and $0<p<q$, then $\norm{V}_{\El{p}}=\infty$.
\end{lem}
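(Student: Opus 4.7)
The plan is to combine a power-law lower bound on ball integrals of $V$, derived from Lemma~\ref{self-improvement property of reverse holder weights}(iv), with the classical observation that a reverse Hölder weight exceeds a fixed fraction of its ball-average on a substantial portion of any ball. Together, these produce a distributional lower bound of the form $\meas{\{V>t\}}\gtrsim t^{-q}$ for all sufficiently small $t>0$, which is incompatible with $V\in\El{p}(\Rn)$ whenever $p<q$ via the layer cake formula. A key feature of this strategy is that it treats the full range $0<p<q$ uniformly, including $p\leq 1$, where a direct Hölder-duality argument on a single large ball breaks down because one cannot lower bound $\int_{B}V^{p}$ by $\int_{B}V$.

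Since $V\geq 0$ and $V\not\equiv 0$, first fix $x_{0}\in\Rn$ and $r_{0}>0$ with $I_{0}:=\int_{B(x_{0},r_{0})}V>0$. Rearranging Lemma~\ref{self-improvement property of reverse holder weights}(iv) applied with $r=r_{0}$ and arbitrary $R>r_{0}$ yields
\begin{equation*}
\int_{B(x_{0},R)}V\geq c\,R^{n/q'}\qquad\text{for all }R>r_{0},
\end{equation*}
for some constant $c=c(I_{0},r_{0},n,q,\llbracket V\rrbracket_{q})>0$, so in particular $\dashint_{B(x_{0},R)}V\gtrsim R^{-n/q}$ as $R\to\infty$.

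Next, for an arbitrary ball $B$, consider the super-level set $A_{B}:=\{x\in B:V(x)>\tfrac{1}{2}\dashint_{B}V\}$. Splitting $\int_{B}V$ over $A_{B}$ and $B\setminus A_{B}$ gives $\int_{A_{B}}V\geq \tfrac{1}{2}\int_{B}V$, while Hölder's inequality together with the reverse Hölder bound $(\int_{B}V^{q})^{1/q}\leq \llbracket V\rrbracket_{q}\meas{B}^{-1/q'}\int_{B}V$ yields $\int_{A_{B}}V\leq \llbracket V\rrbracket_{q}(\meas{A_{B}}/\meas{B})^{1/q'}\int_{B}V$. Comparing the two estimates forces $\meas{A_{B}}\geq c_{0}\meas{B}$ with $c_{0}=(2\llbracket V\rrbracket_{q})^{-q'}$.

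Choosing $B=B(x_{0},R)$ with $R>r_{0}$ and setting $t_{R}:=\tfrac{1}{2}\dashint_{B(x_{0},R)}V\gtrsim R^{-n/q}$ combines the previous two steps into
\begin{equation*}
\meas{\{V>t_{R}\}}\geq \meas{A_{B(x_{0},R)}}\gtrsim R^{n}\eqsim t_{R}^{-q}.
\end{equation*}
A straightforward change of parameter $t\eqsim R^{-n/q}$ upgrades this to $\meas{\{V>t\}}\gtrsim t^{-q}$ for all $t\in(0,t_{0})$ with a suitable $t_{0}>0$. The layer cake formula then concludes the proof:
\begin{equation*}
\norm{V}_{\El{p}}^{p}=p\int_{0}^{\infty}t^{p-1}\meas{\{V>t\}}\,\dd t\geq c'\int_{0}^{t_{0}}t^{p-q-1}\,\dd t=\infty,
\end{equation*}
because $p-q-1<-1$ whenever $p<q$. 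The main obstacle is really the second step, extracting quantitative spreading of $V$ from the reverse Hölder inequality; once this is in hand, the rest is elementary bookkeeping.
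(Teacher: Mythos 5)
Your proof is correct in substance but takes a genuinely different route from the paper. The paper applies Lemma~\ref{self-improvement property of reverse holder weights}(iii) to conclude $V^{p/2}\in\textup{RH}^{2/p}$ (in fact $V^{p}\in\textup{RH}^{1/p}$) when $p<1$, and plain Hölder when $p\geq1$, to bound $\dashint_{B}V$ by $(\dashint_{B}V^{p})^{1/p}$ directly on a single large ball; your approach instead isolates the quantitative spreading property of reverse Hölder weights and then transfers it through the layer cake formula, producing a genuine distributional estimate. Both give the same conclusion; yours reveals more structure (a power-law tail on the distribution function), while the paper's is shorter and sidesteps the parametrization issues below.

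One step in your write-up is looser than it should be: the "straightforward change of parameter $t\eqsim R^{-n/q}$." You only have the one-sided bound $t_{R}\gtrsim R^{-n/q}$, and you have no upper bound on $\dashint_{B(x_{0},R)}V$, so the family $\{t_{R}\}_{R>r_{0}}$ need not sweep out an interval $(0,t_{0})$, nor need $t_{R}\to0$. The fix is a short dichotomy. If $\liminf_{R\to\infty}t_{R}=0$, then by continuity of $R\mapsto\dashint_{B(x_{0},R)}V$ and the intermediate value theorem every small $t$ is realized as some $t_{R}$, and then $t=t_{R}\gtrsim R^{-n/q}$ gives $R^{n}\gtrsim t^{-q}$, hence $\meas{\{V>t\}}\geq\meas{A_{B(x_{0},R)}}\gtrsim R^{n}\gtrsim t^{-q}$; the layer cake then diverges as you wrote. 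If instead $\liminf_{R\to\infty}t_{R}=:\delta>0$, pick $R_{1}$ so that $t_{R}\geq\delta/2$ for $R\geq R_{1}$; then $A_{B(x_{0},R)}\subseteq\{V>\delta/2\}$ for such $R$, and $\meas{\{V>\delta/2\}}\geq\meas{A_{B(x_{0},R)}}\gtrsim R^{n}\to\infty$, so $\norm{V}_{\El{p}}^{p}\geq(\delta/2)^{p}\meas{\{V>\delta/2\}}=\infty$. With this case distinction in place the argument is complete.
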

\begin{proof}
    Since $V\not\equiv 0$, there are $x_0\in\Rn$ and $r_{0}>0$ such that $\int_{B(x_0,r_{0})}V>0$. Therefore, for $R>r_{0}$, item (iv) from Lemma~\ref{self-improvement property of reverse holder weights} gives a $c>0$ such that
    \begin{equation*}
        \frac{1}{R^{n-2}}\int_{B(x_0,R)}V\geq c^{-1}\left(\frac{r_{0}}{R}\right)^{\frac{n}{q}-2}\frac{1}{r_{0}^{n-2}}\int_{B(x_0,r_{0})}V =c_{0}R^{2-\frac{n}{q}},
    \end{equation*}
    where $c_{0}:=c^{-1}r_{0}^{\frac{n}{q}-n}\displaystyle\int_{B(x_0,r_0)}V$. If $p\geq 1$, then we obtain
    \begin{equation*}
        c_{0}R^{-\frac{n}{q}}\leq \frac{1}{R^{n}}\int_{B(x_0,R)}V \lesssim \left(\dashint_{B(x_0,R)}V^p\right)^{\frac{1}{p}}.
    \end{equation*}
If $p\in(0,1)$, then item (iii) of Lemma~\ref{self-improvement property of reverse holder weights} shows that $V^{p}\in\textup{RH}^{1/p}(\Rn)$, hence
    \begin{equation*}
        c_{0}R^{-\frac{n}{q}}\lesssim\dashint_{B(x_0,R)}V\lesssim \left(\dashint_{B(x_0,R)}V^{p}\right)^{1/p}.
    \end{equation*}
    In either case, this proves that there are $x_{0}\in\Rn$, $C>0$ and $r_{0}>0$ such that
    \begin{equation*}
        R^{n\left(\frac{1}{p}-\frac{1}{q}\right)}\leq C\norm{V}_{\El{p}\left(B(x_0,R)\right)}
    \end{equation*}
    for all $R>r_{0}$. The conclusion follows by letting $R\to\infty$.
\end{proof}
 
Now suppose that $n\geq 3$, $q\geq \frac{n}{2}$ and $V\in\textup{RH}^{q}(\Rn)$. Shen \cite{Shen_Schrodinger} introduced the \emph{critical radius function} ${\rho(\cdot,V) :\Rn\to[0,\infty]}$ defined by 
\begin{equation}\label{definition of Shen's critical radius function}
    \rho(x,V):=\sup\set{r>0 : r^{2}\dashint_{B(x,r)}V(y)\dd y \leq 1}
\end{equation}
for all $x\in\Rn$. It follows from items (i) and (iv) of Lemma~\ref{self-improvement property of reverse holder weights} that $\rho(\cdot,V)$ is well-defined since the set defining it is not empty. Moreover, we have $0<\rho(x,V)<\infty$ for all $x\in\Rn$ when $V\not\equiv 0$. It is customary to denote $m(x,V):=\rho(x,V)^{-1}$ for all $x\in\Rn$. When it is clear from the context, we shall abbreviate $\rho(x):=\rho(x,V)$, and similarly for $m(x,V)$. We shall often use the elementary properties of $\rho(\cdot ,V)$ listed in \cite[Lemma 1.4]{Shen_Schrodinger}. Important for us is the “slowly-varying" property, whereby for every $C>0$, whenever $|y-x|\leq C\rho(x)$ it holds that
\begin{equation}\label{comparability property for critical radius function}
\rho(x)\eqsim \rho(y),
\end{equation}
where the implicit constant depends on $C$.
Also, applying item (iv) of Lemma~\ref{self-improvement property of reverse holder weights} with $R=\rho(x)$ and setting $\delta= 2-\frac{n}{q}$ shows that 
\begin{equation}\label{Shen eqn control of integral of potential by the critical radius function}
r^{2}\dashint_{B(x,r)}V\lesssim \left(\frac{r}{\rho(x)}\right)^{\delta}
\end{equation}
for all $x\in\Rn$ and $r\in (0,\rho(x)]$.

The following result is stated without proof in \cite[Estimate (0.8)]{Shen_1996}, and in \cite[Section 0]{Shen_Schrodinger} for non-negative polynomials $P$ and $p=1$. We provide a proof for completeness. 
\begin{prop}\label{polynomials are RH^{infty}}
    Let $n\geq 1$ and $p\in(0,\infty)$. If $P:\Rn\to\C$ is a polynomial of degree $d\geq 0$, then $\abs{P}^{p}\in\textup{RH}^{\infty}(\Rn)$ and 
    \begin{equation}\label{equivalence critical radius function modulus polynomial}
        m(x_{0},\abs{P}^{p})\eqsim \sum_{\abs{\alpha}\leq d}\abs{\partial^{\alpha}P(x_{0})}^{\frac{p}{\abs{\alpha}p+2}}
    \end{equation}
    for all $x_{0}\in\Rn$, where the implicit constants only depend on $d$, $n$ and $p$.
\end{prop}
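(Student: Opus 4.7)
The plan is to reduce, via translation and scaling, to a single equivalence on the unit ball $B(0,1)$, exploiting the finite-dimensionality of the space $\mathcal{P}_d$ of complex polynomials of degree at most $d$ in $n$ variables. All (quasi)norms on $\mathcal{P}_d$ restricted to $B(0,1)$ are equivalent; in particular, for $Q(z)=\sum_{\abs{\alpha}\leq d} c_\alpha z^\alpha$,
\[
\sup_{B(0,1)}\abs{Q}\;\eqsim\;\left(\dashint_{B(0,1)}\abs{Q}^{p}\right)^{1/p}\;\eqsim\;\sum_{\abs{\alpha}\leq d}\abs{c_\alpha}
\]
with constants depending only on $d$, $n$ and $p$. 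I would apply this to $Q(z):=P(x_0+rz)$, whose coefficients in the monomial basis are $\tfrac{1}{\alpha!}\partial^{\alpha}P(x_0)\,r^{\abs{\alpha}}$, and then raise to the $p$-th power to obtain
\[
\sup_{B(x_0,r)}\abs{P}^{p}\;\eqsim\;\dashint_{B(x_0,r)}\abs{P}^{p}\;\eqsim\;\sum_{\abs{\alpha}\leq d}\abs{\partial^{\alpha}P(x_0)}^{p}\,r^{\abs{\alpha}p}
\]
uniformly in $x_0\in\Rn$ and $r>0$. The first equivalence immediately gives $\abs{P}^{p}\in\textup{RH}^{\infty}(\Rn)$.

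For \eqref{equivalence critical radius function modulus polynomial} I may assume $P\not\equiv 0$ (otherwise both sides vanish). Multiplying the previous display by $r^{2}$, the function $f(r):=r^{2}\dashint_{B(x_0,r)}\abs{P}^{p}$ is continuous in $r>0$ and satisfies $f(r)\eqsim \varphi(r):=\sum_{\abs{\alpha}\leq d}\abs{\partial^{\alpha}P(x_0)}^{p}\,r^{\abs{\alpha}p+2}$. By Taylor's formula, $P\not\equiv 0$ forces some $\partial^{\alpha}P(x_0)$ to be nonzero, so $\varphi$ is strictly increasing with $\varphi(0^{+})=0$ and $\varphi(r)\to\infty$ as $r\to\infty$; these properties pass to $f$ up to constants. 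Since $f$ is continuous, the preimage $\{r>0:f(r)\leq 1\}$ is closed in $(0,\infty)$, bounded and has nonempty interior, so $\rho:=\rho(x_0,\abs{P}^{p})$ lies in $(0,\infty)$, is attained in the set, and satisfies $f(r)>1$ for all $r>\rho$; continuity then forces $f(\rho)=1$. Setting $b_\alpha:=\abs{\partial^{\alpha}P(x_0)}^{p/(\abs{\alpha}p+2)}$, this rewrites as
\[
\sum_{\abs{\alpha}\leq d}(b_\alpha\rho)^{\abs{\alpha}p+2}\;\eqsim\;1.
\]

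The final step will be to deduce that $\rho\sum_\alpha b_\alpha\eqsim 1$. Since each exponent $\abs{\alpha}p+2$ lies in $[2,dp+2]$ and each summand of the previous display is uniformly bounded, taking $(\abs{\alpha}p+2)$-th roots gives $b_\alpha\rho\lesssim 1$ uniformly in $\alpha$, and hence $c:=\max_{\alpha}b_\alpha\rho$ is bounded by a constant depending only on $d$, $n$, $p$. When $c\leq 1$, the inequality $(b_\alpha\rho)^{\abs{\alpha}p+2}\leq(b_\alpha\rho)^{2}\leq c^{2}$ combined with the normalisation above yields $1\lesssim Nc^{2}$, where $N:=\#\{\alpha:\abs{\alpha}\leq d\}$ depends only on $d$ and $n$; together with the trivial case $c>1$ this produces $c\gtrsim 1$. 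Since $\max_{\alpha}b_\alpha\eqsim\sum_\alpha b_\alpha$ by the same counting argument,
\[
m(x_0,\abs{P}^{p})=\rho^{-1}\;\eqsim\;\sum_{\abs{\alpha}\leq d}\abs{\partial^{\alpha}P(x_0)}^{p/(\abs{\alpha}p+2)},
\]
as required. The main technical point will be the bookkeeping of constants in the polynomial-norm equivalence on $B(0,1)$ so as to guarantee uniformity in $x_0$ and $r$; the combinatorial last step handling the mismatched exponents $\abs{\alpha}p+2$ is then routine.
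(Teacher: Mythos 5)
Your proposal is correct and follows essentially the same route as the paper: equivalence of norms on the finite-dimensional space of polynomials of degree at most $d$, translation and scaling to obtain the uniform two-sided bound $\dashint_{B(x_0,r)}\abs{P}^{p}\eqsim\sum_{\abs{\alpha}\leq d}\abs{\partial^{\alpha}P(x_0)}^{p}r^{\abs{\alpha}p}$, which yields both the $\textup{RH}^{\infty}$ membership and, after comparison with the defining supremum for $\rho(x_0,\abs{P}^{p})$, the equivalence \eqref{equivalence critical radius function modulus polynomial}. Your final combinatorial step reconciling the mismatched exponents $\abs{\alpha}p+2$ is carried out in more detail than in the paper (which simply asserts $\rho\eqsim\min_{\alpha}\abs{a_{\alpha}}^{-p/(\abs{\alpha}p+2)}$), but it is the same argument.
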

\begin{proof}
    Let $d\geq 0$ be a fixed integer, and let $\textup{X}$ be the finite dimensional complex vector space of all polynomials $Q:\Rn\to \C$ of degree at most $d$. Observe that the mappings 
    \begin{equation*}
         Q\mapsto 
        \begin{cases}
            \dashint_{B(0,1)}\abs{Q}^{p}, & \text{ if }p\in(0,1);\\
            \left(\dashint_{B(0,1)}\abs{Q}^{p}\right)^{1/p}, & \text{ if }p\in [1,\infty),
        \end{cases}
    \end{equation*}
    and 
    \begin{equation*}
        Q:=\sum_{\abs{\alpha}\leq d}a_{\alpha}x^{\alpha} \mapsto 
        \begin{cases}
            \sum_{\abs{\alpha}\leq d}\abs{a_{\alpha}}^{p},& \text{ if }p\in(0,1);\\
            \left(\sum_{\abs{\alpha}\leq d}\abs{a_{\alpha}}\right)^{1/p}, & \text{ if } p\in [1,\infty),
        \end{cases}
    \end{equation*}
both define norms on $\textup{X}$, and are therefore equivalent. Combining this with a change of variable, this implies that there exists $C\geq 1$ (depending only on $d$, $n$ and $p$) such that for any polynomial $Q(x):=\sum_{\abs{\alpha}\leq d}a_{\alpha}x^{\alpha}$ it holds that
    \begin{equation}\label{two sided estimate polynomial and coefficients}
    C^{-1}\sum_{\abs{\alpha}\leq d}\abs{a_{\alpha}}^{p}r^{\abs{\alpha}p}\leq \dashint_{B(0,r)}\abs{Q}^{p}\leq C\sum_{\abs{\alpha}\leq d}\abs{a_{\alpha}}^{p}r^{\abs{\alpha}p}
    \end{equation}
    for all $r>0$, and therefore $\norm{\abs{Q}^{p}}_{\El{\infty}(B(0,r))}\lesssim \sum_{\abs{\alpha}\leq d}\abs{a_{\alpha}}^{p}r^{\abs{\alpha}p}\lesssim\dashint_{B(0,r)}\abs{Q}^{p}$.
    
    Now, given $x_{0}\in\Rn$ and a polynomial $P:\Rn\to\C$ of degree $d$, we note that $Q(x):=P(x_{0}+x)$, $x\in\Rn$, defines a polynomial of degree $d$, and therefore 
    \begin{equation*}
        \norm{\abs{P}^{p}}_{\El{\infty}(B(x_{0},r))}=\norm{\abs{Q}^{p}}_{\El{\infty}(B(0,r))}\lesssim \dashint_{B(0,r)}\abs{Q}^{p}=\dashint_{B(x_{0},r)}\abs{P}^{p},
    \end{equation*}
    which means that $\abs{P}^{p}\in\textup{RH}^{\infty}(\Rn)$.
    
    For a polynomial $Q(x)=\sum_{\abs{\alpha}\leq d}a_{\alpha}x^{\alpha}$, we let $M_{p}(x_{0},Q):=\sum_{\abs{\alpha}\leq d}\abs{\partial^{\alpha}Q(x_{0})}^{\frac{p}{\abs{\alpha}p+2}}$ for all $x_{0}\in\Rn$, and note that 
    \begin{equation}\label{equivalence M function coefficients polynomial}
    M_{p}(0,Q)=\sum_{\abs{\alpha}\leq d}\abs{\partial^{\alpha}Q(0)}^{\frac{p}{\abs{\alpha}p+2}}=\sum_{\abs{\alpha}\leq d}(\alpha! \abs{a_{\alpha}})^{\frac{p}{\abs{\alpha}p+2}}\eqsim \sum_{\abs{\alpha}\leq d}\abs{a_{\alpha}}^{\frac{p}{\abs{\alpha}p+2}}.
    \end{equation}
    Moreover, observe that the definition~\eqref{definition of Shen's critical radius function} and the two-sided estimate \eqref{two sided estimate polynomial and coefficients} show that
    \begin{equation*}
        \rho(0,\abs{Q}^{p})=\sup\set{r>0 : \sum_{\abs{\alpha}\leq d}\abs{a_{\alpha}}^{p}r^{\abs{\alpha}p+2}\leq C},
    \end{equation*}
    and therefore $\rho(0,\abs{Q}^p)\eqsim \min_{\abs{\alpha}\leq d}\abs{a_{\alpha}}^{-\frac{p}{\abs{\alpha}p+2}}$ (with the understanding that $0^{-\frac{p}{\abs{\alpha}p+2}}=\infty$). Combined with \eqref{equivalence M function coefficients polynomial}, this shows that $m(0,\abs{Q}^{p})\eqsim \max_{\abs{\alpha}\leq d}\abs{a_{\alpha}}^{\frac{p}{\abs{\alpha}p+2}}\eqsim M_{p}(0,Q)$. As above, for any given $x_{0}\in\Rn$, applying this with $Q(x):=P(x_{0}+x)$ proves \eqref{equivalence critical radius function modulus polynomial}.
\end{proof}
\subsection{Sobolev spaces adapted to singular potentials}\label{section on definition of Sobolev spaces adapted to singular potentials}
We consider a class of function spaces obtained by adapting the usual Sobolev spaces $\textup{W}^{1,p}\left(\Omega\right)$ for $p\in\left[1,\infty\right)$ and $n\geq 1$ to account for non-negative potentials $V\in \El{1}_{\loc}(\Rn)$ 
on open sets $\Omega\subseteq \Rn$. If $f\in\El{1}_{\textup{loc}}(\Omega)$ has a weak (distributional) derivative $\nabla_{x}f=\left(\partial_{1}f,\ldots,\partial_{n}f\right)\in\El{1}_{\textup{loc}}(\Omega;\C^{n})$, then $\nabla_{\mu}f:\Omega\to\C^{n+1}$ is the measurable function defined for all $x\in\Omega$ by
\begin{equation*}
    \nabla_{\mu}f(x)=\begin{bmatrix}
        \nabla_{x}f(x)\\
        V(x)^{1/2}f(x)
    \end{bmatrix}.
\end{equation*}
The adapted Sobolev space $\mathcal{V}^{1,p}\left(\Omega\right)$ is  defined to be the set 
\begin{equation}\label{definition of inhomogeneous V adapted sobolev spaces}
\mathcal{V}^{1,p}\left(\Omega\right):=\set{f\in\El{p}(\Omega) : \nabla_{\mu}f\in \El{p}\left(\Omega;\C^{n+1}\right)}
\end{equation}
with the norm
$\norm{f}_{\mathcal{V}^{1,p}(\Omega)}:=(\norm{f}_{\El{p}(\Omega)}^{p} + \norm{\nabla_{\mu}f}_{\El{p}(\Omega,\C^{n+1})}^{p})^{1/p}$
for all $f\in \mathcal{V}^{1,p}(\Omega)$. 
It follows from \cite[Lemma 2.1]{MorrisTurner} that $\mathcal{V}^{1,p}(\Omega)$ is a Banach space and that $\mathcal{V}^{1,2}(\Omega)$ is a Hilbert space with the inner-product 
$\left\langle f,g\right\rangle_{\mathcal{V}^{1,2}(\Omega)}:=\int_{\Omega}(f\overline{g} + \nabla_{\mu}f\cdot\overline{\nabla_{\mu}g})$
for all $f,g\in\mathcal{V}^{1,2}(\Omega)$. We shall also consider the local variants $\mathcal{V}^{1,p}_{\loc}(\Omega)$ defined as the space of functions $f\in\textup{W}^{1,p}_{\loc}(\Omega)$ for which $V^{1/2}f\in\El{p}_{\loc}(\Omega)$  equipped with the natural locally convex topology. We also define the homogeneous adapted Sobolev space $\dot{\mathcal{V}}^{1,p}(\Rn)$ as the set 
\begin{equation}\label{definition of homogeneous V adapted Sobolev spaces}
    \dot{\mathcal{V}}^{1,p}(\Rn):=\set{f\in\El{1}_{\text{loc}}(\Rn) : \nabla_{\mu}f\in\El{p}\left(\Rn; \C^{n+1}\right)}
\end{equation}
equipped with the norm
$\norm{f}_{\dot{\mathcal{V}}^{1,p}(\R^n)}:=\norm{\nabla_{\mu}f}_{\El{p}(\Rn;\C^{n+1})}$
for all $f\in\dot{\mathcal{V}}^{1,p}(\Rn)$ when $V\not\equiv 0$. 

\begin{rem}\label{remark about different definition of V^{11,2} in Morris Turner}
    We note that when $n\geq3$, the space $\dot{\mathcal{V}}^{1,2}(\Rn)$ given by \eqref{definition of homogeneous V adapted Sobolev spaces} may be larger than that given by (2.5) in \cite{MorrisTurner}, as here we require $f$ to be in $\El{1}_{\loc}(\R^n)$ instead of $\El{2^*}(\R^n)$. Proposition~\ref{embedding homogeneous V spaces in L ^p* for p<n} below shows that these definitions coincide when  $V\in\textup{RH}^{q}(\Rn)$ for $q\geq \frac{n}{2}$ and $n\geq 3$, which is the only case when we will need to use these spaces. The definition of $\dot{\mathcal{V}}^{1,p}(\Rn)$ given by \eqref{definition of homogeneous V adapted Sobolev spaces} is chosen here because it is more suited to our purposes when $p\neq 2$.
\end{rem}

The following properties of $\dot{\mathcal{V}}^{1,p}(\Rn)$ are important considerations given the remark above.

\begin{lem}\label{completeness lemma for homogeneous V adapted sob spaces}
    Let $n\geq1$ and $V\in\El{1}_{\loc}(\Rn)$ be non-negative with $V\not\equiv 0$. If $p\in[1,\infty)$, then  $\dot{\mathcal{V}}^{1,p}(\Rn)$ is a Banach space. In addition, $\dot{\mathcal{V}}^{1,2}(\Rn)$ is a Hilbert space with the inner-product 
$    \langle f,g\rangle_{\dot{\mathcal{V}}^{1,2}(\Rn)}:=\langle \nabla_{\mu}f,\nabla_{\mu}g\rangle_{\El{2}(\Rn;\C^{n+1})}$
for all $f,g\in\dot{\mathcal{V}}^{1,2}(\Rn)$.
\end{lem}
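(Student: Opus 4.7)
The plan is to verify first that $\|\cdot\|_{\dot{\mathcal{V}}^{1,p}(\Rn)}$ is a genuine norm and then to establish completeness by a localisation argument that combines a Poincar\'e-type inequality with the control on $V^{1/2}f$ coming from the hypothesis $V\not\equiv 0$. The Hilbert space statement for $p=2$ follows immediately, since the sesquilinear form $\langle\nabla_{\mu}f,\nabla_{\mu}g\rangle_{\El{2}(\Rn;\C^{n+1})}$ manifestly induces the norm $\|\cdot\|_{\dot{\mathcal{V}}^{1,2}(\Rn)}$ and completeness is inherited from the Banach space case.

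For the norm axiom, if $\|f\|_{\dot{\mathcal{V}}^{1,p}(\Rn)}=0$ then $\nabla_{x}f=0$ and $V^{1/2}f=0$ a.e., so $f$ is a.e.~equal to a constant which must vanish on $\{V>0\}$; the latter set has positive Lebesgue measure because $V\not\equiv 0$, hence $f=0$. For completeness, let $\{f_k\}$ be Cauchy in $\dot{\mathcal{V}}^{1,p}(\Rn)$; then there exist $G\in\El{p}(\Rn;\C^{n})$ and $h\in\El{p}(\Rn)$ with $\nabla_{x}f_k\to G$ and $V^{1/2}f_k\to h$ in $\El{p}$. The idea is to choose an anchor set that pairs with the $V^{1/2}$ term: since $V\not\equiv 0$, we can fix $\eps>0$ and a ball $B_0\subset\Rn$ such that $E_0:=\{V\geq\eps\}\cap B_0$ has positive finite Lebesgue measure. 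H\"older's inequality on $E_0$ then gives $|(f_k)_{E_0}-(f_\ell)_{E_0}|\lesssim\|V^{1/2}(f_k-f_\ell)\|_{\El{p}(\Rn)}$, so $\{(f_k)_{E_0}\}$ is Cauchy in $\C$. For any ball $B\supseteq E_0$, the standard Poincar\'e inequality combined with a bound on $|(f_k)_B-(f_k)_{E_0}|$ controls $\|(f_k-f_\ell)-((f_k)_{E_0}-(f_\ell)_{E_0})\|_{\El{p}(B)}$ by $\|\nabla_{x}(f_k-f_\ell)\|_{\El{p}(B)}$, up to a constant depending on $B$ and $E_0$. Combining these two estimates produces a candidate limit $f\in\El{p}_{\loc}(\Rn)\subseteq\El{1}_{\loc}(\Rn)$ with $f_k\to f$ in $\El{p}_{\loc}(\Rn)$.

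It remains to identify $\nabla_{x}f=G$ and $V^{1/2}f=h$. The first follows because $\El{1}_{\loc}$-convergence of $f_k$ to $f$ entails distributional convergence of $\nabla_{x}f_k$ to $\nabla_{x}f$, whereas $\nabla_{x}f_k\to G$ in $\El{p}$. The second is obtained by extracting a subsequence along which $f_k\to f$ and $V^{1/2}f_k\to h$ pointwise almost everywhere. The main obstacle in this scheme, and the reason for the hypothesis $V\not\equiv 0$, is the selection of the normalisation: ordinary ball averages $(f_k)_B$ do not interact usefully with $V^{1/2}f_k$, and only averages over a set where $V$ is bounded below allow the latter term to pin down the constants; without the hypothesis $V\not\equiv 0$, one would only obtain a Banach space modulo constants.
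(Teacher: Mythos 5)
Your proof is correct, but the construction of the candidate limit takes a different route from the paper's. The paper anchors on an arbitrary cube $Q_0$ and shows, via Poincar\'e, that the centred functions $g_k:=f_k-(f_k)_{Q_0}$ are Cauchy in $\El{1}_{\loc}(\Rn)$ with limit $g$; it then recovers the additive constant a posteriori, by extracting subsequences along which both $V^{1/2}f_{k}$ and $g_{k}$ converge pointwise a.e.\ and observing that $V^{1/2}(x)\,(f_k)_{Q_0}=V^{1/2}f_k(x)-V^{1/2}g_k(x)$ must converge on the positive-measure set $\{V>0\}$. Your argument instead anchors directly on a set $E_0=\{V\geq\eps\}\cap B_0$ of positive finite measure, uses $V\geq\eps$ there together with H\"older to get the quantitative Cauchy bound $\abs{(f_k)_{E_0}-(f_\ell)_{E_0}}\lesssim_{\eps,E_0}\norm{V^{1/2}(f_k-f_\ell)}_{\El{p}}$, and only then combines with Poincar\'e to conclude Cauchyness of $f_k$ itself in $\El{p}_{\loc}(\Rn)$; the identification $V^{1/2}f=h$ is then handled by a single a.e.\ subsequence extraction. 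Both are valid; your version is somewhat more direct in that it pins down the normalisation constant up front via an explicit estimate rather than reconstructing it from pointwise limits afterward, at the modest cost of having to introduce the auxiliary set $E_0$. The identification of the Hilbert-space structure for $p=2$ is immediate in both versions.
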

\begin{proof}
    It suffices to prove completeness with respect to the norm $\norm{\cdot}_{\dot{\mathcal{V}}^{1,p}(\Rn)}.$ We begin with a general observation. Let $Q_0\subset\Rn$ be an arbitrary cube. Then, if $f\in\El{1}_{\loc}(\Rn)$ has a weak gradient $\nabla_{x}f\in\El{p}(\Rn;\C^{n})$, the Poincaré inequality lets us estimate, for all larger cubes $Q\supseteq Q_0$,
    \begin{align*}
        \int_{Q}\abs{f-(f)_{Q_0}}&\leq \left(1+\frac{\meas{Q}}{\meas{Q_0}}\right)\int_{Q}\abs{f-(f)_Q}
        \leq c_{n}\left(1+\frac{\meas{Q}}{\meas{Q_0}}\right)l(Q)\int_{Q}\abs{\nabla_{x}f}\\
        &\leq c_{n}\left(1+\frac{\meas{Q}}{\meas{Q_0}}\right)l(Q)\meas{Q}^{1-\frac{1}{p}}\norm{\nabla_{x}f}_{\El{p}}.
    \end{align*}
    We can now proceed with the proof. Let  $(f_k)_{k\geq 1}$ be a Cauchy sequence in $\left(\dot{\mathcal{V}}^{1,p},\norm{\cdot}_{\dot{\mathcal{V}}^{1,p}}\right)$. Since $(\nabla_{x}f_{k})_{k\geq 1}$ forms a Cauchy sequence in $\El{p}$, the observation above implies that the sequence $(g_k)_{k\geq 1}:=\left(f_{k}-\left(f_{k}\right)_{Q_0}\right)_{k\geq 1}$ is Cauchy in $\El{1}_{\loc}(\Rn)$. Let $g\in\El{1}_{\loc}(\Rn)$ such that $g_{k}\to g$ in $\El{1}_{\loc}(\Rn)$ as $k\to\infty$. Since the sequence $(\nabla_{x}f_k)_{k\geq 1}=(\nabla_{x}g_k)_{k\geq 1}$ is Cauchy in $\El{p}$, there is $F\in\El{p}(\Rn;\C^{n})$ such that $\nabla_{x}f_k=\nabla_{x}g_k\to F$
    in $\El{p}$ as $k\to\infty$. A standard argument (see, e.g., \cite[Lemma 2.1]{MorrisTurner}) now implies that $g$ has a weak gradient in $\El{p}$, given by $\nabla_{x}g=F$. Moreover, since the sequence $\left(V^{1/2}f_k\right)_{k\geq 1}$ is Cauchy in $\El{p}$, there is a function $G\in\El{p}$ such that $V^{1/2}f_k\to G$ in $\El{p}$, as $k\to\infty$. In particular, there is a subsequence $(k_l)_{l\geq 1}\subset\N$ such that $V(x)^{1/2}f_{k_l}(x)\to G(x)$ as $l\to\infty$, for almost every $x\in\Rn$. Then, we can extract a further subsequence $(k_{l_m})_{m\geq 1}\subset\N$ such that $g_{k_{l_m}}(x)\to g(x)$ as $m\to\infty$ for almost every $x\in\Rn$. As a consequence, we obtain that
    \begin{equation*}
        V(x)^{1/2}\left(f_{k_{l_m}}\right)_{Q_0}=V(x)^{1/2}f_{k_{l_m}}(x)-V(x)^{1/2}g_{k_{l_m}}(x)\to G(x)-V(x)^{1/2}g(x)
    \end{equation*}
    as $m\to\infty$, for almost every $x\in\Rn$. Since $V\neq 0$ on a set of positive measure, this implies that the subsequence of averages $\left(\left(f_{k_{l_m}}\right)_{Q_0}\right)_{m\geq 1}$ converges as $m\to\infty$, towards a constant that we shall denote $c$. Consequently, the subsequence $\left(f_{k_{l_{m}}}\right)_{m\geq 1}\subset \El{1}_{\loc}(\Rn)$ converges towards $\tilde{g}:=g+c$ in $\El{1}_{\loc}(\Rn)$, and
    \begin{equation*}
        V(x)^{1/2}f_{k_{l_{m}}}(x)=V(x)^{1/2}\left(g_{k_{l_{m}}}(x) + \left(f_{k_{l_m}}\right)_{Q_0}    \right)\to V(x)^{1/2}\tilde{g}(x)
    \end{equation*}
    as $m\to\infty$, for almost every $x\in\Rn$. Hence, $V^{1/2}\tilde{g}=G\in\El{p}$, and $V^{1/2}f_{k}\to V^{1/2}\tilde{g}$ in $\El{p}$, as $k\to\infty$. Finally, we have $\nabla_{x}\tilde{g}=\nabla_{x}g=F\in\El{p}(\Rn;\C^{n})$, and $\nabla_{x}f_{k}\to \nabla_{x}\tilde{g}$ in $\El{p}$ as $k\to\infty$. This proves that $\tilde{g}\in\dot{\mathcal{V}}^{1,p}(\Rn)$, and that $f_{k}\to \tilde{g}$ in $\dot{\mathcal{V}}^{1,p}$, hence $\dot{\mathcal{V}}^{1,p}$ is complete.
\end{proof}

The remainder of this section is dedicated to proving a density result for the homogeneous spaces $\dot{\mathcal{V}}^{1,p}(\Rn)$ which is analogous to that recorded below for $\mathcal{V}^{1,p}(\Rn)$ from \cite[Lemma 2.2]{MorrisTurner}.

\begin{lem}\label{density lemma adapted sobolev spaces Morris Turner}
    If $n\geq 1$, $p\in[1,\infty)$ and $V^{p/2}\in\El{1}_{\loc}(\Rn)$, then $\smcp$ is dense in $\mathcal{V}^{1,p}(\Rn)$. Moreover, if $p_0,p_1\in [1,p]$, then $\smcp$ is dense in $\mathcal{V}^{1,p_0}(\Rn)\cap \mathcal{V}^{1,p_1}(\Rn)$.
\end{lem}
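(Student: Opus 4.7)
The plan is to adapt the standard truncate-and-mollify scheme to the $V$-adapted setting, with the hypothesis $V^{p/2}\in\El{1}_{\loc}(\Rn)$ serving to control the weighted component $V^{1/2}f$ during mollification. First I would reduce, via two successive truncations, to the case in which $f$ is bounded and compactly supported. For the spatial truncation, fix a radial cutoff $\chi\in\smcp$ with $\chi=1$ on $B(0,1)$ and $\chi=0$ outside $B(0,2)$, and set $\chi_{R}(x):=\chi(x/R)$. Then $\chi_{R}f\in\mathcal{V}^{1,p}(\Rn)$ and $\chi_{R}f\to f$ in $\mathcal{V}^{1,p}(\Rn)$ as $R\to\infty$: the convergences $\chi_{R}f\to f$ and $V^{1/2}\chi_{R}f\to V^{1/2}f$ in $\Ell{p}$ follow by dominated convergence with dominants $\abs{f}$ and $V^{1/2}\abs{f}$, while for the gradient the identity $\nabla(\chi_{R}f)=\chi_{R}\nabla f+f\nabla\chi_{R}$ yields convergence of the first term by dominated convergence and control of the commutator via $\|f\nabla\chi_{R}\|_{\Ell{p}}\lesssim R^{-1}\|f\|_{\Ell{p}}\to 0$. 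The Lipschitz truncation in value, defined as $T_{N}(f):=f\min(1,N/\abs{f})$ where $f\neq 0$ and $0$ elsewhere, then maps $\mathcal{V}^{1,p}(\Rn)$ into itself with $\abs{T_{N}(f)}\leq\abs{f}$ and $\abs{\nabla T_{N}(f)}\leq\abs{\nabla f}$ almost everywhere by the standard chain rule for Sobolev functions, and $T_{N}(f)\to f$ in $\mathcal{V}^{1,p}(\Rn)$ as $N\to\infty$ by dominated convergence.

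Next, given $f\in\mathcal{V}^{1,p}(\Rn)$ bounded with compact support $K$, I mollify: let $\varphi\in\smcp$ be a standard mollifier supported in $B(0,1)$ with $\int\varphi=1$, and set $\varphi_{\eps}(x):=\eps^{-n}\varphi(x/\eps)$. Then $\varphi_{\eps}*f\in\smcp$, with supports contained in a common bounded neighbourhood $K'$ of $K$ for every $\eps<1$. Friedrichs's theorem gives $\varphi_{\eps}*f\to f$ in $\Ell{p}$ and $\nabla(\varphi_{\eps}*f)=\varphi_{\eps}*\nabla f\to\nabla f$ in $\Ell{p}$. For the weighted component, the pointwise bound $\abs{\varphi_{\eps}*f-f}\leq 2\|f\|_{\Ell{\infty}}\ind{K'}$ together with $\int_{K'}V^{p/2}<\infty$ yield the uniform majorant $V^{p/2}(2\|f\|_{\Ell{\infty}})^{p}\ind{K'}\in\El{1}(\Rn)$, so dominated convergence (using a.e. convergence at Lebesgue points) gives $V^{1/2}\varphi_{\eps}*f\to V^{1/2}f$ in $\Ell{p}$. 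Combining with the previous step, this produces a sequence in $\smcp$ that converges to $f$ in $\mathcal{V}^{1,p}(\Rn)$.

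For the second statement, I would apply the same approximating scheme to $f\in\mathcal{V}^{1,p_{0}}(\Rn)\cap\mathcal{V}^{1,p_{1}}(\Rn)$ and verify convergence simultaneously in each $\mathcal{V}^{1,p_{i}}$-norm. The pointwise bounds in the truncation steps are independent of the exponent, while dominated convergence applies with dominants $\abs{f}^{p_{i}}$ and $V^{p_{i}/2}\abs{f}^{p_{i}}\in\El{1}(\Rn)$. For the mollification step, the hypothesis $p_{0},p_{1}\leq p$ together with $V^{p/2}\in\El{1}_{\loc}(\Rn)$ imply $V^{p_{i}/2}\in\El{1}_{\loc}(\Rn)$ by Hölder's inequality on bounded sets, so the weighted majorant is integrable for each exponent.

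The main obstacle is the weighted term under mollification: multiplication by the possibly unbounded weight $V^{1/2}$ is not continuous on $\Ell{p}$, so the $\Ell{p}$-convergence $\varphi_{\eps}*f\to f$ does not directly transfer to $V^{1/2}\varphi_{\eps}*f\to V^{1/2}f$ in $\Ell{p}$. Resolving this is exactly what forces the truncation reducing to bounded $f$ and is where the hypothesis $V^{p/2}\in\El{1}_{\loc}(\Rn)$ is used, since it provides the locally integrable majorant making dominated convergence applicable on the compact set containing the supports.
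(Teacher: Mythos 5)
Your proposal is correct and follows the same truncate-and-mollify scheme that the cited reference (Morris--Turner, Lemma 2.2) uses; the paper itself states this lemma without proof, but its proof of the homogeneous analogue (Proposition~\ref{density lemma in homogeneous V adapted Sobolev spaces}) is explicitly modelled on that scheme and has the same structure — reduce to bounded compactly supported functions by two truncations, then mollify, with the local integrability of $V^{p/2}$ used exactly where you use it, namely to supply the $\El{1}$-dominant on a compact neighbourhood of the support during mollification. The only cosmetic differences are that you truncate in space before truncating in value (the reverse order appears in the paper's homogeneous argument, but both work here since $f\in\El{p}$ makes the cut-off commutator term trivial to control), and that a careful reader might want you to note that the complex-valued chain-rule bound $\abs{\nabla T_{N}(f)}\leq\abs{\nabla f}$ indeed holds because $T_{N}$ is $1$-Lipschitz on $\C$ (or simply reduce to real and imaginary parts, as the paper does in Proposition~\ref{density lemma in homogeneous V adapted Sobolev spaces}).
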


Such a density result for $\dot{\mathcal{V}}^{1,p}(\R^n)$ cannot hold under the mere assumption that $V^{p/2}\in\El{1}_{\loc}(\Rn)$. Indeed, if $n\geq 3$, $p\in[1,n)$, and we choose $V\not\equiv 0$ such that $V^{p/2}\in\Ell{1}$, then clearly $V^{p/2}\in\El{1}_{\loc}(\Rn)$ (but $V\not\in\textup{RH}^{\frac{n}{2}}$ by Lemma~\ref{non integrability property of reverse holder potentials}). Let $\textup{X}$ denote the closure of $C^{\infty}_{c}(\Rn)$ in $\dot{\mathcal{V}}^{1,p}(\Rn)$. It follows from the Sobolev embedding theorem and the fact that $V\not\equiv 0$ that $\textup{X}\subseteq\El{p*}$. However, $\ind{\Rn}\in\dot{\mathcal{V}}^{1,p}(\Rn)$, but $\ind{\Rn}\notin\Ell{p^{*}}$. This shows that $C^{\infty}_{c}(\Rn)$ is not dense in $\dot{\mathcal{V}}^{1,p}(\R^n)$.

The following result will allow us to prove in Proposition~\ref{density lemma in homogeneous V adapted Sobolev spaces} below that it does hold when $n\geq 3$, $q\geq \frac{n}{2}$ and $V\in\textup{RH}^{q}(\Rn)$, which is sufficient for our main results.
\begin{prop}\label{embedding homogeneous V spaces in L ^p* for p<n}
Suppose that either $n\geq 3$ and $q\geq \frac{n}{2}$, or $n=2$ and $q>1$. If $V\in\textup{RH}^{q}(\Rn)$, $V\!\not\equiv 0$ and $p\in[1,n)$, then $\dot{\mathcal{V}}^{1,p}(\Rn)\! \subseteq \El{p^{*}}\!(\Rn)$ and 
    $\norm{f}_{\El{p^{*}}\!}\! \lesssim \norm{\nabla_x f}_{\El{p}}$
   for all $f\in \dot{\mathcal{V}}^{1,p}(\Rn)$.
\end{prop}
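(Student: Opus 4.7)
My plan is to combine the classical Gagliardo--Nirenberg--Sobolev theorem with the reverse Hölder properties of $V$ collected in Lemma~\ref{self-improvement property of reverse holder weights} in order to pin down the additive constant arising from the standard Sobolev decomposition of $f$. Since $f \in \dot{\mathcal{V}}^{1,p}(\Rn) \subseteq \El{1}_{\loc}(\Rn)$ has $\nabla_x f \in \El{p}(\Rn;\C^n)$ with $p < n$, a classical result in Sobolev space theory produces a unique constant $c = c(f) \in \C$ such that $g := f - c \in \El{p^*}(\Rn)$ with $\norm{g}_{\El{p^*}} \leq C(n,p)\,\norm{\nabla_x f}_{\El{p}}$. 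The proposition will then follow once I show $c = 0$.

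Since $V \not\equiv 0$, I can fix a point $y_0 \in \Rn$ and a radius $r_0 > 0$ with $\int_{B(y_0,r_0)} V > 0$, and work with the balls $B_R := B(y_0,R)$ for $R > r_0$. Hölder's inequality applied to $g \in \El{p^*}$ gives $(g)_{B_R} \to 0$ as $R \to \infty$, so $(f)_{B_R} \to c$, and the task reduces to proving $(f)_{B_R} \to 0$. Splitting $V^{1/2} f = V^{1/2}(f - (f)_{B_R}) + (f)_{B_R}\,V^{1/2}$ and applying the triangle inequality pointwise, raising to the power $p$, and integrating over $B_R$, then bounding the fluctuation term by Hölder's inequality with conjugate exponents $n/p$ and $p^*/p$ combined with the Sobolev--Poincaré inequality on $B_R$, I expect to obtain
\[
|(f)_{B_R}|^p \int_{B_R} V^{p/2} \leq C\,\norm{V^{1/2} f}_{\El{p}}^p + C\,\norm{V}_{\El{n/2}(B_R)}^{p/2}\,\norm{\nabla_x f}_{\El{p}}^p.
\]
The reverse Hölder inequality~\eqref{reverse Holder ineq for V} with $q \geq n/2$ further gives $\norm{V}_{\El{n/2}(B_R)} \lesssim R^2\,\dashint_{B_R} V$.

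For the lower bound on $\int_{B_R} V^{p/2}$, I will invoke Jensen's inequality when $p \geq 2$, and item (iii) of Lemma~\ref{self-improvement property of reverse holder weights} when $p \in [1,2)$, which ensures $V^{p/2} \in \textup{RH}^{2/p}(\Rn)$ and hence the reverse inequality $(\dashint_{B_R} V)^{p/2} \lesssim \dashint_{B_R} V^{p/2}$. In either case, $\int_{B_R} V^{p/2} \gtrsim |B_R|\,(\dashint_{B_R} V)^{p/2}$. Item (iv) of Lemma~\ref{self-improvement property of reverse holder weights} applied with $r = r_0$ will then yield the Shen-type lower bound $\dashint_{B_R} V \gtrsim R^{-n/q}$ for large $R$, exactly as in the proof of Lemma~\ref{non integrability property of reverse holder potentials}. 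Combining these estimates and dividing by $\int_{B_R} V^{p/2}$ produces
\[
|(f)_{B_R}|^p \lesssim \norm{V^{1/2} f}_{\El{p}}^p\,R^{-(n - np/(2q))} + \norm{\nabla_x f}_{\El{p}}^p\,R^{-(n - p)},
\]
whose exponents are both strictly positive because $p < n$ and $p < 2q$. Letting $R \to \infty$ forces $(f)_{B_R} \to 0$, so $c = 0$ and the claimed embedding follows. The main obstacle I foresee is the lower bound on $\int_{B_R} V^{p/2}$ in the subquadratic regime $p < 2$, where Jensen's inequality alone is insufficient and one must appeal to the reverse Hölder property of $V^{p/2}$ itself.
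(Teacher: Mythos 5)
Your argument is correct, and its skeleton is the same as the paper's: first use the classical Gagliardo--Nirenberg--Sobolev theory to write $f = g + c$ with $g \in \El{p^{*}}$ and $\norm{g}_{\El{p^{*}}} \lesssim \norm{\nabla_{x}f}_{\El{p}}$, then exploit $V\not\equiv 0$ together with the reverse Hölder machinery of Lemma~\ref{self-improvement property of reverse holder weights} to force $c=0$. Where you diverge is in the second half. The paper starts from the identity $cV^{1/2} = V^{1/2}f - V^{1/2}(f-c)$, estimates $\norm{V^{1/2}(f-c)}_{\El{p}(B)}$ by Hölder against $\norm{V^{1/2}}_{\El{n}(B)}\norm{g}_{\El{p^{*}}}$, compares $\norm{V^{1/2}}_{\El{n}(B)}$ with $\meas{B}^{1/n-1/p}\norm{V^{1/2}}_{\El{p}(B)}$ via reverse Hölder, divides, and lets $\meas{B}\to\infty$ using the qualitative divergence $\norm{V}_{\El{p/2}(B)}\to\infty$ from Lemma~\ref{non integrability property of reverse holder potentials}. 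You instead control the averages $(f)_{B_R}$ directly, bounding the fluctuation $V^{1/2}(f-(f)_{B_R})$ by Hölder with exponents $n/p$ and $p^{*}/p$ plus the Sobolev--Poincaré inequality (so you never need $g$ in this step, only $\nabla_{x}f$), and you make the decay quantitative via Shen's growth estimate $\dashint_{B_R}V\gtrsim R^{-n/q}$ from Lemma~\ref{self-improvement property of reverse holder weights}.(iv). The two routes use the same inputs — in particular both need $V^{p/2}\in\textup{RH}^{2/p}$ when $p<2$ to get the lower bound $\dashint_{B_R}V^{p/2}\gtrsim(\dashint_{B_R}V)^{p/2}$, exactly the obstacle you flag — and your exponents $n-np/(2q)$ and $n-p$ are indeed both positive since $p<n\leq 2q$ (resp. $p<2<2q$ when $n=2$), so the conclusion follows. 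Your version buys explicit rates for $\abs{(f)_{B_R}}$ at the cost of invoking Sobolev--Poincaré; the paper's is slightly more economical in that it only needs the qualitative non-integrability of $V^{p/2}$.
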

\begin{proof}
    Since $f\in\El{1}_{\loc}(\Rn)$ with $\nabla_{x}f\in\El{p}$, there is a sequence $(f_{k})_{k\geq 1}\subset\smcp$ such that $\nabla_{x}f_{k}\to \nabla_{x}f$ in $\El{p}$ as $k\to\infty$. In fact, a standard regularisation argument shows that we can assume that $f\in C^{\infty}(\Rn)\cap\dot{\textup{W}}^{1,p}(\Rn)$. Next, for $R>0$, consider the annulus $C_{R}:=\set{x\in\Rn : R<\abs{x}<2R}$ and the bump function $\eta_{R}(x):=\eta\left(\frac{x}{R}\right)$, where $\eta\in C^{\infty}_{c}(B(0,2))$ is such that $\eta\geq 0$ and $\eta=1$ on $B(0,1)$. The sequence $f_{k}:=(f-(f)_{C_{k}})\eta_{k}$, $k\geq 1$ has the required property (see, e.g., \cite[Theorem~4]{Hajłasz1995}). 
    
    The Sobolev embedding theorem applied to $f_{k}-f_{j}$, $j,k\geq 1$ implies that the sequence $(f_{k})_{k\geq 1}$ is Cauchy in $\El{p^{*}}$. If $g\in\El{p^{*}}$ is the limit of this sequence in $\El{p^{*}}$, then $\nabla_{x}g=\nabla_{x}f$ in the distributional sense, hence $f=g+c$ for some $c\in\C$. It suffices to prove that $c=0$. We trivially have $cV^{1/2}=V^{1/2}f-V^{1/2}(f-c)$, hence for all balls $B\subset \Rn$ it holds that 
    \begin{align}\label{intermediate estimate triangle and holder}
        \abs{c}\norm{V^{1/2}}_{\El{p}(B)}\leq \norm{V^{1/2}f}_{\El{p}(B)}+\norm{V^{1/2}}_{\El{n}(B)}\norm{f-c}_{\El{p^{*}}(B)},
    \end{align}
    by Hölder's inequality. If $n\geq 3$, then since $V\in\textup{RH}^{\frac{n}{2}}(\Rn)$  we have 
    \begin{equation*}
        \norm{V^{1/2}}_{\El{n}(B)}=\left(\int_{B}V^{n/2}\right)^{\frac{1}{n}}\lesssim \meas{B}^{\frac{1}{n}}\left(\dashint_{B}V\right)^{\frac{1}{2}}.
    \end{equation*}
    The same estimate trivially holds for $n=2$. Now, if $p\in[2,n)$, then $p/2\geq 1$, so by Hölder's inequality, 
    \begin{align}\label{reverse holder property of V^1/2 for Sobolev}
        \norm{V^{1/2}}_{\El{n}(B)}&\lesssim \meas{B}^{\frac{1}{n}}\left(\dashint_{B}V\right)^{1/2}\leq \meas{B}^{\frac{1}{n}-\frac{1}{p}}\norm{V^{1/2}}_{\El{p}(B)}.
    \end{align}
    If $p\in[1,2)$, then $p/2\in (0,1)$, so by Lemma~\ref{self-improvement property of reverse holder weights}.(iii) we have $V^{p/2}\in\textup{RH}^{\frac{2}{p}}$. We therefore obtain the same estimate as \eqref{reverse holder property of V^1/2 for Sobolev} up to a multiplicative constant. Combining \eqref{intermediate estimate triangle and holder} and \eqref{reverse holder property of V^1/2 for Sobolev}, we get
    \begin{align*}
        \abs{c}\norm{V^{1/2}}_{\El{p}(B)}\leq \norm{V^{1/2}f}_{\El{p}} + \meas{B}^{\frac{1}{n}-\frac{1}{p}}\norm{V^{1/2}}_{\El{p}(B)}\norm{g}_{\El{p^{*}}}.
    \end{align*}
    If $B\subset\Rn$ is sufficiently large to have $||V^{1/2}||_{\El{p}(B)}>0$, this yields the estimate
    \begin{equation*}
        \abs{c}\lesssim \frac{\norm{V^{1/2}f}_{\El{p}}}{\norm{V^{1/2}}_{\El{p}(B)}} + \meas{B}^{\frac{1}{n}-\frac{1}{p}}\norm{g}_{\El{p^{*}}}.
    \end{equation*}
    Finally, note that $\norm{V^{1/2}}_{\El{p}(B)}=\norm{V}^{1/2}_{\El{p/2}(B)}$. If $n\geq 3$, then $V\in\textup{RH}^{\frac{n}{2}}$, with $\frac{p}{2}<\frac{n}{2}$, hence Lemma~\ref{non integrability property of reverse holder potentials} implies that $c=0$ by letting the radius of $B$ tend to infinity. Similarly, if $n=2$, then $V\in\textup{RH}^{q}$ for $q>1$ and it follows that $0<\frac{p}{2}<\frac{n}{2}=1<q$, which allows us to conclude using Lemma~\ref{non integrability property of reverse holder potentials}.
\end{proof}

We also need a Fefferman--Phong inequality proved by Auscher--Ben Ali in \cite[Lemma 2.1]{Auscher-BenAli}.
\begin{prop}[Fefferman--Phong]\label{improved Fefferman--Phong inequality}
    If $n\geq 1$, $p\in[1,\infty)$,  $q\in(1,\infty)$ and $V^{p/2}\in\textup{RH}^{q}(\Rn)$, then there exists $\beta\in(0,1)$ such that for all cubes $Q\subset \Rn$ and $f\in\mathcal{V}^{1,p}_{\loc}(\Rn)$ it holds that
    \begin{equation*}
        m_{\beta}\left(l(Q)^{p}\textup{av}_{Q}V^{\frac{p}{2}}\right)\int_{Q}\abs{f}^{p}\lesssim l(Q)^{p}\int_{Q}\abs{\nabla_{\mu}f}^{p},
    \end{equation*}
    where $m_{\beta}(x):=x$ for $0\leq x\leq 1$, and $m_{\beta}(x):=x^{\beta}$ for $x\geq 1$. 
\end{prop}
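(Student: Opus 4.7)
This is the local Fefferman--Phong inequality due to Auscher--Ben Ali, and the plan is to follow the scheme from \cite[Lemma 2.1]{Auscher-BenAli}. The argument proceeds via a dichotomy on the size of $W := l(Q)^p \av{Q} V^{p/2}$, combined with a dyadic stopping time decomposition in the large regime.

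I would first treat the small potential case $W \leq 1$, where $m_\beta(W) = W$ and the inequality reduces to $\av{Q} V^{p/2} \int_Q \abs{f}^p \lesssim \int_Q \abs{\nabla_x f}^p + \int_Q V^{p/2}\abs{f}^p$. Using the Poincaré inequality one splits $\int_Q \abs{f}^p \leq C \int_Q \abs{f - f_Q}^p + \abs{Q}\abs{f_Q}^p$; the first term is absorbed by the gradient contribution thanks to $W \leq 1$, while for the second one writes $\abs{Q}\abs{f_Q}^p \av{Q}V^{p/2} = \abs{f_Q}^p \int_Q V^{p/2}$ and exploits the pointwise bound $\abs{f_Q}^p \lesssim \abs{f(x) - f_Q}^p + \abs{f(x)}^p$ integrated against $V^{p/2}\dd x$ on $Q$. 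This reduces matters to estimating $\int_Q V^{p/2}\abs{f - f_Q}^p$, which is handled via Hölder with exponents $q$ and $q'$: the reverse Hölder bound $\norm{V^{p/2}}_{\El{q}(Q)} \lesssim \abs{Q}^{1/q}\av{Q} V^{p/2}$ combined with a Sobolev--Poincaré inequality $\norm{f - f_Q}_{\El{pq'}(Q)} \lesssim l(Q)^{1 - n/p + n/(pq')}\norm{\nabla_x f}_{\El{p}(Q)}$, valid once $pq' \leq p^*$ (which can be arranged by invoking the self-improvement of $q$ in Lemma~\ref{self-improvement property of reverse holder weights}(i) or a refined splitting of the Hölder exponent), produces the precise cancellation of powers of $l(Q)$.

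For the large case $W > 1$, where $m_\beta(W) = W^\beta$ with some $\beta \in (0,1)$, the plan is a dyadic Calderón--Zygmund stopping time argument: subdivide $Q$ and select the maximal subcubes $\set{Q_i}$ with $l(Q_i)^p \av{Q_i} V^{p/2} \leq 1$. The stopping rule together with the doubling property from Lemma~\ref{self-improvement property of reverse holder weights}(ii) forces $l(Q_i)^p \av{Q_i} V^{p/2} \eqsim 1$ on each $Q_i$. Applying the small-case inequality on each $Q_i$ yields $\int_{Q_i} \abs{f}^p \lesssim l(Q_i)^p \int_{Q_i}\abs{\nabla_\mu f}^p$. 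One then invokes the self-improvement $V^{p/2} \in \textup{RH}^{q+\eps}$ from Lemma~\ref{self-improvement property of reverse holder weights}(i) to compare the distribution of $V^{p/2}$ between $Q$ and $\set{Q_i}$: the Jensen lower bound on $\int_Q V^{p(q+\eps)/2}$ combined with the reverse Hölder upper bound on $\int_{Q_i} V^{p(q+\eps)/2}$ constrains the relative sizes of the stopping cubes quantitatively, and summing yields the required power $\beta$ depending only on $\eps$ and $q$.

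The principal technical obstacle is the large case, specifically the extraction of the power $\beta<1$ (rather than the trivial $\beta = 0$). This hinges in an essential way on the self-improvement of reverse Hölder weights, which is the characteristic device of the Auscher--Ben Ali approach.
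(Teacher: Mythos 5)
The paper's own proof is a one-line reference: it deduces the statement by following the density argument in Morris--Turner's Proposition~2.3 to extend Auscher--Ben Ali's Lemma~2.1. Your proposal instead tries to reconstruct that lemma from scratch. The overall architecture you describe (a dichotomy on $W := l(Q)^p\textup{av}_{Q}V^{p/2}$, with a dyadic stopping-time decomposition in the large regime exploiting $A_\infty$/reverse-H\"older self-improvement) is indeed the right skeleton, and the large-case argument can be made to work along the lines you indicate.

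There is, however, a genuine gap in your small-case argument, and it is not one that the patches you suggest can repair. To handle $\int_Q V^{p/2}\abs{f-f_Q}^p$ you apply H\"older with exponents $(q,q')$ and then Sobolev--Poincar\'e with exponent $pq'$. That Sobolev--Poincar\'e step requires $pq'\leq p^{*}=\tfrac{np}{n-p}$ (when $p<n$), which is equivalent to $q\geq \tfrac{n}{p}$. The hypothesis only gives $q>1$, and indeed in the paper's own application one has $V^{p/2}\in\textup{RH}^{2/p}$ with $p\leq 2$ and $n\geq 3$, so $q=\tfrac{2}{p}<\tfrac{n}{p}$. Self-improvement only upgrades $q$ to $q+\eps$ for a small $\eps>0$, and no reshuffling of H\"older exponents can compensate for the fact that $V^{p/2}$ is simply not assumed to have enough local integrability to pair against $\abs{f-f_Q}^p$ via Sobolev embedding. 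The actual small-case estimate in Auscher--Ben Ali (and in Fefferman's original uncertainty-principle argument, see also Shen's Lemma~1.9) is proved by a different mechanism: one writes $u(x)-u(y)$ as an integral of $\nabla u$ along the segment from $x$ to $y$, averages over $y\in Q$, and handles the Jacobian singularity at the endpoint by a suitable truncation, rather than by a $q$/$q'$ H\"older splitting. That argument works for \emph{any} $A_\infty$ weight and does not require $q\geq n/p$.

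A secondary omission: the proposition is stated for $f\in\mathcal{V}^{1,p}_{\loc}(\Rn)$, not for $C^{1}$ functions, and your argument does not address the passage from smooth functions to this wider class. That density step is in fact the only content the present paper adds on top of the cited results, so it deserves at least a sentence.
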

\begin{proof}
    We follow the arguments in the proof \cite[Proposition 2.3]{MorrisTurner} to extend \cite[Lemma 2.1]{Auscher-BenAli}.
\end{proof}

We can now prove the following density results for the homogeneous spaces $\dot{\mathcal{V}}^{1,p}(\Rn)$.

\begin{prop}\label{density lemma in homogeneous V adapted Sobolev spaces}
If $V\not\equiv 0$ and either \emph{(i)}, \emph{(ii)} or \emph{(iii)} holds, then $\smcp$ is dense in $\dot{\mathcal{V}}^{1,p}(\Rn)$:
        \begin{enumerate}[label=\emph{(\roman*)}]
            \item $n\geq 3$, $q\geq \frac{n}{2}$, $p\in[1,n]$ and  $V\in\textup{RH}^{q}(\Rn)$;
            \item $n\in\set{1,2}$, $q>1$, $p\in[1,n]$ and $V\in\textup{RH}^{q}(\Rn)$;
            \item $n\geq 1$, $p\in(n,\infty)$ and $V^{p/2}\in\El{1}_{\loc}(\Rn)$.
        \end{enumerate}
Moreover, if $p_0,p_1\in[1,\infty)$ and either \emph{(i)}, \emph{(ii)} or \emph{(iii)} hold with both $p=p_{0}$ and $p=p_{1}$, then $\smcp$ is dense in $\dot{\mathcal{V}}^{1,p_0}(\Rn) \cap \dot{\mathcal{V}}^{1,p_1}(\Rn)$. 
\end{prop}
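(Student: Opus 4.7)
The plan is to use truncation followed by mollification. Given $f\in\dot{\mathcal{V}}^{1,p}(\Rn)$, I would set $f_R:=f\eta_R$ where $\eta_R(x)=\eta(|x|/R)$ for a radial cutoff $\eta\in C^\infty(\R)$ with $\eta\equiv 1$ on $[0,1]$ and $\eta\equiv 0$ on $[2,\infty)$. Since
\[
\nabla_x(f-f_R)=(1-\eta_R)\nabla_x f-f\nabla_x\eta_R,\qquad V^{1/2}(f-f_R)=V^{1/2}f(1-\eta_R),
\]
dominated convergence handles the pieces $(1-\eta_R)\nabla_x f$ and $V^{1/2}f(1-\eta_R)$, and the whole problem reduces to the key decay estimate
\[
\|f\nabla_x\eta_R\|_{\El{p}}^p\lesssim R^{-p}\int_{C_R}|f|^p\to 0\quad(R\to\infty),\qquad C_R:=\{R<|x|<2R\}.
\]
Once this is established, $f_R$ is compactly supported and locally bounded (via Proposition~\ref{embedding homogeneous V spaces in L ^p* for p<n} in the reverse Hölder cases, via Morrey in case (iii)), so $f_R\in\mathcal{V}^{1,p}(\Rn)$. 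Lemma~\ref{density lemma adapted sobolev spaces Morris Turner} then supplies $C^\infty_c$ approximants of $f_R$ in the stronger $\mathcal{V}^{1,p}$-norm, and a diagonal argument delivers $C^\infty_c$ approximants of $f$ in $\dot{\mathcal{V}}^{1,p}(\Rn)$. For the intersection assertion, a single cutoff sequence $\eta_R$ works simultaneously for both indices, and the joint version of Lemma~\ref{density lemma adapted sobolev spaces Morris Turner} handles the mollification step in both norms at once.

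In cases (i) and (ii) with $p<n$, Proposition~\ref{embedding homogeneous V spaces in L ^p* for p<n} provides $f\in\Ell{p^*}$, so Hölder's inequality gives $R^{-p}\int_{C_R}|f|^p\lesssim\|f\|_{\El{p^*}(C_R)}^p$, which tends to zero by integrability of $|f|^{p^*}$ at infinity since $C_R\subset\{|x|>R\}$. In cases (i) and (ii) with $p=n$, I would apply the Fefferman--Phong inequality from Proposition~\ref{improved Fefferman--Phong inequality} on a cube $Q_R\supset C_R$ with $l(Q_R)\sim R$, after checking that $V^{n/2}\in\textup{RH}^{q'}$ for some $q'>1$ in each sub-case: the self-improvement from Lemma~\ref{self-improvement property of reverse holder weights}.(i) combined with Jensen's inequality gives $V^{n/2}\in\textup{RH}^{1+2\varepsilon/n}$ when $V\in\textup{RH}^{n/2+\varepsilon}$ for case (i), while case (ii) is immediate for $n=2$ and follows from Lemma~\ref{self-improvement property of reverse holder weights}.(iii) for $n=1$. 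Rearranging the Fefferman--Phong bound yields
\[
R^{-n}\int_{Q_R}|f|^n\lesssim \frac{1}{m_\beta\bigl(\int_{Q_R}V^{n/2}\bigr)}\int_{Q_R}|\nabla_\mu f|^n,
\]
and Lemma~\ref{non integrability property of reverse holder potentials} applied to $V^{n/2}\in\textup{RH}^{q'}$ forces $\int_{Q_R}V^{n/2}\to\infty$, so the prefactor vanishes and the right-hand side tends to zero.

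Case (iii) is where I expect the main difficulty, since no reverse Hölder hypothesis on $V$ is available and Proposition~\ref{improved Fefferman--Phong inequality} is inaccessible. However, $p>n$ unlocks Morrey's embedding: $f$ has a Hölder continuous representative, and applying Morrey's inequality on the annulus $C_R$ (after joining two points by a short path and using Poincaré on convex pieces) yields $\sup_{C_R}|f-(f)_{C_R}|\lesssim R^{1-n/p}\|\nabla_x f\|_{\El{p}(C_R^*)}$ for a small enlargement $C_R^*$, hence
\[
R^{-p}\int_{C_R}|f|^p\lesssim \|\nabla_x f\|^p_{\El{p}(C_R^*)}+R^{n-p}|(f)_{C_R}|^p.
\]
The first summand vanishes since $\nabla_x f\in\Ell{p}$. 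For the second I would distinguish two sub-cases. If $V^{p/2}\in\Ell{1}$, constants belong to $\dot{\mathcal{V}}^{1,p}(\Rn)$ and are themselves limits in $\dot{\mathcal{V}}^{1,p}$-norm of $c\eta_R\in C^\infty_c$ because $\|\nabla_x(c\eta_R)\|_{\El{p}}\lesssim|c|R^{n/p-1}\to 0$ when $p>n$ and $\|V^{1/2}c(\eta_R-1)\|_{\El{p}}\to 0$ by dominated convergence, so an appropriate constant shift can be absorbed before truncating. If $V^{p/2}\notin\Ell{1}$, then $V^{1/2}\notin\Ell{p}$ forces the asymptotic constant component of $f$ to vanish, and a Hölder/Jensen bound for the $V^{p/2}$-weighted mean of $f$ using $V^{1/2}f\in\Ell{p}$ and $V\not\equiv 0$ controls $|(f)_{C_R}|$ and drives the remaining term to zero.
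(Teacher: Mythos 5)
Your overall architecture (spatial cutoff followed by the mollification machinery of Lemma~\ref{density lemma adapted sobolev spaces Morris Turner}) and your treatment of cases (i)--(ii) --- Proposition~\ref{embedding homogeneous V spaces in L ^p* for p<n} plus H\"older on the annulus when $p<n$, and Fefferman--Phong combined with Lemma~\ref{non integrability property of reverse holder potentials} when $p=n$ --- are essentially the paper's argument (applying Proposition~\ref{improved Fefferman--Phong inequality} directly to $f$ rather than to a bounded truncation is fine, since $f\in\mathcal{V}^{1,n}_{\loc}(\Rn)$). The device you are missing is the paper's preliminary \emph{pointwise} truncation $f_N$ of $f$ at height $N$, which converges to $f$ in $\dot{\mathcal{V}}^{1,p}(\Rn)$ by dominated convergence; the spatial cutoff is then applied to the bounded function $f_N$, and in case (iii) the key decay estimate collapses to the trivial bound $\|R^{-1}(\nabla_x\eta)(\tfrac{\cdot}{R})f_N\|_{\El{p}}\lesssim R^{\frac{n}{p}-1}\|f_N\|_{\El{\infty}}\|\nabla_x\eta\|_{\El{p}}\to 0$ because $\frac{n}{p}-1<0$.

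Your substitute argument for case (iii) has a genuine gap. Morrey's inequality only yields $|f(x)|\lesssim |f(0)|+|x|^{1-n/p}\|\nabla_x f\|_{\El{p}}$, so a priori $|(f)_{C_R}|$ may grow like $R^{1-n/p}$, in which case $R^{n-p}|(f)_{C_R}|^p=O\bigl(R^{n-p}\cdot R^{p-n}\bigr)=O(1)$: your second summand is bounded but not shown to vanish. Your two sub-cases only address the scenario where $(f)_{C_R}$ approaches a nonzero \emph{constant}, which is harmless anyway since $R^{n-p}\to 0$; they do not touch the genuinely dangerous sublinear growth, and the claim that the $V^{p/2}$-weighted information controls $|(f)_{C_R}|$ fails when $V^{p/2}\notin\El{1}(\Rn)$ but $V$ is concentrated on a sparse family of balls escaping to infinity, since then $V^{1/2}f\in\El{p}(\Rn)$ constrains $f$ only on those balls. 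The estimate you need is in fact true: a dyadic telescoping of averages over annuli, using $\|\nabla_x f\|_{\El{p}(C_R)}\to 0$, gives $|(f)_{C_R}|=O(1)+o(R^{1-n/p})$ and hence $R^{n-p}|(f)_{C_R}|^p\to 0$. But you must either supply that chaining argument or, as the paper does, truncate in the range first.
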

\begin{proof}
    We will closely follow the argument of \cite[Lemma 2.2]{MorrisTurner}. Let us first remark that our assumptions imply that $\smcp\subset \dot{\mathcal{V}}^{1,p}(\Rn)$ for all $1\leq p<\infty$. Indeed, since $V\in\textup{RH}^{q}$ implies that $V\in\El{q}_{\loc}(\Rn)$, this always yields $V^{1/2}\in\El{n}_{\loc}(\Rn)$. 
    
    Let $f\in\dot{\mathcal{V}}^{1,p}(\Rn)$. By approximating separately the real and imaginary parts of $f$, we may assume that $f$ is real valued. For each $N\geq 1$, we consider the truncation
    \begin{equation*}
        f_{N}(x):=\begin{cases}
            N, & \text{ if }f(x)>N;\\
            f(x), & \text{ if }\abs{f(x)}\leq N;\\
            -N, & \text{ if }f(x)<-N,
        \end{cases}
    \end{equation*}
    for all $x\in\Rn$. It is well-known that $f_{N}\in \dot{\textup{W}}^{1,p}(\Rn)$, with $\abs{\nabla_{x}f_{N}(x)}\leq \abs{\nabla_{x}f(x)}$ for almost every $x\in\Rn$. Moreover, $\abs{f_{N}(x)}\leq \abs{f(x)}$ for all $x\in\Rn$, hence $f_{N}\in\dot{\mathcal{V}}^{1,p}(\Rn)\cap\Ell{\infty}$, with $\abs{\nabla_{\mu}f_{N}(x)}\leq \abs{\nabla_{\mu}f(x)}$ for almost every $x\in\Rn$. Moreover, it is known that $\nabla_{x}f_{N}\to\nabla_{x}f$ in $\El{p}$ as $N\to\infty$. Similarly, by dominated convergence we deduce that $V^{1/2}f_{N}\to V^{1/2}f$ in $\El{p}$ as $N\to\infty$. Consequently, $f_{N}\to f$ in $\dot{\mathcal{V}}^{1,p}(\Rn)$ as $N\to\infty$. 
    
    Next, let $\eta\in\smcp$ be $[0,1]$-valued, with support in $B(0,2)$, and such that $\eta(x)=1$ for $\abs{x}\leq 1$. Then, for all $N\geq 1$ and $R\geq 1$, we consider the approximant 
    $f_{N,R}(x):=\eta\left({x}/{R}\right)f_{N}(x)$
    for all $x\in\Rn$. First, observe that $V^{1/2}f_{N,R}\to V^{1/2}f_{N}$ in $\Ell{p}$ as $R\to\infty$, by dominated convergence. Then, the product rule shows that $f_{N,R}\in\dot{\mathcal{V}}^{1,p}(\Rn)$, with
    \begin{equation*}
        \nabla_{x}f_{N,R}-\nabla_{x}f_{N}=R^{-1}\left(\nabla_{x}\eta\right)\left(\frac{\cdot}{R}\right)f_{N} + \left(\eta\left(\frac{\cdot}{R}\right)-1\right)\nabla_{x}f_{N}.
    \end{equation*}
    Again, dominated convergence shows that $\left(\eta\left(\frac{\cdot}{R}\right)-1\right)\nabla_{x}f_{N}\to 0$ in $\El{p}$ as $R\to\infty$. The first term needs to be handled differently depending on the value of $p$. 

    First, if $1\leq p< n$ (and thus $n\geq 2$), Proposition~\ref{embedding homogeneous V spaces in L ^p* for p<n} shows that $f\in\Ell{p^{*}}$. Therefore, we also have $f_{N}\in \Ell{p^{*}}$. As a consequence, Hölder's inequality implies that
    \begin{align*}
        \norm{R^{-1}\left(\nabla_{x}\eta\right)\left(\frac{\cdot}{R}\right)f_{N}}_{\El{p}}&\leq R^{-1}\norm{f_{N}}_{\El{p^{*}}(R\leq \abs{x}\leq 2R)}\left(\int_{R\leq \abs{x}\leq 2R}\abs{\nabla_{x}\eta\left(\frac{x}{R}\right)}^{n}\dd x\right)^{1/n}\\
        &\lesssim \norm{f_{N}}_{\El{p^{*}}(R\leq \abs{x}\leq 2R)} \to 0
    \end{align*}
    as $R\to\infty$. 
    
Next, if $p>n$, then we simply obtain
    \begin{align*}
        \norm{R^{-1}\left(\nabla_{x}\eta\right)\left(\frac{\cdot}{R}\right)f_{N}}_{\El{p}}\lesssim R^{\frac{n}{p}-1}\norm{f_{N}}_{\Ell{\infty}}\norm{\nabla_{x}\eta}_{\Ell{p}}\to 0
    \end{align*}
    as $R\to\infty$, since $\frac{n}{p}-1<0$.
Finally, if $p=n$, we note that our assumptions imply that there is always some $r>1$ such that $V^{\frac{n}{2}}\in\textup{RH}^{r}$. Indeed, if $n\geq 3$, then $V\in\textup{RH}^{\frac{n}{2}+\eps}$ for some $\eps>0$ and we can take $r=\frac{n/2 +\eps}{n/2}$. If $n=2$ and $q>1$ is such that $V\in\textup{RH}^{q}$, then we may take $r=q$. Similarly, if $n=1$ and $V\in\textup{RH}^{q}$, we may take $r=2$ thanks to Lemma~\ref{self-improvement property of reverse holder weights}.(iii). We may therefore apply the Fefferman--Phong inequality (Proposition~\ref{improved Fefferman--Phong inequality}) to $f_{N}\in\mathcal{V}^{1,n}_{\loc}(\Rn)$, to find some $\beta\in(0,1)$ and $C>0$ such that
    \begin{equation}\label{intermediate step fefferman phong ineq}
        m_{\beta}\left(\int_{Q}V^{n/2}\right)\int_{Q}\abs{f_{N}}^{n} \leq C l(Q)^{n}\int_{Q}\abs{\nabla_{\mu}f_{N}}^{n}
    \end{equation}
    for all cubes $Q\subset \Rn$ and all $N\geq 1$. Therefore, if $\set{Q_{R}}_{R\geq 1}$ is a family of cubes such that $B(0,2R)\subseteq Q_{R}$ for all $R\geq 1$, Lemma~\ref{non integrability property of reverse holder potentials} applied to $w:=V\in\textup{RH}^{\frac{n}{2}+\eps}$ shows that $\int_{Q_{R}}V^{n/2}>1$ for all sufficiently large $R$. Applying (\ref{intermediate step fefferman phong ineq}) with $Q:=Q_{R}$, we get
    \begin{align*}
    \norm{R^{-1}\left(\nabla_{x}\eta\right)\left(\frac{\cdot}{R}\right)f_{N}}_{\El{n}}
    \leq \left(\int_{Q_{R}}\abs{f_{N}}^{n}\right)^{1/n}
    \lesssim \norm{\nabla_{\mu}f_{N}}_{\El{n}}\left(\int_{B(0,2R)}V^{n/2}\right)^{-\frac{\beta}{n}}.
\end{align*}
    Finally, Lemma~\ref{non integrability property of reverse holder potentials} shows that the right-hand side tends to $0$ as $R\to\infty$. This proves that, for any $p\in [1,\infty)$ and $N\geq 1$, $f_{N,R}\to f_{N}$ in $\dot{\mathcal{V}}^{1,p}(\Rn)$ as $R\to\infty$. 
    We can now conclude using the mollification argument used in the proof of \cite[Lemma 2.2]{MorrisTurner}. The second part of the lemma follows, since the construction of the approximating sequence does not depend on $p$.
\end{proof}

The approximating sequence used in the proof of Proposition~\ref{density lemma in homogeneous V adapted Sobolev spaces} above is the same as that constructed in the proof of \cite[Lemma 2.2]{MorrisTurner}. Moreover, by dominated convergence and well-known properties of mollifiers, it is easily shown to converge to the original function $f$ in every $\El{p}$ space that $f$ belongs to. We therefore obtain the following corollary.
\begin{cor}\label{density in mixed V adapted sobolev spaces}
Suppose that $V\not\equiv 0$ and either \emph{(i)}, \emph{(ii)} or \emph{(iii)} from Proposition~\emph{\ref{density lemma in homogeneous V adapted Sobolev spaces}} holds:
\begin{enumerate}[label=\emph{(\arabic*)}]
    \item If $r\in[1,\infty)$ and $V^{r/2}\in\El{1}_{\loc}(\Rn)$, then $\smcp$ is dense in $\dot{\mathcal{V}}^{1,p}(\Rn) \cap \mathcal{V}^{1,r}(\Rn)$.
    \item If $r\in[1,\infty)$, then $\smcp$ is dense in $\dot{\mathcal{V}}^{1,p}(\Rn) \cap \El{r}(\Rn)$.
\end{enumerate} 
\end{cor}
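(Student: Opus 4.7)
The plan is to re-use the approximating sequence constructed in the proof of Proposition~\ref{density lemma in homogeneous V adapted Sobolev spaces}---truncate $f$ at level $N$ to get $f_N$, multiply by the cutoff $\eta(\cdot/R)f_N$, then mollify at scale $\eps$---and verify that it converges to $f$ not only in $\dot{\mathcal{V}}^{1,p}(\Rn)$ (which is already established there) but also in the additional norm appearing in the intersection. Since this construction depends only on $f$ and not on $p$, the same sequence is a valid candidate for both intersections, and the task reduces to controlling convergence in one extra norm at each of the three stages.

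For statement (2) with $f\in\dot{\mathcal{V}}^{1,p}(\Rn)\cap\El{r}(\Rn)$, this is immediate: the pointwise bounds $|f_N|\leq|f|$ and $|\eta(\cdot/R)f_N|\leq|f_N|$ together with pointwise convergence yield $\El{r}$ convergence at the truncation and cutoff stages by dominated convergence, and mollification preserves $\El{r}$ convergence by the standard approximation-of-the-identity properties. For statement (1) with $f\in\dot{\mathcal{V}}^{1,p}(\Rn)\cap\mathcal{V}^{1,r}(\Rn)$ and $V^{r/2}\in\El{1}_{\loc}(\Rn)$, the same bookkeeping applies to the functions themselves, and we additionally need convergence of the $\nabla_x$ and $V^{1/2}$ components in $\El{r}$. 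Truncation is handled via the a.e.\ pointwise bounds $|\nabla_x f_N|\leq|\nabla_x f|$ and $|V^{1/2}f_N|\leq|V^{1/2}f|$, followed by dominated convergence, and the mollification step preserves $\mathcal{V}^{1,r}$ convergence by the arguments already used in \cite[Lemma 2.2]{MorrisTurner}.

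The only step requiring thought is the cutoff stage for the $\nabla_x$ component in statement (1), where the product rule produces the term $R^{-1}(\nabla_x\eta)(\cdot/R)f_N$; this is exactly where Proposition~\ref{density lemma in homogeneous V adapted Sobolev spaces} had to split into the cases $p<n$, $p>n$, and $p=n$, and invoke either the Sobolev embedding of Proposition~\ref{embedding homogeneous V spaces in L ^p* for p<n} or the Fefferman--Phong inequality of Proposition~\ref{improved Fefferman--Phong inequality}. Here, however, the hypothesis $f\in\El{r}$ (and hence $f_N\in\El{r}$) provides direct integrability on the annulus $\{R<|x|<2R\}$, so the term is bounded by
\[
R^{-1}\|\nabla_x\eta\|_{\El{\infty}}\bigl\|f_N\ind{R<|x|<2R}\bigr\|_{\El{r}},
\]
which vanishes as $R\to\infty$ by dominated convergence. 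This removes the only non-routine obstruction and yields the density of $\smcp$ in both intersections.
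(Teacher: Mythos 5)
Your proposal is correct and follows essentially the same route as the paper, which disposes of this corollary in a one-paragraph remark: the approximating sequence from Proposition~\ref{density lemma in homogeneous V adapted Sobolev spaces} is re-used, and dominated convergence together with standard mollifier properties shows it also converges in every additional norm that $f$ carries. Your observation that the troublesome cutoff term $R^{-1}(\nabla_{x}\eta)(\cdot/R)f_{N}$ is harmless in the extra $\El{r}$ norm (because $f_N\in\El{r}$ directly) is exactly the reason the case analysis of the proposition is not needed a second time, and your treatment is, if anything, slightly more explicit than the paper's.
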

\subsection{Holomorphic functional calculus and Hardy spaces for (bi)sectorial operators}\label{section operators and their functional calculus}
\subsubsection{Sectorial and bisectorial operators}\label{section on sectorial and bisectorial operators }
We consider the open sectors defined as
\begin{align*}
    S_{\omega}^{+}&:=\set{z\in\C : z\neq 0 \text{ and }\abs{\arg{z}}< \omega}
\end{align*}
for any $\omega\in\left(0,\pi\right]$. We also let $S_{0}^{+}:=(0,\infty)$, even though it is not open. We can then define the open bisector $S_{\mu}:=S_{\mu}^{+}\cup \left(-S_{\mu}^{+}\right)$ for $\mu\in\left[0,\frac{\pi}{2}\right]$. We say that a (necessarily closed) operator $T$ on a Hilbert space $\mathcal{H}$ is sectorial of type $S_{\omega}^{+}$, or of angle $\omega\in\left[0,\pi\right)$, if the spectrum $\sigma(T)\subseteq \clos{S^{+}_{\omega}}$ and for each $\mu\in(\omega,\pi)$, there is $C_{\mu}>0$ such that 
$
    \norm{\left(z-T\right)^{-1}}_{\mathcal{L}(\mathcal{H})}\leq C_{\mu}\abs{z}^{-1}
$
for all $z\in\C\setminus \clos{S_{\mu}^{+}}$. Bisectorial operators of type $S_{\omega}$, for $\omega\in[0,\frac{\pi}{2})$, are defined similarly, upon replacing sectors with bisectors. We refer the reader to \cite[Section~2.1]{HaaseFunctionalCalculusBook} for some well-known properties of these operators.

\subsubsection{Classes of holomorphic functions on sectors}\label{subsection on classes of holomorphic functions}
Let $\mu\in (0,\pi)$. We say that a holomorphic function $\varphi:S^{+}_{\mu}\to\C$ belongs to $\mathcal{F}(S_{\mu}^{+})$ if there are $C,s>0$ such that $\abs{\varphi(z)}\leq C\left(\abs{z}^{s} + \abs{z}^{-s}\right)$ for all $z\in S^{+}_{\mu}$. For $\alpha,\beta>0$, we say that $\varphi\in\Psi^{\alpha}_{\beta}(S_{\mu}^{+})$ if there is a $C>0$ such that 
$
    \abs{\varphi(z)}\leq C\left(\abs{z}^{\beta}\wedge \abs{z}^{-\alpha}\right)
$
for all $z\in S^{+}_{\mu}$. We shall also write $\textup{H}^{\infty}(S^{+}_{\mu})$ for the class of all bounded holomorphic functions on $S^{+}_{\mu}$. We also consider the classes $\textup{H}^{\infty}_{0}(S_{\mu}^{+}):=\bigcup_{\alpha,\beta>0} \Psi_{\beta}^{\alpha}(S_{\mu}^{+})$ and $\Psi^{\infty}_{\infty}(S_{\mu}^{+}) := \bigcap_{\alpha,\beta>0} \Psi_{\beta}^{\alpha}(S_{\mu}^{+})$. We shall in general suppress the reference to the sector when it is irrelevant or implicit. We note that $\textup{H}^{\infty}, \textup{H}^{\infty}_{0}$ and $\Psi^{\infty}_{\infty}$ are commutative algebras of functions under pointwise multiplication. Similarly, we define the classes $\Psi^{\alpha}_{\beta}(S_{\omega})$, $\textup{H}^{\infty}(S_{\omega}),  \textup{H}^{\infty}_{0}(S_{\omega})$ and $\Psi^{\infty}_{\infty}(S_{\omega})$ on bisectors $S_{\omega}$ for $0<\omega<\frac{\pi}{2}$.

\subsubsection{Holomorphic functional calculus}\label{section on the holomorphic functional calculus}
In this section, we briefly describe the holomorphic functional calculus for (bi)sectorial operators. Good references include \cite{HaaseFunctionalCalculusBook} (for sectorial operators) and \cite[Chapter 3]{Egert_Thesis} (focusing on bisectorial operators).

 Let $\omega\in[0,\pi)$ and let $T$ be an injective sectorial operator of type $S^{+}_{\omega}$ on a Hilbert space $\mathcal{H}$. Then, $T$ has dense domain and dense range. For each $\mu\in(\omega,\pi]$, there is a map sending any $\varphi\in\mathcal{F}(S_{\mu}^{+})$ to a closed operator $\varphi(T)\in\mathcal{C}(\mathcal{H})$. This map is called the $\mathcal{F}(S_{\mu}^{+})$-functional calculus for $T$. See \cite[Theorem A.1]{MorrisTurner} for a summary of some important properties of this mapping. We mention that if $\varphi\in\textup{H}^{\infty}_{0}$, then $\varphi(T)\in\mathcal{L}(\mathcal{H})$ and has the usual Cauchy integral representation.
 If $T$ is a sectorial operator of type $S^{+}_{\omega}$ on $\mathcal{H}$ which is not necessarily injective, then we can consider its injective part $T|_{\clos{\ran{T}}}$, which is defined as the restriction of $T$ to the closure of its range. It is an injective sectorial operator of type $S^{+}_{\omega}$ on the Hilbert space $\clos{\ran{T}}$. If $\mu\in(\omega , \pi]$ and $\varphi\in\mathcal{F}(S^{+}_{\mu})$, we \emph{define} $\varphi(T):=\varphi(T|_{\clos{\ran{T}}})\in\mathcal{C}(\clos{\ran{T}})$.
 
 We say that $T$ has a bounded $\textup{H}^{\infty}$-calculus of angle $\mu\in(\omega,\pi)$ on $\clos{\ran{T}}$ if there is $C\geq 0$ such that $\norm{\varphi(T)}_{\mathcal{L}(\clos{\ran{T}})}\leq C\norm{\varphi}_{\infty}$ for all $\varphi\in\textup{H}^{\infty}(S^{+}_{\mu})$.
Let $\varphi\in\textup{H}^{\infty}_{0}(S^{+}_{\mu})$ for some $\mu\in(\omega,\pi)$. We say that $T$ satisfies quadratic estimates with auxiliary function $\varphi$ if there is $C\geq 1$ such that $C^{-1}\norm{f}_{\mathcal{H}}^{2}\leq \int_{0}^{\infty}\norm{\varphi(tT)f}_{\mathcal{H}}^{2}\frac{\dd t}{t}\leq C\norm{f}_{\mathcal{H}}^{2}$ for all $f\in\clos{\ran{T}}$.
The usual modifications can be made to treat bisectorial operators. We also note that a positive self-adjoint operator is sectorial of type $S_0^+$ and has a bounded $\textup{H}^{\infty}$-calculus that is compatible with its Borel functional calculus.

The equivalence between bounded $\textup{H}^{\infty}$-calculus and quadratic estimates is a fundamental theorem due to McIntosh \cite{McIntosh_HInfty_Calculus}; see also \cite[Theorem 7.3.1]{HaaseFunctionalCalculusBook}. 
\begin{thm}[McIntosh's theorem]\label{McIntosh Theorem}
Let $T$ be a sectorial operator of angle $\omega_{T}\in [0,\pi)$ on a Hilbert space $\mathcal{H}$. Then, the following statements are equivalent.
\begin{enumerate}[label=\emph{(\roman*)}]
    \item The operator $T$ satisfies quadratic estimates with some (equivalently any non-trivial) auxiliary function $\varphi\in\textup{H}^{\infty}_{0}(S^{+}_{\mu})$ and some $\mu\in(\omega_{T},\pi)$,
    \item The operator $T$ has a bounded $\textup{H}^{\infty}$-calculus of some (equivalently arbitrary) angle $\mu\in(\omega_{T},\pi)$ on $\clos{\ran{T}}$.
\end{enumerate}
This also holds for bisectorial operators $T$, upon replacing sectors with bisectors, and $\pi $ with $\frac{\pi}{2}$.
\end{thm}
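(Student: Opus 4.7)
The plan is to treat both (i)$\Rightarrow$(ii) and (ii)$\Rightarrow$(i) via the reproducing formula on the closure of the range, reducing to injective sectorial operators on the Hilbert space $\clos{\ran{T}}$. The sectorial and bisectorial cases are structurally identical, and I would write the proof only in the sectorial setting, noting at the end that the bisectorial case is obtained by replacing the sector with the corresponding bisector throughout. I would also record upfront that $T^{*}$ is sectorial of the same angle on $\mathcal{H}$, with $\clos{\ran{T^{*}}}=\kernel{T}^{\perp}=\clos{\ran{T}}$ (the first equality being where the Hilbert-space setting is essential), and that $\varphi(T)^{*}=\bar\varphi(T^{*})$ whenever $\varphi\in\textup{H}^{\infty}_{0}$, so quadratic estimates are automatically symmetric between $T$ and $T^{*}$.

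For (ii)$\Rightarrow$(i), fix a non-trivial $\varphi\in\textup{H}^{\infty}_{0}(S^{+}_{\mu})$ with $\mu\in(\omega_{T},\pi)$. The key observation is that $z\mapsto \int_{0}^{\infty}\abs{\varphi(tz)}^{2}\tfrac{\dd t}{t}$ is constant on $S^{+}_{\mu}$ (by the substitution $s=t\abs{z}$ and holomorphy), equal to some $c_{\varphi}>0$. Using the bounded $\textup{H}^{\infty}$-calculus and the standard Hilbert-space computation
\[
\int_{0}^{\infty}\norm{\varphi(tT)f}_{\mathcal{H}}^{2}\frac{\dd t}{t}=\int_{0}^{\infty}\inner{\abs{\varphi(tT)}^{2}f}{f}\frac{\dd t}{t},
\]
which is justified for $f\in\clos{\ran{T}}$ by approximating $\varphi$ by a suitable sequence in $\textup{H}^{\infty}_{0}$ and invoking the convergence lemma for the $\textup{H}^{\infty}$-calculus, both the upper and lower quadratic bounds follow with constant $c_{\varphi}$. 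A concise alternative is to view $f\mapsto(t\mapsto \varphi(tT)f)$ as a bounded operator $\mathcal{H}\to \El{2}((0,\infty),\tfrac{\dd t}{t};\mathcal{H})$ and compute its adjoint composition via the functional calculus applied to the bounded function $\int_{0}^{\infty}\overline{\varphi(ts)}\varphi(ts)\tfrac{\dd t}{t}=c_{\varphi}$.

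For the harder implication (i)$\Rightarrow$(ii), fix $\varphi\in\textup{H}^{\infty}_{0}(S^{+}_{\mu})$ for which quadratic estimates hold, and choose $\tilde\varphi\in\textup{H}^{\infty}_{0}(S^{+}_{\mu})$ such that $\kappa:=\int_{0}^{\infty}\varphi(s)\tilde\varphi(s)\tfrac{\dd s}{s}\neq 0$. The resulting Calderón reproducing formula $h=\kappa^{-1}\int_{0}^{\infty}\varphi(tT)\tilde\varphi(tT)h\,\tfrac{\dd t}{t}$, valid for $h\in\clos{\ran{T}}$ by the $\mathcal{F}(S^{+}_{\mu})$-calculus together with quadratic estimates giving norm convergence of the truncated integrals, is the workhorse. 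Given $\psi\in\textup{H}^{\infty}_{0}(S^{+}_{\mu})$ and $f\in\clos{\ran{T}}$, $g\in\clos{\ran{T^{*}}}$, I would write
\[
\inner{\psi(T)f}{g}=\kappa^{-1}\int_{0}^{\infty}\inner{\rho_{t}(T)f}{\varphi(tT)^{*}g}\frac{\dd t}{t},\qquad \rho_{t}(z):=\tilde\varphi(tz)\psi(z),
\]
and apply Cauchy--Schwarz in $\El{2}(\tfrac{\dd t}{t};\mathcal{H})$. The second factor is controlled by $C\norm{g}_{\mathcal{H}}$ by the quadratic estimate for $T^{*}$.

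The main obstacle, and the heart of the argument, is to bound the first factor by $C\norm{\psi}_{\infty}\norm{f}_{\mathcal{H}}$ independently of $\psi$; this is the classical \emph{change of auxiliary function} step, which says that quadratic estimates for $\varphi$ propagate to the family $\{\rho_{t}\}_{t>0}$ with constant proportional to $\norm{\psi}_{\infty}$. I would carry this out by further inserting the reproducing formula to decompose
\[
\rho_{t}(T)f=\kappa^{-1}\int_{0}^{\infty}\varphi(sT)\,\sigma_{s,t}(T)f\,\frac{\dd s}{s},\qquad \sigma_{s,t}(z):=\tilde\varphi(sz)\tilde\varphi(tz)\psi(z),
\]
and then using Schur's test on the $(s,t)$-kernel $\norm{\sigma_{s,t}}_{\infty}\leq C\norm{\psi}_{\infty}\,(s/t\wedge t/s)^{\alpha}$, which holds because the product $\tilde\varphi(s\cdot)\tilde\varphi(t\cdot)$ lies in $\Psi^{\alpha}_{\beta}$ with the correct scaling in $s/t$. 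Combining the Schur bound with the quadratic estimate for $\varphi$ applied to the inner $\varphi(sT)$ yields the required $\El{2}(\tfrac{\dd t}{t})$-bound on $\rho_{t}(T)f$, uniformly in $\psi$ with constant proportional to $\norm{\psi}_{\infty}$. This completes the bound $\norm{\psi(T)}_{\mathcal{L}(\clos{\ran{T}})}\lesssim \norm{\psi}_{\infty}$ for $\psi\in\textup{H}^{\infty}_{0}$. The extension from $\textup{H}^{\infty}_{0}$ to arbitrary $\psi\in\textup{H}^{\infty}(S^{+}_{\mu})$ is then standard: approximate $\psi$ by $\psi_{n}(z):=\psi(z)\cdot\frac{nz}{(1+nz)^{2}}\cdot\frac{n}{n+z}\in \textup{H}^{\infty}_{0}$, note the uniform bound $\sup_{n}\norm{\psi_{n}}_{\infty}\lesssim \norm{\psi}_{\infty}$, and pass to the limit using the convergence lemma, which itself is a consequence of the uniform operator bound already established on the dense subclass $\textup{H}^{\infty}_{0}$.
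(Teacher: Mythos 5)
The paper offers no proof of this statement --- it is McIntosh's classical theorem, cited from \cite{McIntosh_HInfty_Calculus} and \cite[Theorem 7.3.1]{HaaseFunctionalCalculusBook} --- so I am assessing your sketch on its own terms. Your implication (i)$\Rightarrow$(ii) has the right skeleton (Calder\'on reproducing formula, change of auxiliary function via a Schur test on the kernel $\min(s/t,t/s)^{\alpha}$, convergence lemma to pass from $\textup{H}^{\infty}_{0}$ to $\textup{H}^{\infty}$), but the duality step is not justified as set up: you control the factor $\bigl(\int_{0}^{\infty}\norm{\varphi(tT)^{*}g}^{2}\frac{\dd t}{t}\bigr)^{1/2}$ ``by the quadratic estimate for $T^{*}$'', yet hypothesis (i) concerns $T$ only, and the identity $\varphi(tT)^{*}=\varphi^{*}(tT^{*})$ does not make quadratic estimates ``automatically symmetric'': boundedness of the extension operator $f\mapsto(\varphi(tT)f)_{t>0}$ dualises to boundedness of the \emph{contraction} operator $F\mapsto\int_{0}^{\infty}\varphi^{*}(tT^{*})F(t)\frac{\dd t}{t}$, not of the extension operator for $T^{*}$. (Also $\clos{\ran{T^{*}}}=\kernel{T}^{\perp}$ need not equal $\clos{\ran{T}}$, since the decomposition $\mathcal{H}=\kernel{T}\oplus\clos{\ran{T}}$ is topological rather than orthogonal.) The clean fix avoids $T^{*}$ entirely: since $\psi(T)f\in\clos{\ran{T}}$, the \emph{lower} bound in (i) gives $\norm{\psi(T)f}^{2}\lesssim\int_{0}^{\infty}\norm{\varphi(tT)\psi(T)f}^{2}\frac{\dd t}{t}$, your Schur step bounds this by $\norm{\psi}_{\infty}^{2}\int_{0}^{\infty}\norm{\varphi(sT)f}^{2}\frac{\dd s}{s}$, and the upper bound finishes.

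The implication (ii)$\Rightarrow$(i) as written fails. The function $z\mapsto\int_{0}^{\infty}\abs{\varphi(tz)}^{2}\frac{\dd t}{t}$ is neither holomorphic nor constant on $S^{+}_{\mu}$: the substitution $s=t\abs{z}$ only shows it is constant on each ray, and it genuinely depends on $\arg z$ (try $\varphi(z)=z(1+z)^{-2}$). More fundamentally, $\abs{\varphi(tT)}^{2}=\varphi(tT)^{*}\varphi(tT)=\varphi^{*}(tT^{*})\varphi(tT)$ is not an element of the holomorphic functional calculus of $T$ unless $T$ is normal, so neither your displayed identity nor the ``concise alternative'' (computing $Q^{*}Q$ as a function of $T$) can be deduced from the $\textup{H}^{\infty}$-bound. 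The standard Hilbert-space argument is the randomisation device: for signs $\epsilon_{k}=\pm1$ the functions $u_{\epsilon}(z)=\sum_{k}\epsilon_{k}\varphi(2^{k}sz)$ are uniformly bounded in $\textup{H}^{\infty}(S^{+}_{\mu})$ by the $\Psi^{\alpha}_{\beta}$-decay of $\varphi$, so $\norm{u_{\epsilon}(T)f}\lesssim\norm{f}$; averaging over the signs and using $\mathbb{E}_{\epsilon}\norm{\sum_{k}\epsilon_{k}x_{k}}^{2}=\sum_{k}\norm{x_{k}}^{2}$ in a Hilbert space yields $\sum_{k}\norm{\varphi(2^{k}sT)f}^{2}\lesssim\norm{f}^{2}$, and integrating over $s\in[1,2]$ gives the upper quadratic estimate; the lower one then follows by duality through the reproducing formula, using that (ii) for $T$ implies (ii), hence the upper estimate, for $T^{*}$.
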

We shall also need the following useful tool, see \cite[Theorem 5.2.6]{HaaseFunctionalCalculusBook} for a proof.
\begin{thm}[Calderón--McIntosh reproducing formula]\label{Calderon McIntosh reproducing formula}
    Let $T$ be a sectorial operator of angle $\omega_{T}\in [0,\pi)$ on a Hilbert space $\mathcal{H}$. Let $\varphi\in\textup{H}^{\infty}_{0}(S^{+}_{\mu})$ be such that $\int_{0}^{\infty}\varphi(t)\frac{\dd t}{t}=1$. Then, for all $f\in\clos{\ran{T}}$ it holds that  
    \begin{equation*}
        \lim_{\eps\to 0}\int_{\eps}^{1/\eps}\varphi(tT)f\frac{\dd t}{t}=f,
    \end{equation*}
    with convergence in the norm $\norm{\cdot}_{\mathcal{H}}$. 
    The result also holds for bisectorial operators upon replacing sectors with bisectors and provided $\varphi\in\textup{H}^{\infty}_{0}( S_{\mu})$ satisfies $\int_{0}^{\infty}\varphi(\pm t)\frac{\dd t}{t}=1$.
\end{thm}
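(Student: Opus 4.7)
The plan is to reduce the convergence to pointwise convergence on $S^{+}_{\mu}$ of a uniformly bounded holomorphic multiplier $\Phi_{\eps}$, and then lift this to strong operator convergence on $\clos{\ran{T}}$ via the multiplicativity of the $\textup{H}^{\infty}_{0}$-calculus combined with a density argument.

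First, for $\eps>0$ and $z\in S^{+}_{\mu}$, I would define
\[
\Phi_{\eps}(z):=\int_{\eps}^{1/\eps}\varphi(tz)\,\frac{\dd t}{t}.
\]
Choosing $\alpha,\beta>0$ with $\varphi\in\Psi^{\alpha}_{\beta}(S^{+}_{\mu})$, the estimate $|\varphi(tz)|\lesssim (t|z|)^{\beta}\wedge (t|z|)^{-\alpha}$ yields $\sup_{\eps>0}\|\Phi_{\eps}\|_{\textup{H}^{\infty}(S^{+}_{\mu})}<\infty$. The substitution $s=tz$ along the ray through $z$ realises $\Phi_{\eps}(z)$ as a contour integral of $s\mapsto\varphi(s)/s$ between $\eps z$ and $z/\eps$; the decay of $\varphi$ at $0$ and $\infty$ makes the circular-arc contributions vanish as $\eps\to 0$, so Cauchy's theorem lets me deform this arc onto the positive real axis and conclude $\Phi_{\eps}(z)\to \int_{0}^{\infty}\varphi(s)\,\dd s/s=1$ pointwise on $S^{+}_{\mu}$.

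Next, I fix the auxiliary function $\psi(z):=z(1+z)^{-2}\in\Psi^{1}_{1}$ and recall that $\ran{\psi(T)}$ is dense in $\clos{\ran{T}}$, a standard consequence of the $\textup{H}^{\infty}_{0}$-calculus on $\clos{\ran{T}}$. For $h\in\mathcal{H}$ and $f:=\psi(T)h$, Fubini applied inside the Cauchy contour representation of $\varphi(tT)$ (justified by $\varphi\in\textup{H}^{\infty}_{0}$ and compactness of $[\eps,1/\eps]$) gives
\[
\int_{\eps}^{1/\eps}\varphi(tT)f\,\frac{\dd t}{t}=(\Phi_{\eps}\psi)(T)h.
\]
Since $\Phi_{\eps}\psi\in\Psi^{1}_{1}$ uniformly in $\eps$ and $\Phi_{\eps}\psi\to\psi$ boundedly pointwise on $S^{+}_{\mu}$, the Dominated Convergence Theorem applied to the Cauchy integral defining $(\Phi_{\eps}\psi)(T)h$ yields $(\Phi_{\eps}\psi)(T)h\to \psi(T)h=f$ in norm.

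The main obstacle is the extension from $\ran{\psi(T)}$ to arbitrary $f\in\clos{\ran{T}}$, because the family $\{\int_{\eps}^{1/\eps}\varphi(tT)\,\dd t/t\}_{\eps>0}$ need not be uniformly bounded on $\mathcal{H}$ without a bounded $\textup{H}^{\infty}$-calculus hypothesis. I would circumvent this by approximating $f$ within $\ran{\psi(T)}$, say $f_{n}=\psi(T)h_{n}\to f$, and exploiting that the differences $\int_{\eps}^{1/\eps}\varphi(tT)(f_{n}-f_{m})\,\dd t/t = (\Phi_{\eps}\psi)(T)(h_{n}-h_{m})$ are uniformly controlled in $\eps$ by a constant multiple of $\|h_{n}-h_{m}\|$ thanks to the uniform $\Psi^{1}_{1}$-bound on $\Phi_{\eps}\psi$. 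A standard $\eps/3$-argument combining this uniform control on $\ran{\psi(T)}$ with the norm convergence established on each $f_{n}$ then delivers the desired convergence for every $f\in\clos{\ran{T}}$. The bisectorial case follows identically by performing the same analysis separately on the sectors $\pm S^{+}_{\mu}$, using the normalisation $\int_{0}^{\infty}\varphi(\pm t)\,\dd t/t=1$.
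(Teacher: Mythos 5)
Your reductions are on the right track — the uniform $\textup{H}^{\infty}$-bound on $\Phi_{\eps}$, the contour-deformation argument for pointwise convergence $\Phi_{\eps}(z)\to 1$, and the dominated-convergence step giving $(\Phi_{\eps}\psi)(T)h\to\psi(T)h$ are all sound and mirror the standard proof (the paper simply cites Haase, Theorem 5.2.6, rather than proving this). However, the final density extension does not close as written. The ``uniform control'' you invoke is the estimate $\|(\Phi_{\eps}\psi)(T)(h_{n}-h_{m})\|\lesssim\|h_{n}-h_{m}\|$, which bounds $\Phi_{\eps}(T)(f_{n}-f_{m})$ by the norm of the \emph{preimages} $h_{n}$ under $\psi(T)$, not by $\|f_{n}-f_{m}\|$. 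Since $\psi(T)$ has no bounded inverse, this is strictly weaker than uniform boundedness of $\Phi_{\eps}(T)$ on $\ran{\psi(T)}$ in the inherited norm, and the $\eps/3$-argument needs exactly that. Concretely, to dominate $\|\Phi_{\eps}(T)(f-f_{n})\|$ independently of $\eps$ you would have to choose the $h_{n}$ to be Cauchy; but if $\{h_{n}\}$ were Cauchy with limit $h$, then $f=\lim\psi(T)h_{n}=\psi(T)h\in\ran{\psi(T)}$, so this is only possible for the $f$ where density is not needed. For $f\in\clos{\ran{T}}\setminus\ran{\psi(T)}$ the preimages necessarily escape to infinity and the uniform-Cauchy reasoning fails precisely where it is required.

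The true obstruction you flagged is real: without assuming $T$ has a bounded $\textup{H}^{\infty}$-calculus, the family $\Phi_{\eps}(T)=\int_{\eps}^{1/\eps}\varphi(tT)\,\dd t/t$ is not known to be uniformly bounded on $\clos{\ran{T}}$, so the strong convergence cannot be obtained by a bare density argument. The proof must supply this uniform bound by other means. One way is to first treat a reference auxiliary function for which the truncated Calder\'on integral is an explicit linear combination of resolvents and hence manifestly uniformly bounded by sectoriality; for example, with $\psi_{0}(z)=z(1+z)^{-2}$ one computes $\int_{\eps}^{1/\eps}\psi_{0}(tz)\,\dd t/t=(1+\eps z)^{-1}-\eps(\eps+z)^{-1}$, so $\Phi_{\eps}^{\psi_{0}}(T)$ is uniformly bounded. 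One then compares $\Phi_{\eps}^{\varphi}$ against $\Phi_{\eps}^{\psi_{0}}$, exploiting the cancellation $\int_{0}^{\infty}(\varphi-\psi_{0})\,\dd t/t=0$ to control the difference; this is a genuine extra step, not a routine $\eps/3$-argument. Until uniform boundedness of the truncated operators is established, your final paragraph does not deliver the conclusion for general $f\in\clos{\ran{T}}$.
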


\subsubsection{Adjoints and similarity}
Let $T$ be a sectorial operator of angle $\omega_{T}\in[0,\pi)$ on a Hilbert space $\mathcal{H}$. Since $T$ is densely defined, it has an adjoint operator $T^{*}$. As is well-known, $T^{*}$ is also a sectorial operator of angle $\omega_{T}$. Moreover, $T^{*}$ is injective if and only if $T$ is injective. 

For a meromorphic function $\varphi:S^{+}_{\mu}\to\C$ and some $\mu\in(0,\pi)$, the complex conjugate of $\varphi$ is the function $\varphi^{*}:S^{+}_{\mu}\to\C$, defined as $\varphi^{*}(z):=\clos{\varphi(\clos{z})}$ for all $z\in S^{+}_{\mu}$. The operation of complex conjugation preserves the classes of holomorphic functions introduced in Section~\ref{subsection on classes of holomorphic functions}. For all $\mu\in(\omega_{T},\pi)$ and $\varphi\in\textup{H}^{\infty}_{0}(S^{+}_{\mu})$, the adjoint of the bounded operator $\varphi(T)\in\mathcal{L}(\mathcal{H})$ is $\varphi(T)^{*}=\varphi^{*}(T^{*})\in\mathcal{L}(\mathcal{H})$. The same result also holds for bisectorial operators up to the usual modifications.

\subsubsection{Off-diagonal estimates}\label{section on off diagonal estimates}
We will be working with operators which cannot generally be represented as integral operators which have pointwise integral kernel decay estimates. As substitutes for such estimates we will use the weaker \emph{off-diagonal estimates} (or Davies--Gaffney estimates).  The following definition is taken verbatim from \cite[Definition 4.5]{AuscherEgert}. 
\begin{definition}
Let $p,r\in [1,\infty]$, let $\Omega\subseteq \C\setminus\{0\}$, and let $W_1$ and  $W_2$ be finite-dimensional Hilbert spaces. A family $\set{T(z)}_{z\in\Omega}$ of bounded linear operators mapping $\El{2}(\Rn;W_1)$ into $\El{2}(\Rn;W_2)$ is said to satisfy $\El{p}-\El{r}$ off-diagonal estimates of order $\gamma>0$ if 
\begin{equation*}
    \norm{\ind{F}T(z)\left(\ind{E}f\right)}_{\El{r}}\lesssim \abs{z}^{\frac{n}{r}-\frac{n}{p}}\left(1+\frac{\dist(E,F)}{\abs{z}}\right)^{-\gamma}\norm{\ind{E}f}_{\El{p}}
\end{equation*}
for all measurable subsets $E,F\subseteq \Rn$, all $z\in\Omega$ and all $f\in\El{2}\cap\El{p}$. If there is a $c>0$ such that the stronger estimate
\begin{equation*}
    \norm{\ind{F}T(z)\left(\ind{E}f\right)}_{\El{r}}\lesssim \abs{z}^{\frac{n}{r}-\frac{n}{p}}\exp{\left(-c\frac{\dist(E,F)}{\abs{z}}\right)}\norm{\ind{E}f}_{\El{p}}
\end{equation*}
holds, then the family is said to satisfy $\El{p}-\El{r}$ off-diagonal estimates of exponential order. When $p=r$, we simply speak of $\El{p}$ off-diagonal estimates.
\end{definition}

\subsubsection{Hardy spaces $\mathbb{H}_{T}^{s,p}$}\label{subsection on operator adapted hardy spaces for general operators with standard assumptions}
In this section, we recall the definition of Hardy spaces adapted to a given (bi)sectorial operator, following the construction outlined in \cite[Chapter 8]{AuscherEgert}. 
Let $W$ be a finite dimensional Hilbert space, and let $T$ be an operator on $\El{2}(\Rn;W)$ satisfying the following standard assumptions:
\begin{enumerate}[label=(\roman*)]
    \item $T$ is a bisectorial operator on $\El{2}(\Rn;W)$, of angle $\omega_{T}\in\left[0,\pi /2\right)$,
    \item $T$ has a bounded $\textup{H}^{\infty}$-calculus on $\clos{\ran{T}}$, 
    \item  The family $\set{\left(1+itT\right)^{-1}}_{t\in\R\setminus\set{0}}$ satisfies $\El{2}$ off-diagonal estimates of arbitrarily large order.
\end{enumerate}
For sectorial operators, we replace $\frac{\pi}{2}$ by $\pi$ in (i), and require instead that $\set{\left(1+t^{2}T\right)^{-1}}_{t> 0}$ satisfies $\El{2}$ off-diagonal estimates in (iii).
For an arbitrary (bi)sectorial operator $T$ satisfying these standard assumptions, the $T$-adapted spaces are constructed as follows. For a given $\varphi\in \textup{H}^{\infty}$ on any appropriate (bi)sector, we consider the associated extension operator
\begin{equation}\label{extension operator for definition of operator adapted spaces}
    \bb{Q}_{\varphi,T}:\clos{\ran{T}}\longrightarrow \El{\infty}((0,\infty); \Ell{2}),
\end{equation}
defined for all $f\in\clos{\ran{T}}$ and $t>0$ as $\left(\bb{Q}_{\varphi, T}f\right)(t):=\varphi\left(tT\right)f$ if $T$ is bisectorial, or $\left(\bb{Q}_{\varphi, T}f\right)(t):=\varphi\left(t^{2}T\right)f$ if $T$ is sectorial.
For $p\in(0,\infty)$ and $s\in\R$, the pre Hardy--Sobolev space $\bb{H}^{s,p}_{\varphi ,T}$ of smoothness $s$ and integrability $p$, adapted to $T$, is then defined to be the set
\begin{equation*}
    \bb{H}^{s,p}_{\varphi ,T}:=\set{f\in\clos{\ran{T}} \,:\, \bb{Q}_{\varphi, T}f\in\text{T}^{s,p}\cap \text{T}^{0,2}}
\end{equation*}
equipped with the quasinorm  $\norm{f}_{\bb{H}^{s,p}_{\varphi ,T}}:=\norm{\bb{Q}_{\varphi ,T}f}_{\text{T}^{s,p}}$.
Notice that if $\varphi\in \textup{H}^{\infty}_{0}$ is non-degenerate, then $\bb{Q}_{\varphi ,T}$ is defined on $\El{2}$, and for all $f\in\El{2}$ there is an equivalence $\norm{\bb{Q}_{\varphi ,T}f}_{\tent{0,2}}\eqsim \norm{f}_{\El{2}}$ by Fubini's theorem and that of McIntosh.
As a consequence, the condition $\bb{Q}_{\psi , H}f\in\tent{0,2}$ is redundant in the definition of $\bb{H}^{s,p}_{\varphi , T}$ in that case. Moreover, this also shows that $\bb{H}^{0,2}_{\varphi ,T}=\clos{\ran{T}}$ with equivalent norms. Finally, we mention that, up to equivalent norms, the space $\bb{H}^{s,p}_{\varphi ,T}$ does not depend on the particular choice of $\varphi\in \textup{H}^{\infty}$ as long as it non-degenerate and has enough decay at $0$ and at $\infty$. In fact, we need $\varphi\in\Psi^{\tau}_{\sigma}$ with the following conditions on the decay parameters:
\begin{table}[ht]
\caption{}
    \begin{center}
\begin{tabular}{ l|c|c }
      &$T$ is bisectorial & $T$ is sectorial\\
      \hline
     If  $p\leq 2$ & $\tau > -s +\abs{\frac{n}{2}-\frac{n}{p}}$ and $\sigma >s$ & $\tau > -s/2 +\abs{\frac{n}{4}-\frac{n}{2p}}$ and $\sigma >s/2$\\ 
     \hline
     If $p\geq 2$ & $\tau> -s$ and $\sigma >s +\abs{\frac{n}{2}-\frac{n}{p}}$ & $\tau> -s/2$ and $\sigma >s/2 +\abs{\frac{n}{4}-\frac{n}{2p}}$\\
     \hline
    \end{tabular}
    \end{center}
    \label{decay parameters conditions bisectorial Hardy adapted space}
    \end{table}
    
See \cite[Proposition 8.2]{AuscherEgert} and references therein.
We denote $\bb{H}^{s,p}_{T}:=\bb{H}^{s,p}_{\varphi,T}$ for any suitable choice of $\varphi\in \textup{H}^{\infty}$. When $s=0$, we simply write $\bb{H}^{p}_{T}:=\bb{H}^{0,p}_{T}$.
\subsubsection{Molecular decompositions for $\bb{H}^{p}_{T}$}\label{subsubsection on molecular characterisation of op adapted hardy spaces}
If $p\leq 1$, the adapted Hardy space $\bb{H}^{p}_{T}$ admits a molecular decomposition \cite[Theorem 8.17]{AuscherEgert}.  In fact, provided $M$ is large enough depending on $p$ and $n$, for any $\eps>0$, an arbitrary function $f\in\bb{H}^{p}_{T}$ admits a decomposition $f=\sum_{k\geq 1}\lambda_{k}m_{k}$ with unconditional convergence in $\El{2}$, where the $m_{k}$ are so-called $(\bb{H}^{p}_{T},\eps,M)$-molecules and $\norm{(\lambda_{k})_{k\geq 1}}_{\ell^{p}(\N)}\eqsim \norm{f}_{\bb{H}^{p}_{T}}$. The $(\bb{H}^{p}_{T},\eps,M)$-molecules are defined in \cite[Definition 8.14]{AuscherEgert}. For $M=1$, their definition is recalled in Lemma~\ref{abstract molecules are concrete dziubanski molecules} below.

\subsubsection{Extension and contraction operators associated to an injective sectorial operator}\label{section on extension and contraction operators for general inj sectorial op}
Let $T$ be an injective sectorial operator on $\El{2}(\Rn)$ of type $S_{\omega}^{+}$ for some $\omega\in[0,\pi)$. Let us assume that $T$ has a bounded $\textup{H}^{\infty}$-calculus on $\Ell{2}$. For $\mu\in(\omega , \pi)$ and any non-trivial auxiliary function $\psi\in\textup{H}^{\infty}_{0}(S^{+}_{\mu})$, we observe that (since $\clos{\ran{T}}=\Ell{2}$) the extension operator $\bb{Q}_{\psi,T}$ from \eqref{extension operator for definition of operator adapted spaces} has the following mapping property:
\begin{equation*}
    \bb{Q}_{\psi,T}: \Ell{2}\rightarrow \El{2}((0,\infty),\frac{\dd t}{t}; \El{2}).
\end{equation*} 
This operator is bounded by McIntosh's theorem since $T$ has a bounded $\textup{H}^{\infty}$-calculus on $\Ell{2}$. Indeed, for $f\in\Ell{2}$ it holds that  
    \begin{align*}
        \norm{\bb{Q}_{\psi,T}f}_{\El{2}(0,\infty,\frac{\dd t}{t}; \El{2})}^{2}=\int_{0}^{\infty}\norm{\psi(t^{2}T)f}_{\El{2}}^{2}\frac{\dd t}{t}\eqsim \norm{f}_{\El{2}}^{2}.
    \end{align*} This implies that $\bb{Q}_{\psi,T}$ has a bounded adjoint $\left(\bb{Q}_{\psi,T}\right)^{*}: \El{2}((0,\infty),\frac{\dd t}{t};\El{2})\to \El{2}$, referred to as a \emph{contraction operator}. Let $\bb{C}_{\psi,T}:=(\bb{Q}_{\psi^{*},T^{*}})^{*}$. Then, for all $F\in\El{2}((0,\infty),\frac{\dd t}{t};\El{2})$ and $g\in\El{2}$, the operator $\bb{C}_{\psi,T}$ acts as
    \begin{equation*}
        \langle \bb{C}_{\psi,T}F , g\rangle_{\El{2}}=\lim_{\eps\to 0} \langle\int_{\eps}^{\frac{1}{\eps}}\psi(t^{2}T)\left[F(t)\right]\frac{\dd t}{t}, g \rangle_{\El{2}},
    \end{equation*}
and the limit exists by McIntosh's theorem. It follows from the Calderón--McIntosh reproducing formula that the constant $c:=\left(\int_{0}^{\infty}\psi^{2}(t^{2})\frac{dt}{t}\right)^{-1}$ is such that $f=c\bb{C}_{\psi,T}(\bb{Q}_{\psi,T}f)$ for all $f\in\Ell{2}.$

\subsection{Schrödinger operators $H$ and first-order operators $BD$ and $DB$}\label{subsection on schrodinger operators H and DB BD}
Here we consider $n\geq 3$ and non-negative $V\in \El{1}_{\loc}(\R^n)$. 
The bound \eqref{boundedness of coefficients} and ellipticity (\ref{ellipticity assumption}) imply that the sesquilinear form $h:\mathcal{V}^{1,2}(\Rn)\times \mathcal{V}^{1,2}(\Rn) \to\C$ on $\Ell{2}$, defined for all $f,g\in\mathcal{V}^{1,2}\left(\Rn\right)$ by
\begin{equation}\label{form method def of H}
    h(f,g):=\left\langle A_{\parallel\parallel} \nabla_{x}f , \nabla_{x}g\right\rangle_{\El{2}(\Rn;\C^{n})} + \left\langle a V^{1/2}f , V^{1/2}g\right\rangle_{\Ell{2}},
\end{equation}
is densely defined, closed, accretive and continuous. Following \cite[Section~3]{MorrisTurner}, there is a unique maximal accretive operator $\mathsf{H}:\dom{\mathsf{H}}\subseteq \Ell{2} \to \Ell{2}$ such that $h(f,\varphi)=\left\langle \mathsf{H}f, \varphi\right\rangle_{\Ell{2}}$ for all $\varphi\in\mathcal{V}^{1,2}(\Rn)$ and $f\in\dom{\mathsf{H}}\subseteq\mathcal{V}^{1,2}(\Rn)$. We then define $H:=b\mathsf{H}$ with $\dom{H}:=\dom{\mathsf{H}}$, since $b:=A_{\perp\perp}^{-1}\in\Ell{\infty}$, so that formally $H=-b\textup{div}_x(A\nabla_x)+baV$.

 We now consider the first-order operators of the type introduced by Auscher--McIntosh--Nahmod \cite{Auscher_McIntosh_Nahmod_97} from which the second-order operator $H$ can be constructed as in \cite[Section~3]{MorrisTurner}. Let $\nabla_{\mu}:\dom{\nabla_{\mu}}\subset\Ell{2}\longrightarrow \El{2}(\Rn ; \C^{n+1})$ denote the unbounded operator with domain $\dom{\nabla_{\mu}}:=\mathcal{V}^{1,2}(\Rn)$, and let $\nabla_{\mu}^{*}$ denote its adjoint. The self-adjoint operator $D:\dom{D}\subset \El{2}(\Rn;\C^{n+2})\longrightarrow \El{2}(\Rn;\C^{n+2})$ is defined as
\begin{equation*}
    Du := -\begin{bmatrix}
        0 & \nabla_{\mu}^{*}\\
        \nabla_{\mu} & 0 
    \end{bmatrix} 
    \begin{bmatrix}
        u_{\perp}\\
        \left(u_{\parallel}, u_{\mu}\right)
    \end{bmatrix}
    =-\begin{bmatrix}
    -\nabla_{\mu}^{*}\left(u_{\parallel}, u_{\mu}\right)\\
    -\nabla_{\mu}u_{\perp}
    \end{bmatrix}
\end{equation*}
for all $u=\left(u_{\perp} , u_{\parallel} , u_{\mu}\right) \in \El{2}(\Rn ; \C^{n+2})=\Ell{2}\oplus \El{2}(\Rn;\C^{n})\oplus \Ell{2}$ in the domain
\begin{equation*}
    \dom{D}:=\set{u\in \El{2}(\Rn ; \C^{n+2}) : u_{\perp}\in\mathcal{V}^{1,2}(\Rn) \text{ and } \left(u_{\parallel}, u_{\mu}\right)\in\dom{\nabla_{\mu}^{*}}}.
\end{equation*}
It follows from \cite[Lemma 3.1]{MorrisTurner} that the closure of the range of $D$ in $L^2(\Rn,\C^{n+2})$ is
\begin{align}\label{characterisation range D}
\begin{split}
    \clos{\ran{D}}&=\set{u\in\El{2}(\Rn; \C^{n+2}) : u_{\perp}\in\Ell{2} \text{ and} \left(u_{\parallel}, u_{\mu}\right) =\nabla_{\mu}g \text{ for  }g\in\dot{\mathcal{V}}^{1,2}(\Rn)\cap\Ell{2^{*}}}.
    \end{split}
\end{align}
The bounded multiplication operator $B:\El{2}(\Rn;\C^{n+2})\rightarrow\El{2}(\Rn;\C^{n+2})$  is defined pointwise by
\begin{equation}\label{definition of multiplication operator B associated to elliptic coefficients A}
    (Bu)(x):=B(x)u(x)
    \quad\text{ and }\quad
    B(x):=\begin{bmatrix}
        b(x) & 0 & 0\\
        0 & A_{\parallel\parallel}(x) & 0\\
        0 & 0 & a(x)
    \end{bmatrix}
\end{equation}
for all $u\in\El{2}(\Rn;\C^{n+2})$ and almost every $x\in\Rn$, and it is strictly accretive on $\ran{D}$ of angle
\[
\omega(B):=\sup_{v\in\ran{D}\setminus \{0\}}\abs{\arg\left\langle Bv ,v \right\rangle}
\in[0,\pi/2).
\]
The unbounded operators $BD$ and $DB$ are then understood on the domains $\dom{BD}=\dom{D}$ and $\dom{DB}=\set{u\in\El{2}(\Rn;\C^{n+2}) : Bu\in\dom{D}}$.
The operators $BD$ and $DB$ are densely defined bisectorial operators of type $S_{\omega(B)}$ on $\El{2}(\Rn;\C^{n+2})$ (see \cite[Section~3]{MorrisTurner} and the references therein).
Moreover, we have the topological direct sum decompositions
    \begin{equation*}\label{topological direct sum decompositions}
        \El{2}(\Rn;\C^{n+2})=\clos{\ran{DB}}\oplus\kernel{DB}=\clos{\ran{BD}}\oplus\kernel{BD}
    \end{equation*}
    and the following identities: $\ran{DB}=\ran{D}$, $\ran{BD}=B\ran{D}$, $B\kernel{DB}=\kernel{D}$ and $\kernel{BD}=\kernel{D}$.
This and the accretivity of $B$ imply that $B\clos{\ran{DB}}=B\clos{\ran{D}}=\clos{\ran{BD}}$, and that the restriction $B|_{\clos{\ran{DB}}} : \clos{\ran{DB}}\longrightarrow\clos{\ran{BD}}$ is an isomorphism.

The operators $\left(BD\right)^{2}$ and $\left(DB\right)^{2}$ are sectorial of type $S_{2\omega(B)}^{+}$ on $\El{2}(\Rn;\C^{n+2})$.
The block structure of the operators $B$ and $D$ shows that we can define operators $\widetilde{H}$, $M$ and $\widetilde{M}$
such that
\begin{equation}\label{identities square BD; H, M operators}
    (BD)^{2}=\begin{bmatrix}
        H & 0 \\
        0 & M
    \end{bmatrix} \quad\text{ and }\quad (DB)^{2}=\begin{bmatrix}
        \widetilde{H} & 0\\
        0 & \widetilde{M}
    \end{bmatrix}.
\end{equation}
The operators $H$ and $\widetilde{H}$ are injective sectorial operators of type $S_{2\omega(B)}^{+}$ on $\Ell{2}$, and $M$ and $\widetilde{M}$ are sectorial operators of type $S_{2\omega(B)}^{+}$ on $\El{2}(\Rn;\C^{n+1})$. Moreover, if $T\in\set{DB, BD}$, then bisectoriality of $T$ implies that $\kernel{T^{2}}=\kernel{T}$ and therefore that $\clos{\ran{T^{2}}}=\clos{\ran{T}}$ (see the bisectorial counterpart of \cite[Proposition 2.1.1 e)]{HaaseFunctionalCalculusBook}). As a consequence, (\ref{characterisation range D}) shows that 
\begin{equation}\label{identity closure of ranges of schrodinger operators widetilde}
    \clos{\ran{\widetilde{H}}}\times\clos{\ran{\widetilde{M}}}=\clos{\ran{(DB)^{2}}}=\clos{\ran{DB}}=\ran{D}=\Ell{2}\times \clos{\ran{\nabla_{\mu}}}
\end{equation}
and
\begin{equation}\label{identity closure of ranges of schrodinger operators}  \clos{\ran{H}}\times\clos{\ran{M}}=\clos{\ran{(BD)^{2}}}=\clos{\ran{BD}}=B\clos{\ran{D}}=\Ell{2}\times A\clos{\ran{\nabla_{\mu}}}.
\end{equation}

\subsubsection{Block functional calculus}
Let $\mu\in(2\omega(B),\pi)$, and $\psi\in \textup{H}^{\infty}(S_{\mu}^{+})$. Then, the function $\varphi(z):=\psi(z^{2})$ satisfies $\varphi\in \textup{H}^{\infty}(S_{\mu /2})$, and \cite[Theorem 3.2.20]{Egert_Thesis} shows that for $T\in\set{DB,BD}$ it holds that  $\varphi(T)=\psi(T^{2})$,
where the left-hand side is defined by the $\mathcal{F}(S_{\mu/2})$-functional calculus for the bisectorial operator $T$, and the right-hand side is constructed through the $\mathcal{F}(S^{+}_{\mu})$-functional calculus for the sectorial operator $T^{2}$. The block decomposition 
\eqref{identities square BD; H, M operators} is preserved by the functional calculus, with
\begin{equation}\label{decomposition property of BD into H and M}
    \varphi(BD)=\psi((BD)^{2})=\begin{bmatrix}
        \psi(H) & 0\\
        0 & \psi(M)
    \end{bmatrix},
\end{equation}
and a similar identity for $DB$. A central result obtained by Morris--Turner \cite{MorrisTurner} is the following. 
\begin{thm}[{\cite[Theorem 3.2]{MorrisTurner}}]\label{bounded H_infty functional calculus of H}
     Let $n\geq 3$, $q\geq \max\{\frac{n}{2},2\}$ and $V\in\textup{RH}^{q}(\Rn)$. If ${T\in \set{DB,BD}}$, then the bisectorial operator $T$ has a bounded $\textup{H}^{\infty}$-calculus of angle $\omega(B)$ on $\clos{\ran{T}}$. 
\end{thm}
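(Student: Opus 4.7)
The plan is to invoke McIntosh's theorem (Theorem 2.10) and reduce the $\textup{H}^{\infty}$-calculus assertion to quadratic estimates. That is, it suffices to show that for some non-trivial $\psi \in \textup{H}^{\infty}_{0}(S_\mu)$ with $\mu \in (\omega(B), \pi/2)$ and any $T \in \{DB, BD\}$, one has the two-sided bound
\begin{equation*}
\int_0^\infty \norm{\psi(tT)f}_{\El{2}(\Rn;\C^{n+2})}^2 \frac{\dd t}{t} \eqsim \norm{f}_{\El{2}(\Rn;\C^{n+2})}^2
\end{equation*}
for all $f \in \clos{\ran{T}}$. The natural choice is $\psi(z) = z(1+z^2)^{-1}$, so that $\psi(tT) = tT(1+t^2T^2)^{-1}$, which is well-defined on $\El{2}(\Rn;\C^{n+2})$ by bisectoriality. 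By the standard duality between $DB$ and $B^*D$ (as $D$ is self-adjoint) together with the accretivity of $B$, and also the intertwining $B(1+t^2DB \cdot DB)^{-1} = (1+t^2BD \cdot BD)^{-1}B$ on appropriate subspaces, it is enough to treat $T = DB$.

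The first step is to establish the lower bound together with the basic $\El{2}$-boundedness of the resolvents and their $\El{2}$ off-diagonal estimates of arbitrary order. The lower bound follows by a Calderón reproducing formula and Cauchy--Schwarz once the upper bound is obtained for both $DB$ and $(DB)^*$. The off-diagonal estimates are proved by testing the resolvent equation against smooth cutoffs $\eta$, adapting the argument in \cite{AAM_2008} to accommodate the $V^{1/2}$ component of $\nabla_{\mu}$ via the commutator $[\nabla_\mu, \eta] = [\nabla_x, \eta] \oplus 0$ (so the potential contributes no commutator term) along with the ellipticity bound \eqref{ellipticity assumption}.

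The main work is the upper quadratic estimate. Following the first-order framework of Auscher--Axelsson--McIntosh, I would split
\begin{equation*}
tDB(1+t^2(DB)^2)^{-1}f = \Theta_t^B P_t f + (tDB(1+t^2(DB)^2)^{-1}f - \Theta_t^B P_t f),
\end{equation*}
where $\Theta_t^B := tDB(1+t^2(DB)^2)^{-1}$ applied to locally constant "principal parts" and $P_t$ is a dyadic averaging operator. The error term $(I - P_t)f$ is handled by $\El{2}$ off-diagonal estimates combined with a Schur-type lemma. The principal part, giving a Carleson measure on $\Hn$, is handled by a $Tb$ theorem for square roots in the spirit of \cite{Auscher_Hofmann_Lacey_McIntosh_Tchamitchian2002}. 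The accretivity of $B$ on $\ran{D}$ produces the required test functions $b_Q$ on dyadic cubes $Q$.

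The principal obstacle, compared to the potential-free case in the literature, is the presence of the $V$-adapted gradient $\nabla_\mu$ and the need to keep the argument purely $\El{2}$-based despite the singular multiplication operator $V^{1/2}$. This is resolved by exploiting that $V \in \textup{RH}^q(\Rn)$ with $q \geq \max\{n/2, 2\}$: the Fefferman--Phong inequality (Proposition \ref{improved Fefferman--Phong inequality}) controls the $\El{2}$-norm of $V^{1/2}\eta$ on a cube $Q$ by $\ell(Q)^{-1}\norm{\nabla_x \eta}_{\El{2}(Q)}$ up to scale-invariant factors, which is exactly what is needed in the commutator estimates and in the construction of the $Tb$-test functions. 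The reverse Hölder hypothesis also ensures adequate Poincaré--Sobolev embeddings on balls, so the $Tb$-argument closes at the $\El{2}$ level without recourse to operator-weighted spaces.
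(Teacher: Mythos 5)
This theorem is not proved in the paper at all: it is imported verbatim from Morris--Turner \cite[Theorem~3.2]{MorrisTurner}, and the paper's ``proof'' is the citation. Your outline does correctly reproduce the strategy of the cited proof: reduce to quadratic estimates via McIntosh's theorem, pass between $DB$, $BD$ and their adjoints by similarity and duality, establish $\El{2}$ off-diagonal estimates for the resolvents (where, as you note, $V^{1/2}$ commutes with cut-offs so only $\nabla_x$ contributes to the commutator), and then run the Auscher--Axelsson--McIntosh principal-part/Carleson-measure scheme, with the Fefferman--Phong inequality supplying the extra input needed to handle the singular potential. So the route is essentially the same as the actual proof.

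Two caveats. First, the proposal is a roadmap rather than a proof: the decisive step --- the Carleson measure estimate for the principal part $\Theta_t^B$ acting on the $(n+2)$-component space containing the $V^{1/2}$ slot --- is delegated to ``a $Tb$ theorem in the spirit of \cite{Auscher_Hofmann_Lacey_McIntosh_Tchamitchian2002}'', but this is precisely where all of the new work in \cite{MorrisTurner} lies (the stopping-time construction of test functions adapted to $\nabla_{\mu}$ and the verification of the sector/cone decomposition in the presence of $V$), and nothing in the sketch carries it out. Second, the stated form of the Fefferman--Phong input is wrong: for a bump function $\eta$ adapted to a cube $Q$, the bound $\norm{V^{1/2}\eta}_{\El{2}(Q)}\lesssim \ell(Q)^{-1}\norm{\nabla_x\eta}_{\El{2}(Q)}$ amounts to $\ell(Q)^{2}\dashint_{Q}V\lesssim 1$, which holds only for cubes below the critical radius $\rho(\cdot,V)$ and fails for large cubes; Proposition~\ref{improved Fefferman--Phong inequality} controls $\min\{\textup{av}_{Q}V,\ell(Q)^{-2}\}\int_{Q}|f|^{2}$ by $\int_{Q}|\nabla_{\mu}f|^{2}$, which is used in the opposite direction (to gain coercivity for $f$, not to bound $V^{1/2}$ against $\nabla_x$). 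As written, that step would fail; the correct mechanism is the one in \cite{MorrisTurner}, which splits scales at the critical radius.
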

It follows from Theorem~\ref{bounded H_infty functional calculus of H} and McIntosh's theorem (Theorem~\ref{McIntosh Theorem}) that the sectorial operators $H$ and $\widetilde{H}$ have a bounded $\textup{H}^{\infty}$-calculus of angle $2\omega(B)$ on $\Ell{2}$, whereas $M$ and $\widetilde{M}$ have a bounded $\textup{H}^{\infty}$-calculus of angle $2\omega(B)$ on $\clos{\ran{M}}$.

\subsubsection{Intertwining relations}\label{section Intertwining relations}
Let $\mathcal{H}:=\clos{\ran{D}}=\clos{\ran{DB}}$. We know that $B|_{\mathcal{H}}:\clos{\ran{DB}}\longrightarrow \clos{\ran{BD}}$ is an isomorphism. Consequently, we have $BD|_{\clos{\ran{BD}}}=B|_{\mathcal{H}}(DB)|_{\clos{\ran{DB}}}\left(B|_{\mathcal{H}}\right)^{-1}$ on $\clos{\ran{BD}}$. By similarity, see \cite[Proposition 3.2.10]{Egert_Thesis}), we deduce that
\begin{equation}\label{similarity DB and BD}
    \varphi(BD)=B\varphi(DB)\left(B|_{\mathcal{H}}\right)^{-1},
\end{equation}
as closed (bounded) operators on $\clos{\ran{BD}}$, whenever $\varphi\in \textup{H}^{\infty}(S_{\mu})$, $\omega(B)<\mu<\pi /2$. 
The following so-called \emph{intertwining relations} are obtained by following the proof of \cite[Lemma 3.6]{AuscherEgert}.
\begin{lem}[Intertwining relations]\label{intertwining relations}
    Let $\mu\in(2\omega(B),\pi)$, and $\psi\in \textup{H}^{\infty}(S_{\mu}^{+})$. Let $\varphi\in \textup{H}^{\infty}(S_{\mu /2})$ be defined as $\varphi(z):=\psi(z^{2})$ for all $z\in S_{\mu /2}$. Then,
    \begin{equation*}
        D\varphi(BD)u=\varphi(DB)Du
    \end{equation*}
    for all $u\in\dom{D}\cap \clos{\ran{BD}}$. Moreover, if $f_{\perp}\in\mathcal{V}^{1,2}(\Rn)$ and $(f_{\parallel},f_{\mu})\in\dom{\nabla_{\mu}^{*}}\cap\clos{\ran{M}}$, then
    \begin{align*}
        \nabla_{\mu}^{*}\psi(M)\begin{bmatrix}
            f_{\parallel}\\
            f_{\mu}
        \end{bmatrix} &= \psi(\widetilde{H})\nabla_{\mu}^{*}\begin{bmatrix}
            f_{\parallel}\\
            f_{\mu}
        \end{bmatrix},\\
        \nabla_{\mu}\psi(H)f_{\perp} &= \psi(\widetilde{M})\nabla_{\mu}f_{\perp}.
    \end{align*}
\end{lem}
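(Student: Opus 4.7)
The plan is to follow the template of \cite[Lemma 3.6]{AuscherEgert} by first establishing a resolvent intertwining identity for the operators $BD$ and $DB$, extending it to $H^{\infty}_{0}$ auxiliary functions via the Cauchy integral, then to general bounded holomorphic functions by the convergence lemma, and finally deducing the second-order statements by unpacking the block decomposition \eqref{decomposition property of BD into H and M}.

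First, for $\zeta \in \C \setminus \clos{S_{\mu /2}}$, I would establish the resolvent intertwining
\[
D(\zeta - BD)^{-1}u \;=\; (\zeta - DB)^{-1} Du \qquad \text{for all } u\in \dom{D}.
\]
Setting $v:= (\zeta-BD)^{-1}u$, one has $v\in\dom{BD}=\dom{D}$ and $\zeta v - BDv = u$; hence $BDv = \zeta v - u \in \dom{D}$ since both summands are, and applying $D$ yields $(\zeta - DB)Dv = Du$, from which the identity follows because $\zeta \in \rho(DB)$.

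Next, for $\varphi \in \Psi^{\infty}_{\infty}(S_{\mu /2})$, $\varphi(BD)$ and $\varphi(DB)$ are given by absolutely convergent Cauchy integrals along a bisectorial contour in $\C\setminus\clos{S_{\mu/2}}$. Since $D$ is closed, I can pull $D$ through the integral and use the resolvent identity of the previous step to obtain $D\varphi(BD)u = \varphi(DB)Du$ for all $u\in\dom{D}$. To upgrade to general $\varphi \in \textup{H}^{\infty}(S_{\mu /2})$, I approximate $\varphi$ pointwise and boundedly by a sequence $(\varphi_{k})_{k\geq 1}\subset \Psi^{\infty}_{\infty}(S_{\mu /2})$ (for instance $\varphi_{k}(z) := \varphi(z)\left(\frac{kz}{(k+z)(1+kz)}\right)^{2}$ on each component of the bisector). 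By the bounded $\textup{H}^{\infty}$-calculus on $\clos{\ran{BD}}$ and $\clos{\ran{DB}}$ from Theorem~\ref{bounded H_infty functional calculus of H} together with McIntosh's convergence lemma, one obtains $\varphi_{k}(BD)u \to \varphi(BD)u$ and $\varphi_{k}(DB)Du \to \varphi(DB)Du$ in $\El{2}(\Rn;\C^{n+2})$, whenever $u\in\dom{D}\cap\clos{\ran{BD}}$ (noting that $Du \in \ran{D}=\ran{DB}\subseteq\clos{\ran{DB}}$). Closedness of $D$ then transfers the identity to the limit.

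Finally, for the two second-order identities, I would apply the first part to carefully chosen inputs. Writing vectors in $\El{2}(\Rn;\C^{n+2})$ as $(u_\perp,(u_\parallel,u_\mu))$, the block decompositions \eqref{decomposition property of BD into H and M} and its analogue for $DB$, combined with the off-diagonal form of $D$, give
\[
D\varphi(BD)u \;=\; \begin{bmatrix} \nabla_{\mu}^{*}\psi(M)(u_{\parallel},u_{\mu})\\ \nabla_{\mu}\psi(H)u_{\perp}\end{bmatrix},\qquad \varphi(DB)Du \;=\; \begin{bmatrix} \psi(\widetilde{H})\nabla_{\mu}^{*}(u_{\parallel},u_{\mu}) \\ \psi(\widetilde{M})\nabla_{\mu}u_{\perp}\end{bmatrix}
\]
(up to the overall sign of $D$ which cancels on both sides). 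The first identity in the lemma is then obtained by choosing $u := (0,(f_{\parallel},f_{\mu}))$, which lies in $\dom{D}\cap\clos{\ran{BD}}$ exactly when $(f_{\parallel},f_{\mu})\in\dom{\nabla_{\mu}^{*}}\cap\clos{\ran{M}}$ in view of \eqref{identity closure of ranges of schrodinger operators}. The second is obtained by choosing $u:=(f_{\perp},0,0)$, for which the hypothesis $f_{\perp}\in\mathcal{V}^{1,2}(\Rn)=\dom{\nabla_{\mu}}$ suffices since the other components are zero.

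The main obstacle I anticipate is the $\textup{H}^{\infty}$-extension step: both $\varphi_{k}(BD)u\to\varphi(BD)u$ and $\varphi_{k}(DB)Du\to\varphi(DB)Du$ must be controlled simultaneously on the correct closed ranges, and one must verify that the chosen approximants $\varphi_k$ are admissible auxiliary functions on both $S_{\mu/2}$ (for $BD$) and $S_{\mu/2}$ (for $DB$), with a uniform $\textup{H}^{\infty}$ bound; this is where the quadratic-estimate equivalence from Theorem~\ref{bounded H_infty functional calculus of H} and McIntosh's theorem are essential.
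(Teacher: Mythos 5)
Your proposal follows the resolvent-intertwining-then-functional-calculus-extension scheme, which is exactly what the paper does by deferring to the proof of \cite[Lemma 3.6]{AuscherEgert}: the resolvent identity $D(\zeta-BD)^{-1}u=(\zeta-DB)^{-1}Du$ for $u\in\dom{D}$, the Cauchy-integral pull-through of the closed operator $D$ for $\textup{H}^\infty_0$ auxiliary functions, the upgrade to $\textup{H}^\infty$ via McIntosh's convergence lemma and the bounded $\textup{H}^\infty$-calculus of Theorem~\ref{bounded H_infty functional calculus of H}, and the block-matrix unpacking via \eqref{decomposition property of BD into H and M} with the hypothesis check through \eqref{identity closure of ranges of schrodinger operators} are all the right steps, correctly executed. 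One small technical caveat: the displayed regularizer $\bigl(\frac{kz}{(k+z)(1+kz)}\bigr)^{2}$ has poles at $z=-k$ and $z=-1/k$ on the negative real half-line, which lies \emph{inside} the bisector $S_{\mu/2}$; your parenthetical ``on each component of the bisector'' hints at the fix, but you should either spell out that componentwise definition or, more simply, use an even bisectorial regularizer such as $\frac{k^{2}z^{2}}{(1+k^{2}z^{2})(k^{2}+z^{2})}$, whose poles sit on the imaginary axis and which is the natural choice here since $\varphi(z)=\psi(z^{2})$ is even.
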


\subsubsection{Adjoints and similarity}\label{subsubsection on adjoints and similarity}
It is useful to introduce the operator $H^{\sharp}:\dom{H^{\sharp}}\subset\Ell{2}\to\Ell{2}$, formally identified as
\begin{equation}\label{definition H sharp operator}
    H^{\sharp}:=-b^{*} \dvg_{x}\left({A_{\parallel\parallel}^{*}\nabla_{x}}\right) +b^{*}a^{*}V.
\end{equation}
Its coefficients have exactly the same ellipticity and boundedness properties as those of $H$. Note that $\dom{H^{*}}=\set{g\in\Ell{2}:b^{*}g\in\dom{H^{\sharp}}}$, and $H^{\sharp}\left(b^{*}g\right)=b^{*}H^{*}g$ for all $g\in\dom{H^{*}}$. In other words, the adjoint operator $H^{*}$ is similar to $H^{\sharp}$ under conjugation with $b^{*}$. The functional calculus preserves this similarity. In fact, if $\varphi\in\mathcal{F}(S_{\mu}^{+})$ for some $\mu\in(2\omega_{B},\pi)$, then $\varphi(H^{*})f=(b^{*})^{-1}\varphi(H^{\sharp})(b^{*}f)$
for all $f\in\Ell{2}$ (see \cite[Proposition 3.2.10]{Egert_Thesis}). In the same way, the operator $\widetilde{H}$ is similar to $H$ under conjugation with $b$.

\subsubsection{Mapping properties of adapted Hardy spaces}\label{subsubsection on mapping properties ofa dapted hardy spaces}
Let $n\geq 3$, $q\geq \max\{\frac{n}{2},2\}$ and $V\in\textup{RH}^{q}(\Rn)$. Let $T\in\set{BD,DB}$. We obtain the uniform bound
$\norm{\left(1+itT\right)^{-1}f}_{\El{2}}\lesssim \norm{f}_{\El{2}}$ for all $t\in\R$ and $f\in\El{2}(\Rn;\C^{n+2})$ because $T$ is  bisectorial. It follows from \cite[Proposition 3.7]{MorrisTurner} and the references therein that the family $\set{\left(1+itT\right)^{-1}}_{t\in\R\setminus\set{0}}$ satisfies $\El{2}$ off-diagonal estimates of exponential order.
Following the argument of the proof of \cite[Corollary 3.13]{AuscherEgert}, this implies that for any $S\in\set{H,\widetilde{H},M,\widetilde{M}}$, the families $\set{(1+t^{2}S)^{-1}}_{t>0}$ and $\set{t\nabla_{\mu}\left(1+t^{2}H\right)^{-1}}_{t>0}$ both satisfy $\El{2}$ off-diagonal estimates of exponential order. With Theorem~\ref{bounded H_infty functional calculus of H}, this shows that $T$ and $S$ satisfy the standard assumptions from Section~\ref{subsection on operator adapted hardy spaces for general operators with standard assumptions} (on $\El{2}(\Rn;\C^{n+2})$ and $\Ell{2}$, respectively). This means that the adapted Hardy spaces $\bb{H}^{s,p}_{T}$ and $\bb{H}^{s,p}_{S}$ can be constructed as in Section~\ref{subsection on operator adapted hardy spaces for general operators with standard assumptions} above.

This rest of this section is concerned with establishing the following important diagram connecting these different adapted Hardy spaces; compare with \cite[Figure 9.1]{AuscherEgert}.
\begin{figure}[H]
    \begin{tikzcd}
    \bb{H}^{p}_{BD}\cap\ran{[BD]} &[-2em] = &[-2em] \bb{H}^{p}_{H}\cap\ran{H^{1/2}}&[-1em]\oplus&[-1em] \bb{H}^{p}_{M}\cap\ran{M^{1/2}}\\    
    \bb{H}^{1,p}_{BD}\cap\dom{D} \arrow{u}{[BD]} \arrow{d}{D}&[-2em]=&\bb{H}^{1,p}_{H}\cap\dom{H^{1/2}}\arrow{u}{H^{1/2}}\arrow[drr,pos=0.1,"-\nabla_{\mu}"]&\oplus&\bb{H}^{1,p}_{M}\cap\dom{M^{1/2}}\arrow{u}{M^{1/2}}\arrow[dll,pos=0.9, "-\nabla_{\mu}^{*}"]\\ 
    \bb{H}^{p}_{DB}\cap\ran{D} \arrow{d}{B}&[-2em]=& \bb{H}^{p}_{\widetilde{H}}\cap\ran{\nabla_{\mu}^{*}}\arrow{d}{b} &\oplus& \bb{H}^{p}_{\widetilde{M}}\cap\ran{\nabla_{\mu}}\arrow{d}{A}\\
    \bb{H}^{p}_{BD}\cap\ran{BD} &[-2em]=& \bb{H}^{p}_{H}\cap\ran{b\nabla_{\mu}^{*}} &\oplus& \bb{H}^{p}_{M}\cap\ran{A\nabla_{\mu}}
  \end{tikzcd}
  \caption{Relations between various adapted Hardy spaces}
  \label{figure relations between adapted Hardy spaces}
\end{figure}
All spaces appearing on the diagram are intersections of some operator-adapted Hardy space with one of its dense subsets, for both the $\El{2}$ norm and the corresponding Hardy space norm. Moreover, each arrow is a bijection which is bounded from above and below for the correponding $p$-(quasi)norms.

We shall follow the construction outlined in \cite[Section~8.6]{AuscherEgert}, starting with the first column.
Let $T\in\set{BD,DB}$. We know that $T$ is a bisectorial operator of angle $\omega(B)\in\left[0,\pi/2\right)$ on $\El{2}(\Rn;\C^{n+2})$, with a bounded $\textup{H}^{\infty}$-calculus on $\clos{\ran{T}}$. It follows from equation~(4.8) in \cite{MorrisTurner} that the operators $T$ and $[T]$ have the same domain and the same range. Let $p>0$ and $s\in\R$. By following the argument of \cite[Corollary 8.11]{AuscherEgert}, we also get that they are bijections mapping $\bb{H}^{s+1,p}_{T}\cap \dom{T}$ to $\bb{H}^{s,p}_{T}\cap\ran{T}$ and satisfying 
    \begin{equation}\label{elementary regularity shift}
        \norm{Tf}_{\bb{H}^{s,p}_{T}}\eqsim \norm{f}_{\bb{H}^{s+1,p}_{T}}\eqsim \norm{\left[T\right]f}_{\bb{H}^{s,p}_{T}}
    \end{equation}
    for all $f\in \bb{H}^{s+1,p}_{T}\cap \dom{T}$. Moreover, we may follow the proof of \cite[Lemma 8.34]{AuscherEgert} and use the intertwining relations from Section~\ref{section Intertwining relations} to obtain that the map ${B:\bb{H}_{DB}^{s,p}\cap \ran{D}\to \bb{H}^{s,p}_{BD}\cap \ran{BD}}$
    obtained by restriction of the operator $B$, is bijective and $\norm{Bf}_{\bb{H}^{s,p}_{BD}}\eqsim \norm{f}_{\bb{H}^{s,p}_{DB}}$ for all $f\in\bb{H}_{DB}^{s,p}\cap\ran{D}$.
 We can also adapt the so-called regularity shift from \cite[Proposition 5.6]{Amenta_Auscher_2018elliptic} to deduce that the map $D:\bb{H}^{s+1,p}_{BD}\cap\dom{D}\to \bb{H}^{s,p}_{DB}\cap\ran{D}$ is bijective and satisfies $\norm{Df}_{\bb{H}^{s,p}_{DB}}\eqsim \norm{f}_{\bb{H}^{s+1,p}_{BD}}$ for all $f\in\bb{H}^{s+1,p}_{BD}\cap\dom{D}$. 
     This is because the proofs of \cite[Lemma 8.34]{AuscherEgert} and \cite[Proposition 5.6]{Amenta_Auscher_2018elliptic} extend  to the operators $BD$ and $DB$ considered here once the  following local coercivity inequality is established.
\begin{lem}\label{local coercivity inequality BD}
    Let $u\in\dom{D}$. Then, for all $t>0$ and all $x\in\Rn$ it holds that 
    \begin{equation*}
        \int_{B(x,t)}\abs{Du}^{2}\lesssim \int_{B(x,2t)}\abs{BDu}^{2} + t^{-2}\int_{B(x,2t)}\abs{u}^{2}.
    \end{equation*}
\end{lem}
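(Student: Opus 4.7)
The proof splits into two independent pieces, since componentwise
\[
Du = \bigl(\nabla_\mu^*(u_\parallel,u_\mu),\, \nabla_\mu u_\perp\bigr)
\quad\text{and}\quad
BDu = \bigl(b\,\nabla_\mu^*(u_\parallel,u_\mu),\, A_{\parallel\parallel}\nabla_x u_\perp,\, aV^{1/2}u_\perp\bigr),
\]
so that $\abs{Du}^2 = \abs{\nabla_\mu^*(u_\parallel,u_\mu)}^2 + \abs{\nabla_x u_\perp}^2 + V\abs{u_\perp}^2$. The first summand needs no cutoff: since $b^{-1}=A_{\perp\perp}$ with $\norm{A_{\perp\perp}}_{\El{\infty}(\Rn)}\leq\Lambda$ by \eqref{boundedness of coefficients}, the pointwise bound $\abs{\nabla_\mu^*(u_\parallel,u_\mu)}\leq \Lambda\abs{(BDu)_\perp}$ integrates directly to $\int_{B(x,t)}\abs{\nabla_\mu^*(u_\parallel,u_\mu)}^2\leq \Lambda^2\int_{B(x,2t)}\abs{BDu}^2$.

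The substantive step is a Caccioppoli-type inequality for the scalar component $u_\perp\in\mathcal{V}^{1,2}(\Rn)$ (which lies in this space since $u\in\dom{D}$). I would fix $\eta\in C^\infty_c(B(x,2t))$ with $\eta=1$ on $B(x,t)$, $0\leq\eta\leq 1$, and $\abs{\nabla\eta}\lesssim t^{-1}$, and set $X^2 := \int\eta^2\bigl(\abs{\nabla_x u_\perp}^2 + V\abs{u_\perp}^2\bigr)$. Since $\eta u_\perp\in\mathcal{V}^{1,2}(\Rn)$, specialising \eqref{ellipticity assumption} to $f=0$ and $g=\eta u_\perp$ (after extending from $C^\infty_c(\Rn)$ to $\mathcal{V}^{1,2}(\Rn)$ by Lemma~\ref{density lemma adapted sobolev spaces Morris Turner}) yields
\[
\lambda\int\bigl(\abs{\nabla_x(\eta u_\perp)}^2 + V\abs{\eta u_\perp}^2\bigr)\leq \Re{h(\eta u_\perp,\eta u_\perp)},
\]
where $h$ is the form in \eqref{form method def of H}. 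Using the elementary inequality $\abs{\alpha+\beta}^2\geq \tfrac{1}{2}\abs{\alpha}^2-\abs{\beta}^2$ to control $\abs{\nabla_x(\eta u_\perp)}^2$ from below, this gives $X^2\lesssim \Re{h(\eta u_\perp,\eta u_\perp)} + t^{-2}\norm{u_\perp}^2_{\El{2}(B(x,2t))}$.

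Next I would expand $h(\eta u_\perp,\eta u_\perp)$ by Leibniz and compare to $h(u_\perp,\eta^2 u_\perp)$: the difference consists of cross-terms of schematic form $\int \eta\,\bar u_\perp\, A_{\parallel\parallel}\nabla_x u_\perp\cdot\nabla\eta$ and $\int A_{\parallel\parallel}\nabla\eta\cdot\overline{\nabla\eta}\,\abs{u_\perp}^2$, each bounded by $\varepsilon X^2 + C_\varepsilon t^{-2}\norm{u_\perp}^2_{\El{2}(B(x,2t))}$ via Young's inequality. The crucial algebraic identity is
\[
h(u_\perp,\eta^2 u_\perp) = \int (BDu)_{\parallel,\mu}\cdot \overline{\nabla_\mu(\eta^2 u_\perp)},
\]
which follows directly from \eqref{form method def of H} since $(BDu)_{\parallel,\mu}=(A_{\parallel\parallel}\nabla_x u_\perp,\, aV^{1/2}u_\perp)$ and $\nabla_\mu v = (\nabla_x v, V^{1/2} v)$. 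The pointwise bound $\abs{\nabla_\mu(\eta^2 u_\perp)}\lesssim \eta\abs{\nabla_\mu u_\perp} + \abs{u_\perp}\abs{\nabla\eta}$ (using $\eta^2\leq\eta$) together with Cauchy--Schwarz yields $\abs{h(u_\perp,\eta^2 u_\perp)}\lesssim \norm{BDu}_{\El{2}(B(x,2t))}\bigl(X + t^{-1}\norm{u_\perp}_{\El{2}(B(x,2t))}\bigr)$. Assembling the three estimates, absorbing the $\varepsilon X^2$ term on the left, and applying Young's inequality once more to the resulting bilinear term gives $X^2\lesssim \norm{BDu}^2_{\El{2}(B(x,2t))} + t^{-2}\norm{u_\perp}^2_{\El{2}(B(x,2t))}$. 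The lemma then follows by combining this with the bound on the first summand and using $\eta\equiv 1$ on $B(x,t)$.

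I expect the main obstacle to be the form-level comparison between $h(\eta u_\perp,\eta u_\perp)$ and $h(u_\perp,\eta^2 u_\perp)$, precisely because only the \emph{integrated} ellipticity \eqref{ellipticity assumption} of the pair $(A_{\parallel\parallel},aV)$ is available rather than pointwise positivity. Consequently the Leibniz cross-terms cannot be dispatched by a pointwise bound and must be absorbed into $X^2$ via Young's inequality, which requires a bit of care in coupling the gradient and potential contributions so that the absorption is not spoiled by the complex-valued coefficients.
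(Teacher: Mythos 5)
Your proof is correct, but it follows a different route from the paper's. The paper multiplies the full vector $u$ by the cut-off, verifies the commutator identity $D(\eta u)=\eta Du+[\,\text{mult.\ by }\nabla_{x}\eta\,]u$ --- which requires the nontrivial fact, imported from the proof of \cite[Proposition 3.7]{MorrisTurner}, that $(\eta u_{\parallel},\eta u_{\mu})\in\dom{\nabla_{\mu}^{*}}$ --- and then concludes in one stroke by applying the strict accretivity of $B$ on $\ran{D}$ to the element $D(\eta u)\in\ran{D}$, exactly as in Auscher--Stahlhut. You instead split $Du$ into its two blocks: the transversal block $\nabla_{\mu}^{*}(u_{\parallel},u_{\mu})=A_{\perp\perp}(BDu)_{\perp}$ is controlled pointwise with no cut-off at all, and the tangential block $\nabla_{\mu}u_{\perp}$ is handled by a Caccioppoli computation for the form $h$, using the integrated ellipticity \eqref{ellipticity assumption} with $f=0$ (extended to $\mathcal{V}^{1,2}(\Rn)$ by density) together with the identity $h(u_{\perp},\eta^{2}u_{\perp})=\langle (BDu)_{\parallel,\mu},\nabla_{\mu}(\eta^{2}u_{\perp})\rangle_{\El{2}}$. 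The two arguments rest on the same hypotheses: by \eqref{characterisation range D}, the accretivity of $B$ on $\ran{D}$ decomposes precisely into the pointwise accretivity of $b$ and the form ellipticity you invoke. What your version buys is that you never need $\eta u\in\dom{D}$ for the full vector --- the only cut-off manipulation is the elementary product rule for the scalar $u_{\perp}\in\mathcal{V}^{1,2}(\Rn)$ --- at the price of carrying out the comparison of $h(\eta u_{\perp},\eta u_{\perp})$ with $h(u_{\perp},\eta^{2}u_{\perp})$ explicitly; the cross-terms and absorption steps you describe all go through, and the pointwise bound on the transversal block is if anything cleaner than routing it through the cut-off.
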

\begin{proof}
     We follow the argument of the proof of \cite[Lemma 5.14]{AuscherStahlhut_FunctionalCalculus_Dirac}. Let $u\in\dom{D}$, and let $B\subset\Rn$ be a ball of radius $r>0$. Let $\eta\in\smcp$ such that $0\leq \eta\leq 1$, $\eta=1$ on $B$ and $\supp{\eta}\subset 2B$ with $\norm{\nabla_{x} \eta}_{\Ell{\infty}}\lesssim r^{-1}$. The proof of \cite[Proposition 3.7]{MorrisTurner} shows that $\eta u \in\dom{D}$, with 
        $D(\eta u)=\eta Du + \begin{bmatrix}
            \left(\nabla_{x}\eta\right)\cdot u_{\parallel}\\
            -\left(\nabla_{x}\eta\right)u_{\perp}\\
            0
        \end{bmatrix}.$
    This allows to conclude as in the proof of \cite[Lemma 5.14]{AuscherStahlhut_FunctionalCalculus_Dirac}.
\end{proof}

Since $\dom{D}=\dom{BD}$, $\ran{D}=\ran{DB}$ and $\ran{[BD]}=\ran{BD}$, \cite[Lemma 8.7]{AuscherEgert} shows that each space appearing in the first column of Figure \ref{figure relations between adapted Hardy spaces} is the intersection of an operator-adapted Hardy space $\bb{H}^{s,p}_{T}$ with one if its dense subsets. Density holds for both the $\bb{H}^{s,p}_{T}$ norm and the $\El{2}$ norm.
Following the argument of \cite[Section~8.6]{AuscherEgert}, we also have the decomposition $\bb{H}^{s,p}_{BD}=\bb{H}^{s,p}_{H}\oplus\bb{H}^{s,p}_{M}$ with the estimate
    $\norm{u}_{\bb{H}^{s,p}_{BD}}\eqsim \norm{ u_{\perp}}_{\bb{H}^{s,p}_{H}} + \norm{(u_{\parallel},u_{\mu})}_{\bb{H}^{s,p}_{M}}$. 
Similarly, we have the decomposition $\bb{H}^{s,p}_{DB}=\bb{H}^{s,p}_{\widetilde{H}}\oplus\bb{H}^{s,p}_{\widetilde{M}}$. Splitting each space in the first column according to the corresponding decomposition, we obtain the complete Figure \ref{figure relations between adapted Hardy spaces}.

\section{Hardy spaces adapted to Schrödinger operators}\label{section on Hardy spaces adapted to schrodinger operators}
The Fefferman--Stein Hardy spaces $\textup{H}^{p}(\Rn)$ (see \cite[Chapter \RNum{3}]{Stein_HarmonicAnalysis_93}) are well-known to be adapted to the Laplace operator $-\Delta$. In this section, we consider Hardy spaces adapted to the Schrödinger operator $H_{0}:=-\Delta +V$ and introduced by Dziuba\`{n}ski and Zienkiewicz in~\cite{Dziubanski_Hp_Spaces}. We begin by considering $n\geq 1$ and non-negative $V\in\El{1}_{\loc}(\Rn)$. The operator $H_{0}:=-\Delta +V$ is associated to the sesquilinear form $h_{0}$ defined by setting $A_{\parallel\parallel}\equiv 1$ and $a\equiv 1$ in the form $h$ defined by \eqref{form method def of H}.
    The form $h_{0}$ is symmetric, densely defined, accretive and closed. The properties of the self-adjoint operator $H_{0}$  below are well-known, but a proof is included for completeness. Note that $-\Delta$, defined  by setting $V\equiv 0$, generates the classical heat semigroup $\set{e^{t\Delta}}_{t\geq 0}$ given by
  $e^{t\Delta}f=p_{t}\ast f$, where $p_{t}(x):=(4\pi t)^{-n/2}\exp{\left(-\frac{\abs{x}^{2}}{4t}\right)}$, for all $f\in\Ell{2}$, $x\in\R^n$ and $t>0$.

\begin{lem}\label{properties of schrodinger semigroup on L2} If $n\geq 1$ and $V\in\El{1}_{\loc}(\Rn)$ is non-negative, then $-H_{0}$ generates a holomorphic semigroup $\set{e^{-zH_{0}}}_{\Re{z}>0}$ on $\Ell{2}$, and the restriction $\set{e^{-tH_{0}}}_{t>0}$ is real, positive and dominated by the classical heat-semigroup, in the sense that for all $t\geq 0$ and all $f\in\Ell{2}$ it holds that
\begin{equation}\label{domination of Schrodinger semigroup by Heat semigroup}
    \abs{e^{-tH_{0}}f}\leq e^{t\Delta}\abs{f}
\end{equation}
pointwise almost everywhere on $\Rn$.
\end{lem}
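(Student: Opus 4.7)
The plan is to construct $H_{0}$ via the form method and then apply the Trotter product formula to obtain the pointwise domination by the classical heat semigroup. Since $h_{0}$ is a densely defined, symmetric, non-negative and closed sesquilinear form on $\El{2}(\Rn)$, the first representation theorem yields a unique non-negative self-adjoint operator $H_{0}$ on $\El{2}(\Rn)$ associated with $h_{0}$. By the spectral theorem, $-H_{0}$ then generates a holomorphic $C_{0}$-semigroup of contractions $\{e^{-zH_{0}}\}_{\Re z>0}$ on the right half-plane, defined via the Borel functional calculus of $H_{0}$.

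For realness, I would note that $h_{0}$ commutes with complex conjugation (since $A_{\parallel\parallel}\equiv 1$ and $V$ are real valued), whence the resolvents of $H_{0}$, and therefore $e^{-tH_{0}}$, commute with conjugation. To establish \eqref{domination of Schrodinger semigroup by Heat semigroup}, I would apply the Trotter product formula
\[
e^{-tH_{0}}f=\lim_{k\to\infty}\bigl(e^{-(t/k)V}e^{(t/k)\Delta}\bigr)^{k}f\qquad\text{in }\El{2}(\Rn),
\]
which is valid for the form sum of two non-negative self-adjoint operators (here the Laplacian $-\Delta$ and multiplication by $V$, the latter being self-adjoint on its maximal domain since $V\in\El{1}_{\loc}(\Rn)$ is finite almost everywhere). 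Since multiplication by the bounded function $e^{-(t/k)V}\in[0,1]$ satisfies $|e^{-(t/k)V}g|\leq|g|$ pointwise, and positivity of the heat kernel $p_{t/k}$ gives $|e^{(t/k)\Delta}g|\leq e^{(t/k)\Delta}|g|$ pointwise, a straightforward induction on $k$ yields
\[
\bigl|\bigl(e^{-(t/k)V}e^{(t/k)\Delta}\bigr)^{k}f\bigr|\leq e^{t\Delta}|f|
\]
pointwise almost everywhere on $\Rn$. Extracting an a.e.\ convergent subsequence as $k\to\infty$ then gives the required bound. Positivity of $e^{-tH_{0}}$ applied to a non-negative $f\in\El{2}(\Rn)$ then follows by splitting any real-valued datum as $f=f^{+}-f^{-}$ and combining realness of the semigroup with the fact that $|e^{-tH_{0}}f^{\pm}|\leq e^{t\Delta}f^{\pm}$ and $e^{t\Delta}$ preserves positivity; equivalently, one may invoke the first Beurling--Deny criterion for the symmetric form $h_{0}$.

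The main technical obstacle is justifying the Trotter product formula in this form-sum setting for unbounded $V\in\El{1}_{\loc}(\Rn)$; this is a classical theorem of Kato for non-negative self-adjoint operators, and the hypothesis on the form domains is immediate here since $C^{\infty}_{c}(\Rn)\subset \mathcal{V}^{1,2}(\Rn)\cap\textup{W}^{1,2}(\Rn)$. As an alternative approach, one could avoid Trotter entirely by invoking Ouhabaz's domination criterion for semigroups generated by forms, applied to $h_{0}$ and the Dirichlet form $h_{-\Delta}(f,g):=\langle\nabla_{x}f,\nabla_{x}g\rangle_{\Ell{2}}$; the hypotheses reduce to the stability of $\mathcal{V}^{1,2}(\Rn)$ under the operation $f\mapsto|f|$ together with the non-negativity of $V$, both of which are immediate.
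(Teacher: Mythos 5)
Your proposal is correct in its main thrust but takes a genuinely different route from the paper. The paper obtains all three properties (realness, positivity, domination) from Ouhabaz's form-theoretic criteria: realness and positivity from Proposition~2.5 and Theorem~2.7 of \cite{OuhabazHeatEquonDomains} (the latter being the first Beurling--Deny criterion), and the domination \eqref{domination of Schrodinger semigroup by Heat semigroup} from Proposition~2.23 and Theorem~2.24 there, the only thing to check being that $|u|\in\mathcal{V}^{1,2}(\Rn)$ with $h_{0}(|u|,|u|)=h_{0}(u,u)$ for real-valued $u$. You instead derive the domination from Kato's Trotter product formula for form sums of non-negative self-adjoint operators, which applies here since $\dom{(-\Delta)^{1/2}}\cap\dom{V^{1/2}}=\mathcal{V}^{1,2}(\Rn)\supseteq C^{\infty}_{c}(\Rn)$ is dense, and your induction through the factors $e^{-(t/k)V}$ and $e^{(t/k)\Delta}$ is sound. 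The form-domination route is shorter once one accepts the cited criteria and only requires lattice properties of the form domain; your Trotter route is more hands-on and makes the pointwise inequality completely transparent, at the cost of invoking Kato's product formula. You correctly identify the paper's approach as your stated alternative.

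One step as written does not go through: positivity of $e^{-tH_{0}}$ cannot be deduced from the domination $|e^{-tH_{0}}f^{\pm}|\leq e^{t\Delta}f^{\pm}$ together with realness, since a bound on the modulus says nothing about the sign (e.g.\ $-e^{t\Delta}$ is real and dominated by $e^{t\Delta}$ but is not positive). This is easily repaired within your own framework: each Trotter approximant $(e^{-(t/k)V}e^{(t/k)\Delta})^{k}$ is a composition of positivity-preserving operators, hence positive, and strong limits of positive operators are positive. Alternatively, the first Beurling--Deny criterion that you mention in passing (and which the paper uses) settles it directly.
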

\begin{proof}
    Since $H_{0}$ is self-adjoint and positive, it is sectorial of type $S_{0}^{+}$ and thus $-H_{0}$ generates a bounded holomorphic semigroup by \cite[Theorem 1.54]{OuhabazHeatEquonDomains}.
    The semigroup $\set{e^{-tH_{0}}}_{t>0}$ is real and  positive by Proposition~2.5 and Theorem~2.7 in~ \cite{OuhabazHeatEquonDomains}. It is elementary to check, using well-known properties of the Sobolev space $\textup{W}^{1,2}(\Rn)$, that for any real-valued $u\in\mathcal{V}^{1,2}(\Rn)$, it holds that   $\abs{u}\in\mathcal{V}^{1,2}(\Rn)$, with $h_{0}(\abs{u},\abs{u})=h_{0}(u,u)$. 
    The domination property (\ref{domination of Schrodinger semigroup by Heat semigroup}) then follows from Proposition~2.23 and Theorem~2.24 in~\cite{OuhabazHeatEquonDomains}. 
\end{proof}
If $p\in[1,\infty]$, then it follows from (\ref{domination of Schrodinger semigroup by Heat semigroup}) and Young's convolution inequality that  
\begin{align}\label{uniform boundedness heat semigroup schrodinger on Lp for all p}
    \norm{e^{-tH_{0}}f}_{\El{p}}&\leq \norm{\abs{f}\ast p_{t}}_{\El{p}}\leq \norm{f}_{\El{p}}\norm{p_t}_{\El{1}}\lesssim \norm{f}_{\El{p}}
\end{align}
for all $f\in\El{2}\cap\El{p}$ and $t>0$. In addition, for all $f\in\El{1}\cap\El{2}$ and $t>0$ we have 
\begin{align*}
    \norm{e^{-tH_{0}}f}_{\El{\infty}}\leq \norm{\abs{f}\ast p_{t}}_{\El{\infty}}\leq \norm{p_{t}}_{\El{\infty}}\norm{f}_{\El{1}}.
\end{align*}

It follows that the operators $e^{-tH_0}$ are integral operators (see, e.g., \cite[Theorem 1.3]{Arendt_Bukhvalov_1994}). Namely, for each $t>0$ there is a bounded measurable function ${k_{t}:\Rn\times \Rn\to \R}$ such that 
\begin{equation}\label{integral operator representation of semigroup}
    (e^{-tH_0}f)(x)=\int_{\Rn}k_{t}(x,y)f(y)\dd y
\end{equation}
for all $f\in\Ell{2}$ and almost every $x\in\Rn$. The self-adjointness of the semigroup $\set{e^{-tH_{0}}}_{t>0}$ implies that $k_{t}(x,y)=\clos{k_{t}(y,x)}$ for almost every $(x,y)\in\Rn\times \Rn.$
Moreover, testing the domination property (\ref{domination of Schrodinger semigroup by Heat semigroup}) and the positivity of the semigroup with (\ref{integral operator representation of semigroup}) and $f:={\meas{B(\eps,z)}}^{-1}\ind{B(\eps,z)}$ for arbitrary $z\in\Rn$ and $\eps\to 0^{+}$, we obtain by Lebesgue's differentiation theorem that
\begin{align}\label{classical gaussian estimates for schrodinger semigroup}
    0\leq k_{t}(x,y)\leq p_{t}(x-y) = (4\pi t)^{-n/2}\exp{\left(-\frac{\abs{x-y}^{2}}{4t}\right)}
\end{align}
for almost every $(x,y)\in\Rn\times \Rn.$ These Gaussian estimates are strengthened below.
\begin{lem}\label{improved gaussian bound on heat kernel of schrodinger operator}
If $n\geq 3$, $q>\frac{n}{2}$, $V\in\textup{RH}^{q}(\Rn)$, $\delta=2-\frac{n}{q}$ and $\sigma\in (0,\delta\wedge 1)$, then:
\begin{enumerate}[label=\emph{(\roman*)}]
    \item There exists $c>0$, and for each $N>0$ there exists $C_{N}\geq 0$, such that 
    \begin{equation*}
        0\leq k_{t}(x,y)\leq C_{N}t^{-\frac{n}{2}}e^{-c\frac{\abs{x-y}^{2}}{t}}\left(1+\frac{t^{1/2}}{\rho(x)}+\frac{t^{1/2}}{\rho(y)}\right)^{-N}
    \end{equation*}
    for all $x,y\in\Rn$ and all $t>0$. 
    \item There exists $c>0$, and for each $N>0$ there exists $C_{N}\geq 0$, such that 
    \begin{equation*}
        \abs{k_{t}(x,y)-k_{t}(x,z)}\leq C_{N}\left(\frac{\abs{y-z}}{t^{1/2}}\right)^{\sigma}t^{-n/2}e^{-c\frac{\abs{x-y}^{2}}{t}}\left(1+\frac{t^{1/2}}{\rho(x)}+\frac{t^{1/2}}{\rho(y)}\right)^{-N}
    \end{equation*}
    for all $x,y,z\in\Rn$ such that $\abs{y-z}<t^{1/2}$.
    \item There exists $c>0$ and $C\geq 0$ such that
    \begin{equation*}
        \abs{k_{t}(x,y)-k_{t}(x,z)}\leq C\left(\frac{\abs{y-z}}{t^{1/2}}\right)^{\sigma}t^{-n/2}e^{-c\frac{\abs{x-y}^{2}}{t}}
    \end{equation*}
    for all $t>0$ and all $x,y,z\in\Rn$ such that $\abs{x-z}\geq \frac{1}{2}\abs{x-y}$.
    \end{enumerate}
\end{lem}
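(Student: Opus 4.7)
The plan is to establish these three estimates by combining semigroup perturbation methods (to obtain the $\rho$-decay in part (i)) with parabolic De Giorgi--Nash--Moser regularity (to obtain the Hölder continuity in parts (ii) and (iii)), using throughout the reverse Hölder scaling estimate (\ref{Shen eqn control of integral of potential by the critical radius function}) and the slowly-varying property (\ref{comparability property for critical radius function}).

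For part (i), I would start from the Duhamel identity
\[
k_t(x,y)=p_t(x-y)-\int_0^t\!\!\int_{\R^n}p_{t-s}(x-z)V(z)k_s(z,y)\,\dd z\,\dd s.
\]
The basic building block is the bound
\[
\int_{\R^n}V(z)\,t^{-n/2}e^{-c|x-z|^2/t}\,\dd z \lesssim t^{-1}\bigl(1+\sqrt{t}/\rho(x)\bigr)^{\delta},
\]
obtained by decomposing $\R^n$ into dyadic annuli $C_j(B(x,\sqrt{t}))$ and applying (\ref{Shen eqn control of integral of potential by the critical radius function}) on each, using either $R=\sqrt{t}$ (when $\sqrt{t}\leq \rho(x)$) or $R=\rho(x)$ together with the doubling property from Lemma~\ref{self-improvement property of reverse holder weights}.(ii) (when $\sqrt{t}>\rho(x)$). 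Iterating Duhamel and using the semigroup property $e^{-tH_0}=(e^{-tH_0/N})^N$ to distribute the decay evenly over $N$ factors then upgrades the single-step exponent $\delta$ to arbitrary order $N$, at the price of a smaller Gaussian constant $c$. The symmetric form involving both $\rho(x)$ and $\rho(y)$ follows by distinguishing $|x-y|\lesssim \rho(x)$ (where $\rho(x)\eqsim\rho(y)$ by (\ref{comparability property for critical radius function})) and $|x-y|\gtrsim \rho(x)$ (where the Gaussian absorbs any power of $\sqrt{t}/\rho(y)$).

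For part (iii), I would appeal to parabolic De Giorgi--Nash--Moser regularity for the operator $\partial_s+H_0$: the map $(s,w)\mapsto k_s(x,w)$ is a bounded weak solution on a parabolic cylinder of radius comparable to $\sqrt{t}$ centred at $(t,y)$. The hypothesis $|x-z|\geq\tfrac12|x-y|$ ensures that both $y$ and $z$ lie in such a cylinder on which the Gaussian factor $e^{-c|x-w|^2/t}$ is comparable throughout, and the $L^\infty$-bound is controlled by $t^{-n/2}e^{-c|x-y|^2/t}$ via (\ref{classical gaussian estimates for schrodinger semigroup}). The classical Moser-type oscillation estimates for $-\Delta+V$ with $V\in\textup{RH}^q$ then yield Hölder continuity in $w$ with any exponent $\sigma<\delta\wedge 1$, which is exactly the statement of (iii). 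For part (ii), the additional $\rho$-decay is recovered by the semigroup splitting
\[
k_t(x,y)-k_t(x,z)=\int_{\R^n}k_{t/2}(x,w)\bigl[k_{t/2}(w,y)-k_{t/2}(w,z)\bigr]\dd w,
\]
applying (iii) to the bracketed difference and (i) to the factor $k_{t/2}(x,w)$; the integration of the resulting product of Gaussians reproduces a Gaussian in $|x-y|$ (with a smaller constant), while the $\rho(x)$-factor survives the convolution via (\ref{comparability property for critical radius function}).

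The main obstacle I anticipate is the bootstrap in part (i) that upgrades the natural one-step $\rho$-exponent $\delta$ to arbitrary order $N$. A direct Duhamel iteration only adds $\delta$ per step while consuming a fixed fraction of Gaussian aperture, so the argument must balance these two decays carefully---typically by first establishing a weak form of the $\rho$-decay as a uniform $L^1\to L^\infty$ bound and then invoking the semigroup law together with Davies--Gaffney type rescalings of the Gaussian constant. Once this bootstrap is settled for (i), the Hölder estimates (ii) and (iii) follow by essentially standard applications of parabolic regularity theory for reverse Hölder potentials.
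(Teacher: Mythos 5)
The paper does not actually prove this lemma: item (i) is quoted from Kurata's Theorem~1, item (ii) from Dziuba\'nski's Proposition~4.11, and item (iii) is obtained as an immediate two-case consequence of (i) and (ii) (if $\abs{y-z}\geq t^{1/2}$ the factor $(\abs{y-z}/t^{1/2})^{\sigma}$ is $\geq 1$ and one bounds the two kernels separately by (i), using $\abs{x-z}\geq\frac{1}{2}\abs{x-y}$ to trade Gaussians; if $\abs{y-z}<t^{1/2}$ one simply drops the $\rho$-factor in (ii)). Your proposal instead reconstructs the proofs of the cited results from scratch, and in broad strokes it follows the same strategy as the literature: Duhamel perturbation plus semigroup iteration for the $\rho$-decay, and local regularity theory for the H\"older continuity. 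Note, though, that your logical order for (ii) and (iii) is reversed relative to the cheapest route: deriving (iii) first from parabolic regularity and then (ii) by convolution is harder than deriving (ii) directly (as in Dziuba\'nski) and then reading off (iii).

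As a proof, the sketch has concrete gaps. First, the bootstrap in (i) from the one-step exponent $\delta$ to arbitrary $N$ is acknowledged but not resolved; this is the technical heart of Kurata's theorem, and the issue of the Gaussian aperture degrading with $N$ must be addressed (it can be, e.g.\ by interpolating the $N$-dependent bound against the plain Gaussian bound \eqref{classical gaussian estimates for schrodinger semigroup} to restore a uniform $c$). Second, in the splitting $k_t(x,y)-k_t(x,z)=\int k_{t/2}(x,w)[k_{t/2}(w,y)-k_{t/2}(w,z)]\,\dd w$ you cannot apply (iii) to the bracket for every $w$: the hypothesis $\abs{w-z}\geq\frac{1}{2}\abs{w-y}$ fails when $w$ is close to $z$ (e.g.\ $w=z$), so the $w$-integral must be split and the symmetric version of (iii) (with $y$ and $z$ interchanged) used on the complementary region; this is fixable but missing. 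Third, and more seriously, this convolution only produces the factor $(1+t^{1/2}/\rho(x))^{-N}$; to recover the $(1+t^{1/2}/\rho(y))^{-N}$ decay in (ii) you must extract it from the $\rho(w)$-term in the bound for $k_{t/2}(x,w)$ uniformly over \emph{all} $w$, which requires the quantitative slowly-varying estimate $m(w)\lesssim(1+\abs{w-y}m(y))^{k_{0}}m(y)$ from Shen's Lemma~1.4 (so that the error is a polynomial in $\abs{w-y}/t^{1/2}$ absorbed by the Gaussian); the purely local comparability \eqref{comparability property for critical radius function} that you invoke is not sufficient for this step. Given these issues, deferring to the cited references, as the paper does, or following Dziuba\'nski's direct perturbative proof of (ii), is the safer course.
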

\begin{proof}
    The estimate (i) can be found in \cite[Theorem 1]{Kurata1999}.
The Hölder continuity (ii) can be found in \cite[Proposition 4.11]{Dziubanski2003}. The third item is a combination of the previous two.
\end{proof}
\subsection{Dziuba\`{n}ski--Zienkiewicz Hardy spaces $\textup{H}^{p}_{V}$}\label{section on DZ Hardy spaces}

We now return to consider $n\geq 1$ and non-negative $V\in\El{1}_{\loc}(\Rn)$. For $f\in\Ell{2}$, we define the maximal function
\begin{align*}
    \mathcal{M}_{V}f :=\sup_{t>0}\abs{e^{-tH_{0}}f}
\end{align*}
as a lattice supremum to ensure measurability of $\mathcal{M}_{V}f$ (see, e.g., \cite[Lemma 2.6.1]{meyer-Nieberg-1991banach_lattices} for details). For $0<p\leq\infty$, this allows us to define the (quasi)normed space 
\begin{equation*}
    \textup{H}^{p}_{V, \text{pre}}\left(\Rn\right):=\set{f\in\Ell{2} : \mathcal{M}_{V}f\in\Ell{p}}
\end{equation*}
with the (quasi)norm $\norm{f}_{\text{H}^{p}_{V}}:=\norm{\mathcal{M}_{V}f}_{\El{p}}$ for all $f\in\textup{H}^{p}_{V, \text{pre}}\left(\Rn\right)$. To see that this is indeed a (quasi)norm, observe that if $f\in\Ell{2}$ and $\norm{f}_{\Dz{p}}=0$, then $\abs{\left(e^{-tH_{0}}f\right)(x)}\leq\left(\mathcal{M}_{V}f\right)(x)=0$ for almost every $x\in\Rn$ and all $t>0$, so $\norm{e^{-tH_{0}}f}_{\El{2}}=0$ for all $t>0$, and the limit as $t\to 0$ shows that $f=0$ in $\Ell{2}$. The Dziuba\`{n}ski--Zienkiewicz Hardy space $\textup{H}^{p}_{V}\left(\Rn\right)$ can be defined as a completion of $\textup{H}^{p}_{V, \text{pre}}(\R^n)$. We will construct a completion of $\textup{H}^{1}_{V}(\Rn)$ in Section~\ref{subsubsection with definition of completion H^{1}_{V}} after establishing the following preliminaries.
\subsubsection{Properties for $p\geq 1$}

At the level of the pre-completed spaces $\textup{H}^{p}_{V, \text{pre}}(\R^n)$, we have the following result.
\begin{lem}\label{identification of Dziubanski hardy spaces for p>=1}
    Let $n\geq 1$ and $V\in\El{1}_{\loc}(\Rn)$ be nonnegative. Then $\textup{H}^{p}_{V, \textup{pre}}(\Rn)=\Ell{p}\cap\Ell{2}$ with equivalent $p$-norms for all $p\in(1,\infty]$. Moreover, if $f\in\textup{H}^{1}_{V, \textup{pre}}(\Rn)$, then $f\in\Ell{1}$ with $\norm{f}_{\El{1}}\leq \norm{f}_{\textup{H}^{1}_{V}}$. 
\end{lem}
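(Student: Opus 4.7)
The first task is the inclusion $\textup{H}^{p}_{V,\textup{pre}}(\Rn) \hookrightarrow \El{p} \cap \El{2}$ for every $p \in [1,\infty]$. Since $-H_0$ is self-adjoint and nonnegative, it generates a strongly continuous semigroup on $\Ell{2}$, so $e^{-tH_0}f \to f$ in $\Ell{2}$ as $t \to 0^+$ for every $f \in \textup{H}^{p}_{V,\textup{pre}}(\Rn) \subseteq \Ell{2}$. Passing to a subsequence $t_k \to 0^+$ along which convergence is almost everywhere, one obtains the pointwise domination
\begin{equation*}
\abs{f(x)} = \lim_{k\to\infty}\abs{e^{-t_kH_0}f(x)} \leq \sup_{t>0}\abs{e^{-tH_0}f(x)} = \mathcal{M}_V f(x)
\end{equation*}
for almost every $x \in \Rn$. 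Taking $\Ell{p}$ norms yields $\norm{f}_{\El{p}} \leq \norm{f}_{\textup{H}^p_V}$ for each $p \in [1,\infty]$, which already delivers both the embedding $\textup{H}^p_{V,\textup{pre}} \hookrightarrow \El{p} \cap \El{2}$ and the statement for $p=1$.

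The second task is the reverse inclusion when $p > 1$, that is, $\El{p} \cap \El{2} \hookrightarrow \textup{H}^p_{V,\textup{pre}}$. The key tool is the domination \eqref{domination of Schrodinger semigroup by Heat semigroup} by the classical heat semigroup, which gives
\begin{equation*}
\mathcal{M}_V f(x) \leq \sup_{t>0}\abs{(p_t * \abs{f})(x)}
\end{equation*}
pointwise almost everywhere. The heat kernel admits a radially symmetric, radially decreasing, integrable majorant (after the parabolic rescaling $p_t(x) = t^{-n/2}\tilde{p}(x/\sqrt{t})$ with $\tilde{p}(y) = (4\pi)^{-n/2}e^{-\abs{y}^2/4}$), so the standard approximation-of-the-identity estimate (e.g.\ \cite[Chapter \RNum{3}, Theorem 2]{Stein_HarmonicAnalysis_93}) produces a constant $C > 0$ with
\begin{equation*}
\sup_{t>0}\abs{(p_t * \abs{f})(x)} \leq C \, \mathcal{M}_{HL}\abs{f}(x)
\end{equation*}
for almost every $x \in \Rn$, where $\mathcal{M}_{HL}$ denotes the Hardy--Littlewood maximal operator. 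Combining these two pointwise bounds and invoking the $\Ell{p}$-boundedness of $\mathcal{M}_{HL}$ for $p \in (1,\infty]$ yields $\norm{f}_{\textup{H}^p_V} = \norm{\mathcal{M}_V f}_{\El{p}} \lesssim \norm{f}_{\El{p}}$. The endpoint $p=\infty$ can also be handled directly from $\norm{p_t}_{\El{1}} = 1$, which gives $\mathcal{M}_V f(x) \leq \norm{f}_{\El{\infty}}$ a.e.

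\textbf{Main obstacle.} There is no serious obstacle: both directions are essentially standard applications of semigroup theory and the pointwise majorization of the heat semigroup by the Hardy--Littlewood maximal operator. The only point requiring a touch of care is the almost-everywhere passage to the limit as $t \to 0^+$, which is resolved by extracting a subsequence from the $\Ell{2}$-convergence provided by the $C_0$-semigroup property of $\{e^{-tH_0}\}_{t>0}$.
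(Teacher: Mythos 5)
Your proof is correct, and the first half takes a genuinely different route from the paper. For the embedding $\textup{H}^{p}_{V,\textup{pre}}(\Rn)\hookrightarrow\El{p}\cap\El{2}$, you extract an a.e.-convergent subsequence from the $\El{2}$-limit $e^{-tH_0}f\to f$ and obtain the pointwise bound $\abs{f}\leq\mathcal{M}_V f$ almost everywhere, which handles $p\in[1,\infty]$ in a single stroke. The paper instead argues by duality: it shows $\norm{e^{-tH_0}f}_{\El{p}}\leq\norm{f}_{\textup{H}^p_V}$ uniformly in $t$, tests against $\varphi\in\El{2}\cap\El{p'}$, passes to the limit $t\to 0^+$, and then for $p=1$ needs the extra truncation device $\varphi_N:=\frac{f}{\abs{f}}\ind{B(0,N)}$ because $\El{1}$ is not the dual of $\El{\infty}$. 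Your pointwise approach is cleaner in that it sidesteps this endpoint workaround entirely; what the duality argument buys is that it never needs to identify the lattice supremum with an a.e.\ pointwise supremum. On that point, one small clarification is in order: the intermediate step $\lim_{k}\abs{e^{-t_kH_0}f(x)}\leq\sup_{t>0}\abs{e^{-tH_0}f(x)}=\mathcal{M}_V f(x)$ as written conflates the raw pointwise supremum over an uncountable index set (which need not be measurable) with the lattice supremum $\mathcal{M}_V f$ the paper actually defines. The argument is sound because you only need the countable family: for each fixed $k$ the lattice-supremum definition gives $\abs{e^{-t_kH_0}f}\leq\mathcal{M}_V f$ a.e., the union of the countably many null sets is still null, and passing to the limit in $k$ yields $\abs{f}\leq\mathcal{M}_V f$ a.e.\ directly. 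The reverse embedding for $p>1$ is the same argument as the paper's, via the heat-kernel domination \eqref{domination of Schrodinger semigroup by Heat semigroup} and the Hardy--Littlewood maximal theorem.
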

\begin{proof}
    Let $1<p\leq \infty$ and $f\in\Ell{2}\cap\Ell{p}$. The domination property (\ref{domination of Schrodinger semigroup by Heat semigroup}) implies that 
    \begin{equation}\label{domination of schrodinger maximal function by hardy littlewood max function}
        \left(\mathcal{M}_{V}f\right)(x)=\sup_{t>0}\abs{\left(e^{-tH_0}f\right)(x)}\leq \sup_{t>0}\left(\abs{f}\ast p_{t}\right)(x)\lesssim \hlmax{\abs{f}}(x)
    \end{equation}
    for almost every $x\in\Rn$, where $\mathcal{M}$ is the Hardy--Littlewood maximal operator (see \cite[Chapter \RNum{3}, Theorem 2 (a)]{SteinSingularIntegrals} for the last estimate). Since $p>1$, the maximal theorem implies that $f\in\Dz{p}$, with $\norm{f}_{\Dz{p}}=\norm{\mathcal{M}_{V}f}_{\El{p}}\lesssim \norm{f}_{\El{p}}.$
    Conversely, if $f\in\textup{H}^{p}_{V, \text{pre}}(\Rn)$ then for all $t>0$ we have $e^{-tH_{0}}f\in\Ell{p}$, with $\norm{e^{-tH_{0}}f}_{\El{p}}\leq \norm{\mathcal{M}_{V}f}_{\El{p}}=\norm{f}_{\Dz{p}}$. Moreover, since $f\in\Ell{2}$, it holds that  $e^{-tH_{0}}f\to f$ in $\Ell{2}$ as $t\to 0^{+}$, by strong continuity of the semigroup. In particular, for every $\varphi\in\El{2}\cap\El{p'}$ we have $\langle f,\varphi\rangle_{\El{2}}=\lim_{t\to 0^{+}}\langle e^{-tH_{0}}f , \varphi \rangle_{\El{2}}$.
    Since in addition we have
    \begin{align*}
        \abs{\langle e^{-tH_{0}}f , \varphi \rangle_{\El{2}}}\leq \norm{e^{-tH_{0}}f}_{\El{p}}\norm{\varphi}_{\El{p'}}\leq \norm{f}_{\Dz{p}}\norm{\varphi}_{\El{p'}}
    \end{align*}
    for all $t>0$, we obtain that $\abs{\langle f,\varphi\rangle_{\El{2}}}\leq \norm{f}_{\Dz{p}}\norm{\varphi}_{\El{p'}}$ for arbitrary $\varphi\in\El{2}\cap\El{p'}$. This implies that $f\in\Ell{p}$, with $\norm{f}_{\El{p}}\leq \norm{f}_{\Dz{p}}$.

    Now let $f\in\textup{H}^{1}_{V, \text{pre}}(\Rn)$. The previous argument gives
        $\abs{\langle f,\varphi\rangle_{\El{2}}}\leq \norm{f}_{\textup{H}^{1}_{V}}\norm{\varphi}_{\El{\infty}}$
    for all $\varphi\in\El{2}\cap\El{\infty}$. For each $N\geq 1$, we apply this estimate with $\varphi:=\varphi_{N}\in\El{2}\cap\El{\infty}$, where 
    \begin{equation*}
        \varphi_{N}(x):=
        \begin{cases}
            \frac{f(x)}{\abs{f(x)}}\ind{B(0,N)}(x) & \text{ if }f(x)\neq 0,\\
            0 & \text{ if }f(x)=0.
        \end{cases}
    \end{equation*}
    This yields $\int_{B(0,N)}\abs{f(x)}\dd x\leq \norm{f}_{\textup{H}^{1}_{V}}\norm{\varphi_{N}}_{\El{\infty}}\leq \norm{f}_{\textup{H}^{1}_{V}}$ and the result follows.
\end{proof}
\begin{rem}
    If $V\equiv 0$, then $\textup{H}^{p}_{V, \textup{pre}}(\Rn)=\textup{H}^{p}\left(\Rn\right)\cap\Ell{2}$ for all $p\in\left(0,\infty\right]$. Indeed, in this case $H_{0}=-\Delta$, and for $f\in\Ell{2}$ and almost every $x\in\Rn$ it holds that 
$\left(\mathcal{M}_{V}f\right)(x)=\sup_{t>0}\abs{\left(f\ast p_{t}\right)(x)}$, so the fact that $p_{1}\in\mathcal{S}(\Rn)$ yields the claim (see \cite[Chapter \RNum{3}]{Stein_HarmonicAnalysis_93}).
\end{rem}
\subsubsection{The space $\textup{H}^{1}_{V}$}\label{subsubsection with definition of completion H^{1}_{V}}
We now define the Banach space $\textup{H}^{1}_{V}(\Rn)$ that was mentioned in the introduction.
Lemma~\ref{identification of Dziubanski hardy spaces for p>=1} shows that there is a continuous embedding $\textup{H}^{1}_{V,\textup{pre}}(\Rn)\subseteq \Ell{1}$. \emph{The completion} of $\textup{H}^{1}_{V,\textup{pre}}(\Rn)$ in $\Ell{1}$, in the sense of \cite[Definition 2.1]{AMM_2015_CalderonReproducingFormulas}, will be denoted by $\textup{H}^{1}_{V}(\Rn)$. Its existence follows from \cite[Proposition 2.2]{AMM_2015_CalderonReproducingFormulas} and the fact that every Cauchy sequence $(f_{k})_{k\geq 1}$ in $\textup{H}^{1}_{V,\textup{pre}}(\Rn)$ that converges to $0$ in $\El{1}$ satisfies $f_{k}\to 0$ in $\textup{H}^{1}_{V,\textup{pre}}(\Rn)$. This follows from the $\El{1}$-boundedness of the semigroup $\set{e^{-tH_{0}}}_{t>0}$ (see \eqref{uniform boundedness heat semigroup schrodinger on Lp for all p} above). In fact, for every $t>0$ and $k\geq 1$, since $\lim_{m\to\infty}f_{m}=0$ in $\El{1}$, this implies that 
\begin{align*}
    e^{-tH_{0}}f_{k}=\lim_{m\to\infty} e^{-tH_{0}}(f_k-f_m),
\end{align*}
with convergence in $\Ell{1}$. Up to extraction of a subsequence, we get that 
\begin{align*}
    \abs{(e^{-tH_{0}}f_{k})(x)}=\liminf_{m\to\infty} \abs{e^{-tH_{0}}(f_k-f_m)(x)}\leq \liminf_{m\to\infty} \mathcal{M}_{V}(f_k-f_m)(x)
\end{align*}
for all $t>0$, $k\geq 1$ and almost every $x\in\Rn$. Taking the supremum over $t>0$, integrating over $\Rn$, and finally using Fatou's lemma, we deduce that
\begin{equation*}
    \norm{f_{k}}_{\textup{H}^{1}_{V}}\leq \liminf_{m\to \infty}\norm{f_k-f_m}_{\textup{H}^{1}_{V}}
\end{equation*}
for all $k\geq 1$. The conclusion follows from the Cauchy property of $(f_{k})_{k\geq 1}$ in $\textup{H}^{1}_{V,\textup{pre}}(\Rn)$. For completeness we spell out that $\textup{H}^{1}_{V}(\Rn)$ consists of all $f\in\Ell{1}$ for which there is a Cauchy sequence
$(f_{k})_{k\geq 1}$ in $\textup{H}^{1}_{V,\textup{pre}}(\Rn)$ such that $(f_{k})_{k\geq1}$ converges to $f$ in $\Ell{1}$, equipped with the (compatibly defined) norm $\norm{f}_{\textup{H}^{1}_{V}}:=\lim_{k\to\infty}\norm{f_k}_{\textup{H}^{1}_{V}}$. If $n\geq 3$, $q>\frac{n}{2}$ and $V\in\textup{RH}^{q}(\Rn)$, then it also follows from Corollary~\ref{thm continuous inclusion of classical hardy into Dziubanski hardy space} that the classical Hardy space $\textup{H}^{1}(\Rn)$ is continuously embedded into $\textup{H}^{1}_{V}(\Rn)$.
\subsubsection{Equivalent norms on $\textup{H}^{p}_{V,\textup{pre}}$}\label{subsubsection on equivalent characterisations of hardy dziubaski spaces}
We will use an equivalent characterisation of $\textup{H}^{p}_{V,\textup{pre}}$ obtained by Dekel--Kerkyacharian--Kyriazis--Petrushev in \cite{DKKP}. For $n\geq 1$ and non-negative $V\in\El{1}_{\loc}(\Rn)$, we know that $H_{0}$ is a non-negative self-adjoint operator on $\Ell{2}$, mapping real-valued functions to real-valued functions, and the semigroup $\set{e^{-tH_{0}}}_{t>0}$ satisfies the Gaussian bound (\ref{classical gaussian estimates for schrodinger semigroup}), so $H_{0}$ satisfies the hypotheses (H1) and (H2) of \cite[Section~1]{DKKP}.
Let $\varphi\in\mathcal{S}(\R)$ be a real-valued and even Schwartz function. For $f\in\Ell{2}$, we consider the maximal function $\mathcal{M}_{V}(f,\varphi):\Rn\to[0,\infty]$ defined as
$
    \mathcal{M}_{V}(f,\varphi):=\sup_{t>0}\abs{\varphi(tH_{0}^{1/2})f},
$
where the supremum is understood as a lattice supremum (as at the start of Section \ref{section on DZ Hardy spaces}), and  $\varphi(tH_{0}^{1/2})\in\mathcal{L}(\El{2})$ is defined by the Borel functional calculus for the positive self-adjoint operator $H_{0}^{1/2}$ (see Section~\ref{section on the holomorphic functional calculus}). The equivalent characterisation that we shall need is the following result.
\begin{lem}[{\cite[Theorem 3.7]{DKKP}}]\label{lemma equivalent characterisation Hp_V by DKKP}
    Let $p\in(0,1]$ and $\varphi\in\mathcal{S}(\R)$ be real-valued, even, and such that $\varphi(0)\neq 0$. If $f\in\Ell{2}$, then $f\in\textup{H}^{p}_{V,\textup{pre}}(\Rn)$ if and only if $\mathcal{M}_{V}(f,\varphi)\in\Ell{p}$. Moreover, $\norm{f}_{\textup{H}^{p}_{V}}\eqsim \norm{\mathcal{M}_{V}(f,\varphi)}_{\El{p}}$ for all $f\in\textup{H}^{p}_{V,\textup{pre}}(\Rn)$.
\end{lem}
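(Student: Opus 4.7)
The plan is to verify that $H_{0}$ meets the abstract hypotheses of \cite[Theorem 3.7]{DKKP} and to invoke that result directly. That theorem applies to any nonnegative self-adjoint operator $L$ on $\El{2}(\Rn)$ satisfying (H1) $L$ maps real-valued functions to real-valued functions, and (H2) the heat semigroup $\set{e^{-tL}}_{t>0}$ admits a pointwise upper Gaussian bound. Both hypotheses are already available in our setting: positivity and self-adjointness of $H_{0}$ come from the form construction \eqref{form method def of H}, the real-to-real mapping property of $\set{e^{-tH_{0}}}_{t>0}$ is part of Lemma~\ref{properties of schrodinger semigroup on L2}, and the Gaussian kernel bound is \eqref{classical gaussian estimates for schrodinger semigroup}. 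After this verification, the equivalence $\norm{f}_{\textup{H}^{p}_{V}}\eqsim \norm{\mathcal{M}_{V}(f,\varphi)}_{\El{p}}$ is a direct instance of \cite[Theorem 3.7]{DKKP} with $L=H_{0}$, and the if-and-only-if assertion follows from this equivalence together with the definition of $\textup{H}^{p}_{V,\textup{pre}}(\Rn)$.

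For orientation on the DKKP argument itself, one first normalises $\varphi$ so that $\varphi(0)=1$. The central observation is that both $\mathcal{M}_{V}f$ and $\mathcal{M}_{V}(f,\varphi)$ are pointwise comparable to a common grand maximal function $\sup_{\psi\in\mathcal{F}}\sup_{t>0}\abs{\psi(tH_{0}^{1/2})f}$, where $\mathcal{F}$ is a suitable family of even Schwartz functions with uniform seminorm bounds and non-degeneracy at $0$. One direction of each comparison is immediate upon enlarging the supremum to range over $\mathcal{F}$. The non-trivial direction requires producing a generic $\psi\in\mathcal{F}$ from $\varphi$ (or from the Gaussian heat profile) by means of a Calderón-type reproducing formula, in which the hypothesis $\varphi(0)\neq 0$ ensures convergence near the spectral origin while the Schwartz decay of $\varphi$ controls the spectral tail at infinity.

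The main technical obstacle in the underlying DKKP argument is establishing sufficient off-diagonal kernel decay for the spectral multipliers $\varphi(tH_{0}^{1/2})$. This is handled by combining the Gaussian bound (H2) with the finite propagation speed property of the wave operator $\cos(sH_{0}^{1/2})$ (itself a consequence of (H2) via the Cheeger--Gromov--Taylor argument) and then exploiting the Fourier representation of the even Schwartz function $\varphi$, which converts its smoothness into rapid spatial decay of the kernel. Since all of these estimates are already established inside \cite[Theorem 3.7]{DKKP} for the abstract class of operators to which $H_{0}$ belongs, they do not need to be reproduced here.
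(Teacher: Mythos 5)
Your proposal is correct and follows essentially the same route as the paper: the paper likewise treats this as a direct citation of \cite[Theorem 3.7]{DKKP}, merely verifying beforehand that $H_{0}$ is non-negative, self-adjoint, maps real-valued functions to real-valued functions, and has a heat kernel satisfying the Gaussian bound \eqref{classical gaussian estimates for schrodinger semigroup}, i.e.\ hypotheses (H1)--(H2) of that reference. Your additional sketch of the DKKP grand-maximal-function argument is not required but is consistent with how that theorem is proved.
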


\subsubsection{Atomic decompositions}\label{section on atomic decomposition of hardy dziubanski spaces}
Lemma~\ref{identification of Dziubanski hardy spaces for p>=1} above shows that those Hardy spaces enjoy properties similar to those satisfied by the Fefferman--Stein Hardy spaces. Another very important property that these spaces share with the classical Hardy spaces $\textup{H}^{p}(\Rn)$ for $p\leq 1$ is their atomic decomposition. In this section, unless specified otherwise, we fix $n\geq 3$, $q>\frac{n}{2}$, the potential $V\in\textup{RH}^{q}(\Rn)$, and we let $\delta:=2-\frac{n}{q}>0$.
The definition of the atoms in the spaces $\textup{H}^{p}_{V}(\Rn)$ is based upon the important critical radius function mentioned in subsection \ref{subsection on reverse Holder potentials}.
\begin{definition}\label{definition Dziubanski atoms for V}
Let $p\in(\frac{n}{n+ 1\wedge\delta},1]$. An $\El{\infty}$-atom for $\textup{H}^{p}_{V}(\Rn)$ is a function ${a:\Rn\to\C}$ associated to a ball $B=B(x,r)$ such that
\begin{enumerate}[label=(\roman*)]
    \item $r\leq \rho(x,V),$
    \item $\supp{a}\subseteq \clos{B},$
    \item $\norm{a}_{\El{\infty}}\leq \meas{B}^{-1/p},$
    \item $\displaystyle\int_{\Rn}a(y)\dd y =0$ if $r\leq \frac{1}{4}\rho(x,V)$.
\end{enumerate}
\end{definition}
It can be shown that any such atom $a$ belongs to $\textup{H}^{p}_{V,\textup{pre}}(\Rn)$, and satisfies $\norm{a}_{\text{H}^{p}_{V}}\leq C$, where $C$ only depends on $n, p, q$ and $\llbracket V\rrbracket_{q}$ (see \cite[Section~4]{Dziubanski_Hp_Spaces}).

We note that if $f\in\Ell{2}$, $(a_k)_{k\geq 1}\subset \Ell{2}$ and $(\lambda_{k})_{k\geq 1}\subset \C$ are such that $f=\sum_{k\geq 1}\lambda_{k}a_{k}$ with convergence in $\Ell{2}$, then 
\begin{equation}\label{reduction to uniform atomic estimate for Hardy Dziubanski spaces}
    \norm{f}_{\Dz{p}}^{p}\leq \sum_{k\geq 1}\abs{\lambda_{k}}^{p}\norm{a_{k}}_{\Dz{p}}^{p}.
\end{equation}
Indeed, for all $t>0$ we can use $\El{2}$-boundedness of $e^{-tH_{0}}$ to obtain $e^{-tH_{0}}f = \sum_{k\geq 1}\lambda_{k}e^{-tH_{0}}a_k$
with convergence in $\El{2}$. Using the triangle inequality and taking the (lattice) supremum over all $t>0$ implies that $(\mathcal{M}_{V}f)(x)\leq \sum_{k\geq 1}\abs{\lambda_{k}}(\mathcal{M}_{V}a_k)(x)$
for almost every $x\in\Rn$. Raising this inequality to the power $p\leq 1$, and integrating with respect to $x\in\Rn$, we get \eqref{reduction to uniform atomic estimate for Hardy Dziubanski spaces}. In particular, if $\set{a_{k}}_{k}$ is a collection of atoms for $\textup{H}^{p}_{V}$, and $(\lambda_{k})_{k\geq 1}\in \ell^{p}(\N)$, then $f\in\textup{H}^{p}_{V,\textup{pre}}$ with $\norm{f}_{\textup{H}^{p}_{V}}\lesssim \norm{(\lambda_{k})_{k}}_{\ell^{p}}$.

The atomic decomposition for the spaces $\textup{H}^{p}_{V}$ was obtained by Dziuba\`{n}ski--Zienkiewicz \cite[Theorem 1.11]{Dziubanski_Hp_Spaces}. It shows that every function $f\in\textup{H}^{p}_{V,\textup{pre}}\cap\El{\infty}_{c}$ admits such a decomposition.
\begin{thm}\label{Dziubanski atomic decomposition L^2 local bounded with cpct support}
    Let $p\in(\frac{n}{n+1\wedge\delta},1]$, and let $f\in\textup{H}^{p}_{V,\textup{pre}}(\Rn)\cap\El{\infty}_{c}(\Rn)$. There exists a sequence $\set{a_{j}}_{j\geq 1}$ of $\El{\infty}$-atoms for $\textup{H}^{p}_{V}$, and numbers $(\lambda_{j})_{j\geq 1}\in\ell^{p}(\N)$ such that $f=\sum_{j\geq 1}\lambda_{j}a_{j}$ with convergence in $\Ell{2}$. Moreover, $\norm{f}_{\textup{H}^{p}_{V}}\eqsim\norm{\set{\lambda_{j}}_{j\geq 1}}_{\ell^{p}(\N)}$.
\end{thm}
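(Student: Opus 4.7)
The plan is to invoke the atomic decomposition of Dziuba\'{n}ski--Zienkiewicz in \cite[Theorem 1.11]{Dziubanski_Hp_Spaces} and then upgrade the mode of convergence to $\Ell{2}$ using the additional hypothesis $f\in\El{\infty}_{c}(\Rn)$. Applying that result directly yields $\El{\infty}$-atoms $\{a_{j}\}_{j\geq 1}$ for $\textup{H}^{p}_{V}$ and coefficients $(\lambda_{j})_{j\geq 1}\in\ell^{p}(\N)$ with $\norm{(\lambda_{j})_{j}}_{\ell^{p}}\eqsim \norm{f}_{\textup{H}^{p}_{V}}$ such that $f=\sum_{j\geq 1}\lambda_{j}a_{j}$ converges in the $\textup{H}^{p}_{V}$ quasinorm, and so (through the continuous embedding of the completion into $\Ell{1}$, as in Lemma~\ref{identification of Dziubanski hardy spaces for p>=1}) also in $\Ell{1}$. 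After passing to a subsequence, the partial sums $S_{N}:=\sum_{j=1}^{N}\lambda_{j}a_{j}$ converge to $f$ pointwise almost everywhere on $\Rn$.

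The second step is to control the atoms geometrically using the hypothesis $f\in\El{\infty}_{c}(\Rn)$. The $\El{\infty}$-contractivity of $\{e^{-tH_{0}}\}_{t>0}$ from \eqref{uniform boundedness heat semigroup schrodinger on Lp for all p} gives $\norm{\mathcal{M}_{V}f}_{\El{\infty}}\leq \norm{f}_{\El{\infty}}$, while the Gaussian bound \eqref{classical gaussian estimates for schrodinger semigroup} applied to the compactly supported $f$ forces $\mathcal{M}_{V}f(x)\lesssim_{f}\abs{x}^{-n}$ outside a bounded set. Consequently, each level set ${\Omega_{k}:=\set{\mathcal{M}_{V}f>2^{k}}}$ appearing in the underlying Calder\'{o}n--Zygmund--Whitney construction is bounded, only those with $2^{k}\lesssim \norm{f}_{\El{\infty}}$ are non-empty, and all atoms $a_{j}$ are supported in a single ball $B_{\star}$ depending only on $f$. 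The telescoping nature of the Calder\'{o}n--Zygmund decomposition across the finitely many relevant levels then shows that the partial sums satisfy a uniform bound $\abs{S_{N}}\lesssim \norm{f}_{\El{\infty}}\ind{B_{\star}}$ almost everywhere.

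The final step promotes pointwise a.e.\ convergence to $\Ell{2}$ convergence: since the dominating function $\norm{f}_{\El{\infty}}\ind{B_{\star}}$ is square integrable, dominated convergence gives $S_{N}\to f$ in $\Ell{2}$, which is the required conclusion.

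The main obstacle is securing the uniform $\El{\infty}$ bound and common support containment for the partial sums, as the original statement in \cite{Dziubanski_Hp_Spaces} does not record these quantitatively. One can either carefully track constants through the Dziuba\'{n}ski--Zienkiewicz construction, or supply a self-contained Calder\'{o}n--Zygmund--Whitney decomposition at the levels $\Omega_{k}$ (subdividing any Whitney cube whose sidelength exceeds $\rho(\cdot,V)$ at its centre, and imposing cancellation only when item (iv) of Definition~\ref{definition Dziubanski atoms for V} is needed) to produce the atoms directly; the latter route is essentially standard once Lemma~\ref{lemma equivalent characterisation Hp_V by DKKP} is used to replace $\mathcal{M}_{V}$ by an equivalent smooth maximal function whose good/bad decomposition enjoys the required $\El{\infty}$ control on the good part.
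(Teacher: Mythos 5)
Your overall strategy — pass to a.e.\ convergence of partial sums and then dominate to get $\Ell{2}$ convergence — is sound in principle but the implementation has two real gaps, and the proposal follows a genuinely different route than the paper. First, the passage to $\Ell{1}$ convergence ``through the continuous embedding of the completion into $\Ell{1}$, as in Lemma~\ref{identification of Dziubanski hardy spaces for p>=1}'' is available only for $p=1$: the paper establishes $\norm{f}_{\El{1}}\leq\norm{f}_{\textup{H}^1_V}$ on the pre-completed space but there is no such continuous embedding for $p<1$ (an $\El{\infty}$-atom on a cube $Q$ has $\El{1}$ norm $\lesssim\meas{Q}^{1-1/p}$, unbounded as $\meas{Q}\to 0$, while its $\textup{H}^p_V$ quasinorm is $\lesssim 1$). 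So for $p<1$ this step fails, and with it the a.e.\ convergence of a subsequence of partial sums. Second, the claim that the level sets $\Omega_k=\set{\mathcal{M}_V f>2^k}$ are confined to a single ball $B_\star$ with only finitely many relevant $k$ is incorrect: $\El{\infty}$-contractivity caps $k$ only from above, while $\Omega_k\neq\emptyset$ for every sufficiently negative $k$ and grows without bound as $k\to-\infty$ — the Gaussian decay $\mathcal{M}_V f(x)\lesssim_f\abs{x}^{-n}$ shows each $\Omega_k$ is bounded, not that the bound is uniform in $k$. Consequently the claimed domination $\abs{S_N}\lesssim\norm{f}_{\El{\infty}}\ind{B_\star}$ does not hold; the natural dominating function here would be $\mathcal{M}_V f$ itself (which does lie in $\Ell{2}$ since $\mathcal{M}_V f\in\Ell{p}\cap\Ell{\infty}$ with $p\leq 1$), but even then a.e.\ convergence still needs a different source. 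You flag the common-support issue at the end, but neither suggested repair addresses the unboundedness of $\Omega_k$ as $k\to-\infty$.

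The paper's remark takes a shorter and more robust route that sidesteps all of this: the Dziuba\'nski--Zienkiewicz construction in Theorem 1.11 of \cite{Dziubanski_Hp_Spaces} first localizes via a partition of unity to the critical balls $B_\alpha=B(x_\alpha,\rho(x_\alpha))$ from the covering \eqref{covering of Rn by critical balls}, reducing to Goldberg's local Hardy spaces $h^p(\Rn)$ on each ball, and then invokes the $\Ell{2}$-convergence of the $h^p$ atomic decomposition for $h^p\cap\Ell{2}$ functions, which is inherited from the classical result for $\textup{H}^p(\Rn)\cap\Ell{2}$. Since $f$ is compactly supported, only finitely many balls $B_\alpha$ contribute, so the total decomposition is a finite sum of $\Ell{2}$-convergent series. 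No estimate on the level sets of $\mathcal{M}_V f$ is needed.
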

While the result \cite[Theorem 1.11]{Dziubanski_Hp_Spaces} doesn't mention any particular notion of convergence, inspection of the proof reveals that one can obtain $\El{2}$ convergence of the atomic decomposition, because if $h^{p}(\Rn)$, $0<p\leq 1$, denote the local Hardy spaces of Goldberg \cite{Goldberg_Local_Hardy_Spaces}, then any function $f\in h^{p}(\Rn)\cap\Ell{2}$ admits an $\El{2}$-converging atomic decomposition for $h^{p}$ in the sense of \cite[Lemma 5]{Goldberg_Local_Hardy_Spaces}, as follows from the proof and the corresponding result for $\textup{H}^{p}(\Rn)\cap\Ell{2}$.

Another, more “abstract", atomic decomposition for the Hardy spaces $\Dz{p}$ was obtained in \cite{DKKP} for general non-negative self-adjoint operators with certain Gaussian heat kernel bounds. If $n\geq 1$ and $V\in\El{1}_{\loc}(\Rn)$ is non-negative, then the heat kernel for the non-negative self-adjoint operator $H_{0}:=-\Delta +V$ on $\Ell{2}$ satisfies \eqref{classical gaussian estimates for schrodinger semigroup}, so the main assumptions (H1)--(H2) in \cite[Section~1]{DKKP} are satisfied. The authors introduced the following type of atoms for such operators.
\begin{definition}\label{definition abstract atoms DKKP}
    Let $p\in(0,1]$ and $m :=  \lfloor \frac{n}{2p}\rfloor + 1$. A function $a:\Rn\to\C$ is called an abstract $\textup{H}^{p}_{V}$-atom associated with the operator $H_{0}$ if there exists a function $b\in\dom{H_ {0}^{m}}$ and a ball $B\subset\Rn$ of radius $r_{B}>0$ such that
    \begin{enumerate}[label=(\roman*)]
        \item $a= H_{0}^{m}b$,
        \item $\supp{b}\subseteq B$ (hence $\supp{H_{0}^{k}b}\subseteq B$ for all $k\in\set{0,\cdots ,m}$),
        \item $\norm{H_{0}^{k}b}_{\El{\infty}}\leq r_{B}^{2(m-k)}\meas{B}^{-1/p}$ for all $k\in\set{0,\cdots ,m}$.
    \end{enumerate}
\end{definition}
A particular case of \cite[Theorem 1.4]{DKKP} is the following atomic decomposition result.
\begin{thm}\label{abstract atomic decomposition DKKP}
    Let $p\in(0,1]$, $n\geq 1$ and $V\in\El{1}_{\loc}(\Rn)$ be non-negative. If $f\in\textup{H}^{p}_{V,\textup{pre}}(\Rn)$, then there exists a sequence $\set{a_{j}}_{j\geq 1}$ of abstract $\textup{H}^{p}_{V}$-atoms associated with $H_{0}$, and numbers $\set{\lambda_{j}}_{j\geq 1}\in\ell^{p}(\N)$ such that $f=\sum_{j\geq 1}\lambda_{j}a_{j}$ with convergence in $\Ell{2}$ and in the $\Dz{p}$ norm. Moreover, $\norm{f}_{\textup{H}^{p}_{V}}\eqsim\norm{\set{\lambda_{j}}_{j}}_{\ell^{p}(\N)}$.
\end{thm}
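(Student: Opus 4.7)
The plan is to obtain the atomic decomposition via the classical tent space route: reproduce $f$ through a Calderón formula, decompose the resulting tent space function atomically, and then show that applying the reproducing operator to a tent atom produces (a constant times) an abstract $\textup{H}^p_V$-atom. Concretely, set $m:=\lfloor \frac{n}{2p}\rfloor+1$ as in Definition~\ref{definition abstract atoms DKKP}, fix an auxiliary Schwartz function $\eta$, even and real-valued, with $\eta(0)\neq 0$, and put $\psi(\lambda):=\lambda^{2m}e^{-\lambda^2}$, $\widetilde{\psi}(\lambda):=\lambda^{2m}\eta(\lambda)$, normalised so that $c_{\psi,\widetilde{\psi}}\int_0^\infty \widetilde{\psi}(s)\psi(s)\tfrac{\dd s}{s}=1$. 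Since $H_0$ is non-negative self-adjoint and $\psi\in\Psi_\infty^\infty$, the Calderón--McIntosh reproducing formula (Theorem~\ref{Calderon McIntosh reproducing formula}) gives
\[
f=c_{\psi,\widetilde{\psi}}\int_0^\infty \widetilde{\psi}(tH_0^{1/2})\psi(tH_0^{1/2})f\,\tfrac{\dd t}{t}
\]
with convergence in $\Ell{2}$, for every $f\in\Ell{2}$ (since $H_0$ is injective on its range and $\clos{\ran{H_0}}$ is all of $\Ell{2}$, or after projecting onto $\clos{\ran{H_0}}$).

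Next I would use the square function characterisation of $\textup{H}^p_{V,\textup{pre}}$ (Theorem~\ref{square function characterisation of Dziubanski Hardy spaces}, with auxiliary function $\psi$) to see that the lift
\[
F(t,x):=\psi(tH_0^{1/2})f(x)=t^{2m}\bigl(H_0^m e^{-t^2H_0}f\bigr)(x)
\]
belongs to $\tent{p}\cap\tent{2}$ with $\|F\|_{\tent{p}}\eqsim \|f\|_{\textup{H}^p_V}$. Applying the tent space atomic decomposition from Section~\ref{section on tent spaces} produces $\tent{p}$-atoms $A_j$ supported in $Q_j\times(0,\ell(Q_j))$ and scalars $\mu_j$ with $\|(\mu_j)_j\|_{\ell^p}\lesssim\|F\|_{\tent{p}}$, convergent in $\El{2}(\Hn,\tfrac{\dd t\dd x}{t})$. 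Substituting into the reproducing formula and commuting the sum past the bounded integral operator yields $f=\sum_j \lambda_j a_j$ in $\Ell{2}$, where
\[
a_j := H_0^m b_j,\qquad b_j := \widetilde c\int_0^{\ell(Q_j)} t^{2m}\eta(tH_0^{1/2})\bigl[A_j(t,\cdot)\bigr]\,\tfrac{\dd t}{t},
\]
with a suitable normalisation constant absorbed into $\lambda_j$.

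To check properties (ii)--(iii) in Definition~\ref{definition abstract atoms DKKP}, I would choose $\eta$ to be (essentially) the Fourier transform of a compactly supported even bump; then the finite propagation speed of $\cos(tH_0^{1/2})$ (in the sense of \cite[Definition 3.3]{HLMMY_HardySpacesDaviesGaffney}) makes $\eta(tH_0^{1/2})$ spread support by at most $ct$, so that $b_j$ is supported in $B_j:=C\,Q_j$. For (iii), applying $H_0^k$ commutes through the integral to hit $\eta$, so that for $0\le k\le m$ I need an $\El{2}\to\El{\infty}$ bound on $(tH_0^{1/2})^{2k}\eta(tH_0^{1/2})$ with the right scaling. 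This follows from the Gaussian bound \eqref{classical gaussian estimates for schrodinger semigroup}, which via spectral theory yields $\|(tH_0^{1/2})^{2k}\widetilde\eta(tH_0^{1/2})\|_{\El{2}\to\El{\infty}}\lesssim t^{-n/2}$; combining with the Cauchy--Schwarz bound on $A_j$ gives
\[
\|H_0^k b_j\|_{\El{\infty}}\lesssim \int_0^{\ell(Q_j)}\!t^{2(m-k)}t^{-n/2}\|A_j(t,\cdot)\|_{\El{2}(Q_j)}\,\tfrac{\dd t}{t}\lesssim \ell(Q_j)^{2(m-k)}|B_j|^{-1/p},
\]
after using the $\tent{p}$-atom normalisation $\|A_j\|_{\El{2}(\Hn,\frac{\dd t\dd x}{t})}\le \ell(Q_j)^{n/2-n/p}$. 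The norm equivalence $\|f\|_{\textup{H}^p_V}\eqsim \|(\lambda_j)\|_{\ell^p}$ then follows from the square function equivalence on one side and \eqref{reduction to uniform atomic estimate for Hardy Dziubanski spaces} together with the uniform atomic bound on the other; convergence in the $\textup{H}^p_V$ quasinorm follows by applying the same estimate to the tails of the sum.

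The main obstacle is the $\El{\infty}$ size estimate on $H_0^k b_j$ for $k<m$: the integrand is only in $\Ell{2}$, and one must carefully translate the Gaussian bound \eqref{classical gaussian estimates for schrodinger semigroup} into the correct $\El{2}\to\El{\infty}$ scaling for $H_0^k\eta(tH_0^{1/2})$ while also controlling the enlargement of supports via finite propagation speed so that $b_j$ remains supported in a ball of radius $\eqsim\ell(Q_j)$. This is essentially the technical content of the scheme of \cite{DKKP}, and it is the step where the specific value $m=\lfloor \frac{n}{2p}\rfloor+1$ enters in order to make $t^{2(m-k)}\cdot t^{-n/2}$ integrable near $t=0$ after the $\tent{p}$-atomic size bound is inserted.
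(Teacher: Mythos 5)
First, a framing remark: the paper does not prove this statement at all --- it is quoted as a particular case of \cite[Theorem 1.4]{DKKP} --- so any argument you give is necessarily different from the paper's. Your scheme (Calder\'on reproducing formula, tent-space atomic decomposition, finite propagation speed for the supports) is the standard route to \emph{molecular} or $\El{2}$-normalised atomic decompositions in the style of \cite{HLMMY_HardySpacesDaviesGaffney}, but it does not deliver the atoms of Definition~\ref{definition abstract atoms DKKP}, and it breaks exactly at the step you flag as ``the main obstacle''. The atoms here are $\El{\infty}$-normalised: you need $\norm{H_{0}^{k}b_{j}}_{\El{\infty}}\lesssim \ell(Q_j)^{2(m-k)}\meas{Q_j}^{-1/p}$ for \emph{all} $0\leq k\leq m$. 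Writing $t^{2m}H_0^k\eta(tH_0^{1/2})=t^{2(m-k)}(tH_0^{1/2})^{2k}\eta(tH_0^{1/2})$ and using the $\El{2}\to\El{\infty}$ bound $\lesssim t^{-n/2}$, your estimate reads
\[
\norm{H_0^kb_j}_{\El{\infty}}\lesssim\int_0^{\ell(Q_j)}t^{2(m-k)-\frac{n}{2}}\norm{A_j(t,\cdot)}_{\El{2}}\,\frac{\dd t}{t},
\]
while the only information on $A_j$ is $\bigl(\int_0^{\ell(Q_j)}\norm{A_j(t,\cdot)}_{\El{2}}^2\frac{\dd t}{t}\bigr)^{1/2}\leq\ell(Q_j)^{\frac{n}{2}-\frac{n}{p}}$. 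Cauchy--Schwarz then requires $\int_0^{\ell}t^{4(m-k)-n}\frac{\dd t}{t}<\infty$, i.e.\ $m-k>\frac{n}{4}$, which fails for $k$ close to $m$ and in particular for $k=m$, that is, for the size bound on the atom $a_j=H_0^mb_j$ itself. This is not a technicality: a $\tent{p}$-atom may concentrate on a slab $t\in(\eps,2\eps)$, in which case the right-hand side blows up as $\eps\to0$ and no $\El{\infty}$ control survives. Your route does prove a decomposition into $\El{2}$-normalised atoms ($a=H_0^mb$ with $\norm{H_0^kb}_{\El{2}}\leq\ell(Q)^{2(m-k)}\meas{Q}^{\frac{1}{2}-\frac{1}{p}}$); upgrading these to the $\El{\infty}$-atoms of Definition~\ref{definition abstract atoms DKKP} is the genuinely hard content of \cite{DKKP}, whose construction (band-limited, sub-exponentially localized spectral decompositions) is entirely different from the tent-space route.

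There is also a circularity if this is meant as a proof inside the present paper: your second step invokes Theorem~\ref{square function characterisation of Dziubanski Hardy spaces} in the direction $\norm{S_{\psi}f}_{\El{p}}\lesssim\norm{f}_{\textup{H}^{p}_{V}}$, but for $p\leq1$ the paper proves precisely that direction by decomposing $f$ into abstract atoms via Theorem~\ref{abstract atomic decomposition DKKP} and estimating $\norm{\mathcal{S}_{0}a}_{\El{p}}\lesssim1$ atomwise. You would need an independent proof of the square-function domination (such proofs exist, e.g.\ in \cite{DKKP} itself) before this step is legitimate. Two minor points: the normalising integral $\int_0^\infty s^{4m}\eta(s)e^{-s^2}\frac{\dd s}{s}$ must be checked to be non-zero (take, say, $\eta=\widehat{\phi\ast\phi}\geq0$ with $\phi$ an even bump), and since $\eta$ is Schwartz rather than holomorphic you should invoke the Borel-calculus version of the reproducing formula (as the paper does in the proof of Theorem~\ref{thm atomic decomposition of hardy sobolev spaces}) rather than Theorem~\ref{Calderon McIntosh reproducing formula}.
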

\begin{rem}\label{remark about estimate atoms implies HpV estimate} Since the abstract $\textup{H}^{p}_{V}$-atoms associated with $H_{0}$ are in $\El{\infty}_{c}(\Rn)\cap\textup{H}^{p}_{V,\textup{pre}}(\Rn)$, we can combine Theorems \ref{Dziubanski atomic decomposition L^2 local bounded with cpct support} and \ref{abstract atomic decomposition DKKP} to obtain that for any bounded operator $T\in\mathcal{L}(\El{2})$ satisfying a uniform bound $\norm{Ta}_{\textup{H}^{p}_{V}}\lesssim 1$ for all $\El{\infty}$-atoms $a$ for $\textup{H}^{p}_{V}(\Rn)$ (in the sense of Definition \ref{definition Dziubanski atoms for V}), it follows that $\norm{Tf}_{\textup{H}^{p}_{V}}\lesssim \norm{f}_{\textup{H}^{p}_{V}}$ for all $f\in\textup{H}^{p}_{V,\textup{pre}}(\Rn)$ (recall \eqref{reduction to uniform atomic estimate for Hardy Dziubanski spaces} above).
\end{rem}
\subsubsection{Molecular decompositions}
Molecules adapted to the spaces $\textup{H}^{p}_{V}$ have already been introduced in \cite[Section~3.2]{BJL_T1_Schrodinger}. We modify their definition in order to get rid of any size conditions on the cube (or ball, in the context of the aforementioned reference) associated to the molecule. This seems like a reasonable definition in view of the proof of Lemma~\ref{abstract molecules are concrete dziubanski molecules} below.
\begin{definition}\label{molecule Dziubanski hardy space}
    Let $n\geq 1$ and $p\in(0,1]$. A function $m:\Rn\to\C$ is called a $(p,\eps)$-molecule associated to a cube $Q\subset\Rn$ if there exists $\eps>0$ such that 
    \begin{enumerate}[label=(\roman*)]
        \item $\norm{m}_{\El{2}(C_{j}(Q))}\leq (2^{j}\ell(Q))^{\frac{n}{2}-\frac{n}{p}}2^{-j\eps}$ for all $j\geq 1$,
        \item $\abs{\int_{\Rn}m(x)\dd x}\leq \meas{Q}^{1-\frac{1}{p}}\min\set{1, \left(\ell{(Q)}^{2}\dashint_{Q}V\right)^{1/2}}$.
    \end{enumerate}
\end{definition}
Note that the first condition automatically implies that $m\in\Ell{1}$, hence the second condition makes sense. 
The following result adapts \cite[Lemma 3.9]{BJL_T1_Schrodinger} by incorporating the changes made in the definition of the molecules. 
\begin{thm}\label{molecules are in Dziubanski hardy space with uniform bound}
    Let $n\geq 3$, $ q\in(\frac{n}{2},\infty]$, $V\in\textup{RH}^{q}(\Rn)$, and let $\delta= 2-\frac{n}{q}>0$. If $p\in (\frac{n}{n+\frac{\delta}{2}},1]$, then every $(p,\eps)$-molecule belongs to $\textup{H}^{p}_{V,\textup{pre}}(\Rn)$, with $\norm{m}_{\Dz{p}}\lesssim 1$.
\end{thm}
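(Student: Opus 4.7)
The plan is to show $\|\mathcal{M}_{V}m\|_{\Ell{p}}^{p}\lesssim 1$ by splitting the integration $\Rn=4Q\cup\bigcup_{j\geq 2}C_{j}(Q)$ as in Section~\ref{subsubsection on geometry in Rn} and handling each annular contribution using the two molecular conditions of Definition~\ref{molecule Dziubanski hardy space}, the kernel estimates of Lemma~\ref{improved gaussian bound on heat kernel of schrodinger operator}, and the reverse Hölder bound~\eqref{Shen eqn control of integral of potential by the critical radius function}.

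For the local part, the pointwise domination \eqref{domination of schrodinger maximal function by hardy littlewood max function} combined with the maximal theorem gives the $\Ell{2}$-boundedness of $\mathcal{M}_{V}$. Hölder's inequality (at exponent $2/p\geq 1$), together with the global $\El{2}$-bound $\|m\|_{\El{2}}\lesssim \ell(Q)^{n/2-n/p}$ obtained by summing the annular size condition (i) of Definition~\ref{molecule Dziubanski hardy space} as a geometric series (convergent since $p\leq 1$ forces $n-2n/p-2\eps<0$ for any $\eps>0$), yields
\[
\|\mathcal{M}_{V}m\|_{\El{p}(4Q)}^{p}\lesssim |4Q|^{1-p/2}\|m\|_{\El{2}}^{p}\lesssim \ell(Q)^{n-np/2}\cdot \ell(Q)^{np/2-n}=1.
\]

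For the far part, I would fix $x\in C_{j}(Q)$ with $j\geq 2$ and $x_{Q}$ the centre of $Q$, and use the decomposition
\[
e^{-tH_{0}}m(x)=\int_{\Rn}\bigl[k_{t}(x,y)-k_{t}(x,x_{Q})\bigr]m(y)\,\dd y+k_{t}(x,x_{Q})\int_{\Rn}m(y)\,\dd y.
\]
For the kernel-difference term, decompose $m=\sum_{k\geq 1}m\ind{C_{k}(Q)}$: when $k\leq j-2$ one has $|x-x_{Q}|\gtrsim |x-y|$, so Lemma~\ref{improved gaussian bound on heat kernel of schrodinger operator}(iii) supplies a Hölder gain of order $(2^{k}\ell(Q)/t^{1/2})^{\sigma}$ for any fixed $\sigma\in(0,\delta\wedge 1)$, which after optimisation in $t$ produces geometric decay in $j$ provided $\sigma>n/p-n$; this is compatible with $\sigma<\delta\wedge 1$ exactly under the hypothesis $p>n/(n+\delta/2)$. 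When $k\geq j-1$ one instead bounds $|k_{t}(x,y)-k_{t}(x,x_{Q})|$ via the triangle inequality and combines the plain Gaussian estimate Lemma~\ref{improved gaussian bound on heat kernel of schrodinger operator}(i) with the annular $\El{2}$-decay of $m$ to obtain a convergent contribution.

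The principal obstacle is the cancellation term: taking the supremum in $t$ of the improved Gaussian bound Lemma~\ref{improved gaussian bound on heat kernel of schrodinger operator}(i) yields the pointwise estimate $\sup_{t>0}|k_{t}(x,x_{Q})|\lesssim (2^{j}\ell(Q))^{-n}(1+2^{j}\ell(Q)/\rho(x_{Q},V))^{-N}$, and without the $(1+\,\cdot\,)^{-N}$ factor the resulting $\El{p}(C_{j}(Q))$-norm grows like $2^{jn(1-p)}$, which is non-summable. The fix is to split the $j$-sum at $j_{0}\sim\log_{2}(\rho(x_{Q},V)/\ell(Q))$: for $j\leq j_{0}$ (so $\ell(Q)\leq \rho(x_{Q},V)$) Shen's estimate~\eqref{Shen eqn control of integral of potential by the critical radius function} gives $\min\{1,(\ell(Q)^{2}\av{Q}V)^{1/2}\}\lesssim (\ell(Q)/\rho(x_{Q},V))^{\delta/2}$ from Definition~\ref{molecule Dziubanski hardy space}(ii), and the partial geometric sum up to $j_{0}$ produces a factor of order $(\ell(Q)/\rho(x_{Q},V))^{\delta p/2-n(1-p)}\lesssim 1$ \emph{precisely} when $p>n/(n+\delta/2)$; for $j>j_{0}$ the extra factor $(2^{j}\ell(Q)/\rho(x_{Q},V))^{-Np}$ delivers geometric $j$-decay once $N$ is chosen large enough, completing the estimate.
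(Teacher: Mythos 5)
Your approach is correct, and it takes a genuinely different route from the paper's. The paper decomposes the molecule $m$ into the countable family $\{a_j\}_{j\geq1}$, $\{b_j\}_{j\geq1}$ (each with vanishing integral and annular support) plus a single non-cancelling piece $a=\frac{\ind{4Q}}{|4Q|}\int_{\Rn}m$, and then bounds the $\textup{H}^p_V$-norm of each piece. The delicate part is $a$: the paper splits into the sub-cases $r_B\leq\rho(x_B)$ (treated with Lemma~\ref{improved gaussian bound on heat kernel of schrodinger operator}(i) at $N=\delta/2$ plus the slowly-varying property~\eqref{comparability property for critical radius function}) and $r_B>\rho(x_B)$, where it invokes a bounded-overlap covering of $\Rn$ by critical balls to rewrite $a$ as a bona fide atomic sum. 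Your proof instead estimates $\mathcal{M}_V m$ directly on each annulus $C_j(Q)$, pulling out the cancellation via $e^{-tH_0}m(x)=\int[k_t(x,y)-k_t(x,x_Q)]m(y)\,dy + k_t(x,x_Q)\int m$, and handles the non-cancelling contribution uniformly via the $(1+|x-x_Q|/\rho(x_Q))^{-N}$ decay from Lemma~\ref{improved gaussian bound on heat kernel of schrodinger operator}(i) together with a split of the $j$-sum at $j_0\sim\log_2(\rho(x_Q)/\ell(Q))$. This avoids both the covering-by-critical-balls decomposition and the use of $\rho(y)\eqsim\rho(x_B)$; what it loses is the cleaner atomic bookkeeping of the paper, which isolates cancellation piece by piece and fits naturally into the generalized-atom framework used elsewhere in Section~\ref{section on atomic decomposition of hardy dziubanski spaces}.

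Two small points worth repairing. First, the threshold $k\leq j-2$ in the far-field kernel-difference estimate is not quite enough in high dimension: for $x\in C_j(Q)$ and $y\in C_k(Q)$ one has $|y-x_Q|\lesssim 2^k\ell(Q)\sqrt n$ against $|x-x_Q|\geq 2^{j-1}\ell(Q)$, so the hypothesis $|x-x_Q|\geq\tfrac12|x-y|$ of Lemma~\ref{improved gaussian bound on heat kernel of schrodinger operator}(iii) is only guaranteed once $k\leq j-K$ for a constant $K\eqsim\log_2\sqrt n$. This is cosmetic: shift the threshold, and the complementary range $k>j-K$ is still handled by the triangle inequality and the annular $\El{2}$-decay exactly as you outline (indeed one readily checks that $\sum_{j\geq2}\sum_{k>j-K}$ of either contribution reduces to $\sum_j 2^{-j\eps p}$). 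Second, your phrase \emph{precisely when $p>n/(n+\delta/2)$} overstates things for the $\sigma$-compatibility requirement in the kernel-difference term; the hypothesis is sufficient there, while the place it is genuinely sharp is the exponent $\delta p/2 - n(1-p)>0$ governing the partial geometric sum of the cancellation term. Finally, you should make explicit that when $\ell(Q)>\rho(x_Q)$ one has $j_0<0$, so the entire $j$-sum falls in the second regime and the $(1+\cdot)^{-N}$ factor alone closes the estimate with $\min\{1,(\ell(Q)^2\av{Q}V)^{1/2}\}\leq1$; this is implicit in your argument but is where the paper instead invokes the critical-ball covering, so it deserves a sentence.
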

\begin{proof}
We follow the argument of the proof of \cite[Lemma 3.9]{BJL_T1_Schrodinger}.
    For $j\geq 1$ and $x\in\Rn$, let $a_{j}(x):=\left(m(x)-\dashint_{C_{j}(Q)}m\right)\ind{C_{j}(Q)}(x)$.
    Hence, $\supp{a_{j}}\subseteq C_{j}(Q)$ and $\int_{\Rn}a_{j}=0$ for all $j\geq 1$.
    We introduce $\alpha_{k}:=\int_{C_{k}(Q)} m$ and $\chi_{k}:=\frac{1}{\meas{C_{k}(Q)}}\ind{C_{k}(Q)}$. 
    Then, for all $j\geq 1$, let $N_{j}:=\sum_{k=j}^{\infty}\alpha_{k}$. We let $b_{j}:=N_{j+1}(\chi_{j+1}-\chi_{j})$ for all $j\geq 1$, and obtain that $\supp{b_{j}}\subseteq C_{j}(Q)\cup C_{j+1}(Q)$ and $\int_{\Rn}b_{j}=0$ for all $j\geq 1$.
    We can estimate 
    \begin{align}\label{L2 size estimate for the submolecules a_j}
        \norm{a_{j}}_{\El{2}}&\leq \norm{m}_{\El{2}(C_{j}(Q))} + \abs{\dashint_{C_{j}(Q)}m}\meas{C_{j}(Q)}^{1/2}
        \leq 2\norm{m}_{\El{2}(C_{j}(Q))}\lesssim (2^{j}\ell(Q))^{\frac{n}{2}-\frac{n}{p}}2^{-j\eps}.
    \end{align}
Similarly, we get $\norm{b_j}_{\El{2}}\lesssim 2^{-j\eps}(2^{j}\ell(Q))^{\frac{n}{2}-\frac{n}{p}}$. Denoting $a:=\chi_{1}\int_{\Rn}m = \frac{\ind{4Q}}{\meas{4Q}}\left(\int_{\Rn}m\right)$, we have that $m=a + \sum_{j=1}^{\infty}(a_j+b_j)$ with convergence in $\Ell{2}$. Consequently, it follows from (\ref{reduction to uniform atomic estimate for Hardy Dziubanski spaces}) that 
\begin{equation*}
    \norm{\mathcal{M}_{V}(m)}_{\El{p}}^{p}\leq \norm{\mathcal{M}_{V}(a)}_{\El{p}}^{p} + \sum_{j=1}^{\infty}\norm{\mathcal{M}_{V}(a_j)}_{\El{p}}^{p} + \sum_{j=1}^{\infty}\norm{\mathcal{M}_{V}(b_j)}_{\El{p}}^{p}.
\end{equation*}
It remains to bound each term in the right-hand side. Let us start with the terms involving $a_j$. Since $p\leq 2$, we can use Hölder's inequality to get the following local bound for all $j\geq 1$. 
\begin{align}\label{local bound on generalized atom a_j hardy space}
    \norm{\mathcal{M}_{V}a_{j}}_{\El{p}(2^{j+2}Q)}&\leq \norm{\mathcal{M}_{V}a_{j}}_{\El{2}(2^{j+2}Q)}\meas{2^{j+2}Q}^{\frac{1}{p}-\frac{1}{2}}\lesssim \norm{a_{j}}_{\El{2}}(2^{j+2}\ell(Q))^{\frac{n}{p}-\frac{n}{2}}\lesssim 2^{-j\eps},
\end{align}
where we have used the boundedness of the maximal operator $\mathcal{M}_{V}$ on $\Ell{2}$ (which follows from \eqref{domination of schrodinger maximal function by hardy littlewood max function}) and the estimate (\ref{L2 size estimate for the submolecules a_j}). For the global part, we take advantage of the cancellation $\int_{\Rn}a_{j}=0$ and support property $\supp{a_{j}}\subseteq C_{j}(Q)$ to write for all $t>0$ and $x\in\Rn\setminus 2^{j+2}Q$,
\begin{align*}
    (e^{-tH_0}a_j)(x)=\int_{\Rn}k_{t}(x,y)a_{j}(y)\dd y=\int_{C_{j}(Q)}\left(k_{t}(x,y)-k_{t}(x,x_Q)\right)a_{j}(y)\dd y,
\end{align*}
where $x_Q\in\Rn$ is the centre of the cube $Q\subset \Rn$. 
Since by assumption $\frac{n}{n+\delta/2}<p\leq 1$, there is some $\tilde{\delta}<\delta\leq 2$ such that $\frac{n}{n+\tilde{\delta}/2}<p\leq 1$. The choice $\sigma:=\frac{\tilde{\delta}}{2}$ ensures that $\sigma\in (0,\delta\wedge 1)$ and that $\frac{n}{n+\sigma}<p$ (even if $q=\infty$ and thus $\delta=2$). 
Since for all $y\in C_{j}(Q)$ it holds that  $\abs{x-y}\geq \frac{1}{2}\abs{x-x_Q}$, item (iii) of Lemma~\ref{improved gaussian bound on heat kernel of schrodinger operator} implies that there is $c>0$ such that
\begin{align*}
    \abs{(e^{-tH_0}a_j)(x)}
    &\lesssim \int_{C_{j}(Q)}\left(\frac{\abs{y-x_Q}}{t^{1/2}}\right)^{\sigma}t^{-n/2}e^{-c\frac{\abs{x-x_{Q}}^{2}}{t}}\abs{a_j(y)}\dd y\lesssim \frac{(2^{j}\ell(Q))^{\sigma}}{\abs{x-x_Q}^{n+\sigma}}2^{-j\eps}(2^{j}\ell(Q))^{n-\frac{n}{p}}.
\end{align*}
Using that $n-(n+\sigma)p <0$, it follows that 
\begin{align*}
    \norm{\mathcal{M}_{V}a_{j}}_{\El{p}(\Rn\setminus 2^{j+2}Q)}\lesssim (2^{j}\ell(Q))^{\sigma}2^{-j\eps}(2^{j}\ell(Q))^{n-\frac{n}{p}}(2^{j}\ell(Q))^{-(n+\sigma) +\frac{n}{p}}=2^{-j\eps}.
\end{align*}
Combining this with the local bound (\ref{local bound on generalized atom a_j hardy space}) we obtain that $\norm{a_j}_{\Dz{p}}\lesssim 2^{-j\eps}$ for all $j\geq 1$. The exact same argument can be applied to the functions $b_j$ to obtain the bounds $\norm{b_j}_{\Dz{p}}\lesssim 2^{-j\eps}$ for all $j\geq 1$. 

It therefore remains to estimate $\norm{\mathcal{M}_{V}a}_{\El{p}}$. Let us consider a ball $B\subset\Rn$ centred at $x_B=x_Q$, and such that $\frac{1}{\sqrt{n}} B\subseteq 4Q\subseteq B$. We shall now treat two different cases. The first case is when $r_{B}\leq \rho(x_B)$. In this situation, we can first obtain a local bound 
\begin{align*}
    \norm{\mathcal{M}_{V}a}_{\El{p}(8Q)}&\leq \norm{\mathcal{M}_{V}a}_{\El{2}}\meas{8Q}^{\frac{1}{p}-\frac{1}{2}}\lesssim \norm{a}_{\El{2}}\meas{Q}^{\frac{1}{p}-\frac{1}{2}}\lesssim \abs{\int_{\Rn} m}\meas{Q}^{\frac{1}{p}-1}\lesssim 1,
\end{align*}
where we have used the cancellation condition on $m$ in the last inequality. To obtain the global bound, we then apply item (i) from Lemma~\ref{improved gaussian bound on heat kernel of schrodinger operator} with $N=\frac{\delta}{2}$ to get for all $t>0$ and $x\in\Rn\setminus 8Q$,
\begin{align*}
    (e^{-tH_0}a)(x)&\lesssim \meas{Q}^{-1/p}\left(\ell(Q)^{2}\dashint_{Q}V\right)^{1/2}\int_{4Q}t^{-n/2}e^{-c\frac{\abs{x-y}^{2}}{t}}\left(\frac{t^{1/2}}{\rho(y)}\right)^{-\frac{\delta}{2}}\dd y\\
    &\lesssim t^{-(n+\frac{\delta}{2})/2}e^{-\tilde{c}\frac{\abs{x-x_Q}^{2}}{t}}\meas{Q}^{-1/p}\left(r_{B}^{2}\dashint_{B}V\right)^{1/2}\int_{B}\rho(y)^{\frac{\delta}{2}}\dd y\\
    &\lesssim \abs{x-x_B}^{-n-\frac{\delta}{2}}\meas{B}^{-1/p}\left(\frac{r_B}{\rho(x_B)}\right)^{\delta/2}\rho(x_B)^{\frac{\delta}{2}}\meas{B}\;\lesssim \; r_{B}^{\frac{\delta}{2}}\meas{B}^{1-\frac{1}{p}}\abs{x-x_B}^{-n-\frac{\delta}{2}},
\end{align*}
where we have used \eqref{Shen eqn control of integral of potential by the critical radius function} and the fact that $\rho(y)\eqsim \rho(x_B)$ for $y\in B$ since $r_{B}\leq \rho(x_B)$. We can now use that $n-p(n+\frac{\delta}{2})<0$ to get $\norm{\mathcal{M}_{V}a}_{\El{p}(\Rn\setminus 8Q)}\lesssim 1$.

Finally we treat the case $r_{B}>\rho(x_B)$. The reference \cite[Proposition 5]{DGMTZ_2005} provides a countable collection of points $(x_{\alpha})_{\alpha\in\N}$ such that 
\begin{equation}\label{covering of Rn by critical balls}
\Rn=\bigcup_{\alpha\in\N}B(x_{\alpha}, \rho(x_{\alpha})),
\end{equation}
with bounded overlap (see Section~\ref{subsubsection on geometry in Rn} for a definition). It is shown in \cite[Lemma 1.4]{Shen_Schrodinger} that there are $C>0$ and $k_{0}>0$ such that 
    \[m(x)\leq C(1+\abs{x-y}m(y))^{k_{0}}m(y)\]
    for all $x,y\in\Rn$. This property implies that there is $c\geq 0$ such that for all $\alpha\in\N$, $x\in\Rn$ and $R>\rho(x)$ such that $B(x,R)\cap B(x_{\alpha},\rho(x_{\alpha})))\neq \emptyset$, it holds that  $B(x_{\alpha},\rho(x_{\alpha}))\subseteq B(x,cR)$.
    
We can use the covering \eqref{covering of Rn by critical balls} to decompose the function $a=\frac{1}{\meas{4Q}}(\int m)\ind{4Q}$ into a finite linear combination of atoms supported in balls of the form $B(x_{\alpha},\rho(x_\alpha))$. Let us write $B_{\alpha}:=B(x_{\alpha},\rho(x_{\alpha}))$ for each $\alpha\in\N$. Let us also define $B_{1}':=B_{1}$, and $B_{n}':=B_{n}\setminus\left(\bigcup_{k=1}^{n-1}B_{k}\right)$ for $n\geq 2$. Then we can decompose $a=\displaystyle\sum_{\substack{{\alpha\in\N}\\{B_{\alpha}'\cap 4Q\neq \emptyset}}}\lambda_{\alpha}a_{\alpha}$, 
where $\lambda_{\alpha}:=\meas{B_{\alpha}}^{1/p}\norm{a\ind{B_{\alpha}}}_{\Ell{\infty}}\neq 0$ and $a_{\alpha}:=\lambda_{\alpha}^{-1}a\ind{B_{\alpha}'}$. It trivially follows that each $a_{\alpha}$ is supported in $B_{\alpha}$, with $\norm{a_{\alpha}}_{\Ell{\infty}}\leq \meas{B_{\alpha}}^{-1/p}$. Moreover, since for each $\alpha\in\N$ such that $B_{\alpha}\cap 4Q\neq \emptyset$, it holds that $B_{\alpha}\cap B\neq \emptyset$, we get that $B_{\alpha}\subset cB$, since $r_{B}>\rho(x_B)$. 
Let us denote $A_{Q}:=\set{\alpha\in\N : B_{\alpha}'\cap 4Q\neq \emptyset}$.
The bounded overlap of the balls $\set{B_{\alpha}}_{\alpha\in\N}$ implies that $\displaystyle\sum_{\alpha\in A_{Q}}\meas{B_{\alpha}}\lesssim \meas{\bigcup_{\substack{{\alpha\in\N}\\{B_{\alpha}\cap 4Q\neq \emptyset}}}B_{\alpha}}\lesssim \meas{cB}\eqsim \meas{B}$.
As a consequence, the bound $\abs{\int m}\leq \meas{Q}^{1-1/p}$ implies that 
\begin{align}\label{estimate sum of lambdas in finite atomic decomposition}
\begin{split}
    \left(\sum_{\alpha\in A_{Q}}\abs{\lambda_{\alpha}}^{p}\right)^{1/p}&\lesssim \norm{a}_{\El{\infty}}\left(\sum_{\substack{{\alpha\in\N}\\{B_{\alpha}\cap 4Q\neq \emptyset}}}\meas{B_{\alpha}}\right)^{1/p}\lesssim \abs{\int m}\meas{B}^{-1}\meas{B}^{1/p}\lesssim 1.
    \end{split}
\end{align}
Notice that since 
 $a=\sum_{\alpha\in A_{Q}}\lambda_{\alpha}a_{\alpha}$ and $p\leq 1$, we have $\norm{a}_{\Dz{p}}^{p}\leq \sum_{\alpha\in A_{Q}}\abs{\lambda_{\alpha}}^{p}\norm{a_{\alpha}}^{p}_{\Dz{p}}$.
The estimate (\ref{estimate sum of lambdas in finite atomic decomposition}) shows that it suffices to show that $\norm{a_{\alpha}}_{\Dz{p}}\lesssim 1$ for all $\alpha\in A_{Q}$. As before, we get the local bound
$\norm{\mathcal{M}_{V}a_{\alpha}}_{\El{p}(2B_{\alpha})}\leq \norm{\mathcal{M}_{V}a_{\alpha}}_{\El{2}(2B_{\alpha})}\meas{2B_{\alpha}}^{\frac{1}{p}-\frac{1}{2}}\lesssim 1$.
For the global bound, we note that for all $t>0$, $x\in\Rn\setminus 2B_{\alpha}$ and $y\in B_{\alpha}$ it holds that  $\abs{x-y}\geq \frac{1}{2}\abs{x-x_{\alpha}}$ and $\rho(y)\eqsim \rho(x_{\alpha})$. Consequently, we can use as before the estimate (i) from Lemma~\ref{improved gaussian bound on heat kernel of schrodinger operator} with an arbitrary $N\geq 1$ to obtain
\begin{align*}
    \abs{(e^{-tH_{0}}a_{\alpha})(x)}\leq \int_{B_{\alpha}}k_{t}(x,y)\abs{a_{\alpha}(y)}\dd y
    \lesssim \abs{x-x_{\alpha}}^{-n-N}\meas{B_{\alpha}}^{1-\frac{1}{p}}\rho(x_{\alpha})^{N}.
\end{align*}
Since $B_{\alpha}$ has radius $\rho(x_{\alpha})$, we obtain that $\norm{\mathcal{M}_{V}a_{\alpha}}_{\El{p}(\Rn\setminus 2B_{\alpha})}\lesssim 1$, and this finishes the proof.
\end{proof}
We remark that Theorem~\ref{molecules are in Dziubanski hardy space with uniform bound} implies the following result.
\begin{cor}\label{thm continuous inclusion of classical hardy into Dziubanski hardy space}
    Let $n\geq 3$, $q\in(\frac{n}{2},\infty]$, $V\in\textup{RH}^{q}(\Rn)$, and let $\delta=2-\frac{n}{q}>0$. For all $p\in \left(\frac{n}{n+\delta/2}, 1\right]$, there is a continuous inclusion $\textup{H}^{p}(\Rn)\cap\Ell{2}\subseteq \textup{H}^{p}_{V,\textup{pre}}(\Rn)$ for the respective $p$-quasinorms.
\end{cor}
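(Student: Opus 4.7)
The plan is to combine the atomic decomposition of $\textup{H}^{p}(\Rn)\cap\Ell{2}$ recalled in Section~\ref{subsubsection on hardy spaces} with the molecular estimate of Theorem~\ref{molecules are in Dziubanski hardy space with uniform bound}. The essential observation driving the argument is that any standard $\El{\infty}$-atom for $\textup{H}^{p}(\Rn)$, up to a multiplicative constant depending only on $n$ and $p$, is a $(p,\eps)$-molecule in the sense of Definition~\ref{molecule Dziubanski hardy space} for any fixed $\eps>0$.

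Given $f\in\textup{H}^{p}(\Rn)\cap\Ell{2}$, I would first decompose $f=\sum_{k\geq 1}\lambda_{k}a_{k}$ with convergence in both $\textup{H}^{p}$ and $\Ell{2}$, where each $a_{k}$ is an $\El{\infty}$-atom supported in a cube $Q_{k}$ with $\norm{a_{k}}_{\Ell{\infty}}\leq \meas{Q_{k}}^{-1/p}$ and $\int_{\Rn}a_{k}=0$, and the scalars satisfy $\sum_{k\geq 1}\abs{\lambda_{k}}^{p}\eqsim\norm{f}_{\textup{H}^{p}}^{p}$. Next, I would verify that each $a_{k}$ satisfies the two conditions of Definition~\ref{molecule Dziubanski hardy space} with respect to the cube $Q_{k}$, up to a multiplicative constant depending only on $n$ and $p$. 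For $j\geq 2$ the annulus $C_{j}(Q_{k})=2^{j+1}Q_{k}\setminus 2^{j}Q_{k}$ is disjoint from $Q_{k}$, so $\norm{a_{k}}_{\El{2}(C_{j}(Q_{k}))}=0$; for $j=1$ we have $\norm{a_{k}}_{\El{2}(C_{1}(Q_{k}))}\leq \meas{Q_{k}}^{1/2-1/p}\eqsim \ell(Q_{k})^{n/2-n/p}$, which differs from the desired bound $(2\ell(Q_{k}))^{n/2-n/p}2^{-\eps}$ only by a multiplicative constant $C=C(n,p,\eps)$. The cancellation condition (ii) is trivial since $\int_{\Rn}a_{k}=0$.

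Once this is in place, Theorem~\ref{molecules are in Dziubanski hardy space with uniform bound} applies to $a_{k}/C$ (its hypothesis $p>\frac{n}{n+\delta/2}$ is precisely the assumption of the corollary), yielding a uniform bound $\norm{a_{k}}_{\textup{H}^{p}_{V}}\lesssim 1$ independent of $k$. Since the atomic decomposition converges in $\Ell{2}$, I can invoke \eqref{reduction to uniform atomic estimate for Hardy Dziubanski spaces} to obtain
\[
\norm{f}_{\textup{H}^{p}_{V}}^{p}\leq \sum_{k\geq 1}\abs{\lambda_{k}}^{p}\norm{a_{k}}_{\textup{H}^{p}_{V}}^{p}\lesssim \sum_{k\geq 1}\abs{\lambda_{k}}^{p}\eqsim \norm{f}_{\textup{H}^{p}}^{p},
\]
which gives both the inclusion and its continuity.

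I do not anticipate any serious obstacle: all heavy lifting has been done in Theorem~\ref{molecules are in Dziubanski hardy space with uniform bound} (which handles the delicate interplay with the critical radius function and the potential) and in the classical atomic decomposition for $\textup{H}^{p}\cap\El{2}$. The only point requiring care is the bookkeeping of the constant relating an $\textup{H}^{p}$-atom to a true $(p,\eps)$-molecule, and the observation that this constant depends only on $n$, $p$ and the chosen $\eps$, so it can be absorbed into the implicit constant in the final estimate.
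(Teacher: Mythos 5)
Your proposal is correct and follows essentially the same route as the paper: observe that an $\textup{H}^{p}$-atom is (up to a harmless multiplicative constant) a $(p,\eps)$-molecule in the sense of Definition~\ref{molecule Dziubanski hardy space}, apply Theorem~\ref{molecules are in Dziubanski hardy space with uniform bound}, and sum using the $\El{2}$-convergent atomic decomposition and \eqref{reduction to uniform atomic estimate for Hardy Dziubanski spaces}. If anything you are a bit more careful than the paper about the $j=1$ normalisation constant, which the paper glosses over with ``it is clear that $a$ is a $(p,\eps)$-molecule.''
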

\begin{proof}
    Let $a\in\El{\infty}_{c}(\Rn)$ be an atom for $\textup{H}^{p}(\Rn)$ associated to a cube $Q\subseteq \Rn$ (see Section~\ref{subsubsection on hardy spaces}). For an arbitrary $\eps>0$, it is clear that $a$ is a $(p,\eps)$-molecule for $\textup{H}^{p}_{V,\textup{pre}}$, associated with $Q$.  Consequently, we have a uniform bound of the form $\norm{a}_{\textup{H}^{p}_{V}}\lesssim 1$. The conclusion follows from the fact that the atomic decomposition of any function $f\in\textup{H}^{p}(\Rn)\cap\Ell{2}$ converges in the $\El{2}$ norm (as well as in the $\textup{H}^{p}(\Rn)$ norm). 
\end{proof}

\subsubsection{Square function characterisations}\label{section interpolation Dziubanski Hardy spaces}
We shall need to interpolate between the Hardy spaces $\textup{H}^{p}_{V,\textup{pre}}(\Rn)$, for $\frac{n}{n+1}<p<\infty$. This is made possible by the following square function characterisation. 

Let us consider the auxiliary function $\psi_{0}(z):=ze^{-z}$. For $f\in\Ell{2}$, we consider the square function $\mathcal{S}_{0}f : \Rn \to [0,\infty]$ defined as 
\begin{equation*}
    (\mathcal{S}_{0}f)(x):=(S_{\psi_{0},H_{0}}f)(x)=\left(\iint_{\Gamma(x)}{\abs{(\psi_{0}(t^{2}H_{0})f)(y)}}^{2}\frac{\dd y\dd t}{t^{n+1}}\right)^{1/2}
\end{equation*}
for all $x\in\Rn$. As usual, it follows from Fubini's theorem and that of McIntosh that $\norm{\mathcal{S}_{0}f}_{\El{2}}\lesssim\norm{f}_{\El{2}}$ for all $f\in\El{2}$. The following theorem gives a characterisation of the maximal Hardy space $\Dz{p}$ in terms of the square function operator $\mathcal{S}_{0}$.
\begin{thm}\label{square function characterisation of Dziubanski Hardy spaces}
    Let $n\geq 1$ and let $V\in\El{1}_{\loc}(\Rn)$ be non-negative. If $p\in(\frac{n}{n+1},\infty)$ and $f\in\Ell{2}$, then $f\in\textup{H}^{p}_{V,\textup{pre}}(\Rn)$ if and only if $\mathcal{S}_{0}f\in\Ell{p}$. Moreover, $\norm{f}_{\Dz{p}}\eqsim \norm{\mathcal{S}_{0}f}_{\El{p}}$ for all $f\in\textup{H}^{p}_{V,\textup{pre}}(\Rn)$.
\end{thm}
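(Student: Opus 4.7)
The plan is to establish the two-sided bound $\|\mathcal{S}_0 f\|_{\El{p}}\eqsim\|f\|_{\textup{H}^p_V}$ separately in the ranges $p>1$ and $p\leq 1$, exploiting the bounded $\textup{H}^\infty$-calculus of $H_0$ on $\Ell{2}$ and the Gaussian heat-kernel bound \eqref{classical gaussian estimates for schrodinger semigroup}.

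For $p>1$, Lemma~\ref{identification of Dziubanski hardy spaces for p>=1} identifies $\textup{H}^p_{V,\textup{pre}}$ with $\El{p}\cap\El{2}$, so it suffices to show $\|\mathcal{S}_0 f\|_{\El{p}}\eqsim\|f\|_{\El{p}}$ for $f\in\El{p}\cap\El{2}$. The $\El{2}$ bound follows from McIntosh's theorem combined with Fubini. The Gaussian bound on $e^{-tH_0}$ extends (via Cauchy's formula around $s=t^2$) to a Gaussian bound on the kernel of $(sH_0)^ke^{-sH_0}$ for every $k\in\N$, and this enables the usual vector-valued Calderón-Zygmund argument to deduce $\El{p}$-boundedness of $\mathcal{S}_0$ for $p\in(1,2]$. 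Duality on tent spaces then extends the upper bound to $p\in[2,\infty)$. The reverse inequality comes from dualising the Calderón-McIntosh reproducing formula (Theorem~\ref{Calderon McIntosh reproducing formula}) with $c\psi_0^2$: for $f,g\in\El{2}$,
\begin{equation*}
\langle f,g\rangle_{\El{2}}=c\int_0^\infty\langle\psi_0(t^2H_0)f,\psi_0(t^2H_0)g\rangle_{\El{2}}\frac{\dd t}{t}\lesssim\|\mathcal{S}_0 f\|_{\El{p}}\|\mathcal{S}_0 g\|_{\El{p'}}
\end{equation*}
by \eqref{estimate duality in tent spaces}, and the previously established upper bound $\|\mathcal{S}_0 g\|_{\El{p'}}\lesssim\|g\|_{\El{p'}}$ yields $\|f\|_{\El{p}}\lesssim\|\mathcal{S}_0 f\|_{\El{p}}$ by taking the supremum over $g$.

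For $p\leq 1$, the forward direction proceeds by atomic decomposition. Apply Theorem~\ref{abstract atomic decomposition DKKP} to write $f=\sum_j\lambda_ja_j$ in $\El{2}$ with $\|(\lambda_j)\|_{\ell^p}\eqsim\|f\|_{\textup{H}^p_V}$, where each $a_j=H_0^mb_j$ is an abstract $\textup{H}^p_V$-atom with $m=\lfloor n/(2p)\rfloor+1$ supported in a ball $B_j$ of radius $r_{j}$. The $\El{2}$-continuity of $\mathcal{S}_0$ and $p\leq 1$ reduce the task to a uniform atomic estimate $\|\mathcal{S}_0 a\|_{\El{p}}\lesssim 1$. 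The local estimate on a dilate $cB$ of the associated ball $B$ follows from Hölder's inequality, the $\El{2}$-boundedness of $\mathcal{S}_0$, and the size bound $\|a\|_{\El{2}}\lesssim|B|^{1/2-1/p}$ from Definition~\ref{definition abstract atoms DKKP}. For the global estimate on $\Rn\setminus cB$, use the identity $\psi_0(t^2H_0)a=t^{-2m}(t^2H_0)^{m+1}e^{-t^2H_0}b$ together with the Gaussian bound on the kernel of $(sH_0)^{m+1}e^{-sH_0}$ to control $\psi_0(t^2H_0)a(y)$ pointwise for $y\in\Gamma(x)$ and $x\notin cB$. Splitting the integral $\iint_{\Gamma(x)}(\cdots)\dd y\dd t/t^{n+1}$ at the scale $t=|x-x_B|$ and integrating in $\El{p}$ using $m>n/(2p)-n/4$ yields $\|\mathcal{S}_0 a\|_{\El{p}(\Rn\setminus cB)}\lesssim 1$.

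For the reverse direction in the range $p\leq 1$, set $F(t,x):=\psi_0(t^2H_0)f(x)$, so that $F\in\tent{p}\cap\tent{2}$. Decompose $F=\sum_k\mu_kA_k$ using the tent space atomic decomposition of Section~\ref{section on tent spaces}, with unconditional convergence in both $\tent{p}$ and $\tent{2}$ and $\|(\mu_k)\|_{\ell^p}\lesssim\|\mathcal{S}_0 f\|_{\El{p}}$. Fix $N:=\lfloor n/(2p)\rfloor+1$ and choose $\widetilde\psi(z):=c_Nz^Ne^{-z}$ normalised so that $\int_0^\infty\widetilde\psi(s)\psi_0(s)\frac{\dd s}{s}=1$. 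The Calderón-McIntosh reproducing formula then gives
\begin{equation*}
f=\int_0^\infty\widetilde\psi(t^2H_0)F(t,\cdot)\frac{\dd t}{t}=\sum_k\mu_k\alpha_k,\qquad\alpha_k:=\int_0^{\ell(Q_k)}\widetilde\psi(t^2H_0)A_k(t,\cdot)\frac{\dd t}{t},
\end{equation*}
with convergence in $\El{2}$ by the $\tent{2}$-convergence of the atomic decomposition. Factoring $H_0^N$ out of the integrand, $\alpha_k=H_0^N\tilde b_k$ with $\tilde b_k:=c_N\int_0^{\ell(Q_k)}t^{2N}e^{-t^2H_0}A_k(t,\cdot)\frac{\dd t}{t}$. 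The Gaussian $\El{2}\to\El{\infty}$ bounds on $(t^2H_0)^ke^{-t^2H_0}$ for $0\leq k\leq N$, combined with the $\El{2}$-size of tent atoms supported in $Q_k\times(0,\ell(Q_k))$, then verify that each $\alpha_k$ is (up to a universal multiplicative constant) an abstract $\textup{H}^p_V$-atom associated with a dilate of $Q_k$ in the sense of Definition~\ref{definition abstract atoms DKKP}. Invoking the $\El{2}$-convergence of $\sum_k\mu_k\alpha_k$ and the argument leading to \eqref{reduction to uniform atomic estimate for Hardy Dziubanski spaces} gives $\|f\|_{\textup{H}^p_V}^p\lesssim\sum_k|\mu_k|^p\lesssim\|\mathcal{S}_0 f\|_{\El{p}}^p$. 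The main technical obstacle is the verification of the uniform atomic bound in the forward direction, in particular attaining the sharp lower bound $p>n/(n+1)$: this calibrates the choice of $m$ against the polynomial factors produced by the Gaussian iterates of the heat kernel, and dictates the required off-diagonal decay at the scale $t\sim|x-x_B|$.
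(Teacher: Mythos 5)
Your overall architecture — atomic decomposition in $\textup{H}^{p}_{V,\textup{pre}}$ combined with a uniform square-function bound for one direction, tent-space atomic decomposition combined with a reconstruction formula for the other — matches the paper's strategy, and your forward direction for $p\leq1$ is sound. However, the reverse direction for $p\leq1$ has a genuine gap. You define $\tilde b_k:=c_N\int_0^{\ell(Q_k)}t^{2N}e^{-t^2H_0}A_k(t,\cdot)\frac{\dd t}{t}$ and claim that $\alpha_k=H_0^N\tilde b_k$ is (a multiple of) an abstract $\textup{H}^p_V$-atom in the sense of Definition~\ref{definition abstract atoms DKKP}. But condition (ii) of that definition requires $\supp{\tilde b_k}\subseteq B$ for some ball $B$ (and hence $\supp{H_0^j\tilde b_k}\subseteq B$ for all $j$). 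The heat semigroup $\{e^{-sH_0}\}_{s>0}$ does not have finite propagation speed: even when $A_k(t,\cdot)$ is supported in $Q_k$, the function $e^{-t^2H_0}A_k(t,\cdot)$ is supported on all of $\Rn$ (its kernel satisfies only a Gaussian decay bound, see~\eqref{classical gaussian estimates for schrodinger semigroup}). So $\tilde b_k$ is not compactly supported and $\alpha_k$ cannot be an abstract atom. The place in the paper where a contraction operator \emph{is} shown to produce compactly supported atoms is Theorem~\ref{thm atomic decomposition of hardy sobolev spaces}, and there the auxiliary function is $\eta(tH_0^{1/2})$ with $\hat\eta$ compactly supported precisely to invoke the finite propagation property~\eqref{compactness support integral kernel Davies gaffney estimates}; your $\widetilde\psi(z)=c_Nz^Ne^{-z}$ does not enjoy this.

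The paper sidesteps this issue entirely: it uses the same $\psi_0$ for both extension and contraction and never attempts to identify $\alpha=\bb{C}_0(A)$ as an atom. Instead it estimates the maximal function $\mathcal{M}_V\alpha$ directly, splitting into a local part $\|\mathcal{M}_V\alpha\|_{\El{p}(4Q)}\lesssim1$ (Hölder plus $\El{2}$-boundedness and the size bound $\|\alpha\|_{\El{2}}\lesssim|Q|^{1/2-1/p}$) and a global part where the Gaussian kernel bound on $s^2H_0e^{-s^2H_0}$ yields the pointwise decay $(\mathcal{M}_V\alpha)(x)\lesssim|x-x_Q|^{-(n+1)}\ell(Q)^{n+1-n/p}$ off $4Q$, which integrates to a bounded $\El{p}$-norm precisely because $p>n/(n+1)$. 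In effect, $\alpha$ is treated as a molecule rather than an atom, and the Gaussian tails are controlled explicitly. To repair your argument you would either need to switch $\widetilde\psi$ to a function with compactly supported Fourier transform (and then verify the remaining atom conditions against those propagation bounds), or abandon the atom verification and bound $\mathcal{M}_V\alpha_k$ directly as the paper does.

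Separately, your $p>1$ argument gives a self-contained proof via vector-valued Calderón--Zygmund theory and duality, whereas the paper simply cites external references for this range; that part of your proposal is a fine alternative and not a gap.
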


\begin{proof}
We first recall that $\textup{H}^{p}_{V,\textup{pre}}=\El{p}\cap\El{2}$ with equivalent $p$-norms (Lemma~\ref{identification of Dziubanski hardy spaces for p>=1}), and that $H_{0}$ is a non-negative self-adjoint operator with a heat-semigroup $\set{e^{-tH_{0}}}_{t>0}$  satisfying the Gaussian estimates \eqref{classical gaussian estimates for schrodinger semigroup}.
    The result for $p\in (1,\infty)$ is stated and proved (in a more general form) in \cite[Theorem 1.1]{Gong-Yan_Weighted_area_integral_estimates}. Note that the lower bounds can be obtained from the upper bounds by duality (as in the proof of Lemma~\ref{continuous inclusions q>2 abstract bisectorial adapted hardy spaces} below). The result is also mentioned in \cite[Section~1, p. 465]{SONG_YAN_2016}, based on the earlier work of Auscher \cite{Auscher_2007}.
    
    Now assume that $\frac{n}{n+1}<p\leq 1$, and denote by $\bb{Q}_{0}:=\bb{Q}_{\psi_{0},H_{0}}:\Ell{2}\to \El{2}((0,\infty),\frac{\dd t}{t};\El{2})$
and $\bb{C}_{0}:=\bb{C}_{\psi_{0},H_{0}}:\El{2}((0,\infty),\frac{\dd t}{t};\El{2})\to \Ell{2}$,
    the (bounded) extension and contraction operators associated with $\psi_{0}$ and $H_{0}$, as defined in Section~\ref{section on extension and contraction operators for general inj sectorial op}. Since $\psi^{*}_{0}=\psi_{0}$ and $H_{0}$ is self-adjoint, we know that $\bb{Q}_{0}^{*}=\bb{C}_{0}$ (see Section~\ref{section on extension and contraction operators for general inj sectorial op}). Moreover, there is a $c>0$ such that $f=c\bb{C}_{0}\bb{Q}_{0}f$ for all $f\in\Ell{2}.$

Let $f\in\El{2}$ be such that $\mathcal{S}_{0}f\in\Ell{p}$. The definition of the square function $\mathcal{S}_{0}f$ precisely means that the function $\bb{Q}_{0}f$, in addition to being in $\tent{2}$, belongs to the tent space $\tent{p}$, with 
$\norm{\bb{Q}_{0}f}_{\tent{p}}=\norm{\mathcal{S}_{0}f}_{\El{p}}$.
    The atomic decomposition in tent spaces (see Section~\ref{section on tent spaces}) implies that there exists a sequence of $\tent{p}$ atoms $\set{A_{k}}_{k\geq 1}$ and a sequence of scalars $\set{\lambda_{k}}_{k\geq 1}\subset\C$ such that $\bb{Q}_{0}f=\sum_{k\geq 1}\lambda_{k}A_{k}$
    with convergence in $\El{2}_{\loc}(\Hn)$ and in $\tent{2}$.  Moreover, we have $\left(\sum_{k\geq 1 }\abs{\lambda_{k}}^{p}\right)^{1/p}\lesssim \norm{\bb{Q}_{0}f}_{\tent{p}}$. 
    By boundedness of $\bb{C}_{0}$ we get $f=c\bb{C}_{0}(\bb{Q}_{0}f)=c\sum_{k\geq 1}\lambda_{k}\bb{C}_{0}(A_{k})$ with convergence in $\El{2}$. By (\ref{reduction to uniform atomic estimate for Hardy Dziubanski spaces}), it therefore remains to prove that $\bb{C}_{0}(A_k)\in\textup{H}^{p}_{V,\textup{pre}}$ for all $k\geq 1$, with $\norm{\bb{C}_{0}(A_k)}_{\Dz{p}}\lesssim 1$ uniformly for all $k$. Let $A$ be an arbitrary $\tent{p}$-atom associated to a cube $Q\subset\Rn$, and let $\alpha:=\bb{C}_{0}(A)$. We first estimate $\norm{\alpha}_{\El{2}}$ by duality. For arbitrary $g\in\Ell{2}$, we can use the fact that $\psi_{0}(t^{2}H_{0})$ is a bounded self-adjoint operator on $\El{2}$, the support property of $A$, the Cauchy--Schwarz inequality and McIntosh's theorem to get
    \begin{align*}
        \abs{\langle \alpha, g\rangle_{\El{2}}}&\leq \int_{0}^{\ell(Q)}\abs{\langle \psi_{0}(t^{2}H_{0})(A(t,\cdot)),g\rangle} \frac{\dd t}{t}=\int_{0}^{\ell(Q)}\abs{\langle A(t,\cdot),\psi_{0}(t^{2}H_{0})g\rangle} \frac{\dd t}{t}\\
        &\leq \left(\iint_{\Hn}\abs{A(t,x)}^{2}\frac{\dd x\dd t}{t}\right)^{1/2}\left(\int_{0}^{\infty}\norm{\psi_{0}(t^{2}H_{0})g}_{\El{2}}^{2}\frac{\dd t}{t}\right)^{1/2}\lesssim \meas{Q}^{\frac{1}{2}-\frac{1}{p}}\norm{g}_{\El{2}}.
    \end{align*}
    This implies that $\norm{\alpha}_{\El{2}}\lesssim \meas{Q}^{\frac{1}{2}-\frac{1}{p}}$. By Hölder's inequality and $\El{2}$-boundedness of the maximal operator $\mathcal{M}_{V}$, we obtain the local bound $\norm{\mathcal{M}_{V}\alpha}_{\El{p}(4Q)}\lesssim 1$. To obtain the global bound, let us note that for fixed $s>0$ and arbitrary $g\in\Ell{2}$ it holds that 
    \begin{align*}
        \langle e^{-s^{2}H_{0}}\alpha , g\rangle_{\El{2}}=\langle \bb{C}_{0}(A) ,e^{-s^{2}H_{0}}g\rangle_{\El{2}}=\lim_{\eps\to 0}\langle \int_{\eps}^{1/\eps}\psi_{0}(t^{2}H_{0})(A(t,\cdot))\frac{\dd t}{t},e^{-s^{2}H_{0}}g\rangle_{\El{2}}.
    \end{align*}
    Now, for fixed $\eps>0$, we can use the fact that the operators $\psi_{0}(s^{2}H_{0})=s^{2}H_{0}e^{-s^{2}H_{0}}\in\mathcal{L}(\Ell{2})$ are integral operators for all $s>0$ (see \cite[Proposition 4]{DGMTZ_2005}), with associated kernels ${\mathcal{Q}_{s}:\Rn\times \Rn\to \R}$ to write
    \begin{align*}
        &\abs{\langle \int_{\eps}^{1/\eps}\psi_{0}(t^{2}H_{0})(A(t,\cdot))\frac{\dd t}{t},e^{-s^{2}H_{0}}g\rangle_{\El{2}}}=\abs{\langle \int_{\eps}^{1/\eps}e^{-s^{2}H_{0}}\psi_{0}(t^{2}H_{0})(A(t,\cdot))\frac{\dd t}{t},g\rangle_{\El{2}}}\\
        &=\abs{\langle \int_{\eps}^{1/\eps}\frac{t^{2}}{t^{2}+s^{2}}\psi_{0}((s^{2}+t^{2})H_{0})(A(t,\cdot))\frac{\dd t}{t} , g\rangle_{\El{2}}}\\
        &\leq \int_{\Rn}\left(\int_{0}^{\ell(Q)}\int_{Q}\frac{t^{2}}{t^{2}+s^{2}}\abs{\mathcal{Q}_{(t^{2}+s^{2})^{\frac{1}{2}}}(x,y)}\abs{A(t,y)}\frac{\dd y\dd t}{t}\right)\abs{g(x)}\dd x.
    \end{align*}
    Since $g\in\smcp$ can be chosen arbitrarily (and in particular can be any translation of a standard mollifier), this implies that for almost every $x\in\Rn$ it holds that 
    \begin{align*}
        \abs{\left(e^{-s^{2}H_{0}}\alpha\right)(x)}&\leq \int_{0}^{\ell(Q)}\int_{Q}\frac{t^{2}}{t^{2}+s^{2}}\abs{\mathcal{Q}_{(t^{2}+s^{2})^{\frac{1}{2}}}(x,y)}\abs{A(t,y)}\frac{\dd y\dd t}{t}\\
        &\lesssim \meas{Q}^{\frac{1}{2}-\frac{1}{p}}\left(\int_{0}^{\ell(Q)}\int_{Q}\abs{\frac{t^{2}}{t^{2}+s^{2}}\mathcal{Q}_{(t^{2}+s^{2})^{\frac{1}{2}}}(x,y)}^{2}\frac{\dd y\dd t}{t}\right)^{1/2},
    \end{align*}
    where we have used the Cauchy--Schwarz inequality and the fact that $A$ is a $\tent{p}$-atom.
    
    Now let $x\in\Rn\setminus 4Q$ and let $x_{Q}$ be the centre of the cube $Q$. Then, for $y\in Q$ it holds that  $\abs{x-y}\eqsim \abs{x-x_{Q}}$. The integral kernel $\mathcal{Q}_{t}(x,y)$ satisfies the following Gaussian bounds:
    \begin{equation*}
        \abs{\mathcal{Q}_{t}(x,y)}\lesssim t^{-n}e^{-\frac{\abs{x-y}^{2}}{ct^{2}}}
    \end{equation*}
    for almost every $x,y\in\Rn$ and $t>0$ (see \cite[Proposition 4]{DGMTZ_2005}). Consequently, for arbitrary $N>0$ we have 
    \begin{align*}
        \abs{\frac{t^{2}}{t^{2}+s^{2}}\mathcal{Q}_{(t^{2}+s^{2})^{1/2}}(x,y)}^{2}&\lesssim t^{4}(t^{2}+s^{2})^{-n-2}\exp\left(-\frac{2\abs{x-y}^{2}}{c(t^{2}+s^{2})}\right)\\
        &\lesssim t^{4}(t^{2}+s^{2})^{N-n-2}\abs{x-x_{Q}}^{-2N}\leq t^{2N-2n}\abs{x-x_{Q}}^{-2N},
    \end{align*}
    provided $N-n-2<0$. We can then estimate
    \begin{align*}
        \abs{(e^{-s^{2}H_{0}}\alpha)(x)}&\lesssim \meas{Q}^{\frac{1}{2}-\frac{1}{p}}\abs{x-x_{Q}}^{-N}\left(\int_{0}^{\ell(Q)}\int_{Q}t^{2N-2n-1}\dd y\dd t\right)^{1/2}\\
        &\lesssim \meas{Q}^{1-\frac{1}{p}}\abs{x-x_{Q}}^{-N} \ell(Q)^{N-n}=\meas{Q}^{-\frac{1}{p}}\abs{x-x_{Q}}^{-N}\ell(Q)^{N},
    \end{align*}
    provided that $N-n>0$. As $s>0$ was arbitrary, we have $(\mathcal{M}_{V}\alpha) (x)\lesssim \abs{x-x_{Q}}^{-N}\ell(Q)^{N-\frac{n}{p}}$ for almost every $x\in\Rn\setminus 4Q$. If $n<Np$, then $\left(\int_{\Rn\setminus 4Q}\abs{x-x_{Q}}^{-Np}\dd x\right)^{1/p}\lesssim \ell(Q)^{\frac{n}{p}-N}$. Consequently, we may take $N:=n+1$ and obtain
    \begin{align*}
        \norm{\mathcal{M}_{V}\alpha}_{\El{p}}\leq \norm{\mathcal{M}_{V}\alpha}_{\El{p}(4Q)}+\norm{\mathcal{M}_{V}\alpha}_{\El{p}(\Rn\setminus {4Q})}\lesssim 1.
    \end{align*}
    This implies that $f\in\textup{H}^{p}_{V,\textup{pre}}(\Rn)$, with $\norm{f}_{\Dz{p}}\lesssim \left(\sum_{k\geq 1}\abs{\lambda_{k}}^{p}\right)^{1/p}\lesssim \norm{\mathcal{S}_{0}f}_{\El{p}}$.

    Let us now turn to the converse implication. Let $f\in\textup{H}^{p}_{V,\textup{pre}}(\Rn)$. We need to show that $\mathcal{S}_{0}f\in\Ell{p}$ with $\norm{\mathcal{S}_{0}f}_{\El{p}}\lesssim\norm{f}_{\Dz{p}}$. It suffices to prove a uniform bound $\norm{\mathcal{S}_{0}a}_{\El{p}}\lesssim 1$ for an arbitrary abstract atom $a$ for $H_{0}$ associated to a ball $B\subset \Rn$ of radius $r_{B}>0$.
    As usual, we use Hölder's inequality and the $\El{2}$-boundedness of $\mathcal{S}_{0}$ to obtain the local estimate $\norm{\mathcal{S}_{0}a}_{\El{p}(4B)}\leq \norm{\mathcal{S}_{0}a}_{\El{2}}\meas{4B}^{\frac{1}{p}-\frac{1}{2}}\lesssim \norm{a}_{\El{2}}\meas{B}^{\frac{1}{p}-\frac{1}{2}}\lesssim 1$.
    To estimate the remaining global term $\norm{\mathcal{S}_{0}a}_{\El{p}(\Rn\setminus 4B)}^{p}$, we will obtain a pointwise bound on $(\mathcal{S}_{0}a)(x)$ for all $x\in\Rn\setminus 4B$. We do so by writing $a=H_{0}^{m}b$ for some $b\in\dom{H_{0}^{m}}$ with $m=\lfloor \frac{n}{2p}\rfloor +1$ and $\supp{b}\subseteq B$. It follows that for all $t>0$ it holds that 
    \begin{align*}
        \psi_{0}(t^{2}H_{0})a=t^{2}H_{0}^{m+1}e^{-t^{2}H_{0}}b=t^{-2m}(t^{2}H_{0})^{m+1}e^{-t^{2}H_{0}}=:t^{-2m}P_{t}^{m}b,
    \end{align*}
    where $P_{t}^{m}$ is an integral operator with kernel $\mathcal{P}_{t}^{m}:\Rn\times\Rn\to \R$ satisfying the estimate
    \begin{equation*}
        \abs{\mathcal{P}_{t}^{m}(x,y)}\lesssim_{\gamma,m} t^{-n}\left(1+\frac{\abs{x-y}}{t}\right)^{-\gamma} 
    \end{equation*}
    for all $\gamma >0$ and almost every $x,y\in\Rn$ (see \cite[Theorem 2.3]{DKKP}). Consequently, we can estimate
    \begin{align*}
        \abs{(\psi_{0}(t^{2}H_{0})a)(y)}\lesssim t^{-2m-n}\int_{B}\left(1+\frac{\abs{y-z}}{t}\right)^{-\gamma}\abs{b(z)}\dd z.
    \end{align*}
    We now treat two different cases, depending on the size of $t>0$. If $t>\frac{1}{2}\abs{x-x_{B}}$, we crudely estimate 
    \begin{align*}
        \abs{(\psi_{0}(t^{2}H_{0})a)(y)}\lesssim t^{-2m-n}\int_{B}\abs{b}\leq \meas{B}^{1-\frac{1}{p}}r_{B}^{2m}t^{-2m-n}.
    \end{align*}
    If $t\leq \frac{1}{2}\abs{x-x_{B}}$, then, since $x\in\Rn\setminus 4B$ and $\abs{x-y}<t$, it holds that  $\abs{y-z}\geq \frac{1}{4}\abs{x-x_{B}}$ for all $z\in B$. Consequently we can estimate
    \begin{align*}
        \abs{(\psi_{0}(t^{2}H_{0})a)(y)}&\lesssim \meas{B}^{1-\frac{1}{p}}r_{B}^{2m}t^{-2m-n}\left(1+\frac{\abs{x-x_{B}}}{4t}\right)^{-\gamma}\lesssim \meas{B}^{1-\frac{1}{p}}r_{B}^{2m}t^{\gamma-2m-n}\abs{x-x_{B}}^{-\gamma},
    \end{align*}
    for all $\gamma>0$. We now split the range of integration at $t=\frac{1}{2}\abs{x-x_{B}}$ to obtain
    \begin{align*}
        (\mathcal{S}_{0}a)(x) 
        &\lesssim \meas{B}^{1-\frac{1}{p}}r_{B}^{2m}\abs{x-x_{B}}^{-\gamma}\left(\iint_{\substack{\abs{x-y}<t \\ 0<t\leq \frac{1}{2}\abs{x-x_{B}}}}  t^{2\gamma-4m-2n}\frac{\dd y\dd t}{t^{n+1}}\right)^{1/2}\\
        &+ \meas{B}^{1-\frac{1}{p}}r_{B}^{2m}\left(\iint_{\substack{\abs{x-y}<t \\ t>\frac{1}{2}\abs{x-x_{B}}}}  t^{-4m-2n}\frac{\dd y\dd t}{t^{n+1}}\right)^{1/2}\\
        &\lesssim \meas{B}^{1-\frac{1}{p}}r_{B}^{2m}\abs{x-x_{B}}^{-\gamma}\left(\int_{0}^{\frac{1}{2}\abs{x-x_{B}}}t^{2\gamma-4m-2n-1}\dd t\right)^{1/2}\\
        &+ \meas{B}^{1-\frac{1}{p}}r_{B}^{2m}\left(\int_{\frac{1}{2}\abs{x-x_{B}}}^{\infty}t^{-4m-2n-1}\dd t\right)^{1/2}\lesssim \meas{B}^{1-\frac{1}{p}}r_{B}^{2m}\abs{x-x_{B}}^{-2m-n},
    \end{align*}
    provided we choose $\gamma>2m+n$ to make the integral converge. Since $-p(2m+n)+n<0$ by definition of $m$, we can integrate this to the power $p$ to obtain $\norm{\mathcal{S}_{0}a}_{\El{p}(\Rn\setminus 4B)}\lesssim 1$.
     This shows that $\norm{\mathcal{S}_{0}a}_{\El{p}}\lesssim 1$ for arbitrary abstract atoms $a$ for $H_{0}$. 
     
     Now, for $f\in\textup{H}^{p}_{V,\textup{pre}}(\Rn)$, there exists an $\El{2}$-convergent atomic decomposition $f=\sum_{k\geq 1}\lambda_{k}a_k$, where $a_{k}$ are abstract atoms associated with $H_{0}$ and $\set{\lambda_{k}}_{k\geq 1}\subset\C$ satisfies $\left(\sum_{k\geq 1}\abs{\lambda_{k}}^{p}\right)^{1/p}\lesssim \norm{f}_{\Dz{p}}$. For each $n\geq 1$, let $f_{n}:=\sum_{k=1}^{n}\lambda_{k}a_k$. Then, by subadditivity of $\mathcal{S}_{0}$ and the fact that $p\leq 1$, we obtain $\norm{\mathcal{S}_{0}f_{n}}^{p}_{\El{p}}\leq \sum_{k=1}^{n}\abs{\lambda_{k}}^{p}\norm{\mathcal{S}_{0}a_{k}}^{p}_{\El{p}}\lesssim \sum_{k=1}^{n}\abs{\lambda_{k}}^{p}$.
    This gives a uniform bound $\norm{\mathcal{S}_{0}f_{n}}_{\El{p}}\lesssim \norm{f}_{\Dz{p}}$ for $n\geq 1$. Since $f_{n}\to f$ in $\El{2}$, for all $(t,x)\in\Hn$ it holds that  
    \begin{align*}
        \int_{\abs{x-y}<t}\abs{(\psi_{0}(t^{2}H_{0})f)(y)}^{2}\dd y=\lim_{n\to\infty}\int_{\abs{x-y}<t}\abs{(\psi_{0}(t^{2}H_{0})f_{n})(y)}^{2}\dd y.
    \end{align*}
    Applying Fatou's lemma twice we obtain $\norm{\mathcal{S}_{0}f}_{\El{p}}^{p}\leq \liminf_{n\to\infty}\norm{\mathcal{S}_{0}f_{n}}_{\El{p}}^{p}\lesssim \norm{f}_{\Dz{p}}^{p}$.
\end{proof}
Let us remark that the proof of Theorem~\ref{square function characterisation of Dziubanski Hardy spaces} shows that, for $\frac{n}{n+1}<p<\infty$, the bounded operators $\bb{Q}_{0}:\Ell{2}\to \tent{2}$ and $\bb{C}_{0}:\tent{2}\to \Ell{2}$ are such that their restrictions $\bb{Q}_{0}|_{\textup{H}^{p}_{V,\textup{pre}}}: \textup{H}^{p}_{V,\textup{pre}}(\Rn)\to\tent{p}\cap\tent{2}$ and $\bb{C}_{0}|_{\tent{p}\cap\tent{2}}: \tent{p}\cap\tent{2}\to\textup{H}^{p}_{V,\textup{pre}}(\Rn)$ 
are bounded for the respective $p$-quasinorms. To be precise, it holds that 
\begin{equation}\label{boundedness of the canonical extension operator from Hardy to tent}
    \norm{\bb{Q}_{0}f}_{\tent{p}}\eqsim \norm{f}_{\textup{H}^{p}_{V}}
\end{equation}
for all $f\in\textup{H}^{p}_{V,\textup{pre}}(\Rn)$, and 
\begin{equation}\label{boundedness of the canonical contraction operator from tent to Hardy}
    \norm{\bb{C}_{0}F}_{\textup{H}^{p}_{V}}\lesssim \norm{F}_{\tent{p}}
\end{equation}
for all $F\in\tent{p}\cap\tent{2}$. Indeed, the two-sided estimate (\ref{boundedness of the canonical extension operator from Hardy to tent}) is precisely the statement of Theorem~\ref{square function characterisation of Dziubanski Hardy spaces}, and the estimate (\ref{boundedness of the canonical contraction operator from tent to Hardy}) for $\frac{n}{n+1}<p\leq 1$ follows immediately from the estimate $\norm{\bb{C}_{0}(A)}_{\textup{H}^{p}_{V}}\lesssim 1$ for all $\tent{p}$-atoms $A$, that was established in the proof of Theorem~\ref{square function characterisation of Dziubanski Hardy spaces}. 
The estimate (\ref{boundedness of the canonical contraction operator from tent to Hardy}) for $1<p<\infty$ can be obtained by duality from (\ref{boundedness of the canonical extension operator from Hardy to tent}), using that $\bb{C}_{0}^{*}=\bb{Q}_{0}$. Indeed, by \cite[Theorem 2]{Coifman_Meyer_Stein_TentSpaces}, for $1<p<\infty$, $F\in\tent{p}\cap\tent{2}$ and $g\in\El{2}\cap\El{p'}$, we have
\begin{align*}
    \abs{\langle \bb{C}_{0}F , g\rangle_{\El{2}}}=\abs{\langle F , \bb{Q}_{0}g\rangle_{\El{2}((0,\infty),\frac{\dd t}{t};\El{2})}}
\lesssim\norm{F}_{\tent{p}}\norm{\bb{Q}_{0}g}_{\tent{p'}}\lesssim\norm{F}_{\tent{p}}\norm{g}_{\El{p'}}.
\end{align*}

\subsection{Hardy--Sobolev spaces $\dot{\textup{H}}^{1,p}_{V}$}\label{section on hardy sobolev spaces adapted to schrodinger operators}
We introduce the class of homogeneous Hardy--Sobolev spaces adapted to the potential $V$ that were mentioned in the introduction. We can work in the generality of $n\geq 1$ and a non-negative $V\in\El{1}_{\loc}(\Rn)$.
\subsubsection{Definition and properties}
For $p\in (0,1]$ and $n\geq 1$, we let $\dot{\textup{H}}^{1,p}_{V,\textup{pre}}(\Rn)$ be the space of all functions $f\in\mathcal{V}^{1,2}(\Rn)=\dom{H_{0}^{1/2}}$ such that $H_{0}^{1/2}f\in\textup{H}^{p}_{V,\textup{pre}}(\Rn)$. The space $\dot{\textup{H}}^{1,p}_{V,\textup{pre}}(\Rn)$ is equipped with the quasinorm
$\norm{f}_{\dot{\textup{H}}^{1,p}_{V}}:=\norm{H_{0}^{1/2}f}_{\Dz{p}}$ (note that $H_{0}^{1/2}$ is injective).

These spaces may not be complete. We will see in the next section that a completion can sometimes be realised as a subspace of some $\El{p}$ space. For this we require the following proposition.
\begin{prop}\label{continuous embedding V adapted Hardy sobolev space in Ell^{p*}}
    Let $n\geq 3$, $p\in (\frac{n}{n+1},1]$, and $f\in \dot{\textup{H}}^{1,p}_{V,\textup{pre}}(\Rn)$. Then $f\in\Ell{p^{*}}$, with the estimate
    \begin{equation*}
        \norm{f}_{\El{p^{*}}}\lesssim \norm{f}_{\dot{\textup{H}}^{1,p}_{V}}.
    \end{equation*}
\end{prop}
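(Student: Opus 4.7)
The strategy is to prove the stronger inequality $\|H_0^{-1/2}g\|_{\El{p^*}}\lesssim \|g\|_{\textup{H}^p_V}$ for $g:=H_0^{1/2}f$, which yields the proposition since $f=H_0^{-1/2}g$ on $\dom{H_0^{1/2}}$. The natural starting point is the subordination identity $H_0^{-1/2}g=\tfrac{1}{\Gamma(1/2)}\int_0^\infty s^{-1/2}e^{-sH_0}g\,ds$, which combined with the heat-kernel dominance $0\leq k_s(x,y)\leq p_s(x-y)$ from Lemma~\ref{properties of schrodinger semigroup on L2} gives the pointwise Riesz-potential bound
\[
|H_0^{-1/2}g(x)|\leq c\, I_1(|g|)(x),\qquad x\in\Rn,
\]
where $I_1=(-\Delta)^{-1/2}$. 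For $p>1$, the Hardy--Littlewood--Sobolev inequality combined with the embedding $\textup{H}^p_{V,\textup{pre}}(\Rn)\hookrightarrow \El{p}$ from Lemma~\ref{identification of Dziubanski hardy spaces for p>=1} immediately yields the result, so the essential work lies in the range $p\leq 1$, where $I_1|g|$ satisfies only a weak-type estimate and the cancellation of $g$ must be restored.

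For $p\leq 1$, the plan is to invoke the abstract atomic decomposition from Theorem~\ref{abstract atomic decomposition DKKP}: write $g=\sum_k\lambda_k a_k$ with $\El{2}$- and $\textup{H}^p_V$-convergence and $\|(\lambda_k)\|_{\ell^p}\eqsim\|g\|_{\textup{H}^p_V}$, where each $a_k=H_0^{m_0}b_k$ is an abstract atom associated with a ball $B_k=B(x_k,r_k)$ for $m_0=\lfloor n/(2p)\rfloor+1$. Since each $a_k\in\ran{H_0^{m_0}}\subseteq\ran{H_0^{1/2}}\subseteq\dom{H_0^{-1/2}}$, I would apply $H_0^{-1/2}$ to the partial sums and use the uniform atomic bound proved below to show that they are Cauchy in $\El{p^*}$ with limit identifiable with $f$ via the simultaneous $\El{2}$-convergence. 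Since $p^*\in(1,n/(n-1)]\subset(1,2)$ for $n\geq 3$, the space $\El{p^*}$ is Banach, and the triangle inequality combined with $\sum|\lambda_k|\leq(\sum|\lambda_k|^p)^{1/p}$ (valid for $0<p\leq 1$) reduces everything to the uniform atomic estimate
\[
\|H_0^{-1/2}a\|_{\El{p^*}}\lesssim 1
\]
for every abstract atom $a=H_0^{m_0}b$ associated to a ball $B=B(x_0,r)$.

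For such an atom, the local bound on $4B$ follows from Hölder's inequality (using $p^*<2$) and an $\El{2}$-estimate for $H_0^{-1/2}a=H_0^{m_0-1/2}b$ obtained by spectral interpolation between the bounds $\|b\|_{\Ell{\infty}}\leq r^{2m_0}\meas{B}^{-1/p}$ and $\|H_0^{m_0}b\|_{\Ell{\infty}}\leq\meas{B}^{-1/p}$, yielding $\|H_0^{m_0-1/2}b\|_{\El{2}}\lesssim r\,\meas{B}^{1/2-1/p}$ and hence $\|H_0^{-1/2}a\|_{\El{p^*}(4B)}\lesssim 1$ (using $1/p^*-1/p=-1/n$). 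For the exterior estimate on $\Rn\setminus 4B$, the key step is the commutation $e^{-sH_0}(H_0^{m_0}b)=(-\partial_s)^{m_0}e^{-sH_0}b$ and an integration by parts $m_0$ times in $s$ inside the subordination integral, performed either after splitting $\int_0^\infty$ at $s=r^2$ or after an $e^{-\eps s}$-regularisation of the weight $s^{-1/2}$ that permits passage to the limit by exploiting the spatial separation $|x-x_0|\geq 4r>y$ for $y\in\supp b$. The Gaussian bound then delivers the pointwise decay
\[
|H_0^{-1/2}a(x)|\lesssim \meas{B}^{1-1/p}\, r^{2m_0-1}\, |x-x_0|^{-(2m_0+n-1)},\qquad x\in\Rn\setminus 4B,
\]
whose $\El{p^*}$-norm outside $4B$ is bounded by an absolute constant precisely because $m_0=\lfloor n/(2p)\rfloor+1$ ensures $(2m_0+n-1)p^*>n$.

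The main technical obstacle is the exterior estimate of the final step: the $m_0$-fold integration by parts in $s$ produces boundary terms that diverge at $s=0$ if handled naively, and these must be absorbed either by splitting the $s$-integral at the natural scale $s=r^2$ (treating the small-$s$ regime directly via the Gaussian smallness $p_s(x-x_0)\lesssim s^{-n/2}e^{-c r^2/s}$ when $|x-x_0|\geq 4r\gg \sqrt{s}$) or by passing to the limit in a regularised version. The precise decay exponent $(2m_0+n-1)p^*>n$ is what forces the constraint $p>n/(n+1)$: at this endpoint the exterior integral becomes logarithmically divergent, reflecting the well-known failure of Hardy--Sobolev embedding at that threshold.
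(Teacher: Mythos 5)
Your proposal is correct in substance but follows a genuinely different route from the paper, and it contains one small arithmetic slip that is worth flagging.

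The paper avoids the local/exterior split entirely: it uses the Calder\'on--McIntosh reproducing formula $H_0^{-1/2}a = c\lim_{\eps\to 0}\int_\eps^{1/\eps}e^{-tH_0}a\,\frac{\dd t}{t^{1/2}}$, splits the integral at the single scale $t=r_B^2$, and estimates both halves \emph{directly in the $\El{p^*}$ norm} by Minkowski's inequality together with the uniform $\El{p^*}$-boundedness of $\{e^{-tH_0}\}_{t>0}$ (from the Gaussian domination and Young's inequality) and of $\{tH_0 e^{-tH_0}\}_{t>0}$ (from the Duong--Robinson $\textup{H}^\infty$-calculus bound \eqref{lemma bounded H infty functional calculus in all Lp spaces for H_0}). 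Only the $k=m_0$ and $k=m_0-1$ levels of the atom conditions enter, in the rewriting $a=H_0 b$ with $b=H_0^{m_0-1}b_0$. Your argument instead splits $\Rn$ into $4B$ and its complement, estimating the local piece by H\"older and a spectral moment inequality (this works: interpolating $\|b\|_\infty$ and $\|H_0^{m_0}b\|_\infty$ gives $\|H_0^{m_0-1/2}b\|_{\El{2}}\lesssim r\meas{B}^{1/2-1/p}$, and the exponents cancel exactly), and the exterior piece by a pointwise Gaussian kernel bound after $m_0$-fold integration by parts in the subordination variable. What the paper's route buys is brevity and the avoidance of boundary-term bookkeeping; what yours buys is that it never invokes the $\El{r}$-functional-calculus bound, relying only on the heat kernel estimates and the structure of the abstract atoms.

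One correction: the exterior pointwise bound you state is off by a factor of $r$. Carrying out the integration by parts and inserting the Gaussian bound $|e^{-sH_0}b(x)|\lesssim s^{-n/2}e^{-c|x-x_0|^2/s}\|b\|_{\El{1}}$ with $\|b\|_{\El{1}}\leq r^{2m_0}\meas{B}^{1-1/p}$ gives
\[
|H_0^{-1/2}a(x)|\lesssim r^{2m_0}\meas{B}^{1-1/p}|x-x_0|^{-(2m_0+n-1)},\qquad x\in\Rn\setminus 4B,
\]
with $r^{2m_0}$ rather than $r^{2m_0-1}$. With the extra factor of $r$ all powers of $r$ cancel in $\|H_0^{-1/2}a\|_{\El{p^*}(\Rn\setminus 4B)}$, giving the required uniform bound; with your stated power the final estimate would scale like $r^{-1}$ and fail. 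The convergence condition $(2m_0+n-1)p^*>n$ you cite governs only the integrability of the tail, not the $r$-scaling, which is why the slip is not caught by that check. Also note that for $x\notin 4B$ the Gaussian factor $e^{-c|x-x_0|^2/s}$ forces all boundary terms in the $m_0$-fold integration by parts to vanish at $s=0$ and $s=\infty$, so no splitting of the $s$-integral or $e^{-\eps s}$-regularisation is actually needed.
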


\begin{proof}
    Let us observe that for $p\in (\frac{n}{n+1},1]$ it holds that  $p^{*}\in (1,\infty)$. 
    We require the following observation.
    For $1<r<\infty$, arbitrary $0<\mu<\pi$, and $\psi\in\textup{H}^{\infty}(S^{+}_{\mu})$, it holds that  
    \begin{equation}\label{lemma bounded H infty functional calculus in all Lp spaces for H_0}
        \norm{\psi(H_{0})f}_{\El{r}}\lesssim \norm{\psi}_{\El{\infty}(S^{+}_{\mu})}\norm{f}_{\El{r}}
    \end{equation}
    for all $f\in\El{2}\cap\El{r}$. This follows from \cite[Theorem 3.4]{Duong_Robinson_Semigroup_Kernels}. In fact, choose $\varphi\in(0,\frac{\pi}{2})$ such that $\frac{\pi}{2}-\mu<\varphi<\frac{\pi}{2}.$ Lemma~\ref{properties of schrodinger semigroup on L2} shows that $H_{0}$ generates a holomorphic semigroup $\set{e^{-zH_{0}}}_{\Re{z}>0}$, which is uniformly bounded in the sector $S^{+}_{\varphi}$. Moreover, the heat-semigroup $\set{e^{-tH_{0}}}_{t>0}$ has an integral kernel satisfying the Gaussian bounds \eqref{classical gaussian estimates for schrodinger semigroup}. Finally, $H_{0}$ has a bounded $\textup{H}^{\infty}(S^{+}_{\mu})$-functional calculus on $\Ell{2}$. The estimate \eqref{lemma bounded H infty functional calculus in all Lp spaces for H_0} then follows from \cite[Theorem 3.4]{Duong_Robinson_Semigroup_Kernels}.
    
    Let now $f\in\textup{H}^{p}_{V,\textup{pre}}(\Rn)\cap \ran{H_{0}^{1/2}}$. Then we can use the atomic decomposition of Theorem~\ref{abstract atomic decomposition DKKP} to write $f=\sum_{i=1}^{\infty}\lambda_{i}a_{i}$ with convergence in $\El{2}$, where all $a_i$ are abstract atoms for $H_{0}$, thus $a_{i}\in\ran{H_{0}^{1/2}}$ for all $i\geq 1$. Note that  $H_{0}^{-1/2}:\ran{H_{0}^{1/2}}\to \Ell{2}$ is bounded from $\Ell{2}$ to $\Ell{2^{*}}$, as follows from a Sobolev embedding in dimension $n\geq 3$. 
    Since $f\in\ran{H_{0}^{1/2}}$ and $f=\sum_{i=1}^{\infty}\lambda_{i}a_i$ with convergence in $\El{2}$, this implies that
$H_{0}^{-1/2}f=\sum_{i=1}^{\infty}\lambda_{i}H_{0}^{-1/2}a_{i}$ with convergence in $\El{2^{*}}$.
We claim that $\norm{H_{0}^{-1/2}a_{i}}_{\El{p^{*}}}\lesssim 1$, uniformly for all $i\geq 1.$

 Let $a\in\ran{H_{0}^{1/2}}$ be an abstract atom associated to a ball $B\subset \Rn$ of radius $r_{B}>0$. It follows that there is a function $b\in\dom{H_{0}}$ with support in $B$ and such that $a=H_{0}b$, with $\norm{b}_{\El{\infty}}\leq r_{B}^{2}\meas{B}^{-1/p}$. We can use the Calderón--McIntosh reproducing formula for the injective sectorial operator $H_{0}$ and the function $\psi(z)=\sqrt{z}e^{-z}$ to get that
\begin{equation*}
        H_{0}^{-1/2}a=c\lim_{\eps\to 0}\int_{\eps}^{\frac{1}{\eps}}e^{-tH_0}a \frac{\dd t}{t^{1/2}}=c\lim_{\eps\to 0}\left(\int_{\eps}^{r_{B}^{2}}e^{-tH_{0}}a \frac{\dd t}{t^{1/2}} + \int_{r_{B}^{2}}^{\frac{1}{\eps}}e^{-tH_{0}}H_{0}b \frac{\dd t}{t^{1/2}}\right),
    \end{equation*}
    with convergence in the $\El{2}$ norm. 
    To estimate the first term, we use Minkowski's inequality and uniform $\El{p^{*}}$-boundedness of the semigroup $\set{e^{-tH_0}}_{t>0}$ to get
    \begin{align*}
        \norm{\int_{\eps}^{r_{B}^{2}}e^{-tH_{0}}a \frac{\dd t}{t^{1/2}}}_{\El{p^{*}}}&\leq \int_{\eps}^{r_{B}^{2}}\norm{e^{-tH_0}a}_{\El{p^{*}}}\frac{\dd t}{t^{1/2}}\lesssim \norm{a}_{\El{p^{*}}}\int_{0}^{r_{B}^{2}}\frac{\dd t}{t^{1/2}}= r_{B}\norm{a}_{\El{p^{*}}}\\
        &\leq r_{B}\norm{a}_{\El{\infty}}\meas{B}^{1/p^{*}}\lesssim \meas{B}^{1/n}\meas{B}^{-1/p +1/p^{*}}=1,
    \end{align*}
    for all $\eps>0$. Similarly, for the second term, we use that \eqref{lemma bounded H infty functional calculus in all Lp spaces for H_0} implies uniform $\El{p^{*}}$-boundedness of the family $\set{tH_{0}e^{-tH_0}}_{t>0}$ to get 
    \begin{align*}
        \norm{\int_{r_{B}^{2}}^{\frac{1}{\eps}}e^{-tH_{0}}H_{0}b \frac{\dd t}{t^{1/2}}}_{\El{p^{*}}}&\leq \int_{r_{B}^{2}}^{\frac{1}{\eps}}\norm{tH_{0}e^{-tH_0}b}_{\El{p^{*}}}\frac{\dd t}{t^{3/2}}\lesssim \norm{b}_{\El{p^{*}}}\int_{r_{B}^{2}}^{\infty}\frac{\dd t}{t^{3/2}}\\
        &\lesssim \norm{b}_{\El{\infty}}\meas{B}^{1/p^{*}} r_{B}^{-1}\leq r_{B}\meas{B}^{-1/p}\meas{B}^{1/p^{*}}\eqsim 1.
    \end{align*}
    Since the bound doesn't depend on $\eps>0$, it follows that $H_{0}^{-1/2}a\in\El{p^{*}}$ with ${\norm{H_{0}^{-1/2}a}_{\El{p^{*}}}\lesssim 1}$.
    Moreover, as $\set{\lambda_{i}}_{i\geq 1}\in\ell^{p}$ and $p\leq 1$, this implies that $\set{\sum_{i=1}^{N}\lambda_{i}H_{0}^{-1/2}a_{i}}_{N\geq 1}$ is a Cauchy sequence in $\El{p^{*}}$. It follows that $H_{0}^{-1/2}f\in\Ell{p^{*}}$ and that $H_{0}^{-1/2}f=\sum_{i=1}^{\infty}\lambda_{i}H_{0}^{-1/2}a_{i}$ with convergence in $\El{p^{*}}$. Consequently, $\norm{H_{0}^{-1/2}f}_{\El{p^{*}}}\lesssim \left(\sum_{i=1}^{\infty}\abs{\lambda_{i}}^{p}\right)^{1/p}\lesssim \norm{f}_{\Dz{p}}$.
    Since $f\in\textup{H}^{p}_{V,\textup{pre}}(\Rn)\cap\ran{H_{0}^{1/2}}$ was arbitrary, this concludes the proof.
\end{proof}
\subsubsection{Completion}\label{subsection definition of the completion of DZiubanski H^{1,p} spaces}
As in Section~\ref{subsubsection with definition of completion H^{1}_{V}}, we can use the continuous embedding $\dot{\textup{H}}^{1,p}_{V,\textup{pre}}(\Rn)\subseteq\Ell{p^{*}}$ from Proposition~\ref{continuous embedding V adapted Hardy sobolev space in Ell^{p*}} and \cite[Proposition 2.2]{AMM_2015_CalderonReproducingFormulas} (which equally applies to quasinormed spaces) to construct (for $n\geq 3$ and $\frac{n}{n+1}<p\leq 1$) the completion $\dot{\textup{H}}^{1,p}_{V}(\Rn)$ of $\dot{\textup{H}}^{1,p}_{V,\textup{pre}}(\Rn)$ in $\Ell{p^{*}}$. Its existence follows from the following lemma.
\begin{lem}\label{compatibility lemma between V adapted Hardy sobolev spaces and Ell^p spaces}
Let $n\geq 3$, $V\in\El{1}_{\loc}(\Rn)$ and $p\in(\frac{n}{n+1},1]$. Let $(f_{k})_{k\geq 1}\subset \dot{\textup{H}}^{1,p}_{V,\textup{pre}}(\Rn)$ be a Cauchy sequence in $\dot{\textup{H}}^{1,p}_{V,\textup{pre}}(\Rn)$. If $f_{k}\to 0$ in $\El{p^{*}}$, then $f_{k}\to 0$ in $\dot{\textup{H}}^{1,p}_{V,\textup{pre}}(\Rn)$.
\end{lem}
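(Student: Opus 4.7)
The plan is to pass to tent spaces via Theorem~\ref{square function characterisation of Dziubanski Hardy spaces} and then identify the resulting tent-space limit through pointwise analysis of the integrand. Setting $g_{k}:=H_{0}^{1/2}f_{k}$, the hypothesis makes $(g_{k})$ Cauchy in $\textup{H}^{p}_{V,\textup{pre}}(\Rn)$, and the square-function characterisation gives $\norm{g_{k}}_{\textup{H}^{p}_{V}}\eqsim \norm{\bb{Q}_{0}g_{k}}_{\tent{p}}$, where $\bb{Q}_{0}g_{k}(t,y):=\psi_{0}(t^{2}H_{0})g_{k}(y)$ with $\psi_{0}(z):=ze^{-z}$. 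Hence $(\bb{Q}_{0}g_{k})$ is Cauchy in the complete quasi-Banach space $\tent{p}$ and converges to some $F\in\tent{p}$, so the task reduces to showing $F\equiv 0$ on $\Hn$.

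To identify $F$ pointwise, I would exploit the algebraic relation $\psi_{0}(z)z^{1/2}=\psi_{1}(z)$ with $\psi_{1}(z):=z^{3/2}e^{-z}\in \textup{H}^{\infty}_{0}$, which rewrites $\bb{Q}_{0}g_{k}(t,y)=t^{-1}\psi_{1}(t^{2}H_{0})f_{k}(y)$. The Gaussian heat-kernel bound \eqref{classical gaussian estimates for schrodinger semigroup}, combined with the spectral-multiplier theory of~\cite{Duong_Robinson_Semigroup_Kernels}, produces an integral kernel $K_{t,\psi_{1}}$ for $\psi_{1}(t^{2}H_{0})$ with Gaussian decay $|K_{t,\psi_{1}}(y,z)|\lesssim t^{-n}e^{-c|y-z|^{2}/t^{2}}$, and hence $\norm{K_{t,\psi_{1}}(y,\cdot)}_{\El{(p^{*})'}}\lesssim t^{-n/p^{*}}$ uniformly in $y\in\Rn$ (this uses that $p^{*}>1$, which holds for $p\in(\frac{n}{n+1},1]$). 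Hölder's inequality then gives
\begin{equation*}
|\psi_{1}(t^{2}H_{0})f_{k}(y)|\leq \norm{K_{t,\psi_{1}}(y,\cdot)}_{\El{(p^{*})'}}\,\norm{f_{k}}_{\El{p^{*}}}\lesssim t^{-n/p^{*}}\norm{f_{k}}_{\El{p^{*}}},
\end{equation*}
and combined with $f_{k}\to 0$ in $\El{p^{*}}$ and Fubini, this yields $\bb{Q}_{0}g_{k}(t,y)\to 0$ for almost every $(t,y)\in\Hn$.

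The proof would then conclude by invoking the fact recorded in Section~\ref{section on tent spaces} that the topology of $\tent{p}$ is finer than that of $\El{2}_{\loc}(\Hn)$. Consequently $\bb{Q}_{0}g_{k}\to F$ also in $\El{2}_{\loc}(\Hn)$, so a subsequence converges to $F$ almost everywhere on $\Hn$; uniqueness of almost-everywhere limits then forces $F\equiv 0$, hence $\norm{g_{k}}_{\textup{H}^{p}_{V}}\eqsim\norm{\bb{Q}_{0}g_{k}}_{\tent{p}}\to \norm{F}_{\tent{p}}=0$, as required. The only non-trivial ingredient will be the Gaussian kernel bound for $\psi_{1}(t^{2}H_{0})$, which is standard given the heat-kernel estimate \eqref{classical gaussian estimates for schrodinger semigroup} and the bounded $\textup{H}^{\infty}$-calculus of $H_{0}$.
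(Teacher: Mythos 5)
Your strategy --- pass to tent spaces via Theorem~\ref{square function characterisation of Dziubanski Hardy spaces}, use completeness of $\tent{p}$, and identify the limit pointwise --- is a plausible route, but the key pointwise step contains a genuine error. The operator $\psi_1(t^2H_0)$ with $\psi_1(z)=z^{3/2}e^{-z}$ does \emph{not} have a Gaussian kernel bound: writing $\psi_1(t^2H_0)=\varphi(tH_0^{1/2})$ with $\varphi(\lambda)=\abs{\lambda}^3 e^{-\lambda^2}$, the half-integer power $z^{3/2}$ makes $\varphi$ only $C^2$ at $\lambda=0$, which blocks Gaussian decay. Already for $V\equiv 0$ the kernel of $(-t^2\Delta)^{3/2}e^{t^2\Delta}$ is the Fourier inverse of $t^3\abs{\xi}^3 e^{-t^2\abs{\xi}^2}$, which decays only polynomially at spatial infinity. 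Moreover \cite{Duong_Robinson_Semigroup_Kernels} gives $\El{r}$-boundedness of spectral multipliers, not pointwise kernel estimates, so it does not supply the bound you invoke. Rescuing this step with a polynomial-decay kernel bound of sufficient order is possible but takes more care than you give.

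There is a simpler fix, and a much simpler overall proof. For the fix: uniform $\El{p^{*}}$-boundedness of $\set{\psi_1(s^2H_0)}_{s>0}$ --- this is exactly \eqref{lemma bounded H infty functional calculus in all Lp spaces for H_0} for $\psi_1\in\textup{H}^\infty_0$, and requires no kernel bounds --- together with Fubini gives $\iint_{(a,b)\times K}\abs{\bb{Q}_0 g_k(s,y)}^{p^{*}}\dd s\dd y\lesssim\left(\int_a^b s^{-p^{*}}\dd s\right)\norm{f_k}_{\El{p^{*}}}^{p^{*}}\to 0$ for all $0<a<b$ and compact $K\subset\Rn$; hence $\bb{Q}_0 g_k\to 0$ in $\El{p^{*}}_{\loc}(\Hn)$, and comparing almost-everywhere subsequential limits with the $\El{2}_{\loc}$-limit $F$ forces $F\equiv 0$. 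But the paper's proof bypasses tent spaces and the square-function characterisation entirely: \eqref{lemma bounded H infty functional calculus in all Lp spaces for H_0} applied to $\psi(z)=\sqrt{z}e^{-z}$ shows that $tH_0^{1/2}e^{-t^2H_0}$ is $\El{p^{*}}$-bounded, whence $e^{-t^2H_0}(H_0^{1/2}f_k)=\lim_{m\to\infty} e^{-t^2H_0}(H_0^{1/2}f_k-H_0^{1/2}f_m)$ in $\El{p^{*}}$ for each $t>0$, and the conclusion follows from the Fatou/maximal-function argument already set up in Section~\ref{subsubsection with definition of completion H^{1}_{V}}.
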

\begin{proof}
 We know from \eqref{lemma bounded H infty functional calculus in all Lp spaces for H_0} with $\psi(z)=\sqrt{z}e^{-z}$ that $tH_{0}^{1/2}e^{-t^{2}H_{0}}\in\mathcal{L}(\El{2})$ is actually $\El{p^{*}}$-bounded for each $t>0$. Consequently, for all $k\geq 1$ and $t>0$ it holds that  
    \begin{align*}
        e^{-t^{2}H_{0}}\left(H_{0}^{1/2}f_{k}\right)=\lim_{m\to\infty}e^{-t^{2}H_{0}}\left(H_{0}^{1/2}f_{k}-H_{0}^{1/2}f_{m}\right),
    \end{align*}
    with convergence in the $\El{p^{*}}$ norm. 
    The conclusion then follows from the argument outlined in Section~\ref{subsubsection with definition of completion H^{1}_{V}}.
\end{proof}

Again, for completeness, we spell out that $\dot{\textup{H}}^{1,p}_{V}(\Rn)$ consists of all $f\in\Ell{p^{*}}$ for which there exists a Cauchy sequence $(f_k)_{k\geq 1}$ in $\dot{\textup{H}}^{1,p}_{V,\textup{pre}}(\Rn)$ that converges to $f$ in $\Ell{p^{*}}$, and is equipped with the (compatibly defined) quasinorm $\norm{f}_{\dot{\textup{H}}^{1,p}_{V}}:=\lim_{k\to\infty}\norm{f_{k}}_{\dot{\textup{H}}^{1,p}_{V}}$.
\subsubsection{Atomic decompositions}\label{section on atomic decompositions of Hardy sobolev spaces for schrodinger operators}
As for Hardy spaces, the easiest way to deal with Hardy--Sobolev spaces is through the use of a suitable atomic decomposition. We introduce the following definition; compare with \cite[Definition 8.30]{AuscherEgert}. We can work in the generality of non-negative $V\in\El{1}_{\loc}(\Rn)$ and $n\geq 1$.
\begin{definition}\label{definition of L2 atoms for hardy sobolev spaces schrodinger}
    Let $p\in(\frac{n}{n+1},1]$. An $\El{2}$-atom for $\dot{\textup{H}}^{1,p}_{V,\textup{pre}}(\Rn)$ associated to a cube $Q\subset\Rn$ is a function $m\in\dom{H_{0}^{1/2}}=\mathcal{V}^{1,2}(\Rn)$ such that $\supp{m}\subseteq Q$ and $\norm{H_{0}^{1/2}m}_{\El{2}}\leq \meas{Q}^{\frac{1}{2}-\frac{1}{p}}$.
\end{definition}
The following result is the required atomic decomposition, to be compared with \cite[Proposition 8.31]{AuscherEgert}.
\begin{thm}\label{thm atomic decomposition of hardy sobolev spaces}
    Let $p\in(\frac{n}{n+1},1]$ and $f\in \dot{\textup{H}}^{1,p}_{V,\textup{pre}}(\Rn)$. There is a collection $\set{m_i}_{i\geq 1}$ of $\El{2}$-atoms for $\dot{\textup{H}}^{1,p}_{V,\textup{pre}}(\Rn)$ and complex numbers $\set{\lambda_{i}}_{i\geq 1}\in\ell^{p}(\N)$ such that $\norm{\set{\lambda_{i}}_{i\geq 1}}_{\ell^{p}}\lesssim \norm{f}_{\dot{\textup{H}}^{1,p}_{V}}$ and 
        ${H_{0}^{1/2}f=\sum_{i\geq 1}\lambda_{i}H_{0}^{1/2}m_i}$ with convergence in $\El{2}$. 
\end{thm}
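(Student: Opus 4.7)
The plan is to adapt the approach of \cite[Proposition 8.31]{AuscherEgert} by combining the tent space atomic decomposition with a smooth annular localisation. Set $g:=H_{0}^{1/2}f\in\textup{H}^{p}_{V,\textup{pre}}(\Rn)$, so that Theorem~\ref{square function characterisation of Dziubanski Hardy spaces} gives $\bb{Q}_{0}g\in\tent{p}\cap\tent{2}$ with $\norm{\bb{Q}_{0}g}_{\tent{p}}\eqsim\norm{f}_{\dot{\textup{H}}^{1,p}_{V}}$. The atomic decomposition in tent spaces from Section~\ref{section on tent spaces} yields $\bb{Q}_{0}g=\sum_{i}\lambda_{i}A_{i}$ with convergence in $\tent{2}$, and hence in $\El{2}(\Hn,\frac{\dd x\dd t}{t})$, where each $A_{i}$ is a $\tent{p}$-atom supported in $Q_{i}\times(0,\ell(Q_{i}))$ and $\norm{(\lambda_{i})_{i\geq 1}}_{\ell^{p}(\N)}\lesssim\norm{f}_{\dot{\textup{H}}^{1,p}_{V}}$. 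Factoring $\psi_{0}(t^{2}H_{0})=H_{0}^{1/2}\cdot tH_{0}^{1/2}e^{-t^{2}H_{0}}$ inside the Calderón--McIntosh reproducing formula $g=c\bb{C}_{0}(\bb{Q}_{0}g)$ from Section~\ref{section on extension and contraction operators for general inj sectorial op} then yields
\begin{equation*}
H_{0}^{1/2}f = c\sum_{i}\lambda_{i}\bb{C}_{0}(A_{i}) = c\sum_{i}\lambda_{i}H_{0}^{1/2}M_{i}\quad\text{in }\Ell{2},
\end{equation*}
where $M_{i}:=\int_{0}^{\ell(Q_{i})}tH_{0}^{1/2}e^{-t^{2}H_{0}}A_{i}(t,\cdot)\,\dd t$. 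Kato's identity $\norm{H_{0}^{1/2}u}_{\El{2}}=\norm{\nabla_{\mu}u}_{\El{2}}$---which holds as an equality for $H_{0}=-\Delta+V$ through the form $h_{0}$---together with the $\El{2}$-boundedness of $\bb{C}_{0}:\tent{2}\to\Ell{2}$, shows that $M_{i}\in\mathcal{V}^{1,2}(\Rn)$ with the global bound $\norm{\nabla_{\mu}M_{i}}_{\El{2}}=\norm{\bb{C}_{0}(A_{i})}_{\El{2}}\lesssim|Q_{i}|^{1/2-1/p}$.

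Next, choose a smooth partition of unity $\set{\varphi_{i,k}}_{k\geq 0}$ on $\Rn$ with $\varphi_{i,0}\in C^{\infty}_{c}(4Q_{i})$, $\varphi_{i,k}\in C^{\infty}_{c}(2^{k+2}Q_{i}\setminus 2^{k-1}Q_{i})$ for $k\geq 1$, and $\norm{\nabla_{x}\varphi_{i,k}}_{\El{\infty}}\lesssim(2^{k}\ell(Q_{i}))^{-1}$, and set $m_{i,k}:=\varphi_{i,k}M_{i}$. Then $\supp{m_{i,k}}\subseteq 2^{k+2}Q_{i}$, $m_{i,k}\in\mathcal{V}^{1,2}(\Rn)$, and $M_{i}=\sum_{k\geq 0}m_{i,k}$ in $\mathcal{V}^{1,2}(\Rn)$ by dominated convergence. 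The key estimate is
\begin{equation*}
\norm{H_{0}^{1/2}m_{i,k}}_{\El{2}}\lesssim 2^{-kN}|Q_{i}|^{1/2-1/p}
\end{equation*}
for arbitrarily large $N$. By Kato and the Leibniz rule (noting that $V^{1/2}(\varphi M)=\varphi V^{1/2}M$), this reduces to estimating $\norm{\varphi_{i,k}\nabla_{\mu}M_{i}}_{\El{2}}$ and $\norm{M_{i}\nabla_{x}\varphi_{i,k}}_{\El{2}}$. The second term is controlled by the $\El{2}$-off-diagonal estimates of arbitrary order for $\set{tH_{0}^{1/2}e^{-t^{2}H_{0}}}_{t>0}$ (which follow from the Gaussian heat kernel bounds and the bounded $\textup{H}^{\infty}$-calculus of $H_{0}$) combined with a Cauchy--Schwarz in $t$ against the $\tent{p}$-atom $A_{i}$, yielding $\norm{M_{i}}_{\El{2}(2^{k+2}Q_{i}\setminus 2^{k-1}Q_{i})}\lesssim 2^{-kN}\ell(Q_{i})|Q_{i}|^{1/2-1/p}$. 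The first term is handled by the corresponding $\El{2}$-off-diagonal estimates for $\set{t\nabla_{\mu}H_{0}^{1/2}e^{-t^{2}H_{0}}}_{t>0}$, obtained by composing the off-diagonal estimates for the factors $t\nabla_{\mu}e^{-t^{2}H_{0}/2}$ (which is $\El{2}$-bounded uniformly in $t$ via Kato) and $H_{0}^{1/2}e^{-t^{2}H_{0}/2}$ (with $\El{2}$ operator norm $\lesssim t^{-1}$).

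To conclude, rescale by $\alpha_{i,k}:=C\cdot 2^{-k[N-n(1/p-1/2)]}$, so that $\tilde{m}_{i,k}:=m_{i,k}/\alpha_{i,k}$ is an $\El{2}$-atom for $\dot{\textup{H}}^{1,p}_{V,\textup{pre}}(\Rn)$ associated with $2^{k+2}Q_{i}$, and set $\tilde{\lambda}_{i,k}:=\lambda_{i}\alpha_{i,k}$, so that $H_{0}^{1/2}f=c\sum_{i,k}\tilde{\lambda}_{i,k}H_{0}^{1/2}\tilde{m}_{i,k}$ in $\Ell{2}$. The assumption $p>\frac{n}{n+1}$ ensures that $n(\frac{1}{p}-\frac{1}{2})<\frac{n}{2}+1$, so for $N>\frac{n}{2}+1$ the geometric sum $\sum_{k\geq 0}\alpha_{i,k}^{p}$ is finite and $\sum_{i,k}|\tilde{\lambda}_{i,k}|^{p}\lesssim\sum_{i}|\lambda_{i}|^{p}\lesssim\norm{f}_{\dot{\textup{H}}^{1,p}_{V}}^{p}$, yielding the decomposition after relabelling. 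The main technical obstacle is the off-diagonal estimate for $\set{t\nabla_{\mu}H_{0}^{1/2}e^{-t^{2}H_{0}}}_{t>0}$: unlike in the case $V\equiv 0$, this family is not uniformly bounded on $\El{2}$---its operator norm grows like $t^{-1}$ as $t\to 0^{+}$---so the spatial decay factor $(1+\dist(E,F)/t)^{-N}$ produced by the composition must be traded off via Cauchy--Schwarz against the $\tent{p}$-atom's $\El{2}$-normalisation in $t$ in order to absorb this growth.
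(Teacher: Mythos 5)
Your argument is correct in substance, but it takes a genuinely different route from the paper at the one point where the two could diverge: how to make the atoms compactly supported. The paper deliberately replaces the Gaussian auxiliary function by $\varphi(z)=z\eta(z)$ with $\hat{\eta}$ compactly supported, so that the finite propagation speed property \cite[Lemma 3.5]{HLMMY_HardySpacesDaviesGaffney} forces $\supp{\eta(tH_{0}^{1/2})A(t,\cdot)}\subseteq 2Q$ outright; each tent-space atom then produces exactly one $\El{2}$-atom for $\dot{\textup{H}}^{1,p}_{V,\textup{pre}}(\Rn)$ supported in $2Q$, and there is no tail to estimate and no second summation index. You instead keep $\psi_{0}(z)=ze^{-z}$, accept that $M_{i}$ is only a ``molecule'', and recover genuine atoms by an annular partition of unity paid for with $\El{2}$ off-diagonal estimates for $\set{tH_{0}^{1/2}e^{-t^{2}H_{0}}}_{t>0}$ and $\set{t^{2}\nabla_{\mu}H_{0}^{1/2}e^{-t^{2}H_{0}}}_{t>0}$; the condition $p>\frac{n}{n+1}$ then closes the geometric series in $k$. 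What the paper's route buys is the absence of all of this machinery (no Davies--Gaffney estimates for $\nabla_{\mu}e^{-t^{2}H_{0}}$, no double index, no constraint from the $k$-sum); what your route buys is that it uses only soft semigroup properties and the Kato identity $\norm{H_{0}^{1/2}u}_{\El{2}}=\norm{\nabla_{\mu}u}_{\El{2}}$, without invoking finite propagation speed.

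Two points you should tighten. First, the displayed factorisation $\psi_{0}(t^{2}H_{0})=H_{0}^{1/2}\cdot tH_{0}^{1/2}e^{-t^{2}H_{0}}$ is off by a factor of $t$ (the right-hand side equals $t^{-1}\psi_{0}(t^{2}H_{0})$); your definition of $M_{i}$ is nevertheless the correct one once the measure $\frac{\dd t}{t}$ in $\bb{C}_{0}$ is taken into account. Second, the identity $H_{0}^{1/2}M_{i}=\bb{C}_{0}(A_{i})$ cannot be obtained by pushing $H_{0}^{1/2}$ through the Bochner integral, since $\int_{0}^{\ell(Q_{i})}\norm{tH_{0}e^{-t^{2}H_{0}}A_{i}(t,\cdot)}_{\El{2}}\dd t$ need not converge absolutely; one must argue by duality against $g\in\dom{H_{0}^{1/2}}$ and use self-adjointness, exactly as the paper does for $\bb{C}_{\eta}$. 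Both are routine repairs, not gaps.
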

\begin{proof}
Let us pick an even function $\eta\in \mathcal{S}(\R)$ such that $\supp{\hat{\eta}}\subseteq (-c_{0},c_{0})$, where $c_0$ is the constant from \cite[Lemma 3.5]{HLMMY_HardySpacesDaviesGaffney}. Since $H_{0}$ is a positive self-adjoint operator on $\El{2}$ satisfying Davies--Gaffney estimates, \cite[Lemma 3.5]{HLMMY_HardySpacesDaviesGaffney} shows that for all $t>0$, the operator $\eta(tH_{0}^{1/2})\in\mathcal{L}(\El{2})$ has the finite propagation speed property
    \begin{equation}\label{compactness support integral kernel Davies gaffney estimates}
        \supp{\eta(tH_{0}^{1/2})f}\subseteq \supp{f} + \clos{B(0,t)},
    \end{equation}
for all $f\in\El{2}_{c}(\Rn)$. Let $\varphi(z):=z\eta(z)$. Using the Borel functional calculus for the positive self-adjoint operator $H_{0}^{1/2}$ (see Section~\ref{section on the holomorphic functional calculus}), we can define a bounded extension operator ${\bb{Q}^{\eta} : \El{2}\to \El{2}(0,\infty,\El{2};\frac{\dd t}{t})\simeq \tent{2}}$, as in Section~\ref{section on extension and contraction operators for general inj sectorial op}, by $(\bb{Q}^{\eta}f)(t)=\varphi(tH_{0}^{1/2})f$ for all $t\in(0,\infty)$. Indeed, the boundedness of the operator $\bb{Q}^{\eta}$ follows from the spectral theorem, because if $E:\mathscr{B}(\R)\to \mathcal{L}(\Ell{2})$ is the resolution of the identity (or spectral measure) on $(\R,\mathscr{B}(\R))$ for the self-adjoint operator $H_{0}^{1/2}$ (see \cite[Theorem 13.30]{Rudin1991_functional_analysis}) then for $f\in\Ell{2}$ we have
    \begin{align*}
        \int_{0}^{\infty}\norm{(\bb{Q}^{\eta}f)(t)}_{\El{2}}^{2}\frac{\dd t}{t}&=\int_{0}^{\infty}\left(\int_{\R}\abs{tx\eta(tx)}^{2}dE_{f,f}(x)\right)\frac{\dd t}{t}=\int_{\R}\left(\int_{0}^{\infty}\abs{tx\eta(tx)}^{2}\frac{\dd t}{t}\right)dE_{f,f}(x)\\
        &\leq\left(\int_{0}^{\infty}\abs{s\eta(s)}^{2}\frac{\dd s}{s}\right)\int_{\R}dE_{f,f}(x)\lesssim E_{f,f}(\R)=\norm{f}^{2}_{\El{2}}
    \end{align*}
by changing variables ($s=tx$) and properties of $E$ (see, e.g., \cite[Theorem 13.24]{Rudin1991_functional_analysis}). The operator $\bb{Q}^{\eta}$ then has a bounded adjoint $\bb{C}_{\eta}:=\left({\bb{Q}^{\eta}}\right)^{*}:\El{2}(0,\infty,\El{2};\frac{\dd t}{t})\to\El{2}$.
We use the Borel functional calculus, which is compatible with the holomorphic functional calculus for self-adjoint operators, as $\varphi$ may not have a holomorphic extension to a sector with enough decay at infinity.
    
     As in Section~\ref{section on extension and contraction operators for general inj sectorial op}, for all $g\in\El{2}$ and $F\in\El{2}(0,\infty,\El{2};\frac{\dd t}{t})=\tent{2}$ it holds that 
    \begin{align}\label{property of the contraction operator weakly converging integral}
        \langle \bb{C}_{\eta}F , g \rangle_{\El{2}} =\lim_{\eps\to 0^{+}}\langle \int_{\eps}^{\frac{1}{\eps}}\varphi(tH_{0}^{1/2})F(t)\frac{\dd t}{t} ,g \rangle_{\El{2}}.
    \end{align}
    Let us consider the auxiliary holomorphic function $\psi(z)=z^{2}e^{-z^{2}}$. 
    Since $H_{0}$ is an injective self-adjoint operator, we can apply the Calderón--McIntosh reproducing formula from \cite[Proposition 4.2]{AMM_2015_CalderonReproducingFormulas} to obtain a $c>0$ such that for all $g\in\El{2}$,
    \begin{equation}\label{reproducing formula to prove the atomic decomposition of Hardy--sobolev spaces}
        g=c\lim_{\eps\to 0^{+}} \int_{\eps}^{1/\eps} \varphi(tH_{0}^{1/2})\psi(tH_{0}^{1/2})g\frac{\dd t}{t},
    \end{equation}
    with convergence in $\Ell{2}.$ Note however that, since $\varphi$ is odd, we have $c\int_{0}^{\infty}\varphi(t)\psi(t)\frac{\dd t}{t}=1$, but $c\int_{0}^{\infty}\varphi(-t)\psi(-t)\frac{\dd t}{t}=-1$. Inspection of the proof of \cite[Proposition 4.2]{AMM_2015_CalderonReproducingFormulas} reveals that the result still holds since $H_{0}$ is non-negative.
    As in the proof of Theorem~\ref{square function characterisation of Dziubanski Hardy spaces}, we let $(\bb{Q}_{0}g)(t):=\psi(tH_{0}^{1/2})g\in\El{2}$ for all $t>0$. It follows from (\ref{property of the contraction operator weakly converging integral}) and (\ref{reproducing formula to prove the atomic decomposition of Hardy--sobolev spaces}) that $c\bb{C}_{\eta}\bb{Q}_{0}=\textup{Id}_{\El{2}}$.
    
    We now have all the ingredients to start the proof. If $f\in\mathcal{V}^{1,2}(\Rn)$ and $H_{0}^{1/2}f\in\textup{H}^{p}_{V,\textup{pre}}$, then we have 
$H_{0}^{1/2}f=c\bb{C}_{\eta}\left(\bb{Q}_{0}\left(H_{0}^{1/2}f\right)\right)$.
    Theorem~\ref{square function characterisation of Dziubanski Hardy spaces} shows that $\bb{Q}_{0}\left(H_{0}^{1/2}f\right)\in\tent{p}\cap\tent{2}$, with $\norm{\bb{Q}_{0}\left(H_{0}^{1/2}f\right)}_{\tent{p}}\lesssim \norm{H_{0}^{1/2}f}_{\Dz{p}}$. By the atomic decomposition in tent spaces, there is a decomposition $\bb{Q}_{0}\left(H_{0}^{1/2}f\right)=\sum_{k\geq 1}\lambda_{k} A_{k}$
    with convergence in the $\tent{2}$ norm, where the $A_{k}$ are $\tent{p}$-atoms and $\norm{\set{\lambda_k}_{k}}_{\ell^{p}}\lesssim \norm{H_{0}^{1/2}f}_{\Dz{p}}=\norm{f}_{\dot{\textup{H}}^{1,p}_{V}}$. It follows from boundedness of $\bb{C}_{\eta}$ that $H_{0}^{1/2}f= c\sum_{k\geq 1}\lambda_{k} \C_{\eta}\left( A_k\right)$
    with convergence in $\El{2}$. Let $A\in\tent{2}$ be a $\tent{p}$-atom associated with a cube $Q\subset\Rn$. We consider a function $m\in\El{2}$ defined as the following absolutely convergent Bochner integral in $\Ell{2}$:
    \begin{align*}
        m:=\int_{0}^{\infty}\eta(tH_{0}^{1/2})A(t,\cdot)\dd t = \int_{0}^{\ell(Q)}\eta(tH_{0}^{1/2})A(t,\cdot)\dd t.
    \end{align*}
    We claim that $m\in\dom{H_{0}^{1/2}}$ and that $H_{0}^{1/2}m=\bb{C}_{\eta}\left(A\right)$. Indeed, if $g\in\dom{H_{0}^{1/2}}$ is arbitrary, and $\eps>0$ is fixed, then
    \begin{align*}
        \langle \int_{\eps}^{1/\eps}\varphi(tH_{0}^{1/2})A(t,\cdot)\frac{\dd t}{t} ,g \rangle_{\El{2}} &= \langle \int_{\eps}^{1/\eps}H_{0}^{1/2}\eta(tH_{0}^{1/2})A(t,\cdot)\dd t ,g \rangle_{\El{2}}\\
        &=\langle \int_{\eps}^{1/\eps}\eta(tH_{0}^{1/2})A(t,\cdot)\dd t , H_{0}^{1/2}g \rangle_{\El{2}},
    \end{align*}
    where we have used Hille's theorem (see \cite{Hille_Phillips} or \cite[Theorem A.37]{HaaseLectureNotes}), followed by the self-adjointness of $H_{0}^{1/2}$ in the last equality. Letting $\eps\to 0$, we obtain $\langle \bb{C}_{\eta}\left(A\right) , g\rangle_{\El{2}} = \langle m , H_{0}^{1/2}g\rangle_{\El{2}}$.
    Since $H_{0}^{1/2}$ is self-adjoint, this implies that $m\in\dom{\left(H_{0}^{1/2}\right)^{*}}=\dom{H_{0}^{1/2}}$, with $H_{0}^{1/2}m=\bb{C}_{\eta}(A)$. It follows that $\norm{H_{0}^{1/2}m}_{\El{2}}=\norm{\bb{C}_{\eta}(A)}_{\El{2}}\lesssim \norm{A}_{\tent{2}}\lesssim \meas{Q}^{\frac{1}{2}-\frac{1}{p}}.$ Finally, the support property $\supp{m}\subseteq 2Q$ follows from (\ref{compactness support integral kernel Davies gaffney estimates}) because it implies that 
    \begin{align*}
        \supp{\eta(tH_{0}^{1/2})A(t,\cdot)}&\subseteq \supp{A(t,\cdot)}+B(0,t)\subseteq Q+ B(0,\ell(Q)) \subseteq 2Q
    \end{align*}
    for all $t\in (0,\ell(Q))$. Applying this construction to $A:=A_{k}$ for all $k\geq 1$ and hiding all the multiplicative constants in the coefficients $\set{\lambda_{k}}_{k\geq 1}$ concludes the proof.
\end{proof}

\subsection{Interpolation, duality and iteration of bounds for families of operators}\label{section on families of operators and bddness on dziubanski hardy spaces}
In analogy with \cite[Definition 4.1]{AuscherEgert}, we introduce the following definition. Again, here we can work with the assumption of a non-negative $V\in\El{1}_{\loc}(\Rn)$ and $n\geq 1$.
\begin{definition}\label{boundedness family operators}
    Let $\Omega \subset \C\setminus \{0\}$ and let $W_{1}, W_{2}$ be finite dimensional Hilbert spaces. Let $\set{T(z)}_{z\in\Omega}$ be a family of bounded operators mapping $\El{2}(\Rn;W_{1})$ into $\El{2}(\Rn;W_2)$. Let $a_{i}\in\El{\infty}(\Rn;\mathcal{L}\left(W_{i}\right))$, $i\in\{1,2\}$, such that $a_{i}(x)$ is invertible for almost every $x\in\Rn$ and $a_{i}^{-1}\in\El{\infty}(\Rn;\mathcal{L}\left(W_{i}\right))$. 
    Let $0<p\leq q \leq\infty$. The family $\set{T(z)}_{z\in\Omega}$ is said to be $a_{1}\textup{H}^{p}_{V}-a_{2}\textup{H}^{q}_{V}$-bounded if 
    \begin{align}\label{boundedness of family operators on Dziubanski hardy spaces}
        \norm{a_{2}^{-1}T(z)\left(a_{1}f\right)}_{\textup{H}^{q}_{V}}\lesssim \abs{z}^{\frac{n}{q}-\frac{n}{p}}\norm{f}_{\textup{H}^{p}_{V}}
    \end{align}
    for all $f\in\textup{H}_{V,\textup{pre}}^{p}$ and all $z\in\Omega$.
\end{definition}
We speak of $a\textup{H}_{V}^{p}$-boundedness when $a_{1}=a_{2}=a$ and $p=q$. 
Observe that when $p$ and $q$ are both larger than $1$, this definition coincides with the notion of $a_{1}\textup{H}^{p}-a_{2}\textup{H}^{q}$-boundedness of \cite[Definition 4.1]{AuscherEgert}, which is simply $\El{p}-\El{q}$-boundedness, by Lemma~\ref{identification of Dziubanski hardy spaces for p>=1}.  In \eqref{boundedness of family operators on Dziubanski hardy spaces} and in the rest of this paper, for any integer $N\geq 1$ and $p\in(0,\infty)$, we let $\textup{H}^{p}_{V,\textup{pre}}(\Rn;\C^{N}):=(\textup{H}^{p}_{V,\textup{pre}}(\Rn))^{N}$, with the associated (quasi)norm $\norm{F}_{\textup{H}^{p}_{V}}:=\sum_{k=1}^{N}\norm{F_k}_{\textup{H}^{p}_{V}}$ for $F=(F_{1},\ldots,F_{N})\in\textup{H}^{p}_{V,\textup{pre}}(\Rn;\C^{N})$. 

In the context of Definition \ref{boundedness family operators}, the notion of $a_{1}\textup{H}^{p}_{V}-a_{2}\textup{H}^{q}_{V}$-boundedness interpolates in the expected way. In fact, we have the following lemma. Recall the notation $[p_0,p_1]_{\theta}$ introduced in Section~\ref{subsubsection on definition of exponents and order relations}.
\begin{lem}\label{lemma notion of HpV boundedness interpolates}
    If $\frac{n}{n+1}<p_{j}\leq q_{j}<\infty$, $j\in\set{0,1}$, and the family $\set{T(z)}_{z\in\Omega}$ is $a_{1}\textup{H}^{p_j}_{V}-a_{2}\textup{H}^{q_j}_{V}$-bounded for $j\in\set{0,1}$, then $\set{T(z)}_{z\in\Omega}$ is $a_{1}\textup{H}^{p_{\theta}}_{V}-a_{2}\textup{H}^{q_{\theta}}_{V}$-bounded for all $\theta\in [0,1]$, where $p_{\theta}=[p_0,p_1]_{\theta}$ and $q_{\theta}=[q_0,q_1]_{\theta}$. 
\end{lem}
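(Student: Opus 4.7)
The plan is to reduce the desired interpolation of $a_{1}\textup{H}^{p}_{V}$--$a_{2}\textup{H}^{q}_{V}$-boundedness to interpolation of tent spaces via the square function characterisation from Theorem \ref{square function characterisation of Dziubanski Hardy spaces}. Concretely, let $\bb{Q}_{0}$ and $\bb{C}_{0}$ be the extension and contraction operators associated with $\psi_{0}(z)=ze^{-z}$ and $H_{0}$, as used in the proof of that theorem, extended componentwise to vector-valued $\El{2}$ spaces. I would exploit the two-sided bound \eqref{boundedness of the canonical extension operator from Hardy to tent}, the one-sided bound \eqref{boundedness of the canonical contraction operator from tent to Hardy}, and the Calder\'on--McIntosh identity $c\bb{C}_{0}\bb{Q}_{0}=\textup{Id}_{\El{2}}$ for some $c>0$.

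For each $z\in\Omega$, I would introduce the auxiliary operator $V(z):=\bb{Q}_{0}\circ(a_{2}^{-1}T(z)a_{1})\circ\bb{C}_{0}$, which is well-defined and $\tent{2}$-bounded because $a_{2}^{-1}T(z)a_{1}$ is bounded on $\El{2}$. Combining the hypothesis \eqref{boundedness of family operators on Dziubanski hardy spaces} at $p=p_{j}$ and $q=q_{j}$ with \eqref{boundedness of the canonical extension operator from Hardy to tent}--\eqref{boundedness of the canonical contraction operator from tent to Hardy} shows that $V(z):\tent{p_{j}}\cap\tent{2}\to\tent{q_{j}}$ has operator norm $\lesssim|z|^{\frac{n}{q_{j}}-\frac{n}{p_{j}}}$, uniformly in $z\in\Omega$, for $j\in\{0,1\}$. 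Since $\tent{p_{j}}\cap\tent{2}$ is dense in $\tent{p_{j}}$ (via the atomic decomposition recalled in Section \ref{section on tent spaces} when $p_{j}\leq 1$, and by bounded compactly supported approximants when $p_{j}>1$), each such $V(z)$ extends uniquely to a bounded map $\tent{p_{j}}\to\tent{q_{j}}$ with the same norm bound.

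The next step is to apply the complex interpolation identity $[\tent{p_{0}},\tent{p_{1}}]_{\theta}=\tent{p_{\theta}}$, valid in both Banach and quasi-Banach ranges (see \cite{Coifman_Meyer_Stein_TentSpaces} for the Banach case and the tent space literature summarised in \cite{Amenta_tent_spaces2018} for the quasi-Banach case), to conclude that $V(z):\tent{p_{\theta}}\to\tent{q_{\theta}}$ is bounded with operator norm $\lesssim|z|^{\frac{n}{q_{\theta}}-\frac{n}{p_{\theta}}}$, using the linearity of $\frac{n}{q_{\theta}}-\frac{n}{p_{\theta}}$ in $\theta$ and the fact that the Calder\'on interpolation theorem produces a constant depending only on the exponents $p_{j},q_{j},\theta$, hence uniform in $z$. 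Pulling back through the identity $a_{2}^{-1}T(z)(a_{1}f)=c^{2}\bb{C}_{0}V(z)\bb{Q}_{0}f$, which follows from $c\bb{C}_{0}\bb{Q}_{0}=\textup{Id}_{\El{2}}$, and applying \eqref{boundedness of the canonical extension operator from Hardy to tent}--\eqref{boundedness of the canonical contraction operator from tent to Hardy} once more, yields the required estimate for every $f\in\textup{H}^{p_{\theta}}_{V,\textup{pre}}$.

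The main technical obstacle will be the justification of complex interpolation for tent spaces in the quasi-Banach regime (when at least one $p_{j}\leq 1$), which cannot be handled by a direct Riesz--Thorin argument and instead relies on the atomic tent space framework; this is, however, established in the cited literature and is consistent with the duality and atomic decomposition results already recorded in Section \ref{section on tent spaces}. Minor care is also needed to ensure that the vector-valued extensions of $\bb{Q}_{0}$ and $\bb{C}_{0}$ preserve the relevant estimates, which follows from applying the scalar-valued theory componentwise.
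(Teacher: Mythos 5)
Your proposal is correct and follows essentially the same route as the paper: conjugate the family by the extension and contraction operators $\bb{Q}_{0}$ and $\bb{C}_{0}$ to transfer the problem to tent spaces, invoke complex interpolation for tent spaces (Coifman–Meyer–Stein, Proposition 6, which covers the quasi-Banach range), and pull back via $c\bb{C}_{0}\bb{Q}_{0}=\textup{Id}_{\El{2}}$. The only differences are cosmetic — you make the density extension $\tent{p_{j}}\cap\tent{2}\hookrightarrow\tent{p_{j}}$ and the componentwise vector-valued extension explicit, whereas the paper leaves these as routine.
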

\begin{proof}
    We use the fact that the notion of $\tent{p}-\tent{q}$-boundedness interpolates. We also use the operators $\bb{C}_{0}:\tent{2}\to\El{2}$ and $\bb{Q}_{0}:\El{2}\to\tent{2}$ constructed above. In particular, we use their boundedness properties $\bb{Q}_{0}|_{\textup{H}^{p}_{V,\textup{pre}}}\in\mathcal{L}(\textup{H}^{p}_{V,\textup{pre}}, \tent{p})$ and $\bb{C}_{0}|_{\tent{p}\cap\tent{2}}\in\mathcal{L}(\tent{p},\textup{H}^{p}_{V,\textup{pre}})$ for all $\frac{n}{n+1}<p<\infty$ and the identity $c\bb{C}_{0}\bb{Q}_{0}=\textup{Id}_{\El{2}}$ for some $c>0$. To be more precise, the family $\set{\bb{Q}_{0}a_{2}^{-1}T(z)a_{1}\bb{C}_{0}}_{z\in\Omega}$ is both $\tent{p_0}-\tent{q_0}$-bounded and $\tent{p_1}-\tent{q_1}$-bounded. By complex interpolation in tent spaces (see \cite[Proposition 6]{Coifman_Meyer_Stein_TentSpaces}), it follows that the family is $\tent{p_{\theta}}-\tent{q_{\theta}}$-bounded. Consequently, for $f\in\textup{H}^{p_{\theta}}_{V,\textup{pre}}(\Rn)$ we have $\bb{Q}_{0}f\in\tent{p_{\theta}}\cap\tent{2}$ and we can estimate
    \begin{align*}
        \norm{a_{2}^{-1}T(z)a_{1}f}_{\Dz{q_{\theta}}}&\eqsim\norm{\bb{C}_{0}\bb{Q}_{0}a_{2}^{-1}T(z)a_{1}f}_{\Dz{q_{\theta}}}\lesssim \norm{\bb{Q}_{0}a_{2}^{-1}T(z)a_{1}f}_{\tent{q_{\theta}}}\\
        &\eqsim\norm{\bb{Q}_{0}a_{2}^{-1}T(z)a_{1}\bb{C}_{0}\bb{Q}_{0}f}_{\tent{q_{\theta}}}\lesssim \abs{z}^{\frac{n}{q_{\theta}}-\frac{n}{p_{\theta}}}\norm{\bb{Q}_{0}f}_{\tent{p_{\theta}}}\lesssim \abs{z}^{\frac{n}{q_{\theta}}-\frac{n}{p_{\theta}}}\norm{f}_{\Dz{p_{\theta}}}.\qedhere
    \end{align*}
\end{proof}
We have the following substitute for \cite[Lemma 4.4]{AuscherEgert} for families of operators on the Hardy spaces $\textup{H}^{p}_{V}$.
\begin{prop}\label{bound on power of family}
    Let $\set{T(z)}_{z\in\Omega}$ be a family of operators as in Definition \emph{\ref{boundedness family operators}}, with $a_{1}=a_{2}=a$ and $W_1 = W_2$. Let $p,\varrho \in (0,2)$ such that $\set{T(z)}_{z\in\Omega}$ is $a\textup{H}^{p}_{V}$-bounded and $a\textup{H}^{\varrho}_{V}-\El{2}$-bounded. Then, for each $q\in (p,2)$, there exists an integer $\beta\geq 1$ (depending on $p$, $q$ and $\varrho$) such that $\set{T(z)^{\beta}}_{z\in\Omega}$ is $a\textup{H}^{q}_{V}-\El{2}$-bounded.
\end{prop}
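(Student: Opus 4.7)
The proof plan adapts the standard interpolation-iteration argument (cf.\ \cite[Lemma~4.4]{AuscherEgert} in the case $V\equiv 0$) to the Dziuba\'nski--Zienkiewicz setting. The first step is to apply Lemma~\ref{lemma notion of HpV boundedness interpolates} to the two hypothesized bounds. This produces a one-parameter interpolation family: $\{T(z)\}_{z\in\Omega}$ is $a\textup{H}^{p_\theta}_V - a\textup{H}^{q_\theta}_V$-bounded for every $\theta\in[0,1]$, with
\[
\frac{1}{p_\theta}=\frac{1-\theta}{p}+\frac{\theta}{\varrho}\qquad\text{and}\qquad \frac{1}{q_\theta}=\frac{1-\theta}{p}+\frac{\theta}{2},
\]
with the corresponding scaling $|z|^{n/q_\theta - n/p_\theta}$. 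Since $\varrho<2$, the slope $\lambda:=\frac{1/p-1/2}{1/p-1/\varrho}>1$ of this interpolation segment is strictly greater than $1$.

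The next step is to iterate by composition. If at step $j$ the interpolated bound is chosen with input exponent $r_{j-1}=p_{\theta_j}$ and output $r_j=q_{\theta_j}$, then chaining $\beta$ such bounds yields $\{T(z)^\beta\}\colon a\textup{H}^{r_0}_V-a\textup{H}^{r_\beta}_V$-bounded, where the matching-exponent constraint forces the rigid recurrence $\frac{1}{p}-\frac{1}{r_j}=\lambda\bigl(\frac{1}{p}-\frac{1}{r_{j-1}}\bigr)$. Taking $r_0=q\in(p,2)$ gives the closed form $\frac{1}{p}-\frac{1}{r_j}=\lambda^j\bigl(\frac{1}{p}-\frac{1}{q}\bigr)$. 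Since $\lambda>1$, the sequence $(r_j)$ strictly increases and eventually exits the interval $[p,\varrho]$ on which the single interpolated step is valid.

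The third step is to choose $\beta$ carefully so as to reach the target output $L^2=\textup{H}^2_V$. Let $\beta^*$ be the smallest positive integer with $\lambda^{\beta^*}\bigl(\tfrac{1}{p}-\tfrac{1}{q}\bigr)\geq \tfrac{1}{p}-\tfrac{1}{2}$; equivalently, the first integer such that $r_{\beta^*}\geq 2$ according to the rigid recurrence. At this step, $r_{\beta^* - 1}\leq \varrho$ and $r_{\beta^*}\in(\varrho,2]$ (since $q_\theta$ lies in $[p,2]$). One then combines the chain bound $\{T(z)^{\beta^*}\}\colon a\textup{H}^{q}_V - a\textup{H}^{r_{\beta^*}}_V$ with the composed base bound $\{T(z)^{\beta^*}\}\colon a\textup{H}^{p}_V - a\textup{H}^{p}_V$ by Lemma~\ref{lemma notion of HpV boundedness interpolates}, and then composes with a single additional application of the $a\textup{H}^\varrho_V - L^2$ bound of $T(z)$ after a suitable interpolation step that absorbs the mismatch between $r_{\beta^*}$ and $\varrho$. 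The power $\beta=\beta^*+1$ then satisfies the conclusion.

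The main obstacle is the discrete-versus-continuous mismatch: the chain recurrence evolves continuously with $\lambda^\beta$ but $\beta$ must be an integer, so the chain generically overshoots or undershoots the exact target $r_\beta=2$. This is the standard difficulty in extrapolation arguments of this type, and must be handled either through the final reinterpolation step described above or, where available in the context of application, via a uniform $L^2$--$L^2$ bound for $\{T(z)\}$ which permits an additional interpolation between $a\textup{H}^\varrho_V-L^2$ and $L^2-L^2$, yielding the $a\textup{H}^r_V-L^2$ bound for $r\in[\varrho,2]$ needed to close the argument. The quantitative value of $\beta$ depends on $p$, $q$, and $\varrho$ through the slope $\lambda$, explicitly of the order $\lceil\log_\lambda\bigl(\tfrac{1/p-1/2}{1/p-1/q}\bigr)\rceil+1$.
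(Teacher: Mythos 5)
Your skeleton (interpolate the two hypotheses, chain the interpolated bounds, handle the overshoot) is the right one and matches the argument of \cite[Lemma~4.4]{AuscherEgert} that the paper invokes, but the execution has concrete errors. First, the bookkeeping around $\beta^*$ is inverted: the chain step $r_{j-1}\mapsto r_j$ is only admissible while $r_{j-1}\in[p,\varrho]$, so the chain is valid precisely up to the first index $m$ with $r_m>\varrho$ (and then automatically $r_m\le 2$, since $r_{m-1}\le\varrho$ gives $\tfrac1p-\tfrac1{r_m}\le\lambda(\tfrac1p-\tfrac1\varrho)=\tfrac1p-\tfrac12$). Your $\beta^*$ is instead the first index with $r_{\beta^*}\ge 2$, and for it one has $r_{\beta^*-1}\ge\varrho$, not $\le\varrho$ as you assert; generically $m<\beta^*$ and the steps between them are not legitimate, so the ``chain bound'' $T(z)^{\beta^*}\colon a\textup{H}^q_V\to a\textup{H}^{r_{\beta^*}}_V$ does not exist. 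Second, the proposed repair — interpolating with $T(z)^{\beta^*}\colon a\textup{H}^p_V\to a\textup{H}^p_V$ and then composing with the $a\textup{H}^\varrho_V-\El{2}$ bound — cannot close the argument: the point on the interpolation segment between $(1/p,1/p)$ and $(1/q,1/r_{\beta^*})$ whose output exponent equals $\varrho$ has input exponent strictly less than $q$, so you end with an $a\textup{H}^{v}_V-\El{2}$ bound for some $v<q$ rather than for $q$ itself. Third, $\lambda>1$ presupposes $p<\varrho$, and the chain cannot even start when $q>\varrho$; neither the case $\varrho\le p$ nor $q\ge\varrho$ is addressed.

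The ingredient you mention only parenthetically — the uniform $\El{2}$-boundedness of the family, i.e.\ $a\textup{H}^2_V$-boundedness — is in fact indispensable and resolves all of these issues at once. Any bound derived purely by interpolation and composition from $(p\to p)$ and $(\varrho\to 2)$ has input exponent confined to $[p\wedge\varrho,\,p\vee\varrho]$ (interpolation keeps inputs in the convex hull and composition inherits the input of its first factor), so without an extra input the statement is unreachable whenever $q>p\vee\varrho$. With it, interpolating $(\varrho\to 2)$ against $(2\to 2)$ gives $a\textup{H}^r_V-\El{2}$-boundedness of $\{T(z)\}$ for \emph{every} $r\in[\varrho,2]$; this disposes of the cases $q\ge\varrho$ and $\varrho\le p$ with $\beta=1$, and when $q<\varrho$ it supplies the single admissible final step $a\textup{H}^{r_m}_V\to\El{2}$ after the forward chain exits $[p,\varrho]$ into $(\varrho,2]$, yielding $\beta=m+1$. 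You should promote this from a fallback to the backbone of the proof (and note that the uniform $\El{2}$ bound, while not literally among the stated hypotheses, holds for every family to which the proposition is applied, e.g.\ via their $\El{2}$ off-diagonal estimates).
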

\begin{proof}
     We follow \cite[Lemma 4.4]{AuscherEgert} verbatim, using that the notion of $a\textup{H}^{q}_{V}$-boundedness interpolates in the expected way (see Lemma~\ref{lemma notion of HpV boundedness interpolates}).
\end{proof}

In the same vein, the notion of $a_{1}\textup{H}^{p}_{V}-a_{2}\textup{H}^{q}_{V}$-boundedness dualises in the expected way. The duals of the Hardy spaces $\textup{H}^{p}_{V,\textup{pre}}(\Rn)$ can be identified with a class of spaces that we introduce next.
\subsubsection{Campanato spaces $\BMO_{V}^{\alpha}$ and duality}\label{subsection on campanato spaces for schrodinger operators}
Let $n\geq 3$, $q\geq \frac{n}{2}$ and $V\in\textup{RH}^{q}(\Rn)$. For $\alpha\in[0,1]$, the Campanato space $\BMO_{V}^{\alpha}(\Rn)$ is defined as the set of all functions $f\in\El{1}_{\loc}(\Rn)$ for which there exists $C\geq 0$ such that 
\begin{equation}\label{Campanato space condition 1}
    \dashint_{B}\abs{f-f_{B}}\dd x\leq C\meas{B}^{\frac{\alpha}{n}}
\end{equation}
for all balls $B\subset\Rn$, and
\begin{equation}\label{Campanato space condition 2}
    \dashint_{B}\abs{f}\dd x\leq C\meas{B}^{\frac{\alpha}{n}}
\end{equation}
for all balls $B=B(x_B,r_B)$ such that $r_{B}\geq \rho(x_{B})$. For such a function $f$, we let 
\begin{equation*}
    \norm{f}_{\BMO_{V}^{\alpha}}:=\inf\set{C\geq 0 : (\ref{Campanato space condition 1}) \textup{ and } (\ref{Campanato space condition 2}) \textup{ hold}}.
\end{equation*}
It is clear that $\BMO_{V}^{\alpha}(\Rn)$ is a linear subspace of $\El{1}_{\loc}(\Rn)$, and that $\norm{\cdot}_{\BMO_{V}^{\alpha}}$ is a norm on $\BMO_{V}^{\alpha}(\Rn)$ when $V\not \equiv 0$ (so  there is at least one ball $B(x_B,r_B)\subset\Rn$ such that $r_B\geq \rho(x_B)$). 

We shall need the following alternative characterisation of the spaces $\BMO_{V}^{\alpha}(\Rn)$ for $\alpha\in(0,1]$. 
    We let $\Lambda^{\alpha}_{V}(\Rn)$ be the space of $\alpha$-Hölder continuous functions $f:\Rn\to\C$ for which $\abs{f(x)}\lesssim \rho(x)^{\alpha}$ for all $x\in\Rn$. It is equipped with the norm 
\[\norm{f}_{\Lambda^{\alpha}_{V}}:=\sup_{\substack{x,y\in\Rn\\x\neq y}}\frac{\abs{f(x)-f(y)}}{\abs{x-y}^{\alpha}} + \norm{\rho^{-\alpha}f}_{\El{\infty}}.\]
 When $\alpha\in(0,1]$, it follows from \cite[Proposition 4]{BHS2008} that $\Lambda^{\alpha}_{V}(\Rn)=\BMO_{V}^{\alpha}(\Rn)$ with equivalent norms. Accordingly, we define $\Lambda^{0}_{V}(\Rn):=\BMO_{V}^{0}(\Rn)$ with the corresponding norm.

The duality property $\left(\textup{H}^{p}_{V}\right)^{*}=\BMO_{V}^{n\left(\frac{1}{p}-1\right)}$ is well known, and follows from the atomic decomposition of the Hardy spaces $\textup{H}^{p}_{V}$ from Section~\ref{section on atomic decomposition of hardy dziubanski spaces}, as is shown in \cite[Theorem 2.1]{Yang_Yang_Zhou_2010}. As we work with subspaces of $\El{2}$, we state an $\El{2}$ version, which is proved similarly.
\begin{prop}\label{duality DZiubanski hardy space and Campanato spaces}
    Let $n\geq 3$, $q>\frac{n}{2}$, $V\in\textup{RH}^{q}(\Rn)$, $p\in(\frac{n}{n+\delta\wedge 1},1]$ and $\alpha:=n\left(\frac{1}{p}-1\right)$. If $f\in\BMO_{V}^{\alpha}\cap\El{2}$ and $g\in\textup{H}^{p}_{V,\textup{pre}}(\Rn)$, then $\abs{\langle f,g\rangle_{\El{2}}}\leq \norm{f}_{\BMO_{V}^{\alpha}}\norm{g}_{\textup{H}^{p}_{V}}$.
    Conversely, if $f\in\Ell{2}$ and there exists $C\geq 0$ such that $\abs{\langle f,g\rangle_{\El{2}}}\leq C\norm{g}_{\textup{H}^{p}_{V}}$ for all $g\in\textup{H}^{p}_{V,\textup{pre}}(\Rn)$, then $f\in\BMO_{V}^{\alpha}$ with $\norm{f}_{\BMO_{V}^{\alpha}}\leq C$.
\end{prop}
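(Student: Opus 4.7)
The plan is to establish both directions by way of the atomic decomposition of Theorem~\ref{Dziubanski atomic decomposition L^2 local bounded with cpct support} (extended to arbitrary $g\in\textup{H}^{p}_{V,\textup{pre}}(\Rn)$ via the $\El{2}$--convergent abstract decomposition of Theorem~\ref{abstract atomic decomposition DKKP} combined with an application of Theorem~\ref{Dziubanski atomic decomposition L^2 local bounded with cpct support} to each abstract atom, which is in $\El{\infty}_{c}(\Rn)\cap \textup{H}^{p}_{V,\textup{pre}}(\Rn)$). This lets us reduce both directions to uniform bounds on atoms.

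For the forward direction, fix $f\in\BMO^{\alpha}_{V}\cap\El{2}$ and $g\in\textup{H}^{p}_{V,\textup{pre}}(\Rn)$ with decomposition $g=\sum_{j}\lambda_{j}a_{j}$ converging in $\El{2}$, where the $a_{j}$ are $\El{\infty}$-atoms in the sense of Definition~\ref{definition Dziubanski atoms for V} and $\bigl(\sum_{j}|\lambda_{j}|^{p}\bigr)^{1/p}\lesssim\|g\|_{\textup{H}^{p}_{V}}$. The first step is to show that $|\langle f,a\rangle_{\El{2}}|\lesssim\|f\|_{\BMO^{\alpha}_{V}}$ uniformly in all such atoms $a$. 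If $a$ is supported in a ball $B=B(x,r)$ with $r\leq\rho(x)/4$, the cancellation $\int a=0$ gives $\langle f,a\rangle_{\El{2}}=\langle f-f_{B},a\rangle_{\El{2}}$, and \eqref{Campanato space condition 1} together with $|a|\leq|B|^{-1/p}$ yields a bound of order $\|f\|_{\BMO^{\alpha}_{V}}|B|^{1-1/p+\alpha/n}=\|f\|_{\BMO^{\alpha}_{V}}$ since $\alpha=n(1/p-1)$. If instead $\rho(x)/4<r\leq\rho(x)$, then $4B$ has radius exceeding $\rho(x)$ and \eqref{Campanato space condition 2} applied to $4B$ gives the same bound. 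Summing, one uses the elementary inequality $\sum_{j}|\lambda_{j}|\leq\bigl(\sum_{j}|\lambda_{j}|^{p}\bigr)^{1/p}$ valid for $p\leq 1$ on nonnegative sequences, followed by Cauchy--Schwarz to justify termwise pairing with $f$, to conclude $|\langle f,g\rangle_{\El{2}}|\lesssim\|f\|_{\BMO^{\alpha}_{V}}\|g\|_{\textup{H}^{p}_{V}}$.

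For the converse, suppose $f\in\El{2}$ satisfies $|\langle f,g\rangle_{\El{2}}|\leq C\|g\|_{\textup{H}^{p}_{V}}$ for all $g\in\textup{H}^{p}_{V,\textup{pre}}(\Rn)$. To verify \eqref{Campanato space condition 1} for a ball $B=B(x_{0},r_{0})$ with $r_{0}\leq\rho(x_{0})/4$, test against the function $g:=|B|^{-1/p}\bigl(\varphi-(\varphi)_{B}\ind{B}\bigr)$ with $\varphi(y):=\overline{\operatorname{sgn}(f(y)-f_{B})}\ind{B}(y)$. This $g$ is (up to a factor $2$) an $\El{\infty}$-atom for $\textup{H}^{p}_{V}$ associated with $B$ (hence $\|g\|_{\textup{H}^{p}_{V}}\lesssim 1$), and $\langle f,g\rangle_{\El{2}}=|B|^{-1/p}\int_{B}|f-f_{B}|$ after using $\int_{B}(f-f_{B})=0$ to eliminate the mean term. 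This yields $\dashint_{B}|f-f_{B}|\lesssim C|B|^{\alpha/n}$. To verify \eqref{Campanato space condition 2} for a ball $B=B(x_{0},r_{0})$ with $r_{0}\geq\rho(x_{0})$, use the covering \eqref{covering of Rn by critical balls} of $\Rn$ by balls $B_{\alpha}=B(x_{\alpha},\rho(x_{\alpha}))$ with bounded overlap, and set $B'_{1}:=B_{1}$, $B'_{\alpha}:=B_{\alpha}\setminus\bigcup_{k<\alpha}B_{k}$, so that $\{B'_{\alpha}\}$ is a disjoint cover of $\Rn$. Then the test function $\phi:=\overline{\operatorname{sgn}(f)}\ind{B}=\sum_{\alpha:B'_{\alpha}\cap B\neq\emptyset}\lambda_{\alpha}a_{\alpha}$ with $a_{\alpha}:=|B_{\alpha}|^{-1/p}\overline{\operatorname{sgn}(f)}\ind{B'_{\alpha}\cap B}$ and $\lambda_{\alpha}:=|B_{\alpha}|^{1/p}$ is a finite linear combination of $\El{\infty}$-atoms (with no cancellation required since each $B_{\alpha}$ has critical radius). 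By the slowly-varying property \eqref{comparability property for critical radius function}, every $B_{\alpha}$ meeting $B$ is contained in $cB$, so the bounded overlap yields $\sum_{\alpha}|\lambda_{\alpha}|^{p}\lesssim|B|$ and therefore $\|\phi\|_{\textup{H}^{p}_{V}}\lesssim|B|^{1/p}$. The assumed duality bound then gives $\int_{B}|f|=\langle f,\phi\rangle_{\El{2}}\lesssim C|B|^{1/p}$, which is \eqref{Campanato space condition 2}. Finally, the remaining case $\rho(x_{0})/4<r_{0}<\rho(x_{0})$ of \eqref{Campanato space condition 1} follows from \eqref{Campanato space condition 2} applied to $4B$ using $\dashint_{B}|f-f_{B}|\leq 2\dashint_{B}|f|\lesssim\dashint_{4B}|f|$.

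The main technical obstacle is the covering argument for large balls in the converse direction, since it requires careful use of the slowly-varying and bounded-overlap properties of the critical radius function to produce a finite atomic decomposition of $\phi$ whose coefficients have controlled $\ell^{p}$-norm; all other steps are essentially standard reductions once the atomic framework of Section~\ref{section on atomic decomposition of hardy dziubanski spaces} is in place.
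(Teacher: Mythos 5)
Your proof is correct and takes the same route as the paper, which defers this statement to the atomic-decomposition argument of Yang--Yang--Zhou (cited just before the proposition); what you write out is exactly that argument, reduced to $\El{\infty}$-atoms via the two-step decomposition the paper itself sanctions in Remark~\ref{remark about estimate atoms implies HpV estimate}, with the large-ball case of the converse handled by the same critical-ball covering used in the proof of Theorem~\ref{molecules are in Dziubanski hardy space with uniform bound}. Two cosmetic fixes: the test functions should be $\sgn{f-f_{B}}\ind{B}$ and $\sgn{f}\ind{B}$ rather than their complex conjugates, since $\langle f,g\rangle_{\El{2}}=\int f\,\clos{g}$ already supplies the conjugation needed to produce $\int_{B}\abs{f-f_{B}}$ and $\int_{B}\abs{f}$; and in verifying \eqref{Campanato space condition 1} you should also record the case $r_{0}\geq\rho(x_{0})$, which follows at once from \eqref{Campanato space condition 2} applied to $B$ itself via $\dashint_{B}\abs{f-f_{B}}\leq 2\dashint_{B}\abs{f}$.
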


We next introduce the following useful notion. Let $0<p<\infty$ and $\alpha\in [0,1]$. In the context of Definition~\ref{boundedness family operators}, we shall refer to a family $\set{T(z)}_{z\in\Omega}$ being $a_{1}\textup{H}^{p}_{V}-a_{2}\Lambda^{\alpha}_{V}$-bounded if 
\begin{align*}
        \norm{a_{2}^{-1}T(z)\left(a_{1}f\right)}_{\Lambda^{\alpha}_{V}}\lesssim \abs{z}^{-\alpha-\frac{n}{p}}\norm{f}_{\textup{H}^{p}_{V}}
    \end{align*}
    for all $f\in\textup{H}_{V,\textup{pre}}^{p}\left(\Rn\right)$ and all $z\in\Omega$. We can now state the duality result we had in mind. This is the analogue of \cite[Lemma 4.3]{AuscherEgert} in our context. 
\begin{lem}\label{dualisation of boundedness of families in Lp spaces and Hp spaces-campanato}
    For $p,r\in[1,\infty]$, a family $\set{T(z)}_{z\in\Omega}$ is $\El{p}-\El{r}$-bounded if and only if the dual family $\set{T(z)^{*}}_{z\in\Omega}$ is $\El{r'}-\El{p'}$-bounded. 
    Moreover, if $n\geq 3$, $q\geq \frac{n}{2}$, $V\in\textup{RH}^{q}(\Rn)$ and $\delta=2-\frac{n}{q}$, then for $p\in(\frac{n}{n+\delta\wedge 1},1]$, $r\in[1,\infty]$ and a $a_{1}\textup{H}^{p}_{V}-\El{r}$-bounded family $\set{T(z)}_{z\in\Omega}$, the dual family $\set{T(z)^{*}}_{z\in\Omega}$ is $\El{r'}-{a_{1}^{*}}^{-1}\Lambda_{V}^{n\left(\frac{1}{p}-1\right)}$-bounded. 
\end{lem}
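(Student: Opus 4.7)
The approach for both statements is to pair functions via the $\El{2}$ inner product and transfer the boundedness of $T(z)$ to $T(z)^{*}$ using known duality identifications. The first statement is classical: for $f \in \El{p} \cap \El{2}$ and $g \in \El{r'} \cap \El{2}$, Hölder's inequality paired with the adjoint identity $\langle T(z) f, g\rangle_{\El{2}} = \langle f, T(z)^{*} g\rangle_{\El{2}}$ transfers the bound $\norm{T(z) f}_{\El{r}} \lesssim |z|^{n/r - n/p} \norm{f}_{\El{p}}$ into $\norm{T(z)^{*} g}_{\El{p'}} \lesssim |z|^{n/p' - n/r'} \norm{g}_{\El{r'}}$ via $(\El{p})^{*} \cong \El{p'}$ for $1 \leq p < \infty$, with the scaling preserved since $n/r - n/p = n/p' - n/r'$. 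The case $p = \infty$ requires a predual argument but is equally standard.

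For the second statement, I plan to invoke the $\textup{H}^{p}_{V,\textup{pre}} - \Lambda^{\alpha}_{V}$ duality from Proposition~\ref{duality DZiubanski hardy space and Campanato spaces} with $\alpha := n(1/p - 1) \in [0, \delta \wedge 1)$. Given $z \in \Omega$ and $g \in \El{r'}(\Rn; W_{2}) \cap \El{2}(\Rn; W_{2})$, set $F := a_{1}^{*} T(z)^{*} g$. Then $F \in \El{2}(\Rn; W_{1})$ since $T(z)^{*}$ is the Hilbert adjoint of $T(z) \in \mathcal{L}(\El{2}(\Rn; W_{1}), \El{2}(\Rn; W_{2}))$ and $a_{1}^{*} \in \El{\infty}$. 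For an arbitrary test $h \in \textup{H}^{p}_{V,\textup{pre}}(\Rn; W_{1})$, the adjoint identity combined with Hölder's inequality and the hypothesis gives
\begin{equation*}
|\langle F, h\rangle_{\El{2}}| = |\langle g, T(z)(a_{1} h)\rangle_{\El{2}}| \leq \norm{g}_{\El{r'}} \norm{T(z)(a_{1} h)}_{\El{r}} \lesssim |z|^{n/r - n/p} \norm{g}_{\El{r'}} \norm{h}_{\textup{H}^{p}_{V}}.
\end{equation*}
Applying the converse part of Proposition~\ref{duality DZiubanski hardy space and Campanato spaces} componentwise then places $F \in \Lambda^{\alpha}_{V}$ with $\norm{F}_{\Lambda^{\alpha}_{V}} \lesssim |z|^{n/r - n/p} \norm{g}_{\El{r'}}$. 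The arithmetic identity $n/r - n/p = -\alpha - n/r'$, which uses $n/p = \alpha + n$ and $1/r + 1/r' = 1$, then matches the scaling exponent demanded by the definition of $\El{r'} - (a_{1}^{*})^{-1}\Lambda^{\alpha}_{V}$-boundedness.

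I do not expect any substantial obstacle: this is the natural transplantation of \cite[Lemma 4.3]{AuscherEgert} into the singular-potential setting, with the only new input being Proposition~\ref{duality DZiubanski hardy space and Campanato spaces} in place of the classical $\textup{H}^{p}$-$\BMO$ duality. Care is needed only to verify the two prerequisites of the converse direction of that proposition, namely $F \in \El{2}$ and testing against all $h \in \textup{H}^{p}_{V,\textup{pre}}(\Rn)$, both of which are immediate. The vector-valued setting is handled componentwise via the product structure of $\textup{H}^{p}_{V,\textup{pre}}(\Rn; \C^{N})$ recalled after Definition~\ref{boundedness family operators}, and at the endpoints $r \in \{1, \infty\}$ one tests against $g$ in the natural dense subsets $\El{\infty} \cap \El{2}$ or $\El{1} \cap \El{2}$ respectively.
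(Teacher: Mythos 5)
Your proposal is correct and follows exactly the route the paper takes: the first part is the standard $\El{p}$–$\El{p'}$ duality argument, and the second part is precisely an application of Proposition~\ref{duality DZiubanski hardy space and Campanato spaces} (whose converse direction you correctly verify applies to $F=a_1^*T(z)^*g\in\El{2}$), with the exponent arithmetic $n/r-n/p=-\alpha-n/r'$ checked correctly. The paper's proof is just a two-line pointer to that proposition, and your write-up supplies the same details it leaves implicit.
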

\begin{proof}
    The first part is elementary. The second part follows from Proposition~\ref{duality DZiubanski hardy space and Campanato spaces}.
\end{proof}

\section{Critical numbers $p_{\pm}(H)$, $q_{\pm}(H)$ and their properties}\label{section on critical numbers and their properties}

Let us recall that the bounded accretive function $b$ is defined as $b(x):=A_{\perp\perp}(x)^{-1}$ for almost every $x\in\Rn$. We will work with $n\geq 1$ and the general assumption that $V\in\El{1}_{\loc}(\Rn)$ is non-negative. We shall consider the sets $\mathcal{J}(H)$ and $\mathcal{N}(H)$ defined as follows:
\begin{align*}
    \mathcal{J}(H)&:=\set{p\in\left(\frac{n}{n+1},\infty\right)\; :\;\set{\left(1+t^{2}H\right)^{-1}}_{t>0} \textup{ is } b\textup{H}^{p}_{V} \textup{-bounded} };\\
    \mathcal{N}(H)&:=\set{p\in\left(\frac{n}{n+1},\infty\right) \;:\; \set{t\nabla_{\mu}\left(1+t^{2}H\right)^{-1}}_{t>0} \textup{ is } b\textup{H}^{p}_{V}-\textup{H}^{p}_{V} \textup{-bounded} }.
\end{align*}
These sets are intervals containing $p=2$ with non-empty interior (see Section~\ref{subsubsection on mapping properties ofa dapted hardy spaces}, Lemma~\ref{lemma notion of HpV boundedness interpolates} and Corollary~\ref{improvement on property of interval J(H)}). The endpoints of $\mathcal{J}(H)$ will be denoted by $p_{\pm}(H)$, and those of $\mathcal{N}(H)$ will be denoted by $q_{\pm}(H)$, analogous to \cite[Definition 6.1]{AuscherEgert}. These four numbers will be called the critical numbers. 

\subsection{Resolvents bounds}
In this subsection, we establish some important properties satisfied by the critical numbers. 
We start by mentioning the following lemma for later reference. Its proof is identical to that of \cite[Lemma 7.2]{AuscherEgert}.
    \begin{lem}\label{density lemma range dom L^p}
        If $p\in\mathcal{J}(H)\cap (1,\infty)$ and $k\geq 1$, then the space $\Ell{p}\cap \dom{H^{k}}\cap\ran{H^{k}}$ is dense in $\Ell{p}\cap\Ell{2}$.
    \end{lem}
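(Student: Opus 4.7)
The plan is to construct, for every $f\in\Ell{p}\cap\Ell{2}$, approximants $f_{n}\in\Ell{p}\cap\dom{H^{k}}\cap\ran{H^{k}}$ that converge to $f$ in both $\Ell{p}$ and $\Ell{2}$. Using the bounded $\textup{H}^{\infty}$-calculus of $H$ on $\Ell{2}$ from Theorem~\ref{bounded H_infty functional calculus of H}, I set $f_{n}:=\psi_{n}(H)f$, where
\[
\psi_{n}(z) := \Bigl(\frac{z}{z+1/n}\Bigr)^{k}(1+z/n)^{-2k} \in \Psi^{k}_{k}(S^{+}_{\mu})
\]
for some fixed $\mu\in(2\omega(B),\pi)$. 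Using the identity $z/(z+1/n)=1-(1+nz)^{-1}$ and the binomial formula yields the key representation
\[
\psi_{n}(H) = \sum_{j=0}^{k}(-1)^{j}\binom{k}{j}(I+nH)^{-j}(I+H/n)^{-2k}
\]
as a finite polynomial in the two resolvent families $\{(I+nH)^{-1}\}_{n}$ and $\{(I+H/n)^{-1}\}_{n}$.

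Three properties of this family will be established. First, $f_{n}\in\dom{H^{k}}\cap\ran{H^{k}}$: the factorisation $\psi_{n}(z)=z^{k}(z+1/n)^{-k}(1+z/n)^{-2k}$ shows $f_{n}=H^{k}g_{n}$ with $g_{n}:=(H+1/n)^{-k}(I+H/n)^{-2k}f\in\dom{H^{2k}}\subseteq\dom{H^{k}}$ (so $f_{n}\in\ran{H^{k}}$), and boundedness of $z\mapsto z^{k}\psi_{n}(z)$ on $S^{+}_{\mu}$ gives $H^{k}f_{n}\in\Ell{2}$ (so $f_{n}\in\dom{H^{k}}$). Second, $\{\psi_{n}(H)\}_{n\geq 1}$ is uniformly bounded on $\Ell{p}\cap\Ell{2}$: since $p\in\mathcal{J}(H)\cap(1,\infty)$ and $\textup{H}^{p}_{V,\textup{pre}}(\Rn)=\Ell{p}\cap\Ell{2}$ with equivalent $p$-norms by Lemma~\ref{identification of Dziubanski hardy spaces for p>=1}, the operators $\{b^{-1}(I+tH)^{-1}b\}_{t>0}$ are uniformly $\Ell{p}$-bounded; inserting $bb^{-1}=I$ between consecutive resolvents factorises $b^{-1}\psi_{n}(H)b$ into finite compositions $[b^{-1}(I+nH)^{-1}b]^{j}[b^{-1}(I+H/n)^{-1}b]^{2k}$, each uniformly $\Ell{p}$-bounded, and composition with $b,b^{-1}\in\Ell{\infty}$ transfers the bound to $\psi_{n}(H)$. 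Third, $f_{n}\to f$ in $\Ell{2}$: this follows from $\psi_{n}\to 1$ pointwise on $(0,\infty)$ with $\sup_{n}\norm{\psi_{n}}_{\textup{H}^{\infty}(S^{+}_{\mu})}<\infty$, via the dominated-convergence principle for the bounded $\textup{H}^{\infty}$-calculus of $H$ on $\Ell{2}$.

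The main obstacle is the $\Ell{p}$-convergence $f_{n}\to f$. Decomposing
\[
\psi_{n}(H)-I = \bigl[(I+H/n)^{-2k}-I\bigr] + \sum_{j=1}^{k}(-1)^{j}\binom{k}{j}(I+nH)^{-j}(I+H/n)^{-2k},
\]
this reduces to proving (i) $(I+H/n)^{-2k}f\to f$ in $\Ell{p}$ as $n\to\infty$, and (ii) $(I+nH)^{-j}(I+H/n)^{-2k}f\to 0$ in $\Ell{p}$ as $n\to\infty$ for each $j\geq 1$. Both are standard strong-convergence statements for sectorial operators and follow from the uniform $\Ell{p}$-resolvent bounds together with density arguments internal to $\Ell{p}\cap\Ell{2}$. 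For (i), the dense subset $(I+H)^{-1}[\Ell{p}\cap\Ell{2}]$ of $\Ell{p}\cap\Ell{2}$ (density obtained from the splitting $h=\varepsilon(\varepsilon+H)^{-1}h+H(\varepsilon+H)^{-1}h$ combined with the uniform bounds and injectivity of $H$) consists of $h$ for which $Hh\in\Ell{p}\cap\Ell{2}$; on such $h$ the resolvent identity $(I+sH)^{-1}h-h=-s(I+sH)^{-1}Hh$ gives $\Ell{p}$-decay of order $O(s)$, and uniform $\Ell{p}$-boundedness extends this to all of $\Ell{p}\cap\Ell{2}$ after iteration. For (ii), the identity $(I+nH)^{-1}Hh=n^{-1}[h-(I+nH)^{-1}h]$ gives $\norm{(I+nH)^{-j}Hh}_{\Ell{p}}\lesssim n^{-1}\norm{h}_{\Ell{p}}$ on the dense subset $H[(I+H)^{-1}(\Ell{p}\cap\Ell{2})]\subseteq\ran H\cap\Ell{p}\cap\Ell{2}$, and uniform $\Ell{p}$-boundedness of the factor $(I+H/n)^{-2k}$ together with density then yields (ii). Combining (i) and (ii) gives $f_{n}\to f$ in $\Ell{p}$, completing the density argument.
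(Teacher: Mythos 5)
Your construction of the approximants and the verification that $f_{n}\in\Ell{p}\cap\dom{H^{k}}\cap\ran{H^{k}}$ with $f_{n}\to f$ in $\Ell{2}$ are all correct, and this is essentially the route of \cite[Lemma 7.2]{AuscherEgert}, to which the paper defers (the regularising family $[(1+j^{-2}H)^{-1}-(1+j^{2}H)^{-1}]^{k}$ used there differs from your $\psi_{n}(H)$ only cosmetically). The gap is in the final step, the $\Ell{p}$-convergence. Your reduction of (i) and (ii) to ``density arguments internal to $\Ell{p}\cap\Ell{2}$'' is circular: the density of $(I+H)^{-1}[\Ell{p}\cap\Ell{2}]$ in $\Ell{p}\cap\Ell{2}$ is itself equivalent to the strong limit $(I+sH)^{-1}h\to h$ in $\Ell{p}$ as $s\to 0$, since the only available approximants of a general $h$ are $(I+sH)^{-1}h$, and the splitting $h=\varepsilon(\varepsilon+H)^{-1}h+H(\varepsilon+H)^{-1}h$ you invoke requires $(1+\varepsilon^{-1}H)^{-1}h\to 0$ in $\Ell{p}$, which is precisely statement (ii). Likewise the density of $H[(I+H)^{-1}(\Ell{p}\cap\Ell{2})]$ presupposes (ii). Injectivity of $H$ on $\Ell{2}$ does not transfer to injectivity of the $\Ell{p'}$-extensions of the adjoint resolvents, so a duality shortcut is not available either.

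The standard repair --- consistent with the proof of \cite[Lemma 7.2]{AuscherEgert} that the paper invokes --- is to not prove strong $\Ell{p}$-convergence at all. The target set $Y:=\Ell{p}\cap\dom{H^{k}}\cap\ran{H^{k}}$ is a linear subspace of $X:=\Ell{p}\cap\Ell{2}$, so by Mazur's lemma its norm closure in $X$ coincides with its weak closure. Your $f_{n}$ lie in $Y$, are bounded in $\Ell{p}$ by your uniform resolvent bounds, and converge to $f$ in $\Ell{2}$. Since $1<p<\infty$, $\El{p}$ is reflexive, and every weak-$\El{p}$ cluster point of $(f_{n})_{n}$ agrees with $f$ when tested against the dense subset $\Ell{p'}\cap\Ell{2}$ of $\Ell{p'}$; hence $f_{n}\weakto f$ weakly in $\El{p}$. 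Mazur then produces convex combinations $g_{m}$ of $\set{f_{n}:n\geq m}$ with $g_{m}\to f$ in $\Ell{p}$; these remain in $Y$ and satisfy $\norm{g_{m}-f}_{\El{2}}\leq\sup_{n\geq m}\norm{f_{n}-f}_{\El{2}}\to 0$, which gives the required density in $\Ell{p}\cap\Ell{2}$. (Alternatively, for $p$ in the interior of $\mathcal{J}(H)$ one can obtain the strong limits (i) and (ii) by interpolating the strong $\Ell{2}$-convergence with a uniform $\Ell{q}$-bound for some $q\in\mathcal{J}(H)\cap(1,\infty)$ with $p$ strictly between $2$ and $q$, first for $f$ in the dense subclass $\Ell{p}\cap\Ell{2}\cap\Ell{q}$, and then by uniform boundedness.) With either replacement the rest of your argument stands.
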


In contrast with \cite[Theorem 6.2]{AuscherEgert}, it is not clear wether $p_{-}(H)=q_{-}(H)$ when $V\not\equiv 0$. Nevertheless, Proposition~\ref{properties of critical numbers proposition} will be enough for our purposes. The following two lemmas will be needed for its proof. The first lemma is adapted from \cite[Lemma 6.3]{AuscherEgert} and follows here by observing that the $\El{2}$-boundedness of the family $\set{t\nabla_{\mu}\left(1+t^{2}H\right)^{-1}}_{t>0}$ implies the $\El{2}$-boundedness of $\set{t\nabla_{x}\left(1+t^{2}H\right)^{-1}}_{t>0}$.
\begin{lem}\label{property J(H)}
   If $n\geq 3$, then   $\left(2_{*},2^{*}\right)\subset\mathcal{J}(H)$. Moreover, $\set{\left(1+t^{2}H\right)^{-1}}_{t>0}$ is $\El{2}-\El{p}$-bounded and $\El{p'}-\El{2}$-bounded for every $p\in[2,2^{*}]$. 
\end{lem}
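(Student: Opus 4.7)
Since $n\geq 3$, we have $2_{*}=2n/(n+2)>1$, so for $p$ in the claimed range $b\textup{H}^{p}_{V}$-boundedness coincides with $\El{p}$-boundedness (Lemma~\ref{identification of Dziubanski hardy spaces for p>=1}, together with $b,b^{-1}\in\Ell{\infty}$). The proof is an adaptation of \cite[Lemma~6.3]{AuscherEgert}, in which the only new input is the observation flagged in the text: the $\El{2}$-boundedness of $\{t\nabla_{\mu}(1+t^{2}H)^{-1}\}_{t>0}$ forces the $\El{2}$-boundedness of $\{t\nabla_{x}(1+t^{2}H)^{-1}\}_{t>0}$, simply by restricting to the first $n$ components of $\nabla_{\mu}$.

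I would first establish the endpoint bound $\|(1+t^{2}H)^{-1}f\|_{\El{2^{*}}} \lesssim t^{-1}\|f\|_{\El{2}}$. The Kato-type estimate \eqref{first non rigourous apparition of the Kato square root type result for schrodinger operator} reduces the $\El{2}$-boundedness of $\{t\nabla_{\mu}(1+t^{2}H)^{-1}\}_{t>0}$ to the uniform $\El{2}$-boundedness of $\{tH^{1/2}(1+t^{2}H)^{-1}\}_{t>0}$, which is immediate from the bounded $\textup{H}^{\infty}$-calculus of $H$ on $\Ell{2}$ provided by Theorem~\ref{bounded H_infty functional calculus of H}. Restricting to the first $n$ components gives $\|\nabla_{x}(1+t^{2}H)^{-1}f\|_{\El{2}}\lesssim t^{-1}\|f\|_{\El{2}}$. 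Setting $u:=(1+t^{2}H)^{-1}f\in\mathcal{V}^{1,2}(\Rn)\subseteq\dot{\mathcal{V}}^{1,2}(\Rn)$, Proposition~\ref{embedding homogeneous V spaces in L ^p* for p<n} (applicable since $V\in\textup{RH}^{q}(\Rn)$ with $q\geq n/2$) yields $\|u\|_{\El{2^{*}}}\lesssim\|\nabla_{x}u\|_{\El{2}}\lesssim t^{-1}\|f\|_{\El{2}}$. Interpolating with the uniform $\El{2}$-bound via Riesz--Thorin then gives $\El{2}$--$\El{p}$-boundedness with scaling $t^{n/p-n/2}$ for all $p\in[2,2^{*}]$.

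The $\El{p'}$--$\El{2}$-bound for $p\in[2,2^{*}]$ is obtained by duality. Applying the preceding step to the operator $H^{\sharp}$ defined in~\eqref{definition H sharp operator}, whose coefficients $b^{*},A_{\parallel\parallel}^{*},a^{*}$ inherit the ellipticity and boundedness of those of $H$, yields the $\El{2}$--$\El{p}$-bound for $\{(1+t^{2}H^{\sharp})^{-1}\}_{t>0}$. The similarity $(1+t^{2}H^{*})^{-1}f=(b^{*})^{-1}(1+t^{2}H^{\sharp})^{-1}(b^{*}f)$ recalled in Section~\ref{subsubsection on adjoints and similarity}, combined with $b^{*},(b^{*})^{-1}\in\Ell{\infty}$, transfers this estimate to $\{(1+t^{2}H^{*})^{-1}\}_{t>0}$, and Lemma~\ref{dualisation of boundedness of families in Lp spaces and Hp spaces-campanato} then delivers the desired $\El{p'}$--$\El{2}$-bound for $\{(1+t^{2}H)^{-1}\}_{t>0}$.

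The main obstacle is the inclusion $(2_{*},2^{*})\subset\mathcal{J}(H)$, i.e.\ uniform $\El{p}$-boundedness of the resolvent for interior $p$. Straight Riesz--Thorin interpolation between the three available endpoints $(1/2,1/2)$, $(1/2,1/2^{*})$, $(1/2_{*},1/2)$ cannot succeed, since their convex hull meets the diagonal $\{1/p=1/q\}$ only at $(1/2,1/2)$. Following \cite[Lemma~6.3]{AuscherEgert}, the remedy is to upgrade the $\El{2}$ off-diagonal estimates of arbitrarily large order for $\{(1+t^{2}H)^{-1}\}_{t>0}$ (recorded in Section~\ref{subsubsection on mapping properties ofa dapted hardy spaces}) to $\El{2}$--$\El{2^{*}}$ and $\El{2_{*}}$--$\El{2}$ off-diagonal estimates, by localising the just-proved pointwise bounds on dyadic annuli, and then to invoke the Blunck--Kunstmann extrapolation principle to extract uniform $\El{p}$-boundedness for every $p\in(2_{*},2^{*})$. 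Since this extrapolation machinery is purely abstract and depends only on the off-diagonal structure, not on the presence of the potential $V$, it transfers from \cite[Lemma~6.3]{AuscherEgert} without further modification.
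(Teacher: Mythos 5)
Your proposal is correct and follows essentially the same route as the paper, which itself gives only a one-line proof pointing to \cite[Lemma~6.3]{AuscherEgert} together with the observation that $\El{2}$-boundedness of $\{t\nabla_{\mu}(1+t^{2}H)^{-1}\}_{t>0}$ forces that of $\{t\nabla_{x}(1+t^{2}H)^{-1}\}_{t>0}$. You correctly identify that observation, the Kato/Sobolev derivation of the $\El{2}$--$\El{2^{*}}$ bound, Riesz--Thorin for $p\in[2,2^{*}]$, and the duality via $H^{\sharp}$.

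One small labelling slip: the mechanism that upgrades the $\El{2}$ off-diagonal estimates and then extracts uniform $\El{p}$-boundedness for interior $p$ is not the Blunck--Kunstmann extrapolation principle (that criterion is invoked later, in the Riesz transform proof via \cite[Proposition~7.6]{AuscherEgert}). Rather, it is the pair of abstract lemmas from \cite{AuscherEgert}: their Lemma~4.14, which interpolates $\El{2}$--$\El{2}$ off-diagonal estimates of exponential order against the $\El{2}$--$\El{2^{*}}$ operator bound to obtain $\El{p}$--$\El{q}$ off-diagonal estimates for intermediate exponents, followed by their Lemma~4.7, which shows that off-diagonal estimates of sufficiently high order between mismatched exponents $p<q$ already imply $\El{p}$-boundedness and $\El{q}$-boundedness. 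This is exactly the pattern the present paper replays explicitly in the proof of Proposition~\ref{properties of critical numbers proposition}. Since you defer to \cite[Lemma~6.3]{AuscherEgert} for details, this does not affect the validity of the argument, but the name is wrong.
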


The second lemma is adapted from \cite[Lemma 6.5]{AuscherEgert}, and is proved in exactly the same way.
\begin{lem}\label{decreasing powers in boundedness of resolvents}
    Let $\frac{n}{n+1}<p\leq r<\infty$ such that $r>1$ and $\frac{n}{p}-\frac{n}{r}<1$. If there is an integer $\beta\geq 1$ such that the family $\set{t\nabla_{\mu}\left(1+t^{2}H\right)^{-\beta -1}}_{t>0}$ is $b\textup{H}^{p}_{V}-\El{r}$-bounded, then $\set{t\nabla_{\mu}\left(1+t^{2}H\right)^{-\alpha}}_{t>0}$ is also $b\textup{H}^{p}_{V}-\El{r}$-bounded for all integers $1\leq \alpha\leq \beta$.
\end{lem}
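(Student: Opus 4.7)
The plan is to derive an integral representation writing $(1+t^2H)^{-\alpha}$ as a weighted integral of $(1+\tau^2H)^{-\beta-1}$ over $\tau\in(0,t)$, and then transfer the hypothesis through this integral.

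I would first establish the operator-valued identity
\begin{equation*}
(1+t^2H)^{-\alpha} = \frac{2}{\textup{B}(\beta-\alpha+1,\alpha)\,t^{2\beta}}\int_0^t (t^2-\tau^2)^{\beta-\alpha}\tau^{2\alpha-1}(1+\tau^2H)^{-\beta-1}\,d\tau
\end{equation*}
for integers $1\leq\alpha\leq\beta$, where $\textup{B}$ denotes the Euler Beta function. This is obtained from the scalar identity
\begin{equation*}
z^{-\alpha} = \textup{B}(\beta-\alpha+1,\alpha)^{-1}\int_0^\infty s^{\beta-\alpha}(s+z)^{-\beta-1}\,ds \qquad (\Re z>0),
\end{equation*}
applied via the bounded $\textup{H}^{\infty}$-calculus for $H$ from Theorem~\ref{bounded H_infty functional calculus of H} with $z=1+t^2H$, followed by the factorisation $(s+1+t^2H)^{-\beta-1}=(s+1)^{-\beta-1}(1+\tau^2H)^{-\beta-1}$ and the substitution $\tau=t(s+1)^{-1/2}$.

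Given $f\in\textup{H}^{p}_{V,\textup{pre}}(\Rn)$, so that $bf\in\El{2}(\Rn)$, the uniform $\El{2}$-boundedness of $\set{\tau\nabla_\mu(1+\tau^2 H)^{-\beta-1}}_{\tau>0}$ recalled in Section~\ref{subsubsection on mapping properties ofa dapted hardy spaces} shows that the map $\tau\mapsto\tau\nabla_\mu(1+\tau^2H)^{-\beta-1}(bf)$ is uniformly bounded in $\El{2}$; combined with the weight $\tau^{2\alpha-2}$ (integrable near $0$ for $\alpha\geq 1$), the closedness of $\nabla_\mu$ and Hille's theorem permit interchanging $\nabla_\mu$ with the integral. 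This yields
\begin{equation*}
t\nabla_\mu(1+t^2H)^{-\alpha}(bf) = \frac{2}{\textup{B}(\beta-\alpha+1,\alpha)\,t^{2\beta-1}}\int_0^t(t^2-\tau^2)^{\beta-\alpha}\tau^{2\alpha-2}\bigl[\tau\nabla_\mu(1+\tau^2H)^{-\beta-1}(bf)\bigr]d\tau.
\end{equation*}

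Taking $\El{r}$-norms, applying Minkowski's integral inequality together with the $b\textup{H}^p_V$--$\El{r}$-boundedness hypothesis for the integrand, and finally making the substitution $\tau=ts$, one obtains
\begin{equation*}
\norm{t\nabla_\mu(1+t^2H)^{-\alpha}(bf)}_{\El{r}} \lesssim t^{n/r-n/p}\norm{f}_{\textup{H}^p_V}\int_0^1(1-s^2)^{\beta-\alpha}s^{2\alpha-2+n/r-n/p}\,ds,
\end{equation*}
which is precisely the desired bound, provided the remaining Beta-type integral is finite. This last point is the one place where the hypotheses must be used sharply, and is the main obstacle to the argument: convergence at $s=1$ is automatic since $\beta-\alpha\geq 0$, while convergence at $s=0$ requires $2\alpha-1+n/r-n/p>0$, which is guaranteed by $\alpha\geq 1$ together with the assumption $\frac{n}{p}-\frac{n}{r}<1$. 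This completes the proof.
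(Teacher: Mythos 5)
Your argument is correct and is essentially the proof the paper defers to (the subordination argument of \cite[Lemma 6.5]{AuscherEgert}): your representation of $(1+t^{2}H)^{-\alpha}$ as a weighted integral of $(1+\tau^{2}H)^{-\beta-1}$ over $\tau\in(0,t]$ is just a reparametrisation of the Euler Beta identity $(1+z)^{-\alpha}=\textup{B}(\alpha,\beta+1-\alpha)^{-1}\int_{0}^{1}s^{\alpha-1}(1-s)^{\beta-\alpha}(1+sz)^{-\beta-1}\,ds$ used there, and the convergence condition $2\alpha-1>\frac{n}{p}-\frac{n}{r}$ you isolate at $s=0$ is exactly where the hypothesis $\frac{n}{p}-\frac{n}{r}<1$ enters in that proof as well.
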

We can now state and prove the main result of this section. We follow the argument used in the proof of \cite[Theorem 6.2]{AuscherEgert}.
\begin{prop}\label{properties of critical numbers proposition}
    Let $n\geq 3$. The critical numbers satisfy
    \begin{align*}
        p_{-}(H)\vee 1 = q_{-}(H)\vee 1\hspace{1cm}\textup{ and }\hspace{1cm}
        p_{+}(H)\geq q_{+}(H)^{*}.
    \end{align*}
    \end{prop}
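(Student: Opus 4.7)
The plan is to adapt the proof of \cite[Theorem 6.2]{AuscherEgert} to the Schr\"odinger setting, with modifications forced by the potential $V$ and the associated loss of the conservation property $(1+t^2H)^{-1}(1)=1$.

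For $p_+(H) \geq q_+(H)^*$, I fix $p \in \mathcal{N}(H)$ with $p < n$ and show $p^* \in \mathcal{J}(H)$. The key input is the Sobolev embedding of Proposition \ref{embedding homogeneous V spaces in L ^p* for p<n}, which gives $\|u\|_{\El{p^*}} \lesssim \|\nabla_x u\|_{\El{p}}$ for $u \in \dot{\mathcal{V}}^{1,p}(\Rn)$ without a constant term, exploiting that $V \not\equiv 0$. Applied directly to $u_t := (1+t^2H)^{-1}(bf)$ this produces a bound that is not uniform in $t$, so I combine it with a localization-plus-off-diagonal argument: for $f \in \El{p^*}\cap\El{2}$, I decompose $f$ along balls of scale $t$, and use the $\El{2}$ off-diagonal (Davies--Gaffney) estimates for the resolvent (recorded in Section \ref{subsubsection on mapping properties ofa dapted hardy spaces}) together with the assumed $\El{p}$-$\El{p}$ boundedness of $\{t\nabla_\mu(1+t^2H)^{-1}\}_{t>0}$ on each localized piece. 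Summing the pieces yields a uniform-in-$t$ $\El{p^*}$ bound, giving $p^* \in \mathcal{J}(H)$.

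For $p_-(H)\vee 1 = q_-(H)\vee 1$, I prove two inclusions in $(1,\infty)$. The inclusion $\mathcal{J}(H)\cap(1,\infty)\subseteq\mathcal{N}(H)\cap(1,\infty)$ is a Caccioppoli-type upgrade: testing the resolvent equation $(1+t^2H)u_t = bf$ against a cutoff $\eta^2 u_t$ and invoking ellipticity \eqref{ellipticity assumption} yields the local $\El{2}$-bound $\|t\nabla_\mu u_t\|_{\El{2}(B)} \lesssim \|u_t\|_{\El{2}(2B)} + t\|f\|_{\El{2}(2B)}$ on balls $B$ of radius $t$; this upgrades to $\El{p}$ via the assumed $\El{p}$-bound on $u_t$, interpolation with off-diagonal $\El{2}$ estimates, and a standard maximal-function argument. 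For the reverse inclusion $\mathcal{N}(H)\cap(1,\infty)\subseteq\mathcal{J}(H)\cap(1,\infty)$, I use duality and the divergence form of $H$: the identity $u_t = bf - t^2Hu_t$ together with integration by parts exhibits $\langle u_t, \varphi\rangle$ as the sum of $\langle bf,\varphi\rangle$ and pairings of $t\nabla_\mu u_t$ with $t\nabla_\mu$ applied to $\El{p'}$-test functions. The assumed $\El{p}$-bound on $t\nabla_\mu u_t$ then pairs with $\El{p'}$-gradient bounds on $\{t\nabla_\mu(1+t^2H^*)^{-1}\}_{t>0}$ (obtained from Lemma \ref{dualisation of boundedness of families in Lp spaces and Hp spaces-campanato} together with the similarity of $H^*$ to $H^\sharp$ discussed in Section \ref{subsubsection on adjoints and similarity}) to produce a uniform $\El{p}$-bound on $u_t$.

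The main obstacle is the absence of the conservation property $(1+t^2H)^{-1}(1)=1$, which in \cite{AuscherEgert} permits subtraction of constants and delivers $p_-(L) = q_-(L)$ throughout $(\tfrac{n}{n+1},\infty)$. When $V\not\equiv 0$, constants are not annihilated by the resolvent, and the substitute Fefferman--Phong inequality (Proposition \ref{improved Fefferman--Phong inequality}) compensates for the resulting extra terms only in the range $p > 1$. This restriction is the source of the weaker conclusion $p_-(H)\vee 1 = q_-(H)\vee 1$ in place of full endpoint equality.
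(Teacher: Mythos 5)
Your treatment of $p_{+}(H)\geq q_{+}(H)^{*}$ is close to the paper's, but the claimed obstruction is illusory: for $\rho\in\mathcal{N}(H)\cap(1,n)$ the Sobolev embedding gives $\norm{(1+t^{2}H)^{-1}f}_{\El{\rho^{*}}}\lesssim t^{-1}\norm{f}_{\El{\rho}}$, and since $t^{-1}=t^{\frac{n}{\rho^{*}}-\frac{n}{\rho}}$ this is \emph{exactly} the admissible scaling in the definition of $\El{\rho}$--$\El{\rho^{*}}$-boundedness, so no localisation is needed to repair non-uniformity. One then interpolates with the $\El{2}$ off-diagonal estimates to get $[\rho,2]_{\theta},[\rho^{*},2]_{\theta}\in\mathcal{J}(H)$ for all $\theta\in(0,1)$, which yields $p_{-}(H)\leq\rho\leq\rho^{*}\leq p_{+}(H)$; the endpoint $\rho^{*}\in\mathcal{J}(H)$ that your localisation argument aims for is neither needed nor delivered by your sketch. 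Note also that this single step already gives $p_{-}(H)\leq q_{-}(H)\vee 1$, so no separate duality argument is required for that inclusion.

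The genuine gaps are in your two inclusions for $p_{-}(H)\vee 1=q_{-}(H)\vee 1$. (i) The Caccioppoli upgrade: the local bound $\norm{t\nabla_{\mu}u_{t}}_{\El{2}(B)}\lesssim\norm{u_{t}}_{\El{2}(2B)}+\cdots$ on balls of radius $t$ cannot be summed into an $\El{p}$ bound for $p<2$ without $\El{p}$--$\El{2}$ off-diagonal estimates for the resolvent at scale $t$, which are not among your hypotheses ($p\in\mathcal{J}(H)$ gives only $\El{p}$-boundedness plus $\El{2}$ off-diagonal decay). Manufacturing them forces you to take powers of the resolvent and lose to exponents $q>p$ — precisely the paper's route via Proposition~\ref{bound on power of family}, composition with the $\El{2}$-bounded gradient family, interpolation, and Lemma~\ref{decreasing powers in boundedness of resolvents} — and the conclusion is only $(p,2]\subseteq\mathcal{N}(H)$, not $p\in\mathcal{N}(H)$. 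That still suffices for $q_{-}(H)\leq p_{-}(H)\vee 1$, but your stronger same-exponent inclusion is not established. (ii) The duality argument for $\mathcal{N}(H)\subseteq\mathcal{J}(H)$ does not close: after the integration by parts (which already requires care, since $b^{*}\varphi\notin\mathcal{V}^{1,2}(\Rn)$ for merely bounded $b$) you need $\norm{t\nabla_{\mu}\psi}_{\El{p'}}\lesssim\norm{\psi}_{\El{p'}}$ for the dual test functions, i.e.\ $p'\in\mathcal{N}(H^{\sharp})$. This does not follow from $p\in\mathcal{N}(H)$: dualising $\{t\nabla_{\mu}(1+t^{2}H)^{-1}b\}_{t>0}$ gives $\El{p'}$-bounds for the reverse-gradient family $\{b^{*}(1+t^{2}H^{*})^{-1}(t\nabla_{\mu})^{*}\}_{t>0}$, not for the gradient family of the adjoint. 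Finally, the closing attribution of the ``$\vee\,1$'' to the failure of conservation and to Fefferman--Phong is not what happens here: neither enters this proof, and the restriction to exponents above $1$ comes from the Sobolev embedding and the Lebesgue-scale interpolation.
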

    \begin{proof}
        Let $\rho\in\mathcal{N}(H)\cap(1,\infty)$. We treat two cases:
        \begin{enumerate}[wide, labelwidth=!, labelindent=0pt, label={\textit{{Case} \arabic*.}}]
        \item Suppose that $\rho<n$. This implies that the family $\set{t\nabla_{x}\left(1+t^{2}H\right)^{-1}}_{t>0}$ is $\El{\rho}$-bounded. As a consequence, a Sobolev embedding shows that
        \begin{align*}
            \norm{\left(1+t^{2}H\right)^{-1}f}_{\El{\rho^{*}}}&\lesssim \norm{\nabla_{x}\left(1+t^{2}H\right)^{-1}f}_{\El{\rho}}\lesssim t^{-1}\norm{f}_{\El{\rho}}=t^{\frac{n}{\rho^{*}}-\frac{n}{\rho}}\norm{f}_{\El{\rho}}\
        \end{align*}
        for all $f\in\El{2}\cap\El{\rho}$ and all $t>0$. This means that the family $\set{\left(1+t^{2}H\right)^{-1}}_{t>0}$ is $\El{\rho}-\El{\rho^{*}}$-bounded. Since it also satisfies $\El{2}$ off-diagonal estimates of exponential order, \cite[Lemma 4.14]{AuscherEgert} shows that it satisfies $\El{[\rho,2]_{\theta}}-\El{[\rho^{*},2]_{\theta}}$ off-diagonal estimates of exponential order, for all $\theta\in(0,1)$. Then, \cite[Lemma 4.7]{AuscherEgert} implies that $\set{\left(1+t^{2}H\right)^{-1}}_{t>0}$ is $\El{[\rho,2]_{\theta}}$-bounded and $\El{[\rho^{*},2]_{\theta}}$-bounded for all $\theta\in(0,1)$. This means that $[\rho,2]_{\theta},[\rho^{*},2]_{\theta}\in\mathcal{J}(H)$ for all $\theta\in(0,1)$, which implies that $p_{-}(H)\leq \rho \leq \rho^{*}\leq p_{+}(H).$
\vspace{6pt}
        \item Suppose that $\rho\geq n$. Then, since $\mathcal{N}(H)$ is an interval containing $2$, it follows that $[2,n]\subset\mathcal{N}(H)$. We may therefore apply Case 1 to any $\tilde{\rho}\in\mathcal{N}(H)\cap [2,n)$ in place of $\rho$, to obtain that $\tilde{\rho}^{*}\leq p_{+}(H)$ for all $\tilde{\rho}\in[2,n)$. Since $\tilde{\rho}^{*}\to\infty$ as $\tilde{\rho}\to n$, we have $p_{+}(H)=\infty=\rho^{*}$, since $\rho\geq n$.
        Moreover, $\rho\geq n > 2 \geq  p_{-}(H)$, hence $p_{-}(H)\leq \rho \leq \rho^{*}\leq p_{+}(H)$.
        
        Altogether, Case 1 and 2 show that        \begin{equation}\label{inequality exponents critical numbers}
            p_{-}(H)\leq \rho \leq \rho^{*}\leq p_{+}(H)
        \end{equation}
        for every $\rho\in\mathcal{N}(H)\cap (1,\infty)$. 
        The last inequality in \eqref{inequality exponents critical numbers} shows that $q_{+}(H)^{*}\leq p_{+}(H).$
        In addition, the first inequality in (\ref{inequality exponents critical numbers}) shows that $p_{-}(H)\leq q_{-}(H)\vee 1$.
        
        Now let $p\in\mathcal{J}(H)\cap (1,2)$. Lemma~\ref{property J(H)} shows that $\set{\left(1+t^{2}H\right)^{-1}}_{t>0}$ is $\El{\rho}-\El{2}$-bounded for some $\rho\in (1,2)$ (we may take $\rho =2_{*}$ since $n\geq 3$). We can therefore apply Proposition~\ref{bound on power of family} to find, for each $q\in (p,2)$, an integer $\beta\geq 1$ such that $\set{(1+t^{2}H)^{-\beta}}_{t>0}$ is $\El{q}-\El{2}$-bounded. By composition with the $\El{2}$-bounded family $\set{t\nabla_{\mu}\left(1+t^{2}H\right)^{-1}}_{t>0}$, we obtain $\El{q}-\El{2}$-boundedness of the family $\set{t\nabla_{\mu}\left(1+t^{2}H\right)^{-\beta -1}}_{t>0}$. The later family also satisfies $\El{2}$ off-diagonal estimates of exponential order (see \cite[Lemma 4.6]{AuscherEgert}). Therefore, \cite[Lemma 4.14]{AuscherEgert} shows that it has $\El{[q,2]_{\theta}}-\El{2}$ off-diagonal estimates of exponential order for every $\theta\in(0,1)$. Consequently, \cite[Lemma 4.7]{AuscherEgert} shows that $\set{t\nabla_{\mu}\left(1+t^{2}H\right)^{-\beta -1}}_{t>0}$ is $\El{[q,2]_{\theta}}$-bounded for each $\theta \in (0,1)$. We may finally apply Lemma~\ref{decreasing powers in boundedness of resolvents} to deduce that $[q,2]_{\theta}\in\mathcal{N}(H)$ for all $\theta\in(0,1)$. Since $q\in\left(p,2\right)$ was arbitrary, this implies that $(p,2]\subset \mathcal{N}(H)$, and therefore $q_{-}(H)\leq p$. The later inequality also trivially holds for all $p\in[2,\infty)$, thus $q_{-}(H)\leq p$ for all $p\in\mathcal{J}(H)\cap(1,\infty)$, hence $q_{-}(H)\leq p_{-}(H)\vee 1$. Altogether, this shows that $q_{-}(H)\vee 1 =p_{-}(H)\vee 1$.
        \end{enumerate}
    \end{proof}
    Following the proof of \cite[Proposition 6.7]{AuscherEgert}, we can use the previous result along with \cite[Lemma 5.5]{MorrisTurner} in order to improve on the conclusions of Lemma~\ref{property J(H)}. This yields the following result, which provides a worst-case estimate for the critical numbers.
    \begin{cor}\label{improvement on property of interval J(H)}
        Let $n\geq 3$, $q>1$ and $V\in\textup{RH}^{q}(\Rn)$. There exists $\eps>0$ such that $q_{+}
        (H)\geq 2+\eps$ and $\left[2_{*}-\eps,2^{*}+\eps\right]\subseteq \mathcal{J}(H)$.
    \end{cor}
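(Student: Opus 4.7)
The strategy is to first extend the known range in $\mathcal{N}(H)$ slightly past $p=2$ and then propagate this improvement to $\mathcal{J}(H)$ via Proposition~\ref{properties of critical numbers proposition}. The starting point is a Meyers-type self-improvement for the $\El{2}$-bounded family $\set{t\nabla_\mu(1+t^{2}H)^{-1}}_{t>0}$: Lemma~5.5 of~\cite{MorrisTurner} provides, under the assumption $V\in\textup{RH}^{q}(\Rn)$ with $q>1$, an exponent $\eps_{0}>0$ and a uniform bound of the form
\[
\norm{t\nabla_\mu(1+t^{2}H)^{-1}f}_{\El{2+\eps_{0}}}\lesssim \norm{f}_{\El{2+\eps_{0}}}
\]
for $f\in\El{2}\cap\El{2+\eps_{0}}$ (and similarly for the adjoint operator). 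Together with the natural $\El{2}$ bound on $b$, this yields $2+\eps_{0}\in\mathcal{N}(H)$, so $q_{+}(H)\geq 2+\eps_{0}$, which is the first claim.

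Next I invoke the second conclusion of Proposition~\ref{properties of critical numbers proposition}, namely $p_{+}(H)\geq q_{+}(H)^{*}$. Since $2+\eps_{0}<n$ (taking $\eps_{0}$ smaller if necessary, using $n\geq 3$), we obtain
\[
p_{+}(H)\geq (2+\eps_{0})^{*} = \frac{n(2+\eps_{0})}{n-(2+\eps_{0})}>2^{*},
\]
so there is $\eps_{1}>0$ with $[2,2^{*}+\eps_{1}]\subseteq \mathcal{J}(H)$. For the lower endpoint I argue by duality. The operator $H^{*}$ is similar to the Schrödinger operator $H^{\sharp}$ of~\eqref{definition H sharp operator} under conjugation by $b^{*}$ (see Section~\ref{subsubsection on adjoints and similarity}), and $H^{\sharp}$ satisfies exactly the same hypotheses as $H$. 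Consequently the argument of the first paragraph applies also to $H^{\sharp}$, giving some $\eps_{2}>0$ with $q_{+}(H^{\sharp})\geq 2+\eps_{2}$. By the identity $(1+t^{2}H)^{-*}= (b^{*})^{-1}(1+t^{2}H^{\sharp})^{-1}b^{*}$ and Lemma~\ref{dualisation of boundedness of families in Lp spaces and Hp spaces-campanato}, the $b\El{p}$-boundedness of $\set{(1+t^{2}H)^{-1}}_{t>0}$ for $p$ near $2$ from above translates into the same type of bound for $\set{(1+t^{2}H)^{-1}}_{t>0}$ for $p'$ near $2$ from below, i.e. $(2+\eps_{2})'\in\mathcal{J}(H)$. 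Hence $p_{-}(H)\leq (2+\eps_{2})'<2$.

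Finally I reinsert this lower-end information into Proposition~\ref{properties of critical numbers proposition}. Since $p_{-}(H)\vee 1 = q_{-}(H)\vee 1$, the improvement above gives $q_{-}(H)\vee 1\leq (2+\eps_{2})'$, and then the reasoning of Case~1 in the proof of Proposition~\ref{properties of critical numbers proposition} applied to any $\rho\in\mathcal{N}(H)$ slightly larger than $(2+\eps_{2})'$ (so $\rho<n$) yields $\rho_{*}\leq p_{-}(H)$. Choosing $\rho$ close enough to $(2+\eps_{2})'$ produces an $\eps_{3}>0$ with $p_{-}(H)\leq 2_{*}-\eps_{3}$. Taking $\eps:=\min\{\eps_{0},\eps_{1},\eps_{3}\}$ delivers both claims of the corollary. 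The only non-routine ingredient is the Meyers-type step in the first paragraph; everything else is bookkeeping with the already established relations between $\mathcal{J}(H)$ and $\mathcal{N}(H)$, duality, and interpolation via Lemma~\ref{lemma notion of HpV boundedness interpolates}.
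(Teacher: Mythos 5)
Your steps 1 and 2 are precisely the paper's approach for the upper endpoint: the Meyers-type $\El{2+\eps_0}$ bound for $\{t\nabla_\mu(1+t^2H)^{-1}\}_{t>0}$ from \cite[Lemma~5.5]{MorrisTurner} gives $q_+(H)\geq 2+\eps_0$, and then Proposition~\ref{properties of critical numbers proposition} gives $p_+(H)\geq(2+\eps_0)^*>2^*$. The problem is entirely in your treatment of the lower endpoint. In step 3 you dualize from $q_+(H^\sharp)\geq 2+\eps_2$, which only places $\mathcal{J}(H)$ slightly below $2$, and in step 4 you try to patch the deficit by reinserting this into Case~1 of the proof of Proposition~\ref{properties of critical numbers proposition}. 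But Case~1, applied to $\rho\in\mathcal{N}(H)\cap(1,n)$, yields $(\rho,\rho^*)\subseteq\mathcal{J}(H)$ and hence $p_-(H)\leq\rho$; it does not yield $p_-(H)\leq\rho_*$, and your statement ``$\rho_*\leq p_-(H)$'' is a bound in the wrong direction altogether. With $\rho$ near $(2+\eps_2)'<2$ this only reproduces $p_-(H)<2$, never anything near $2_*$. The Sobolev-embedding/interpolation mechanism in Case~1 improves the \emph{upper} endpoint of $\mathcal{J}(H)$ by a Sobolev exponent; there is no analogous downward self-improvement of the lower endpoint, so the loop you describe does not close.

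The fix is to carry out your steps 1 and 2 for $H^\sharp$ \emph{before} dualizing. Since $H^\sharp$ satisfies the same structural hypotheses, the Meyers improvement plus Proposition~\ref{properties of critical numbers proposition} gives $p_+(H^\sharp)>2^*$, so there is $\eta>0$ with $2^*+\eta\in\mathcal{J}(H^\sharp)$. For $p>1$, the adjoint of $A_{\perp\perp}(1+t^2H)^{-1}A_{\perp\perp}^{-1}$ is $(1+t^2H^\sharp)^{-1}$ (using $H^*=(b^*)^{-1}H^\sharp b^*$), and conjugation by the bounded accretive multiplier $b^*=(A^\sharp_{\perp\perp})^{-1}$ does not affect $\El{r}$-boundedness, so $p\in\mathcal{J}(H)$ if and only if $p'\in\mathcal{J}(H^\sharp)$. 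Dualizing $2^*+\eta\in\mathcal{J}(H^\sharp)$ therefore gives $(2^*+\eta)'\in\mathcal{J}(H)$, and $(2^*+\eta)'<(2^*)'=2_*$, so $p_-(H)<2_*$. This is the route the paper takes, following \cite[Proposition~6.7]{AuscherEgert}; the final $\eps$ is then chosen so that $[2_*-\eps,2^*+\eps]\subseteq(p_-(H),p_+(H))=\mathcal{J}(H)$ and $2+\eps\leq q_+(H)$.
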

    The critical numbers can be effectively estimated in some simple cases that we list in the following remark.
\begin{rem}\label{remark about the value of critical numbers in some particular cases}
     Let $n\geq 3$, $q\geq \frac{n}{2}$, $V\in\textup{RH}^{q}(\Rn)$, and let $\delta=2-\frac{n}{q}$. It follows from \cite[Theorem 1.5]{BJL_T1_Schrodinger} and the estimate \eqref{uniform boundedness heat semigroup schrodinger on Lp for all p} that $p_{-}(H_{0})\leq \frac{n}{n+1\wedge\delta}$ and that $p_{+}(H_{0})=\infty$. In addition, it follows from Lemma~\ref{converse implication for boundedness of riesz transform and critical numbers} below and Theorems 0.5 and 5.10 in \cite{Shen_Schrodinger} that $q_{+}(H_{0})\geq 2q$. 
     
     More generally, if $n\geq 1$, $V\in\El{1}_{\loc}(\Rn)$ is non-negative, $A_{\perp\perp}=a=1$ and $A_{\parallel\parallel}$ is real, symmetric, bounded and strongly elliptic, then $H$ generates a heat semigroup $\set{e^{-tH}}_{t>0}$ satisfying Gaussian bounds. Indeed, the argument of the proof of Lemma~\ref{properties of schrodinger semigroup on L2} shows that $\set{e^{-tH}}_{t>0}$ is dominated by $\set{e^{-tL}}_{t>0}$, where $L:=-\dvg_{x}(A_{\parallel\parallel}\nabla_{x})$. The claim then follows from \cite[Corollary 3.2.8]{Davies_Heat_Kernels_Spectral_Theory}. It follows that $p_{-}(H)\leq 1$ and $p_{+}(H)=\infty$.
\end{rem}
\subsection{The set $\mathcal{I}(V)$ and reverse Riesz transform bounds on $\textup{H}^{p}_{V,\textup{pre}}$}\label{subsection on reverse riesz transform bounds on Dziubanski hardy space for no coefficients case}
In order to identify the adapted Hardy spaces $\bb{H}^{1,p}_{H}$ for $p\leq 1$, more specifically for Proposition~\ref{lemma continuous inclusions abstract hardy spaces for p<=1} below, we shall need an additional set of exponents, $\mathcal{I}(V)$, defined as
\begin{align*}
    \mathcal{I}(V):=\set{p\in \left(\frac{n}{n+1},1\right] : \norm{\nabla_{\mu}f}_{\textup{H}^{p}_{V}}\gtrsim \norm{f}_{\dot{\textup{H}}^{1,p}_{V}}
         \textup{ for all }f\in\mathcal{V}^{1,2}(\Rn)}.
\end{align*}
We will see in Lemma~\ref{characterisation of I(V) in terms of boundedness of the adjoint of Riesz transform} that an exponent $p$ belongs to $\mathcal{I}(V)$ provided a reverse Riesz transform bound holds on the Hardy space $\textup{H}^{p}_{V}(\Rn)$ (recall that $\norm{f}_{\dot{\textup{H}}^{1,p}_{V}}:=\norm{H_{0}^{1/2}f}_{\textup{H}^{p}_{V}}$). 

We now let $n\geq 1$ and $V$ satisfy either (i), (ii) or (iii) of Proposition~\ref{density lemma in homogeneous V adapted Sobolev spaces} for $p=2$, so in particular $\mathcal{V}^{1,2}(\Rn)$ is dense in $\dot{\mathcal{V}}^{1,2}(\Rn)$.
Recall that $H_{0}=-\Delta +V$ is the self-adjoint operator on $\Ell{2}$ defined at the start of Section~\ref{section on Hardy spaces adapted to schrodinger operators}. By Kato's second representation theorem (see, e.g., \cite[Theorem 8.1]{OuhabazHeatEquonDomains}), we have $\dom{H_{0}^{1/2}}=\mathcal{V}^{1,2}(\Rn)$ with $\norm{H_{0}^{1/2}f}_{\El{2}}=\norm{\nabla_{\mu}f}_{\El{2}}$ for all $f\in\mathcal{V}^{1,2}(\Rn).$  By density, the operator $H_{0}^{1/2}$ extends to an isomorphism $\dotH : \dot{\mathcal{V}}^{1,2}(\Rn)\to \Ell{2}$ such that 
\begin{equation}\label{kato estimate identity for H_0}
\norm{\dotH f}_{\El{2}}=\norm{\nabla_{\mu}f}_{\El{2}}
\end{equation}
for all $f\in\dot{\mathcal{V}}^{1,2}(\Rn)$.
Indeed, \eqref{kato estimate identity for H_0} shows that $\dotH$ is injective and has closed range. By functional calculus, we have $\ran{H_{0}}\subseteq \ran{H_{0}^{1/2}}\subseteq \ran{\dotH}\subseteq \Ell{2}$. Since $H_{0}$ is self-adjoint, we have $\clos{\ran{H_{0}}}=\Ell{2}$. It follows that $\dotH$ is surjective, whence an isomorphism.
The inverse of this isomorphism shall be denoted $\invH : \Ell{2}\to\dot{\mathcal{V}}^{1,2}(\Rn)$.

The Riesz transform $\mathcal{R}_{0}:\Ell{2}\to \El{2}(\Rn;\C^{n+1})$ is the bounded operator $\mathcal{R}_{0}:=\!\!\nabla_{\mu}(\dotH)^{-1}$. Let us note that $\mathcal{R}_{0}=\nabla_{\mu}H_{0}^{-1/2}$ on $\ran{H_{0}^{1/2}}$. It follows from \eqref{kato estimate identity for H_0} that $\mathcal{R}_{0}$ is an isometry, i.e. it satisfies $\norm{\mathcal{R}_{0}f}_{\El{2}}^{2}=\norm{f}_{\El{2}}^{2}$ for all $f\in\Ell{2}.$ By the polarisation identity, it follows that the adjoint of $\mathcal{R}_{0}$ is a bounded operator $\mathcal{R}_{0}^{*}:\El{2}(\Rn ;\C^{n+1})\to\Ell{2}$ such that $\mathcal{R}_{0}^{*}\mathcal{R}_{0}=\textup{Id}_{\Ell{2}}$. 
We have the following elementary observation.
\begin{lem}\label{characterisation of I(V) in terms of boundedness of the adjoint of Riesz transform}
    Let $p\in \left(\frac{n}{n+1},1\right]$. The exponent $p$ belongs to $\mathcal{I}(V)$ if and only if $\norm{\mathcal{R}_{0}^{*}g}_{\textup{H}^{p}_{V}}\lesssim \norm{g}_{\textup{H}^{p}_{V}}$ for all $g\in(\textup{H}^{p}_{V,\textup{pre}})^{n+1}\cap\ran{\nabla_{\mu}}$.
\end{lem}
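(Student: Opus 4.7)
The entire equivalence rests on the identity
\[
\mathcal{R}_{0}^{*}\,\nabla_{\mu}f \;=\; H_{0}^{1/2}f
\qquad\text{for all }f\in\mathcal{V}^{1,2}(\Rn),
\]
which I would verify first. Since $\dotH$ is the unique extension of $H_{0}^{1/2}$ from $\mathcal{V}^{1,2}(\Rn)$ to $\dot{\mathcal{V}}^{1,2}(\Rn)$ and $\mathcal{R}_{0}=\nabla_{\mu}(\dotH)^{-1}$, we have $\mathcal{R}_{0}(H_{0}^{1/2}f)=\nabla_{\mu}f$ for $f\in\mathcal{V}^{1,2}(\Rn)$. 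Composing on the left with $\mathcal{R}_{0}^{*}$ and using the isometry identity $\mathcal{R}_{0}^{*}\mathcal{R}_{0}=\textup{Id}_{\El{2}}$ (recorded just before the lemma) yields the displayed equation. Note also that if $V\not\equiv 0$, then $\nabla_{\mu}:\mathcal{V}^{1,2}(\Rn)\to\El{2}(\Rn;\C^{n+1})$ is injective, because $\nabla_{\mu}f=0$ forces $f$ to be constant and simultaneously $V^{1/2}f=0$; the standing assumption at the start of Section~\ref{subsection on reverse riesz transform bounds on Dziubanski hardy space for no coefficients case} guarantees this. Consequently, every $g\in\ran{\nabla_{\mu}}$ has a unique representation $g=\nabla_{\mu}f$ with $f\in\mathcal{V}^{1,2}(\Rn)$.

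For the forward implication, I would suppose $p\in\mathcal{I}(V)$ and fix $g\in(\textup{H}^{p}_{V,\textup{pre}})^{n+1}\cap\ran{\nabla_{\mu}}$. Writing $g=\nabla_{\mu}f$ with $f\in\mathcal{V}^{1,2}(\Rn)$, the key identity gives $\mathcal{R}_{0}^{*}g=H_{0}^{1/2}f$, so
\[
\norm{\mathcal{R}_{0}^{*}g}_{\textup{H}^{p}_{V}}
=\norm{H_{0}^{1/2}f}_{\textup{H}^{p}_{V}}
=\norm{f}_{\dot{\textup{H}}^{1,p}_{V}}
\lesssim \norm{\nabla_{\mu}f}_{\textup{H}^{p}_{V}}
=\norm{g}_{\textup{H}^{p}_{V}},
\]
where the inequality is precisely the hypothesis $p\in\mathcal{I}(V)$ applied to $f$ (each of whose $\nabla_{\mu}$-components is, by construction, in $\textup{H}^{p}_{V,\textup{pre}}$).

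For the converse, I would assume the $\mathcal{R}_{0}^{*}$-bound and take an arbitrary $f\in\mathcal{V}^{1,2}(\Rn)$. If $\nabla_{\mu}f\notin(\textup{H}^{p}_{V,\textup{pre}})^{n+1}$, the inequality defining $\mathcal{I}(V)$ holds vacuously; otherwise $g:=\nabla_{\mu}f$ belongs to $(\textup{H}^{p}_{V,\textup{pre}})^{n+1}\cap\ran{\nabla_{\mu}}$, and the identity together with the hypothesis gives
\[
\norm{H_{0}^{1/2}f}_{\textup{H}^{p}_{V}}
=\norm{\mathcal{R}_{0}^{*}g}_{\textup{H}^{p}_{V}}
\lesssim\norm{g}_{\textup{H}^{p}_{V}}
=\norm{\nabla_{\mu}f}_{\textup{H}^{p}_{V}},
\]
proving $p\in\mathcal{I}(V)$. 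There is no substantive obstacle here; the argument is a direct unpacking of definitions once the identity $\mathcal{R}_{0}^{*}\nabla_{\mu}=H_{0}^{1/2}$ on $\mathcal{V}^{1,2}(\Rn)$ is made explicit.
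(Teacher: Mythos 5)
Your proof is correct and follows essentially the same route as the paper's: both directions reduce to the identity $\mathcal{R}_{0}^{*}\nabla_{\mu}f=\mathcal{R}_{0}^{*}\mathcal{R}_{0}(H_{0}^{1/2}f)=H_{0}^{1/2}f$ for $f\in\mathcal{V}^{1,2}(\Rn)$, after which the equivalence is a direct unpacking of the definitions. The added remark on injectivity of $\nabla_{\mu}$ is harmless but not needed, since it suffices to pick any $f$ with $g=\nabla_{\mu}f$.
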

\begin{proof}
    Let $p\in(\frac{n}{n+1},1]$ be such that $\norm{\mathcal{R}_{0}^{*}g}_{\textup{H}^{p}_{V}}\lesssim \norm{g}_{\textup{H}^{p}_{V}}$ for all $g\in(\textup{H}^{p}_{V,\textup{pre}})^{n+1}\cap\ran{\nabla_{\mu}}$. Then, if $f\in\mathcal{V}^{1,2}(\Rn)$ is such that $\nabla_{\mu}f\in(\textup{H}^{p}_{V,\textup{pre}})^{n+1}$, we note that $\mathcal{R}_{0}(H_{0}^{1/2}f)=\nabla_{\mu}f$, and therefore
    \begin{align*}
        \norm{H_{0}^{1/2}f}_{\textup{H}^{p}_{V}}=\norm{\mathcal{R}_{0}^{*}\mathcal{R}_{0}(H_{0}^{1/2}f)}_{\textup{H}^{p}_{V}}\lesssim \norm{\mathcal{R}_{0}(H_{0}^{1/2}f)}_{\textup{H}^{p}_{V}}=\norm{\nabla_{\mu}f}_{\textup{H}^{p}_{V}}.
    \end{align*}
    Conversely, if $p\in\mathcal{I}(V)$, then for $g\in(\textup{H}^{p}_{V,\textup{pre}})^{n+1}\cap\ran{\nabla_{\mu}}$ we write $g=\nabla_{\mu}f$ for $f\in\mathcal{V}^{1,2}(\Rn)$ and obtain
    \begin{align*}
\norm{\mathcal{R}_{0}^{*}g}_{\textup{H}^{p}_{V}}=\norm{\mathcal{R}_{0}^{*}(\nabla_{\mu}f)}_{\textup{H}^{p}_{V}}=\norm{\mathcal{R}_{0}^{*}\mathcal{R}_{0}(H_{0}^{1/2}f)}_{\textup{H}^{p}_{V}}=\norm{H_{0}^{1/2}f}_{\textup{H}^{p}_{V}}\lesssim \norm{\nabla_{\mu}f}_{\textup{H}^{p}_{V}}=\norm{g}_{\textup{H}^{p}_{V}}. 
\hspace{1cm}\qedhere
    \end{align*}
\end{proof}
The following remarks indicate the importance of the set $\mathcal{I}(V)$.
\begin{rem}
    Let $n\geq 3$, $q>\frac{n}{2}$, $V\in\textup{RH}^{q}(\Rn)$, and $\delta=2-\frac{n}{q}$. It follows from \cite[Theorems 1.6 and 1.7]{BJL_T1_Schrodinger} and Corollary~\ref{thm continuous inclusion of classical hardy into Dziubanski hardy space} that the Riesz transform $\mathcal{R}_{0}$ is $\textup{H}^{p}_{V}$-bounded for all $p\in(\frac{n}{n+\frac{\delta}{2}},1]$. So, if $p\in\mathcal{I}(V)\cap (\frac{n}{n+\frac{\delta}{2}},1]$, there is an equivalence $\norm{H_{0}^{1/2}f}_{\textup{H}^{p}_{V}}\eqsim \norm{\nabla_{\mu}f}_{\textup{H}^{p}_{V}}$
    for all $f\in\mathcal{V}^{1,2}(\Rn)$.
\end{rem}
\begin{rem}
    Let $n\geq 1$ and $V\in\El{1}_{\loc}(\Rn)$ be non-negative. If $f\in\mathcal{V}^{1,2}(\Rn)$ and $\nabla_{\mu}f\in\left(\textup{H}^{p}_{V,\textup{pre}}\right)^{n+1}$ for some $p\in\mathcal{I}(V)$, then the atomic decomposition of Theorem~\ref{thm atomic decomposition of hardy sobolev spaces} can be applied to obtain some coefficients $\set{\lambda_{k}}_{k}\subset\C$ such that $\nabla_{\mu}f=\sum_{k}\lambda_{k}\nabla_{\mu}m_k$ in $\El{2}$, where the $\set{m_k}_{k}$ are $\El{2}$-atoms for $\dot{\textup{H}}^{1,p}_{V,\textup{pre}}(\Rn)$ in the sense of Definition \ref{definition of L2 atoms for hardy sobolev spaces schrodinger}, and $\norm{\set{\lambda_{k}}_{k}}_{\ell^{p}}\lesssim \norm{\nabla_{\mu}f}_{\textup{H}^{p}_{V}}$. 
\end{rem}
The main result of this section is the following boundedness criterion for $\mathcal{R}_{0}^{*}$. The class $S^{\alpha}$ is a “smoothness" class for potentials $V\in\textup{RH}^{\frac{n}{2}}(\Rn)$ which is introduced at the start of Section~\ref{subsection reverse riesz trsf bounds hardy 2} below, and shown to contain a non trivial class of potentials $V$ in Section~\ref{subsubsection on the S alpha smoothness class of potentials}. 
\begin{thm}\label{full boundedness criteria for adjoint riesz transforms on hardy dziubanski spaces}
    Let $n\geq 3$, $q>n$, $\alpha\in(0,1]$ and $V\in\textup{RH}^{q}(\Rn)\cap S^{\alpha}$. The adjoint $\mathcal{R}_{0}^{*}$ is $\textup{H}^{p}_{V}$-bounded for all $p\in(\frac{n}{n+\alpha\wedge \sigma},1]$, where $\sigma=1-\frac{n}{q}>0$. It follows that $\left(\frac{n}{n+\alpha\wedge\sigma} ,1\right] \subseteq \mathcal{I}(V)$.
\end{thm}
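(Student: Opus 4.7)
The plan is to prove $\textup{H}^{p}_{V}$-boundedness of $\mathcal{R}_{0}^{*}$ by decomposing it componentwise as $\mathcal{R}_{0}^{*}g=-\sum_{j=1}^{n}T_{j}(g_{j})+S(g_{n+1})$ with the scalar operators
\[
T_{j}:=H_{0}^{-1/2}\partial_{j}\quad(j=1,\ldots,n)\qquad\text{and}\qquad S:=H_{0}^{-1/2}V^{1/2},
\]
and then proving the uniform $\textup{H}^{p}_{V}$-boundedness of each $T_{j}$ and $S$. Once this is done, the conclusion $\left(\frac{n}{n+\alpha\wedge\sigma},1\right]\subseteq\mathcal{I}(V)$ follows immediately from Lemma~\ref{characterisation of I(V) in terms of boundedness of the adjoint of Riesz transform}.

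To handle $T_{j}$ and $S$ individually, I would invoke the abstract atomic decomposition of Theorem~\ref{abstract atomic decomposition DKKP} (together with Remark~\ref{remark about estimate atoms implies HpV estimate}), which reduces the problem to proving uniform bounds $\|T_{j}(a)\|_{\textup{H}^{p}_{V}}\lesssim 1$ and $\|S(a)\|_{\textup{H}^{p}_{V}}\lesssim 1$ for every abstract $\textup{H}^{p}_{V}$-atom $a=H_{0}^{m}b$ associated with a ball $B=B(x_{B},r_{B})$, where $\textup{supp}(b)\subseteq B$ and $\|H_{0}^{k}b\|_{\El{\infty}}\leq r_{B}^{2(m-k)}|B|^{-1/p}$ for $k=0,\ldots,m$. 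The strategy is to show that each of $T_{j}(a)$ and $S(a)$ is, up to a universal multiplicative constant, a $(p,\varepsilon)$-molecule in the sense of Definition~\ref{molecule Dziubanski hardy space} associated to $B$, and then appeal to Theorem~\ref{molecules are in Dziubanski hardy space with uniform bound}. This is compatible with the range, since $q>n$ gives $\delta=2-\frac{n}{q}>1>\sigma$, so $p>\frac{n}{n+\alpha\wedge\sigma}\geq\frac{n}{n+\delta/2}$.

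To verify the molecular $L^{2}$ annular bounds, I would use the subordination representation
\[
H_{0}^{-1/2}=c_{0}\int_{0}^{\infty}e^{-sH_{0}}\,\frac{\dd s}{s^{1/2}},
\]
split the $s$-integral at the scale $s\sim (2^{k}r_{B})^{2}$, and combine the Hölder/Gaussian kernel bounds from Lemma~\ref{improved gaussian bound on heat kernel of schrodinger operator} (yielding decay of rate $\sigma=1-\frac{n}{q}$ from the $\textup{RH}^{q}$ regularity of $V$) with the atomic size estimates on $H_{0}^{k}b$. In this way, writing $T_{j}a=c_{0}\int_{0}^{\infty}\partial_{j}(e^{-sH_{0}}a)\,s^{-1/2}\dd s$ (and similarly for $Sa$), one obtains annular decay with exponent controlled by $\sigma$, which is compatible with Definition~\ref{molecule Dziubanski hardy space}(i).

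The main obstacle will be the molecular cancellation condition~(ii): I need to show
\[
\Big|\int_{\Rn}T_{j}(a)\,\dd x\Big|\lesssim |B|^{1-\frac{1}{p}}\min\!\Big\{1,\big(r_{B}^{2}\dashint_{B}V\big)^{1/2}\Big\},
\]
and likewise for $Sa$. Formally $\int\partial_{j}(H_{0}^{-1/2}a)=0$, but $H_{0}^{-1/2}a$ does not decay rapidly enough at infinity to justify this directly; it is precisely this quantitative defect that the smoothness hypothesis $V\in S^{\alpha}$ is designed to control. I expect the proof to proceed by pairing $T_{j}(a)$ against a smooth truncation of the constant $1$ (playing the role of a test element of $\Lambda^{\alpha}_{V}$) and exploiting a commutator-type identity to trade the derivative $\partial_{j}$ against the $\alpha$-Hölder regularity built into $S^{\alpha}$, giving cancellation of order $\alpha$. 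The $\min\{1,(r_{B}^{2}\dashint_{B}V)^{1/2}\}$ factor then arises from the standard dichotomy between $r_{B}\leq\rho(x_{B})$ and $r_{B}>\rho(x_{B})$, with the improvement for small balls coming from the Fefferman--Phong inequality (Proposition~\ref{improved Fefferman--Phong inequality}). The final exponent $\alpha\wedge\sigma$ reflects the competition between heat-kernel Hölder regularity (which dictates the size bounds) and the $S^{\alpha}$-smoothness of $V$ (which dictates the cancellation bound), together forcing $p>\frac{n}{n+\alpha\wedge\sigma}$ for Theorem~\ref{molecules are in Dziubanski hardy space with uniform bound} to apply.
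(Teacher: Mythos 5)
Your overall architecture --- splitting $\mathcal{R}_{0}^{*}$ componentwise into a gradient part and a $V^{1/2}$ part, reducing to atoms, verifying a molecular criterion, and closing with Lemma~\ref{characterisation of I(V) in terms of boundedness of the adjoint of Riesz transform} --- matches the paper's, but the molecular target you choose makes the plan break down at the cancellation step. You propose to show that $T_{j}a$ and $Sa$ are $(p,\eps)$-molecules in the sense of Definition~\ref{molecule Dziubanski hardy space}, whose condition (ii) demands $\abs{\int_{\Rn} m}\leq\meas{Q}^{1-1/p}\min\{1,(\ell(Q)^{2}\dashint_{Q}V)^{1/2}\}$. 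For sub-critical balls this forces a gain of order $(\ell(Q)^{2}\dashint_{Q}V)^{1/2}\lesssim(r_{B}/\rho(x_{B}))^{\delta/2}$ (and possibly much smaller, since \eqref{Shen eqn control of integral of potential by the critical radius function} is only an upper bound), whereas the mechanisms you invoke --- the $S^{\alpha}$ oscillation bound on $V^{1/2}$ and the H\"older regularity of order $\sigma$ of the relevant kernels --- can only produce a gain of order $(r_{B}/\rho(x_{B}))^{\alpha\wedge\sigma}$, and $\alpha\wedge\sigma\leq\sigma=1-\tfrac{n}{q}<1-\tfrac{n}{2q}=\delta/2$. If your cancellation bound were attainable, Theorem~\ref{molecules are in Dziubanski hardy space with uniform bound} would already give boundedness for all $p>\frac{n}{n+\delta/2}$ and the exponent $\alpha\wedge\sigma$ would not appear in the statement; its presence signals that the correct molecule class here is the one from \cite[Section~3.2]{BJL_T1_Schrodinger} --- $(p,2,\beta,\alpha\wedge\sigma)$-molecules attached to balls with $r_{B}\leq\rho(x_{B})$, whose cancellation condition carries exactly the factor $(r_{B}/\rho(x_{B}))^{\alpha\wedge\sigma}$ --- and the correct starting atoms are the $\El{\infty}$-atoms of Definition~\ref{definition Dziubanski atoms for V} (via Theorem~\ref{Dziubanski atomic decomposition L^2 local bounded with cpct support} and Remark~\ref{remark about estimate atoms implies HpV estimate}), which come with the critical-radius restriction and conditional cancellation built in. Starting from the abstract atoms $a=H_{0}^{m}b$ with no restriction on $r_{B}$ discards precisely the structure that makes the cancellation estimate tractable.

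Two further points. First, for the gradient components the subordination formula together with Lemma~\ref{improved gaussian bound on heat kernel of schrodinger operator} is not enough: that lemma only gives H\"older continuity of order $\sigma<1$ of the heat kernel, which does not allow you to differentiate the kernel of $H_{0}^{-1/2}$; you need the Calder\'on--Zygmund kernel bounds \eqref{cz estimates for kernel of riesz tranforms} for $\nabla H_{0}^{-1/2}$ from Shen, valid for $q\geq n$. In fact the paper treats $\mathcal{R}_{\parallel,k}^{*}$ by a $GCZO$ plus $T1$-type criterion from \cite{BJL_T1_Schrodinger} rather than a molecular argument, and this component requires no $S^{\alpha}$ hypothesis at all; the class $S^{\alpha}$ enters only for $\mathcal{R}_{\mu}^{*}=\invH V^{1/2}$, where the splitting $V^{1/2}=(V^{1/2}-(V^{1/2})_{B})+(V^{1/2})_{B}$, the pointwise bound on $\invH 1$, and its oscillation estimate carry the cancellation. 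Your ``commutator-type identity'' for $\int_{\Rn} T_{j}a$ is the step that would need to be made precise, and it is exactly where this machinery is required.
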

This theorem will be proved in two independent steps, namely Propositions \ref{thm boundedness adjoint Riesz transform 1} and \ref{thm boundedness adjoint Riesz transform 2} below, combined with Lemma~\ref{characterisation of I(V) in terms of boundedness of the adjoint of Riesz transform} to obtain the conclusion. The Riesz transform $\mathcal{R}_{0}$ can be decomposed into its components as $\mathcal{R}_{0}f:=\begin{bmatrix}
        \mathcal{R}_{\parallel}f\\
        \mathcal{R}_{\mu}f
    \end{bmatrix}$ where the operators $\mathcal{R}_{\parallel}:\Ell{2}\to\El{2}(\Rn;\C^{n})$ and $\mathcal{R}_{\mu}\in\mathcal{L}(\Ell{2})$ are both bounded. We can also further decompose the operator $\mathcal{R}_{\parallel}$ as $\mathcal{R}_{\parallel}=\left(\mathcal{R}_{\parallel,k}\right)_{k=1}^{n}$.
This implies that 
\begin{align*}
    \mathcal{R}_{0}^{*}\begin{bmatrix}
        f\\
        g
    \end{bmatrix}&=\mathcal{R}_{\parallel}^{*}f + \mathcal{R}_{\mu}^{*}g=\sum_{k=1}^{n}\mathcal{R}_{\parallel,k}^{*}f_k + \mathcal{R}_{\mu}^{*}g
\end{align*}
for all $f=\left(f_k\right)_{k=1}^{n}\in\El{2}(\Rn;\C^{n})$ and $g\in\Ell{2}$. For $p\in (\frac{n}{n+1},1]$, the boundedness of the adjoint $\mathcal{R}_{\parallel}^{*}$ on the Hardy space $\textup{H}^{p}_{V}$ is investigated in Section~\ref{subsection reverse riesz trsf bounds hardy 1}, while that of $\mathcal{R}_{\mu}^{*}$ is considered in Section~\ref{subsection reverse riesz trsf bounds hardy 2}.
\subsubsection{Boundedness of $\mathcal{R}_{\parallel}^{*}$ on $\textup{H}^{p}_{V,\textup{pre}}$}\label{subsection reverse riesz trsf bounds hardy 1}
In this section, we let $n\geq 3$, $q>n$ and $V\in\textup{RH}^{q}(\Rn)$. It follows from \cite[Theorem 0.8]{Shen_Schrodinger} that the operators $\mathcal{R}_{\parallel,k}$, and therefore $\mathcal{R}_{\parallel,k}^{*}$, are Calderón--Zygmund operators. This means that there are locally integrable kernels $K_{k}\in\El{1}_{\loc}((\Rn\times\Rn)\setminus \set{(x,y): x=y\in\Rn})$ such that 
\begin{align*}
    (\mathcal{R}_{\parallel,k}f)(x)=\int_{\Rn}K_{k}(x,y)f(y)\dd y\hspace{1cm}\textup{ for a.e. }x\in\Rn\setminus\supp{f}
\end{align*}
for all $f\in\El{\infty}_{c}(\Rn)$. The same representation also holds for the adjoints $\mathcal{R}_{\parallel,k}^{*}$ with the associated kernel $K_{k}^{*}(x,y):=K_{k}(y,x)$. The kernels satisfy the usual Calderón--Zygmund estimates, meaning that there are $C\geq 0$ and $\sigma>0$ such that 
\begin{equation}\label{cz estimates for kernel of riesz tranforms}
\left\{\begin{array}{l}
     \abs{K_{k}(x,y)}\lesssim{\abs{x-y}^{n}},  \\
     \abs{K_{k}(x+h,y)-K_{k}(x,y)}\lesssim \frac{\abs{h}^{\sigma}}{\abs{x-y}^{n+\sigma}},\\
     \abs{K_{k}(x,y+h)-K_{k}(x,y)}\lesssim \frac{\abs{h}^{\sigma}}{\abs{x-y}^{n+\sigma}},
\end{array}\right.
\end{equation}
for all $x,y,h\in\Rn$, $x\neq y$ and $\abs{h}<\frac{\abs{x-y}}{2}$, where $\sigma=1-\frac{n}{q}>0$.
To prove $\textup{H}^{p}_{V}$-boundedness of $\mathcal{R}_{\parallel, k}^{*}$, we shall apply the boundedness criterion \cite[Theorem 1.2]{BJL_T1_Schrodinger}. This criterion is concerned with generalised $H_{0}$-adapted Calderón--Zygmund operators, defined in \cite[Definition 1.1]{BJL_T1_Schrodinger}.

\begin{prop}\label{thm boundedness adjoint Riesz transform 1}
    Let $n\geq 3$, $q>n$, $V\in\textup{RH}^{q}(\Rn)$, and let $\sigma=1-\frac{n}{q}>0$. Then $\mathcal{R}_{\parallel,k}^{*}$, $k\in\set{1,\ldots,n}$, are $\textup{H}^{p}_{V}$-bounded for $p\in(\frac{n}{n+\sigma}, 1]$.
\end{prop}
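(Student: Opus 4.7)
My plan is to invoke the T1-type boundedness criterion \cite[Theorem 1.2]{BJL_T1_Schrodinger} for generalised $H_{0}$-adapted Calderón--Zygmund operators. Since the paper announces this as the method, the bulk of the proof should consist of checking the hypotheses of that criterion for each operator $\mathcal{R}_{\parallel,k}^{*}$.

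First I would recast the operator in a form suited to the criterion. The adjoint $\mathcal{R}_{\parallel,k}^{*}$ is bounded on $\El{2}$ (being the adjoint of a bounded operator), and by Shen's $\textup{CZ}$ theorem applied to $\mathcal{R}_{\parallel,k}$, its kernel $K_{k}^{*}(x,y)=K_{k}(y,x)$ inherits the estimates \eqref{cz estimates for kernel of riesz tranforms} with smoothness exponent $\sigma=1-\frac{n}{q}>0$. I would then verify the structural hypotheses of \cite[Definition 1.1]{BJL_T1_Schrodinger}: namely the pointwise size and regularity bounds on $K_{k}^{*}$, together with whatever off-diagonal / semigroup decay conditions are imposed, which should follow from the CZ estimates combined with standard resolvent bounds for $H_{0}$ (e.g.\ the Gaussian bounds from Lemma~\ref{improved gaussian bound on heat kernel of schrodinger operator} and the Fefferman--Phong inequality of Proposition~\ref{improved Fefferman--Phong inequality}).

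The key step, and the part I expect to be the main obstacle, is the cancellation condition (the T1-type hypothesis) required by \cite[Theorem 1.2]{BJL_T1_Schrodinger}. Formally, writing $\mathcal{R}_{\parallel,k}^{*}=-H_{0}^{-1/2}\partial_{k}$ shows that $\mathcal{R}_{\parallel,k}^{*}(1)=0$, but since $1\notin\El{2}$ this identity has to be interpreted weakly, by testing against a dense class of atoms or molecules with appropriate cancellation. I would establish this by using the reproducing formula for the injective sectorial operator $H_{0}$, together with the off-diagonal estimates for the resolvents $(1+t^{2}H_{0})^{-1}$ to control tail contributions and justify the integration by parts that transfers the derivative $\partial_{k}$ onto the test function. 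Proposition~\ref{continuous embedding V adapted Hardy sobolev space in Ell^{p*}} and the atomic decomposition of Theorem~\ref{thm atomic decomposition of hardy sobolev spaces} should provide the right class of test objects.

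Once the T1 condition is in place, the criterion yields $\mathcal{R}_{\parallel,k}^{*}a\in\textup{H}^{p}_{V,\textup{pre}}(\Rn)$ with a uniform bound for each $\El{\infty}$-atom $a$ for $\textup{H}^{p}_{V}$ (in the sense of Definition~\ref{definition Dziubanski atoms for V}), in the range $p\in(\frac{n}{n+\sigma},1]$ determined by the CZ smoothness. The conclusion then follows by the atomic decomposition of Theorem~\ref{Dziubanski atomic decomposition L^2 local bounded with cpct support} combined with Remark~\ref{remark about estimate atoms implies HpV estimate}, which promotes the uniform atomic bound to $\textup{H}^{p}_{V,\textup{pre}}$-boundedness of the full operator.
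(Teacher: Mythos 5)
Your overall strategy coincides with the paper's: both proofs run through the $T1$-type criterion of \cite[Theorem 1.2]{BJL_T1_Schrodinger} for operators in the class $GCZO_{\rho}(\sigma,2)$, and both finish by combining the uniform atomic bound with Theorems~\ref{Dziubanski atomic decomposition L^2 local bounded with cpct support} and \ref{abstract atomic decomposition DKKP}. However, there is a genuine conceptual error in the step you yourself flag as the main one. For $T:=\mathcal{R}_{\parallel,k}^{*}$ to map $\textup{H}^{p}_{V}$ atoms to objects with the required cancellation, the relevant quantity is $\int_{\Rn}Ta=\langle a,T^{*}1\rangle=\langle a,\mathcal{R}_{\parallel,k}1\rangle$; the criterion of \cite[Theorem 1.2]{BJL_T1_Schrodinger} is therefore a condition on $\mathcal{R}_{\parallel,k}1=\partial_{k}H_{0}^{-1/2}(1)$, not on $T1=\mathcal{R}_{\parallel,k}^{*}(1)$. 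The identity $\mathcal{R}_{\parallel,k}^{*}(1)=-H_{0}^{-1/2}\partial_{k}(1)=0$ that you propose to make rigorous is the cancellation condition for the \emph{forward} Riesz transform (it controls $\langle T^{*}b,1\rangle$, not $\langle Ta,1\rangle$), and it is vacuous here. The object that actually has to be estimated, $\mathcal{R}_{\parallel,k}1$, does \emph{not} vanish — $H_{0}^{-1/2}(1)$ behaves like the critical radius function $\rho(\cdot)$ — and the verification is a non-trivial quantitative bound, which the paper imports from the proof of \cite[Proposition 4.12]{Ma_etAl}. Your plan would replace the real work by a trivially true statement and leave the molecular cancellation of $\mathcal{R}_{\parallel,k}^{*}a$ unproved.

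A secondary gap: condition (i) of \cite[Definition 1.1]{BJL_T1_Schrodinger} demands decay of the form $(\rho(x_{B})/R)^{N}R^{-n/2}$ for the $\El{2}$ norm of $K_{k}^{*}(\cdot,y)$ on annuli around balls of critical radius. The plain Calderón--Zygmund size bound $\abs{K_{k}(x,y)}\lesssim\abs{x-y}^{-n}$ in \eqref{cz estimates for kernel of riesz tranforms}, which is all you invoke, gives no decay in $\rho(x_{B})/R$ at all. One needs the refined estimate $\abs{K_{k}(x,y)}\leq C_{N}(1+\abs{x-y}m(x))^{-N}\abs{x-y}^{-n}$ from the proof of \cite[Theorem 0.8]{Shen_Schrodinger}, which encodes the extra damping coming from the potential at scales beyond the critical radius. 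Without it, condition (i) cannot be verified and the argument does not close.
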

\begin{proof}
Let $k\in\set{1,\cdots, n}$ be fixed. We claim that $\mathcal{R}_{\parallel,k}^{*}\in GCZO_{\rho}(\sigma,2)$ in the sense of \cite[Definition 1.1]{BJL_T1_Schrodinger}. We first check that condition (i) of \cite[Definition 1.1]{BJL_T1_Schrodinger} is satisfied. We have that for each $N>0$, there exists $C_{N}>0$ such that
\begin{align}\label{stronger estimate on the kernel of riesz transform for H_0 Shen}
    \abs{K_{k}(x,y)}\leq C_{N}(1+\abs{x-y}m(x))^{-N}\abs{x-y}^{-n}
\end{align}
for every $x,y\in\Rn$, $x\neq y$ (see the proof of \cite[Theorem 0.8]{Shen_Schrodinger}). 

Using \eqref{stronger estimate on the kernel of riesz transform for H_0 Shen}, we get that for $N>0$, $x_B\in\Rn$, $y\in B(x_B,\rho(x_B))$ and $R>2\rho(x_B)$, we have $\abs{K_{k}^{*}(x,y)}^{2}\lesssim \abs{x-y}^{-2n}\left(1+\frac{\abs{x-y}}{\rho(y)}\right)^{-2N}$.
Note that for $\abs{x-x_B}>R$ and $y\in B(x_B,\rho(x_B))$ we have $\abs{x_{B}-y}<\frac{\abs{x-x_{B}}}{2}$ and therefore $\abs{x-y}\geq \frac{\abs{x-x_{B}}}{2}>R/2$. Since also $\rho(y)\eqsim\rho(x_B)$ we get $\left(1+\frac{\abs{x-y}}{\rho(y)}\right)^{-2N}\lesssim \left(1+\frac{R}{\rho(x_B)}\right)^{-2N}$.
We therefore obtain
\begin{align*}
    \left(\int_{R<\abs{x-x_B}<2R}\abs{K_{k}^{*}(x,y)}^{2}\dd x\right)^{1/2}&\lesssim \left(\frac{R}{\rho(x_B)}\right)^{-N}\left(\int_{R<\abs{x-x_B}<2R}\abs{x-x_B}^{-2n}\dd x\right)^{1/2}\\
    &\lesssim \left(\frac{\rho(x_B)}{R}\right)^{N}R^{-\frac{n}{2}},
\end{align*}
which shows that condition (i) of \cite[Definition 1.1]{BJL_T1_Schrodinger} is satisfied. Next, we check that $K_{k}^{*}$ satisfies condition (ii) in \cite[Definition 1.1]{BJL_T1_Schrodinger}. If $B=B(x_B,r_B)$, $y\in B$ and $j\geq 1$ is an integer, then we note that for $\abs{x-x_B}>2^{j}r_{B}\geq 2r_B$ we have $\frac{\abs{x-x_{B}}}{2}>r_{B}>\abs{x_B-y}$. We can therefore use \eqref{cz estimates for kernel of riesz tranforms} to get
\begin{align*}
    &\left(\int_{2^{j}r_{B}<\abs{x-x_B}<2^{j+1}r_B} \abs{K^{*}_{k}(x,y)-K^{*}_{k}(x,x_{B})}^{2}\dd x\right)^{1/2}\\
    &\lesssim \left(\int_{2^{j}r_B<\abs{x-x_B}<2^{j+1}r_{B}} \left(\frac{\abs{y-x_B}^{\sigma}}{\abs{x-x_{B}}^{n+\sigma}}\right)^{2}\dd x\right)^{1/2}\lesssim \frac{r_{B}^{\sigma}}{(2^{j}r_{B})^{n+\sigma}}(2^{j}r_{B})^{n/2}\lesssim 2^{-j\sigma}\meas{2^{j}B}^{-1/2},
\end{align*}
as needed. These estimates show that $\mathcal{R}_{\parallel,k}^{*}\in GCZO_{\rho}(\sigma,2)$. The proof of \cite[Proposition 4.12]{Ma_etAl} shows that $\mathcal{R}_{\parallel,k}1$ satisfies the criterion of \cite[Theorem 1.2]{BJL_T1_Schrodinger} (the definition of $\mathcal{R}_{\parallel,k}1$ is made precise therein).
We conclude that for every $\textup{H}^{p}_{V}$-atom $a\in\El{\infty}_{c}(\Rn)$ it holds that  $\norm{\mathcal{R}_{\parallel,k}^{*}a}_{\textup{H}^{p}_{V}}\lesssim 1$. 
It follows from the atomic decompositions of Theorems \ref{Dziubanski atomic decomposition L^2 local bounded with cpct support} and \ref{abstract atomic decomposition DKKP} that $\norm{\mathcal{R}_{\parallel,k}^{*}f}_{\textup{H}^{p}_{V}}\lesssim \norm{f}_{\textup{H}^{p}_{V}}$ for all $f\in\textup{H}^{p}_{V,\textup{pre}}(\Rn)$.
\end{proof}
\subsubsection{Boundedness of $\mathcal{R}_{\mu}^{*}$ on $\textup{H}^{p}_{V,\textup{pre}}$}\label{subsection reverse riesz trsf bounds hardy 2}
We now turn to the $\textup{H}^{p}_{V}$-boundedness of the adjoint $\mathcal{R}_{\mu}^{*}$ for $p\in(\frac{n}{n+1},1]$. We let $n\geq 3$, $q\geq\frac{n}{2}$ and $V\in\textup{RH}^{q}(\Rn)$. We introduce a “smoothness" class for these potentials $V$. For a parameter $\alpha\in (0,1]$, we say that $V\in S^{\alpha}$ if
\begin{equation}\label{S alpha smoothness condition}
    \dashint_{B}\abs{V^{1/2}(x)-(V^{1/2})_{B}}\dd x\lesssim r_{B}^{\alpha}m(x_B,V)^{1+\alpha}
\end{equation}
for all balls $B=B(x_B,r_B)$ such that $r_B\leq \rho(x_B,V)$. 
We will exhibit some potentials that lie in $S^{\alpha}$ for some $\alpha\in(0,1]$ in Section~\ref{subsubsection on the S alpha smoothness class of potentials} below.

We obtain the following result, which is similar to \cite[Theorem 1.8]{BJL_T1_Schrodinger} for $s=\frac{1}{2}$, although the range of indices $p\in (\frac{n}{n+\alpha\wedge \sigma},1]$ obtained here is somewhat different. This is because we avoid their approach based on \cite[Lemma 5.5]{BJL_T1_Schrodinger}, which we could not get to work.
\begin{prop}\label{thm boundedness adjoint Riesz transform 2}
    Let $n\geq 3$, $q>n$, $\alpha\in (0,1]$, $V\in\textup{RH}^{q}(\Rn)\cap S^{\alpha}$ and let $\sigma:=1-\frac{n}{q}$. Then $\mathcal{R}_{\mu}^{*}$ is $\textup{H}^{p}_{V}$-bounded for all $p\in (\frac{n}{n+\alpha\wedge \sigma},1]$.
\end{prop}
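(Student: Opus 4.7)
The strategy is to verify a uniform atomic bound. By Remark \ref{remark about estimate atoms implies HpV estimate}, it suffices to show $\norm{\mathcal{R}_\mu^* a}_{\textup{H}^p_V}\lesssim 1$ for every $\El{\infty}$-atom $a$ for $\textup{H}^p_V(\Rn)$ associated to a ball $B=B(x_B,r_B)$ with $r_B\leq\rho(x_B)$. I would exploit the representation $\mathcal{R}_\mu^{*}=H_0^{-1/2}\circ M_{V^{1/2}}$, where $M_{V^{1/2}}$ is multiplication, together with the subordination formula $H_0^{-1/2}g=\pi^{-1/2}\int_0^\infty e^{-sH_0}g\,s^{-1/2}\dd s$ (valid on $\clos{\ran{H_0^{1/2}}}=\El{2}$). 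This yields, for $t>0$ and $x\in\Rn$,
\[
\abs{(e^{-tH_0}\mathcal{R}_\mu^* a)(x)}\leq \pi^{-1/2}\int_0^\infty\int_B k_{t+s}(x,y)V^{1/2}(y)\abs{a(y)}\dd y\,\frac{\dd s}{\sqrt{s}},
\]
which is the starting point for every subsequent estimate. The local part $\norm{\mathcal{M}_V(\mathcal{R}_\mu^* a)}_{\El{p}(4B)}$ is handled by Hölder's inequality, the $\El{2}$-boundedness of $\mathcal{M}_V$, and the $\El{2}$-boundedness of $\mathcal{R}_\mu^*$ (inherited from $\mathcal{R}_0$ being an isometry), giving a uniform bound exactly as in the local step of the proof of Theorem \ref{molecules are in Dziubanski hardy space with uniform bound}.

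The global part $\norm{\mathcal{M}_V(\mathcal{R}_\mu^* a)}_{\El{p}(\Rn\setminus 4B)}$ is split into two cases according to Definition \ref{definition Dziubanski atoms for V}. In the \emph{cancellation case} $r_B\leq\rho(x_B)/4$, where $\int a=0$, I would write (with $u=t+s$)
\[
\int_B k_u(x,y)V^{1/2}(y)a(y)\dd y = \int_B k_u(x,y)\bigl[V^{1/2}(y)-(V^{1/2})_B\bigr]a(y)\dd y + (V^{1/2})_B\int_B\bigl[k_u(x,y)-k_u(x,x_B)\bigr]a(y)\dd y.
\]
The first term is controlled by combining the $S^\alpha$ condition $\int_B\abs{V^{1/2}-(V^{1/2})_B}\lesssim\meas{B}r_B^{\alpha}m(x_B)^{1+\alpha}$ with the Gaussian estimate of Lemma \ref{improved gaussian bound on heat kernel of schrodinger operator}(i). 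The second term is controlled via the Hölder estimate of Lemma \ref{improved gaussian bound on heat kernel of schrodinger operator}(iii), applied with exponent up to $\sigma$ (any value below $\delta\wedge 1 = 1$ is permissible here since $q>n$ gives $\delta>1$), together with the bound $(V^{1/2})_B\lesssim r_B^{-1}(r_B/\rho(x_B))^{\delta/2}$ obtained from \eqref{Shen eqn control of integral of potential by the critical radius function}. In the \emph{non-cancellation case} $\rho(x_B)/4<r_B\leq\rho(x_B)$, I would instead use the full refined Gaussian bound of Lemma \ref{improved gaussian bound on heat kernel of schrodinger operator}(i) with $N$ chosen large, exploiting $\rho(x_B)\eqsim r_B$; this mirrors the large-ball argument already deployed in the proof of Theorem \ref{molecules are in Dziubanski hardy space with uniform bound}.

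The hardest part, and the one that dictates the range $p\in\bigl(\tfrac{n}{n+\alpha\wedge\sigma},1\bigr]$, is tracking the decay exponents through the Bochner integral in $s$ (against the weight $s^{-1/2}$), taking the supremum over $t>0$, and finally integrating in $x\in\Rn\setminus 4B$ to the power $p$. The first term in the cancellation split produces $\abs{x-x_B}^{-n-\alpha+1}$-type decay after integrating in $s$, which is $\El{p}$-integrable provided $p>\tfrac{n}{n+\alpha}$. The second term produces $\abs{x-x_B}^{-n-\sigma+1}$-type decay; the factor $(V^{1/2})_B$ is absorbed exactly through the telescoping $(r_B/\rho(x_B))^{\delta p/2}\leq 1$, leaving an $\El{p}$ integrability condition $p>\tfrac{n}{n+\sigma}$. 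Thus $\sigma=1-\tfrac{n}{q}$ emerges not from the heat kernel regularity itself but from the constraint that the kernel $K_{\mu^*}(x,y)=\pi^{-1/2}\bigl(\int_0^\infty k_s(x,y)s^{-1/2}\dd s\bigr)V^{1/2}(y)$ of $\mathcal{R}_\mu^*$ inherits only that much Hölder smoothness through its $V^{1/2}$ factor, as controlled by $V\in\textup{RH}^q$ with $q>n$. Combining all cases yields $\norm{\mathcal{M}_V(\mathcal{R}_\mu^* a)}_{\El{p}}\lesssim 1$ uniformly on atoms, which by Remark \ref{remark about estimate atoms implies HpV estimate} concludes the proof.
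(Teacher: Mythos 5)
Your overall strategy (reduce to a uniform bound on atoms via Remark~\ref{remark about estimate atoms implies HpV estimate}, write $\mathcal{R}_\mu^{*}=\invH\circ M_{V^{1/2}}$, and estimate the heat maximal function directly through subordination) is a legitimate alternative to the paper's route, which instead shows that $\invH(\eta_V a)$ and $(V^{1/2})_B\invH a$ are $(p,2,\beta,\alpha\wedge\sigma)$-molecules in the sense of \cite[Section~3.2]{BJL_T1_Schrodinger} and, crucially, proves the global cancellation estimate $\abs{\int_{\Rn}\mathcal{R}_\mu^{*}a}\lesssim\meas{B}^{1-\frac{1}{p}}(r_B/\rho(x_B))^{\alpha\wedge\sigma}$ by duality against $\invH(1)$, using $\abs{\invH 1}\lesssim\rho(x_B)$ on $B$ and the oscillation estimate for $\invH 1$ from \cite{Ma_etAl}. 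However, the quantitative core of your sketch --- the step you yourself say dictates the range --- does not close. Bounding $\abs{\int_B k_u(x,y)[V^{1/2}(y)-(V^{1/2})_B]a(y)\dd y}$ by $\sup_y k_u(x,y)$ times the $S^{\alpha}$ mass and running the $s$-integral yields only $\abs{x-x_B}^{-(n-1)}\meas{B}^{1-\frac{1}{p}}r_B^{\alpha}m(x_B)^{1+\alpha}$, not the $\abs{x-x_B}^{-(n-1)-\alpha}$ decay you claim; and in any case a function decaying like $\abs{x-x_B}^{-(n-1+\alpha)}$ lies in $\El{p}$ at infinity only for $p>\frac{n}{n-1+\alpha}$, not $p>\frac{n}{n+\alpha}$. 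The kernel of $\invH$ only ever supplies $n-1$ powers of spatial decay, which is never $p$-integrable at infinity for $p\leq 1$; what actually saves the estimate is (a) the weighted factor $(1+\sqrt{u}/\rho(y))^{-N}$ in Lemma~\ref{improved gaussian bound on heat kernel of schrodinger operator}(i), which confines the weak decay to the region $\abs{x-x_B}\lesssim\rho(x_B)$, and (b) the smallness prefactor, which after integration over that region produces $(r_B/\rho(x_B))^{p(n+\alpha)-n}\leq 1$ precisely when $p\geq\frac{n}{n+\alpha}$. Neither ingredient appears in your write-up, and without them the far-field and large-time regimes are uncontrolled.

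The same bookkeeping problem affects your second term and the provenance of $\sigma$. With the heat-kernel H\"older exponent taken equal to $\sigma$, as you propose, that term requires $p\geq\frac{n}{n-1+\sigma+\delta/2}$ with $\delta/2=1-\frac{n}{2q}<1$, which is strictly more restrictive than $p>\frac{n}{n+\sigma}$; taking the exponent close to $1$ (permissible since $\delta>1$) repairs this but then produces a threshold involving $\delta/2$ rather than $\sigma$, so your claim that $\sigma$ arises from ``the H\"older smoothness inherited through the $V^{1/2}$ factor'' does not correspond to any estimate you actually perform. In the paper, $\sigma=1-\frac{n}{q}$ enters only through the cancellation bound, as the exponent $\eta+\frac{\delta}{2}-1$ with $\eta=\frac{\delta}{2}$ coming from the oscillation of $\invH 1$ combined with $(V^{1/2})_B\lesssim r_B^{-1}(r_B/\rho(x_B))^{\delta/2}$. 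To make your route work you would need to redo the exponent tracking in all regimes ($u\lessgtr r_B^{2}$, $\abs{x-x_B}\lessgtr\rho(x_B)$, with and without atomic cancellation), invoking the $N$-decay of Lemma~\ref{improved gaussian bound on heat kernel of schrodinger operator}(i) explicitly, and then identify which threshold actually emerges; as written, the stated range is not justified.
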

\begin{proof}
    Since the interval $(\frac{n}{n+\alpha\wedge \sigma},1]$ is open at the left endpoint, we note that the case $q=\infty$ follows from the case $q<\infty$. We can therefore assume that $\delta=2-\frac{n}{q}<2$.
    Let $p\in (\frac{n}{n+\alpha\wedge \sigma},1]$. Since $p\in(\frac{n}{n+\delta/2},1]$, we can choose $\eta_{0},\gamma \in (0,1)$ sufficiently close to $1$ so that $p>\frac{n}{n+\frac{\delta}{2}+(\eta_{0}\gamma -1)}$. Let $a$ be an $\El{\infty}$-atom for $\textup{H}^{p}_{V}$ associated to a ball $B=B(x_B,r_B)\subset\Rn$ with $r_B\leq \rho(x_B)$. We have $\mathcal{R}_{\mu}^{*}a=\invH (V^{1/2}a)$. It follows from \cite[Section~4.7]{Ma_etAl} that $\invH $ is an integral operator with a symmetric kernel $K\in\El{1}_{\loc}(\Rn\times\Rn)$ such that for all $0<\eta<\delta\wedge 1=1$ and $N>0$ there is a $C_{N}\geq 0$ such that 
    \begin{align}\label{generalised calderon zygmund estimates for kernel of H_{0}^{-1/2}}
    \left\{\begin{array}{l}
        \abs{K(x,y)}\leq \frac{C_{N}}{\abs{x-y}^{n-1}}\left(1+\frac{\abs{x-y}}{\rho(x)} +\frac{\abs{x-y}}{\rho(y)}\right)^{-N}\\
         \abs{K(x,y)-K(x,z)}\leq C\frac{\abs{y-z}^{\eta}}{\abs{x-z}^{n-1+\eta}}
    \end{array}\right.
\end{align}
for all $x,y,z\in\Rn$, $x\neq y$ and $\abs{x-y}>2\abs{y-z}$. (Note it seems that $0<\delta<2-\frac{n}{q}$ should be $0<\delta<(2-\frac{n}{q})\wedge 1$ at the top of page 588 in \cite{Ma_etAl}.) These estimates show that $\invH $ is a $1$-Schrödinger–Calderón–Zygmund operator in the sense of \cite[Definition 3.1]{Ma_etAl}. In particular, this allows us to define $\invH (1)\in\El{2}_{\loc}(\Rn)$ with the property that for all $f\in\El{2}_{c}(\Rn)$, it holds that  $\invH f\in\Ell{1}$, and
\begin{equation}\label{duality property of H_0^{-1/2}1}
    \int_{\Rn}\invH f = \langle f , \invH 1 \rangle_{\El{2}}.
\end{equation}

We claim that there is $\beta>0$ such that $\invH V^{1/2}a$ is a $(p,2,\beta,\alpha\wedge \sigma)$-molecule for $H_{0}$ associated to the ball $B$ in the sense of \cite[Section~3.2]{BJL_T1_Schrodinger} (we shall only require $r_{B}\leq \rho(x_B)$ instead of $r_{B}\leq \frac{1}{2}\rho(x_B)$). Note that this is slightly different from the molecules introduced in Definition \ref{molecule Dziubanski hardy space}, because it requires a size condition on the ball associated with the molecule. To this aim, we split 
\begin{align}\label{decomposition of part of adjoint riesz trsf into two terms}
\begin{split}
    \invH V^{1/2}a&=\invH (V^{1/2}-(V^{1/2})_{B})a + (V^{1/2})_{B}\invH a\\
    &=\invH (\eta_{V}a) + (V^{1/2})_{B}\invH a=:\RNum{1}+\RNum{2},
    \end{split}
\end{align}
where we defined $\eta_{V}:=V^{1/2}-(V^{1/2})_{B}\in\El{2}_{\loc}(\Rn)$. We note that the assumption $V\in S^{\alpha}$ implies that $\norm{\eta_{V}}_{\El{1}(B)}\leq r_{B}^{\alpha}m(x_B)^{1+\alpha}\meas{B}$.
Since $r_B\leq \rho(x_B)$, it follows from Hölder's inequality and \eqref{Shen eqn control of integral of potential by the critical radius function} that $\left(V^{1/2}\right)_{B}\leq \left(\dashint_{B}V\right)^{1/2}\lesssim r_{B}^{-1}\left(\frac{r_B}{\rho(x_B)}\right)^{\delta /2}$.
We shall prove that both terms $\RNum{1}$ and $\RNum{2}$ in \eqref{decomposition of part of adjoint riesz trsf into two terms} are $(p,2,\beta,\alpha\wedge \sigma)$-molecules for $H_{0}$ associated to the ball $B$.

We first check the molecular decay (condition (b) in \cite[Definition 3.7]{BJL_T1_Schrodinger}), and start with term $\RNum{2}.$ We need to treat two cases depending on the cancellation property of the atom $a$. We first assume that $\frac{\rho(x_B)}{4}\leq r_{B}\leq \rho(x_B)$. Let $j\geq 2$, and consider $x\in C_{j}(B)$. Then, for $y\in B$, we have $\rho(y)\eqsim \rho(x_B)$ and $\abs{x-y}>\abs{x-x_B}(1-2^{-j})> \frac{\abs{x-x_B}}{2}$. It follows from \eqref{generalised calderon zygmund estimates for kernel of H_{0}^{-1/2}} that for all $N>0$ it holds that  
\begin{align*}
    \abs{(V^{1/2})_{B}(\invH a)(x)}\lesssim(V^{1/2})_{B}\meas{B}^{1-\frac{1}{p}}\abs{x-x_B}^{-(n-1)}\left(1+\frac{\abs{x-x_B}}{\rho(x_B)}\right)^{-N}
\end{align*}
for almost every $x\in\Rn\setminus 4B$. Consequently, for $j\geq 2$ it holds that  
\begin{align}
    \norm{(V^{1/2})_{B}(\invH a)}_{\El{2}(C_{j}(B))}&\lesssim (V^{1/2})_{B}\meas{B}^{1-\frac{1}{p}}\rho(x_B)^{N}\left(\int_{2^{j}r_B\leq \abs{z}<2^{j+1}r_B} \abs{z}^{-2(n-1)-2N}\dd z\right)^{1/2}\nonumber\\
    &\lesssim\left(\frac{r_{B}}{\rho(x_B)}\right)^{\frac{\delta}{2}-N}2^{-j(n-\frac{n}{p}+N-1)}\meas{2^{j}B}^{\frac{1}{2}-\frac{1}{p}}.\label{universal estimate on atoms}
\end{align}
Taking $N=2$, we get the required molecular decay since $n+1-\frac{n}{p}>0$ and $r_{B}\eqsim \rho(x_B)$. Next, if $r_B\leq \frac{\rho(x_B)}{4}$, we have the cancellation $\int_{\Rn} a(x)\dd x=0$. This allows to write
\begin{align*}
    (\invH a)(x)=\int_{B}K(x,y)a(y)\dd y=\int_{B} (K(x,y)-K(x,x_B))a(y)\dd y
\end{align*}
for almost every $x\in \Rn\setminus 4B$. If $x\in C_{j}(B)=2^{j+1}B\setminus 2^{j}B$ for $j\geq 2$ and $y\in B$, then $\abs{x-y}>2\abs{x_B-y}$. Since $\eta_{0}\in(0,1)$, the kernel bounds \eqref{generalised calderon zygmund estimates for kernel of H_{0}^{-1/2}} give
\begin{align*}
    \abs{(\invH a)(x)}&\lesssim \int_{B}\frac{\abs{y-x_B}^{\eta_{0}}}{\abs{x-x_B}^{n-1+\eta_{0}}}\abs{a(y)}\dd y\lesssim r_{B}^{\eta_{0}}\abs{x-x_B}^{-(n-1+\eta_{0})}\meas{B}^{1-\frac{1}{p}}
\end{align*}
for almost every $x\in\Rn\setminus 4B$. Integrating this over the annular region $C_{j}(B)$, we obtain
\begin{align}
    \norm{(V^{1/2})_{B}(\invH a)}_{\El{2}(C_{j}(B))}&\lesssim (V^{1/2})_{B}r_{B}^{\eta_{0}}\meas{B}^{1-\frac{1}{p}}\left(\int_{2^{j}r_B\leq\abs{x-x_B}\leq 2^{j+1}r_B} \!\!\!\!\!\!\!\abs{x-x_B}^{-2(n-1+\eta_{0})}\dd x\right)^{\!\!1/2}\nonumber\\
    &\lesssim \left(\frac{r_B}{\rho(x_B)}\right)^{\frac{\delta}{2}}2^{-j(n-\frac{n}{p}+\eta_{0}-1)}\meas{2^{j}B}^{\frac{1}{2}-\frac{1}{p}}.\label{estimate on atoms with cancellation}
\end{align}
Let us now consider the inequality \eqref{universal estimate on atoms} with $N=\frac{\delta}{2(1-\gamma)}$, and raise it to the power $1-\gamma$, and raise \eqref{estimate on atoms with cancellation} to the power $\gamma$. Multiplying the results, we obtain
\begin{align*}
    \norm{(V^{1/2})_{B}(\invH a)}_{\El{2}(C_{j}(B))}&\lesssim 2^{-j(n-\frac{n}{p}+\frac{\delta}{2}+\gamma\eta_{0}-1)}\meas{2^{j}B}^{\frac{1}{2}-\frac{1}{p}}.
\end{align*}
Since $n-\frac{n}{p}+\frac{\delta}{2}+\gamma\eta_{0}-1>0$ by our choice of $\gamma$ and $\eta_{0}$, this shows that term $\RNum{2}$ satisfies the molecular decay required for a $(p,2,\beta,\alpha\wedge \sigma)$-molecule, for some $\beta>0$.

For term $\RNum{1}$, we simply get that for almost every $x\in\Rn\setminus 4B$ and all $N>0$ it holds that, independently of any cancellation property of $a$,  
\begin{align*}
    \abs{\invH (\eta_{V}a)(x)}&\leq \int_{B}\abs{K(x,y)}\abs{\eta_{V}(y)}\abs{a(y)}\dd y\\
    &\lesssim \meas{B}^{1-1/p}r_{B}^{\alpha}m(x_B)^{1+\alpha}\abs{x-x_B}^{-(n-1)}\left(1+\frac{\abs{x-x_B}}{\rho(x_B)}\right)^{-N}.
\end{align*}
Consequently, we can estimate as above, for all $j\geq 2$,
\begin{align*}
    \norm{\invH (\eta_{V}a)}_{\El{2}(C_{j}(B))}&\lesssim \meas{B}^{1-1/p}r_{B}^{\alpha}m(x_B)^{1+\alpha}\rho(x_B)^{N}(2^{j}r_{B})^{-\frac{n}{2}+1-N}\\
    &\lesssim \left(\frac{r_B}{\rho(x_B)}\right)^{\alpha +1-N}2^{j(1-N-n+\frac{n}{p})}(2^{j}r_B)^{\frac{n}{2}-\frac{n}{p}}\lesssim 2^{-j(n+\alpha-\frac{n}{p})}\meas{2^{j}B}^{\frac{1}{2}-\frac{1}{p}},
\end{align*}
by choosing $N=1+\alpha.$ Since $n+\alpha-\frac{n}{p}>0$ by assumption, this shows that term $\RNum{1}$ satisfies the required molecular decay for a $(p,2,\beta,\alpha\wedge \sigma)$-molecule, for some $\beta>0$. This means that we get the required molecular decay for $\mathcal{R}_{\mu}^{*}a$ on $C_{j}(B)$ for $j\geq 2$. For $j=1$, we can simply use $\El{2}$-boundedness of $\mathcal{R}_{\mu}^{*}$ to get $\norm{\mathcal{R}_{\mu}^{*}a}_{\El{2}(4B)}\lesssim \norm{a}_{\El{2}}\lesssim \meas{B}^{\frac{1}{2}-\frac{1}{p}}$.

It remains to show that $\mathcal{R}_{\mu}^{*}a$ satisfies the global cancellation estimate for a $(p,2,\beta,\alpha\wedge \sigma)$-molecule (condition (c) in \cite[Definition 3.7]{BJL_T1_Schrodinger}). First, using the support property of $a$, an application of \eqref{duality property of H_0^{-1/2}1} yields
\begin{align*}
    \abs{\int_{\Rn}(\invH (\eta_{V}a))(x)\dd x}&=\abs{\langle \eta_{V}a , \invH 1\rangle_{\El{2}}}\leq \norm{\eta_{V}a}_{\El{1}(B)}\norm{\invH 1}_{\El{\infty}(B)}\\
    &\lesssim \meas{B}^{1-\frac{1}{p}}r_{B}^{\alpha}m(x_B)^{1+\alpha}\rho(x_B)=\left(\frac{r_B}{\rho(x_B)}\right)^{\alpha}\meas{B}^{1-\frac{1}{p}}.
\end{align*}
Here, we have used the fact that $\abs{(\invH 1)(x)}\lesssim \rho(x_{B})$ for almost every $x\in B$. This can be seen as follows. For $R:=\rho(x_{B})$, $N>1$ and a.e. $x\in B$, we have
\begin{align*}
    \abs{(\invH 1)(x)}&\leq \int_{B(x_{B},2R)}\abs{K(x,y)}\dd y + \int_{\Rn\setminus B(x_{B},2R)}\abs{K(x,y)}\dd y\\
    &\lesssim \int_{\abs{x-y}<R}\abs{x-y}^{-(n-1)}\dd y + \int_{\abs{x-y}\geq 3R}\abs{x-y}^{-(n-1)}\left(1+\frac{\abs{x-y}}{\rho(x)}\right)^{-N}\dd y\\
    &\lesssim R + \rho(x)^{N}\int_{\abs{x-y}\geq R} \abs{x-y}^{-(n-1)-N}\dd y\lesssim R +\rho(x)^{N}R^{-N+1}\lesssim \rho(x_{B}),
\end{align*}
since $\rho(x)\eqsim \rho(x_B)$ by \eqref{comparability property for critical radius function}. For the remaining term, we use \eqref{duality property of H_0^{-1/2}1} again to get
\begin{align*}
    \abs{\int_{\Rn}\invH a}
    &\leq \abs{\langle a, \invH 1 - (\invH 1)_{B} \rangle_{\El{2}}} + \abs{\langle a, (\invH 1)_{B}\rangle_{\El{2}}}\\
    &\leq \norm{a}_{\El{\infty}(B)}\norm{\invH 1 - (\invH 1)_{B}}_{\El{1}(B)} + \abs{(\invH 1)_{B}}\abs{\int_{\Rn}a}.
\end{align*}
A slight modification of the arguments used in the proof of \cite[Proposition 4.14]{Ma_etAl} (and, by extension, of the arguments of the proof of \cite[Proposition 4.6]{Ma_etAl}) show that for $0<\eta< 1\wedge\delta=1$ it holds that 
\begin{equation*}
    \frac{1}{\meas{B}^{1+\frac{1}{n}}}\int_{B}\abs{\invH (1) -(\invH 1)_{B}}\dd y\lesssim \left(\frac{r_B}{\rho(x_B)}\right)^{\eta -1}.
\end{equation*}
We point out that there seems to be an inaccuracy in the proof of this estimate in \cite{Ma_etAl}, which is due to a typo in the statement of \cite[Lemma 4.4]{Ma_etAl}. Indeed, “every $0<\delta<\delta_{0}$" should be “every $0<\delta < 1\wedge \delta_{0}$". This prevents the $t$-integral in the penultimate display of the proof of \cite[Proposition 4.14]{Ma_etAl} to converge. The issue can be fixed by splitting the integral with respect to $t$ in (4.29) of  \cite{Ma_etAl} into three parts: $0<t\leq 4r_{B}^{2}$, $4r_{B}^{2}<t\leq 4\rho(x_{B})^{2}$, and $t>4\rho(x_{B})^{2}$, similarly to what is done in the proof of \cite[Proposition 4.6]{Ma_etAl}. One must then use the full decay available in $t$ from \cite[Lemma 4.4]{Ma_etAl}, to treat the part $t>4\rho(x_{B})^{2}$. This can be achieved by picking $N=1$.

Moreover, $\int_{\Rn}a \neq 0$ only if $\frac{\rho(x_B)}{4}\leq r_B\leq \rho(x_B)$, in which case $\abs{(\invH 1)_{B}}\abs{\int_{\Rn}a}\lesssim \rho(x_B)\meas{B}^{1-\frac{1}{p}}$.
Consequently, if $r_{B}<\frac{\rho(x_B)}{4}$, we obtain 
\begin{align*}
    \abs{\int_{\Rn}(V^{1/2})_{B}\invH a}&\lesssim (V^{1/2})_{B}\meas{B}^{-1/p}\meas{B}^{1+\frac{1}{n}}\left(\frac{r_B}{\rho(x_B)}\right)^{\eta-1}\lesssim \meas{B}^{1-\frac{1}{p}}\left(\frac{r_B}{\rho(x_B)}\right)^{\eta+\frac{\delta}{2}-1}.
\end{align*}
On the other hand, if $\frac{\rho(x_B)}{4}\leq r_B\leq \rho(x_B)$, then we also have 
\begin{align*}
    \abs{\int_{\Rn}(V^{1/2})_{B}\invH a}&\lesssim \meas{B}^{1-\frac{1}{p}}\left(\frac{r_B}{\rho(x_B)}\right)^{\eta+\frac{\delta}{2}-1} + r_{B}^{-1}\left(\frac{r_B}{\rho(x_B)}\right)^{\delta/2}\rho(x_B)\meas{B}^{1-\frac{1}{p}}\\
    &\lesssim \meas{B}^{1-\frac{1}{p}}\left(\frac{r_B}{\rho(x_B)}\right)^{\eta+\frac{\delta}{2}-1}.
\end{align*}
We make the choice $\eta:=\sigma+1-\frac{\delta}{2}=\frac{\delta}{2}\in (0,1)$ to obtain the cancellation bound
\begin{align*}
    \abs{\int_{\Rn}\mathcal{R}_{\mu}^{*}a}&\lesssim \meas{B}^{1-\frac{1}{p}}\left[\left(\frac{r_B}{\rho(x_B)}\right)^{\alpha} + \left(\frac{r_B}{\rho(x_B)}\right)^{\sigma}\right]\lesssim \meas{B}^{1-\frac{1}{p}}\left(\frac{r_B}{\rho(x_B)}\right)^{\alpha\wedge \sigma}.
\end{align*}
This shows that $\mathcal{R}_{\mu}^{*}a$ satisfies the global cancellation estimate for a $(p,2,\beta,\alpha\wedge \sigma)$-molecule. It follows from (the proof of) \cite[Lemma 3.9]{BJL_T1_Schrodinger} that $\mathcal{R}_{\mu}^{*}a\in\textup{H}^{p}_{V}$ with $\norm{\mathcal{R}_{\mu}^{*}a}_{\textup{H}^{p}_{V}}\lesssim 1$. This concludes the proof by Remark \ref{remark about estimate atoms implies HpV estimate}.
\end{proof}

\subsubsection{The $S^{\alpha}$ class}\label{subsubsection on the S alpha smoothness class of potentials}
In this section we describe a collection of potentials $V$ that lie in the intersection $S^{\alpha}\cap\textup{RH}^{q}(\Rn)$ for some $\alpha\in (0,1]$ when $q>n\geq 1$. We recall that the class $S^{\alpha}$ was defined in \eqref{S alpha smoothness condition}.

Let $P:\Rn\to\C$ be a polynomial, and let $a\in(0,2]$. Note that $V:=\abs{P}^{a}\in\textup{RH}^{\infty}(\Rn)$ by Proposition \ref{polynomials are RH^{infty}}. We claim that $V\in S^{a/2}$, so $\left(\frac{n}{n+a/2},1\right]\subseteq \mathcal{I}(V)$ by Theorem~\ref{full boundedness criteria for adjoint riesz transforms on hardy dziubanski spaces}.
To see this, we first note that \eqref{equivalence critical radius function modulus polynomial} implies that $\abs{\nabla P(x)}\lesssim m(x,V)^{\frac{a+2}{a}}$ for all $x\in\Rn$. Also, note that $\abs{t^{p}-s^{p}}\leq \abs{t-s}^{p}$ for all $t,s\geq 0$ and $p\in(0,1]$. Let now $B=B(x_B,r_B)\subset\Rn$ be such that $r_B\leq \rho(x_B,V)$. Then, for all $x,y\in B$, the mean-value theorem and \eqref{comparability property for critical radius function} imply that there is a point $\xi\in B$ such that 
\begin{align*}
    \abs{V(x)^{1/2}-V(y)^{1/2}}&=\abs{\abs{P(x)}^{a/2}-\abs{P(y)}^{a/2}}\leq \abs{\abs{P(x)}-\abs{P(y)}}^{a/2}
    \leq \abs{P(x)-P(y)}^{a/2}\\
    &\leq \abs{x-y}^{a/2}\abs{\nabla P(\xi)}^{a/2}\lesssim r_{B}^{a/2}m(\xi, V)^{1+a/2}\eqsim r_{B}^{a/2}m(x_{B},V)^{1+a/2},
\end{align*}
which proves the claim.

\section{Riesz transform bounds and Theorem~\ref{thm boundedness of Riesz transforms on Lp full range}}\label{section on riesz transform bounds}
In this section, we consider $\El{p}$-boundedness of the Riesz transforms associated with the Schrödinger operator $H$. This is an important step towards the identification of the Hardy spaces $\bb{H}^{p}_{H}$ and $\bb{H}^{1,p}_{H}$ in the next section. The starting point for the definition of the Riesz transform is the Kato square root result obtained by Morris--Turner \cite{MorrisTurner}. They have shown that if $n\geq 3$, $q\geq \max\set{\frac{n}{2},2}$ and $V\in\textup{RH}^{q}(\Rn)$, then $\dom{H^{1/2}}=\mathcal{V}^{1,2}(\Rn)$ with the estimate
\begin{equation}\label{Kato estimate}
    \norm{H^{1/2}f}_{\El{2}}\eqsim \norm{\nabla_{\mu}f}_{\El{2}}=\norm{f}_{\dot{\mathcal{V}}^{1,2}}
\end{equation}
for all $f\in\mathcal{V}^{1,2}(\Rn).$ By density of $\mathcal{V}^{1,2}(\Rn)$ in $\dot{\mathcal{V}}^{1,2}(\Rn)$ (see Proposition~\ref{density lemma in homogeneous V adapted Sobolev spaces}), the operator $H^{1/2}$ has a bounded extension $\dot{H}^{1/2}:\dot{\mathcal{V}}^{1,2}(\Rn)\to \Ell{2}$ which still satisfies \eqref{Kato estimate}. Since $\clos{\ran{H}}=\Ell{2}$ by \eqref{identity closure of ranges of schrodinger operators}, the argument outlined at the start of Section~\ref{subsection on reverse riesz transform bounds on Dziubanski hardy space for no coefficients case} shows that this extension is an isomorphism. 
The inverse of this isomorphism shall be denoted $\dot{H}^{-1/2}: \Ell{2}\to\dot{\mathcal{V}}^{1,2}(\Rn)$. Consequently, the \emph{Riesz transform} $R_{H}:= \nabla_{\mu}\dot{H}^{-1/2}:\Ell{2}\to\El{2}\left(\Rn;\bb{C}^{n+1}\right)$ is a bounded mapping, satisfying $\norm{R_{H} f}_{\El{2}}\eqsim \norm{f}_{\El{2}}$ for all $f\in \Ell{2}$. Let us also note that $R_{H}=\nabla_{\mu}H^{-1/2}$ on $\ran{H^{1/2}}$.

In the rest of this section, we shall obtain boundedness of the operator $R_{H}$ on $\El{p}$ for a range of exponents $p\neq 2$ by adapting the method that was used in \cite[Chapter 7]{AuscherEgert}.

\subsection{A conservation property for resolvents of Schrödinger operators}
This section describes the analogue of the conservation property \cite[Corollary 5.4]{AuscherEgert} that holds in the presence of the singular potential $V$. The main result in this direction is Lemma~\ref{cancellation lemma} below. We shall first need the following elementary lemma. It is recorded at the start of the proof of \cite[Corollary 5.4]{AuscherEgert}, but since we will be using it several times, we include some details for the interested reader.

\begin{lem}\label{off-diag + cpct supp ==> integrable}
    Let $\Omega\subset\C\setminus\set{0}$, let $W_1$ and $W_2$ be finite dimensional Hilbert spaces, and let $\set{T(z)}_{z\in\Omega}$ be a family of linear operators mapping $\El{2}(\Rn;W_1)$ into $\El{2}(\Rn;W_2)$ that satisfies $\El{2}$ off-diagonal estimates of order $\gamma >\frac{n}{2}$. Then for all $f\in\El{2}_{c}(\Rn;W_1)$, it holds that  $T(z)f\in\El{1}(\Rn;W_2)$, locally uniformly in $z\in\Omega$.
\end{lem}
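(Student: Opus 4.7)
The plan is to fix $f \in \El{2}_{c}(\Rn;W_1)$ with $\supp f \subseteq E$ for some compact $E\subset\Rn$ and a fixed $z\in\Omega$, and then decompose $\Rn$ into annular regions at scale $|z|$ centred around $E$. Concretely, I would set $F_{0}:=\set{x\in\Rn : \dist(x,E)\leq \abs{z}}$ and $F_{j}:=\set{x\in\Rn : 2^{j-1}\abs{z}<\dist(x,E)\leq 2^{j}\abs{z}}$ for every integer $j\geq 1$, so that $\set{F_{j}}_{j\geq 0}$ is a disjoint partition of $\Rn$. A direct geometric estimate then yields $\meas{F_{j}}\lesssim (\diam(E)+2^{j}\abs{z})^{n}$ for all $j\geq 0$, which is finite since $E$ is compact.

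On the local piece $F_{0}$, the Cauchy--Schwarz inequality combined with the $\El{2}$-boundedness of $T(z)$ (which is automatic from the off-diagonal estimates applied with $E=F=\Rn$) gives
\begin{equation*}
\norm{T(z)f}_{\El{1}(F_{0};W_{2})}\leq \meas{F_{0}}^{1/2}\norm{T(z)f}_{\El{2}}\lesssim (\diam(E)+\abs{z})^{n/2}\norm{f}_{\El{2}}.
\end{equation*}
For $j\geq 1$, the distance $\dist(E,F_{j})\geq 2^{j-1}\abs{z}$, so the off-diagonal estimates of order $\gamma$ together with Cauchy--Schwarz yield
\begin{equation*}
\norm{T(z)f}_{\El{1}(F_{j};W_{2})}\leq \meas{F_{j}}^{1/2}\norm{\ind{F_{j}}T(z)(\ind{E}f)}_{\El{2}}\lesssim (\diam(E)+2^{j}\abs{z})^{n/2}\,(1+2^{j-1})^{-\gamma}\norm{f}_{\El{2}}.
\end{equation*}

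Summing over $j\geq 0$, the resulting series is controlled by $\sum_{j\geq 1}2^{jn/2}2^{-j\gamma}$ times lower-order factors in $\diam(E)$ and $\abs{z}$, which converges precisely because $\gamma>n/2$. This gives $T(z)f\in\El{1}(\Rn;W_{2})$ with an explicit bound that depends only on $\diam(E)$, $\abs{z}$, $\norm{f}_{\El{2}}$ and the off-diagonal constant; the dependence on $z$ is continuous in $\abs{z}$, which yields the claimed local uniformity as $z$ varies in any compact subset of $\Omega$.

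The only ``delicate'' point is keeping the geometric bookkeeping clean: one must observe that the $F_{j}$ are contained in a ball of radius $\diam(E)+2^{j}\abs{z}$ centred at any point of $E$, so their measure grows at most polynomially in $j$, which is then dominated by the exponential decay $2^{-j\gamma}$ supplied by the off-diagonal estimate. There is no real obstacle beyond that; this is a standard ``decompose and sum'' argument made possible by the assumption $\gamma>n/2$, and no delicate functional-analytic input is required.
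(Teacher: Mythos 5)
Your proof is correct and follows essentially the same route as the paper's: decompose the complement of the support into dyadic annuli, use Cauchy--Schwarz together with the $\El{2}$ off-diagonal decay on each piece, and sum the geometric series, which converges precisely because $\gamma>\tfrac{n}{2}$. The only (immaterial) difference is that you scale the annuli by $\abs{z}$ while the paper scales them by the radius of the supporting ball; both bookkeepings give a bound that is locally bounded in $\abs{z}$, hence the local uniformity.
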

\begin{proof}
    Let $f$ be supported in a ball $B\subset \Rn$ of radius $r>0$. Using Hölder's inequality and the $\El{2}$ off-diagonal estimates satisfied by $\set{T(z)}_{z\in\Omega}$, we obtain for every $j\geq 2$ that 
    \begin{align*}
        \norm{T(z)f}_{\El{1}(C_{j}(B))}\leq \meas{C_{j}(B)}^{\frac{1}{2}}\norm{T(z)f}_{\El{2}(C_{j}(B))}\lesssim \abs{z}^{\gamma}r^{\frac{n}{2}-\gamma}2^{(\frac{n}{2}-\gamma)j}\norm{f}_{\El{2}}.
    \end{align*}
    We get $\norm{T(z)f}_{\El{1}(C_{1}(B))}\leq \meas{4B}^{1/2}\norm{T(z)f}_{\El{2}(4B)}\lesssim r^{n/2}\norm{f}_{\El{2}}$ by Hölder's inequality and the $\El{2}$-bounds for $T(z)$.
    Finally, since $\gamma> \frac{n}{2}$ we obtain
    \begin{align*}
        \norm{T(z)f}_{\El{1}}&\leq \sum_{j=1}^{\infty}\norm{T(z)f}_{\El{1}(C_{j}(B)))}\lesssim r^{n/2}\norm{f}_{\El{2}} + \left(\sum_{j=2}^{\infty}2^{(\frac{n}{2}-\gamma)j}\right)r^{\frac{n}{2}-\gamma}\abs{z}^{\gamma}\norm{f}_{\El{2}} <\infty.\qedhere
    \end{align*}
\end{proof}
We can now state and prove the main result of this section. This is to be compared with \cite[Corollary 5.4]{AuscherEgert}. We note that it holds for general $n\geq 1$ and non-negative $V\in\El{1}_{\loc}(\Rn)$.
\begin{lem}\label{cancellation lemma}
    If $f\in \El{2}_{c}(\Rn)$, then for every integer $k\geq 1$ and every $t>0$, it holds that both $H(1+t^{2}H)^{-k}(f)$ and $V(1+t^{2}H)^{-k}(f)$ belong to $\Ell{1}$, and 
    \begin{equation}\label{cancellation identity}
        \int_{\Rn}A_{\perp\perp}H(1+t^{2}H)^{-k}(f) = \int_{\Rn}aV(1+t^{2}H)^{-k}(f).
    \end{equation}
\end{lem}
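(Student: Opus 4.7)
The plan is to test the form identity $\langle \mathsf{H}w,\phi\rangle_{L^2}=h(w,\phi)$ against a smooth radial cutoff $\phi_R(x):=\eta(x/R)$ with $\eta\in C^\infty_c(\R^n)$ equal to $1$ near $0$, and then let $R\to\infty$. Here $w:=(1+t^2H)^{-k}f\in\dom H\subset\mathcal{V}^{1,2}(\R^n)$, and recall $\mathsf{H}=A_{\perp\perp}H$ by the definition of $H=b\mathsf{H}$ in Section~\ref{subsection on schrodinger operators H and DB BD}. Since $\phi_R\in C^\infty_c\subset\mathcal{V}^{1,2}(\R^n)$ and is real-valued, the form identity reads
\begin{equation*}
\int_{\R^n} A_{\perp\perp}(Hw)\,\phi_R \;=\; \int_{\R^n} A_{\parallel\parallel}\nabla_x w\cdot\nabla_x\phi_R \;+\; \int_{\R^n} aV w\,\phi_R.
\end{equation*}

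The first task is the $L^1$ integrability claims. For $Hw$: the family $\{H(1+t^2H)^{-k}\}_{t>0}$ can be written as a polynomial in $(1+t^2H)^{-1}$ via the identity $t^2H(1+t^2H)^{-k}=(1+t^2H)^{-(k-1)}-(1+t^2H)^{-k}$, so it inherits $L^2$ off-diagonal estimates of exponential (hence arbitrary polynomial) order from $\{(1+t^2H)^{-1}\}_{t>0}$, and Lemma~\ref{off-diag + cpct supp ==> integrable} yields $Hw\in L^1$. For $Vw$ the argument is more delicate and is the main obstacle: since the family $\{t\nabla_\mu(1+t^2H)^{-1}\}_{t>0}$ satisfies $L^2$ off-diagonal estimates of exponential order (see Section~\ref{subsubsection on mapping properties ofa dapted hardy spaces}), composition gives the same for $\{t\nabla_\mu(1+t^2H)^{-k}\}_{t>0}$, and in particular $\|V^{1/2}w\|_{L^2(C_j(B_0))}\lesssim_N 2^{-jN}\|f\|_{L^2}$ for any $N$, where $B_0$ is a fixed ball containing $\supp f$. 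Combining with Cauchy--Schwarz $\|Vw\|_{L^1(C_j(B_0))}\leq \|V^{1/2}\|_{L^2(C_j(B_0))}\|V^{1/2}w\|_{L^2(C_j(B_0))}$ and the reverse Hölder doubling $\int_{2^jB_0}V\leq \kappa^j\int_{B_0}V$ from Lemma~\ref{self-improvement property of reupolve holder weights}(ii), the series $\sum_j\|Vw\|_{L^1(C_j(B_0))}$ converges once $N$ is chosen so that $2^{-N}\kappa^{1/2}<1$, giving $Vw\in L^1$.

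The second task is passing to the limit $R\to\infty$. By dominated convergence with dominators $\Lambda|Hw|$ and $\Lambda|Vw|$ respectively, the left-hand side converges to $\int_{\R^n}A_{\perp\perp}Hw$ and the last term on the right converges to $\int_{\R^n}aVw$. The gradient term vanishes in the limit: since $|\nabla_x\phi_R|\lesssim R^{-1}\mathbf{1}_{\{R\leq|x|\leq 2R\}}$, Cauchy--Schwarz gives
\begin{equation*}
\left|\int_{\R^n}A_{\parallel\parallel}\nabla_x w\cdot\nabla_x\phi_R\right|\lesssim R^{-1+n/2}\|\nabla_x w\|_{L^2(R\leq |x|\leq 2R)},
\end{equation*}
and the arbitrary-order off-diagonal decay of $t\nabla_\mu(1+t^2H)^{-k}$ (applied to $f\in L^2_c$) forces $\|\nabla_x w\|_{L^2(|x|\geq R)}=O(R^{-N})$ for any $N$, whence the bound tends to zero. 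This yields the identity~\eqref{cancellation identity} and completes the proof.
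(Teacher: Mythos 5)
Your proposal is correct and follows essentially the same route as the paper's proof: the $L^1$ membership of $Hw$ via the resolvent identity and Lemma~\ref{off-diag + cpct supp ==> integrable}, the $L^1$ membership of $Vw$ via the annular Cauchy--Schwarz split combined with the doubling property of $V$ and arbitrary-order off-diagonal decay of $\{t\nabla_\mu(1+t^2H)^{-k}\}_{t>0}$, and the cutoff/form-testing argument for the identity. The only cosmetic variation is the gradient term: you kill it with a direct quantitative estimate $R^{-1+n/2}\|\nabla_x w\|_{L^2(R\leq|x|\leq 2R)}\to 0$, whereas the paper first notes $A_{\parallel\parallel}\nabla_x w\in L^1$ (again via Lemma~\ref{off-diag + cpct supp ==> integrable}) and then applies dominated convergence using the uniform bound $\|\nabla_x\chi_r\|_\infty\lesssim 1$; both work with the same ingredients.
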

\begin{proof}
    For every $t>0$ and every integer $k\geq 1$ we have 
    \begin{equation}\label{functional resolvent identity for telescopic}
        t^{2}H(1+t^{2}H)^{-k}=(1+t^{2}H)^{-(k-1)}-(1+t^{2}H)^{-k},
    \end{equation}
    with the convention that $(1+t^{2}H)^{0}:= 1$. By composition (when $k\geq 2$), we know that the family $\{(1+t^{2}H)^{-k}\}_{t>0}$ satisfies $\El{2}$ off-diagonal estimates of arbitrarily large order, for any $k\geq 1$. It follows from Lemma~\ref{off-diag + cpct supp ==> integrable} that $(1+t^{2}H)^{-k}(f)\in\Ell{1}$ for all $t>0$. Since also $f\in\Ell{1}$, we conclude from \eqref{functional resolvent identity for telescopic} that $t^{2}H(1+t^{2}H)^{-k}(f)\in\Ell{1}$ for all $k\geq 1$ and $t>0$.

    To prove that $V(1+t^{2}H)^{-k}(f)\in\Ell{1}$, we pick some ball $B\subset \Rn$ of radius $r>0$ such that $\supp{f}\subseteq B$, and we write
    \begin{align}
        \int_{\Rn} \abs{t^2 V(1+t^{2}H)^{-k}(f)} &= \sum_{j=1}^{\infty} \int_{C_{j}(B)}\abs{tV^{\frac{1}{2}}}\abs{t V^{\frac{1}{2}}(1+t^{2}H)^{-k}(f)}\nonumber\\
        &\leq \sum_{j=1}^{\infty} \norm{t V^{\frac{1}{2}}}_{\El{2}(C_{j}(B))}\norm{t \nabla_{\mu}(1+t^{2}H)^{-k}(f)}_{\El{2}(C_{j}(B))}.\label{finiteness of integral as a sum}
    \end{align}
    By the doubling property of $V$, we get $\norm{ V^{\frac{1}{2}}}_{\El{2}(C_{j}(B))}\lesssim \kappa^{\frac{j}{2}}\left(\int_{B}V\right)^{\frac{1}{2}}$ for each $j\geq 1$, where $\kappa$ denotes the doubling constant of Lemma~\ref{self-improvement property of reverse holder weights}.
    By composition (when $k\geq 2$) we know that the family $\{t \nabla_{\mu}(1+t^{2}H)^{-k}\}_{t>0}$ satisfies $\El{2}$ off-diagonal estimates of arbitrarily large order. Therefore, for $j\geq 2$ and all $\gamma >0$ we can write 
    \begin{align*}
        \norm{t \nabla_{\mu}(1+t^{2}H)^{-k}(f)}_{\El{2}(C_{j}(B))}\lesssim \left(1+\frac{\dist(B,C_{j}(B))}{t}\right)^{-\gamma}\norm{f}_{\El{2}}\lesssim \left(\frac{t}{r}\right)^{\gamma}2^{-j\gamma}\norm{f}_{\El{2}},
    \end{align*}
    while for $j=1$ we have the uniform $\El{2}$ estimate $\norm{t \nabla_{\mu}(1+t^{2}H)^{-k}(f)}_{\El{2}(C_{1}(B))}\lesssim \norm{f}_{\El{2}}$.
    Now it is only a matter of choosing $\gamma$ large enough so that $\sum_{j=1}^{\infty}2^{-j\gamma}\kappa^{\frac{j}{2}} <\infty$ to prove that the sum in (\ref{finiteness of integral as a sum}) is indeed finite.
    Let us finally prove the identity (\ref{cancellation identity}). Let $\{\chi_{r}\}_{r>0}$ be a family of smooth, compactly supported and $\left[0,1\right]$-valued functions such that $\chi_{r}(x)=1$ for all $\abs{x}\leq r$, $\supp{\chi_{r}}\subseteq \ball(0,2r)$ and such that $\norm{\nabla_{x}\chi_{r}}_{\El{\infty}}\lesssim r^{-1}$.
    Then, by dominated convergence we can write
    \begin{equation*}
        \int_{\Rn}A_{\perp\perp}H(1+t^{2}H)^{-k}(f) = \lim_{r\to\infty }\int_{\Rn} \chi_{r}A_{\perp\perp}H(1+t^{2}H)^{-k}(f) .
    \end{equation*}
    Let $r>1$ be fixed. Since $(1+t^{2}H)^{-k}(f)\in \dom{H}$ and $\chi_{r}\in C_{c}^{\infty}(\Rn)\subseteq \mathcal{V}^{1,2}(\Rn)$, we can use the form definition of $H$ to write
    \begin{align*}
        \int_{\Rn} \chi_{r}A_{\perp\perp}H(1+t^{2}H)^{-k}(f)&=\langle A_{\parallel\parallel}\nabla_{x}(1+t^{2}H)^{-k}(f), \nabla_{x}\chi_{r}\rangle_{\El{2}} + \langle aV(1+t^{2}H)^{-k}(f) , \chi_{r}\rangle_{\El{2}}.
    \end{align*}
    Now, using the off-diagonal estimates for the family $\{t\nabla_{\mu}(1+t^{2}H)^{-k}(f)\}_{t>0}$, the boundedness of $A_{\parallel\parallel}$, and the fact that $f\in\Ell{2}$ has compact support, we deduce from Lemma~\ref{off-diag + cpct supp ==> integrable} that $A_{\parallel\parallel}\nabla_{x}(1+t^{2}H)^{-k}(f)\in\Ell{1}$. 
    Since $\lim_{r\to\infty}\nabla_{x}\chi_{r}(x)=0$ for all $x\in\Rn$, and since $\norm{\nabla_{x}\chi_{r}}_{\El{\infty}}\lesssim r^{-1}\leq 1$, we can deduce by dominated convergence that 
    \begin{equation*}
        \lim_{r\to\infty} \langle A_{\parallel\parallel}\nabla_{x}(1+t^{2}H)^{-k}(f), \nabla_{x}\chi_{r}\rangle_{\El{2}} =0.
    \end{equation*}
    Similarly, since $aV(1+t^{2}H)^{-k}(f)\in\El{1},$ we obtain by dominated convergence that 
    \begin{equation*}
        \lim_{r\to\infty}\langle aV(1+t^{2}H)^{-k}(f) , \chi_{r}\rangle_{\El{2}} = \int_{\Rn}aV(1+t^{2}H)^{-k}(f).\qedhere
    \end{equation*}
\end{proof}
\subsection{Riesz transforms bounds on $\El{p}$ -- Proof of Theorem~\ref{thm boundedness of Riesz transforms on Lp full range}}\label{subsection on the proof of Lp bounds for riesz trsform}
Let us now turn to the boundedness properties of $R_{H}$. In this section, we let $n\geq 3$, $q\geq \max\set{\frac{n}{2},2}$ and $V\in\textup{RH}^{q}(\Rn)$. We first note that the arguments used in Sections 7.3 and 7.4 of \cite{AuscherEgert} can be used verbatim (upon replacing $L$ with $H$ and $\nabla_{x}$ with $\nabla_{\mu}$) to obtain the following result.
\begin{lem}\label{converse implication for boundedness of riesz transform and critical numbers}
   If $R_{H}$ is $\El{p}$-bounded for some $p\in (1,\infty)$, then $p\in[p_{-}(H), q_{+}(H)]$.
\end{lem}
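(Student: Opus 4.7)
The plan is to transcribe the proofs of \cite[\S 7.3--7.4]{AuscherEgert} to the present setting by systematically substituting $L\leadsto H$ and $\nabla_x\leadsto\nabla_\mu$. Those arguments are essentially algebraic and rely only on three analytic inputs: (i) the Kato square-root equivalence \eqref{Kato estimate}, (ii) the bounded $\textup{H}^{\infty}$-calculus of $H$ on $\Ell{2}$ from Theorem~\ref{bounded H_infty functional calculus of H}, and (iii) the $\El{2}$-off-diagonal estimates of exponential order for the families $\{(1+t^2H)^{-1}\}_{t>0}$ and $\{t\nabla_\mu(1+t^2H)^{-1}\}_{t>0}$ recorded in Section~\ref{subsubsection on mapping properties ofa dapted hardy spaces}. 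All three are available in the present setting, so no new ingredient is required.

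The argument naturally splits into two independent steps. For the upper bound $p\leq q_+(H)$, I would exploit the factorisation
\[t\nabla_\mu(1+t^2H)^{-1}=R_H\circ \phi(t^2H),\qquad \phi(z):=\tfrac{\sqrt{z}}{1+z},\]
valid on $\ran{H^{1/2}}$, which is dense in $\Ell{2}$. The uniform $\El{2}$-bound on $\{\phi(t^2H)\}_{t>0}$ given by McIntosh's theorem is promoted to uniform $\El{p}$-bounds after composing with $R_H$, using the $\El{2}$-off-diagonal decay of the resolvents and the density statement of Lemma~\ref{density lemma range dom L^p}, exactly as in \cite[\S 7.3]{AuscherEgert}. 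This places $p$ in $\mathcal{N}(H)$, hence $p\leq q_+(H)$.

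For the lower bound $p\geq p_-(H)$, I would use the telescoping identity
\[(1+t^2H)^{-1}=\mathrm{Id}-(tH^{1/2})\circ \bigl(tH^{1/2}(1+t^2H)^{-1}\bigr)\]
together with a duality argument exploiting that $H^*$ is similar to $H^\sharp$ via $b^*$ (Section~\ref{subsubsection on adjoints and similarity}). Dualising the $\El{p}$-bound on $R_H$ provides an $\El{p'}$-bound for the Riesz transform of $H^\sharp$, which satisfies identical structural assumptions to $H$; applying Step~1 to that operator, then returning via the similarity and the composition formula above, yields uniform $\El{p}$-bounds on $\{(1+t^2H)^{-1}\}_{t>0}$, placing $p\in\mathcal{J}(H)$.

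The main technical point in either step is justifying the compositions on dense subspaces compatible with both the $\El{p}$- and $\El{2}$-topologies, which is exactly the role of Lemma~\ref{density lemma range dom L^p}. Since every remaining step is a formal transcription of the arguments in \cite[\S 7.3--7.4]{AuscherEgert}, we may omit the details.
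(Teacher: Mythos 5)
Your top-level plan --- import Sections 7.3--7.4 of \cite{AuscherEgert} verbatim with $L\leadsto H$ and $\nabla_x\leadsto\nabla_\mu$ --- is exactly what the paper does (its proof is a one-line citation to those sections). However, your reconstruction of what those sections actually contain has two genuine errors, and the argument as you have described it would not go through if written out.

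First, in your upper-bound step: a uniform $\El{2}$-bound on $\{\phi(t^2H)\}_{t>0}$ together with $\El{2}$ off-diagonal estimates cannot be ``promoted'' to a uniform $\El{p}$-bound by composing with $R_H$; if it could, every such family would be bounded on every $\El{p}$ and the critical numbers would be vacuous. To bound $R_H\circ\phi(t^2H)$ on $\El{r}$ you must first know that $\phi(t^2H)$ is uniformly $\El{r}$-bounded, which (via the subordination formula for $H^{1/2}$) requires $r\in\mathcal{J}(H)$. A priori one only knows $(2_*,2^*)\subseteq\mathcal{J}(H)$ (Lemma~\ref{property J(H)}), so for $p>2^*$ you need the bootstrap through the Sobolev self-improvement $\rho\in\mathcal{N}(H)\cap(1,\infty)\Rightarrow\rho^*\leq p_+(H)$ of Proposition~\ref{properties of critical numbers proposition}, iterated until the interval $[2,p)$ is exhausted; this is the substance of the argument and is absent from your sketch. (Note also that Lemma~\ref{density lemma range dom L^p} assumes $p\in\mathcal{J}(H)$, so invoking it before $p\in\mathcal{J}(H)$ is established is circular.) Second, your lower-bound step rests on a false duality: the adjoint of $R_H=\nabla_\mu H^{-1/2}$ is $(H^{*})^{-1/2}\nabla_\mu^{*}$, a \emph{reverse} Riesz-type operator, not the Riesz transform $\nabla_\mu (H^{\sharp})^{-1/2}$ of $H^{\sharp}$; Riesz transform bounds do not dualise to Riesz transform bounds for the adjoint operator. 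Even granting that step, ``applying Step 1 to $H^{\sharp}$'' would yield $p'\leq q_+(H^{\sharp})$, and there is no duality relation converting this into $p\geq p_-(H)$ (the available relations are $p_\pm(H^{*})=p_\mp(H)'$ and $q_-(H)\vee 1=p_-(H)\vee 1$; nothing ties $q_+(H^{\sharp})$ to $p_-(H)$). The lower bound in \cite{AuscherEgert} is instead obtained by showing directly that $\{t\nabla_\mu(1+t^2H)^{-1}\}_{t>0}$ is $\El{r}$-bounded for $r$ between $p$ and $2$ and then using $q_-(H)\vee 1=p_-(H)\vee 1$; you should replace the duality paragraph accordingly.
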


We shall now prove Theorem~\ref{thm boundedness of Riesz transforms on Lp full range}. Let $\alpha\geq 1$ be an integer. We obtain a singular integral representation of $R_H$ in terms of powers of resolvents by following the approach that leads to equation~(7.3) in \cite{AuscherEgert}. In fact, by applying the Calderón--McIntosh reproducing formula to the injective sectorial operator $H$ with the auxiliary function $\psi_{\alpha}(z)=z^{3\alpha +1/2}(1+z)^{-9\alpha}$, we obtain that the truncated Riesz transforms
\begin{align}\label{truncated riesz trf def}
R_{H}^{\eps}:=
c_{\alpha}^{-1}\int_{\eps}^{\frac{1}{\eps}}t\nabla_{\mu}\left(1+t^{2}H\right)^{-3\alpha}\left(t^{2}H\right)^{3\alpha}\left(1+t^{2}H\right)^{-6\alpha}\frac{\dd t}{t}\in\mathcal{L}(\El{2}),
\end{align}
where $\eps>0$ and $c_{\alpha}=\int_{0}^{\infty}\psi_{\alpha}(t^{2})\frac{\dd t}{t}>0$, give the representation $R_{H}f=\lim_{\eps\to 0}R_{H}^{\eps}f$, with convergence in $\El{2}\left(\Rn;\bb{C}^{n+1}\right)$ for all $f\in\El{2}$.

Before starting the proof of Theorem~\ref{thm boundedness of Riesz transforms on Lp full range} we need two lemmas. The first is the analogue of \cite[Lemma 7.4]{AuscherEgert} in our setting. We note that it holds for general non-negative $V\in\El{1}_{\loc}(\Rn)$.
\begin{lem}\label{Obtaining L^2-L^q or L^p-L^2 odes for powers of resolvents}
    Let $p\in\mathcal{J}(H)\cap(1,\infty)$ and let $r$ be between $p$ and $2$ (i.e. either $r\in(p,2)$ or $r\in(2,p)$). There exists an integer $\beta\geq 1$ (depending on $p$, $r$ and $n$) with the following properties:
    \begin{enumerate}[label=\emph{(\roman*)}]
        \item If $p\leq 2$, then $\set{(1+t^{2}H)^{-\beta}}_{t>0}$ and $\set{t\nabla_{\mu}(1+t^{2}H)^{-\beta -1}}_{t>0}$ satisfy $\El{r}-\El{2}$ off-diagonal estimates of arbitrarily large order.
        \item If $p\geq 2$, then $\set{(1+t^{2}H)^{-\beta}}_{t>0}$ satisfies $\El{2}-\El{r}$ off-diagonal estimates of arbitrarily large order. 
    \end{enumerate}
    \end{lem}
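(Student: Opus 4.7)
The proof strategy is to imitate \cite[Lemma 7.4]{AuscherEgert} with the necessary modifications, splitting the argument according to whether $p\leq 2$ or $p\geq 2$. The three main tools will be: Proposition~\ref{bound on power of family}, which upgrades $b\textup{H}^p_V$-boundedness to $b\textup{H}^q_V - \El{2}$-boundedness of a high enough power of the family; the interpolation scheme of \cite[Lemma 4.14]{AuscherEgert}, which upgrades off-diagonal boundedness (without decay) combined with $\El{2}$ off-diagonal estimates to $\El{r}-\El{2}$ (or $\El{2}-\El{r}$) off-diagonal estimates of arbitrarily large order; and the adjoint-similarity $H^{*}=(b^{*})^{-1}H^{\sharp}b^{*}$ from Section~\ref{subsubsection on adjoints and similarity}, which will reduce case (ii) to case (i).

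For case (i), I would first note that for $p\in\mathcal{J}(H)\cap(1,\infty)$ the $b\textup{H}^{p}_{V}$-boundedness of $\{(1+t^{2}H)^{-1}\}_{t>0}$ is equivalent to its $\El{p}$-boundedness, since $b,b^{-1}\in\El{\infty}(\Rn)$ and $\textup{H}^{p}_{V,\textup{pre}}=\El{p}\cap\El{2}$ with equivalent norms by Lemma~\ref{identification of Dziubanski hardy spaces for p>=1}. Lemma~\ref{property J(H)} provides the $\El{2_{*}}-\El{2}$-boundedness of the same family (with the right scaling), which is the same as $b\textup{H}^{2_{*}}_{V}-\El{2}$-boundedness. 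Feeding both into Proposition~\ref{bound on power of family} with $\varrho=2_{*}$, for any $r\in(p,2)$ there is an integer $\beta\geq 1$ such that $\{(1+t^{2}H)^{-\beta}\}_{t>0}$ is $\El{r}-\El{2}$-bounded (with scaling $t^{n/2-n/r}$). Since $\{(1+t^{2}H)^{-\beta}\}_{t>0}$ already satisfies $\El{2}$ off-diagonal estimates of exponential order by composition, applying \cite[Lemma 4.14]{AuscherEgert} then yields $\El{r}-\El{2}$ off-diagonal estimates of arbitrarily large order. For the gradient family, I would factorise $t\nabla_{\mu}(1+t^{2}H)^{-\beta-1}=t\nabla_{\mu}(1+t^{2}H)^{-1}\circ(1+t^{2}H)^{-\beta}$; the first factor enjoys $\El{2}$ off-diagonal estimates of arbitrarily large order (see Section~\ref{subsubsection on mapping properties ofa dapted hardy spaces}), and a composition argument (analogous to \cite[Lemma 4.6]{AuscherEgert}) delivers $\El{r}-\El{2}$ off-diagonal estimates of arbitrarily large order for the product.

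For case (ii), the plan is to dualise. When $p>2$, $\El{p}$-boundedness of $\{(1+t^{2}H)^{-1}\}_{t>0}$ is equivalent by duality to $\El{p'}$-boundedness of $\{(1+t^{2}H^{*})^{-1}\}_{t>0}$ with $p'\in(1,2)$. The similarity $H^{*}=(b^{*})^{-1}H^{\sharp}b^{*}$ from Section~\ref{subsubsection on adjoints and similarity} is preserved by the resolvents and by the functional calculus, so it transfers $\El{p'}$-boundedness (modulo multiplication by $b^{*}$ and $(b^{*})^{-1}$, both in $\El{\infty}$) to $\{(1+t^{2}H^{\sharp})^{-1}\}_{t>0}$. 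Hence $p'\in\mathcal{J}(H^{\sharp})$. Since $H^{\sharp}$ satisfies the same standing assumptions as $H$, case (i) applied to $H^{\sharp}$ at the exponent $p'<2$ provides, for $r\in(2,p)$ (so $r'\in(p',2)$), an integer $\beta\geq 1$ such that $\{(1+t^{2}H^{\sharp})^{-\beta}\}_{t>0}$ satisfies $\El{r'}-\El{2}$ off-diagonal estimates of arbitrarily large order. Dualising this statement (and transferring back through the similarity) produces $\El{2}-\El{r}$ off-diagonal estimates of arbitrarily large order for $\{(1+t^{2}H)^{-\beta}\}_{t>0}$, as required.

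The main obstacle, which is mostly technical, lies in carefully tracking scalings, dualities, and the three-way correspondence between $H$, $H^{*}$ and $H^{\sharp}$ in case (ii). In particular, one must verify that multiplication by $b^{*}$ (and its inverse) affects neither $\El{r}$-boundedness (it doesn't, as both are $\El{\infty}$) nor off-diagonal estimates (it doesn't, since such operators commute with support localisations). Once this bookkeeping is in place, the rest of the argument is a direct transcription of \cite[Lemma 7.4]{AuscherEgert}, with the gradient $\nabla_{x}$ replaced throughout by $\nabla_{\mu}$ and $L$ replaced by $H$.
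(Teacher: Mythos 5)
Your proposal is correct and takes essentially the same route as the paper: the paper's proof is precisely a transcription of \cite[Lemma 7.4]{AuscherEgert} with Proposition~\ref{bound on power of family} in place of \cite[Lemma 4.4]{AuscherEgert} and Lemma~\ref{property J(H)} (giving the $\El{2_*}-\El{2}$-boundedness you take as $\varrho=2_*$) in place of \cite[Lemma 6.3]{AuscherEgert}. Your handling of case (ii) by dualising through the similarity $H^{*}=(b^{*})^{-1}H^{\sharp}b^{*}$ is exactly the intended adaptation of the $p>2$ case of that argument.
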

    \begin{proof}
        This is proved as in \cite[Lemma 7.4]{AuscherEgert}, using Proposition~\ref{bound on power of family} instead of \cite[Lemma 4.4]{AuscherEgert} and Lemma \ref{property J(H)} instead of \cite[Lemma 6.3]{AuscherEgert}.
    \end{proof}
The following lemma will be crucial in the proof of Theorem~\ref{thm boundedness of Riesz transforms on Lp full range}.
\begin{lem}\label{ode resolvents lemma for riesz transform bounds}
    Let $p_{0}\in(2,q_{+}(H))$. There exists an integer $\omega\geq 2$ such that the following properties hold for all integers $k\geq 1$ and $m\geq \omega$:
    \begin{enumerate}[label=\emph{(\roman*)}]
    \item $\set{(1+t^{2}H)^{-(m-1)}}_{t>0}$ satisfies $\El{2}-\El{p_{0}}$ off-diagonal estimates of arbitrarily large order;
    \item  $\set{t\nabla_{\mu}(1+t^{2}H)^{-k}}_{t>0}$ satisfies $\El{p_{0}}$ and $\El{r}-\El{p_0}$ off-diagonal estimates of arbitrarily large order for all $r\in({p_{0}}_{*},p_{0}]$;
    \item  $\set{t\nabla_{\mu}(1+t^{2}H)^{-m}}_{t>0}$ satisfies $\El{2}-\El{p_{0}}$ off-diagonal estimates of arbitrarily large order.
    \end{enumerate}
\end{lem}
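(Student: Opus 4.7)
The plan is to chain together the $\El{2}$ off-diagonal estimates of exponential order recalled in Section~\ref{subsubsection on mapping properties ofa dapted hardy spaces} with the boundedness afforded by the hypothesis $p_0<q_+(H)$, using interpolation (\cite[Lemma~4.14]{AuscherEgert}) and the standard composition rule for off-diagonal estimates. The integer $\omega$ will be chosen at the end so that all the required compositions succeed, and the three items will be proved in the order (ii)[$\El{p_0}$ part], (i), (iii), then (ii)[$\El{r}-\El{p_0}$ part].

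First I would pick $p_1\in(p_0,q_+(H))$. The factorisation $t\nabla_\mu(1+t^2H)^{-k}=[t\nabla_\mu(1+t^2H)^{-1}]\circ(1+t^2H)^{-(k-1)}$ shows that $\{t\nabla_\mu(1+t^2H)^{-k}\}_{t>0}$ is $\El{p_1}$-bounded for every $k\geq1$, since both factors are $\El{p_1}$-bounded (using $p_1\in\mathcal{N}(H)\subseteq\mathcal{J}(H)$, and $q_+(H)\leq p_+(H)$ by Proposition~\ref{properties of critical numbers proposition}). Interpolating this uniform $\El{p_1}$-bound against the $\El{2}$ off-diagonal estimates of arbitrarily large order via \cite[Lemma~4.14]{AuscherEgert} produces $\El{p_0}$ off-diagonal estimates of arbitrarily large order for every $k\geq1$, settling the $\El{p_0}$ part of (ii) (and the endpoint $r=p_0$ of its second off-diagonal claim). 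For (i), I would then pick $p\in\mathcal{J}(H)\cap(p_0,p_+(H))$ and apply Lemma~\ref{Obtaining L^2-L^q or L^p-L^2 odes for powers of resolvents}(ii) with this $p$ and $r=p_0\in(2,p)$ to produce an integer $\beta_1\geq 1$ such that $\{(1+t^2H)^{-\beta_1}\}_{t>0}$ satisfies $\El{2}-\El{p_0}$ off-diagonal estimates of arbitrarily large order. Composing with $(1+t^2H)^{-(m-1-\beta_1)}$, which has $\El{2}$ off-diagonal estimates of arbitrarily large order, preserves the $\El{2}-\El{p_0}$ property, hence (i) holds for all $m\geq\beta_1+1$.

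Part~(iii) follows by composing the $\El{p_0}$ off-diagonal estimate for $t\nabla_\mu(1+t^2H)^{-1}$ obtained above with the $\El{2}-\El{p_0}$ off-diagonal estimate of $(1+t^2H)^{-(m-1)}$ provided by~(i). For the remaining $\El{r}-\El{p_0}$ off-diagonal claim in~(ii) with $r\in({p_0}_*,p_0)$, the same factorisation reduces the problem to $\El{r}-\El{p_0}$ off-diagonal estimates for $(1+t^2H)^{-(k-1)}$. The key observation is that $r\in\mathcal{J}(H)\cap\mathcal{N}(H)$: indeed, $r<p_0<q_+(H)\leq p_+(H)$ and $r>{p_0}_*>2_*\geq p_-(H)\vee 1\geq q_-(H)$ by Corollary~\ref{improvement on property of interval J(H)} and Proposition~\ref{properties of critical numbers proposition}, so the Sobolev--Poincar\'e improvement $(1+t^2H)^{-1}\colon\El{r}\to\El{r^*}$ (together with its $t$-scaling) is available. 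Since ${p_0}_*<r$ forces $r^*>p_0$, iterating this improvement finitely many times and closing with Riesz--Thorin interpolation at the last step yields $\El{r}-\El{p_0}$ boundedness of $(1+t^2H)^{-(k-1)}$ for $k-1$ large enough, and interpolation with the $\El{2}$ off-diagonal estimates upgrades this to arbitrarily large order. Defining $\omega$ to be the largest integer used across these arguments finishes the proof.

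The main obstacle will be this last step. One has to verify that the intermediate Sobolev conjugates $r,r^*,r^{**},\ldots$ used in the iteration all stay below $p_+(H)$ (so that the resolvent $\El{s}\to\El{s^*}$ improvement persists at each stage) and above $q_-(H)\vee1$ (so that the associated off-diagonal bounds survive composition). This bookkeeping, together with the need to track the order of off-diagonal decay through several compositions, is what determines the quantitative dependence of $\omega$ on $p_0$ and $n$; the remaining ingredients are essentially mechanical consequences of the standard interpolation and composition rules for off-diagonal estimates.
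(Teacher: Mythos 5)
Your treatment of (i), (iii), and the $\El{p_0}$ part of (ii) agrees with the paper's argument (interpolate the $\El{2}$ off-diagonal bounds of the single resolvent and gradient-resolvent families against their $\El{s}$-boundedness for $p_0 < s < p_+(H)$, respectively $p_0 < s < q_+(H)$, then compose; and use Lemma~\ref{Obtaining L^2-L^q or L^p-L^2 odes for powers of resolvents} for the $\El{2}$-to-$\El{p_0}$ jump).

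There is, however, a genuine gap in your argument for the $\El{r}-\El{p_0}$ part of (ii). You correctly note that your Sobolev-iteration approach yields $\El{r}-\El{p_0}$ boundedness of $(1+t^{2}H)^{-(k-1)}$ only for $k-1$ large enough, and then you propose to absorb this into the choice of $\omega$. But the statement requires the $\El{r}-\El{p_0}$ off-diagonal estimates for $\{t\nabla_{\mu}(1+t^{2}H)^{-k}\}_{t>0}$ to hold \emph{for every} $k\geq 1$; the range of $k$ is not allowed to depend on $\omega$. For $k=1$ your factorisation $t\nabla_{\mu}(1+t^{2}H)^{-k}=\left[t\nabla_{\mu}(1+t^{2}H)^{-1}\right](1+t^{2}H)^{-(k-1)}$ degenerates to the identity on the right, which is not $\El{r}-\El{p_0}$-bounded when $r<p_0$, so the reduction fails outright. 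This is not cosmetic: the paper's application of this lemma in Step~4 of the proof of Theorem~\ref{thm boundedness of Riesz transforms on Lp full range} explicitly invokes the $\El{r}-\El{p_0}$ bound for \emph{all} $1\leq k\leq m$. The paper closes the gap with the ``decreasing powers'' Lemma~\ref{decreasing powers in boundedness of resolvents} (a nontrivial device from \cite[Lemma 6.5]{AuscherEgert}): once the family at the high power $m+1$ is shown to be $\El{p_{0_*}}-\El{p_0}$-bounded, the condition $\tfrac{n}{r}-\tfrac{n}{p_0}<1$ (equivalent to $r>{p_{0}}_{*}$) lets one descend to every lower power $1\leq k\leq m+1$; interpolation with the $\El{p_0}$ off-diagonal estimates of arbitrarily large order then upgrades to the claimed order. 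That mechanism has no analogue in your proposal and is the missing ingredient.

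A secondary issue is the Sobolev iteration itself. Because $r\in({p_0}_*,p_0]$ and $p_0$ may exceed $n$, the exponent $r$ can itself exceed $n$, in which case the gain $\El{r}\to\El{r^{*}}$ via $\dot{W}^{1,r}\hookrightarrow\El{r^{*}}$ is not available ($r^{*}=\infty$ and the inequality fails, or fails at $r=n$). The paper sidesteps this by splitting into the two cases $p_0\geq 2^{*}$ (where ${p_0}_{*}\geq 2$ and one interpolates $\El{2}-\El{p_0}$ with $\El{p_0}-\El{p_0}$) and $2<p_0<2^{*}$ (where one composes the $\El{{p_0}_{*}}-\El{2}$ bound from Lemma~\ref{property J(H)} with the $\El{2}-\El{p_0}$ bound already established), which never appeals to a Sobolev step in the variable $r$ and therefore never runs into the $r\geq n$ obstruction. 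If you insist on the Sobolev route, note that one step already gives $r^{*}>p_0$ when $r>{p_0}_{*}$, so the ``finitely many iterations'' are unnecessary, but you would still need to deal separately with $r\geq n$ and, crucially, with $k=1$.
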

\begin{proof}
    It follows from Proposition~\ref{properties of critical numbers proposition} that $p_0\in (2,p_{+}(H))$, hence Lemma~\ref{Obtaining L^2-L^q or L^p-L^2 odes for powers of resolvents} shows that there exists an integer $\omega\geq 2$ such that the family $\{(1+t^{2}H)^{-(\omega -1)}\}_{t>0}$ satisfies $\El{2}-\El{p_0}$ off-diagonal estimates of arbitrarily large order. 
    Moreover, interpolation of the $\El{2}$ off-diagonal estimates for $\{(1+t^{2}H)^{-1}\}_{t>0}$ (recall the start of Section~\ref{subsubsection on mapping properties ofa dapted hardy spaces}) with its $\El{s}$-boundedness for some $p_{0}<s<p_{+}(H)$ implies that $\{(1+t^{2}H)^{-1}\}_{t>0}$ has $\El{p_0}$ off-diagonal estimates of arbitrarily large order (see \cite[Lemma 4.14]{AuscherEgert}). By composition (see \cite[Lemma 4.6]{AuscherEgert}), we obtain (i). 
    
    Similarly, interpolation of the $\El{2}$ off-diagonal estimates for $\{t\nabla_{\mu}(1+t^{2}H)^{-1}\}_{t>0}$ with its $\El{s}$-boundedness for some $p_{0}<s<q_{+}(H)$ implies that $\{t\nabla_{\mu}(1+t^{2}H)^{-1}\}_{t>0}$ has $\El{p_0}$ off-diagonal estimates of arbitrarily large order.
 By composition with those satisfied by $\{(1+t^{2}H)^{-1}\}_{t>0}$, we obtain the first part of (ii). 
 
Combining (by composition) (i) with the $\El{p_0}$ off-diagonal estimates of arbitrarily large order for the family $\{t\nabla_{\mu}(1+t^{2}H)^{-1}\}_{t>0}$ yields (iii).

Finally, we prove the second part of (ii). Let $m\geq \omega$ be arbitrary. It follows from (iii) and the first part of (ii) that both $\{t\nabla_{\mu}(1+t^{2}H)^{-m}\}_{t>0}$ and $\{t\nabla_{\mu}(1+t^{2}H)^{-m-1}\}_{t>0}$ are $\El{2}-\El{p_0}$-bounded and $\El{p_0}$-bounded. We now treat two different cases: 
\begin{enumerate}
    \item{
First, if $p_0\geq 2^{*}$, then we have $2\leq p_{0_{*}}<p_{0}$. By interpolation (see \cite[Lemma 4.13]{AuscherEgert}) of the $\El{2}- \El{p_0}$-boundedness of the family $\{t\nabla_{\mu}(1+t^{2}H)^{-m-1}\}_{t>0}$ with its $\El{p_0}$-boundedness, we deduce that this family is $\El{p_{0_{*}}}- \El{p_0}$-bounded. }

\item{If, on the contrary, $2<p_0<2^{*}$, then we have $2_{*}<p_{0_{*}}<2.$ By Lemma~\ref{property J(H)}, we deduce that $\{(1+t^{2}H)^{-1}\}_{t>0}$ is $\El{p_{0_{*}}}-\El{2}$-bounded. Then, by composition with the $\El{2}- \El{p_0}$-boundedness of the family $\{t\nabla_{\mu}(1+t^{2}H)^{-m}\}_{t>0}$, we obtain that the family $\{t\nabla_{\mu}(1+t^{2}H)^{-m-1}\}_{t>0}$ is $\El{p_{0_{*}}}- \El{p_0}$-bounded.}
\end{enumerate}
In conclusion, the family $\{t\nabla_{\mu}(1+t^{2}H)^{-m-1}\}_{t>0}$ is always $\El{p_{0_{*}}}- \El{p_0}$-bounded. By interpolation with the $\El{p_0}$-boundedness of this family, we deduce that $\{t\nabla_{\mu}(1+t^{2}H)^{-m-1}\}_{t>0}$ is $\El{r}-\El{p_0}$-bounded for all $r\in (p_{0_{*}},p_{0}]$.
 Now, note that for each $r\in (p_{0_{*}},p_{0}]$, we have $\frac{n}{n+1}<r\leq p_0 <\infty$ and $\frac{n}{r}-\frac{n}{p_0}<1$, hence Lemma~\ref{decreasing powers in boundedness of resolvents} shows that the family $\{t\nabla_{\mu}(1+t^{2}H)^{-k}\}_{t>0}$ is $\El{r}-\El{p_0}$-bounded for all integers $1\leq k\leq m+1$.
Finally, interpolating (see \cite[Lemma 4.14]{AuscherEgert}) with the $\El{p_0}-\El{p_0}$ off-diagonal estimates of arbitrarily large order satisfied by $\{t\nabla_{\mu}(1+t^{2}H)^{-k}\}_{t>0}$, we obtain the second part of (ii), and the proof is complete.
\end{proof}
We now have all the ingredients needed for the proof of Theorem~\ref{thm boundedness of Riesz transforms on Lp full range}.
\begin{proof}[Proof of Theorem~\ref{thm boundedness of Riesz transforms on Lp full range}]
The Riesz transform $R_H$ is $\El{p}$-bounded for all $p\in \left(p_{-}(H)\vee 1 , 2\right]$. This actually follows merely from the $\El{2}$-boundedness of $R_{H}$, and does not require any additional assumption on $V\in\El{1}_{\loc}(\Rn)$ other than non-negativity. This can be seen by following \cite[Section~7.1]{AuscherEgert} verbatim, replacing $L$ by $H$ and $\nabla_{x}$ by $\nabla_{\mu}$ appropriately. One should also use Lemma~\ref{Obtaining L^2-L^q or L^p-L^2 odes for powers of resolvents} instead of \cite[Lemma 7.4]{AuscherEgert}.

We now show that if $p\in\left(2,q_{+}(H)\right)$ and $p\leq 2q$, then $R_H$ is $\El{p}$-bounded. We proceed by adapting the proof in \cite[Section~7.2]{AuscherEgert}. There are several steps:
\begin{enumerate}[wide, labelwidth=!, labelindent=0pt, label={\textit{{Step} \arabic*.}}]
\item We start with some preliminaries. Let us first note that for integers $\alpha\geq 1$ we have
    \begin{align}
    \begin{split}\label{factored resolvents}
t\nabla_{\mu}\!\left(1+t^{2}H\right)^{\!-3\alpha}\!\!\left(t^{2}H\right)^{\!3\alpha}\!\left(1+t^{2}H\right)^{\!-6\alpha}
\!\!=t\nabla_{\mu}\left(1+t^{2}H\right)^{\!-1}\!\!\left(1-\!\left(1+t^{2}H\right)^{\!-1}\right)^{\!\!3\alpha}\!\!\!\left(1+t^{2}H\right)^{\!-6\alpha +1}.
\end{split}
    \end{align}
    Proposition~\ref{properties of critical numbers proposition} implies that $2<p<p^{*}<q_{+}(H)^{*}\leq p_{+}(H)$, 
    hence $p\in\mathcal{J}(H)$, and therefore both $\set{\left(1+t^{2}H\right)^{-1}}_{t>0}$ and $\set{t\nabla_{\mu}\left(1+t^{2}H\right)^{-1}}_{t>0}$ are $\El{p}$-bounded. The identity (\ref{factored resolvents}) above implies (by composition) that the family 
    \begin{equation}\label{kernel family Riesz tf}
\set{t\nabla_{\mu}\left(1+t^{2}H\right)^{-3\alpha}\left(t^{2}H\right)^{3\alpha}\left(1+t^{2}H\right)^{-6\alpha}}_{t>0}
    \end{equation}
    is $\El{p}$-bounded. As a consequence, $R_{H}^{\eps}$ is $\El{p}$-bounded with a bound depending on $\eps$. We will apply the Blunck--Kunstmann boundedness criterion stated in \cite[Proposition 7.6]{AuscherEgert} with $T=R_{H}^{\eps}$ for a parameter $\eps>0$, and a family $\set{A_{r}}_{r>0}\subset \mathcal{L}(\El{2})$, in order to obtain an $\El{p}$ bound which is independent of $\eps$. The parameter $\alpha\geq 1$ will have to be chosen large enough in the course of the proof.
    
    Using Lemma~\ref{self-improvement property of reverse holder weights}, we may pick $\beta>q$ such that $V\in\textup{RH}^{\beta}$. Then, we can find some $p_0\in (p,q_{+}(H))$ such that $p_{0}\leq 2\beta$. Since $\beta > \frac{n}{2}$, this automatically implies that $p_{0}<\beta^{*}$ (with the understanding that $\beta^{*}=\infty$ if $\beta\geq n$). Lemma~\ref{ode resolvents lemma for riesz transform bounds}.(i) shows that there exists an integer $\omega\geq 2$ such that the family $\{(1+t^{2}H)^{-(\omega -1)}\}_{t>0}$ satisfies $\El{2}- \El{p_0}$ off-diagonal estimates of arbitrarily large order. Let us assume from now on that $6\alpha\geq \omega $. 
Then, the identity (\ref{factored resolvents}) and the definition of $\omega$ imply (by composition and the choice of $\alpha$) that the family (\ref{kernel family Riesz tf}) is $\El{2}-\El{p_{0}}$-bounded. As a consequence, $R_{H}^{\eps}$ maps $\Ell{2}$ to $\Ell{p_{0}}$. 
    
Consider the function $\varphi(z):=\left(1-\left(1+z\right)^{-\omega}\right)^{3\alpha}$ for $z\in\C\setminus\set{-1}$. A binomial expansion shows that the approximating family $\set{A_{r}}_{r>0}:=\set{1-\varphi\left(r^{2}H\right)}_{r>0}\subset\mathcal{L}\left(\Ell{2}\right) $ satisfies
    \begin{equation}\label{binomial expansion of Ar}
        A_{r}=-\sum_{k=1}^{3\alpha}\binom{3\alpha}{k}(-1)^{k}\left(1+r^{2}H\right)^{-\omega k }.
    \end{equation}
    \vspace{6pt}
    \item Let us now check that both conditions in the Blunck--Kunstmann-type criterion \cite[Proposition 7.6]{AuscherEgert} are satisfied by the linear operator $R_{H}^{\eps}$ and the family $\set{A_{r}}_{r>0}$. Let $B\subset \Rn$ be a ball of radius $r>0$, $f\in\Ell{2}$ and $y\in B$. Let $\mathcal{M}$ denote the uncentred Hardy--Littlewood maximal operator for balls. These  conditions ask for a constant $C>0$, independent of $\eps$, $f$, $y$ and $B$, such that
    \begin{align}\label{first condition}
        \left(\dashint_{B}\abs{R_{H}^{\eps}\left(1-A_{r}\right)f}^{2}\right)^{1/2}\leq C\left(\hlmax{\abs{f}^{2}}\right)^{1/2}(y),
    \end{align}
and 
    \begin{equation}\label{second condition}
    \left(\dashint_{B}\abs{R_{H}^{\eps}A_r f}^{p_0}\right)^{1/p_0}\leq C \left(\hlmax{\abs{R_{H}^{\eps}f}^{2}}\right)^{1/2}(y).
\end{equation}
The proof of \eqref{first condition} is obtained by following Step 1 of \cite[Section~7.2]{AuscherEgert}, replacing $\nabla_{x}$ by $\nabla_{\mu}$ and $L$ by $H$.
The proof of \eqref{second condition} is based on the following claim: There are positive constants $\set{c_j}_{j\geq 1}$ such that for all $g\in\dot{\mathcal{V}}^{1,p_0}(\Rn)\cap\mathcal{V}^{1,2}(\Rn)$ it holds that 
\begin{align}\label{claim to be proven to obtain second condition for boundedness of riesz transform}
    \left(\dashint_{B}\abs{\nabla_{\mu}A_{r}g}^{p_0}\right)^{1/p_0}\lesssim \sum_{j\geq 1}c_{j}\left(\dashint_{2^{j+1}B}\abs{\nabla_{\mu}g}^{2}\right)^{1/2},
\end{align}
where $\sum_{j\geq 1}c_j <\infty$ and the implicit constant does not depend on the ball $B$. Assuming this for now (it will be proved in Step 3), for any $f\in \Ell{2}$ we can apply it with
\begin{equation*}
    g:=c_{\alpha}^{-1}\int_{\eps}^{1/\eps}t\left(1+t^{2}H\right)^{-3\alpha}\left(t^{2}H\right)^{3\alpha}\left(1+t^{2}H\right)^{-6\alpha}f \;\frac{\dd t}{t}
\end{equation*}
to obtain \eqref{second condition}. Indeed, by $\El{2}$-boundedness of $A_r$ and the definition of $R_{H}^{\eps}$, we obtain that $\nabla_{\mu}A_{r}g=R_{H}^{\eps}A_{r}f$, and that $\nabla_{\mu}g=R_{H}^{\eps}f\in \Ell{2}\cap\Ell{p_0}$, because $R_{H}^{\eps}$ maps $\El{2}$ to $\El{p_0}$, as observed in Step 1. Hence $g\in\dot{\mathcal{V}}^{1,p_0}\cap\mathcal{V}^{1,2}$, and the claim yields (\ref{second condition}). Altogether, this shows that both conditions in \cite[Proposition 7.6]{AuscherEgert} are satisfied, hence $\norm{R_{H}^{\eps}f}_{\El{p}}\lesssim \norm{f}_{\El{p}}$
for all $f\in\El{2}\cap\El{p}$, where the implicit constant doesn't depend on $\eps$. Since $R_{H}f=\lim_{\eps\to 0}R_{H}^{\eps}f$ in $\Ell{2}$, this concludes the proof.
\vspace{6pt}
\item We prove the claim (\ref{claim to be proven to obtain second condition for boundedness of riesz transform}). First, the expansion (\ref{binomial expansion of Ar}) shows that it suffices to prove the claim when $A_r$ is replaced by $\left(1+r^{2}H\right)^{-\omega k}$ for integers $1\leq k\leq 3\alpha$.
 It follows from Lemma~\ref{ode resolvents lemma for riesz transform bounds}.(ii) that the family $\{t\nabla_{\mu}(1+t^{2}H)^{-k}\}_{t>0}$ satisfies $\El{p_0}$ off-diagonal estimates of arbitrarily large order for all $k\geq 1$.
 By \cite[Proposition 5.1]{AuscherEgert}, for any integer $m\geq 1$, we can define $t\nabla_{\mu}(1+t^{2}H)^{-m}(1)$ as an element of $\El{p_0}_{\textup{loc}}(\Rn)$. To be clear, here we mean $1:=\ind{\Rn}$.
We claim that the following “cancellation" bound,
\begin{equation}\label{cancellation bound}
    \left(\dashint_{B}\abs{\nabla_{\mu}(1+r^{2}H)^{-m}(1)}^{p_0}\right)^{1/p_{0}}\lesssim \min\set{r^{-1},\left(\dashint_{B} V\right)^{\frac{1}{2}}},
\end{equation}
holds for all integers $m\geq \omega$, where the implicit constant does not depend on $B$. Note that this estimate is trivial when $V\equiv 0$.
Let us assume \eqref{cancellation bound} for now (it will be proved in Step 4) to conclude the proof. To this end, let $g\in\dot{\mathcal{V}}^{1,p_0}\cap\mathcal{V}^{1,2}$ and $1\leq k\leq 3\alpha$. We write
\begin{align*}
\left(\dashint_{B}\abs{\nabla_{\mu}\left(1+r^{2}H\right)^{-\omega k}g}^{p_0}\right)^{1/p_0}&\leq \left(\dashint_{B}\abs{\nabla_{\mu}\left(1+r^{2}H\right)^{-\omega k}\left(g-(g)_{B}\right)}^{p_0}\right)^{1/p_0}\\
    &+ \abs{(g)_B}\left(\dashint_{B}\abs{\nabla_{\mu}\left(1+r^{2}H\right)^{-\omega k}\left(1\right)}^{p_0}\right)^{1/p_0}=: \RNum{1} + \RNum{2}.
\end{align*}

For term $\RNum{1}$, we proceed as in \cite[Section~7.2]{AuscherEgert}, using the $\El{2}-\El{p_0}$ off-diagonal estimates for $\set{r\nabla_{\mu}\left(1+r^{2}H\right)^{-\omega k}}_{r>0}$ (see Lemma~\ref{ode resolvents lemma for riesz transform bounds}.(iii)) and Poincaré's inequality to obtain
\begin{align}\label{estimate first term}
    \RNum{1}=\left(\dashint_{B}\abs{\nabla_{\mu}\left(1+r^{2}H\right)^{-\omega k}\left(g-(g)_B\right)}^{p_0}\right)^{1/p_0}\lesssim \sum_{j=1}^{\infty}2^{-j\gamma}2^{\frac{3nj}{2}}\left(\dashint_{2^{j+1}B}\abs{\nabla_{x}g}^{2}\right)^{1/2}
\end{align}
for any $\gamma >0$. 

For term $\RNum{2}$, we first fix a cube $Q\subset\Rn$ such that $B\subseteq Q\subseteq \sqrt{n}B$. We then use the cancellation bound (\ref{cancellation bound}), Holder's inequality and the Fefferman--Phong inequality (Lemma~\ref{improved Fefferman--Phong inequality} with $p=2$) to obtain
\begin{align}\begin{split}\label{estimate of the second term using the cancellation bound and FP}
\abs{(g)_B}\left(\dashint_{B}\abs{\nabla_{\mu}\left(1+r^{2}H\right)^{-\omega k}\left(1\right)}^{p_0}\right)^{1/p_0}
    &\lesssim \abs{(g)_B}\min\set{r^{-1},\left(\dashint_{B} V\right)^{1/2}}\\
    &\lesssim \left(\dashint_{Q}\abs{g}^{2}\right)^{1/2}\min\set{\ell(Q)^{-1} , \left(\dashint_{Q}V\right)^{1/2}}\\
&\lesssim\left(\dashint_{Q}\abs{\nabla_{\mu}g}^{2}\right)^{1/2}\lesssim \left(\dashint_{2^{j_{0}+1}B}\abs{\nabla_{\mu}g}^{2}\right)^{1/2},
\end{split}
\end{align}
where the integer $j_{0}\geq 1$ is chosen so that $\sqrt{n}B\subseteq 2^{j_{0}+1}B$.  

We can now combine \eqref{estimate first term} and \eqref{estimate of the second term using the cancellation bound and FP} to obtain
\begin{align*}
\left(\dashint_{B}\abs{\nabla_{\mu}\left(1+r^{2}H\right)^{-\omega k}g}^{p_0}\right)^{1/p_0}
    \lesssim\sum_{j=1}^{\infty}2^{-j\gamma}2^{\frac{3nj}{2}}\left(\dashint_{2^{j+1}B}\abs{\nabla_{\mu}g}^{2}\right)^{1/2}.
\end{align*}
We choose $\gamma >\frac{3n}{2}$ to prove \eqref{claim to be proven to obtain second condition for boundedness of riesz transform}.
\vspace{6pt}
\item It remains to prove \eqref{cancellation bound}. We first prove, for all integers $m\geq\omega$, that 
\begin{equation}\label{elementary bound in cancellation bound}
    \left(\dashint_{B}\abs{\nabla_{\mu}(1+r^{2}H)^{-m}(1)}^{p_0}\right)^{1/{p_0}}\lesssim r^{-1}.
\end{equation}
By definition (recall \cite[Proposition 5.1]{AuscherEgert}), we have 
\begin{equation*}
    t\nabla_{\mu}(1+t^{2}H)^{-m}(1):=\sum_{j=1}^{\infty}t\nabla_{\mu}(1+t^{2}H)^{-m}\left(\ind{C_{j}(B)}\right),
\end{equation*}
with convergence in $\El{p_0}_{\textup{loc}}(\Rn)$. Therefore, using the $\El{2}-\El{p_0}$ off-diagonal estimates of (arbitrarily large) order $\gamma>\frac{n}{2}$ for the family $\{t\nabla_{\mu}(1+t^{2}H)^{-m}\}_{t>0}$, we obtain
\begin{align*}
    \norm{\nabla_{\mu}(1+r^{2}H)^{-m}(1)}_{\El{p_0}(B)}&= r^{-1}\norm{r\nabla_{\mu}(1+r^{2}H)^{-m}(1)}_{\El{p_0}(B)}\\
    &\leq r^{-1}\sum_{j=1}^{\infty}\norm{r\nabla_{\mu}(1+r^{2}H)^{-m}\left(\ind{C_{j}(B)}\right)}_{\El{p_0}(B)}\\
    &\lesssim r^{-1+\frac{n}{p_0}-\frac{n}{2}}\sum_{j=1}^{\infty} 2^{-j\gamma} \meas{2^{j+1}B}^{\frac{1}{2}}\lesssim r^{-1+\frac{n}{p_0}}\sum_{j=1}^{\infty} 2^{j(\frac{n}{2}-\gamma)}\lesssim r^{-1}\meas{B}^{\frac{1}{p_0}},
\end{align*}
and dividing by $\meas{B}^{\frac{1}{p_0}}$ gives (\ref{elementary bound in cancellation bound}).

To prove the estimate (\ref{cancellation bound}), we observe that (\ref{elementary bound in cancellation bound}) reduces matters to proving that 
\begin{align}\label{basically the cancellation bound up to factor}
    \norm{\nabla_{\mu}(1+r^{2}H)^{-m}(1)}_{\El{p_0}(B)}\lesssim \meas{B}^{\frac{1}{p_{0}}}\left(\dashint_{B}V\right)^{\frac{1}{2}}
\end{align}
under the assumption that $r^{-1}\geq \left(\dashint_{B} V\right)^{\frac{1}{2}}$.
We will estimate $\norm{\nabla_{\mu}(1+r^{2}H)^{-m}(1)}_{\El{p_0}(B)}$ by duality. Let us first observe that if $k\geq 1$ is an integer, then it follows from the definition that $r\nabla_{\mu}(1+r^{2}H)^{-k}(1):=\sum_{j=1}^{\infty}r\nabla_{\mu}(1+r^{2}H)^{-k}\left(\ind{C_{j}(B(0,1))}\right)$, with convergence in $\El{2}_{\textup{loc}}(\Rn)$. Therefore, for any $\varphi=(\varphi_{\parallel},\varphi_{\mu})\in C^{\infty}_{c}(\Rn,\C^{n+1})$ we have 
\begin{align*}
\begin{split}
\inner{\nabla_{\mu}(1+r^{2}H)^{-k}(1)}{\varphi}&=\sum_{j=1}^{\infty}\inner{\nabla_{\mu}(1+r^{2}H)^{-k}\left(\ind{C_{j}(B(0,1))}\right)}{\varphi}\\
&=\sum_{j=1}^{\infty}\inner{\ind{C_{j}(B(0,1))}}{(1+r^{2}H^{*})^{-k}\!\left(\nabla_{\mu}^{*}\varphi\right)}\!\!=\!\!\int_{\Rn}\clos{(1+r^{2}H^{*})^{-k}\!\left(\nabla_{\mu}^{*}\varphi\right)},
\end{split}
\end{align*}
by duality and dominated convergence, using similarity and Lemma~\ref{off-diag + cpct supp ==> integrable} applied to $H^{\sharp}$ to deduce that $(1+r^{2}H^{*})^{-k}\left(\nabla_{\mu}^{*}\varphi\right)=A_{\perp\perp}^{*}(1+r^{2}H^{\sharp})^{-k}\left(b^{*}\nabla_{\mu}^{*}\varphi\right)\in \Ell{1}$,
since $b^{*}\nabla_{\mu}^{*}\varphi\in \El{2}_{c}(\Rn)$.
By telescopic summation we have
\begin{align*}
    (1+r^{2}H^{*})^{-m} &= 1+ \sum_{k=1}^{m} \left((1+r^{2}H^{*})^{-k}-(1+r^{2}H^{*})^{-(k-1)}\right)=1-\sum_{k=1}^{m} r^{2}H^{*}(1+r^{2}H^{*})^{-k}.
\end{align*}
As a consequence, we obtain that
\begin{align}
\begin{split}
    \abs{\inner{\nabla_{\mu}(1+r^{2}H)^{-m}(1)}{\varphi}}&=\abs{\int_{\Rn}(1+r^{2}H^{*})^{-m}\left(\nabla_{\mu}^{*}\varphi\right)}\\
    &\leq \abs{\int_{\Rn}\nabla_{\mu}^{*}\varphi} + \sum_{k=1}^{m}\abs{\int_{\Rn}r^{2}H^{*}(1+r^{2}H^{*})^{-k}(\nabla_{\mu}^{*}\varphi)}\\
    &=\abs{\int_{\Rn}\nabla_{\mu}^{*}\varphi} + \sum_{k=1}^{m}\abs{\int_{\Rn}r^{2}A_{\perp\perp}^{*}H^{\sharp}(1+r^{2}H^{\sharp})^{-k}(b^{*}\nabla_{\mu}^{*}\varphi)}\\
    &=\abs{\int_{\Rn}V^{1/2}\varphi_{\mu}} + \sum_{k=1}^{m}\abs{\int_{\Rn}r^{2}a^{*}V(1+r^{2}H^{\sharp})^{-k}(b^{*}\nabla_{\mu}^{*}\varphi)}\label{last two terms to est}\\
    &=\abs{\int_{\Rn}V^{1/2}\varphi_{\mu}} + \sum_{k=1}^{m}\abs{\int_{\Rn}r^{2}a^{*}b^{*}V(1+r^{2}H^{*})^{-k}(\nabla_{\mu}^{*}\varphi)},
    \end{split}
\end{align}
where we have used similarity between $H^{\sharp}$ and $H^{*}$, Lemma~\ref{cancellation lemma} applied to $H^{\sharp}$, and the fact that $\nabla_{\mu}^{*}\varphi=-\textup{div}\varphi_{\parallel}+V^{1/2}\varphi_{\mu}$ with $\int_{\Rn}\textup{div}{\varphi_{\parallel}}=0$.

We first estimate the first term in the last line of \eqref{last two terms to est}. Recall that $V\in \textup{RH}^{\beta}(\Rn)$ for some $\beta>\frac{n}{2}$. Since $\frac{p_0}{2}\leq \beta$, Hölder's inequality yields
\begin{align*}
    \norm{V^{1/2}}_{\El{p_0}(B)}&=\meas{B}^{1/p_{0}}\left(\dashint_{B}V^{\frac{p_{0}}{2}}\right)^{\frac{1}{p_0}}
    \leq \meas{B}^{1/p_{0}}\left(\left(\dashint_{B}V^{\beta}\right)^{\frac{1}{\beta}}\right)^{\frac{1}{2}}
    \lesssim \meas{B}^{1/p_{0}}\left(\dashint_{B}V\right)^{\frac{1}{2}}.
\end{align*}
Hence, another application of Hölder's inequality gives
\begin{align}\label{intermediate step proof riesz trsf1}
    \abs{\int_{\Rn}V^{1/2}\varphi_{\mu}}\leq \norm{V^{1/2}}_{\El{p_0}(B)}\norm{\varphi}_{L^{p'_{0}}(B)}\lesssim\meas{B}^{1/p_{0}}\left(\dashint_{B}V\right)^{\frac{1}{2}}\norm{\varphi}_{L^{p'_{0}}(B)}.
\end{align}

We now estimate the second term in the last line of (\ref{last two terms to est}). We claim that there is an exponent $q_{0}\in (1,2\beta]$ such that 
\begin{equation}\label{conditions to be imposed on the exponent q to make the argument work}
    \frac{1}{p_{0}}\leq \frac{1}{q_{0}} + \frac{1}{2\beta} < \frac{1}{{p_{0}}_{*}}=\frac{1}{p_{0}}+\frac{1}{n}.
\end{equation}
Indeed, it suffices to pick  $\frac{1}{q_{0}}\in[\frac{1}{p_0}-\frac{1}{2\beta} , \frac{1}{p_0}-\frac{1}{2\beta}+\frac{1}{n})\cap [\frac{1}{2\beta},1)$. This intersection is non empty since $\frac{1}{\beta}<\frac{1}{p_0} + \frac{1}{n}$ (as $p_{0}<\beta^{*}$) and $\frac{1}{p_{0}}-\frac{1}{2\beta}<1$ (as $p_{0}>2)$.
Let us now fix an integer $1\leq k\leq m$. We can apply Hölder's inequality with the exponent $q_{0}>1$ to get
\begin{align}\label{intermediate inequality to prove Blunck Kunstmann criterion satisfied}
    \abs{\int_{\Rn}r^{2}a^{*}b^{*}V(1+r^{2}H^{*})^{-k}(\nabla_{\mu}^{*}\varphi)}&\lesssim\sum_{j=1}^{\infty}\int_{C_{j}(B)}\abs{rV^{1/2}}\abs{rV^{1/2}(1+r^{2}H^{*})^{-k}(\nabla_{\mu}^{*}\varphi)}\nonumber\\
    &\leq \sum_{j=1}^{\infty}\norm{rV^{1/2}}_{\El{q_{0}}(C_{j}(B))}\!\norm{rV^{1/2}(1\!+\! r^{2}H^{*})^{-k}(\nabla_{\mu}^{*}\varphi)}_{\El{q_{0}'}(C_{j}(B))}.
\end{align}
First, since $\frac{q_{0}}{2}\leq\beta$ and $V\in\textup{RH}^{\beta}$, we can use Hölder's inequality to estimate
\begin{align}\label{intermediate inequality to prove Blunck Kunstmann criterion satisfied_sub1}
    \norm{rV^{1/2}}_{\El{q_{0}}(C_{j}(B))}&\leq r\left[\left(\int_{2^{j+1}B}V^{\frac{q_{0}}{2}}\right)^{\frac{2}{q_{0}}}\right]^{\frac{1}{2}}=r\meas{2^{j+1}B}^{\frac{1}{q_{0}}}\left[\left(\dashint_{2^{j+1}B}V^{\frac{q_{0}}{2}}\right)^{\frac{2}{q_{0}}}\right]^{\frac{1}{2}}\nonumber\\
    &\leq r\meas{2^{j+1}B}^{\frac{1}{q_{0}}}\left[\left(\dashint_{2^{j+1}B}V^{\beta}\right)^{\frac{1}{\beta}}\right]^{\frac{1}{2}}\lesssim r\meas{2^{j+1}B}^{\frac{1}{q_{0}}-\frac{1}{2}}\left(\int_{2^{j+1}B}V\right)^{\frac{1}{2}}.
\end{align}

Let us now estimate the second factor in \eqref{intermediate inequality to prove Blunck Kunstmann criterion satisfied}.
Let $s_{\beta}$ be defined by $\frac{1}{s_{\beta}}:=\frac{1}{q_{0}}+\frac{1}{2\beta}$, and note that (\ref{conditions to be imposed on the exponent q to make the argument work}) shows that $s_{\beta}\in({p_{0}}_{*} , p_{0}]$. 
Let the conjugate exponent $s'_{\beta}$ be defined by $\frac{1}{s'_{\beta}}:=1-\frac{1}{s_{\beta}}$. Since $\frac{1}{q_{0}'}=\frac{1}{2\beta}+\frac{1}{s_{\beta}'}$, Hölder's inequality and the fact that $V\in\textup{RH}^{\beta}(\Rn)$ yield
\begin{align*}
    \norm{V^{1/2}g}_{\El{q_{0}'}(C_{j}(B))}\leq \left[\left(\int_{C_{j}(B)}V^{\beta}\right)^{\frac{1}{\beta}}\right]^{\frac{1}{2}}\norm{g}_{\El{s'_{\beta}}(C_{j}(B))}\lesssim \!\meas{2^{j+1}B}^{\frac{1}{2\beta}-\frac{1}{2}}\!\!\left(\int_{2^{j+1}B}V\right)^{\frac{1}{2}}\norm{g}_{\El{s'_{\beta}}(C_{j}(B))}\!.
\end{align*}
Applying this to $g:=\left(r\nabla_{\mu}(1+r^{2}H)^{-k}\right)^{*}\varphi$, we obtain
\begin{align}
    &\norm{rV^{1/2}(1+r^{2}H^{*})^{-k}(\nabla_{\mu}^{*}\varphi)}_{\El{q_{0}'}(C_{j}(B))}= \norm{V^{1/2}\left(r\nabla_{\mu}(1+r^{2}H)^{-k}\right)^{*}\varphi}_{\El{q'_{0}}(C_{j}(B))}\nonumber\\
    &\lesssim \meas{2^{j+1}B}^{\frac{1}{2\beta}-\frac{1}{2}}\left(\int_{2^{j+1}B}V\right)^{\frac{1}{2}}\norm{\left(r\nabla_{\mu}(1+r^{2}H)^{-k}\right)^{*}\varphi}_{\El{s'_{\beta}}(C_{j}(B))}.
    \label{intermediate inequality to prove Blunck Kunstmann criterion satisfied_sub2}
\end{align}

 Since $s_{\beta}\in({p_{0}}_{*} , p_{0}]$, Lemma~\ref{ode resolvents lemma for riesz transform bounds}.(ii) implies that the family $\{t\nabla_{\mu}(1+t^{2}H)^{-k}\}_{t>0}$ satisfies $\El{s_{\beta}}-\El{p_0}$ off-diagonal estimates, whence the dual family $\set{\left(t\nabla_{\mu}(1+t^{2}H)^{-k}\right)^{*}}_{t>0}$ satisfies $\El{p'_{0}}-\El{s'_{\beta}}$ off-diagonal estimates of arbitrarily large order.
Since $\frac{1}{s'_{\beta}}-\frac{1}{p'_{0}}=\frac{1}{p_0}-\frac{1}{s_{\beta}}$, this means that we have the following estimate for all $\gamma>0$ and $j\geq 1$:
\begin{align}\label{intermediate inequality to prove Blunck Kunstmann criterion satisfied_sub3}
    \norm{\left(r\nabla_{\mu}(1+r^{2}H)^{-k}\right)^{*}\varphi}_{\El{s'_{\beta}}(C_{j}(B))}&\lesssim r^{\frac{n}{s'_{\beta}}-\frac{n}{p'_{0}}}\left(1+\frac{\dist(B,C_{j}(B))}{r}\right)^{-\gamma}\norm{\varphi}_{\El{p'_{0}}(B)}\nonumber\\
    &\lesssim r^{\frac{n}{p_{0}}-\frac{n}{s_{\beta}}}2^{-j\gamma}\norm{\varphi}_{\El{p'_{0}}(B)}.
\end{align}

We can now combine the estimates \eqref{intermediate inequality to prove Blunck Kunstmann criterion satisfied}, \eqref{intermediate inequality to prove Blunck Kunstmann criterion satisfied_sub1}, \eqref{intermediate inequality to prove Blunck Kunstmann criterion satisfied_sub2} and \eqref{intermediate inequality to prove Blunck Kunstmann criterion satisfied_sub3} and use the doubling property of $V$ (with doubling constant $\kappa$ from Lemma~\ref{self-improvement property of reverse holder weights}.(ii)) to get, for any $k\in\set{1,\ldots,m}$,
\begin{align}
\begin{split}
\label{intermediate step proof riesz trsf2}
    \abs{\int_{\Rn}r^{2}a^{*}b^{*}V(1+r^{2}H^{*})^{-k}(\nabla_{\mu}^{*}\varphi)}&\leq \sum_{j=1}^{\infty}\norm{rV^{1/2}}_{\El{q_{0}}(C_{j}(B))}\norm{rV^{1/2}(1+r^{2}H^{*})^{-k}(\nabla_{\mu}^{*}\varphi)}_{\El{q_{0}'}(C_{j}(B))}\\
    &\lesssim \sum_{j=1}^{\infty} \meas{2^{j+1}B}^{\frac{1}{2\beta}+\frac{1}{q_{0}}-1}r^{1+\frac{n}{p_{0}}-\frac{n}{s_{\beta}}}\left(\int_{2^{j+1}B}V\right)2^{-j\gamma}\norm{\varphi}_{\El{p'_{0}}(B)}\\
    &\lesssim\sum_{j=1}^{\infty} \meas{2^{j+1}B}^{\frac{1}{s_{\beta}}-1}r^{1+\frac{n}{p_{0}}-\frac{n}{s_{\beta}}}\kappa^{j+1}\left(\int_{B}V\right)2^{-j\gamma}\norm{\varphi}_{\El{p'_{0}}(B)}\\
    &\lesssim r^{1+\frac{n}{p_0}}\left(\dashint_{B}V\right)\norm{\varphi}_{\El{p'_{0}}(B)}\left(\sum_{j=1}^{\infty}2^{-j\gamma}2^{jn\left(\frac{1}{s_{\beta}}-1\right)}\kappa^{j+1}\right)\\
    &\lesssim r^{1+\frac{n}{p_0}}\left(\dashint_{B}V\right)\norm{\varphi}_{\El{p'_{0}}(B)},
    \end{split}
\end{align}
provided we choose $\gamma>n\left(1-\frac{1}{s_{\beta}}\right)$ + $\log_{2}(\kappa)$.

Combining \eqref{intermediate step proof riesz trsf1} and \eqref{intermediate step proof riesz trsf2} in \eqref{last two terms to est}, with the assumption $r^{-1}\geq \left(\dashint_{B}V\right)^{\frac{1}{2}}$, we obtain
\begin{align*}
\begin{split}
    \abs{\inner{\nabla_{\mu}(1+r^{2}H)^{-m}(1)}{\varphi}}
    &\lesssim \meas{B}^{1/p_{0}}\left(\dashint_{B}V\right)^{\frac{1}{2}}\norm{\varphi}_{\El{p'_{0}}(B)} +\sum_{k=1}^{m}r^{1+\frac{n}{p_0}}\left(\dashint_{B}V\right)\norm{\varphi}_{\El{p'_{0}}(B)}\\
    &\lesssim \meas{B}^{\frac{1}{p_{0}}}\left[\left(\dashint_{B}V\right)^{\frac{1}{2}} + r\dashint_{B}V\right]\norm{\varphi}_{\El{p'_{0}}(B)}\\
    &\lesssim \meas{B}^{\frac{1}{p_{0}}}\left(\dashint_{B}V\right)^{\frac{1}{2}}\norm{\varphi}_{\El{p'_{0}}(B)}
    \end{split}
\end{align*}
for all $\varphi\in C^{\infty}_{c}(B)$, hence \eqref{basically the cancellation bound up to factor} holds. This proves the cancellation estimate \eqref{cancellation bound}, so the entire proof is complete.\qedhere
\end{enumerate}
\end{proof}

\begin{rem}
    We could have studied the $\El{p}$-boundedness of the components of $R_{H}$ separately by introducing the operators $R_{H}^{\parallel}:\Ell{2}\to\El{2}(\Rn;\C^{n})$ and $R_{H}^{\mu}:\Ell{2}\to\Ell{2}$ such that $R_{H}:=\begin{bmatrix}
        R_{H}^{\parallel}\\
        R_{H}^{\mu}
    \end{bmatrix}$, with $R_{H}^{\parallel}=\nabla_{x}H^{-1/2}$ and $R_{H}^{\mu}=V^{1/2}H^{-1/2}$ on $\ran{H^{1/2}}$. Inspection of the proof above reveals that if $n\geq 3$, $q\geq \max\set{\frac{n}{2},2}$ and $V\in\textup{RH}^{q}(\Rn)$, then for a given $p\in\left(2,q_{+}(H)\right)$, the operator $R_{H}^{\parallel}$ is $\El{p}$-bounded if $p<q^{*}$, whereas $R^{\mu}_{H}$ is $\El{p}$-bounded provided $p\leq 2q$. As was previously noted, since $q\geq \frac{n}{2}$, $p\leq 2q$ automatically implies that $p<q^{*}$. Let us also remark that this is consistent with \cite[Theorems 0.5 and 5.10]{Shen_Schrodinger}.
\end{rem}

\section{Identification of the Hardy spaces $\bb{H}^{p}_{H}$ and $\bb{H}^{1,p}_{H}$}\label{section on identification of H adapted Hardy spaces}
This section is concerned with the identification of the operator-adapted spaces $\bb{H}^{p}_{H}$ and $\bb{H}^{1,p}_{H}$ with “concrete" function spaces. In fact, we shall prove the following theorem. We recall here that $b$ is the bounded accretive function defined as $b(x)=A_{\perp\perp}(x)^{-1}$ for almost every $x\in\Rn$, and that the (quasi)norm on $b\textup{H}^{p}_{V,\textup{pre}}(\Rn)$ is defined as $\norm{f}_{b\textup{H}^{p}_{V}}:=\norm{b^{-1}f}_{\textup{H}^{p}_{V}}$ for all $f\in b\textup{H}^{p}_{V,\textup{pre}}$.
\begin{thm}\label{summary of identifications of H adapted Hardys spaces}
    Let $n\geq 3$, $q>\max\{\frac{n}{2},2\}$, $V\in\textup{RH}^{q}(\Rn)$, and let $\delta=2-\frac{n}{q}$. The following identifications hold with equivalent $p$-(quasi)norms:
\begin{equation*}
\begin{array}{ll}
  \bb{H}^{p}_{H}=b\textup{H}^{p}_{V,\textup{pre}}, &  \textup{ if }p\in(p_{-}(H)\vee \frac{n}{n+\delta/2}\; ,p_{+}(H));\\
 \bb{H}^{1,p}_{H}=\dot{\mathcal{V}}^{1,p}\cap\El{2}, & \textup{ if }p\in (p_{-}(H)_{*}\; , q_{+}(H))\cap(1,2q]; \\  
\bb{H}^{1,p}_{H}\cap\mathcal{V}^{1,2}=\dot{\textup{H}}^{1,p}_{V,\textup{pre}}, & \textup{ if }p\in(p_{-}(H)_{*}\vee \frac{n}{n+\delta/2}\, ,1]\cap \mathcal{I}(V).
    \end{array}
        \end{equation*}
\end{thm}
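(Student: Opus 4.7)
The plan is to adapt the identification procedure of \cite[Theorem 9.7]{AuscherEgert} to the Schrödinger setting, treating the three identifications in sequence and exploiting the diagram in Figure \ref{figure relations between adapted Hardy spaces} together with the Riesz transform bound of Theorem \ref{thm boundedness of Riesz transforms on Lp full range} and the conservation property of Lemma \ref{cancellation lemma}.

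\textbf{Part 1.} For $p>1$, Lemma \ref{identification of Dziubanski hardy spaces for p>=1} reduces the claim to $\bb{H}^{p}_{H}=bL^p\cap L^2$ with equivalent norms, and since $p\in\mathcal{J}(H)$ the standard square function characterization of $L^p$ by tent spaces (together with the $b\textup{H}^p_V$-bounded resolvents defining $\mathcal{J}(H)$) yields the identification. For $p\leq 1$, I would combine the abstract molecular decomposition of $\bb{H}^{p}_{H}$ recalled in Section \ref{subsubsection on molecular characterisation of op adapted hardy spaces} with the concrete molecule theory of Section \ref{section on atomic decomposition of hardy dziubanski spaces}: given an abstract $(\bb{H}^p_H,\epsilon,M)$-molecule $m=(t^{2}H)^{M}g$ with $\supp g\subseteq Q$, the $L^2$-annular decay of $b^{-1}m$ follows from off-diagonal bounds for $\{(t^{2}H)^{M}\}$, while the crucial cancellation condition of Definition \ref{molecule Dziubanski hardy space}, namely $|\int b^{-1}m|\lesssim |Q|^{1-1/p}\min\{1,(\ell(Q)^{2}\dashint_{Q}V)^{1/2}\}$, is extracted from Lemma \ref{cancellation lemma} by replacing $\int A_{\perp\perp}H(1+t^{2}H)^{-k}g$ with $\int aV(1+t^{2}H)^{-k}g$ and then applying the Fefferman--Phong inequality (Proposition \ref{improved Fefferman--Phong inequality}) to obtain the $\min\{1,\cdot\}$ factor. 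Theorem \ref{molecules are in Dziubanski hardy space with uniform bound} then yields $\bb{H}^{p}_{H}\subseteq b\textup{H}^{p}_{V,\textup{pre}}$ whenever $p>\frac{n}{n+\delta/2}$. The reverse inclusion proceeds from the atomic decomposition of Theorem \ref{Dziubanski atomic decomposition L^2 local bounded with cpct support}, verifying a uniform bound $\|ba\|_{\bb{H}^{p}_{H}}\lesssim 1$ for each $\textup{H}^{p}_{V}$-atom $a$ via the off-diagonal estimates of $\{(1+t^{2}H)^{-1}\}$ and the size/cancellation properties of atoms.

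\textbf{Part 2.} From Figure \ref{figure relations between adapted Hardy spaces}, the map $H^{1/2}:\bb{H}^{1,p}_{H}\cap\dom{H^{1/2}}\to\bb{H}^{p}_{H}\cap\ran{H^{1/2}}$ is an isomorphism with $\|f\|_{\bb{H}^{1,p}_{H}}\eqsim \|H^{1/2}f\|_{\bb{H}^{p}_{H}}$. Since $p>1$, Part 1 applies ($p>p_{-}(H)_{*}\geq p_{-}(H)$, $p<q_{+}(H)\leq p_{+}(H)^{*}$ by Proposition \ref{properties of critical numbers proposition}) and produces $\|f\|_{\bb{H}^{1,p}_{H}}\eqsim \|A_{\perp\perp}H^{1/2}f\|_{L^{p}}\eqsim \|H^{1/2}f\|_{L^{p}}$. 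The forward Riesz bound $\|\nabla_{\mu}f\|_{L^{p}}\lesssim \|H^{1/2}f\|_{L^{p}}$ is exactly Theorem \ref{thm boundedness of Riesz transforms on Lp full range}, valid on $(p_{-}(H),q_{+}(H))\cap(1,2q]$, which covers the given range. The reverse bound $\|H^{1/2}f\|_{L^{p}}\lesssim \|\nabla_{\mu}f\|_{L^{p}}$ is obtained by duality against the similar operator $H^{\sharp}$ (Section \ref{subsubsection on adjoints and similarity}): applying Theorem \ref{thm boundedness of Riesz transforms on Lp full range} to $H^{\sharp}$ at the exponent $p'$ (which lies in the analogous range $(p_{-}(H^{\sharp}),q_{+}(H^{\sharp}))\cap(1,2q]$) and unwinding the similarity gives the reverse bound. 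Density of $C^{\infty}_{c}$ in $\dot{\mathcal{V}}^{1,p}\cap L^{2}$ from Corollary \ref{density in mixed V adapted sobolev spaces} completes the argument.

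\textbf{Part 3.} The same $H^{1/2}$-isomorphism and Part 1 give $\|f\|_{\bb{H}^{1,p}_{H}}\eqsim \|A_{\perp\perp}H^{1/2}f\|_{\textup{H}^{p}_{V}}$, so it remains to show $\|A_{\perp\perp}H^{1/2}f\|_{\textup{H}^{p}_{V}}\eqsim \|H_{0}^{1/2}f\|_{\textup{H}^{p}_{V}}$ for $f\in\mathcal{V}^{1,2}$. The forward direction would be derived from the Hardy-scale analogue of the Riesz transform bound: since Part 1 identifies the Hardy space $\bb{H}^{p}_{\widetilde{M}}\cap\ran{\nabla_{\mu}}$ appearing in Figure \ref{figure relations between adapted Hardy spaces} with a concrete Hardy-type space on the range side, the intertwining relations (Lemma \ref{intertwining relations}) translate $A_{\perp\perp}H^{1/2}f$ into a bounded expression in $\nabla_{\mu}f$. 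The reverse bound $\|H_{0}^{1/2}f\|_{\textup{H}^{p}_{V}}\lesssim \|\nabla_{\mu}f\|_{\textup{H}^{p}_{V}}$ is precisely the defining condition of $p\in\mathcal{I}(V)$ (characterized in Lemma \ref{characterisation of I(V) in terms of boundedness of the adjoint of Riesz transform}), which is assumed; this replaces the atomic decomposition of $\nabla_{\mu}f$ with atoms in $\ran{\nabla_{\mu}}$ that would otherwise be required.

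\textbf{Main obstacle.} The principal technical difficulty is the molecular identification in Part 1, where the concrete cancellation condition of Definition \ref{molecule Dziubanski hardy space} incorporates a Schrödinger-specific factor $\min\{1,(\ell(Q)^{2}\dashint_{Q}V)^{1/2}\}$ that has no analogue in the $V\equiv 0$ setting of \cite{AuscherEgert}. Extracting this factor from an abstract molecule forces the use of Lemma \ref{cancellation lemma} to reroute the cancellation through the potential, and the Fefferman--Phong inequality of Proposition \ref{improved Fefferman--Phong inequality} to produce the correct quantitative bound. A secondary obstacle is the reverse Riesz bound at the Hardy-space scale in Part 3, precisely the reason the auxiliary set $\mathcal{I}(V)$ must be introduced as an hypothesis.
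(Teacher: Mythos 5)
Your outline reproduces the top-level architecture (molecules for $p\leq 1$, the $H^{1/2}$/$\nabla_{\mu}$ passage through Figure~\ref{figure relations between adapted Hardy spaces}, duality with $H^{\sharp}$ for $p>2$, and the role of $\mathcal{I}(V)$), but it omits the actual engine of the proof for $p\leq 2$ and the substitute you propose does not cover the stated ranges. The inclusion $\El{p}\cap\El{2}\subseteq \bb{H}^{p}_{H}$ for $p_{-}(H)\vee 1<p\leq 2$, equivalently the square function bound $\norm{S_{\psi_{\alpha},H}(H^{1/2}f)}_{\El{p}}\lesssim\norm{\nabla_{\mu}f}_{\El{p}}$, is \emph{not} a consequence of $p\in\mathcal{J}(H)$ plus "standard square function characterisations": it is proved by the extrapolation Lemma~\ref{extrapolation lemma}, whose weak-type estimate rests on the $V$-adapted Calderón--Zygmund--Sobolev decomposition of Lemma~\ref{lemma V-adapted Calderon--Zygmund--Sobolev decomposition} (the Schrödinger analogue of \cite[Lemma 9.10]{AuscherEgert}). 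Neither object appears in your proposal. The same omission breaks Parts 2 and 3 at the lower endpoint: your chain $\norm{f}_{\bb{H}^{1,p}_{H}}\eqsim\norm{H^{1/2}f}_{\bb{H}^{p}_{H}}\eqsim\norm{H^{1/2}f}_{\El{p}}$ requires the zeroth-order identification \emph{at the same exponent} $p$, hence only works for $p>p_{-}(H)$, whereas the theorem reaches down to $p_{-}(H)_{*}$; that Sobolev gain of one step is exactly what the Calderón--Zygmund--Sobolev decomposition buys (one iteration of Lemma~\ref{extrapolation lemma}). Your duality substitute for the reverse Riesz bound also fails quantitatively in the range $1<p<2$: it needs $R_{H^{\sharp}}$ bounded on $\El{p'}$ with $p'>2$, which by Theorem~\ref{thm boundedness of Riesz transforms on Lp full range} requires $p'<q_{+}(H^{\sharp})$ and $p'\leq 2q$, conditions that are violated as soon as $p$ approaches $p_{-}(H)_{*}\vee 1$ (indeed $p'\leq 2q$ already forces $p\geq (2q)'$). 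Duality is used in the paper only for $p>2$ (Lemma~\ref{continuous inclusions q>2 abstract bisectorial adapted hardy spaces} and Proposition~\ref{identification abstract hardy space H^p p>2}), where $p'<2$ and the Riesz bound at $p'$ is unconditional.

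A secondary but concrete error is the mechanism you propose for the molecular cancellation bound. The paper does not extract $\abs{\int_{\Rn}m}\lesssim\meas{Q}^{1-1/p}\min\{1,(\ell(Q)^{2}\dashint_{Q}V)^{1/2}\}$ from Lemma~\ref{cancellation lemma} or from Fefferman--Phong (which bounds $\int_{Q}\abs{f}^{p}$ \emph{by} gradient terms, i.e.\ it points the wrong way for producing this factor). Instead, Lemma~\ref{abstract molecules are concrete dziubanski molecules} works with $(\bb{H}^{p}_{D},\eps_{0},1)$-molecules $m=Db$ coming from the $DB$-decomposition of Figure~\ref{figure relations between adapted Hardy spaces}: the components $\int_{\Rn}\nabla_{x}b_{\perp}$ and $\int_{\Rn}\dvg b_{\parallel}$ vanish, leaving only $\int_{\Rn}V^{1/2}b_{\perp}$ and $\int_{\Rn}V^{1/2}b_{\mu}$, which are estimated by Cauchy--Schwarz on annuli together with the doubling property of $V$ after choosing $\eps_{0}$ large; the $\min\{1,\cdot\}$ is the minimum of this bound and the trivial $\El{1}$ bound. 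Your version starting from $m=(t^{2}H)^{M}g$ would additionally need control of $V^{1/2}g$ on annuli, which is not part of the abstract molecule data, so the reduction to $D$-molecules is not cosmetic. Finally, for the converse inclusion at $p\leq 1$ the paper decomposes into the abstract DKKP atoms of Theorem~\ref{abstract atomic decomposition DKKP} (whose structure $a=H_{0}^{m}b$ is needed to run the argument of \cite[Lemma 9.17]{AuscherEgert}), not directly into the Dziuba\'nski--Zienkiewicz atoms of Theorem~\ref{Dziubanski atomic decomposition L^2 local bounded with cpct support} as you suggest.
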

We shall mostly follow the strategy used in \cite[Chapter 9]{AuscherEgert}. The theorem is obtained by combining Corollary~\ref{characterisation abstract H^p p<2} and Proposition~\ref{identification abstract hardy space H^p p>2}. We note that if $p\in(\frac{n}{n+1},\infty)$ is such that $\bb{H}^{p}_{H}=b\textup{H}^{p}_{V,\textup{pre}}$ with equivalent $p$-(quasi)norms, then $p\in\mathcal{J}(H)$. This follows from the sectorial version of \cite[Proposition 8.10]{AuscherEgert} as in Part 1 of the proof of \cite[Theorem 9.7]{AuscherEgert}. This shows that the identification $\bb{H}^{p}_{H}=b\textup{H}^{p}_{V,\textup{pre}}$ in Theorem~\ref{summary of identifications of H adapted Hardys spaces} is the best possible when $V\in\textup{RH}^{\infty}(\Rn).$

\subsection{A Calderón--Zygmund--Sobolev decomposition adapted to singular potentials}
The main ingredient for the identification of the $H$-adapted Hardy spaces (more specifically the proof of Corollary~\ref{continuous inclusion L^p subset abstract H^p}) is a Calderón--Zygmund--Sobolev decomposition adapted to singular potentials. This was obtained by Auscher--Ben Ali \cite[Lemma 7.1]{Auscher-BenAli}. We recall their construction because we need certain additional properties. In particular, their result only treated smooth compactly supported functions $f$, and properties (v), (viii) and (ix) stated below were not stated. We also point out that there is an inaccuracy in the construction in \cite[Lemma 7.1]{Auscher-BenAli}, that dates back to \cite[Theorem 6]{Auscher2004}, where this Calderón--Zygmund decomposition for Sobolev functions was first introduced, but a correction has been provided in \cite{Auscher_erratum_Calderon_Sobolev}, which we follow.

\begin{lem}\label{lemma V-adapted Calderon--Zygmund--Sobolev decomposition}
    Let $n\geq 1$, $q>1$, $V\in\textup{RH}^{q}(\Rn)$ and $p\in[1,2]$. Let $f\in\mathcal{V}^{1,2}(\Rn)\cap\Dot{\mathcal{V}}^{1,p}(\Rn)$ and $\alpha>0$. There exists a sequence of cubes $\{Q_{k}\}_{k\geq 1}$, a sequence of functions $\{b_k\}_{k\geq 1}\subset \dot{\mathcal{V}}^{1,p}(\Rn)\cap \mathcal{V}^{1,2}(\Rn)$ and a function $g\in \Dot{\mathcal{V}}^{1,p}(\Rn)$ with the following properties:
    \begin{enumerate}[label=\emph{(\roman*)}]
        \item $f=g+\sum_{k\geq 1}b_k$ almost everywhere on $\Rn$, and in $\dot{\mathcal{V}}^{1,p}(\Rn)$;
        \item $b_k$ is supported in $Q_k$ for each $k\geq 1$;
        \item The cubes $\{Q_{k}\}_{k\geq 1}$ have bounded overlap (see Section~\emph{\ref{subsubsection on geometry in Rn}}) and satisfy \[\sum_{k\geq 1}\nolimits\meas{Q_k}\lesssim \meas{\bigcup_{k\geq 1}\nolimits Q_k}\lesssim \alpha^{-p}\norm{\nabla_{\mu}f}_{\Ell{p}}^{p};\]
        \item $\displaystyle\int_{Q_k}\abs{\nabla_{x}b_k}^{p}+\abs{V^{1/2}b_k}^{p}+\ell(Q_k)^{-p}\abs{b_k}^{p}\dd x\lesssim \alpha^{p}\meas{Q_k}$ for all $k\geq 1$;
        \item The series $\sum_{k\geq 1}\nabla_{\mu}b_k$ converges unconditionally in $\Ell{p}$, with \[\norm{\sum_{k\geq 1}\nolimits\nabla_{\mu}b_k}_{\El{p}}\lesssim \norm{\nabla_{\mu}f}_{\El{p}};\]
        \item $\norm{\nabla_{x}g}_{\El{\infty}}\lesssim \alpha;$
        \item $\norm{\nabla_{\mu}g}_{\El{r}}^{r}\lesssim \alpha^{r-p}\norm{\nabla_{\mu}f}_{\El{p}}^{p}$ for all $r\in \left[p,2\right]$;
        \item If $p<n$ and $q\in \left[1,p^{*}\right]$, then $\norm{b_k}_{\El{q}}^{q}\lesssim \alpha^{q}\meas{Q_k}^{1+\frac{q}{n}}$ for all $k\geq 1$. If $p\geq n$, then this holds for all $q\in[1,\infty)$;
        \item If $r\in\left[p,2\right]$ and $f\in\Dot{\mathcal{V}}^{1,r}(\Rn)$ (in addition to $f\in\mathcal{V}^{1,2}(\Rn)\cap\Dot{\mathcal{V}}^{1,p}(\Rn)$) then $b_k\in \mathcal{V}^{1,r}(\Rn)$ for all $k\geq 1$, and the series $\sum_{k\geq 1}\nabla_{\mu}b_k$ converges unconditionally in $\Ell{r}$.
    \end{enumerate}
\end{lem}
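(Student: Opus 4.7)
The plan is to follow the Calderón--Zygmund--Sobolev construction of Auscher--Ben Ali~\cite[Lemma~7.1]{Auscher-BenAli} (with the correction in~\cite{Auscher_erratum_Calderon_Sobolev}) and then verify the additional properties (v), (viii) and (ix). First, I would form the bad set $\Omega := \{x \in \Rn : \mathcal{M}(\abs{\nabla_\mu f}^p)(x) > \alpha^p\}$, perform a Whitney decomposition $\Omega = \bigcup_k Q_k$ with bounded overlap and $\ell(Q_k) \eqsim \dist(Q_k, \Rn \setminus \Omega)$, and fix a smooth partition of unity $\{\chi_k\}$ subordinate to $\{Q_k\}$ with $\norm{\nabla_x \chi_k}_{\Ell{\infty}} \lesssim \ell(Q_k)^{-1}$ and $\sum_k \chi_k = \ind{\Omega}$. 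Following the corrected construction, set $c_k := \av{Q_k} f$ when $\ell(Q_k) \leq \rho(x_{Q_k})$ and $c_k := 0$ otherwise, and define $b_k := (f-c_k)\chi_k$, $g := f - \sum_k b_k$. Properties (i)--(iv), (vi) and (vii) are then established as in~\cite[Lemma~7.1]{Auscher-BenAli}; the key point in verifying (iv) when $\ell(Q_k) > \rho(x_{Q_k})$ is that~\eqref{Shen eqn control of integral of potential by the critical radius function} gives $\ell(Q_k)^p \av{Q_k} V^{p/2} \gtrsim 1$ in this regime, so the Fefferman--Phong inequality (Proposition~\ref{improved Fefferman--Phong inequality}) yields $\norm{f}_{\El{p}(Q_k)} \lesssim \ell(Q_k)\norm{\nabla_\mu f}_{\El{p}(Q_k)} \lesssim \alpha \ell(Q_k)\meas{Q_k}^{1/p}$.

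For (v), the bounded overlap of $\{Q_k\}$ and (iv) give, for any finite $F \subset \N$,
\begin{equation*}
\norm{\sum_{k \in F}\nabla_\mu b_k}_{\Ell{p}}^p \lesssim \sum_{k \in F}\norm{\nabla_\mu b_k}_{\El{p}(Q_k)}^p \lesssim \alpha^p \sum_{k}\meas{Q_k} \lesssim \norm{\nabla_\mu f}_{\Ell{p}}^p,
\end{equation*}
using (iii) in the last inequality. The pointwise domination $\sum_k \abs{\nabla_\mu b_k}(x) \lesssim \abs{\nabla_\mu f}(x) + \alpha \ind{\Omega}(x) \in \Ell{p}$, valid because of bounded overlap, then yields unconditional $\Ell{p}$-convergence of $\sum_k \nabla_\mu b_k$ by dominated convergence; convergence of the decomposition in $\dot{\mathcal{V}}^{1,p}(\Rn)$ claimed in (i) is immediate.

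The main obstacle is (viii). When $\ell(Q_k) \leq \rho(x_{Q_k})$, the Sobolev--Poincaré inequality on $Q_k$ gives $\norm{b_k}_{\El{q}(Q_k)} \lesssim \ell(Q_k)\norm{\nabla_x b_k}_{\El{p}(Q_k)}$ for $q \in [1,p^*]$ (or any $q \in [1,\infty)$ when $p \geq n$), from which the claim follows via (iv). When $\ell(Q_k) > \rho(x_{Q_k})$ we have $b_k = f\chi_k \in \textup{W}^{1,p}_0(Q_k)$, and the Sobolev embedding combined with the Fefferman--Phong bound from the first paragraph gives $\norm{b_k}_{\El{p^*}(Q_k)} \lesssim \norm{\nabla_x b_k}_{\El{p}(Q_k)} \lesssim \alpha \meas{Q_k}^{1/p}$; since $p^{*}(1/p - 1/n) = 1$ implies $p^{*}/p = 1+p^*/n$, this rearranges to the desired $\El{p^*}$-bound, and the intermediate exponents follow by Hölder's inequality.

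Property (ix) follows by repeating the argument for (v) with $r$ in place of $p$: the pointwise bound $\abs{\nabla_\mu b_k} \lesssim (\abs{\nabla_\mu f} + \alpha)\ind{Q_k}$ together with bounded overlap give $\norm{\sum_{k\in F}\nabla_\mu b_k}_{\Ell{r}}^r \lesssim \norm{\nabla_\mu f}_{\Ell{r}}^r + \alpha^r\meas{\Omega}$, with $\meas{\Omega} < \infty$ by (iii), and unconditional $\Ell{r}$-convergence follows by dominated convergence. The membership $b_k \in \mathcal{V}^{1,r}(\Rn)$ is immediate since $b_k$ has compact support in $Q_k$, lies in $\El{2}(Q_k)$ by hypothesis on $f$ (hence in $\El{r}(Q_k)$ by Hölder since $r \leq 2$), and has an $\El{r}$-controlled $\nabla_\mu$ by construction.
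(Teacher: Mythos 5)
Your construction outline matches the paper's (the paper's "type 1/type 2" dichotomy is made by comparing $\av{Q_k}(V^{p/2})$ to $\ell(Q_k)^{-p}$ rather than $\ell(Q_k)$ to $\rho(x_{Q_k})$, but these are comparable under the reverse Hölder hypothesis, so that change is cosmetic). The genuine problem is the pointwise domination you invoke for (v) and (ix): neither $\sum_k\abs{\nabla_{\mu}b_k}(x)\lesssim\abs{\nabla_{\mu}f}(x)+\alpha\ind{\Omega}(x)$ nor $\abs{\nabla_{\mu}b_k}\lesssim(\abs{\nabla_{\mu}f}+\alpha)\ind{Q_k}$ holds pointwise. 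Writing $b_k=(f-c_k)\chi_k$, the term $(f-c_k)\nabla_x\chi_k$ contributes $\ell_k^{-1}\abs{f(x)-c_k}$, and for a Sobolev (not Lipschitz) function this is not bounded pointwise by $\alpha$: the construction only gives $\ell_k^{-p}\int_{Q_k}\abs{f-c_k}^{p}\lesssim\alpha^{p}\meas{Q_k}$ via Poincaré. The same issue affects $V^{1/2}\abs{b_k}$. For (v) this happens to be harmless — your finite-sum display already gives $\norm{\sum_{k\in F}\nabla_{\mu}b_k}_{\El{p}}^{p}\lesssim\alpha^{p}\sum_{k\in F}\meas{Q_k}$, and since $\sum_k\meas{Q_k}<\infty$ the tails vanish, which is the Cauchy criterion for unconditional convergence; the "dominated convergence" paragraph is both false and unnecessary.

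For (ix), however, the gap is fatal as written. You need the local $\El{r}$ analogue of (iv), namely $\norm{\nabla_{\mu}b_k}_{\El{r}(Q_k)}^{r}\lesssim\norm{\nabla_{\mu}f}_{\El{r}(Q_k)}^{r}+\alpha^{r}\meas{Q_k}$, and this does not follow by "repeating the argument for (v) with $r$ in place of $p$". It requires a Poincaré estimate in $\El{r}$ for the $\nabla_x$ part and, for the $V^{1/2}b_k$ part, the reverse-Hölder consequence $\av{Q_k}(V^{r/2})\lesssim(\av{Q_k}(V^{p/2}))^{r/p}$ combined with the Fefferman--Phong bound $\av{Q_k}(V^{p/2})\abs{(f)_{Q_k}}^{p}\lesssim\alpha^{p}$. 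Once that local estimate is in hand, bounded overlap and (iii) give summability of $\sum_k(\norm{\nabla_{\mu}f}_{\El{r}(Q_k)}^{r}+\alpha^{r}\meas{Q_k})$ and hence unconditional convergence. Separately, in (viii) the Sobolev--Poincaré inequality should carry the factor $\ell_k^{1+n/q-n/p}$, not $\ell_k$; this reduces to your statement only when $q=p$, but with the correct exponent the computation does close up, so this one is a slip rather than a structural error.
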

\begin{proof}
    Let $\mathcal{M}$ denote the uncentred Hardy--Littlewood maximal operator for cubes. Consider the open  subset $\Omega:=\set{x\in\Rn :  \hlmax{\abs{\nabla_{x}f}^{p}+\abs{V^{1/2}f}^{p}}(x)>\alpha^{p}}$. If $\Omega$ is empty, then we can take $g:=f$ and $b_k:=0$ for all $k\geq 1$. Then, properties (vi) and (vii) follows by Lebesgue's differentiation theorem, and all remaining properties are trivial. 
    
    We henceforth assume that $\Omega\neq\emptyset$. Let $F:=\Rn\setminus\Omega$, and let $\{W_{k}\}_{k\geq 1}$ be a Whitney decomposition of $\Omega$, as constructed in \cite[Chapter \RNum{6}, Theorem 1]{SteinSingularIntegrals}. This means that the $W_{k}$ are essentially disjoint (dyadic) cubes, such that $\Omega=\bigcup_{k\geq 1}W_{k}$ and 
    \begin{equation}\label{properties of whitney decomp}
    \diam{W_{k}}\leq\dist(W_{k},F)\leq 4\diam{W_{k}}.
    \end{equation}
    We can then define the enlarged cubes $Q_k:=(1+\eps)W_{k}$ where $\eps>0$ is small enough to ensure that $\Omega=\bigcup_{k\geq 1}Q_k$, with bounded overlap. We can also find some $c>1$ such that the further enlarged cubes $\widetilde{Q}_{k}:=c Q_k$ satisfy $\widetilde{Q}_{k}\cap F\neq\emptyset$ for all $k\geq 1.$
    The bounded overlap of the cubes $\set{Q_{k}}_{k\geq 1}$ and the weak-type $(1,1)$ bound for the maximal operator yield property (iii).  

As in \cite[p.170]{SteinSingularIntegrals}, we can construct a partition of unity of $\Omega$ subordinate to the covering $\Omega=\cup_{k\geq 1} Q_k$. For each $k\geq 1$, there is $\chi_{k}\in C^{\infty}_{c}(\Rn,\left[0,1\right])$ with support in $Q_k$ and such that $\sum_{k\geq 1}\chi_{k} = \ind{\Omega}$, the sum being locally finite on $\Omega$. Moreover, $\norm{\chi_{k}}_{\El{\infty}} + \ell_{k}\norm{\nabla_{x}\chi_{k}}_{\El{\infty}}\lesssim 1$ for all $k\geq 1$, where $\ell_{k}:=\ell(Q_k)$. 
Moreover, the construction in \cite[p.170]{SteinSingularIntegrals} shows that $\chi_{k}\gtrsim 1$ on $W_{k}$, uniformly for all $k\geq 1$.

Since $f\in\mathcal{V}^{1,2}(\Rn)$ and $p\leq 2$, we know that $f\in \El{p}_{\loc}(\Rn)$. Moreover, since $p/2\in(0,1]$, the third item of Lemma~\ref{self-improvement property of reverse holder weights} shows that $V^{p/2}\in \textup{RH}^{\frac{2}{p}}$ if $p\neq 2$, while $V^{p/2}\in\textup{RH}^{q}$ if $p=2$. The local Fefferman--Phong inequality (Proposition~\ref{improved Fefferman--Phong inequality}) gives
\begin{equation}\label{fefferman-phong ineq}
    \int_{Q_k} \abs{\nabla_{x}f}^{p}+\abs{V^{1/2}f}^{p} \gtrsim\min\set{\av{Q_k}V^{p/2},\ell_{k}^{-p}}\int_{Q_k}\abs{f}^{p}
\end{equation}
uniformly for all $k\geq 1$. We say that a cube $Q_k$ is of type 1 if $\av{Q_k}V^{p/2}\geq \ell_{k}^{-p}$, and of type 2 otherwise.
The functions $b_k$ are then defined as 
\begin{equation*}
b_{k}:=
    \begin{cases}
        f\chi_{k}, & \text{ if } Q_k \text{ is type 1;}\\
        (f-(f)_{Q_k})\chi_{k}, & \text{ if } Q_k \text{ is type 2}.
    \end{cases}
\end{equation*}
Property (ii) follows immediately from this definition.
Let us now prove property (iv). The proof of \cite[Lemma 7.1]{Auscher-BenAli} readily shows that
\begin{equation*}
    \int_{Q_k}\left(\abs{\nabla_{x}b_k}^{p}+\ell_{k}^{-p}\abs{b_k}^{p} \right)\lesssim \alpha^{p}\meas{Q_k}
\end{equation*}
for all $k\geq 1$. In addition, if $Q_k$ is type 2, the Fefferman--Phong inequality (\ref{fefferman-phong ineq}) shows that
\begin{align*}
    \norm{V^{1/2}b_k}_{\El{p}}^{p}&\leq \int_{Q_k}V^{p/2}\abs{f-(f)_{Q_k}}^{p}\lesssim \int_{Q_k}V^{p/2}\abs{f}^{p} + \abs{(f)_{Q_k}}^{p}\int_{Q_k}V^{p/2}\\
    &\leq \int_{Q_k}\abs{V^{1/2}f}^{p} + \left(\int_{Q_k}\abs{f}^{p}\right)\av{Q_k}(V^{p/2})\lesssim \int_{Q_k}\abs{\nabla_{x}f}^{p}+\abs{V^{1/2}f}^{p} \lesssim \alpha^{p}\meas{Q_k},
\end{align*}
by enlarging the cube $Q_k$ to $\widetilde{Q}_{k}$ and using the definition of $F$. On the other hand, if $Q_k$ is type 1, we have $\int_{\Rn}\abs{V^{1/2}b_k}^{p} \leq \int_{Q_k} V^{p/2}\abs{f}^{p} \lesssim \alpha^{p}\meas{Q_k}$. This completes the proof of property (iv).

The proof of \cite[Lemma 7.1]{Auscher-BenAli} shows that the series $\sum_{k\geq 1} b_k$ converges in $\mathcal{D}'(\Rn)$. We can therefore define $g:=f-\sum_{k\geq 1}b_k$ as a distribution on $\Rn$. 
Note that the bounded overlap $\sum_{k\geq 1}\ind{Q_{k}}(x)\leq N$ implies that for any finite subset $K\subset \bb{N}$ it holds that 
\begin{align*}
    \norm{\sum_{k\in K}\nolimits b_k}_{\dot{\mathcal{V}}^{1,p}}^{p}&=\int_{\Rn}\abs{\sum_{k\in K}\nolimits\nabla_{\mu}b_k}^{p}\leq N^{p}\sum_{k\in K}\nolimits\int_{\Rn}\abs{\nabla_{\mu}b_k}^{p}\lesssim \alpha^{p}\sum_{k\in K}\nolimits\meas{Q_k},
\end{align*}
using property (iv) to obtain the last inequality. Since ${\sum_{k\geq 1}\meas{Q_k}<\infty}$ by property (iii), this proves property (v) (see, e.g., \cite[Theorem 3.10]{Heil2011}).

Since $\nabla_{x}g=\nabla_{x}f-\sum_{k\geq 1}\nabla_{x}b_k$ in $\mathcal{D}'(\Rn)$, the distributional gradient $\nabla_{x}g$ coincides with an $\El{p}(\Rn,\C^{n})$ function, and 
\begin{equation}\label{Lp bound gradient g}
    \norm{\nabla_{x}g}_{\El{p}}\lesssim \norm{\nabla_{\mu}f}_{\El{p}}.
\end{equation}
In particular, both distributions $g$ and $\sum_{k\geq 1}b_k$ coincide with locally integrable functions on $\Rn$ (see , e.g., \cite[Théorème 2.1]{DenyLions_BeppoLevi}). Consequently, the representation $f=g+\sum_{k\geq1}b_k$ holds almost everywhere on $\Rn$, as well as in the $\dot{\mathcal{V}}^{1,p}(\Rn)$ norm, which proves property (i). In addition, the proof of \cite[Lemma 7.1]{Auscher-BenAli} shows that $\norm{\nabla_{x}g}_{\El{\infty}}\lesssim \alpha$. This proves property (vi). 

Property (vii) in the case $r=2$ is already contained in the statement of \cite[Lemma 7.1]{Auscher-BenAli}. The case $r=p$ follows immediately from property (v), so the result follows by Hölder's inequality. We will however rewrite the argument for general $r\in[p,2]$, as some steps will be needed for other properties. We first claim that for all cubes $Q$ it holds that  
\begin{equation}\label{averages cubes potential}
    \av{Q}\left(V^{r/2}\right)\lesssim \av{Q}\left(V^{p/2}\right)^{r/p}.
\end{equation}
If $r=p$, this is trivial, so we may assume that $p<r\leq 2$. Since $V\in\textup{RH}^{q}$ and $0<\frac{r}{2}\leq 1$, the third item of Lemma~\ref{self-improvement property of reverse holder weights} implies that $V^{r/2}\in \cup_{q>1}\textup{RH}^{q}$. Then, since $\frac{p}{r}<1$, the same result implies that $V^{p/2}=\left(V^{r/2}\right)^{p/r}\in \textup{RH}^{\frac{r}{p}}$, so $\left(\dashint_{Q}V^{r/2}\right)^{p/r}\lesssim \dashint_{Q} \left(V^{r/2}\right)^{p/r}$ for each cube $Q\subset\Rn$, and \eqref{averages cubes potential} holds.

Clearly, $g=f$ on $F$. Moreover, the sum $\sum_{k\geq 1}b_k$ is locally finite on $\Omega$, and the proof of \cite[Lemma 7.1]{Auscher-BenAli} shows that $g=\displaystyle\sum_{\substack{k\geq 1 \\ \textup{ type 2}}}(f)_{Q_k}\chi_{k}$ almost everywhere on $\Omega$.
Hence, using the bounded overlap of the supports of the $\chi_k$ and \eqref{averages cubes potential}, we obtain
\begin{align}
\begin{split}\label{intermediate step to estimate Lr norm of V1/2 g on omega}
    \int_{\Omega} \left(V^{1/2}\abs{g}\right)^{r} &= \int_{\Omega} V^{r/2}\vert{\sum_{\substack{k\geq 1 \\ \textup{ type 2}}}(f)_{Q_k}\chi_{k}}\vert^{r}\lesssim \sum_{\substack{k\geq 1 \\ \textup{ type 2}}}\int_{\Omega}V^{r/2}\abs{(f)_{Q_k}\chi_{k}}^{r}\\
    &\leq \sum_{\substack{k\geq 1 \\ \textup{ type 2}}}\meas{Q_k}\av{Q_k}(V^{r/2})\abs{(f)_{Q_k}}^{r}\lesssim \sum_{\substack{k\geq 1 \\ \textup{ type 2}}} \meas{Q_k} \left(\av{Q_k}(V^{p/2})\abs{(f)_{Q_k}}^{p}\right)^{r/p}.
    \end{split}
\end{align}
Now, for all cubes $Q_k$ of type 2 we have, by the Fefferman--Phong inequality (\ref{fefferman-phong ineq}),
\begin{align}
\begin{split}\label{estimate cube type 2 from fefferman phong}
    \av{Q_k}(V^{p/2})\abs{(f)_{Q_k}}^{p}&\leq \av{Q_k}(V^{p/2})\av{Q_k}\left(\abs{f}^{p}\right)\lesssim \dashint_{Q_k} \left(\abs{\nabla_{x}f}^{p}+\abs{V^{1/2}f}^{p} \right)\lesssim \alpha^{p},
    \end{split}
\end{align}
by enlarging the cube $Q_k$ to $\widetilde{Q}_{k}$ and using the definition of $F=\Rn\setminus\Omega$ in the last inequality. Combining \eqref{intermediate step to estimate Lr norm of V1/2 g on omega} and \eqref{estimate cube type 2 from fefferman phong}, and using property (iii), we get $\int_{\Omega} \left(V^{1/2}\abs{g}\right)^{r} \lesssim\alpha^{r-p}\norm{\nabla_{\mu}f}_{\El{p}}^{p}$. 
Moreover, since $r\geq p$ and $\abs{V^{1/2}f}\leq \alpha $ almost everywhere in $F$ (by Lebesgue's differentiation theorem), we have
\begin{align*}
    \int_{F} \left(V^{1/2}\abs{g}\right)^{r}\leq \alpha^{r-p}\int_{F} \left(V^{1/2}\abs{f}\right)^{p}
    \leq \alpha^{r-p}\norm{\nabla_{\mu}f}_{\El{p}}^{p}.
\end{align*}
This proves that $\norm{V^{1/2}g}_{\Ell{r}}^{r}\lesssim \alpha^{r-p}\norm{\nabla_{\mu}f}_{\El{p}}^{p}$. We also combine (vi) and \eqref{Lp bound gradient g} to obtain $\norm{\nabla_{x}g}_{\El{r}}^{r}\lesssim \alpha^{r-p}\norm{\nabla_{\mu}f}_{\El{p}}^{p}$ for all $r\geq p$. This proves property (vii).

We now prove property (viii). Let $q\in \left[1,p^{*}\right]$ if $1\leq p<n$, or $q\in[1,\infty)$ if $p\geq n$. Therefore, if $Q_k$ is any type 2 cube, the Sobolev--Poincaré inequality (see, e.g., \cite[Lemmas 7.12 and 7.16]{Gilbarg_Trudinger_Elliptic}) yields
\begin{equation*}
    \norm{b_{k}}_{\El{q}}\leq \meas{Q_k}^{1/q}\left(\dashint_{Q_k}\abs{f-(f)_{Q_k}}^{q}\right)^{1/q}\lesssim \meas{Q_k}^{1/q}\ell_{k}\left(\dashint_{2Q_k}\abs{\nabla_{x}f}^{p}\right)^{1/p}\lesssim \alpha\meas{Q_k}^{\frac{1}{q}+\frac{1}{n}}.
\end{equation*}
Similarly, if $Q_k$ is type 1 we have
\begin{align*}
    \norm{b_k}_{\El{q}}\leq \norm{(f-(f)_{Q_k})\chi_{k}}_{\El{q}}+\abs{(f)_{Q_k}}\norm{\chi_{k}}_{\El{q}}\lesssim \meas{Q_k}^{1/q}\ell_{k}\alpha + \abs{(f)_{Q_k}}\meas{Q_k}^{1/q}\lesssim \meas{Q_k}^{1/q}\ell_{k}\alpha,
\end{align*}
where we have used the fact that $\abs{(f)_{Q_k}}\lesssim l_{k}\alpha$ for all cubes $Q_k$ of type 1. In fact,
\begin{align}\label{estimate type 1 cubes}
    \abs{(f)_{Q_k}}^{p}&\leq \dashint_{Q_k}\abs{f}^{p}\lesssim l_{k}^{p}\dashint_{Q_k}\left(\abs{\nabla_{x}f}^{p}+\abs{V^{1/2}f}^{p}\right)\lesssim l_{k}^{p}\alpha^{p},
\end{align}
by the Fefferman--Phong inequality (\ref{fefferman-phong ineq}). This proves property (viii).

Finally, we prove property (ix). Let $r\in\left[p,2\right]$ and $f\in \Dot{\mathcal{V}}^{1,r}(\Rn)$. We claim that $b_k\in \mathcal{V}^{1,r}(\Rn)$ for all $k\geq 1$. First, $b_k\in \Ell{r}$ is clear from its definition.  Now, if $Q_k$ is type 2, we simply have
\begin{align}\label{intermediate estimate on gradient of bk in the Lr norm}
\begin{split}
    \norm{\nabla_{x}b_k}_{\El{r}}&\lesssim \norm{\chi_{k}\nabla_{x}f}_{\El{r}}+\norm{\left(\nabla_{x}\chi_{k}\right)(f-(f)_{Q_k})}_{\El{r}}\\
    &\lesssim \norm{\nabla_{x}f}_{\El{r}(Q_k)}+\ell_{k}^{-1}\norm{f-(f)_{Q_k}}_{\El{r}(Q_k)}\lesssim \norm{\nabla_{x}f}_{\El{r}(Q_k)}
    \end{split}
\end{align}
by Poincaré's inequality. Moreover, we can crudely estimate, using \eqref{averages cubes potential} and \eqref{estimate cube type 2 from fefferman phong}, 
\begin{align*}
\norm{V^{1/2}b_k}_{\El{r}}^{r}&=\int_{\Rn}V^{r/2}\abs{f-(f)_{Q_k}}^{r}\abs{\chi_{k}}^{r}\lesssim \int_{Q_k}V^{r/2}\abs{f}^{r} + \int_{Q_k}\abs{(f)_{Q_k}}^{r}V^{r/2}\\
    &\lesssim\norm{V^{1/2}f}_{\El{r}(Q_k)}^{r} + \meas{Q_k}\left(\abs{(f)_{Q_k}}^{p}\av{Q_k}(V^{p/2})\right)^{r/p}\lesssim \norm{V^{1/2}f}_{\El{r}(Q_k)}^{r} +\alpha^{r}\meas{Q_k}.
\end{align*}
If $Q_k$ is type 1, then, as $p\leq r$, the Fefferman--Phong inequality (\ref{fefferman-phong ineq}) yields
\begin{align*}
    \abs{(f)_{Q_k}}\leq \left(\dashint_{Q_k}\abs{f}^{p}\right)^{1/p}\lesssim \ell_{k}\left(\dashint_{Q_k}\abs{\nabla_{\mu}f}^{p}\right)^{1/p}\lesssim \ell_{k}\left(\dashint_{Q_k}\abs{\nabla_{\mu}f}^{r}\right)^{1/r}.
\end{align*}
Consequently, using Poincaré's inequality as in \eqref{intermediate estimate on gradient of bk in the Lr norm}, we get
\begin{align*}
    \norm{\nabla_{x}b_{k}}_{\El{r}}&\leq \norm{\nabla_{x}\left(\chi_{k}\left(f-(f)_{Q_k}\right)\right)}_{\El{r}} + \abs{(f)_{Q_k}}\norm{\nabla_{x}\chi_{k}}_{\El{r}}\\
    &\lesssim \norm{\nabla_{x}f}_{\El{r}(Q_k)} + \abs{(f)_{Q_k}}\ell_{k}^{-1}\meas{Q_k}^{1/r}\lesssim \norm{\nabla_{\mu}f}_{\El{r}(Q_k)}.
\end{align*}
 In addition, we have $\norm{V^{1/2}b_k}_{\El{r}}^{r}\leq \norm{V^{1/2}f}_{\El{r}(Q_k)}^{r}$.
 This proves that $b_{k}\in\dot{\mathcal{V}}^{1,r}(\Rn)$, with $\norm{\nabla_{\mu}b_k}_{\El{r}}^{r}\lesssim \norm{\nabla_{\mu}f}_{\El{r}(Q_k)}^{r} + \alpha^{r}\meas{Q_k}$ for all $k\geq 1$. 
 
 Finally, we use the estimate above to prove the unconditional convergence of the series $\sum_{k\geq 1}b_k$ in $\dot{\mathcal{V}}^{1,r}(\Rn)$. For all finite subsets $K\subset \bb{N}$ we can write, by bounded overlap of the supports of the $b_k$'s,
\begin{align*}
    \norm{\sum_{k\in K}\nolimits\nabla_{\mu}b_k}_{\El{r}}^{r}\!=\int_{\Rn}\!\abs{\sum_{k\in K}\nolimits\nabla_{\mu}b_k}^{r}\lesssim \sum_{k\in K}\nolimits\int_{\Rn}\!\abs{\nabla_{\mu}b_k}^{r}\lesssim \sum_{k\in K}\nolimits\!\left(\norm{\nabla_{\mu}f}_{\El{r}(Q_k)}^{r}\!+\alpha^{r}\meas{Q_k}\right).
\end{align*}
By bounded overlap of $\set{Q_{k}}_{k\geq 1}$ and property (iii), we have
\begin{equation*}
    \sum_{k\geq 1}\left(\norm{\nabla_{\mu}f}_{\El{r}(Q_k)}^{r}+\alpha^{r}\meas{Q_k}\right)\lesssim 
 \norm{\nabla_{\mu}f}^{r}_{\El{r}} + \alpha^{r-p}\norm{\nabla_{\mu}f}_{\El{p}}^{p}<\infty.
\end{equation*}
This proves property (ix), and concludes the proof.
\end{proof}
\subsection{Embeddings $\El{p}\cap\El{2}\subseteq \bb{H}^{p}_{H}$ and $\dot{\mathcal{V}}^{1,p}\cap \El{2}\subseteq \bb{H}_{H}^{1,p}$ for $p\in (1,2]$}

In the rest of Section~\ref{section on identification of H adapted Hardy spaces}, we assume that $n\geq 3$, $q\geq \max\set{\frac{n}{2},2}$ and $V\in\textup{RH}^{q}(\Rn)$.
As a first step towards the proof of Theorem~\ref{summary of identifications of H adapted Hardys spaces},  we will use Lemma~\ref{lemma V-adapted Calderon--Zygmund--Sobolev decomposition} to prove the continuous inclusions $\El{p}\cap\El{2}\subseteq \bb{H}^{p}_{H}$ and $\dot{\mathcal{V}}^{1,p}\cap \El{2}\subseteq \bb{H}_{H}^{1,p}$ (for the respective $p$-(quasi)norms) for a subinterval of exponents $p\in(1,2]$ in Corollary~\ref{continuous inclusion L^p subset abstract H^p} below.

For any integer $\alpha\geq 1$, we define $\psi_{\alpha}(z):=z^{\alpha-\frac{1}{2}}(1+z)^{-3\alpha}$ for all $z\in S^{+}_{\pi}$, and consider the square function 
\begin{align}\label{definition of the square function psi alpha H for injection}
    \left(S_{\psi_{\alpha},H}f\right)(x):=\left(\iint_{\abs{x-y}<t}\abs{\psi_{\alpha}(t^2 H)f(y)}^{2}\frac{\dd t\dd y}{t^{n+1}}\right)^{1/2}
\end{align}
for all $f\in \Ell{2}$ and all $x\in\Rn$. We note that, since $\psi_{\alpha}\in\textup{H}^{\infty}_{0}$, it follows from McIntosh's theorem that $S_{\psi_{\alpha},H}f\in\Ell{2}$ with $\norm{S_{\psi_{\alpha},H}f}_{\El{2}}\lesssim \norm{f}_{\El{2}}$ for all $f\in\Ell{2}.$

We shall need the following extrapolation lemma, adapted from \cite[Lemma 9.10]{AuscherEgert}. Recall that $\dom{H^{1/2}}=\mathcal{V}^{1,2}(\Rn)$.
\begin{lem}\label{extrapolation lemma}
    If $s\in\left(p_{-}(H)\vee 1, 2\right]$, $p\in (s_{*}\vee 1, s)$ and there exists an $\alpha_{0}\geq 1$ such that  
    \begin{equation}\label{estimate square function q}
        \norm{S_{\psi_{\alpha},H}\left(H^{1/2}f\right)}_{\El{s}}\lesssim \norm{\nabla_{\mu}f}_{\El{s}}
    \end{equation}
    for all $f\in \dot{\mathcal{V}}^{1,s}(\Rn)\cap\mathcal{V}^{1,2}(\Rn)$ and all $\alpha\geq \alpha_0$, where the implicit constant is allowed to depend on $\alpha$, then there exists $\alpha_{1}\geq 1$, depending on $p$, $s$, $p_{-}(H)$, $n$ and $\alpha_0$, such that 
    \begin{align}\label{extrapolated estimate}
\norm{S_{\psi_{\alpha},H}\left(H^{1/2}f\right)}_{\El{p}}\lesssim \norm{\nabla_{\mu}f}_{\El{p}}
    \end{align}
    for all $f\in \dot{\mathcal{V}}^{1,p}(\Rn)\cap\mathcal{V}^{1,2}(\Rn)$ and all $\alpha\geq \alpha_{1}$.
\end{lem}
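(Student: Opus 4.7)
The plan is to follow the classical Calder\'on--Zygmund extrapolation scheme, adapted here to the $V$-adapted Sobolev setting via Lemma~\ref{lemma V-adapted Calderon--Zygmund--Sobolev decomposition}. Since the hypothesis gives boundedness in $\El{s}$, by the Marcinkiewicz interpolation theorem it suffices to prove the weak-type $(p,p)$ estimate
\[
\bigl|\{x\in\Rn : S_{\psi_\alpha,H}(H^{1/2}f)(x)>\lambda\}\bigr|\lesssim \lambda^{-p}\norm{\nabla_\mu f}_{\El{p}}^{p}
\]
for all $\lambda>0$ and all $f\in\dot{\mathcal V}^{1,p}(\Rn)\cap \mathcal V^{1,2}(\Rn)$, provided $\alpha$ is sufficiently large. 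Here we will crucially rely on $s\in(p_{-}(H)\vee 1,2]$ together with the openness of the interval $(s_*\vee 1,s)$ so that we may apply the hypothesis to functions in $\dot{\mathcal V}^{1,s}\cap\mathcal V^{1,2}$.

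Fix $f$ and $\lambda>0$, and apply Lemma~\ref{lemma V-adapted Calderon--Zygmund--Sobolev decomposition} at level $\lambda$ with exponent $p$ to obtain the decomposition $f=g+\sum_k b_k$, the associated cubes $\{Q_k\}_{k\geq 1}$, and properties (i)--(ix). Writing $T_\alpha:=S_{\psi_\alpha,H}\circ H^{1/2}$, split
\[
\bigl|\{T_\alpha f>2\lambda\}\bigr|\;\leq\; \bigl|\{T_\alpha g>\lambda\}\bigr|\;+\;\bigl|\{T_\alpha(\textstyle\sum_k b_k)>\lambda\}\bigr|.
\]
Properties (vii) and (ix) of the decomposition (the latter applied with $r=2$, using $f\in\mathcal V^{1,2}\subset\dot{\mathcal V}^{1,2}$) ensure that $g\in\dot{\mathcal V}^{1,s}(\Rn)\cap\mathcal V^{1,2}(\Rn)$ is a legitimate input for the hypothesis (\ref{estimate square function q}) (valid for all $\alpha\geq \alpha_0$). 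Chebyshev's inequality combined with (vii) then yields
\[
\bigl|\{T_\alpha g>\lambda\}\bigr|\;\leq\;\lambda^{-s}\norm{T_\alpha g}_{\El{s}}^{s}\;\lesssim\;\lambda^{-s}\norm{\nabla_\mu g}_{\El{s}}^{s}\;\lesssim\;\lambda^{-s}\cdot\lambda^{s-p}\norm{\nabla_\mu f}_{\El{p}}^{p}
\]
as required.

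For the bad part, introduce the enlarged set $\Omega^*:=\bigcup_k 2\sqrt n\,Q_k$; property (iii) gives $|\Omega^*|\lesssim \lambda^{-p}\norm{\nabla_\mu f}_{\El{p}}^{p}$, and it remains to bound $|\{x\in\Rn\setminus\Omega^*:T_\alpha(\sum_k b_k)(x)>\lambda\}|$ by the same quantity. The strategy is to Chebyshev in $\El{2}$ against the truncated square-function norm $\norm{T_\alpha(\sum_k b_k)}_{\El{2}(\Rn\setminus\Omega^*)}$: by Fubini and the cone geometry, this reduces to a weighted vertical integral of $\norm{\psi_\alpha(t^2H)H^{1/2}b_k}_{\El{2}(E_{t,k})}$ type quantities, where $E_{t,k}$ is a set at controlled distance from $Q_k$. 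Writing
\[
\psi_\alpha(t^2H)H^{1/2}=t^{-1}(t^2H)^{\alpha}(1+t^2H)^{-3\alpha},
\]
and using the Riesz transform identity $H^{1/2}=R_H^*\nabla_\mu$ on $\El{2}$ (so that the action on $b_k$ factors through $\nabla_\mu b_k$), the relevant operator family inherits $\El{2}$ off-diagonal estimates of arbitrarily large order via composition from the off-diagonal bounds for $\{(1+t^2H)^{-k}\}_{t>0}$ and $\{t\nabla_\mu(1+t^2H)^{-k}\}_{t>0}$ established in Section~\ref{subsubsection on mapping properties ofa dapted hardy spaces}. Each $b_k$ then enters via $\norm{\nabla_\mu b_k}_{\El{2}}$ and $\ell_k^{-1}\norm{b_k}_{\El{2}}$, both controlled by properties (iv), (viii) combined with H\"older and $p\leq 2$. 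Summing the contributions over the dyadic scales $t\sim 2^j\ell_k$ and over all cubes $Q_k$ using the bounded overlap from (iii), one obtains the weak-type bound once $\alpha\geq \alpha_1$ is chosen large enough in terms of $n$, $p$, $s$ so that the off-diagonal decay, which improves with $\alpha$, outweighs the geometric loss $n(1/p-1/s)$ inherent in passing from the $\El{s}$ estimate to weak-$(p,p)$; this is where the Sobolev-type constraint $p>s_*$ forces the threshold.

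The main obstacle is this bad-part estimate. The delicate issue is to produce off-diagonal bounds for $\psi_\alpha(t^2H)H^{1/2}$ (as an operator to $\El{2}(\Rn;\C^{n+1})$-like targets through the Riesz rewrite) with order that grows linearly in $\alpha$ in a way compatible with the Whitney scales $\ell_k$ and the exponent relation $p\in(s_*\vee 1,s)$; once these estimates are in hand, the summation over $k$ and integration in $t$ are standard, but balancing the Sobolev loss against the off-diagonal gain drives the choice of $\alpha_1$ and explains its dependence on $p$, $s$, $p_-(H)$, $n$ and $\alpha_0$.
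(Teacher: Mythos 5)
Your overall architecture (Calderón--Zygmund--Sobolev decomposition, weak-type bound, interpolation against the $\El{s}$ hypothesis) is the right one and matches the paper's, but there are two genuine gaps in the way you propose to execute it.

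First, the interpolation step does not work as stated. Proving weak $(p,p)$ together with strong $(s,s)$ only yields strong $(r,r)$ for $r$ strictly between the endpoints, so you must establish the weak-type bound at some $p_0\in(s_*\vee 1,p)$ rather than at $p$ itself (this is fixable since the interval is open, and it is what the paper does). More seriously, Marcinkiewicz interpolation applies to sublinear operators between Lebesgue spaces, whereas $f\mapsto S_{\psi_\alpha,H}(H^{1/2}f)$ is measured against the homogeneous norm $\norm{\nabla_\mu f}_{\El{p}}$ on the domain side. The paper resolves this by introducing the genuinely Lebesgue-space operator $Tf:=S_{\psi_\alpha,H}\bigl(H^{1/2}\invH f\bigr)$ and transferring back and forth using the $\El{r}$-boundedness of $\nabla_\mu\invH$ (Shen) and the reverse inequality $\norm{H_0^{1/2}f}_{\El{r}}\lesssim\norm{\nabla_\mu f}_{\El{r}}$ of Auscher--Ben Ali, followed by a density and Fatou argument through $C^\infty_c(\Rn)$. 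This is a substantive ingredient your proposal omits entirely. (A smaller point of the same flavour: property (vii) only gives $g\in\dot{\mathcal V}^{1,s}\cap\dot{\mathcal V}^{1,2}$, not $g\in\mathcal V^{1,2}$, so the hypothesis must first be extended by density to the homogeneous intersection before it can be applied to $g$.)

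Second, your treatment of the bad part would fail. Off the enlarged set $\Omega^*$, spatial off-diagonal decay for $\psi_\alpha(t^2H)H^{1/2}$ controls only the regime $t\lesssim\dist(x,Q_k)$; in the regime $t\gtrsim\ell_k$ with $x$ in the influence cone of $Q_k$ there is no usable spatial gain, and the raw $b_k$ carries no cancellation against the functional calculus of $H$ that would produce summable decay in $t/\ell_k$. The standard (and here essential) remedy is the further splitting $\sum_k b_k=\tilde g+b$ with $\tilde g:=\sum_k\bigl(1-\varphi_\alpha(\ell_k^2H)\bigr)b_k$ and $b:=\sum_k\varphi_\alpha(\ell_k^2H)b_k$, where $\varphi_\alpha(z)=z^\alpha(1+z)^{-\alpha}$: the factor $\varphi_\alpha(\ell_k^2H)$ manufactures the decay in $t/\ell_k$ needed to sum the off-diagonal contributions of $b$, while $\tilde g$ is shown (using properties (viii) and (ix) of Lemma~\ref{lemma V-adapted Calderon--Zygmund--Sobolev decomposition}) to converge in $\dot{\mathcal V}^{1,s}$ with $\norm{\nabla_\mu\tilde g}_{\El{s}}^{s}\lesssim\lambda^{s-p}\norm{\nabla_\mu f}_{\El{p}}^{p}$ and is then fed back into the strong $\El{s}$ hypothesis exactly like $g$. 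Without this splitting the summation over $k$ in your sketch does not close, and the threshold $\alpha_1$ cannot be extracted from off-diagonal decay alone.
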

\begin{proof}
Let $p\in (s_{*}\vee 1, s)$. The proof of \eqref{extrapolated estimate} relies on the following weak-type estimate:
    \begin{align}\label{weak type estimate}
        \meas{\{x\in\Rn : S_{\psi_{\alpha},H}\left(H^{1/2}f\right) (x)>3\lambda\}} \lesssim \lambda^{-p_{0}}\norm{\nabla_{\mu}f}_{\Ell{p_{0}}}^{p_{0}}
    \end{align}
    for all $p_{0}\in(s_{*}\vee 1, s)$, $f\in \dot{\mathcal{V}}^{1,p_{0}}(\Rn)\cap\mathcal{V}^{1,2}(\Rn)$ and all $\lambda>0$. 
    Indeed, let us assume that (\ref{weak type estimate}) holds for some fixed $\alpha\geq \alpha_{0}$. We shall deduce that the extrapolated estimate (\ref{extrapolated estimate}) holds for the same $\alpha$. As a first step we pick some $p_{0}\in (s_{*}\vee 1, s)$ such that $p_{0}<p$. We proceed as in the start of the proof of Lemma~9.10 in \cite{AuscherEgert}, but our argument is more involved as we don't have a suitable analogue of the space $\mathcal{Z}(\Rn)$ used therein. We rely on the space $\smcp$ instead.

    The fractional power $H_{0}^{-1/2}=\left(-\Delta +V\right)^{-1/2}$ extends to a bounded invertible operator $\invH :\Ell{2}\to\dot{\mathcal{V}}^{1,2}(\Rn)$, as described in Section~\ref{subsection on reverse riesz transform bounds on Dziubanski hardy space for no coefficients case}.
    By \cite[Section~4.7]{Ma_etAl}, for all $1<p<n$, it holds that
    \begin{equation}\label{Lp-Lq boundedness inverse square root H_0}
        \norm{\invH f}_{\El{p^{*}}}\lesssim \norm{f}_{\El{p}}
    \end{equation}
    for all $f\in\Ell{2}\cap\Ell{p}.$
    Since $n\geq 3$, the estimate (\ref{Lp-Lq boundedness inverse square root H_0}) with $p=2_{*}>1$ implies that $\invH f\in\El{2}\cap\dot{\mathcal{V}}^{1,2}=\mathcal{V}^{1,2}=\dom{H^{1/2}}$ for all $f\in\El{2_{*}}\cap\El{2}$. This allows us to define a positive, sub-additive operator $T: \El{2}\cap\El{2_{*}}\to \El{2}$ by
    \begin{align*}
        Tf:= S_{\psi_{\alpha},H}\left(H^{1/2}\invH f\right)
    \end{align*}
    for all $f\in \El{2}\cap\El{2_{*}}$. The fact that $T$ maps into $\Ell{2}$ follows from the remark following (\ref{definition of the square function psi alpha H for injection}). 
    By \cite[Thm. 0.5, Thm. 5.10]{Shen_Schrodinger} (or Theorem~\ref{thm boundedness of Riesz transforms on Lp full range}), the Riesz transform $\nabla_{\mu}\invH $ is $\El{r}$-bounded for all $r\in(1,2]$. Let us denote $\textup{X}:=\bigcap_{1<r\leq 2}\Ell{r}$ and note that if $f\in \textup{X}$, then $\invH f\in\dot{\mathcal{V}}^{1,r}\cap\mathcal{V}^{1,2}$ for all $r\in(1,2]$. Since $s\in(1,2]$, we can use assumption \eqref{estimate square function q} to get
    \begin{align*}
\norm{Tf}_{\El{s}}=\norm{S_{\psi_{\alpha},H}\left(H^{1/2}\invH f\right)}_{\El{s}}\lesssim \norm{\nabla_{\mu}\invH f}_{\El{s}}\lesssim \norm{f}_{\El{s}}
    \end{align*}
    for all $f\in \textup{X}$.
    Similarly, since $1<p_{0}\leq 2$, the weak-type estimate (\ref{weak type estimate}) implies that
    \begin{align*}
        \meas{\set{x\in\Rn : (Tf)(x)> 3\lambda}}&\lesssim \lambda^{-p_{0}}\norm{\nabla_{\mu}\invH f}_{\El{p_{0}}}^{p_{0}}\lesssim \lambda^{-p_{0}}\norm{f}_{\El{p_{0}}}^{p_{0}}
    \end{align*}
    for all $f\in \textup{X}$. By sub-additivity of $T$ and the Marcinkiewicz interpolation theorem, we deduce that for all $p_{0}<r\leq s$ and $f\in \textup{X}$, it holds that 
    \begin{align}\label{extrap estimate}
        \norm{Tf}_{\El{r}}\lesssim \norm{f}_{\El{r}}.
    \end{align}
    Now, by \cite[Theorem 1.2]{Auscher-BenAli}, we know that for all $f\in C^{\infty}_{c}\left(\Rn\right)\subseteq \dom{H_{0}^{1/2}}$ it holds that  $H_{0}^{1/2}f \in \textup{X}$, with $\norm{H_{0}^{1/2}f}_{\El{r}}\lesssim\norm{\nabla_{\mu}f}_{\El{r}}$ for all $r\in (1,2]$. Moreover, since $\invH =H_{0}^{-1/2}$ on $\ran{H_{0}^{1/2}}$, for all $f\in C^{\infty}_{c}(\Rn)$ and all $r\in(p_{0},s]$ we obtain by \eqref{extrap estimate} that
    \begin{align}\label{estimate on square function of square root for dense subclass}
        \norm{S_{\psi_{\alpha},H} (H^{1/2}f)}_{\El{r}}=\norm{T(H_{0}^{1/2}f)}_{\El{r}}\lesssim \norm{H_{0}^{1/2}f}_{\El{r}}\lesssim \norm{\nabla_{\mu}f}_{\El{r}}.
    \end{align}
     Finally, for $r\in(p_{0},s]$ and $f\in \dot{\mathcal{V}}^{1,r}(\Rn)\cap \mathcal{V}^{1,2}(\Rn)$, we use Corollary~\ref{density in mixed V adapted sobolev spaces} to find a sequence $\left(f_k\right)_{k\geq 1}\subset C_{c}^{\infty}(\Rn)$ such that $f_k\to f$ in both $\mathcal{V}^{1,2}(\Rn)$ and $\dot{\mathcal{V}}^{1,r}\left(\Rn\right)$. Let us define $g_{k}:=H^{1/2}f_k$ and $g:=H^{1/2}f$. Since $g_{k}\to g$ in $\El{2}$, hence $\psi_{\alpha}(t^{2}H)g_{k}\to \psi_{\alpha}(t^{2}H)g$ in $\El{2}$ for all $t>0$, we have
    \begin{align*}
        \int_{B(x,t)}\abs{\psi_{\alpha}\left(t^{2}H\right)g}^{2}=\lim_{k\to\infty} \int_{B(x,t)}\abs{\psi_{\alpha}\left(t^{2}H\right)g_{k}}^{2}
    \end{align*}
    for all $t>0$ and all $x\in\Rn$. By Fatou's lemma, this implies that
    \begin{align*}
        \int_{0}^{\infty}\int_{\abs{x-y}<t}\abs{\psi_{\alpha}\left(t^{2}H\right)g(y)}^{2}\frac{\dd y\dd t}{t^{n+1}}\leq \liminf_{k\to\infty} \int_{0}^{\infty}\int_{B(x,t)}\abs{\psi_{\alpha}\left(t^{2}H\right)g_{k}(y)}^{2}\frac{\dd y\dd t}{t^{n+1}}.
    \end{align*}
    Raising this inequality to the power $r/2$ and using \eqref{estimate on square function of square root for dense subclass} and Fatou's lemma once more yields
    \begin{align*}
        \norm{S_{\psi_{\alpha},H}\left(g\right)}_{\El{r}}^{r}&\leq \liminf_{k\to\infty}\norm{S_{\psi_{\alpha},H}\left(g_k\right)}_{\El{r}}^{r}\lesssim \liminf_{k\to\infty}\norm{\nabla_{\mu}f_k}_{\El{r}}^{r}=\norm{\nabla_{\mu}f}_{\El{r}}^{r}.
    \end{align*}
    Since $p_{0}<p\leq s$, applying this with $r=p$ concludes this part of the proof.

    It remains to prove the weak-type estimate (\ref{weak type estimate}).
    Let $f\in \dot{\mathcal{V}}^{1,p_{0}}(\Rn)\cap\mathcal{V}^{1,2}(\Rn)$. Note that this implies that $f\in \dot{\mathcal{V}}^{1,s}(\Rn)$. Fix $\lambda>0$, and consider the Calderón--Zygmund decomposition of $f$ at height $\lambda$, from Lemma~\ref{lemma V-adapted Calderon--Zygmund--Sobolev decomposition}. We obtain a sequence of cubes $\{Q_k\}_{k\geq 1}$, functions $g\in\dot{\mathcal{V}}^{1,p_{0}}(\Rn)$, and $\{b_{k}\}_{k\geq 1}\subset \dot{\mathcal{V}}^{1,p_{0}}\cap\mathcal{V}^{1,2}$, such that $f=g+\sum_{k\geq 1}b_k$ with convergence in $\dot{\mathcal{V}}^{1,p_{0}}(\Rn)$. We let $\ell_{k}:=\ell(Q_k)$ for all $k\geq 1$.

    As in the proof of \cite[Lemma 9.10]{AuscherEgert}, we consider the bounded functions $\varphi_{\alpha}(z)=z^{\alpha}(1+z)^{-\alpha}$ for integers $\alpha\geq 1$. We can decompose $f$ further as $f=g+\tilde{g}+b$ where
    \begin{equation*}
        \tilde{g}:=\sum_{k\geq 1}\left(1-\varphi_{\alpha}(\ell_{k}^{2}H)\right)b_k \hspace{1cm}\textup{ and }\hspace{1cm}b:=\sum_{k\geq 1}\varphi_{\alpha}(\ell_{k}^{2}H)b_k.
    \end{equation*}
    This decomposition can be justified by proving that the series $\sum_{k\geq 1}\left(1-\varphi(\ell_{k}^{2}H)\right)b_k$ converges unconditionally in $\dot{\mathcal{V}}^{1,s}(\Rn)$, with the estimate
    \begin{equation*}
        \norm{\sum_{k\geq 1}\nolimits\nabla_{\mu}\left(1-\varphi_{\alpha}(\ell_{k}^{2}H)\right)b_k}_{\El{s}}^{s}\lesssim \lambda^{s-p}\norm{\nabla_{\mu}f}_{\El{p}}^{p}.
    \end{equation*}
    This is obtained by following the argument of step 4 in the proof of \cite[Lemma 9.10]{AuscherEgert}, replacing $L$ by $H$ and $\nabla_{x}$ by $\nabla_{\mu}$. In addition, Proposition~\ref{properties of critical numbers proposition} should be used instead of \cite[Theorem 6.2]{AuscherEgert}, and property (viii) of the Calderón--Zygmund--Sobolev decomposition from Lemma~\ref{lemma V-adapted Calderon--Zygmund--Sobolev decomposition} should be used instead of property (i') stated in the proof of \cite[Lemma 9.10]{AuscherEgert}.
    This justifies defining $\tilde{g}:=\sum_{k\geq 1}\left(1-\varphi_{\alpha}(\ell_{k}^{2}H)\right)b_k$ as an element of $\dot{\mathcal{V}}^{1,s}(\Rn)$ satisfying $\norm{\widetilde{g}}_{\dot{\mathcal{V}}^{1,s}}^{s}\lesssim\lambda^{s-p}\norm{f}^{p}_{\dot{\mathcal{V}}^{1,p}}$.

    Since $p_{0}\in\left(s_{*}\vee 1 , s\right)$, we have $s\in \left(p_{0},2\right]$ and $f\in \dot{\mathcal{V}}^{1,s}(\Rn)$. Property (ix) of the Calderón--Zygmund decomposition implies that $b_k\in \mathcal{V}^{1,s}(\Rn)$ for all $k\geq 1$, and that the series $\sum_{k\geq 1}b_k$ converges unconditionally in $\dot{\mathcal{V}}^{1,s}$. Since for any finite subset $K\subset \bb{N}$ we can write
    \begin{align*}
        \sum_{k\in K}\varphi_{\alpha}(\ell_{k}^{2}H)b_k=\sum_{k\in K}b_k - \sum_{k\in K} \left(1-\varphi_{\alpha}(\ell_{k}^{2}H)\right)b_k,
    \end{align*}
    this proves the convergence of the series $b:=\sum_{k\geq 1}\varphi_{\alpha}(\ell_{k}^{2}H)b_k$ in $\dot{\mathcal{V}}^{1,s}$. This justifies the decomposition $f=g+\tilde{g}+b$ in $\dot{\mathcal{V}}^{1,s}$.
    Now, since it is assumed that $f\in\mathcal{V}^{1,2}(\Rn)$, the previous argument can also be carried out for $s=2$, and yields convergence of all the series above in the $\dot{\mathcal{V}}^{1,2}$ norm. Therefore, $g,\Tilde{g}$ and $b$ are in $\dot{\mathcal{V}}^{1,s}(\Rn)\cap\dot{\mathcal{V}}^{1,2}(\Rn)$. 
    
    Let us now remark that, by density of $C^{\infty}_{c}(\Rn)$ in the intersection $\dot{\mathcal{V}}^{1,s}\cap\dot{\mathcal{V}}^{1,2}$ (Proposition~\ref{density lemma in homogeneous V adapted Sobolev spaces}), the extension of the square root $\dot{H}^{1/2}:\dot{\mathcal{V}}^{1,2}(\Rn)\to \Ell{2}$ still verifies the estimate (\ref{estimate square function q}) for all $f\in\dot{\mathcal{V}}^{1,2}(\Rn)\cap\dot{\mathcal{V}}^{1,s}(\Rn)$. By linearity of $\dot{H}^{1/2}$ and subadditivity of the square function operator $S_{\psi_{\alpha},H}$, we deduce that 
    \begin{align*}
    \set{x\in\Rn : S_{\psi_{\alpha},H}(H^{1/2}f)(x)>3\lambda}&\subseteq \set{x: S_{\psi_{\alpha},H}(\dot{H}^{1/2}g)(x)>\lambda} \cup \set{x : S_{\psi_{\alpha},H}(\dot{H}^{1/2}\tilde{g})(x)>\lambda}\\
    &\cup \set{x : S_{\psi_{\alpha},H}(\dot{H}^{1/2}b)(x)>\lambda}=:A_{1}\cup A_{2}\cup A_{3}.
    \end{align*}
It therefore remains to bound the measures of $A_1$, $A_2$ and $A_3$. First, by Markov's inequality, the assumption \eqref{estimate square function q} and property (vii) of the Calderón--Zygmund decomposition, we have
\begin{align*}
    \meas{A_1}&\leq \lambda^{-s}\norm{S_{\psi_{\alpha},H}(\dot{H}^{1/2}g)}_{\El{s}}^{s}\lesssim \lambda^{-s}\norm{\nabla_{\mu}g}_{\El{s}}^{s}\lesssim\lambda^{-p_{0}}\norm{\nabla_{\mu}f}_{\El{p_{0}}}^{p_{0}}.
\end{align*}
Similarly, we also obtain $\meas{A_2}\lesssim \lambda^{-p_{0}}\norm{\nabla_{\mu}f}_{\Ell{p_{0}}}^{p_{0}}$. To prove the remaining estimate for $\meas{A_3}$, as explained in step 6 of the proof of  \cite[Lemma 9.10]{AuscherEgert}, it suffices to prove the estimate
\begin{align*}
    \meas{\set{x\in\Rn | S_{\psi_{\alpha},H}(\dot{H}^{1/2}b_{F})(x)>\lambda}}\lesssim \lambda^{-p}\norm{\nabla_{\mu}f}_{\El{p}}^{p}
\end{align*}
uniformly for all finite subsets $F\subset \bb{N}$, where $b_F:=\sum_{k\in F}\varphi_{\alpha}\left(\ell_{k}^{2}H\right)b_k$. This estimate is obtained by following the argument in Steps $7$ and $8$ of the proof of \cite[Lemma 9.10]{AuscherEgert}, by replacing $L$ by $H$ and $\nabla$ by $\nabla_{\mu}$ appropriately, and by using Lemma~\ref{Obtaining L^2-L^q or L^p-L^2 odes for powers of resolvents} instead of \cite[Lemma 7.4]{AuscherEgert}. \end{proof}

We obtain the following important consequence.
\begin{cor}\label{continuous inclusion L^p subset abstract H^p}

    For $p\in\left(p_{-}(H)\vee 1 , 2\right]$, there is a continuous inclusion $\El{p}\cap\El{2}\subseteq \bb{H}^{p}_{H}$ for the respective $p$-(quasi)norms. For $p\in(p_{-}(H)_{*}\vee 1 ,2]$, there is a continuous inclusion $\dot{\mathcal{V}}^{1,p}\cap \El{2}\subseteq \bb{H}_{H}^{1,p}$ for the respective $p$-(quasi)norms.
\end{cor}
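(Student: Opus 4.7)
Both inclusions reduce, via the construction of $\bb{H}^p_H$ and $\bb{H}^{1,p}_H$ in Section~\ref{subsection on operator adapted hardy spaces for general operators with standard assumptions} and the fact that $\clos{\ran{H}}=\Ell{2}$, to square function estimates in $\Ell{p}$. I fix an integer $\alpha\geq 1$ large enough so that $\psi_{\alpha}(z):=z^{\alpha-1/2}(1+z)^{-3\alpha}$ and $\varphi_{\alpha}(z):=z^{\alpha}(1+z)^{-3\alpha}$ satisfy the decay conditions of Table~\ref{decay parameters conditions bisectorial Hardy adapted space} at the relevant $s\in\{0,1\}$. On $\mathcal{V}^{1,2}(\Rn)=\dom{H^{1/2}}$ the identity $t^{-1}\varphi_{\alpha}(t^{2}H)f=\psi_{\alpha}(t^{2}H)H^{1/2}f$ holds, so the two bounds to be proved read
\begin{equation*}
\|f\|_{\bb{H}^{p}_{H}}=\|S_{\psi_{\alpha},H}f\|_{\Ell{p}}\lesssim \|f\|_{\Ell{p}}
\quad\text{and}\quad
\|f\|_{\bb{H}^{1,p}_{H}}=\|S_{\psi_{\alpha},H}(H^{1/2}f)\|_{\Ell{p}}\lesssim \|\nabla_{\mu}f\|_{\Ell{p}}.
\end{equation*}
The $p=2$ endpoints are both immediate from McIntosh's Theorem~\ref{McIntosh Theorem}, combined in the Sobolev case with the Kato estimate~\eqref{Kato estimate}.

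For the Sobolev inclusion I iterate Lemma~\ref{extrapolation lemma}. Starting from $s_{0}=2$, a first application gives the estimate for all $p\in (2_{*}\vee 1,2)$; picking $s_{1}\in (p_{-}(H)\vee 1, 2_{*}]$ (inside the range just covered) and reapplying the lemma---with a possibly larger choice of $\alpha$---extrapolates to $p\in((s_{1})_{*}\vee 1,s_{1})$. Continuing with a decreasing sequence $s_{k}\downarrow p_{-}(H)\vee 1$, the union of the resulting intervals exhausts $(p_{-}(H)_{*}\vee 1, 2]$. The passage from $f\in\mathcal{V}^{1,2}(\Rn)\cap\dot{\mathcal{V}}^{1,p}(\Rn)$ to general $f\in\dot{\mathcal{V}}^{1,p}(\Rn)\cap\Ell{2}$ is carried out by approximation: Corollary~\ref{density in mixed V adapted sobolev spaces} provides a sequence in $\smcp$ converging in both $\dot{\mathcal{V}}^{1,p}$ and $\Ell{2}$, on which $\bb{Q}_{\varphi_{\alpha},H}$ is $\Ell{2}$-bounded, and lower semicontinuity of the tent space $\Ell{p}$-norm under $\Ell{2}_{\loc}(\Hn)$-convergence then transfers the bound to the limit.

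For the $\Ell{p}$ inclusion I run a parallel Marcinkiewicz extrapolation argument mirroring the proof of Lemma~\ref{extrapolation lemma}, but with the Calderón--Zygmund--Sobolev decomposition of Lemma~\ref{lemma V-adapted Calderon--Zygmund--Sobolev decomposition} replaced by the classical Calderón--Zygmund decomposition of $f$ at height $\lambda$. The associated splitting $f=g+\tilde g+b$ with $\tilde g=\sum_{k}(1-\varrho_{\alpha}(\ell_{k}^{2}H))b_{k}$, $b=\sum_{k}\varrho_{\alpha}(\ell_{k}^{2}H)b_{k}$ and $\varrho_{\alpha}(z)=z^{\alpha}(1+z)^{-\alpha}$ is controlled piece by piece by combining the $\Ell{2}$ bound for $S_{\psi_{\alpha},H}g$ with the off-diagonal estimates from Lemma~\ref{Obtaining L^2-L^q or L^p-L^2 odes for powers of resolvents} and the $\Ell{p_{0}}$-boundedness of $\{(1+t^{2}H)^{-1}\}_{t>0}$ for any $p_{0}\in\mathcal{J}(H)\cap(p_{-}(H)\vee 1,p)$. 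No Sobolev embedding intervenes, so the endpoint $p_{-}(H)\vee 1$ is reached without iteration.

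\emph{Main obstacle.} The principal technical burden is organising the Sobolev iteration so that the extrapolated endpoint is truly $p_{-}(H)_{*}\vee 1$ rather than some strictly larger threshold introduced by $\alpha$-dependence. Each application of Lemma~\ref{extrapolation lemma} produces an integer $\alpha_{1}\geq 1$ depending on the chosen $s$, $p$, $p_{-}(H)$ and $n$, and one must keep $\alpha$ monotonically growing along the chain $s_{0}>s_{1}>\cdots$ while verifying that the estimate produced at step $k$ is in precisely the form needed to serve as hypothesis at step $k+1$. Once this bookkeeping is handled the two embeddings follow in a uniform manner, but this step is where the proof requires the most care.
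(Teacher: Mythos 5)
Your treatment of the Sobolev inclusion matches the paper: iterate Lemma~\ref{extrapolation lemma} from $s_0=2$ (where McIntosh's theorem and the Kato estimate~\eqref{Kato estimate} supply the base case) and cascade down the Sobolev exponents. Minor slip there: you write ``picking $s_1\in(p_-(H)\vee 1,\,2_*]$ (inside the range just covered)'', but the first application only covers $p\in(2_*\vee 1, 2)$, which is strictly above $2_*$; you need $s_1>2_*\vee 1$, and the correct phrasing is that the union over all admissible choices exhausts the target interval in the limit. This is the bookkeeping the paper handles by citing \cite[Remark 9.11]{AuscherEgert}, and the passage to general $f\in\dot{\mathcal{V}}^{1,p}\cap\El{2}$ by density and Fatou is also the paper's argument (following \cite[Proposition 9.13]{AuscherEgert}).

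For the $\El{p}\cap\El{2}\subseteq\bb{H}^p_H$ inclusion you diverge from the paper. The paper does \emph{not} run a second Calder\'on--Zygmund argument: it follows \cite[Proposition 9.12]{AuscherEgert}, which deduces $\|\bb{Q}_{\psi_\alpha,H}g\|_{\tent{p}}\lesssim\|g\|_{\El{p}}$ from the already established Sobolev square function estimate by setting $g=H^{1/2}f$ and invoking the $\El{p}$-boundedness of $R_H=\nabla_\mu H^{-1/2}$ from Theorem~\ref{thm boundedness of Riesz transforms on Lp full range} (via Lemma~\ref{density lemma range dom L^p} for the density of $\Ell{p}\cap\ran{H^{1/2}}$). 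Your alternative---a classical CZ decomposition at height $\lambda$ of $f$ itself, with the splitting $f=g+\tilde g+b$ via $\varrho_\alpha(\ell_k^2H)$ and off-diagonal estimates from Lemma~\ref{Obtaining L^2-L^q or L^p-L^2 odes for powers of resolvents}---is a genuinely different, more self-contained route: it would bypass Theorem~\ref{thm boundedness of Riesz transforms on Lp full range} entirely. But as written it is far too compressed. You would need to (i) verify that $b_k\in\El{2}$ so that $\varrho_\alpha(\ell_k^2H)b_k$ is defined, (ii) control $\|\tilde g\|_{\El{2}}$ using $\El{p_0}$--$\El{2}$ bounds for $(1-\varrho_\alpha(\ell_k^2H))$ together with bounded overlap of $\{Q_k\}$, and (iii) carry out the off-$\bigcup 4Q_k$ estimate for $S_{\psi_\alpha,H}(b)$ with off-diagonal decay; only then does the assertion that ``the endpoint $p_-(H)\vee 1$ is reached without iteration'' become a theorem rather than a hope. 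None of this is wrong in outline, but the burden of proof is where the paper's Riesz-transform shortcut pays for itself---given that Theorem~\ref{thm boundedness of Riesz transforms on Lp full range} is already proved before this corollary is invoked.
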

\begin{proof}
    It follows from McIntosh's theorem and the Kato square root estimate \eqref{Kato estimate} that the estimate \eqref{estimate square function q} holds for $s=2$ and $\alpha\in\N$. As explained in \cite[Remark 9.11]{AuscherEgert}, we can iterate Lemma~\ref{extrapolation lemma} to obtain that 
\begin{equation}\label{extrapolation estimate for induction}
\norm{S_{\psi_{\alpha},H}\left(H^{1/2}f\right)}_{\El{p}}\lesssim \norm{\nabla_{\mu}f}_{\El{p}}
    \end{equation}
    for all $p\in\left(p_{-}(H)_{*}\vee 1, 2\right]$, all sufficiently large $\alpha$ (possibly depending on $p$) and all $f\in \dot{\mathcal{V}}^{1,p}(\Rn)\cap\mathcal{V}^{1,2}(\Rn)$.
    The first continuous inclusion then follows from the argument of the proof of \cite[Proposition 9.12]{AuscherEgert}, upon replacing $L$ by $H$ and $\nabla_{x}$ by $\nabla_{\mu}$. In particular, we rely on the $\El{p}$-boundedness of $R_{H}$ established in Theorem~\ref{thm boundedness of Riesz transforms on Lp full range}. We also need to use Lemma~\ref{density lemma range dom L^p} instead of \cite[Lemma 7.2]{AuscherEgert}. The second inclusion is proved as in \cite[Proposition 9.13]{AuscherEgert}, using that $\dot{\mathcal{V}}^{1,p}\cap\mathcal{V}^{1,2}$ is dense in $\dot{\mathcal{V}}^{1,p}\cap\El{2}$ by Corollary~\ref{density in mixed V adapted sobolev spaces}.
\end{proof}

\subsection{Embeddings $\bb{H}^{p}_{H}\subseteq \El{p}\cap\El{2}$ and $\bb{H}^{1,p}_{H}\subseteq \dot{\mathcal{V}}^{1,p}\cap\El{2}$ for $p\in(1,2]$}
The main result of this section is Proposition~\ref{two continuous inclusions abstract H^p and H^1,p} below. We shall need the following lemma, which follows from the machinery used to treat general bisectorial operators $T$ satisfying the standard assumptions of \cite[Section~8.1]{AuscherEgert}. Here we use the operators $B$ and $D$ from Section~\ref{subsection on schrodinger operators H and DB BD}, and recall that $n\geq 3$, $q\geq \max\set{\frac{n}{2},2}$ and $V\in\textup{RH}^{q}(\Rn)$.
\begin{lem}\label{continuous inclusions q>2 abstract bisectorial adapted hardy spaces}
    Let $T\in\set{BD,DB}$. The embeddings below hold for the respective $p$-(quasi)norms:
    \begin{enumerate}[label=\emph{(\roman*)}]
    \item For all $p\in [2,\infty)$, there is a continuous inclusion $\clos{\ran{T}}\cap\El{p}(\Rn;\C^{n+2})\subseteq \bb{H}^{p}_{T}$.
    \item For all $p\in(1,2]$, there is a continuous inclusion $\bb{H}^{p}_{T}\subseteq \El{p}(\Rn;\C^{n+2})\cap\El{2}(\Rn;\C^{n+2})$. 
    \end{enumerate}
\end{lem}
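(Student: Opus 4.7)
The operator $T\in\{BD,DB\}$ verifies the standard assumptions of Section~\ref{subsection on operator adapted hardy spaces for general operators with standard assumptions} on $\El{2}(\Rn;\C^{n+2})$: bisectoriality of type $S_{\omega(B)}$, a bounded $\textup{H}^{\infty}$-calculus on $\clos{\ran{T}}$ (Theorem~\ref{bounded H_infty functional calculus of H}), and $\El{2}$-off-diagonal estimates of exponential order for $\{(1+itT)^{-1}\}_{t\in\R\setminus\{0\}}$ (see Section~\ref{subsubsection on mapping properties ofa dapted hardy spaces}). Both embeddings are therefore instances of the general Hardy-space theory for such operators developed in Chapter 8 of Auscher--Egert. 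Fix once and for all a non-degenerate $\psi\in\Psi^{\tau}_{\sigma}(S_{\mu})$ with $\mu\in(\omega(B),\pi/2)$ and $\tau,\sigma$ large enough to meet Table~\ref{decay parameters conditions bisectorial Hardy adapted space} for $(s,p)=(0,p)$. The case $p=2$ in both (i) and (ii) is then immediate from McIntosh's theorem, which gives $\bb{H}^{2}_{T}=\clos{\ran{T}}\subseteq\El{2}(\Rn;\C^{n+2})$ with equivalent norms.

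For (i) with $p\in(2,\infty)$: The inclusion reduces to the square-function bound $\|\bb{Q}_{\psi,T}f\|_{\tent{0,p}}\lesssim\|f\|_{\El{p}}$ for $f\in\El{p}\cap\clos{\ran{T}}$. Starting from McIntosh's $\El{2}$-estimate, the plan is to extrapolate to $\El{p}$ by applying the Blunck--Kunstmann criterion \cite[Proposition~7.6]{AuscherEgert} to the square-function operator, with an approximating family built as a polynomial in $(1+r^{2}T^{2})^{-1}$. The crucial ingredient is that $\{\psi(tT)\}_{t>0}$ inherits $\El{2}$-off-diagonal estimates of arbitrarily large order from the resolvents (via the Cauchy integral representation of $\psi(tT)$ and composition), and these supply the decay required in place of the pointwise kernel bounds of classical Calderón--Zygmund theory.

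For (ii) with $p\in(1,2)$: The plan is to obtain the boundedness of the contraction $\bb{C}_{\psi,T}:\tent{0,p}\cap\tent{0,2}\to\El{p}\cap\El{2}$ by duality with (i), and then apply the Calderón--McIntosh reproducing formula. The adjoint $T^{*}\in\{DB^{*},B^{*}D\}$ satisfies the same standard assumptions (since $D$ is self-adjoint and $B^{*}$ shares the accretivity and boundedness of $B$), so (i) applied to $T^{*}$ at $p'\in[2,\infty)$ yields $\|\bb{Q}_{\psi^{*},T^{*}}g\|_{\tent{0,p'}}\lesssim\|g\|_{\El{p'}}$ for $g\in\El{p'}\cap\clos{\ran{T^{*}}}$. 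This extends to all $g\in\El{p'}\cap\El{2}$ via the orthogonal decomposition $\El{2}=\clos{\ran{T^{*}}}\oplus\kernel{T^{*}}$, the fact that $\psi^{*}(tT^{*})$ annihilates $\kernel{T^{*}}$ (as $\psi^{*}\in\textup{H}^{\infty}_{0}$), and the $\El{p'}$-boundedness of the Hodge-type projector onto $\clos{\ran{T^{*}}}$. Tent-space duality \cite[Proposition~1.9]{Amenta_tent_spaces2018} and the identity $\bb{C}_{\psi,T}=(\bb{Q}_{\psi^{*},T^{*}})^{*}$ then give the desired boundedness of $\bb{C}_{\psi,T}$. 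For $f\in\bb{H}^{p}_{T}$, the bisectorial analogue of the Calderón--McIntosh reproducing formula (Theorem~\ref{Calderon McIntosh reproducing formula}) gives $f=c\,\bb{C}_{\psi,T}\bb{Q}_{\psi,T}f$, whence
\begin{align*}
\|f\|_{\El{p}}\;\lesssim\;\|\bb{Q}_{\psi,T}f\|_{\tent{0,p}}\;\eqsim\;\|f\|_{\bb{H}^{p}_{T}},
\end{align*}
while $\El{2}$-control follows from McIntosh via $\bb{Q}_{\psi,T}f\in\tent{0,2}$.

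The principal obstacle is the extrapolation in (i), which must be carried out without pointwise kernel bounds; this is handled by feeding the $\El{2}$-off-diagonal estimates of arbitrarily large order into the Blunck--Kunstmann machinery in lieu of kernel regularity. A secondary delicacy in (ii) is the $\El{p'}$-boundedness of the Hodge projector, which is needed to pass from the restricted bound for $\bb{Q}_{\psi^{*},T^{*}}$ on $\clos{\ran{T^{*}}}$ to one valid on all $\El{p'}\cap\El{2}$ test functions, and which follows from the accretivity of $B^{*}$ combined with the resolvent bounds of Section~\ref{section on critical numbers and their properties}.
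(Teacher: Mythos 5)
Your overall architecture — part (i) by extrapolating the McIntosh $\El{2}$ square-function bound using $\El{2}$ off-diagonal estimates, part (ii) by duality with $T^{*}$ together with the Calderón--McIntosh reproducing formula and tent-space duality — is the same as the paper's. Part (i) is fine in spirit; it is exactly the bisectorial version of Part 2 of the proof of \cite[Theorem 9.7]{AuscherEgert} that the paper invokes.

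There is, however, a genuine gap in your treatment of (ii), precisely at the point you flag as a ``secondary delicacy''. To pass from the bound $\norm{\bb{Q}_{\psi^{*},T^{*}}g}_{\tent{p'}}\lesssim\norm{g}_{\El{p'}}$ on $\clos{\ran{T^{*}}}\cap\El{p'}$ to one valid for all $g\in\El{2}\cap\El{p'}$, you decompose $g$ along $\El{2}=\clos{\ran{T^{*}}}\oplus\kernel{T^{*}}$ (which, incidentally, is only a topological splitting, not an orthogonal one, since $B$ is not self-adjoint) and then invoke the $\El{p'}$-boundedness of the projector onto $\clos{\ran{T^{*}}}$ for every $p'\in[2,\infty)$. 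That projector bound is not justified by accretivity plus the resolvent estimates of Section~\ref{section on critical numbers and their properties}, and it is false in general: for the operators $D$ built from $\nabla_{\mu}$ and $\nabla_{\mu}^{*}$, the Hodge projector onto $\clos{\ran{D}}$ encodes Riesz-transform-type bounds for Schrödinger operators, which hold only on restricted intervals of exponents (compare Theorem~\ref{thm boundedness of Riesz transforms on Lp full range} and Lemma~\ref{converse implication for boundedness of riesz transform and critical numbers}); for $BD$ the situation is worse still because of the conjugation by $B$. Since the lemma is asserted for \emph{all} $p\in(1,2]$, i.e.\ all $p'\in[2,\infty)$, your argument would require more than is available.

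The correct fix, and the one the paper uses, is that no projection is needed at all. Because the auxiliary function $\varphi^{*}$ lies in $\textup{H}^{\infty}_{0}$, the operators $\varphi^{*}(tT^{*})$ are defined by the Cauchy integral as bounded operators on the \emph{whole} of $\El{2}(\Rn;\C^{n+2})$, not merely on $\clos{\ran{T^{*}}}$, and the proof of the square-function estimate in part (i) (off-diagonal estimates plus extrapolation) never uses membership of the range. Hence \eqref{intermediate tent space estimate of general extension operator for q geq 2} holds for every $g\in\El{2}\cap\El{p'}$ directly, and the duality pairing $\abs{\langle f,g\rangle_{\El{2}}}\lesssim\norm{f}_{\bb{H}^{p}_{T}}\norm{g}_{\El{p'}}$ follows for arbitrary test functions $g$. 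You should record the part (i) estimate in this stronger, unrestricted form; the restriction to $\clos{\ran{T}}$ belongs only in the statement of the embedding, not in the underlying square-function bound.
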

\begin{proof}
   The first item is a bisectorial version of Part 2 in the proof of \cite[Theorem 9.7]{AuscherEgert}. In fact, if $\alpha>\frac{n}{2}$, $p\in [2,\infty)$ and ${\varphi(z)=z^{\alpha}(1+iz)^{-2\alpha}}$ for all $z\in S_{\pi/2}$, then it shows that  
    \begin{align}\label{intermediate tent space estimate of general extension operator for q geq 2}
        \norm{\bb{Q}_{\varphi,T}f}_{\tent{p}}\lesssim \norm{f}_{\El{p}}
    \end{align}
    for all $f\in\El{2}\cap\El{p}$. Therefore, if $f\in\clos{\ran{T}}\cap\El{p}(\Rn;\C^{n+2})$, we obtain that $f\in\bb{H}^{p}_{T}$ with $\norm{f}_{\bb{H}^{p}_{T}}\eqsim \norm{\bb{Q}_{\varphi,T}f}_{\tent{p}}\lesssim \norm{f}_{\El{p}}$.
    
To prove (ii), we cannot rely on interpolation as in the proof of \cite[Lemma 9.14]{AuscherEgert}, and we shall therefore rely on (i) and duality.
Let $p\in(1,2]$ and $f\in\bb{H}^{p}_{T}\subseteq\clos{\ran{T}}$. 
    Pick a function $\psi\in \textup{H}^{\infty}_{0}$ which is admissible in the definition of $\bb{H}^{p}_{T}$. If $\varphi\in  \Psi^{\infty}_{\infty}$ is chosen so that $\varphi\psi$ satisfies the Calderón--McIntosh reproducing formula, then for all $g\in\El{2}\cap\El{p'}$, it holds that 
    \begin{align*}
        \langle f,g\rangle_{\El{2}}=\lim_{\eps\to 0^{+}}\int_{\eps}^{1/\eps} \langle (\varphi\psi)(tT)f,g\rangle_{\El{2}}\frac{\dd t}{t}=\lim_{\eps\to 0^{+}}\int_{\eps}^{1/\eps} \langle \psi(tT)f,\varphi^{*}(tT^{*})g\rangle_{\El{2}}\frac{\dd t}{t}.
    \end{align*}
    
    In the formula above, we do not restrict the operator $T$ to the closure of its range. We can still make sense of $(\varphi\psi)(tT)\in\mathcal{L}(\El{2}(\Rn ; \C^{n+2}))$ since $\varphi\psi\in\textup{H}^{\infty}_{0}$. 
    Consequently, we can use \eqref{estimate duality in tent spaces} to obtain  \begin{align*}
        \abs{\langle f,g\rangle_{\El{2}}}&\leq \int_{0}^{\infty}\abs{\langle \psi(tT)f , \varphi^{*}(tT^{*})g\rangle_{\El{2}}}\frac{\dd t}{t}\leq\iint_{\Hn} \abs{(\bb{Q}_{\psi,T}f)(t,y)}\abs{(\bb{Q}_{\varphi^{*},T^{*}}g)(t,y)}\frac{\dd t\dd y}{t}\\
        &\lesssim \norm{\bb{Q}_{\psi, T}f}_{\tent{p}}\norm{\bb{Q}_{\varphi^{*},T^{*}}g}_{\tent{p'}}\lesssim \norm{f}_{\bb{H}^{p}_{T}} \norm{\bb{Q}_{\varphi^{*},T^{*}}g}_{\tent{p'}},
    \end{align*}
     by definition of the adapted Hardy space $\bb{H}^{p}_{T}$. Here it is important that we haven't restricted the operator $T$ to the closure of its range. Now, since $p'\geq 2$ and $T^{*}\in\set{DB^{*},B^{*}D}$ has the same properties as $T$, \eqref{intermediate tent space estimate of general extension operator for q geq 2} implies that $\abs{\langle f,g\rangle_{\El{2}}}\lesssim \norm{f}_{\bb{H}^{p}_{T}}\norm{g}_{\El{p'}}$. Since $g\in\El{2}\cap\El{p'}$ was arbitrary, this implies that $f\in\El{p}$ with the estimate $\norm{f}_{\El{p}}\lesssim \norm{f}_{\bb{H}^{p}_{T}}$.
\end{proof}
The previous lemma has the following consequence.
\begin{prop}\label{two continuous inclusions abstract H^p and H^1,p}
    Let $p\in(1,2]$. The following are continuous inclusions for the respective $p$-(quasi)norms:
    \begin{equation*}
        \bb{H}^{p}_{H}\subseteq \Ell{p}\cap\Ell{2} \quad\text{ and } \quad\bb{H}^{1,p}_{H}\subseteq \dot{\mathcal{V}}^{1,p}(\Rn)\cap\Ell{2}.
    \end{equation*} 
\end{prop}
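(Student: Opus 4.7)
The plan is to reduce both inclusions to Lemma~\ref{continuous inclusions q>2 abstract bisectorial adapted hardy spaces}(ii) by exploiting the direct sum decompositions and the bijective correspondences displayed in Figure~\ref{figure relations between adapted Hardy spaces}.

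For the first inclusion, suppose $f\in \bb{H}^{p}_{H}$. The decomposition $\bb{H}^{p}_{BD}=\bb{H}^{p}_{H}\oplus\bb{H}^{p}_{M}$ with the norm equivalence $\norm{u}_{\bb{H}^{p}_{BD}}\eqsim \norm{u_{\perp}}_{\bb{H}^{p}_{H}}+\norm{(u_{\parallel},u_{\mu})}_{\bb{H}^{p}_{M}}$ recalled at the end of Section~\ref{subsubsection on mapping properties ofa dapted hardy spaces} shows that $(f,0,0)$ belongs to $\bb{H}^{p}_{BD}\subseteq \clos{\ran{BD}}$ with $\norm{(f,0,0)}_{\bb{H}^{p}_{BD}}\eqsim \norm{f}_{\bb{H}^{p}_{H}}$. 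Since $p\in(1,2]$, Lemma~\ref{continuous inclusions q>2 abstract bisectorial adapted hardy spaces}(ii) applied with $T=BD$ yields $(f,0,0)\in\El{p}(\Rn;\C^{n+2})\cap\El{2}(\Rn;\C^{n+2})$, and reading off the first component gives $f\in\Ell{p}\cap\Ell{2}$ with $\norm{f}_{\Ell{p}}\lesssim \norm{f}_{\bb{H}^{p}_{H}}$, as required.

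For the second inclusion, the strategy is first to work on the dense subspace $\bb{H}^{1,p}_{H}\cap\dom{H^{1/2}}$. If $f$ belongs to this subspace, then Figure~\ref{figure relations between adapted Hardy spaces} gives that $-\nabla_{\mu}$ is a bijection onto $\bb{H}^{p}_{\widetilde{M}}\cap\ran{\nabla_{\mu}}$ with $\norm{\nabla_{\mu}f}_{\bb{H}^{p}_{\widetilde{M}}}\eqsim\norm{f}_{\bb{H}^{1,p}_{H}}$. Embedding via $\bb{H}^{p}_{DB}=\bb{H}^{p}_{\widetilde{H}}\oplus\bb{H}^{p}_{\widetilde{M}}$ and applying Lemma~\ref{continuous inclusions q>2 abstract bisectorial adapted hardy spaces}(ii) with $T=DB$ delivers $\nabla_{\mu}f\in\El{p}(\Rn;\C^{n+1})\cap\El{2}(\Rn;\C^{n+1})$ with $\norm{\nabla_{\mu}f}_{\El{p}}\lesssim \norm{f}_{\bb{H}^{1,p}_{H}}$. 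Since $f\in\dom{H^{1/2}}=\mathcal{V}^{1,2}(\Rn)\subseteq \El{1}_{\loc}(\Rn)\cap\Ell{2}$, this yields $f\in\dot{\mathcal{V}}^{1,p}(\Rn)\cap\Ell{2}$ together with the stated estimate.

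It remains to extend from this dense subspace to all of $\bb{H}^{1,p}_{H}$ by density (via the analogue of \cite[Lemma~8.7]{AuscherEgert}, as used in Section~\ref{subsubsection on mapping properties ofa dapted hardy spaces}). Given $f\in\bb{H}^{1,p}_{H}$, pick $(f_{k})\subseteq \bb{H}^{1,p}_{H}\cap\dom{H^{1/2}}$ converging to $f$ in $\bb{H}^{1,p}_{H}$. The bound just proved shows that $(\nabla_{\mu}f_{k})$ is Cauchy in $\Ell{p}$ and converges to some $g\in\El{p}(\Rn;\C^{n+1})$. The delicate step, and the main obstacle, is identifying $g$ with the distributional $\nabla_{\mu}$-derivative of $f$. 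For this one uses that $f\in\bb{H}^{1,p}_{H}\subseteq\Ell{2}$ and that $\bb{H}^{1,p}_{H}$-convergence is finer than $\El{2}_{\loc}(\Hn)$-convergence of $\bb{Q}_{\varphi,H}(f_{k}-f)$; testing against $\nabla_{\mu}^{*}\phi$ for $\phi\in C^{\infty}_{c}(\Rn;\C^{n+1})$ and passing to the limit through integration by parts identifies the distributional $\nabla_{x}f_{k}\to g_{\parallel}$ and $V^{1/2}f_{k}\to g_{\mu}$ in the sense of distributions, hence $\nabla_{\mu}f=g\in\El{p}$. This gives $f\in\dot{\mathcal{V}}^{1,p}(\Rn)\cap\Ell{2}$ with the required $p$-norm estimate, and completes the proof.
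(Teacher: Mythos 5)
Your proof is correct and follows essentially the same route as the paper: the first inclusion via the splitting $\bb{H}^{p}_{BD}=\bb{H}^{p}_{H}\oplus\bb{H}^{p}_{M}$ and Lemma~\ref{continuous inclusions q>2 abstract bisectorial adapted hardy spaces}.(ii), and the second via Figure~\ref{figure relations between adapted Hardy spaces} on the dense subspace $\bb{H}^{1,p}_{H}\cap\dom{H^{1/2}}$ followed by a density/Cauchy argument. The only cosmetic difference is that you spell out the identification of the distributional limit of $(\nabla_{\mu}f_{k})_{k}$ by hand, where the paper simply cites \cite[Lemma 2.1]{MorrisTurner}; note that the $\El{2}$-convergence $f_{k}\to f$ supplied by the density lemma is what makes that identification work, and you do have it available.
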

\begin{proof}
    The first inclusion follows from $\bb{H}^{p}_{BD}=\bb{H}^{p}_{H}\oplus\bb{H}_{M}^{p}$ (see Section~\ref{subsubsection on mapping properties ofa dapted hardy spaces}) and Lemma~\ref{continuous inclusions q>2 abstract bisectorial adapted hardy spaces}.(ii).
    To prove the second inclusion, we first consider $f\in\bb{H}^{1,p}_{H}\cap\dom{H^{1/2}}$. 
    Then, Figure \ref{figure relations between adapted Hardy spaces} shows that $\begin{bmatrix}
        0\\
        \nabla_{\mu}f
    \end{bmatrix}\in\bb{H}^{p}_{DB}$, with $\norm{f}_{\bb{H}^{1,p}_{H}}\eqsim\norm{\begin{bmatrix}
            f\\
            0
        \end{bmatrix}}_{\bb{H}^{1,p}_{BD}}\eqsim \norm{\begin{bmatrix}
            0\\
            \nabla_{\mu}f
        \end{bmatrix}}_{\bb{H}^{p}_{DB}}$.
    Hence, Lemma~\ref{continuous inclusions q>2 abstract bisectorial adapted hardy spaces}.(ii) implies that $\nabla_{\mu}f\in\El{p}$, with
    \begin{equation}\label{intermediate step estimate inclusions hardy spaces}
        \norm{\nabla_{\mu}f}_{\El{p}}\lesssim \norm{\begin{bmatrix}
            0\\
            \nabla_{\mu}f
        \end{bmatrix}}_{\bb{H}^{p}_{DB}}\eqsim \norm{f}_{\bb{H}^{1,p}_{H}}.
    \end{equation}
    Now, if $f\in\bb{H}^{1,p}_{H}$ is arbitrary, then we can find (see \cite[Lemma 8.7]{AuscherEgert}) a sequence $(f_{k})_{k\geq 1}$ in $ \bb{H}^{1,p}_{H}\cap\dom{H^{1/2}}$ such that $f_{k}\to f$ in both $\El{2}$ and $\bb{H}^{1,p}_{H}$ as $k\to\infty$. Applying the estimate (\ref{intermediate step estimate inclusions hardy spaces}) to $f_{k}-f_{j}$, $k,j\geq 1$, shows that the sequence $(\nabla_{\mu}f_{k})_{k\geq 1}$ is Cauchy in $\Ell{p}$. Therefore, we can apply \cite[Lemma 2.1]{MorrisTurner} to deduce that $f$ is weakly differentiable, with $\nabla_{\mu}f\in\Ell{p}$, and that $\nabla_{\mu}f_k\to\nabla_{\mu}f$ in $\El{p}$ as $k\to\infty$. As a consequence, $f\in\dot{\mathcal{V}}^{1,p}\cap\El{2}$, and we can pass to the limit $k\to\infty$ in the estimate (\ref{intermediate step estimate inclusions hardy spaces}) applied to $f_k$ to conclude the proof.
\end{proof}
\subsection{Identification of $\bb{H}^{p}_{H}$ and $\bb{H}^{1,p}_{H}$ for $p\leq 2$}
In this section (see Corollary~\ref{characterisation abstract H^p p<2} below), we obtain identifications $\bb{H}^{p}_{H}=b\textup{H}^{p}_{V,\textup{pre}}(\Rn)$ and $\bb{H}^{1,p}_{H}\cap\mathcal{V}^{1,2}(\Rn)=\dot{\textup{H}}^{1,p}_{V,\textup{pre}}(\Rn)$ (up to equivalent $p$-quasinorms) for exponents $p$ in certain subintervals of $(\frac{n}{n+1},2]$. 

The following lemma is a substitute for \cite[Corollary 8.29]{AuscherEgert} that will be needed in the proof of Proposition~\ref{lemma continuous inclusions abstract hardy spaces for p<=1}. The definition of $\left(\bb{H}^{p}_{T},\eps,1\right)$-molecule is recalled in the proof.
\begin{lem}\label{abstract molecules are concrete dziubanski molecules}
    Let $\delta=2-\frac{n}{q}$ and $p\in (\frac{n}{n+ \frac{\delta}{2}},1]$. There exists $\eps_{0}>0$ such that all $\left(\bb{H}^{p}_{D},\eps_{0},1\right)$-molecules $m$ belong to $\textup{H}^{p}_{V,\textup{pre}}(\Rn;\C^{n+2})$, with $\norm{m}_{\Dz{p}}\lesssim 1$. 
\end{lem}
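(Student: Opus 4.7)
The plan is to show that each of the $n+2$ scalar components of a vector-valued $(\bb{H}^p_D, \eps_0, 1)$-molecule $m$ is a $(p,\eps_0)$-molecule in the sense of Definition~\ref{molecule Dziubanski hardy space}, and then invoke Theorem~\ref{molecules are in Dziubanski hardy space with uniform bound} componentwise. Recall from \cite[Definition 8.14]{AuscherEgert} that, at level $M=1$, a function $m\in\El{2}(\Rn;\C^{n+2})$ is a $(\bb{H}^p_D,\eps_0,1)$-molecule associated with a cube $Q\subset\Rn$ if $m=Db$ for some $b\in\dom{D}$ with, for every $j\geq 1$,
\[
\ell(Q)^{-1}\norm{b}_{\El{2}(C_j(Q))}+\norm{Db}_{\El{2}(C_j(Q))}\lesssim 2^{-j\eps_0}\meas{2^jQ}^{\frac{1}{2}-\frac{1}{p}}.
\]
Using the formula $Du=-(\nabla_\mu^*(u_\parallel,u_\mu),\nabla_\mu u_\perp)$ together with $\nabla_\mu^*(u_\parallel,u_\mu)=-\dvg_x u_\parallel+V^{1/2}u_\mu$, the scalar components of $m=Db$ are $m_\perp=\dvg_x b_\parallel-V^{1/2}b_\mu$, $m_{\parallel,k}=-\partial_k b_\perp$ for $k=1,\ldots,n$, and $m_\mu=-V^{1/2}b_\perp$.

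Condition (i) of Definition~\ref{molecule Dziubanski hardy space} for each component follows directly from the $\El{2}$-decay of $m$. For condition (ii), the vanishing integral of the components $m_{\parallel,k}$ and of $\dvg_x b_\parallel$ in $m_\perp$ will follow from an integration-by-parts argument against a smooth cutoff $\eta_R(x)=\eta(x/R)$ with $\eta_R\to 1$ and $R^{-1}\nabla\eta\to 0$, once we observe that both $b_\perp$ and $\nabla_x b_\perp$ lie in $\El{1}$ as soon as $\eps_0$ exceeds a (purely dimensional) threshold: this is by Cauchy--Schwarz from their annular $\El{2}$-decay, taking into account the negative exponent $1-\frac{1}{p}\leq 0$. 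It thus remains to bound the two integrals $\int V^{1/2}b_\mu$ and $\int V^{1/2}b_\perp$ by $\meas{Q}^{1-\frac{1}{p}}\min\{1,(\ell(Q)^2\dashint_Q V)^{1/2}\}$.

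These two bounds will be obtained separately: for the bound by $(\ell(Q)^2\dashint_Q V)^{1/2}\meas{Q}^{1-\frac{1}{p}}$, we split into annuli and apply Cauchy--Schwarz,
\[
\Bigl|\int_{C_j(Q)}V^{1/2}b\Bigr|\leq \norm{V^{1/2}}_{\El{2}(C_j(Q))}\norm{b}_{\El{2}(C_j(Q))},
\]
controlling the first factor by the $V$-doubling in Lemma~\ref{self-improvement property of reverse holder weights}(ii) (with constant $\kappa$) as $\kappa^{j/2}(\int_Q V)^{1/2}$ and the second by the molecule bound on $b$; summing gives a geometric series that converges to the desired quantity once $\eps_0$ exceeds $\log_2\sqrt{\kappa}+\frac{n}{2}-\frac{n}{p}$. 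The complementary bound by $\meas{Q}^{1-\frac{1}{p}}$ follows from the $\El{2}$-decay of $m$ itself, since $|\int_{C_j(Q)}m|\leq\meas{C_j(Q)}^{1/2}\norm{m}_{\El{2}(C_j(Q))}$ sums (with $\eps_0>n(1-\frac{1}{p})$, which is automatic as $p\leq 1$). The main obstacle, and the only delicate point, is the uniform selection of $\eps_0$ depending on $p$, $n$, and $\kappa$ so that both annular sums converge with the correct prefactors prescribed by Definition~\ref{molecule Dziubanski hardy space}. With such $\eps_0$ fixed, Theorem~\ref{molecules are in Dziubanski hardy space with uniform bound} applied to each scalar component, combined with the definition $\norm{F}_{\textup{H}^p_V}:=\sum_{k=1}^{n+2}\norm{F_k}_{\textup{H}^p_V}$, yields $\norm{m}_{\textup{H}^p_V}\lesssim 1$.
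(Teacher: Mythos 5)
Your proposal is correct and follows essentially the same route as the paper: identify the scalar components of $m=Db$, get the size condition from the annular $\El{2}$-decay, kill the $\partial_k b_\perp$ and $\dvg_x b_\parallel$ integrals by integration by parts against a cutoff, and bound $\int V^{1/2}b_\mu$ and $\int V^{1/2}b_\perp$ by annular Cauchy--Schwarz plus the doubling of $V$, with the same threshold $\eps_0>\frac{n}{2}-\frac{n}{p}+\frac{1}{2}\log_2\kappa$, before invoking Theorem~\ref{molecules are in Dziubanski hardy space with uniform bound}. The only point to tidy is that for the $\dvg_x b_\parallel$ piece you also need $b_\parallel\in\El{1}$ (same annular decay as $b_\perp$) and $\dvg_x b_\parallel=m_\perp+V^{1/2}b_\mu\in\El{1}$, the latter using the $V^{1/2}b_\mu$ estimate you establish anyway.
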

\begin{proof}
    By definition, if $m$ is a $\left(\bb{H}^{p}_{D},\eps_{0},1\right)$-molecule, there is a cube $Q\subset\Rn$ and a function $b\in\dom{D}$ such that $m=Db$ and
    \begin{align*}
        \norm{m}_{\El{2}(C_{j}(Q))} + \ell(Q)^{-1}\norm{b}_{\El{2}(C_{j}(Q))}\lesssim (2^{j}\ell{(Q)})^{\frac{n}{2}-\frac{n}{p}}2^{-j\eps_{0}}
    \end{align*}
    for all $j\geq 1$. This implies that $m\in\El{1}(\Rn;\C^{n+2})$. In fact,
    \begin{align}\label{molecules are in L1 with cube estimate}
        \norm{m}_{\El{1}}&\leq \sum_{j=1}^{\infty} \meas{C_{j}(Q)}^{1/2}\norm{m}_{\El{2}(C_{j}(Q))}\lesssim \sum_{j=1}^{\infty}2^{\frac{nj}{2}}\ell(Q)^{\frac{n}{2}}(2^{j}\ell{(Q)})^{\frac{n}{2}-\frac{n}{p}}2^{-j\eps_{0}}\lesssim \meas{Q}^{1-\frac{1}{p}}.
    \end{align}
    Writing $b=\begin{bmatrix}
        b_{\perp}\\
        (b_{\parallel},b_{\mu})
    \end{bmatrix}$ and $m=\begin{bmatrix}
        -\nabla_{\mu}^{*}(b_{\parallel},b_{\mu})\\
        -\nabla_{\mu}b_{\perp}
    \end{bmatrix},$ where $b_{\perp}\in\mathcal{V}^{1,2}(\Rn)$ and $(b_{\parallel},b_{\mu})\in\dom{\nabla_{\mu}^{*}}$, gives 
    \begin{align}\label{decomposition of m molecule}
        \abs{\int_{\Rn}m}&= \abs{\int_{\Rn}\nabla_{\mu}^{*}(b_{\parallel},b_{\mu})}+\abs{\int_{\Rn}\nabla_{x}b_{\perp}}+ \abs{\int_{\Rn}V^{1/2}b_{\perp}}=\abs{\int_{\Rn}V^{1/2}b_{\mu}} + \abs{\int_{\Rn}V^{1/2}b_{\perp}}.
    \end{align}
    To see the second equality, observe that since $b_{\perp}\in\dot{\textup{W}}^{1,1}(\Rn)$ and $n\geq 3$, we have $\int_{\Rn}\nabla_{x}b_{\perp}=0$ (this general property of $\dot{\textup{W}}^{1,1}(\Rn)$ functions for $n\geq 2$ follows from \cite[Theorem 4]{Hajłasz1995} with $m=1$). In our case, this can also be seen directly. Indeed, let $\varphi\in \smcp$ be such that $\supp{\varphi}\subseteq 2Q$ and $\varphi =1$ on $Q$. Then, define $\varphi_{j}(x)=\varphi\left(\frac{x}{2^{j}}\right)$ for all $x\in\Rn$ and $j\geq 1$. Since $b_{\perp}\in\mathcal{V}^{1,2}(\Rn)$ we are justified in writing
    \begin{align*}
        \abs{\int_{\Rn}\varphi_{j}\nabla_{x}b_{\perp}}&=\abs{\int_{\Rn}b_{\perp}\nabla_{x}\varphi_{j}}\lesssim 2^{-j}\int_{2^{j+1}Q\setminus 2^{j}Q}\abs{b_{\perp}}\leq 2^{-j}\norm{b_{\perp}}_{\El{2}(C_{j}(Q))}\meas{2^{j+1}Q}^{1/2}\\
        &\lesssim 2^{-j}\ell{(Q)}(2^{j}\ell{(Q)})^{\frac{n}{2}-\frac{n}{p}}2^{-j\eps_{0}}2^{\frac{jn}{2}}\meas{Q}^{1/2}\lesssim 2^{j(n-\frac{n}{p}-\eps_{0}-1)},
    \end{align*}
    where the implicit constants depend on $Q$. Since $\nabla_{x}b_{\perp}\in\Ell{1}$, it follows from dominated convergence that $\abs{\int_{\Rn}\nabla_{x}b_{\perp}}=\lim_{j\to \infty}\abs{\int_{\Rn}\varphi_{j}\nabla_{x}b_{\perp}}=0$.
    Similarly, as $\nabla_{\mu}^{*}(b_{\parallel},b_{\mu})\in\El{1}$, we have 
    \begin{align}\label{limits intermediate step proof of inclusion molecules in dziubanski}
        \int_{\Rn}\nabla_{\mu}^{*}(b_{\parallel},b_{\mu}) &= \lim_{j\to \infty}\int_{\Rn} \varphi_{j}\nabla_{\mu}^{*}(b_{\parallel},b_{\mu})=\lim_{j\to \infty}\langle (b_{\parallel},b_{\mu}) , \nabla_{\mu}\varphi_{j}\rangle_{\El{2}}\nonumber\\
        &=\lim_{j\to\infty}\int_{\Rn} b_{\parallel}\cdot \nabla_{x}\varphi_{j} + \lim_{j\to\infty} \int_{\Rn}b_{\mu}V^{1/2}\varphi_{j}=\int_{\Rn}V^{1/2}b_{\mu},
    \end{align}
    where we have used the same reasoning as above to show that the penultimate limit is equal to zero. In order to make \eqref{limits intermediate step proof of inclusion molecules in dziubanski} rigorous, we need to choose $\eps_{0}>0$ suitably large to ensure that $V^{1/2}b_{\mu}\in\Ell{1}$. 
    In fact, if $\hat{b}\in\set{b_{\mu},b_{\perp}}$ and $\kappa$ is the doubling constant from Lemma~\ref{self-improvement property of reverse holder weights}, we have
    \begin{align*}
        \int_{\Rn}V^{1/2}\abs{\hat{b}}&\leq \sum_{j\geq 1}\norm{V^{1/2}}_{\El{2}(C_{j}(Q))}\norm{b}_{\El{2}(C_{j}(Q))}\leq \sum_{j\geq 1}\left(\int_{2^{j+1}Q}V\right)^{1/2}\ell(Q)(2^{j}\ell(Q))^{\frac{n}{2}-\frac{n}{p}}2^{-j\eps_{0}}\\
        &\lesssim \ell(Q)^{1+\frac{n}{2}-\frac{n}{p}}\sum_{j\geq 1}2^{j\left(\frac{n}{2}-\frac{n}{p}-\eps_{0}\right)}\kappa^{\frac{j+1}{2}}\left(\int_{Q} V\right)^{1/2}\lesssim \meas{Q}^{1-\frac{1}{p}}\left(\ell(Q)^{2}\dashint_{Q} V\right)^{1/2},
    \end{align*}
    provided we choose $\eps_{0}> \frac{n}{2}-\frac{n}{p}+\frac{1}{2}\log_{2}{\kappa}$.
    Consequently, it follows from \eqref{molecules are in L1 with cube estimate} and \eqref{decomposition of m molecule} that
    \begin{align*}
        \abs{\int_{\Rn}m}\lesssim \meas{Q}^{1-\frac{1}{p}}\min\set{1 , \left(\ell(Q)^{2}\dashint_{Q} V\right)^{1/2}}.
    \end{align*}
    It follows that $m$ is (a generic multiple of) a $(p,\eps_{0})$-molecule associated to the cube $Q$ for $\Dz{p}$, in the sense of Definition \ref{molecule Dziubanski hardy space}. The desired conclusion follows from Theorem~\ref{molecules are in Dziubanski hardy space with uniform bound}.
\end{proof}
Using Lemma~\ref{abstract molecules are concrete dziubanski molecules}, we follow the proof of \cite[Lemma 9.14]{AuscherEgert} to obtain the next result.
\begin{prop}\label{lemma continuous inclusions abstract hardy spaces for p<=1}
    Let $\delta=2-\frac{n}{q}$ and $p\in (\frac{n}{n+\frac{\delta}{2}},1].$ There is a continuous inclusion $\bb{H}^{p}_{H}\subseteq b\textup{H}^{p}_{V,\textup{pre}}$ for the respective $p$-quasinorms.
    If in addition $p\in\mathcal{I}(V)$, then there is a continuous inclusion $\bb{H}^{1,p}_{H}\cap\mathcal{V}^{1,2}\subseteq \dot{\textup{H}}^{1,p}_{V,\textup{pre}}$ for the respective $p$-quasinorms.
\end{prop}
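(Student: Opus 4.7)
Plan. The strategy, following \cite[Lemma~9.14]{AuscherEgert}, is to lift $f$ into $\bb{H}^{p}_{BD}$ or $\bb{H}^{p}_{DB}$ via Figure~\ref{figure relations between adapted Hardy spaces}, apply the abstract molecular decomposition from Section~\ref{subsubsection on molecular characterisation of op adapted hardy spaces}, transfer each abstract molecule to a $(\bb{H}^{p}_{D},\eps_{0},1)$-molecule using the boundedness of the multiplication operator $B$ from \eqref{definition of multiplication operator B associated to elliptic coefficients A}, and finally invoke Lemma~\ref{abstract molecules are concrete dziubanski molecules} to identify these with concrete $(p,\eps_{0})$-molecules for $\textup{H}^{p}_{V,\textup{pre}}(\Rn;\C^{n+2})$.

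For the first inclusion, take $f\in\bb{H}^{p}_{H}$ and fix $\eps_{0}$ large enough that both \cite[Theorem~8.17]{AuscherEgert} and Lemma~\ref{abstract molecules are concrete dziubanski molecules} apply. The block decomposition $\bb{H}^{p}_{BD}=\bb{H}^{p}_{H}\oplus\bb{H}^{p}_{M}$ from Section~\ref{subsubsection on mapping properties ofa dapted hardy spaces} places $u:=(f,0)$ in $\bb{H}^{p}_{BD}$ with $\|u\|_{\bb{H}^{p}_{BD}}\eqsim\|f\|_{\bb{H}^{p}_{H}}$. I decompose $u=\sum_{k}\lambda_{k}m_{k}$ unconditionally in $\El{2}$, where each $m_{k}=BDb_{k}$ is a $(\bb{H}^{p}_{BD},\eps_{0},1)$-molecule and $\|(\lambda_{k})\|_{\ell^{p}}\lesssim\|u\|_{\bb{H}^{p}_{BD}}$. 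Since $b_{k}\in\dom{BD}=\dom{D}$ and the $\El{2}$-size and support conditions defining molecules depend only on $b_{k}$, setting $n_{k}:=Db_{k}$ exhibits $n_{k}$ as a generic multiple of a $(\bb{H}^{p}_{D},\eps_{0},1)$-molecule, so Lemma~\ref{abstract molecules are concrete dziubanski molecules} delivers $\|n_{k}\|_{\textup{H}^{p}_{V}(\Rn;\C^{n+2})}\lesssim 1$ uniformly in $k$. Reading off the first component of $m_{k}=Bn_{k}$ gives $(m_{k})_{\perp}=b\,(n_{k})_{\perp}$, so $A_{\perp\perp}f=\sum_{k}\lambda_{k}(n_{k})_{\perp}$ converges in $\El{2}$, and \eqref{reduction to uniform atomic estimate for Hardy Dziubanski spaces} yields
\begin{equation*}
\|f\|_{b\textup{H}^{p}_{V}}^{p}=\|A_{\perp\perp}f\|_{\textup{H}^{p}_{V}}^{p}\leq\sum_{k}|\lambda_{k}|^{p}\,\|(n_{k})_{\perp}\|_{\textup{H}^{p}_{V}}^{p}\lesssim\|f\|_{\bb{H}^{p}_{H}}^{p}.
\end{equation*}

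For the second inclusion, assume in addition $p\in\mathcal{I}(V)$ and let $f\in\bb{H}^{1,p}_{H}\cap\mathcal{V}^{1,2}(\Rn)$. Figure~\ref{figure relations between adapted Hardy spaces} places $(f,0)\in\bb{H}^{1,p}_{BD}\cap\dom{D}$ and $D(f,0)=(0,\nabla_{\mu}f)\in\bb{H}^{p}_{DB}\cap\ran{D}$ with $\|\nabla_{\mu}f\|_{\bb{H}^{p}_{DB}}\eqsim\|f\|_{\bb{H}^{1,p}_{H}}$. Decompose $(0,\nabla_{\mu}f)=\sum_{k}\lambda_{k}m_{k}$ unconditionally in $\El{2}$ with $(\bb{H}^{p}_{DB},\eps_{0},1)$-molecules $m_{k}=DBc_{k}$; the defining property $c_{k}\in\dom{DB}$ forces $Bc_{k}\in\dom{D}$, and pointwise multiplication by $B$ preserves the $\El{2}$-size and support conditions up to the factor $\|B\|_{\El{\infty}}$, so the identity $m_{k}=D(Bc_{k})$ exhibits $m_{k}$ itself as (a generic multiple of) a $(\bb{H}^{p}_{D},\eps_{0},1)$-molecule. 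Lemma~\ref{abstract molecules are concrete dziubanski molecules} combined with \eqref{reduction to uniform atomic estimate for Hardy Dziubanski spaces} then gives $\|\nabla_{\mu}f\|_{\textup{H}^{p}_{V}}\lesssim\|f\|_{\bb{H}^{1,p}_{H}}$. In particular each of the $n+1$ components of $\nabla_{\mu}f$ lies in $\textup{H}^{p}_{V,\textup{pre}}(\Rn)$, so since $f\in\mathcal{V}^{1,2}(\Rn)$ the defining estimate \eqref{reverse Riesz transform bound from definition of I(V)} of $\mathcal{I}(V)$ applies and yields $\|f\|_{\dot{\textup{H}}^{1,p}_{V}}=\|H_{0}^{1/2}f\|_{\textup{H}^{p}_{V}}\lesssim\|\nabla_{\mu}f\|_{\textup{H}^{p}_{V}}\lesssim\|f\|_{\bb{H}^{1,p}_{H}}$, which completes the proof.

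The most delicate bookkeeping is verifying that each transfer of molecular structure through $B$ produces a genuine $(\bb{H}^{p}_{D},\eps_{0},1)$-molecule with a constant depending only on $\|B\|_{\El{\infty}}$ (and on $\|(B|_{\clos{\ran{D}}})^{-1}\|$ where relevant). Because the $\bb{H}^{p}_{T}$-molecular conditions recalled in Section~\ref{subsubsection on molecular characterisation of op adapted hardy spaces} are $\El{2}$-norm bounds on the annuli $C_{j}(Q)$ that are stable under pointwise multiplication by a bounded function, this passage only costs a uniform multiplicative factor, which can be absorbed by slightly enlarging $\eps_{0}$; since the freedom to take $\eps_{0}$ arbitrarily large is available both in \cite[Theorem~8.17]{AuscherEgert} and in Lemma~\ref{abstract molecules are concrete dziubanski molecules}, this does not impose any further restriction on $p$.
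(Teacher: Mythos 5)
Your argument for the second inclusion (via $\bb{H}^{p}_{DB}$-molecules and the identity $m_k=D(Bc_k)$) is essentially the paper's and is correct. The problem is in the first inclusion.

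You claim that for a $(\bb{H}^{p}_{BD},\eps_{0},1)$-molecule $m_{k}=BDb_{k}$, the function $n_{k}:=Db_{k}$ is automatically (a multiple of) a $(\bb{H}^{p}_{D},\eps_{0},1)$-molecule because ``the $\El{2}$-size and support conditions defining molecules depend only on $b_{k}$.'' That is not so: as recalled in the proof of Lemma~\ref{abstract molecules are concrete dziubanski molecules}, the $(\bb{H}^{p}_{T},\eps_{0},1)$-molecule conditions require decay of \emph{both} $b_k$ \emph{and} $Tb_k$ on the annuli $C_j(Q)$. From the $BD$-molecule hypotheses you control $\|b_k\|_{\El{2}(C_j(Q))}$ and $\|BDb_k\|_{\El{2}(C_j(Q))}$, but what you need is $\|Db_k\|_{\El{2}(C_j(Q))}$. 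Pointwise boundedness of $B$ gives $\|BDb_k\|\lesssim\|Db_k\|$, which is the wrong direction; to go the other way you would need $B^{-1}$ to be a bounded pointwise multiplier, and this is not available — the ellipticity assumption~\eqref{ellipticity assumption} gives only Gårding-type coercivity for $A_{\parallel\parallel}$ on gradient fields, not pointwise invertibility of $B(x)$. (The inverse $(B|_{\clos{\ran{DB}}})^{-1}$ that \emph{is} bounded is a global operator on $\clos{\ran{BD}}$, not a multiplication operator, so it does not preserve localisation on annuli.) This is precisely why the paper routes the first inclusion through $\bb{H}^{p}_{DB}$ rather than $\bb{H}^{p}_{BD}$: there the passage $m_i=DBb_i=D(Bb_i)$ only needs pointwise boundedness of $B$ to transfer decay from $b_i$ to $Bb_i$, at the cost of a subsequent density-plus-Fatou argument to cover arbitrary $f\in\bb{H}^{p}_{H}$ (since $(b^{-1}f,0)\in\bb{H}^{p}_{DB}$ is only immediate for $f\in\ran{b\nabla_{\mu}^{*}}$).

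Your $BD$-route is not beyond repair — the missing decay on $Db_k$ \emph{can} be recovered by combining Lemma~\ref{local coercivity inequality BD} with a covering of each annulus $C_j(Q)$ by balls whose doubles stay in $C_{j-1}(Q)\cup C_j(Q)\cup C_{j+1}(Q)$, which trades the decay of $BDb_k$ and of $\ell(Q)^{-1}b_k$ on slightly enlarged annuli for decay of $Db_k$. If you add that step, your approach actually has a genuine advantage over the paper's: it lands the bound $\|b^{-1}f\|_{\textup{H}^{p}_{V}}\lesssim\|f\|_{\bb{H}^{p}_{H}}$ directly for all $f\in\bb{H}^{p}_{H}$, avoiding the density/Fatou extension. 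As written, though, the justification for the $BD$-to-$D$ passage is incorrect and the first inclusion is not established.
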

\begin{proof}
    We claim that there is a continuous inclusion $\bb{H}^{p}_{DB}\subseteq \textup{H}^{p}_{V,\textup{pre}}(\Rn;\C^{n+2})$ for the respective $p$-quasinorms.
    Let $\eps_{0}>0$ be the constant found in Lemma~\ref{abstract molecules are concrete dziubanski molecules}. Let $f\in\bb{H}^{p}_{DB}$ and let us choose some $M>\frac{n}{p}-\frac{n}{2}$. The molecular decomposition \cite[Theorem 8.17]{AuscherEgert} implies that 
 $f=\sum_{i=1}^{\infty}\lambda_{i}m_{i}$
    with (unconditional) convergence in $\El{2}$, where $m_i$ is a $(\bb{H}_{DB}^{p},\eps_{0},M)$-molecule for all $i\geq 1$, and such that $\norm{(\lambda_{i})_{i\geq 1}}_{\ell^{p}}\lesssim \norm{f}_{\bb{H}^{p}_{DB}}$. Writing $m_{i}:=DBb_{i}$ for $b_{i}\in\dom{DB}$, and using boundedness of $B$ we see that $m_{i}$ is a multiple of some $(\bb{H}^{p}_{D}, \eps_{0} ,1)$-molecule. Lemma~\ref{abstract molecules are concrete dziubanski molecules} implies that $m_{i}\in\textup{H}^{p}_{V,\textup{pre}}$ with ${\norm{m_i}_{\Dz{p}}\lesssim 1}$ uniformly for all $i\geq 1$. The claim follows from \eqref{reduction to uniform atomic estimate for Hardy Dziubanski spaces}. 

    Let us now prove the first inclusion. Let $f\in\bb{H}^{p}_{H}\cap\ran{b\nabla_{\mu}^{*}}$. Using Figure \ref{figure relations between adapted Hardy spaces}, going from the third to the fourth row, we obtain that $\begin{bmatrix}
        b^{-1}f\\
        0
    \end{bmatrix}\in\bb{H}^{p}_{DB}$, with $\norm{\begin{bmatrix}
        b^{-1}f\\
        0
    \end{bmatrix}}_{\bb{H}^{p}_{DB}}\eqsim \norm{\begin{bmatrix}
        f\\
        0
    \end{bmatrix}}_{\bb{H}^{p}_{BD}}\eqsim \norm{f}_{\bb{H}^{p}_{H}}$. The previous claim thus implies that $b^{-1}f\in\textup{H}^{p}_{V,\textup{pre}}(\Rn)$, with $\norm{b^{-1}f}_{\Dz{p}}\lesssim \norm{f}_{\bb{H}^{p}_{H}}$.
    
    Let us now consider an arbitrary $f\in\bb{H}^{p}_{H}$. We know from Figure \ref{figure relations between adapted Hardy spaces} that there is a sequence $(f_{k})_{k\geq 1}\subset \bb{H}^{p}_{H}\cap\ran{b\nabla_{\mu}^{*}}$ such that $f_{k}\to f$ in both $\bb{H}^{p}_{H}$ and $\El{2}$. 
    We claim that $g:=b^{-1}f\in\textup{H}^{p}_{V,\textup{pre}}(\Rn)$. 
    Indeed, since $g_{k}\to g$ in $\El{2}$ as $k\to \infty$, up to a subsequence we get $\mathcal{M}_{V}g(x)\leq \liminf_{k\to\infty}\mathcal{M}_{V}g_{k}(x)$ for almost every $x\in\Rn$ (see Section~\ref{subsubsection with definition of completion H^{1}_{V}}). Raising this inequality to the power $p$ and applying Fatou's lemma, we obtain
    \begin{align*}
        \norm{\mathcal{M}_{V}g}_{\El{p}}\leq \liminf_{k\to\infty}\norm{\mathcal{M}_{V}g_{k}}_{\El{p}}=\liminf_{k\to\infty}\norm{g_{k}}_{\Dz{p}}\lesssim \liminf_{k\to \infty}\norm{f_{k}}_{\bb{H}^{p}_{H}}=\norm{f}_{\bb{H}^{p}_{H}}<\infty.
    \end{align*}
    This means that $g=b^{-1}f\in\mathrm{H}^{p}_{V,\textup{pre}}(\Rn)$ with $\norm{b^{-1}f}_{\Dz{p}}\lesssim \norm{f}_{\bb{H}^{p}_{H}}$, as claimed. 

    We now prove the second inclusion for $p\in\mathcal{I}(V)$. Let $f\in\bb{H}^{1,p}_{H}\cap\mathcal{V}^{1,2}(\Rn)=\bb{H}^{1,p}_{H}\cap\dom{H^{1/2}}$. It follows from Figure \ref{figure relations between adapted Hardy spaces} that $\begin{bmatrix}
        0\\
        \nabla_{\mu}f
    \end{bmatrix}\in\bb{H}_{DB}^{p}\cap\ran{D}$, with 
    \begin{align*}
        \norm{f}_{\bb{H}^{1,p}_{H}}\eqsim \norm{\begin{bmatrix}
            f\\
            0
        \end{bmatrix}}_{\bb{H}^{1,p}_{BD}}\eqsim \norm{\begin{bmatrix}
            0\\
            \nabla_{\mu}f
        \end{bmatrix}}_{\bb{H}^{p}_{DB}}\gtrsim \norm{\nabla_{\mu}f}_{\Dz{p}}\gtrsim \norm{f}_{\dot{\textup{H}}^{1,p}_{V}},
    \end{align*}
     where we have used the earlier claim and the fact that $p\in\mathcal{I}(V)$ (see Section~\ref{subsection on reverse riesz transform bounds on Dziubanski hardy space for no coefficients case}).
\end{proof}
We now study the converse inclusions. The first result is an analogue of \cite[Lemma 9.17]{AuscherEgert}.
\begin{lem}\label{lemma continuous inclusion concrete hardy space for p<=1}
    For all $p\in(p_{-}(H),1]$, there is a continuous inclusion $b\textup{H}^{p}_{V,\textup{pre}}\subseteq \bb{H}^{p}_{H}$ for the respective $p$-quasinorms. 
\end{lem}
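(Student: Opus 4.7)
The plan is to mimic the argument for the case $V\equiv 0$ in \cite[Lemma 9.17]{AuscherEgert}, with the atomic decomposition in the Fefferman--Stein Hardy space replaced by the Dziuba\`nski--Zienkiewicz atomic decomposition from Section \ref{section on atomic decomposition of hardy dziubanski spaces}. First I would observe that, in the spirit of Remark \ref{remark about estimate atoms implies HpV estimate}, it suffices to establish the uniform estimate $\norm{ba}_{\bb{H}^{p}_{H}}\lesssim 1$ for every $\El{\infty}$-atom $a$ for $\textup{H}^{p}_{V}$ in the sense of Definition \ref{definition Dziubanski atoms for V}. Indeed, combining Theorem \ref{Dziubanski atomic decomposition L^2 local bounded with cpct support} with the abstract decomposition of Theorem \ref{abstract atomic decomposition DKKP} (applied first to reduce an arbitrary $g=b^{-1}f\in\textup{H}^{p}_{V,\textup{pre}}$ to $\El{\infty}_{c}$ atoms, then further decomposed into $\El{\infty}$-atoms of Definition \ref{definition Dziubanski atoms for V}), one obtains $g=\sum_{k}\lambda_{k}a_{k}$ in $\Ell{2}$ with $\|(\lambda_k)\|_{\ell^{p}}\lesssim\|g\|_{\textup{H}^{p}_{V}}$. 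Since the topology of $\bb{H}^{p}_{H}$ is finer than that of $\Ell{2}$, a Fatou-type argument for the quasinorm $\norm{\cdot}_{\bb{H}^{p}_{H}}$ combined with the $p$-subadditivity of its $p$-th power gives $\norm{f}_{\bb{H}^{p}_{H}}^{p}\leq\sum_{k}|\lambda_{k}|^{p}\norm{ba_{k}}_{\bb{H}^{p}_{H}}^{p}\lesssim\norm{b^{-1}f}_{\textup{H}^{p}_{V}}^{p}$.

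For the atomic bound, I would fix an auxiliary $\varphi(z)=z^{N}(1+z)^{-2N}$ with $N$ large enough to be admissible in the definition of $\bb{H}^{p}_{H}$ (see Table \ref{decay parameters conditions bisectorial Hardy adapted space}), so that $\norm{ba}_{\bb{H}^{p}_{H}}^{p}\eqsim\int_{\Rn}(S\bb{Q}_{\varphi,H}(ba))^{p}$. Given an atom $a$ supported in $B=B(x_{B},r_{B})$ with $r_{B}\leq\rho(x_{B})$, I would split the outer integral as $\int_{4B}+\int_{\Rn\setminus 4B}$. The local piece is controlled by Hölder's inequality (since $p\leq 1\leq 2$) and the $\tent{2}$-bound $\norm{\bb{Q}_{\varphi,H}(ba)}_{\tent{2}}\eqsim\norm{ba}_{\El{2}}\lesssim\norm{a}_{\El{2}}\leq|B|^{1/2-1/p}$ coming from McIntosh's theorem applied to $H$ and the $\Ell{\infty}$-boundedness of $b$:
\[
\int_{4B}(S\bb{Q}_{\varphi,H}(ba))^{p}\lesssim|4B|^{1-p/2}\norm{ba}_{\El{2}}^{p}\lesssim 1.
\]

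The global piece is handled via $\El{2}$ off-diagonal estimates. I would partition $\Rn\setminus 4B=\bigcup_{j\geq 2}C_{j}(B)$ and, inside each cone $\Gamma(x)$ for $x\in C_{j}(B)$, split the $t$-integration into the three regimes $t\leq r_{B}$, $r_{B}<t\leq\rho(x_{B})$, and $t>\rho(x_{B})$. The small-$t$ regime uses that $\{\varphi(t^{2}H)\}_{t>0}=\{(t^{2}H)^{N}(1+t^{2}H)^{-2N}\}_{t>0}$ satisfies $\El{2}$ off-diagonal estimates of arbitrarily large order by composition (as $p\in\mathcal{J}(H)$ and the standard assumptions hold for $H$; cf.\ Section \ref{subsubsection on mapping properties ofa dapted hardy spaces}), giving decay in $j$ after using that $\dist(B,C_{j}(B))/t\gtrsim 2^{j}r_{B}/t$. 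For the moderate regime the atom's $\Ell{2}$-bound is still sharp enough. The large-$t$ regime $t>\rho(x_{B})$ is where the argument requires extra care: when $r_{B}\leq\rho(x_{B})/4$ the atom has vanishing mean, and I would exploit this against $\varphi(t^{2}H)^{*}$ acting on $b^{*}$ applied to an approximating bump, using Lemma \ref{cancellation lemma} to justify the integration; when $\rho(x_{B})/4<r_{B}\leq\rho(x_{B})$ the atom has no cancellation, but then $r_{B}\eqsim\rho(x_{B})$ so that the Fefferman--Phong inequality (Proposition \ref{improved Fefferman--Phong inequality}) together with the decay of $\varphi(t^{2}H)$ above the scale $\rho(x_{B})$ provides the required gain $(r_{B}/\rho(x_{B}))^{0}\eqsim 1$.

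The main obstacle is organising the global-tent estimate in the large-$t$ regime $t\geq\rho(x_{B})$ when the atom lacks cancellation, since in this range the off-diagonal decay of $\varphi(t^{2}H)$ alone is not enough: one must combine it with the cancellation identity of Lemma \ref{cancellation lemma} (or equivalently, with a suitable dualisation against the adjoint resolvent as in Step 4 of the proof of Theorem \ref{thm boundedness of Riesz transforms on Lp full range}) to convert integrals of $\varphi(t^{2}H)(ba)$ into manageable quantities involving $V^{1/2}$ averaged over $B$. Once this is achieved, summing in $j$ with the geometric decay and raising to the power $p$ yields the uniform bound $\norm{ba}_{\bb{H}^{p}_{H}}\lesssim 1$, which completes the proof.
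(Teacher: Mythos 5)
Your overall architecture---reduce to a uniform atomic bound, then prove it by a local/global split using $\El{2}$ off-diagonal estimates, following \cite[Lemma 9.17]{AuscherEgert}---is the same as the paper's, and your reduction step and local estimate are fine. The divergence is in the choice of atoms: you reduce to the concrete $\El{\infty}$-atoms of Definition~\ref{definition Dziubanski atoms for V}, whereas the paper deliberately reduces, via Theorem~\ref{abstract atomic decomposition DKKP}, to the \emph{abstract} atoms of Definition~\ref{definition abstract atoms DKKP}, i.e.\ atoms of the form $m=H_{0}^{M}b_{0}$ with $M=\lfloor n/(2p)\rfloor+1$. This difference is where your argument has a genuine gap, precisely in the case you flag: an atom supported in $B=B(x_{B},r_{B})$ with $\rho(x_{B})/4<r_{B}\leq\rho(x_{B})$, which carries no cancellation. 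Your proposed fix---that the ``decay of $\varphi(t^{2}H)$ above the scale $\rho(x_{B})$'' supplies the missing gain---is not available: for the rough-coefficient operator $H$ the only quantitative control on $\set{\varphi(t^{2}H)}_{t>0}$ comes from $\El{2}$ (and interpolated) off-diagonal estimates, which give nothing when $t$ is large compared with the spatial separation, and nothing ties the functional calculus of $H$ to the critical radius $\rho(\cdot,V)$. The kernel bounds carrying factors $(1+t^{1/2}/\rho(x))^{-N}$ (Lemma~\ref{improved gaussian bound on heat kernel of schrodinger operator}) are specific to $H_{0}=-\Delta+V$ and have no analogue for $H$. Without genuine $t$-decay, the portion of the cone integral over $\set{t\gtrsim\abs{x-x_{B}}}$ only yields $\abs{x-x_{B}}^{-np/2}$ after raising to the power $p/2$, which is not integrable over $\Rn\setminus 4B$ when $p\leq 1$; and the Fefferman--Phong inequality, which bounds $\int_{Q}\abs{f}^{p}$ by $\ell(Q)^{p}\int_{Q}\abs{\nabla_{\mu}f}^{p}$, does not convert into decay of $\varphi(t^{2}H)(ba)$ in $t$.

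Even in the case $r_{B}\leq\rho(x_{B})/4$ the plan is incomplete: the single vanishing moment of $a$ must be played off against the resolvents of $H$, and since the conservation property $(1+t^{2}H)^{-1}1=1$ fails for $V\not\equiv 0$, Lemma~\ref{cancellation lemma} only produces an additional term involving $aV(1+t^{2}H)^{-k}$ that must then be controlled as in Step~4 of the proof of Theorem~\ref{thm boundedness of Riesz transforms on Lp full range}; it is not clear this closes with a single moment. The paper sidesteps both issues by using the abstract atoms $m=H_{0}^{M}b_{0}$, which carry high-order cancellation uniformly and require no case distinction on $r_{B}$ versus $\rho(x_{B})$; the uniform bound $\norm{S_{\eta_{\alpha},H}(bm)}_{\El{p}}\lesssim 1$ is then obtained by transcribing \cite[Lemma 9.17]{AuscherEgert} with Lemma~\ref{property J(H)} and Proposition~\ref{bound on power of family} in place of their Euclidean counterparts---which is also where the hypothesis $p>p_{-}(H)$ enters, a hypothesis your sketch never actually uses. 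I recommend switching your reduction to the abstract atomic decomposition.
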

\begin{proof}
    Let $\alpha\geq 1$ be an integer, and consider the auxilliary function $\eta_{\alpha}(z)=z^{\alpha}(1+z)^{-2\alpha}$ for all $z\in S^{+}_{\pi}$. Then, $\eta_{\alpha}\in\Psi^{\alpha}_{\alpha}$ on all sectors. For $f\in\Ell{2},$ consider the square function $S_{\eta_{\alpha},H}f$ defined as 
    \begin{equation*}
        S_{\eta_{\alpha},H}f(x)=\left(\iint_{\abs{x-y}<t}\abs{(\eta_{\alpha}(t^{2}H)f)(y)}^{2}\frac{\dd t\dd y}{t^{1+n}}\right)^{1/2}.
    \end{equation*}
    By Table \ref{decay parameters conditions bisectorial Hardy adapted space}, we know that $\norm{S_{\eta_{\alpha},H}(\cdot)}_{\Ell{p}}$ is an equivalent norm on $\bb{H}^{p}_{H}$ provided we choose $\alpha>\frac{n}{2p}-\frac{n}{4}$. 
    If $f\in b\textup{H}^{p}_{V,\textup{pre}}(\Rn)$, then Theorem~\ref{abstract atomic decomposition DKKP} yields an $\El{2}$-convergent atomic decomposition $b^{-1}f=\sum_{i}\lambda_{i}m_{i}$, where $\norm{\set{\lambda_{i}}_{i}}_{\ell^{p}}\lesssim \norm{b^{-1}f}_{\Dz{p}}$ and the $m_i$ are abstract $\textup{H}^{p}_{V}$-atoms for $H_{0}$ (see Definition \ref{definition abstract atoms DKKP}). By the Fatou's lemma argument used in the proof of Lemma~\ref{extrapolation lemma}, we have
    \begin{align*}
        \norm{S_{\eta_{\alpha},H}(f)}_{\El{p}}^{p}\leq \sum_{i}\abs{\lambda_{i}}^{p}\norm{S_{\eta_{\alpha},H}(bm_{i})}_{\El{p}}^{p}.
    \end{align*}
    Consequently, it suffices to prove the existence of some $\alpha\geq 1$ large enough (depending on $n, p$ and $p_{-}(H)$) such that $\norm{S_{\eta_{\alpha},H}(bm)}_{\El{p}}\lesssim 1$ for all abstract $\textup{H}^{p}_{V}$-atoms $m$ for $H_{0}$. This is obtained by following the proof of \cite[Lemma 9.17]{AuscherEgert}, applying Lemma~\ref{property J(H)} instead of \cite[Lemma 6.3]{AuscherEgert} and Proposition~\ref{bound on power of family} instead of \cite[Lemma 4.4]{AuscherEgert}. Let us also note that for $\frac{n}{n+1}<r<p\leq 1$, it holds that  $\lfloor \frac{n}{2p}\rfloor +1 = \lfloor \frac{n}{2r}\rfloor +1$, and it is therefore clear from Definition \ref{definition abstract atoms DKKP} that if $m$ is an abstract $\textup{H}^{p}_{V}$-atom associated to a ball $B\subset\Rn$, then the multiple $\meas{B}^{\frac{1}{p}-\frac{1}{r}}m$ is an abstract $\textup{H}^{r}_{V}$-atom associated to the same ball. 
\end{proof}

The following result is proved by adapting \cite[Lemma 9.16]{AuscherEgert}.
\begin{lem}\label{Lemma continuous inclusion V adapted hardy sobolev space for p<=1}
    For $p\in(p_{-}(H)_{*}\vee \frac{n}{n+1}, 1]$ there is a continuous inclusion $\dot{\textup{H}}^{1,p}_{V,\textup{pre}}\subseteq \bb{H}^{1,p}_{H}\cap\mathcal{V}^{1,2}$ for the respective $p$-quasinorms. 
\end{lem}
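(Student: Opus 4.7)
The plan is to adapt the argument of \cite[Lemma 9.16]{AuscherEgert} to our setting by reducing the inclusion to a uniform bound over $\El{2}$-atoms for $\dot{\textup{H}}^{1,p}_{V,\textup{pre}}(\Rn)$. Given $f\in\dot{\textup{H}}^{1,p}_{V,\textup{pre}}(\Rn)$, Theorem~\ref{thm atomic decomposition of hardy sobolev spaces} provides $\El{2}$-atoms $\{m_k\}_{k\geq 1}$ and scalars $\{\lambda_k\}_{k\geq 1}\in\ell^{p}(\N)$ with $\|\{\lambda_k\}\|_{\ell^p}\lesssim \|f\|_{\dot{\textup{H}}^{1,p}_V}$ and $H_{0}^{1/2}f=\sum_{k}\lambda_{k}H_{0}^{1/2}m_{k}$ in $\El{2}$. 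Since $\dot{H}_{0}^{1/2}:\dot{\mathcal{V}}^{1,2}(\Rn)\to\Ell{2}$ is an isomorphism and $\dot{H}^{1/2}:\dot{\mathcal{V}}^{1,2}(\Rn)\to\Ell{2}$ is bounded (see \eqref{Kato estimate}), inverting and composing gives $f=\sum_{k}\lambda_{k}m_{k}$ in $\dot{\mathcal{V}}^{1,2}(\Rn)$ and $H^{1/2}f=\sum_{k}\lambda_{k}H^{1/2}m_{k}$ in $\El{2}$. The regularity shift \eqref{elementary regularity shift} combined with Figure~\ref{figure relations between adapted Hardy spaces} yields $\|m\|_{\bb{H}^{1,p}_{H}}\eqsim\|H^{1/2}m\|_{\bb{H}^{p}_{H}}$ on $\bb{H}^{1,p}_{H}\cap\mathcal{V}^{1,2}(\Rn)$, and for $\eta_{\alpha}(z):=z^{\alpha}(1+z)^{-2\alpha}$ with integer $\alpha$ satisfying the decay conditions of Table~\ref{decay parameters conditions bisectorial Hardy adapted space} (depending on $p$ and $n$), the square function characterisation gives $\|H^{1/2}m\|_{\bb{H}^{p}_{H}}\eqsim\|S_{\eta_{\alpha},H}(H^{1/2}m)\|_{\El{p}}$. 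A Fatou argument as at the end of the proof of Lemma~\ref{extrapolation lemma} then reduces the claim to the uniform bound
\begin{equation*}
\|S_{\eta_{\alpha},H}(H^{1/2}m)\|_{\El{p}}\lesssim 1
\end{equation*}
over all $\El{2}$-atoms $m$ associated to cubes $Q\subset\Rn$.

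For the local contribution, the Kato estimate \eqref{Kato estimate} for $H$ combined with Kato's second representation theorem for the self-adjoint operator $H_{0}$ yields $\|H^{1/2}m\|_{\El{2}}\eqsim\|\nabla_{\mu}m\|_{\El{2}}=\|H_{0}^{1/2}m\|_{\El{2}}\leq|Q|^{1/2-1/p}$. Combined with the $\El{2}$-boundedness of $S_{\eta_{\alpha},H}$ (McIntosh's theorem) and Hölder's inequality, this gives $\|S_{\eta_{\alpha},H}(H^{1/2}m)\|_{\El{p}(4Q)}\lesssim 1$, as required.

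The main work is the global estimate $\sum_{j\geq 2}\|S_{\eta_{\alpha},H}(H^{1/2}m)\|_{\El{p}(C_{j}(Q))}^{p}\lesssim 1$. The starting point is the elementary identity
\begin{equation*}
\eta_{\alpha}(t^{2}H)H^{1/2}=t^{-1}\phi_{\alpha}(t^{2}H)\quad\text{on } \dom{H^{1/2}},\quad \phi_{\alpha}(z):=z^{\alpha+1/2}(1+z)^{-2\alpha}\in \Psi^{\alpha-1/2}_{\alpha+1/2},
\end{equation*}
which reduces matters to analysing a square function in terms of $\phi_{\alpha}(t^{2}H)m$. For each $x\in C_{j}(Q)$, $j\geq 2$, we split the $t$-integral in the square function at the scale $t\eqsim 2^{j}\ell(Q)$. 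In the regime $t\lesssim 2^{j}\ell(Q)$, the ball $B(x,t)$ is essentially disjoint from $\supp m\subseteq Q$, so the $\El{2}$ off-diagonal estimates of arbitrarily large order for $\{\phi_{\alpha}(t^{2}H)\}_{t>0}$, obtained by composition from the exponential order off-diagonal estimates of $\{(1+t^{2}H)^{-1}\}_{t>0}$, yield the required decay. In the regime $t\gtrsim 2^{j}\ell(Q)$, off-diagonal estimates become essentially vacuous, and one must compensate by using the polynomial decay of $\phi_{\alpha}$ at infinity together with $\El{2}$-$\El{r}$ off-diagonal bounds on powers of resolvents obtained from Lemma~\ref{Obtaining L^2-L^q or L^p-L^2 odes for powers of resolvents} for $r\in(p_{-}(H)_{*}\vee 1,2)$ arbitrarily close to $p$.

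The main obstacle will be the second regime above, and this is where the restriction $p>p_{-}(H)_{*}\vee\frac{n}{n+1}$ is used: the threshold $p_{-}(H)_{*}$ dictates the lowest admissible $r$ in Lemma~\ref{Obtaining L^2-L^q or L^p-L^2 odes for powers of resolvents} and hence the lowest admissible $p$, while $\frac{n}{n+1}$ is the natural lower bound for the Hardy--Sobolev atomic decomposition of Theorem~\ref{thm atomic decomposition of hardy sobolev spaces}. By choosing $\alpha$ sufficiently large in terms of $p$, $n$ and $p_{-}(H)_{*}$, the resulting pointwise bound on $S_{\eta_{\alpha},H}(H^{1/2}m)(x)$ decays like $|Q|^{-1/p}(2^{j}\ell(Q)/|Q|^{1/n})^{-\beta}$ for some exponent $\beta$ satisfying $\beta p>n$, giving a geometric series in $j$ after raising to the $p$-th power and integrating over $C_{j}(Q)$.
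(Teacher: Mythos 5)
Your proposal follows essentially the same route as the paper's proof: reduce via the atomic decomposition of Theorem~\ref{thm atomic decomposition of hardy sobolev spaces} to a uniform square-function bound $\norm{S_{\eta_\alpha,H}(H^{1/2}m)}_{\El{p}}\lesssim 1$ over $\El{2}$-atoms, handle the local part by Hölder, McIntosh and the Kato estimate, and handle the global part by off-diagonal estimates from Lemma~\ref{Obtaining L^2-L^q or L^p-L^2 odes for powers of resolvents} followed by a Sobolev embedding to convert the resulting $\El{q}$-norm of the atom back to the normalisation $\norm{H_0^{1/2}m}_{\El{2}}\leq \meas{Q}^{\frac12-\frac1p}$. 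The one slip is the auxiliary exponent: it should be some $q\in(p_-(H),p^*)\cap(1,2]$, an interval that is nonempty precisely because $p>p_-(H)_*$ forces $p^*>p_-(H)$, rather than an $r\in(p_-(H)_*\vee 1,2)$ ``arbitrarily close to $p$'' --- no exponent larger than $1$ is close to $p\leq 1$, and an exponent that merely exceeds $p_-(H)_*\vee 1$ need not lie in $\mathcal{J}(H)$, which Lemma~\ref{Obtaining L^2-L^q or L^p-L^2 odes for powers of resolvents} requires.
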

\begin{proof}
    Let $\alpha\geq 1$ be an integer, and consider the auxilliary function $\eta_{\alpha}(z)=z^{\alpha}(1+z)^{-2\alpha}$. Then, $\eta_{\alpha}\in\Psi^{\alpha}_{\alpha}$ on all sectors. For $f\in\Ell{2},$ consider the square function $S_{\eta_{\alpha},H}^{(1)}f$ defined as 
    \begin{equation*}
        S_{\eta_{\alpha},H}^{(1)}f(x):=\left(\iint_{\abs{x-y}<t}\abs{t^{-1}(\eta_{\alpha}(t^{2}H)f)(y)}^{2}\frac{\dd t\dd y}{t^{1+n}}\right)^{1/2}.
    \end{equation*}
    We know that $\norm{S_{\eta_{\alpha},H}^{(1)}(\cdot)}_{\El{p}}$ is an equivalent norm on $\bb{H}^{1,p}_{H}$ provided we choose $\alpha>\frac{n}{2p}-\frac{n}{4}$ (see Table \ref{decay parameters conditions bisectorial Hardy adapted space}). Let $f\in\dot{\textup{H}}^{1,p}_{V}\cap\mathcal{V}^{1,2}$. By Theorem~\ref{thm atomic decomposition of hardy sobolev spaces} there is an $\El{2}$-convergent atomic decomposition $H_{0}^{1/2}f=\sum_{i}\lambda_{i}H_{0}^{1/2}m_i$,
    where the $m_{i}\in\dom{H_{0}^{1/2}}$ are $\El{2}$-atoms for $\dot{\textup{H}}^{1,p}_{V,\textup{pre}}(\Rn)$ and ${\norm{\set{\lambda_{i}}_{i}}_{\ell^{p}}\lesssim \norm{f}_{\dot{\textup{H}}^{1,p}_{V}}}$. By the Kato square root estimate \eqref{Kato estimate}, we have $\dom{H^{1/2}}=\dom{H_{0}^{1/2}}$ with $\norm{H^{1/2}f}_{\El{2}}\eqsim \norm{H_{0}^{1/2}f}_{\El{2}}$ for all $f\in\dom{H_{0}^{1/2}}=\mathcal{V}^{1,2}(\Rn)$. Consequently, we obtain an $\El{2}$-convergent decomposition $H^{1/2}f=\sum_{i}\lambda_{i}H^{1/2}m_i$.
    
    Since $H^{-1/2}\eta_{\alpha}(t^{2}H)\in\mathcal{L}(\El{2})$ for all $t>0$, we obtain $\eta_{\alpha}(t^{2}H)f=\sum_{i}\lambda_{i}\eta_{\alpha}(t^{2}H)m_{i}$, where the sum converges in $\El{2}$. As in the proof of Lemma~\ref{lemma continuous inclusion concrete hardy space for p<=1}, this implies that $\norm{S_{\eta_{\alpha},H}^{(1)}(f)}_{\El{p}}^{p}\leq \sum_{i}\abs{\lambda_{i}}^{p}\norm{S_{\eta_{\alpha},H}^{(1)}(m_{i})}_{\El{p}}^{p}$,
    and this shows that it suffices to obtain a uniform estimate of the form $\norm{S_{\eta_{\alpha},H}^{(1)}(m)}_{\Ell{p}}\lesssim 1$ for all $\El{2}$-atoms $m$ for $\dot{\textup{H}}^{1,p}_{V,\textup{pre}}(\Rn)$, for all sufficiently large $\alpha$.
    
    Let $m$ be an $\El{2}$-atom for $\dot{\textup{H}}^{1,p}_{V,\textup{pre}}(\Rn)$, associated to a cube $Q\subset \Rn$. For all $x\in\Rn$, it holds that 
$(S_{\eta_{\alpha},H}^{(1)}m)(x)=S_{\varphi_{\alpha},H}(H^{1/2}m)(x)$, where $\varphi_{\alpha}(z):=z^{\alpha-1/2}(1+z)^{-2\alpha}$. Using Hölder's inequality followed by McIntosh's theorem, we obtain the local bound
    \begin{align*}
        \norm{S_{\eta_{\alpha},H}^{(1)}(m)}_{\El{p}(4Q)}&\leq \meas{4Q}^{\frac{1}{p}-\frac{1}{2}}\norm{S_{\eta_{\alpha},H}^{(1)}(m)}_{\El{2}(4Q)}
        =\meas{4Q}^{\frac{1}{p}-\frac{1}{2}}\norm{S_{\varphi_{\alpha},H}(H^{1/2}m)}_{\El{2}(4Q)}\\
        &\lesssim \meas{4Q}^{\frac{1}{p}-\frac{1}{2}}\norm{H^{1/2}m}_{\Ell{2}}
        \eqsim\meas{4Q}^{\frac{1}{p}-\frac{1}{2}}\norm{H_{0}^{1/2}m}_{\El{2}}\lesssim 1.
    \end{align*}
 To obtain the global bound, we can follow the proof of \cite[Lemma 9.16]{AuscherEgert}, by choosing some $q\in (p_{-}(H),p^{*})\cap (1,2]$ and applying Lemma~\ref{Obtaining L^2-L^q or L^p-L^2 odes for powers of resolvents} instead of \cite[Lemma 7.4]{AuscherEgert}. This yields the global bound $\norm{S_{\eta_{\alpha},H}^{(1)}(m)}_{\El{p}(\Rn\setminus 4Q)}\lesssim \ell(Q)^{\frac{n}{p}-\frac{n}{q}-1}\norm{m}_{\El{q}}$ for suitably large $\alpha$.
    Since $m$ is supported in $Q$ and $q\leq 2<2^{*}$, the conclusion follows from Hölder's inequality and a Sobolev embedding as follows:
    \begin{align*}
        \norm{S_{\eta_{\alpha},H}^{(1)}(m)}_{\El{p}(\Rn\setminus 4Q)}&\lesssim \ell(Q)^{\frac{n}{p}-\frac{n}{q}-1}\norm{m}_{\El{q}}\leq \ell(Q)^{\frac{n}{p}-\frac{n}{2}}\norm{m}_{\El{2^{*}}(Q)}\lesssim \ell(Q)^{\frac{n}{p}-\frac{n}{2}}\norm{\nabla_{x}m}_{\El{2}}\\
        &\leq \ell(Q)^{\frac{n}{p}-\frac{n}{2}}\norm{\nabla_{\mu}m}_{\El{2}}
        =\ell(Q)^{\frac{n}{p}-\frac{n}{2}}\norm{H_{0}^{1/2}m}_{\El{2}}\lesssim 1.\qedhere
    \end{align*}
    \end{proof}

    We can now combine Corollary~\ref{continuous inclusion L^p subset abstract H^p}, Propositions \ref{two continuous inclusions abstract H^p and H^1,p} and \ref{lemma continuous inclusions abstract hardy spaces for p<=1}, and Lemmas \ref{lemma continuous inclusion concrete hardy space for p<=1} and \ref{Lemma continuous inclusion V adapted hardy sobolev space for p<=1}, to obtain the following consequence.
\begin{cor}\label{characterisation abstract H^p p<2}
Let $\delta=2-\frac{n}{q}$. The following identifications hold:
\begin{enumerate}[label=\emph{(\roman*)}]
    \item If $p\in(p_{-}(H)\vee \frac{n}{n+\delta/2}, 2]$, then $\bb{H}^{p}_{H}=b\textup{H}^{p}_{V,\textup{pre}}$ with equivalent $p$-(quasi)norms. 
    \item If $p\in(p_{-}(H)_{*}\vee \frac{n}{n+\delta/2} , 1]\cap\mathcal{I}(V)$, then $\bb{H}^{1,p}_{H}\cap\mathcal{V}^{1,2}=\dot{\textup{H}}^{1,p}_{V,\textup{pre}}$ with equivalent $p$-quasinorms.  
    \item If $p\in(p_{-}(H)_{*}\vee 1 , 2]$, then $\bb{H}^{1,p}_{H}=\dot{\mathcal{V}}^{1,p}\cap\El{2}$ with equivalent $p$-norms.
    \end{enumerate}
\end{cor}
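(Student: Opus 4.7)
The proof is essentially a bookkeeping exercise that combines the continuous inclusions established in the preceding subsections. The plan is to treat each of the three identifications (i)–(iii) by pairing the relevant direct inclusion with its converse, covering the ranges $p\in(1,2]$ and $p\in(\frac{n}{n+1},1]$ separately where needed, since different tools apply in these two regimes.

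For (i), in the range $p\in(p_{-}(H)\vee 1,2]$ I will use Corollary~\ref{continuous inclusion L^p subset abstract H^p} to obtain the inclusion $\El{p}\cap\El{2}\subseteq\bb{H}^{p}_{H}$ and Proposition~\ref{two continuous inclusions abstract H^p and H^1,p} for the reverse inclusion $\bb{H}^{p}_{H}\subseteq\El{p}\cap\El{2}$; since $b,b^{-1}\in\El{\infty}$, Lemma~\ref{identification of Dziubanski hardy spaces for p>=1} converts these into the identification $\bb{H}^{p}_{H}=b\textup{H}^{p}_{V,\textup{pre}}$ with equivalent $p$-norms. In the low range $p\in(p_{-}(H)\vee\frac{n}{n+\delta/2},1]$, I will combine the molecular inclusion $\bb{H}^{p}_{H}\subseteq b\textup{H}^{p}_{V,\textup{pre}}$ from Proposition~\ref{lemma continuous inclusions abstract hardy spaces for p<=1} with the square-function atomic inclusion $b\textup{H}^{p}_{V,\textup{pre}}\subseteq\bb{H}^{p}_{H}$ from Lemma~\ref{lemma continuous inclusion concrete hardy space for p<=1}. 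Since both sub-ranges give the same identification, this yields (i) on the full interval $(p_{-}(H)\vee\frac{n}{n+\delta/2},2]$.

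For (iii), the same mechanism applies: Proposition~\ref{two continuous inclusions abstract H^p and H^1,p} gives the continuous inclusion $\bb{H}^{1,p}_{H}\subseteq\dot{\mathcal{V}}^{1,p}\cap\El{2}$ for $p\in(1,2]$, while Corollary~\ref{continuous inclusion L^p subset abstract H^p} provides the reverse inclusion $\dot{\mathcal{V}}^{1,p}\cap\El{2}\subseteq\bb{H}^{1,p}_{H}$ in the range $p\in(p_{-}(H)_{*}\vee 1,2]$. Intersecting these yields (iii) on $(p_{-}(H)_{*}\vee 1,2]$ with equivalent norms.

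For (ii), which is the only genuinely delicate point, I will use the assumption $p\in\mathcal{I}(V)$ together with the molecular/reverse-Riesz inclusion $\bb{H}^{1,p}_{H}\cap\mathcal{V}^{1,2}\subseteq\dot{\textup{H}}^{1,p}_{V,\textup{pre}}$ from the second part of Proposition~\ref{lemma continuous inclusions abstract hardy spaces for p<=1}, and pair it with the atomic inclusion $\dot{\textup{H}}^{1,p}_{V,\textup{pre}}\subseteq\bb{H}^{1,p}_{H}\cap\mathcal{V}^{1,2}$ from Lemma~\ref{Lemma continuous inclusion V adapted hardy sobolev space for p<=1} (here the range $p>p_{-}(H)_{*}\vee\frac{n}{n+1}$ is automatically weaker than the assumed $p>p_{-}(H)_{*}\vee\frac{n}{n+\delta/2}$ since $\delta\le 2$). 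Since there is no obstacle beyond verifying that the hypotheses on $p$ in each cited result are simultaneously met by the intersection given in the corollary, the proof reduces to a short citation of Corollary~\ref{continuous inclusion L^p subset abstract H^p}, Propositions~\ref{two continuous inclusions abstract H^p and H^1,p} and~\ref{lemma continuous inclusions abstract hardy spaces for p<=1}, and Lemmas~\ref{lemma continuous inclusion concrete hardy space for p<=1} and~\ref{Lemma continuous inclusion V adapted hardy sobolev space for p<=1}. The main subtlety, if any, is simply matching the various lower endpoints $p_{-}(H)$, $p_{-}(H)_{*}$, $\frac{n}{n+1}$, and $\frac{n}{n+\delta/2}$ to ensure the tightest interval stated in the corollary is consistent with every lemma invoked.
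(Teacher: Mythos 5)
Your proposal is correct and takes essentially the same route as the paper, which also obtains the corollary by combining Corollary~\ref{continuous inclusion L^p subset abstract H^p}, Propositions~\ref{two continuous inclusions abstract H^p and H^1,p} and \ref{lemma continuous inclusions abstract hardy spaces for p<=1}, and Lemmas~\ref{lemma continuous inclusion concrete hardy space for p<=1} and~\ref{Lemma continuous inclusion V adapted hardy sobolev space for p<=1}. The only detail you make more explicit than the paper is the observation that for $p>1$ the target space $b\textup{H}^{p}_{V,\textup{pre}}$ coincides with $\Ell{p}\cap\Ell{2}$ via Lemma~\ref{identification of Dziubanski hardy spaces for p>=1} and boundedness of $b,b^{-1}$, which is a useful clarification rather than a deviation.
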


\subsection{Identification of $\bb{H}^{p}_{H}$ and $\bb{H}^{1,p}_{H}$ for $p\geq 2$}
The following lemma provides an identification of the $H$-adapted spaces $\bb{H}^{p}_{H}$ and $\bb{H}^{1,p}_{H}$ in a subinterval of exponents $p\in(2,\infty)$ to conclude the proof of Theorem~\ref{summary of identifications of H adapted Hardys spaces}.
\begin{prop}\label{identification abstract hardy space H^p p>2}
    If $p\in(2,p_{+}(H))$, then $\bb{H}^{p}_{H}=\El{p}\cap\El{2}$ with equivalent $p$-norms. 
    If $p\in(2,q_{+}(H))$ and $p\leq 2q$, then $\bb{H}^{1,p}_{H}= \dot{\mathcal{V}}^{1,p}\cap\El{2}$ with equivalent $p$-norms.
\end{prop}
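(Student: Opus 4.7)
The plan is to prove each identification via two continuous inclusions, following the strategy of \cite[Theorem 9.7]{AuscherEgert} adapted to the Schrödinger setting.

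For the first identification $\bb{H}^{p}_{H}=\El{p}\cap\El{2}$ with $p\in(2,p_{+}(H))$, the forward inclusion $\El{p}\cap\El{2}\subseteq\bb{H}^{p}_{H}$ actually holds for every $p\in[2,\infty)$. Indeed, given $f\in\El{p}\cap\El{2}$, the vector $\begin{bmatrix}f\\0\end{bmatrix}$ lies in $\El{p}(\Rn;\C^{n+2})\cap\clos{\ran{BD}}$ thanks to \eqref{identity closure of ranges of schrodinger operators}, so Lemma \ref{continuous inclusions q>2 abstract bisectorial adapted hardy spaces}(i) applied to $T=BD$ places it in $\bb{H}^{p}_{BD}$, and the block decomposition $\bb{H}^{p}_{BD}=\bb{H}^{p}_{H}\oplus\bb{H}^{p}_{M}$ recalled in Section \ref{subsubsection on mapping properties ofa dapted hardy spaces} extracts $f\in\bb{H}^{p}_{H}$ with the required bound.

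The reverse inclusion $\bb{H}^{p}_{H}\subseteq\El{p}\cap\El{2}$ is proved by duality, paralleling Lemma \ref{continuous inclusions q>2 abstract bisectorial adapted hardy spaces}(ii). For $f\in\bb{H}^{p}_{H}$ and $g\in\El{2}\cap\El{p'}$, a Calderón–McIntosh reproducing formula factored as $\psi=\varphi\cdot\psi_{0}$ together with tent space duality \eqref{estimate duality in tent spaces} gives
\[
|\langle f,g\rangle_{\El{2}}|\lesssim\|f\|_{\bb{H}^{p}_{H}}\|\bb{Q}_{\varphi^{*},H^{*}}g\|_{\tent{p'}}.
\]
The similarity $H^{*}=(b^{*})^{-1}H^{\sharp}b^{*}$ from Section \ref{subsubsection on adjoints and similarity} rewrites the second factor (up to bounded multiplication) as $\|\bb{Q}_{\varphi^{*},H^{\sharp}}(b^{*}g)\|_{\tent{p'}}$. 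Because $H^{\sharp}$ has the same block structure, boundedness, and ellipticity as $H$ (with coefficients $A^{*},a^{*}$), Corollary \ref{continuous inclusion L^p subset abstract H^p} applied to $H^{\sharp}$ bounds this tent norm by $\|g\|_{\El{p'}}$ provided $p'\in(p_{-}(H^{\sharp})\vee 1,2]$. The duality of critical numbers $p_{-}(H^{\sharp})\vee 1=(p_{+}(H))'\vee 1$, which comes from combining $[(1+t^{2}H)^{-1}]^{*}=(1+t^{2}H^{*})^{-1}$ with the similarity, yields this range precisely when $p\in(2,p_{+}(H))$.

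For the second identification, the reverse inclusion $\dot{\mathcal{V}}^{1,p}\cap\El{2}\subseteq\bb{H}^{1,p}_{H}$ does \emph{not} require a reverse Riesz estimate: given $f\in\dot{\mathcal{V}}^{1,p}\cap\El{2}\cap\mathcal{V}^{1,2}$, the vector $\begin{bmatrix}0\\\nabla_{\mu}f\end{bmatrix}$ lies in $\El{p}(\Rn;\C^{n+2})\cap\clos{\ran{DB}}$ by \eqref{identity closure of ranges of schrodinger operators widetilde}, so Lemma \ref{continuous inclusions q>2 abstract bisectorial adapted hardy spaces}(i) applied to $T=DB$ places $\nabla_{\mu}f$ in $\bb{H}^{p}_{\widetilde{M}}\cap\ran{\nabla_{\mu}}$ (using the decomposition $\bb{H}^{p}_{DB}=\bb{H}^{p}_{\widetilde{H}}\oplus\bb{H}^{p}_{\widetilde{M}}$), and the bijection $-\nabla_{\mu}\colon\bb{H}^{1,p}_{H}\cap\dom{H^{1/2}}\to\bb{H}^{p}_{\widetilde{M}}\cap\ran{\nabla_{\mu}}$ from Figure \ref{figure relations between adapted Hardy spaces} then puts $f\in\bb{H}^{1,p}_{H}$ with $\|f\|_{\bb{H}^{1,p}_{H}}\lesssim\|\nabla_{\mu}f\|_{\El{p}}$. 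Extension to general $f\in\dot{\mathcal{V}}^{1,p}\cap\El{2}$ is done by truncation/cutoff as in the proof of Proposition \ref{density lemma in homogeneous V adapted Sobolev spaces}. The forward inclusion $\bb{H}^{1,p}_{H}\subseteq\dot{\mathcal{V}}^{1,p}\cap\El{2}$ uses the Riesz bound: for $f\in\bb{H}^{1,p}_{H}\cap\dom{H^{1/2}}$, Figure \ref{figure relations between adapted Hardy spaces} gives $H^{1/2}f\in\bb{H}^{p}_{H}$, and since $q_{+}(H)\leq p_{+}(H)$ by Proposition \ref{properties of critical numbers proposition}, the first identification yields $H^{1/2}f\in\El{p}\cap\El{2}$; then $\nabla_{\mu}f=R_{H}(H^{1/2}f)\in\El{p}$ by Theorem \ref{thm boundedness of Riesz transforms on Lp full range} in the range $(p_{-}(H),q_{+}(H))\cap(1,2q]$ (the source of the constraint $p\leq 2q$). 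Passing to general $f\in\bb{H}^{1,p}_{H}$ is by density via \cite[Lemma 8.7]{AuscherEgert}, as in Proposition \ref{two continuous inclusions abstract H^p and H^1,p}.

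The principal obstacle is the duality argument for the first identification, specifically the verification that $p_{-}(H^{\sharp})\vee 1=(p_{+}(H))'\vee 1$ and that all the machinery of Corollary \ref{continuous inclusion L^p subset abstract H^p} applies verbatim to $H^{\sharp}$. Tracking the similarity transform $b^{*}$ through the tent space and functional-calculus bounds requires care, but once this duality of critical numbers is in place, the remainder of the proof assembles from results already developed (Lemma \ref{continuous inclusions q>2 abstract bisectorial adapted hardy spaces}, Theorem \ref{thm boundedness of Riesz transforms on Lp full range}, and Figure \ref{figure relations between adapted Hardy spaces}).
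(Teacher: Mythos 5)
The proposal is correct. For the first identification, your route is essentially the paper's: the forward inclusion $\El{p}\cap\El{2}\subseteq\bb{H}^{p}_{H}$ via Lemma~\ref{continuous inclusions q>2 abstract bisectorial adapted hardy spaces}(i), and the reverse via a Calder\'on--McIntosh/tent-space-duality argument transported to $H^{\sharp}$ through similarity. You cite Corollary~\ref{continuous inclusion L^p subset abstract H^p} where the paper cites the first part of Corollary~\ref{characterisation abstract H^p p<2}; both supply the needed embedding $\El{p'}\cap\El{2}\subseteq\bb{H}^{p'}_{H^{\sharp}}$, so this is a cosmetic difference. The claimed duality $p_{-}(H^{\sharp})\vee 1=(p_{+}(H))'\vee 1$ is correct and follows, as you say, from $[(1+t^{2}H)^{-1}]^{*}=(1+t^{2}H^{*})^{-1}$, the similarity $H^{*}=(b^{*})^{-1}H^{\sharp}b^{*}$, and the fact that conjugation by the bounded multiplication operator $b^{*}$ does not affect $\El{r}$-boundedness.

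Where you genuinely depart from the paper is the forward inclusion $\dot{\mathcal{V}}^{1,p}\cap\El{2}\subseteq\bb{H}^{1,p}_{H}$ in the second identification. The paper first proves the reverse Riesz bound $\norm{H^{1/2}f}_{\El{p}}\lesssim\norm{\nabla_{\mu}f}_{\El{p}}$ via a duality computation with $R_{H^{\sharp}}$ (requiring Theorem~\ref{thm boundedness of Riesz transforms on Lp full range} for $H^{\sharp}$ in the dual range, plus Lemma~\ref{density lemma range dom L^p}, plus the first identification already established); only then does it pass through Figure~\ref{figure relations between adapted Hardy spaces} to reach $\norm{f}_{\bb{H}^{1,p}_{H}}\lesssim\norm{\nabla_{\mu}f}_{\El{p}}$. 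Your argument shortcuts all of this: for $f\in\dot{\mathcal{V}}^{1,p}\cap\mathcal{V}^{1,2}$ you place $\begin{bmatrix}0\\\nabla_{\mu}f\end{bmatrix}$ directly in $\bb{H}^{p}_{DB}$ via Lemma~\ref{continuous inclusions q>2 abstract bisectorial adapted hardy spaces}(i) applied to $DB$, project onto $\bb{H}^{p}_{\widetilde{M}}$ through the block decomposition, and invert the regularity-shift bijection $-\nabla_{\mu}\colon\bb{H}^{1,p}_{H}\cap\dom{H^{1/2}}\to\bb{H}^{p}_{\widetilde{M}}\cap\ran{\nabla_{\mu}}$ from the figure. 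This uses only tools established before the proposition, requires no Riesz bound and no appeal to the first identification, and gives the inclusion for every $p\geq 2$; the reverse Riesz bound then drops out \emph{a posteriori} from the completed identification rather than being an ingredient. The reverse inclusion $\bb{H}^{1,p}_{H}\subseteq\dot{\mathcal{V}}^{1,p}\cap\El{2}$ and the density/Fatou passage (for which the constraint $p\leq 2q$ is what guarantees $V^{p/2}\in\El{1}_{\loc}$ in Proposition~\ref{density lemma in homogeneous V adapted Sobolev spaces} when $p>n$) match the paper.
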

\begin{proof}
    The first identification follows both from the first component of the continuous inclusion of Lemma~\ref{continuous inclusions q>2 abstract bisectorial adapted hardy spaces}.(i) (with $T=BD$),  and by duality and similarity as in Part 8 of the proof of \cite[Theorem 9.7]{AuscherEgert}, by applying the first item of Corollary~\ref{characterisation abstract H^p p<2} to the operator $H^{\sharp}$ (recall Section~\ref{subsubsection on adjoints and similarity}).

    To prove the second identification, we need to establish two inclusions. First, if $p\in(2,p_{+}(H))$, we claim that $\dot{\mathcal{V}}^{1,p}\cap\El{2}\subseteq \bb{H}^{1,p}_{H}$.
    We follow the proof of (9.34) in \cite{AuscherEgert}. Let us first pick $f\in\dot{\mathcal{V}}^{1,p}\cap\mathcal{V}^{1,2}$. By duality and similarity, this implies that $p'\in (p_{-}(H^{\sharp})\vee 1 ,2)$. Since $H^{\sharp}$ satisfies the same hypotheses as $H$, Theorem~\ref{thm boundedness of Riesz transforms on Lp full range} implies that the Riesz transform $R_{H^{\sharp}}$ is $\El{p'}$-bounded. Now, for any $g\in\El{p'}\cap\ran{H^{*}}\cap\dom{H^{*}}$, we can write, using the similarity between $H^{*}$ and $H^{\sharp}$ (see Section~\ref{subsubsection on adjoints and similarity}),
    \begin{align*}
        \langle H^{1/2}f,g\rangle_{\El{2}}&=\langle f, {H^{*}}^{1/2}g\rangle_{\El{2}}=\langle f ,{b^{*}}^{-1}H^{\sharp}\left(b^{*}{H^{*}}^{-1/2}g\right)\rangle_{\El{2}}\\
        &=\langle \nabla_{x}f, A_{\parallel\parallel}^{*}\nabla_{x}\left(b^{*}{H^{*}}^{-1/2}g\right)\rangle_{\El{2}}+\langle V^{1/2}f, a^{*}V^{1/2}b^{*}{H^{*}}^{-1/2}g\rangle_{\El{2}}\\
        &=\left\langle \begin{bmatrix}
            A_{\parallel\parallel} & 0\\
            0 & a
        \end{bmatrix}\nabla_{\mu}f,\nabla_{\mu}{H^{\sharp}}^{-1/2}(b^{*}g)\right\rangle_{\El{2}}=\left\langle \begin{bmatrix}
            A_{\parallel\parallel} & 0\\
            0 & a
        \end{bmatrix}\nabla_{\mu}f,R_{H^{\sharp}}(b^{*}g)\right\rangle_{\El{2}},
    \end{align*}
    where the last equality follows from the characterisation of $R_{H^{\sharp}}$ on $\ran{{H^{\sharp}}}=b^{*}\ran{{H^{*}}}$.
    Following the argument of Part 9 of the proof of \cite[Theorem 9.7]{AuscherEgert}, using boundedness of $R_{H^{\sharp}}$, Lemma~\ref{density lemma range dom L^p},  Proposition~\ref{identification abstract hardy space H^p p>2} and Figure \ref{figure relations between adapted Hardy spaces}, we obtain the estimate $\norm{f}_{\bb{H}^{1,p}_{H}}\lesssim \norm{\nabla_{\mu}f}_{\El{p}}$. 
    Finally, we can use Corollary~\ref{density in mixed V adapted sobolev spaces} and the usual Fatou's lemma argument to extend to the general case $f\in\dot{\mathcal{V}}^{1,p}\cap\El{2}$.

    Next, we claim that if $p\in(2,q_{+}(H))$ and $p\leq 2q$, then $\bb{H}^{1,p}_{H}\subseteq \dot{\mathcal{V}}^{1,p}\cap\El{2}$. This is the analogue of (9.33) in Step 9 of the proof of \cite[Theorem 9.7]{AuscherEgert}. It follows from Figure \ref{figure relations between adapted Hardy spaces}, Proposition~\ref{properties of critical numbers proposition}, Proposition~\ref{identification abstract hardy space H^p p>2}, and Theorem~\ref{thm boundedness of Riesz transforms on Lp full range}, following the argument in the aforementioned reference. 
    Finally, one should conclude using \cite[Lemma 2.1]{MorrisTurner} as in the proof of Proposition~\ref{two continuous inclusions abstract H^p and H^1,p}. Since $p_{+}(H)>q_{+}(H)$ by Proposition~\ref{properties of critical numbers proposition}, the two inclusions conclude the proof.
\end{proof}

\section{Properties of weak solutions}\label{section on properties of weak solutions and interior estimates}
In this section, we collect some important features of the elliptic equation $\mathscr{H}u=g$ on open subsets of $\R^{1+n}$ (recall \eqref{definition of weak solutions in general open subset of R1+n}). We work with $n\geq 1$ and a general non-negative $V\in\El{1}_{\loc}(\Rn)$. The potential $V$ is extended to $\R^{1+n}$ by $t$-independence by setting $V(t,x):=V(x)$ for all $t\in\R$ and $x\in\Rn$. 
With that in mind, the space $\mathcal{V}^{1,2}(\R^{1+n})$ is defined as in \eqref{definition of inhomogeneous V adapted sobolev spaces}, and now we distinguish the components  $\nabla_{\mu}u:=\begin{bmatrix}
    \partial_{t}u\\
    \nabla_{x}u\\
    V^{1/2}u
\end{bmatrix}$ for a weakly differentiable function $u:\R^{1+n}\to\C$.

We first note that the $t$-independence of $V$ and of the coefficients $A$ implies that the ellipticity assumption (\ref{ellipticity assumption}) can be extended to $\bb{R}^{n+1}$. In fact, for $u\in C^{\infty}_{c}(\bb{R}^{n+1})$, we can integrate \eqref{ellipticity assumption} with respect to $t$ to obtain $\Re{\langle \mathcal{A}\nabla_{\mu}u , \nabla_{\mu}u \rangle_{\El{2}(\R^{n+1};\C^{n+2})}}\geq \lambda \norm{\nabla_{\mu}u}_{\El{2}(\R^{n+1}; \C^{n+2})}^{2}$, where $\mathcal{A}\in\El{\infty}\left(\R^{1+n};\mathcal{L}\left({\C^{n+2}}\right)\right)$ is defined as 
\begin{equation}\label{definition of mathcal A coefficients on the full space}
\mathcal{A}(t,x):=\begin{bmatrix}
        A(x) & 0\\
        0 & a(x)
    \end{bmatrix}:=\begin{bmatrix}
        A_{\perp \perp}(x) & 0 & 0\\
        0& A_{\parallel\parallel}(x) & 0\\
        0 & 0& a(x)
    \end{bmatrix}
        \end{equation}
    for all $(t,x)\in\R^{1+n}$. This ellipticity inequality extends to all $u\in\mathcal{V}^{1,2}(\R^{1+n})$ by Lemma~\ref{density lemma adapted sobolev spaces Morris Turner}. 

    Even though only the homogeneous norm appears here, we may not be able to extend the ellipticity to $\dot{\mathcal{V}}^{1,2}(\R^{1+n})$, because the $t$-extension $V(t,x):=V(x)$ may only belong to $\textup{RH}^{\frac{n}{2}}(\R^{1+n})$, which is not enough to apply Proposition~\ref{density lemma in homogeneous V adapted Sobolev spaces}.

We now record a Caccioppoli-type inequality and reverse Hölder estimates for weak solutions, as defined in \eqref{definition of weak solutions in general open subset of R1+n}.
If $\Omega\subset \bb{R}^{n+1}$ is open, $g\in\El{2}_{\textup{loc}}(\Omega)$, and $u\in\mathcal{V}^{1,2}_{\textup{loc}}(\Omega)$ is a weak solution of $\mathscr{H}u=g$ in $\Omega$, and $\alpha>1$, then
\begin{align}\label{Caccioppoli's inequality}
    \iint_{Q}\abs{\nabla_{\mu}u}^{2}\dd t\dd x\lesssim 
    \ell(Q)^{-2}\iint_{\alpha Q}\abs{u}^{2}\dd t\dd x + \left(\iint_{\alpha Q}\abs{g}^{2}\right)^{1/2}\left(\iint_{\alpha Q}\abs{u}^{2}\right)^{1/2}
\end{align}
for all cubes $Q\subset\R^{1+n}$ such that $\alpha \clos{Q}\subseteq \Omega$,  where the implicit constant depends on $\alpha.$
This type of Caccioppoli inequality can be proven using the same argument as in the proof of \cite[Proposition 5.1]{MorrisTurner}.
 Moreover, if $g\equiv 0$, and $\delta\in(0,\infty)$, then the reverse Hölder estimate
 \begin{equation}\label{reverse holder property weak solutions}
         \left(\fiint_{Q}\abs{u}^{2^{*}_{+}}\dd x\dd t\right)^{1/2^{*}_{+}}\lesssim \left(\fiint_{\alpha Q}\abs{u}^{\delta}\dd x\dd t\right)^{1/\delta}
     \end{equation}
holds (see \cite[Proposition 5.2]{MorrisTurner} or \cite[Lemma 16.7]{AuscherEgert}), where the implicit constant depends on $\alpha$ and $\delta$, and $2^{*}_{+}=\frac{2(n+1)}{(n+1)-2}$ is the upper Sobolev exponent in dimension $n+1$.
 
 The following result will be crucial for constructing solutions in Section~\ref{section on existence for the dirichlet and regularit problems}. The proof shows how the argument outlined in \cite[Lemma 16.8]{AuscherEgert} can be adapted to the case of the Schrödinger equations considered here.
 \begin{lem}\label{limit of weak solutions is weak solution to elliptic equation}
     Let $\Omega\subset \R^{n+1}$ be open, and let $g\in\El{2}_{\textup{loc}}(\Omega)$. Let $(u_k)_{k\geq 1}$ be a sequence of weak solutions of $\mathscr{H}u_{k}=g$ in $\Omega.$ Let $u\in\El{1}_{\textup{loc}}(\Omega)$ such that $u_k\to u$ in $\El{1}_{\textup{loc}}(\Omega)$ as $k\to\infty$. Then $u\in\mathcal{V}^{1,2}_{\textup{loc}}(\Omega)$ is a weak solution of $\mathscr{H}u=g$ in $\Omega$, and $u_k\to u$ in $\mathcal{V}^{1,2}_{\textup{loc}}(\Omega)$.
 \end{lem}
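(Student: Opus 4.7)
The plan is to use the linearity of $\mathscr H$ together with the interior estimates \eqref{Caccioppoli's inequality} and \eqref{reverse holder property weak solutions} to upgrade the $\El{1}_{\loc}$-convergence to convergence in $\mathcal V^{1,2}_{\loc}(\Omega)$, and then pass to the limit in the weak formulation~\eqref{definition of weak solutions in general open subset of R1+n}. The key algebraic observation is that for any $k,l\geq 1$, the difference $u_k-u_l$ is a weak solution of the homogeneous equation $\mathscr H(u_k-u_l)=0$ in $\Omega$, since $u_k$ and $u_l$ satisfy the inhomogeneous equation with the same right-hand side $g$.

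First, I would fix an arbitrary cube $Q\subset\R^{1+n}$ with $\alpha\clos Q\subseteq \Omega$ for some $\alpha>1$, and apply the reverse Hölder estimate \eqref{reverse holder property weak solutions} with $\delta=1$ to the weak solution $u_k-u_l$ of $\mathscr H(u_k-u_l)=0$, obtaining
\begin{equation*}
    \left(\fiint_{Q}\abs{u_k-u_l}^{2}\right)^{1/2}\lesssim \fiint_{\alpha Q}\abs{u_k-u_l}.
\end{equation*}
Since $u_k\to u$ in $\El{1}_{\loc}(\Omega)$, the right-hand side tends to $0$ as $k,l\to\infty$, so $(u_k)_{k\geq 1}$ is a Cauchy sequence in $\El{2}_{\loc}(\Omega)$. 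By uniqueness of limits in $\El{1}_{\loc}(\Omega)$, it must hold that $u\in\El{2}_{\loc}(\Omega)$ and $u_k\to u$ in $\El{2}_{\loc}(\Omega)$.

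Next, I would apply the Caccioppoli inequality \eqref{Caccioppoli's inequality} (with $g=0$) to the weak solution $u_k-u_l$ of $\mathscr H(u_k-u_l)=0$ on a slightly enlarged cube $\alpha Q$ with $\alpha^2\clos Q\subseteq \Omega$, which yields
\begin{equation*}
    \iint_{Q}\abs{\nabla_{\mu}(u_k-u_l)}^{2}\lesssim \ell(Q)^{-2}\iint_{\alpha Q}\abs{u_k-u_l}^{2}\longrightarrow 0
\end{equation*}
as $k,l\to\infty$, by the first step. Hence $(\nabla_{\mu}u_k)_{k\geq 1}$ is Cauchy in $\El{2}_{\loc}(\Omega;\C^{n+2})$. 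Let $F\in\El{2}_{\loc}(\Omega;\C^{n+2})$ be its limit. Writing $F=(F_0,F_1,\dots,F_n,F_{n+1})$ and testing against $\varphi\in C^\infty_c(\Omega)$, the $\El{1}_{\loc}$-convergence $u_k\to u$ gives $\int u_k \partial_i\varphi\to \int u\partial_i\varphi$ for $i=0,\dots,n$, while the $\El{2}_{\loc}$-convergence of $\partial_i u_k$ and $V^{1/2}u_k$ (recall $V\in\El{1}_{\loc}$, so $V^{1/2}\in\El{2}_{\loc}$) allows us to identify $F_i=\partial_iu$ for $i=0,\dots,n$ and $F_{n+1}=V^{1/2}u$ as in the proof of \cite[Lemma 2.1]{MorrisTurner}. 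Therefore $u\in\mathcal V^{1,2}_{\loc}(\Omega)$ with $\nabla_\mu u=F$, and $u_k\to u$ in $\mathcal V^{1,2}_{\loc}(\Omega)$.

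Finally, to verify that $\mathscr H u=g$ weakly in $\Omega$, I would fix $\varphi\in C^\infty_c(\Omega)$ and pass to the limit in
\begin{equation*}
    \iint_{\Omega}\left(A\begin{bmatrix}\partial_t u_k\\\nabla_x u_k\end{bmatrix}\cdot\overline{\begin{bmatrix}\partial_t\varphi\\\nabla_x\varphi\end{bmatrix}}+aV u_k\overline{\varphi}\right)=\iint_{\Omega}g\overline{\varphi}.
\end{equation*}
The boundedness of $A$ and $a$ together with $\nabla_\mu u_k\to \nabla_\mu u$ in $\El{2}(K;\C^{n+2})$ for $K:=\supp\varphi$ (and $V^{1/2}u_k\to V^{1/2}u$ in $\El{2}(K)$) makes each term on the left converge to the corresponding term with $u$ in place of $u_k$, by the Cauchy--Schwarz inequality. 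The right-hand side is independent of $k$, so the limiting identity is precisely the weak formulation of $\mathscr Hu=g$ in $\Omega$.

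The main (mild) obstacle is the first step: the hypothesis only provides $\El{1}_{\loc}$-convergence, which is insufficient to directly test against the quadratic form $h$, so one must first boost to $\El{2}_{\loc}$-convergence by exploiting the reverse Hölder self-improvement for weak solutions of the homogeneous equation; after that, Caccioppoli and the identification of the weak $\mu$-gradient via \cite[Lemma 2.1]{MorrisTurner} are routine.
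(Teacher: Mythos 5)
Your proof is correct and follows essentially the same route as the paper's: exploit linearity so that $u_k-u_l$ solves the homogeneous equation, use the reverse H\"older estimate \eqref{reverse holder property weak solutions} to boost $\El{1}_{\loc}$-convergence, use Caccioppoli \eqref{Caccioppoli's inequality} to control $\nabla_\mu$, identify the limit via \cite[Lemma~2.1]{MorrisTurner}, and pass to the limit in the weak formulation. The only cosmetic difference is that you apply reverse H\"older first (to upgrade to $\El{2}_{\loc}$-convergence) and then Caccioppoli, whereas the paper combines them into a single chain of inequalities estimating $\norm{u_k-u_j}_{\mathcal{V}^{1,2}(O)}$ directly by $\norm{u_k-u_j}_{\El{1}(O')}$; the two arrangements are equivalent.
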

 \begin{proof}
 Let $k,j\geq 1$ be integers. By linearity, $u_k -u_j\in\mathcal{V}_{\textup{loc}}^{1,2}(\Omega)$ is a weak solution of $\mathscr{H}(u_k-u_j)=0$ in $\Omega$.
     Let $Q\subset \Omega$ be any cube such that $\clos{Q}\subset \Omega$. Then, there is $\alpha>1$ such that $\alpha \clos{Q}\subseteq \Omega$. By Caccioppoli's inequality \eqref{Caccioppoli's inequality} and Hölder's inequality, we have (where $2^{*}_{+}=\frac{2n+2}{n-1}$)
     \begin{align*}
         \ell(Q)\left(\fiint_{Q}\abs{\nabla_{\mu}u_k - \nabla_{\mu}u_j}^{2}\right)^{1/2}\lesssim \left(\fiint_{\alpha ^{1/2}Q}\abs{u_k -u_j}^{2}\right)^{1/2}\lesssim\left(\fiint_{\alpha ^{1/2}Q}\abs{u_k -u_j}^{2^{*}_{+}}\right)^{1/2^{*}_{+}}.
     \end{align*}
     Hence, the reverse Hölder estimate \eqref{reverse holder property weak solutions} with $\delta =1$ implies that
     \begin{align*}
        \left(\fiint_{Q}\abs{u_k -u_j}^{2}\right)^{1/2} +\left(\fiint_{Q}\abs{\nabla_{\mu}u_k - \nabla_{\mu}u_j}^{2}\right)^{1/2} \lesssim \left(1+\ell(Q)^{-1}\right)\fiint_{\alpha Q}\abs{u_k-u_j}.
     \end{align*}
     By a covering argument, this shows that for any open set $O\subset \Omega$ with compact closure $\clos{O}\subseteq \Omega$ (henceforth denoted $O\Subset \Omega$), there exists $C>0$, depending on $O$, and an open set $O'$ such that $O\subset O'\Subset \Omega$ and
     \begin{equation*}
         \norm{u_k -u_j}_{\mathcal{V}^{1,2}(O)}\leq C\norm{u_k-u_j}_{\El{1}(O')}.
     \end{equation*}
     This holds for any $j,k\geq 1$, and since $(u_k)_{k\geq 1}$ converges in $\El{1}_{\loc}(\Omega)$, this shows that $(u_k)_{k\geq 1}$ is Cauchy in $\mathcal{V}^{1,2}_{\loc}(\Omega)$. Let $v\in\mathcal{V}^{1,2}_{\loc}(\Omega)$ be such that $u_k\to v$ in $\mathcal{V}^{1,2}_{\loc}(\Omega)$ as $k\to\infty$. Since $u_k\to u$ in $\El{1}_{\loc}(\Omega)$, we deduce that $u=v$ almost everywhere in $\Omega$, thus $u\in\mathcal{V}^{1,2}_{\loc}(\Omega)$. Finally, we can take the limit as $k\to\infty$ in the weak formulation of $\mathscr{H}u_{k}=g$ to conclude that $u$ is a weak solution of $\mathscr{H}u=g$ in $\Omega$.
 \end{proof}
 \subsection{Poisson semigroup solutions}
 As stated in the introduction, block coefficients allow us to rewrite equation~(\ref{general div form schrodinger equ}) in the form $\partial_{t}^{2}u=Hu$, which allows us to construct solutions to boundary value problems for $\El{2}$ boundary data by considering the Poisson semigroup $\set{e^{-tH^{1/2}}}_{t>0}$. We now make this precise.

 Since $H$ is an injective sectorial operator of type $S_{2\omega(B)}^{+}$, we know that its square root $H^{1/2}$ is an injective sectorial operator of type $S_{\omega(B)}^{+}$ (see, e.g., \cite[Propositions 3.1.1 and 3.1.2]{HaaseFunctionalCalculusBook}) which generates a holomorphic semigroup $\set{e^{-\lambda H^{1/2}}}_{\lambda\in S_{\frac{\pi}{2}-\omega(B)}^{+}}$, 
 satisfying the properties stated in \cite[Proposition 3.4.1]{HaaseFunctionalCalculusBook}. Moreover, \cite[Proposition 3.1.4]{HaaseFunctionalCalculusBook} shows that for all $\lambda\in S^{+}_{\frac{\pi}{2}-\omega(B)}$, it holds that  $e^{-\lambda H^{1/2}}=f_{\lambda}(H^{1/2})=g_{\lambda}(H)$, where $f_{\lambda}(z)=e^{-\lambda z}$ and $g_{\lambda}(z)=e^{-\lambda \sqrt{z}}$ for all $z\in S^{+}_{\pi}$.
 
These properties show that for fixed $f\in\Ell{2}$, the mapping 
\begin{equation}\label{Poisson semigroup extension of f is continuous mapping srtongly}
\mathrm{P}:(0,\infty)\to\Ell{2}: t\mapsto e^{-tH^{1/2}}f
\end{equation}
is continuous and satisfies $\norm{P(t)}_{\Ell{2}}\lesssim \norm{f}_{\Ell{2}}$ uniformly for all $t>0$. A slight variation of the argument of the proof of \cite[Lemma 2.2]{Arendt_Bukhvalov_1994} shows that there exists a (measurable) $u\in\El{2}_{\loc}(\Hn)$ such that $u(t,x)=\mathrm{P}(t)(x)$ for almost every $t\in(0,\infty)$ and almost every $x\in\Rn$. This is how the notation $u(t,x):=(e^{-tH^{1/2}}f)(x)$ should be understood in Theorem~\ref{Poisson semigroup extension is a weak solution} below, and in the rest of the paper. In particular, $u(t,\cdot):=\mathrm{P}(t)$ for all $t>0$.
 
Note that in what follows, for a normed space $X$, we let $C_{0}([0,\infty); X)$ denote the space of continuous $X$-valued functions on $[0,\infty)$ vanishing at $\infty$, equipped with the supremum norm. 
To avoid confusion, we also mention that for any measurable function $v:\Hn\to \C$, the notation $v\in C_{0}\left([0,\infty); X\right)$, for a subspace $X$ of $\C$-valued measurable functions on $\Rn$, means that there exists some $F\in C_{0}\left([0,\infty); X\right)$ such that $v(t,x)=F(t)(x)$ for almost every $t>0$ and almost every $x\in\Rn$. This notation extends to the spaces different from $C_{0}$ used below.
\begin{thm}\label{Poisson semigroup extension is a weak solution}
    Let $f\in\Ell{2}$ and $u:\Hn\to \C$ be defined as $u(t,x):=(e^{-tH^{1/2}}f)(x)$
for all $t>0$ and $x\in\Rn$. Then, $u\in\mathcal{V}^{1,2}_{\loc}(\Hn)$, and it is a weak solution of $\mathscr{H}u=0$ in $\Hn$. Moreover, it is of class $C_{0}([0,\infty);\El{2})\cap C^{\infty}((0,\infty);\El{2})$.
\end{thm}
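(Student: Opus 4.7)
The plan is to exploit that $H^{1/2}$ is an injective sectorial operator of angle $\omega(B)\in[0,\pi/2)$ on $\Ell{2}$ (as stated just before the theorem), so that $-H^{1/2}$ generates a bounded holomorphic $C_{0}$-semigroup $\set{e^{-tH^{1/2}}}_{t\geq 0}$. Combining this with the Kato identification $\dom{H^{1/2}}=\mathcal{V}^{1,2}(\Rn)$ and the square-root estimate \eqref{Kato estimate}, as well as the form definition of $\mathsf{H}$ (so that $H=b\mathsf{H}$ with $b=A_{\perp\perp}^{-1}$), will let me reinterpret the operator identity $\partial_{t}^{2}u=Hu$ as the weak formulation \eqref{definition of weak solutions in general open subset of R1+n}.

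I would first address the continuity and smoothness statements. The holomorphicity of the semigroup yields, for each $f\in\Ell{2}$ and $t>0$, that $u(t,\cdot)\in\bigcap_{k\geq 0}\dom{H^{k/2}}$ with $\partial_{t}^{k}u(t,\cdot)=(-H^{1/2})^{k}e^{-tH^{1/2}}f$ and the bound $\norm{(H^{1/2})^{k}e^{-tH^{1/2}}}_{\mathcal{L}(\El{2})}\lesssim t^{-k}$ locally uniformly in $t$; hence $u\in C^{\infty}((0,\infty);\Ell{2})$. Strong continuity of the $C_{0}$-semigroup at $0$ gives $u(t,\cdot)\to f$ in $\Ell{2}$ as $t\to 0^{+}$. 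For decay at infinity, I would use injectivity of $H^{1/2}$ (so $\clos{\ran{H^{1/2}}}=\Ell{2}$) together with the estimate above: for $g=H^{1/2}h\in\ran{H^{1/2}}$, $\norm{e^{-tH^{1/2}}g}_{\El{2}}=\norm{H^{1/2}e^{-tH^{1/2}}h}_{\El{2}}\lesssim t^{-1}\norm{h}_{\El{2}}\to 0$, and a density-plus-uniform-boundedness argument extends this to all $f\in\Ell{2}$, completing $u\in C_{0}([0,\infty);\Ell{2})$.

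For $\mathcal{V}^{1,2}_{\loc}(\Hn)$-membership, the Kato estimate \eqref{Kato estimate} applied to $u(t,\cdot)\in\dom{H^{1/2}}=\mathcal{V}^{1,2}(\Rn)$ yields $\norm{\nabla_{\mu}u(t,\cdot)}_{\El{2}}\eqsim \norm{H^{1/2}u(t,\cdot)}_{\El{2}}\lesssim t^{-1}\norm{f}_{\El{2}}$, and $\partial_{t}u(t,\cdot)=-H^{1/2}e^{-tH^{1/2}}f$ is bounded by the same expression. Combining this with the uniform bound on $\norm{u(t,\cdot)}_{\El{2}}$ and Fubini's theorem gives $\iint_{K}(\abs{u}^{2}+\abs{\partial_{t}u}^{2}+\abs{\nabla_{x}u}^{2}+V\abs{u}^{2})<\infty$ for every compact $K\subset\Hn$, so $u\in\mathcal{V}^{1,2}_{\loc}(\Hn)$.

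Finally, I would check the weak equation by testing against an arbitrary $\varphi\in C^{\infty}_{c}(\Hn)$. For each $t>0$, since $u(t,\cdot)\in\dom{H}=\dom{\mathsf{H}}$, the form definition of $\mathsf{H}$ yields
\begin{equation*}
\inner{A_{\perp\perp}Hu(t,\cdot)}{\varphi(t,\cdot)}_{\El{2}}=\inner{A_{\parallel\parallel}\nabla_{x}u(t,\cdot)}{\nabla_{x}\varphi(t,\cdot)}_{\El{2}}+\inner{aVu(t,\cdot)}{\varphi(t,\cdot)}_{\El{2}},
\end{equation*}
and integration over $t\in(0,\infty)$ produces the spatial terms in \eqref{definition of weak solutions in general open subset of R1+n} via Fubini. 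Integration by parts in $t$, legitimate by the $C^{\infty}((0,\infty);\Ell{2})$-regularity, the $t$-independence of $A_{\perp\perp}$ and the compact support of $\varphi$ in $\Hn$ (which kills boundary terms), together with $\partial_{t}^{2}u(t,\cdot)=Hu(t,\cdot)$, converts the time-derivative term $\iint_{\Hn}A_{\perp\perp}\partial_{t}u\,\overline{\partial_{t}\varphi}$ into $-\int_{0}^{\infty}\inner{A_{\perp\perp}Hu(t,\cdot)}{\varphi(t,\cdot)}_{\El{2}}\,dt$, which cancels the previous identity and completes the weak formulation. The principal technical point is justifying this integration by parts via the decay/integrability estimates above, but since $\varphi$ is compactly supported in the open half-space, no boundary contribution arises and the argument is direct.
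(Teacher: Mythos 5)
Your proof is essentially correct, and parts of it coincide with the paper's own argument (the holomorphic-semigroup smoothness estimates and the weak-formulation step via the form definition of $\mathsf{H}$ followed by integration by parts in $t$ are the same). The genuine difference is in establishing $u\in\mathcal{V}^{1,2}_{\loc}(\Hn)$: the paper transfers to the first-order framework, writing $u(t,\cdot)\oplus 0 = Be^{-t[DB]}(b^{-1}f\oplus 0)$ and applying $D$ to read off $t\nabla_{\mu}^{\parallel}u(t,\cdot)$ as $(\varphi(tDB)g)_{\parallel,\mu}$, then cites a convergence lemma for the continuity of $t\mapsto\nabla_{\mu}u(t,\cdot)$. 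You instead combine the Kato square-root estimate \eqref{Kato estimate} with $\|H^{1/2}e^{-tH^{1/2}}\|_{\mathcal{L}(\El{2})}\lesssim t^{-1}$ to bound $\|\nabla_{\mu}u(t,\cdot)\|_{\El{2}}$ directly. This is a more elementary route and it works; the paper's route via $BD/DB$ has the side benefit of producing the explicit representation $t\nabla_{\mu}u(t,\cdot)=\bigl[\psi(t^{2}H)f;(\varphi(tDB)g)_{\parallel,\mu}\bigr]$, which is then reused in later estimates (e.g.\ Propositions 17.1 and 17.3), whereas your argument only establishes what is needed for this particular theorem.

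One point you should make slightly more careful: writing $\iint_{K}(\abs{u}^{2}+\abs{\nabla_{\mu}u}^{2})<\infty$ for all compact $K$ is the size bound but not by itself the definition of $\mathcal{V}^{1,2}_{\loc}(\Hn)$ — one must also identify the pointwise-in-$t$ derivatives $\partial_{t}u(t,\cdot)$, $\nabla_{x}u(t,\cdot)$, $V^{1/2}u(t,\cdot)$ (which are elements of $\El{2}(\Rn)$ for each $t$) with the distributional derivatives of $u$ on the open set $\Hn$. This requires strong measurability (in fact continuity) of $t\mapsto\nabla_{\mu}u(t,\cdot)$ as an $\El{2}$-valued map, which your estimates do provide via the analyticity of $t\mapsto H^{1/2}e^{-tH^{1/2}}f$; it would be worth stating this explicitly, as the paper does (invoking the Arendt--Bukhvalov identification of a continuous $\El{2}$-valued map with a measurable function on $\Hn$). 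With that caveat addressed, your argument is a valid and somewhat shorter alternative.
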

\begin{proof}
 The general properties of holomorphic semigroups imply that $u\in C_{0}([0,\infty);\El{2})\cap C^{\infty}((0,\infty);\El{2})$, with $\partial_{t}^{k}u(t,\cdot)=(-1)^{k}H^{k/2}u(t,\cdot)$ in the strong sense in $\El{2}$ for all $t>0$.
Moreover, for all $t>0$, we can use \eqref{decomposition property of BD into H and M} to write
\begin{align*}
    \begin{bmatrix}
        u(t,\cdot)\\
        0
    \end{bmatrix}=\begin{bmatrix}
        e^{-tH^{1/2}}f\\
        0
    \end{bmatrix}
    =B\begin{bmatrix}
        e^{-t\widetilde{H}^{1/2}}(b^{-1}f)\\
        0
    \end{bmatrix}=Be^{-t\sqrt{(DB)^{2}}}\begin{bmatrix}
        b^{-1}f\\
        0
    \end{bmatrix}
    =B e^{-t[DB]}\begin{bmatrix}
        b^{-1}f\\
        0
    \end{bmatrix},
\end{align*}
where we have defined $[z]:=\sqrt{z^{2}}$ for all $z\in S_{\pi/2}$. Consequently, since $\ran{e^{-t[DB]}}\subseteq\dom{[DB]}=\dom{DB}$, we may apply the operator $D$ to obtain 
\begin{align*}
    \begin{bmatrix}
        0\\
        t\nabla_{\mu}^{\parallel}u(t,\cdot)
    \end{bmatrix}=-tDBe^{-t[DB]}\begin{bmatrix}
        b^{-1}f\\
        0
    \end{bmatrix}.
\end{align*}
Since $t\partial_{t}u(t,\cdot)=-tH^{1/2}e^{-tH^{1/2}}f$, we may write for all $t>0$,
\begin{equation}\label{functional calculus representation of the nabla mu of u(t,)}
    t\nabla_{\mu}u(t,\cdot)=\begin{bmatrix}
        t\partial_{t}u(t,\cdot)\\
        t\nabla_{\mu}^{\parallel}u(t,\cdot)
    \end{bmatrix}=\begin{bmatrix}
        \psi(t^{2}H)f\\
        \left(\varphi(tDB)g\right)_{\parallel,\mu}
    \end{bmatrix},
\end{equation}
where $\psi(z)=-\sqrt{z}e^{-\sqrt{z}}$, $\varphi(z)=-ze^{-[z]}$, and $g:=\begin{bmatrix}
    b^{-1}f\\
    0
\end{bmatrix}$. It follows from \eqref{functional calculus representation of the nabla mu of u(t,)} and the (baby) convergence lemma \cite[Lemma 5.12]{HaaseFunctionalCalculusBook} that the map
\begin{equation*}
    (0,\infty)\to \El{2}(\Rn;\C^{n+2}): t\mapsto \nabla_{\mu}u(t,\cdot)
\end{equation*}
is continuous, and each of its $n+2$ components can therefore be identified with an element of $\El{2}_{\loc}(\Hn)$, as follows from the argument following \eqref{Poisson semigroup extension of f is continuous mapping srtongly} above. It follows that $u\in\El{2}_{\loc}(\Hn)$ has weak derivatives in $\El{2}_{\loc}(\Hn)$, thus $u\in\mathcal{V}^{1,2}_{\loc}(\Hn)$. 

We now prove that $u$ is a weak solution of $\mathscr{H}u=0$ in $\Hn$. Let $\phi\in C^{\infty}_{c}(\Hn)$ and observe that the slices $\phi(t,\cdot)$ belong to $\smcp$ for all $t>0$. Since $\partial_{t}^{2}u(t,\cdot)=Hu(t,\cdot)$ in the strong sense in $\El{2}$ for all $t>0$, and $b^{-1}=A_{\perp\perp}$ is independent of $t$ and bounded, we may use the form definition \eqref{form method def of H} of the operator $H$ to obtain
\begin{align}\label{intermediate equation form definition weak solution poisson semigroup}
    \langle \partial_{t}^{2}\left(A_{\perp\perp}u\right), \phi(t,\cdot)\rangle_{\Ell{2}}&=\langle b^{-1}Hu(t,\cdot), \phi(t,\cdot)\rangle_{\Ell{2}}=\langle A \nabla_{\mu}^{\parallel}u(t,\cdot), \nabla_{\mu}^{\parallel}\phi(t,\cdot)\rangle_{\El{2}(\Rn;\C^{n+1})}
\end{align}
for each $t>0$. Since both sides of \eqref{intermediate equation form definition weak solution poisson semigroup} are continuous functions of the variable $t$, and since $\phi$ has compact support, we may integrate with respect to $t$ to obtain
\begin{equation}\label{intermediate equation weak solution poisson semigroup}
    \int_{0}^{\infty}\langle \partial_{t}^{2}\left(A_{\perp\perp}u\right), \phi(t,\cdot)\rangle_{\El{2}} \dd t = \int_{0}^{\infty}\langle A \nabla_{\mu}^{\parallel}u(t,\cdot), \nabla_{\mu}^{\parallel}\phi(t,\cdot)\rangle_{\El{2}(\Rn;\C^{n+1})}\dd t.
\end{equation}
Using Fubini's theorem, we may write the integral on the left-hand side as
\begin{equation*}
    \int_{0}^{\infty}\langle \partial_{t}^{2}\left(A_{\perp\perp}u\right), \phi(t,\cdot)\rangle_{\El{2}} \dd t = \int_{\Rn}A_{\perp\perp}(x)\left(\int_{0}^{\infty}\partial_{t}^{2}u(t,x)\clos{\phi(t,x)}\dd t\right)\dd x.
\end{equation*}
We may integrate by parts the $\El{2}$-valued Bochner integral to obtain
$\int_{0}^{\infty}\partial_{t}^{2}u(t,\cdot)\clos{\phi(t,\cdot)}\dd t=-\int_{0}^{\infty}\partial_{t}u(t,\cdot)\clos{\partial_{t}\phi(t,\cdot)}\dd t$.
Applying Fubini's theorem again, \eqref{intermediate equation weak solution poisson semigroup} becomes
\begin{equation*}
    -\int_{0}^{\infty} \langle A_{\perp\perp}\partial_{t}u(t,\cdot) , \partial_{t}\phi(t,\cdot)\rangle_{\El{2}}\dd t = \int_{0}^{\infty}\langle A \nabla_{\mu}^{\parallel}u(t,\cdot), \nabla_{\mu}^{\parallel}\phi(t,\cdot)\rangle_{\El{2}(\Rn;\C^{n+1})}\dd t,
\end{equation*}
which means that $u$ is a weak solution of $\mathscr{H}u=0$ in $\Hn$.
\end{proof}
\section{Estimates for Poisson semigroup solutions}\label{section estimates towards the dirichelt and regularity problems}
Henceforth we assume that $n\geq 3$, $q\geq \max\{\frac{n}{2},2\}$, $V\in\textup{RH}^{q}(\Rn)$, and as always, let $\delta=2-\frac{n}{q}$. In this section, we shall derive the important square function and non-tangential maximal function bounds on Poisson semigroup solutions which will be central in the existence part of the proofs of Theorems \ref{existence and uniqueness result Dirichlet Lp}, \ref{existence and uniqueness result Regularity Lp}.
\subsection{Dirichlet data}
We first consider estimates for the Dirichlet problem. 
\subsubsection{Square function estimates}
The following result is adapted from \cite[Proposition 17.1]{AuscherEgert}.
\begin{prop}\label{SQuare function estimate dirichlet poisson semigroup}
     Let $p\in(p_{-}(H)\vee \frac{n}{n+\delta/2}\,,\,p_{+}(H)^{*})$. If $f\in b\textup{H}^{p}_{V,\textup{pre}}$, then $u(t,\cdot)=e^{-tH^{1/2}}f$ satisfies 
    \begin{equation}\label{equation square function estimate Poisson extension p< 2}
        \norm{S\left(t\nabla_{\mu}u\right)}_{\El{p}}\eqsim \norm{b^{-1}f}_{\textup{H}^{p}_{V}}.
    \end{equation}
\end{prop}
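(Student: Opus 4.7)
The plan is to translate the square-function quantity $\|S(t\nabla_\mu u)\|_{\El{p}}$ into tent-space norms of functional-calculus extension operators, then invoke the Hardy-space identifications of Theorem~\ref{summary of identifications of H adapted Hardys spaces} together with Figure~\ref{figure relations between adapted Hardy spaces}, and finally bootstrap past $p_+(H)$ via the interior regularity of the solution.

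\textbf{Step 1: Functional-calculus representation.} By Theorem~\ref{Poisson semigroup extension is a weak solution} and the identity~\eqref{functional calculus representation of the nabla mu of u(t,)},
\begin{equation*}
t\nabla_\mu u(t,\cdot)=\begin{bmatrix}\psi(t^2 H)f\\ \bigl(\varphi(tDB)g\bigr)_{\parallel,\mu}\end{bmatrix},\qquad g=\begin{bmatrix}b^{-1}f\\ 0\end{bmatrix},
\end{equation*}
where $\psi(z)=-\sqrt{z}\,e^{-\sqrt{z}}$ (sectorial) and $\varphi(z)=-z\,e^{-[z]}$ (bisectorial). Both functions lie in $\Psi^{\alpha}_{1/2}$ for every $\alpha>0$, so $\|S(t\nabla_\mu u)\|_{\El{p}}$ is comparable to $\|\bb{Q}_{\psi,H}f\|_{\tent{p}}+\|\bb{Q}_{\varphi,DB}g\|_{\tent{p}}$.

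\textbf{Step 2: The Hardy-space range $p\in(p_-(H)\vee\tfrac{n}{n+\delta/2}\,,\,p_+(H))$.} Here one checks that the decay parameters of $\psi$ and $\varphi$ satisfy Table~\ref{decay parameters conditions bisectorial Hardy adapted space} for the relevant values of $p$, so that the tent-space norms coincide, up to equivalence, with the corresponding adapted Hardy-space norms:
$\|\bb{Q}_{\psi,H}f\|_{\tent{p}}\eqsim \|f\|_{\bb{H}^{p}_{H}}$ and $\|\bb{Q}_{\varphi,DB}g\|_{\tent{p}}\eqsim \|g\|_{\bb{H}^{p}_{DB}}$. The diagram in Figure~\ref{figure relations between adapted Hardy spaces} combined with the block identity \eqref{decomposition property of BD into H and M} and the similarity \eqref{similarity DB and BD} then gives $\|g\|_{\bb{H}^{p}_{DB}}\eqsim\|b^{-1}f\|_{\bb{H}^{p}_{\widetilde H}}\eqsim\|f\|_{\bb{H}^{p}_{H}}$. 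Finally, Corollary~\ref{characterisation abstract H^p p<2} (for $p\leq 2$) and Proposition~\ref{identification abstract hardy space H^p p>2} (for $p>2$) yield the identification $\bb{H}^{p}_{H}=b\textup{H}^{p}_{V,\textup{pre}}$, i.e.\ $\|f\|_{\bb{H}^{p}_{H}}\eqsim\|b^{-1}f\|_{\textup{H}^{p}_{V}}$, and the two-sided bound follows.

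\textbf{Step 3: The extended range $p\in[p_+(H),p_+(H)^*)$.} Beyond $p_+(H)$ the Hardy-space identification breaks down, so I would instead use that $u$ is a weak solution of $\mathscr{H}u=0$ and self-improve the preceding estimate. Choose $q\in(p_-(H)\vee\tfrac{n}{n+\delta/2}\,,\,p_+(H))$ with $p<q^{*}$; Step~2 already produces $\|S(t\nabla_\mu u)\|_{\El{q}}\eqsim\|b^{-1}f\|_{\El{q}}$ (recall that $\textup{H}^{q}_{V,\textup{pre}}=\El{q}\cap\El{2}$ for $q>1$ by Lemma~\ref{identification of Dziubanski hardy spaces for p>=1}). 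For the upper bound, apply Caccioppoli's inequality \eqref{Caccioppoli's inequality} on Whitney boxes to dominate $L^{2}$-averages of $\nabla_\mu u$ by averages of $u$, then the reverse Hölder estimate \eqref{reverse holder property weak solutions} to upgrade these to $L^{2^{*}_+}$-averages, and finally a Fefferman--Stein/Hardy--Littlewood maximal argument to transfer an $\El{q}$-bound into an $\El{p}$-bound via a Sobolev embedding argument on $u=e^{-tH^{1/2}}f$. For the reverse inequality, dualize using that $H^{\sharp}$ (Section~\ref{subsubsection on adjoints and similarity}) satisfies the same hypotheses, so the same equivalence at the conjugate exponent transfers back by the Calderón--McIntosh reproducing formula and the duality pairing \eqref{estimate duality in tent spaces}.

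\textbf{Main obstacle.} The delicate part is Step~3: the extrapolation from the Hardy-space range $(p_-(H)\vee\tfrac{n}{n+\delta/2},p_+(H))$ up to the Sobolev endpoint $p_+(H)^{*}$. The difficulty is that Caccioppoli and reverse Hölder control $\nabla_x u$ naturally, but the full $V$-adapted gradient $\nabla_\mu u$ also involves the term $V^{1/2}u$, which must be treated by the Fefferman--Phong inequality (Proposition~\ref{improved Fefferman--Phong inequality}); this is precisely where the hypothesis $V\in\textup{RH}^{q}(\Rn)$ with $q\geq\max\{n/2,2\}$ is essential. Moreover, making the bootstrap consistent with the tent-space formulation, and in particular obtaining the reverse inequality without losing in the exponent, requires careful duality bookkeeping using the similarity of $H$ and $H^{\sharp}$.
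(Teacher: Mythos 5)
Your Step 1 and the $p\le 2$ part of Step 2 match the paper exactly. The rest has a genuine gap.

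The problem is your claim in Step 2 that $\|\bb{Q}_{\psi,H}f\|_{\tent{p}}\eqsim\|f\|_{\bb{H}^{p}_{H}}$ and $\|\bb{Q}_{\varphi,DB}g\|_{\tent{p}}\eqsim\|g\|_{\bb{H}^{p}_{DB}}$ for all $p<p_{+}(H)$. The auxiliary functions only satisfy $\psi\in\Psi^{\infty}_{1/2}$ and $\varphi\in\Psi^{\infty}_{1}$, i.e.\ they have decay of order $1/2$ (resp.\ $1$) at the origin. For $p\geq 2$ and $s=0$, Table~\ref{decay parameters conditions bisectorial Hardy adapted space} requires decay at the origin exceeding $\frac{n}{4}-\frac{n}{2p}$ (sectorial) resp.\ $\frac{n}{2}-\frac{n}{p}$ (bisectorial), and both conditions fail precisely when $p\geq 2^{*}$. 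Since Corollary~\ref{improvement on property of interval J(H)} guarantees $p_{+}(H)>2^{*}$, the interval $[2^{*},p_{+}(H))$ on which your admissibility check fails is always nonempty, so Step 2 does not cover its advertised range. The paper instead treats all of $p\in(2,p_{+}(H)^{*})$ at once: it writes $e^{-\sqrt{z}}=\phi(z)+(1+z)^{-2}$ with $\phi(z)=e^{-\sqrt{z}}-(1+z)^{-2}\in\Psi^{2}_{1/2}$, observes that $v(t,\cdot)=\phi(t^{2}H)f$ is a weak solution of an \emph{inhomogeneous} equation $\mathscr{H}v=g$ with $g=-t^{-2}A_{\perp\perp}\eta(t^{2}H)f$, applies Caccioppoli's inequality~\eqref{Caccioppoli's inequality} on Whitney boxes to reduce $S(t\nabla_{\mu}u)$ to conical square functions of $\phi(t^{2}H)f$, $\psi(t^{2}H)f$ and $t\nabla_{\mu}(1+t^{2}H)^{-2}f$, and then invokes the limited-decay conical square function estimate \cite[Theorem 9.22]{AuscherEgert} with $\sigma=1/2$, which is exactly calibrated to produce the range $2\le p<p_{+}(H)^{*}$. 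Your Step 3 gestures at Caccioppoli and reverse Hölder but does not identify this mechanism; the ``Fefferman--Stein maximal function plus Sobolev embedding'' transfer you describe is not the argument that closes the gap, and your concern about needing Fefferman--Phong to handle $V^{1/2}u$ is moot since \eqref{Caccioppoli's inequality} already controls the full $\nabla_{\mu}u$ including that component.

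There is a second, subtler issue with your reverse inequality for $p>2$. Dualizing against the Poisson semigroup of $H^{\sharp}$ would require the already-established case at the conjugate exponent $p'$, i.e.\ $p'>p_{-}(H^{\sharp})\vee\frac{n}{n+\delta/2}$; since $p_{-}(H^{\sharp})$ is dual to $p_{+}(H)$, this only covers $p<p_{+}(H)$ and misses $[p_{+}(H),p_{+}(H)^{*})$. The paper avoids this by pairing $\langle A_{\perp\perp}e^{-tH^{1/2}}f,e^{-t\widetilde{H}_{0}^{1/2}}g\rangle$ with the \emph{auxiliary} operator $\widetilde{H}_{0}=b^{*}H_{0}$, for which $p_{-}(\widetilde{H}_{0})\vee 1=1$, so the $p'\in(1,2]$ case applies for every $p\in(2,\infty)$. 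You would need to make this choice of dual operator explicit for your Step 3 to close.
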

\begin{proof}
    We first treat the case $p_{-}(H)\vee \frac{n}{n+\delta/2}<p\leq 2$. We note that the functions $\psi$ and $\varphi$ in \eqref{functional calculus representation of the nabla mu of u(t,)} are such that $\psi\in\Psi^{\infty}_{1/2}$ on any sector, and $\varphi\in\Psi^{\infty}_{1}$ on any bisector. Moreover, the functions $\psi$ and $\varphi$ are admissible for the definition of the adapted spaces $\bb{H}^{p}_{H}$ and $\bb{H}_{DB}^{p}$, respectively. By Corollary~\ref{characterisation abstract H^p p<2}, the assumption on $p$ implies that $\bb{H}^{p}_{H}=b\textup{H}^{p}_{V,\textup{pre}}$ and thus for $f\in b\textup{H}^{p}_{V,\textup{pre}}$ we have
\begin{equation*}
\norm{b^{-1}f}_{\textup{H}^{p}_{V}}\eqsim\norm{f}_{\bb{H}^{p}_{H}}\eqsim\norm{S\left(\psi(t^{2}H)f\right)}_{\El{p}}= \norm{S\left(t\partial_{t}u(t,\cdot)\right)}_{\El{p}}\leq \norm{S\left(t\nabla_{\mu}u\right)}_{\El{p}},
\end{equation*}
and if $h=\begin{bmatrix}
        b^{-1}f\\
        0
    \end{bmatrix}$, then
\begin{align*}
\norm{S\left(t\nabla_{\mu}^{\parallel}u(t,\cdot)\right)}_{\El{p}}&
    \leq \norm{S\left(\varphi(tDB)h\right)}_{\El{p}}\eqsim \norm{\begin{bmatrix}
        b^{-1}f\\
        0
    \end{bmatrix}}_{\bb{H}^{p}_{DB}}\eqsim \norm{b^{-1}f}_{\bb{H}^{p}_{\widetilde{H}}}
    \eqsim \norm{f}_{\bb{H}^{p}_{H}}\eqsim \norm{b^{-1}f}_{\textup{H}^{p}_{V}},
\end{align*}
where we have used the decomposition $\bb{H}^{p}_{DB}=\bb{H}^{p}_{\widetilde{H}}\oplus\bb{H}^{p}_{\widetilde{M}}$ from Section~\ref{subsubsection on mapping properties ofa dapted hardy spaces}, and the similarity between $H$ and $\widetilde{H}$ from Section~\ref{subsubsection on adjoints and similarity}.
Those two estimates show that (\ref{equation square function estimate Poisson extension p< 2}) holds.

Next, we prove the upper bound in \eqref{equation square function estimate Poisson extension p< 2} for $2<p<p_{+}(H)^{*}$. Consider the function $\phi(z):=e^{-\sqrt{z}}-(1+z)^{-2}$ and define $v(t,\cdot):=\phi(t^{2}H)f$ for all $t>0$, so we have 
\begin{equation*}
    A_{\perp\perp}\partial_{t}^{2}v-\mathscr{H}v(t,\cdot)=t^{-2}A_{\perp\perp}\eta(t^{2}H)f,
\end{equation*}
where $\eta(z):=z(1+z)^{-2}+4z(1+z)^{-3}-24z^{2}(1+z)^{-4}$. Note that $\eta\in\Psi^{1}_{1}$ on any sector. 
If $g(t,x):=-t^{-2}A_{\perp\perp}(x)\left(\eta(t^{2}H)f\right)(x)$, then $g\in\El{2}_{\loc}(\Hn)$, and the argument of the proof of Theorem~\ref{Poisson semigroup extension is a weak solution} implies that $v\in\mathcal{V}^{1,2}_{\loc}(\Hn)$ is a weak solution of $\mathscr{H}v=g$ in $\Hn$.  Following the argument of the proof of \cite[Proposition 17.1]{AuscherEgert}, applying Caccioppoli's inequality \eqref{Caccioppoli's inequality} gives
\begin{equation*}
    \norm{S\left(t\nabla_{\mu}u\right)}_{\El{p}}\lesssim \norm{S\left(\phi(t^{2}H)f\right)}_{\El{p}}+\norm{S\left(\psi(t^{2}H)f\right)}_{\El{p}} + \norm{S\left(t\nabla_{\mu}(1+t^{2}H)^{-2}f\right)}_{\El{p}}.
\end{equation*}
Since $\phi\in\Psi^{2}_{1/2}$ and $\eta\in\Psi^{2}_{1}$ on any sector, applying \cite[Theorem 9.22]{AuscherEgert} with $\sigma =1/2$ gives
\begin{equation*}
\norm{S\left(\phi(t^{2}H)f\right)}_{\El{p}}+\norm{S\left(\psi(t^{2}H)f\right)}_{\El{p}}\lesssim \norm{f}_{\El{p}},
\end{equation*}
for all $2\leq p<\frac{np_{+}(H)}{n-p_{+}(H)}=p_{+}(H)^{*}$, with the understanding that $p_{+}(H)^{*}=\infty$ if $p_{+}(H)\geq n$. 
The remaining estimate, 
$\norm{S\left(t\nabla_{\mu}(1+t^{2}H)^{-2}f\right)}_{\El{p}}\lesssim \norm{f}_{\El{p}}$,
holds for any $p\in [2,\infty)$ and is obtained as in the proof of \cite[Proposition 17.1]{AuscherEgert}, by applying \cite[Remark 9.9]{AuscherEgert} and (\ref{Kato estimate}).

Finally, we prove the lower bound for $2<p<p_{+}(H)^{*}$. We follow the duality argument used in Step 3 of the proof of \cite[Proposition 17.1]{AuscherEgert} by introducing the adapted Schrödinger operator $\widetilde{H}_{0}:=b^{*}H_{0}$. Let $f\in\El{p}\cap\El{2}$ and $g\in\El{p'}\cap\El{2}$, and consider the infinitely differentiable function 
\begin{equation*}
    \phi : (0,\infty)\to \C : t\mapsto \langle A_{\perp\perp}e^{-tH^{1/2}}f , e^{-t\widetilde{H}_{0}^{1/2}}g\rangle_{\El{2}}.
\end{equation*}
The smoothness of $\phi$ follows from the properties of holomorphic semigroups on $\El{2}$.
Moreover, properties of the holomorphic functional calculus, and in particular the bounded $\textup{H}^{\infty}$-calculus of $H$ and $\widetilde{H}_{0}$, justify the identity 
\begin{align*}
    \langle A_{\perp\perp}f,g\rangle_{\El{2}}=\lim_{\eps\to 0^{+}}\int_{\eps}^{1/\eps}t^{2}\phi''(t)\frac{\dd t}{t}.
\end{align*}
Following the proof of \cite[Proposition 17.1]{AuscherEgert}, replacing $\nabla_x$ with $\nabla_{\mu}$ and $L$ with $H$, and using that $\nabla_{\mu}=\begin{bmatrix}
    \partial_{t}\\
    \nabla_{\mu}^{\parallel}
\end{bmatrix}$, we get $\abs{\langle A_{\perp\perp}f,g\rangle_{\El{2}}}\lesssim \norm{S\left(t\nabla_{\mu}e^{-tH^{1/2}}f\right)}_{\El{p}}\norm{S\left(t\nabla_{\mu}e^{-t\widetilde{H}_{0}^{1/2}}g\right)}_{\El{p'}}$.
Now, note that $p_{-}(\widetilde{H}_{0})\vee 1 =p_{-}(H_0)\vee 1=1$. Indeed, the first equality is proved exactly as in \cite[Theorem 6.9]{AuscherEgert}, and the second equality follows from the $\El{p}$-boundedness of the heat semigroup $\set{e^{-tH_{0}}}_{t>0}$ in \eqref{uniform boundedness heat semigroup schrodinger on Lp for all p} for all $p\in[1,\infty]$.  We can therefore apply \eqref{equation square function estimate Poisson extension p< 2} to $\widetilde{H}_{0}$ and $p'\in(1,2]$ to get
\begin{equation*}
    \abs{\langle A_{\perp\perp}f,g\rangle_{\El{2}}}\lesssim \norm{S\left(t\nabla_{\mu}e^{-tH^{1/2}}f\right)}_{\El{p}}\norm{g}_{\El{p'}}.
\end{equation*}
Since $g\in\El{p'}\cap\El{2}$ was arbitrary, this concludes the proof.
\end{proof}
\subsubsection{Non-tangential maximal functions estimates}
We next turn to the non-tangential maximal function bounds. We need the following result from the $\El{2}$-theory, which is an elaboration of \cite[Theorem 5.8, Proposition 5.9]{MorrisTurner}.
\begin{thm}\label{non tangential function L2 estimates Morris Turner}
   Let $T\in\set{BD,DB}$. Then, for all $f\in\clos{\ran{T}}$ it holds that  
    \begin{equation}\label{non tangential maximal function estimate abstract bisectorial functional calculus}
        \norm{N_{*}\left(e^{-t[T]}f\right)}_{\El{2}}\eqsim \norm{f}_{\El{2}}
    \end{equation}
    and 
    \begin{equation}\label{non tangential convergence of Whitney averages towards the boundary data in first order bisectorial functional calculus}
        \lim_{t\to 0^{+}}\fiint_{W(t,x)}\abs{e^{-s[T]}f - f(x)}^{2}\dd s\dd y =0
    \end{equation}
    for almost every $x\in\Rn$.
\end{thm}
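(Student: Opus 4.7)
The plan is to redevelop \cite[Theorem~5.8 and Proposition~5.9]{MorrisTurner} in a self-contained way, resting on three ingredients: (i) $\El{2}$ off-diagonal estimates of arbitrarily large order for the family $\{e^{-t[T]}\}_{t>0}$, (ii) strong $\El{2}$-continuity of $\{e^{-t[T]}\}_{t\geq 0}$ on $\clos{\ran{T}}$, and (iii) a dense subspace of $\clos{\ran{T}}$ on which the non-tangential convergence~\eqref{non tangential convergence of Whitney averages towards the boundary data in first order bisectorial functional calculus} can be verified by hand. For (i), I would write $e^{-t[T]}$ as a Cauchy-type contour integral in the resolvents $\{(1-izT)^{-1}\}$ along a suitable contour skirting the spectrum, which is justified by the bounded $\textup{H}^{\infty}$-calculus of $T$ on $\clos{\ran{T}}$ from Theorem~\ref{bounded H_infty functional calculus of H}. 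Combining this representation with the exponential-order $\El{2}$ off-diagonal estimates for the resolvents recorded in Section~\ref{subsubsection on mapping properties ofa dapted hardy spaces} yields $\El{2}$ off-diagonal estimates of arbitrarily large order for $\{e^{-t[T]}\}_{t>0}$, uniform in $t>0$.

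The upper bound in~\eqref{non tangential maximal function estimate abstract bisectorial functional calculus} then follows by Whitney averaging: for $(t,x)\in\Hn$, decomposing $f=\sum_{j\geq 1}f\ind{C_{j}(B(x,t))}$, Minkowski's inequality and off-diagonal decay of order $\gamma>n/2$ yield the pointwise bound
\begin{equation*}
\left(\fiint_{W(t,x)}\abs{e^{-s[T]}f(y)}^{2}\dd s\dd y\right)^{\!1/2}\lesssim \sum_{j\geq 1}2^{-j\gamma}\left(\dashint_{2^{j+1}B(x,t)}\abs{f}^{2}\right)^{\!1/2}\!\!\lesssim\bigl(\hlmax{\abs{f}^{2}}(x)\bigr)^{1/2},
\end{equation*}
and the Hardy--Littlewood maximal theorem gives $\norm{N_{*}(e^{-t[T]}f)}_{\El{2}}\lesssim\norm{f}_{\El{2}}$.

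For~\eqref{non tangential convergence of Whitney averages towards the boundary data in first order bisectorial functional calculus}, I would first establish it for $f\in\dom{[T]}\cap\clos{\ran{T}}$, which is dense in $\clos{\ran{T}}$. For such $f$, the identity $e^{-s[T]}f-f=-\int_{0}^{s}[T]e^{-r[T]}f\,\dd r$ yields $\norm{e^{-s[T]}f-f}_{\El{2}}\leq s\norm{[T]f}_{\El{2}}$, which combined with Lebesgue differentiation gives the conclusion at every Lebesgue point of $f$. To pass to arbitrary $f\in\clos{\ran{T}}$, the standard device is to introduce the boundary oscillation
\begin{equation*}
\mathcal{O}f(x):=\limsup_{t\to 0^{+}}\left(\fiint_{W(t,x)}\abs{e^{-s[T]}f(y)-f(x)}^{2}\dd s\dd y\right)^{1/2},
\end{equation*}
and to bound it subadditively by $N_{*}(e^{-t[T]}f)(x)+\bigl(\hlmax{\abs{f}^{2}}(x)\bigr)^{1/2}$. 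By the upper bound just proved and the maximal theorem, $\mathcal{O}$ is of weak type $(2,2)$ in $f$, and since $\mathcal{O}\equiv 0$ on the dense subspace, a routine approximation argument forces $\mathcal{O}f\equiv 0$ almost everywhere for every $f\in\clos{\ran{T}}$. The lower bound in~\eqref{non tangential maximal function estimate abstract bisectorial functional calculus} is then immediate, because~\eqref{non tangential convergence of Whitney averages towards the boundary data in first order bisectorial functional calculus} forces $\abs{f(x)}\leq N_{*}(e^{-t[T]}f)(x)$ for almost every $x\in\Rn$.

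The main obstacle will be item (i): upgrading the exponential-order off-diagonal estimates for resolvents to arbitrarily large polynomial order for the Poisson semigroup requires a careful choice of contour so that the factor $e^{-t\abs{z}}$ in the Cauchy integrand produces decay strong enough to beat the polynomial loss one pays when controlling $\norm{\ind{F}(1-izT)^{-1}\ind{E}}_{\El{2}\to\El{2}}$ by the distance $\dist(E,F)$ divided by $\abs{z}$. Once (i) is in hand, both the upper bound and the weak-type control needed in the density step for non-tangential convergence follow by the standard techniques sketched above, and the lower bound is immediate.
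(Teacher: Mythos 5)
Your argument for the upper bound in~\eqref{non tangential maximal function estimate abstract bisectorial functional calculus} has a genuine gap, and it is exactly at the point where you invoke the maximal theorem. The pointwise bound
\begin{equation*}
N_{*}\bigl(e^{-t[T]}f\bigr)(x)\;\lesssim\;\bigl(\hlmax{\abs{f}^{2}}(x)\bigr)^{1/2}
\end{equation*}
is indeed a consequence of $\El{2}$ off-diagonal estimates of order $\gamma>n/2$, but it does \emph{not} imply $\norm{N_{*}(e^{-t[T]}f)}_{\El{2}}\lesssim\norm{f}_{\El{2}}$: one would need
\begin{equation*}
\norm{(\hlmax{\abs{f}^{2}})^{1/2}}_{\El{2}}^{2}=\norm{\hlmax{\abs{f}^{2}}}_{\El{1}}\lesssim\norm{\abs{f}^{2}}_{\El{1}},
\end{equation*}
i.e.\ $\El{1}$-boundedness of the Hardy--Littlewood maximal operator, which fails (indeed $\hlmax{g}$ is never integrable for nonzero $g\geq 0$). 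The pointwise bound you derive only yields $\El{p}$-boundedness of $N_{*}\circ e^{-t[T]}$ for $p>2$, because then $\abs{f}^{2}\in\El{p/2}$ with $p/2>1$. To get the endpoint $p=2$ one must first improve the $\El{2}$-Whitney average in $N_{*}$ to an $\El{r}$-Whitney average with $r<2$, so that $\abs{f}^{r}\in\El{2/r}$ with $2/r>1$. This improvement is not available from off-diagonal decay alone; it comes from the observation that $F(t,\cdot)=e^{-t[T]}f$ is the conormal gradient of a weak solution of the underlying PDE, which therefore satisfies a reverse Hölder inequality on Whitney boxes. This is the route taken in the paper: for $f$ in the spectral subspace $\mathcal{H}^{+}_{DB}$ one invokes \cite[Theorem~5.8]{MorrisTurner} directly; for $f\in\mathcal{H}^{-}_{DB}$ one reflects to the lower half-space to obtain a weak solution there and applies the reverse Hölder inequality \eqref{reverse holder property weak solutions}; and for $T=BD$ one transfers the result by similarity via the bounded multiplication operator $B$.

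A secondary, less serious, point: your plan (i) asks for $\El{2}$ off-diagonal estimates of \emph{arbitrarily large} order for $\{e^{-t[T]}\}_{t>0}$, but Poisson semigroups of second-order operators have off-diagonal decay of fixed polynomial order (roughly $n+1$; compare the classical Poisson kernel), not arbitrarily large. Fortunately your application only requires order $\gamma>n/2$, which does hold, so this does not add a new obstruction. Your weak-type device for the non-tangential convergence \eqref{non tangential convergence of Whitney averages towards the boundary data in first order bisectorial functional calculus}, and the derivation of the lower bound in \eqref{non tangential maximal function estimate abstract bisectorial functional calculus} from it, are sound once the upper bound is in hand.
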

\begin{proof}
Let us remark that the reference \cite[Theorem 5.8]{MorrisTurner} only proves the non-tangential maximal function estimate (\ref{non tangential maximal function estimate abstract bisectorial functional calculus}) for $T=DB$ and $f\in \mathcal{H}^{+}_{DB}$, a spectral subspace of $\clos{\ran{DB}}$. Let us first discuss how the proof can be adapted to cover the general case $f\in\clos{\ran{DB}}$. 

First, we note that the lower bound in (\ref{non tangential maximal function estimate abstract bisectorial functional calculus}) follows from exactly the same argument as in the start of the proof of \cite[Proposition 5.8]{MorrisTurner}. In fact, let  $f\in\clos{\ran{DB}}$ and $F(t)=e^{-t[DB]}f$ for all $t>0$. Since $\lim_{t\to 0^{+}}\norm{F(t)-f}_{\El{2}}=0$, it follows from \cite[Lemma 5.7]{MorrisTurner} that
\begin{align*}
    \norm{f}_{\El{2}}^{2}=\lim_{t\to 0^{+}}\dashint_{t}^{2t}\norm{F(s)}^{2}_{\El{2}}\dd s\lesssim \norm{N_{*}F}_{\El{2}}^{2}
\end{align*}
as claimed. Let us now focus on the reverse estimate. We recall that the spectral subspaces $\mathcal{H}^{+}_{DB}$,$\mathcal{H}^{-}_{DB}\subset \clos{\ran{DB}}$ are defined using the bounded $\textup{H}^{\infty}$-calculus of $DB$ as $\mathcal{H}^{\pm}_{DB}:=\set{\chi^{\pm}(DB)f \;:\; f\in\clos{\ran{DB}}}$,
where $\chi^{\pm}: S_{\pi/2}\to \C$ are defined as 
\begin{equation*}
    \chi^{\pm}(z)=\begin{cases}
        1, & \textup{ if }\pm\Re(z)>0;\\
        0, & \textup{ if }\pm \Re(z)<0,
    \end{cases}
\end{equation*}
for all $z\in S_{\pi/2}$. As in \cite[Section~4.2]{MorrisTurner} we introduce the Hilbert space $\mathcal{H}:=\clos{\ran{DB}}=\clos{\ran{D}}$. There is a topological direct sum decomposition $\mathcal{H}=\mathcal{H}^{+}_{DB}\oplus\mathcal{H}^{-}_{DB}$. 

For $f\in\mathcal{H}^{-}_{DB}$, the function $F\in \El{2}_{\loc}([0,\infty); \mathcal{H}^{-}_{DB})\cap C^{\infty}((0,\infty);\mathcal{H})$ defined as $F(t):=e^{-t[DB]}f$ satisfies the first-order equation $\partial_{t}F(t)-DBF(t)=0$, for all $t>0$, in the strong sense in $\El{2}(\Rn ; \C^{n+2})$ (see the proof of \cite[Proposition 4.4]{MorrisTurner}). Let us consider the reflection $G\in \El{2}_{\loc}((-\infty , 0]); \mathcal{H}^{-}_{DB})\cap C^{\infty}((-\infty , 0);\mathcal{H})$ defined as $G(t)=F(-t)$ for all $t\leq 0$. This new function $G$ satisfies the first-order equation $\partial_{t}G(t)+DBG(t)=0$ for all $t<0$, in the strong sense. Using $t$-independence of the coefficients $A$, $a$ and $V$, and by using the argument of the proof of \cite[Proposition 4.3 (2)]{MorrisTurner}, we can obtain a function $u\in\mathcal{V}^{1,2}_{\loc}(\R^{1+n}_{-})$, defined on the lower half-space $\R^{1+n}_{-}$, such that $G=\overline{\mathcal{A}}\nabla_{\mu}u$ and $\mathscr{H}u=0$ in $\R^{1+n}_{-}.$ The notation $\overline{\mathcal{A}}$ is introduced at the start of \cite[Section~4.1]{MorrisTurner}. 

Let us now consider an arbitrary Whitney cube $W:=(t,2t)\times Q(x,t)$ with $(t,x)\in \Hn$, and its reflection $-W:=(-2t,-t)\times Q(x,t)$. Let us now choose $p<2$ as in the proof of \cite[Theorem 5.8]{MorrisTurner}. We can therefore apply the reverse Hölder inequality \cite[Proposition 5.3]{MorrisTurner} in $\R^{1+n}_{-}$ to obtain, 
\begin{align*}
    \fiint_{W}\abs{F}^{2} &= \fiint_{-W}\abs{G}^{2} =\fiint_{-W} \abs{\overline{\mathcal{A}}\nabla_{\mu}u}^{2}\lesssim \fiint_{-W} \abs{\nabla_{\mu}u}^{2}\lesssim \left(\fiint_{-2W}\abs{\nabla_{\mu}u}^{p}\right)^{\frac{2}{p}}\\
&\lesssim \left(\fiint_{-2W}\abs{\overline{\mathcal{A}}\nabla_{\mu}u}^{p}\right)^{\frac{2}{p}}=\left(\fiint_{-2W}\abs{G}^{p}\right)^{\frac{2}{p}}=\left(\fiint_{2W}\abs{F}^{p}\right)^{\frac{2}{p}}.
\end{align*}
We can then follow the rest of the proof of \cite[Theorem 5.8]{MorrisTurner} to obtain the estimate $\norm{N_{*}F}_{\El{2}}^{2}\lesssim \norm{f}_{\El{2}}^{2}$. The desired conclusion \eqref{non tangential maximal function estimate abstract bisectorial functional calculus} for a general $f\in\mathcal{H}$ then follows from the topological decomposition $\mathcal{H}=\mathcal{H}^{+}_{DB}\oplus \mathcal{H}^{-}_{DB}$.

The result for $T=BD$ (more precisely, its injective part, i.e. its restriction to $\clos{\ran{BD}}$) and $f\in\clos{\ran{BD}}$ can be obtained as follows. First, note that the lower bound $\norm{N_{*}\left(e^{-t[T]}f\right)}_{\El{2}}\gtrsim \norm{f}_{\El{2}}$ follows from exactly the same argument as in the $DB$ case. Next, we can observe that if $f\in \clos{\ran{BD}}$, then $g:=(B|_{\clos{\ran{DB}}})^{-1}f\in\clos{\ran{DB}}$ with $\norm{f}_{\El{2}}\eqsim\norm{g}_{\El{2}}$, and the similarity (\ref{similarity DB and BD}) between $DB$ and $BD$ implies that $e^{-t[BD]}f=Be^{-t[DB]}g$ for all $t>0$. Consequently, 
\begin{align*}
    \norm{N_{*}\left(e^{-t[BD]}f\right)}_{\El{2}}&=\norm{N_{*}\left(Be^{-t[DB]}g\right)}_{\El{2}}\lesssim\norm{N_{*}\left(e^{-t[DB]}g\right)}_{\El{2}}\lesssim \norm{g}_{\El{2}}\eqsim \norm{f}_{\El{2}}
\end{align*}
follows from the boundedness of $B$ and the result for $DB$.

Similarly, we should note that the reference \cite[Proposition 5.9]{MorrisTurner} only proves the non-tangential convergence (\ref{non tangential convergence of Whitney averages towards the boundary data in first order bisectorial functional calculus}) for $T=DB$. However, the similarity argument above allows us to deduce the result for $BD$ from that of $DB$. Indeed, we can use boundedness of the multiplication operator $B$ and the triangle inequality to obtain, for arbitrary $t>0$ and $x\in\Rn$,
\begin{align*}
    &\fiint_{W(t,x)}\abs{e^{-s[BD]}f-f(x)}^{2}\dd s\dd y=\fiint_{W(t,x)}\abs{Be^{-s[DB]}g-(Bg)(x)}^{2}\dd s\dd y\\
    &\lesssim \fiint_{W(t,x)}\abs{(e^{-s[DB]}g)(y)-g(x)}^{2}\dd s\dd y + \dashint_{B(x,t)}\abs{g(x)-g(y)}^{2}\dd y \\
    &+ \dashint_{B(x,t)}\abs{(Bg)(y)-(Bg)(x)}^{2}\dd y.
\end{align*}
Since $g$ and $ Bg=f\in\El{2}(\Rn;\C^{n+2})$, the conclusion follows from Lebesgue's differentiation theorem and the result for $DB$.
\end{proof}
In order to prove the last part of Proposition~\ref{estimate non tangential maximal function on poisson extension solution} below, we shall need the following lemma.
\begin{lem}\label{lemma L^infty estimate nabla mu from L^infty estimate on H_0}
    Let $V\in\textup{RH}^{\infty}(\Rn)$, $x\in\Rn$ and $t>0$. If $g\in\dom{H_{0}}$ and $\supp{g}\subseteq B(x,t)$ with $\norm{H_{0}g}_{\El{\infty}}\lesssim t^{-n-2}$, then $\norm{\nabla_{\mu}g}_{\El{\infty}}\lesssim t^{-n-1}$.
\end{lem}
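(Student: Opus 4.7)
The plan is to represent $g$ via the Green's function of $H_{0}$ and exploit pointwise kernel estimates refined by the Shen-type critical radius $\rho(\cdot,V)$. Let $f:=H_{0}g$, so $\|f\|_{\El{\infty}}\lesssim t^{-n-2}$; since $g$ and $Vg$ vanish outside $B(x,t)$, so does $f$. Because $n\geq 3$, the operator $H_{0}$ admits a Green's function $\Gamma(y,z)$ on $\Rn$ (well-defined as $\int_{0}^{\infty}k_{s}(y,z)\,\dd s$ from the heat kernel), and the representation
\[
g(y)=\int_{B(x,t)}\Gamma(y,z)f(z)\,\dd z
\]
holds for a.e.\ $y\in\Rn$. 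Integrating the bounds of Lemma~\ref{improved gaussian bound on heat kernel of schrodinger operator} in $s$ yields, for every $N>0$ (cf.~\cite{Shen_Schrodinger}):
\[
\Gamma(y,z)\leq C_{N}|y-z|^{2-n}\bigl(1+|y-z|/\rho(y)\bigr)^{-N},\qquad |\nabla_{y}\Gamma(y,z)|\leq C_{N}|y-z|^{1-n}\bigl(1+|y-z|/\rho(y)\bigr)^{-N}.
\]

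The gradient bound on $g$ is then immediate from the elementary estimate $\int_{B(x,t)}|y-z|^{1-n}\,\dd z\lesssim t$ uniformly in $y\in\Rn$: indeed, $|\nabla_{x}g(y)|\leq \|f\|_{\El{\infty}}\int_{B(x,t)}|y-z|^{1-n}\,\dd z\lesssim t\|f\|_{\El{\infty}}\lesssim t^{-n-1}$.

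The main step is the bound on $V^{1/2}g$. The key observation is that $V\in\textup{RH}^{\infty}$ combined with the definition of $\rho$ yields the pointwise estimate $V^{1/2}(y)\lesssim \rho(y)^{-1}$: indeed, $V(y)\leq \|V\|_{\El{\infty}(B(y,\rho(y)))}\lesssim \dashint_{B(y,\rho(y))}V\leq \rho(y)^{-2}$, where the last inequality uses the critical radius definition. Combined with the Green's function bound, for $y\in B(x,t)$ I will split the integral $\int_{B(x,t)}\Gamma(y,z)\,\dd z$ according to whether $|y-z|\leq \rho(y)$ or $|y-z|>\rho(y)$. The first region contributes $\lesssim(\rho(y)\wedge t)^{2}$, since the $N$-factor is harmless there; the second region (empty if $t\leq \rho(y)$) contributes $\lesssim \rho(y)^{2}$ via polar coordinates with $N>2$. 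Multiplying by $V^{1/2}(y)\lesssim \rho(y)^{-1}$ gives $V^{1/2}(y)\int_{B(x,t)}\Gamma(y,z)\,\dd z\lesssim t$ uniformly, and hence $\|V^{1/2}g\|_{\El{\infty}}\lesssim t\|f\|_{\El{\infty}}\lesssim t^{-n-1}$.

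The main obstacle is the delicate splitting of the integral: one must balance the $\rho(y)^{-1}$ weight from $V^{1/2}$ against the built-in $\rho(y)$-decay of the Green's function to land on the sharp $t^{-n-1}$ scaling for arbitrary $t>0$. The use of $V\in\textup{RH}^{\infty}$ (rather than merely $\textup{RH}^{q}$ for some $q\geq n/2$) is essential to obtain the pointwise bound $V^{1/2}\lesssim \rho^{-1}$.
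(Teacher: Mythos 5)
Your argument is correct and follows essentially the same route as the paper: represent $g$ through the fundamental solution of $H_{0}$, invoke Shen's pointwise bounds on $\Gamma$ and $\nabla\Gamma$, and use $V\in\textup{RH}^{\infty}$ together with the definition of $\rho$ to get $V^{1/2}\lesssim\rho^{-1}$; your splitting of $\int_{B(x,t)}\Gamma(y,z)\,\dd z$ at $|y-z|=\rho(y)$ is just a repackaging of the paper's pointwise bound $V^{1/2}(y)\,\Gamma(y,z)\lesssim|y-z|^{1-n}$. One small imprecision: the gradient estimate $|\nabla_{y}\Gamma(y,z)|\lesssim|y-z|^{1-n}$ does \emph{not} follow by integrating Lemma~\ref{improved gaussian bound on heat kernel of schrodinger operator} in $s$, since that lemma contains no derivative bounds on the heat kernel; it must be quoted directly from Shen (valid since $V\in\textup{RH}^{q}$ for $q\geq n$ here), as your parenthetical citation suggests.
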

\begin{proof}
    Let $L=\set{(x,y)\in\Rn\times\Rn : x=y}$ and let $\Gamma^{V}:(\Rn\times\Rn)\setminus L \to \R$ denote the fundamental solution of the operator $H_{0}=-\Delta +V$ on $\Rn$. We refer to \cite[Section~5]{MayborodaPoggi_ExponDecayEstimates} for its construction and important properties, including that $\Gamma^{V}(\cdot,y)\in\El{1}_{\loc}(\Rn)\cap\El{\infty}_{\loc}(\Rn\setminus\set{y})$ for all $y\in\Rn$. Since $H_{0}g\in\El{\infty}_{c}(\Rn)$, it follows from \cite[Theorem 5.17]{MayborodaPoggi_ExponDecayEstimates} that 
    \begin{align*}
        g(z)=\int_{\Rn}\Gamma^{V}(z,y)\left(H_{0}g\right)(y)\dd y
    \end{align*}
    for almost every $z\in\Rn$. Also, as $V\in\textup{RH}^{n}(\Rn)$, \cite[Theorem 2.7, Remark 4.9]{Shen_Schrodinger} shows that 
    \begin{align*}
        \abs{\Gamma^{V}(z,y)}\lesssim (1+\abs{z-y}m(z))^{-1}\abs{z-y}^{-(n-2)},\hspace{1cm}
        \abs{\nabla_{z}\Gamma^{V}(z,y)}\lesssim\abs{z-y}^{-(n-1)},
    \end{align*}
    for all $y,z\in\Rn$ with $y\neq z$. Since $V\in\textup{RH}^{\infty}(\Rn)$, the first estimate above can be combined with the inequality $V(z)\lesssim \rho(z)^{-2}$ (see \eqref{Shen eqn control of integral of potential by the critical radius function}) to obtain $\abs{V^{1/2}(z)\Gamma^{V}(z,y)}\lesssim \abs{z-y}^{-(n-1)}$
    for all $y,z\in\Rn$ with $y\neq z$. Altogether, for all $z\in B(x,t)$, we have
    \begin{align*}
        \abs{\nabla_{\mu}g(z)}
        \lesssim \int_{B(x,t)} \abs{z-y}^{-(n-1)}t^{-n-2}\dd y
        \leq t^{-n-2}\int_{B(z,2t)}\abs{z-y}^{-(n-1)}\dd y
        \lesssim t^{-n-2}t=t^{-n-1}.
    \end{align*}
    Since $\nabla_{\mu}g$ is supported in $B(x,t)$, this finishes the proof.
\end{proof}
We have the following proposition, the proof of which is adapted from \cite[Proposition 17.3]{AuscherEgert}.
\begin{prop}\label{estimate non tangential maximal function on poisson extension solution}
    Let $p\in (p_{-}(H)\vee \frac{n}{n+\delta/2}\,,\,p_{+}(H)^{*})$. If $f\in b\textup{H}^{p}_{V,\textup{pre}}$, then $u(t,\cdot):=e^{-tH^{1/2}}f$ satisfies the estimate $\norm{N_{*}u}_{\El{p}}\lesssim \norm{b^{-1}f}_{\textup{H}^{p}_{V}}$
    and the non-tangential limit 
    \begin{equation}\label{non tangential limit for sirichlet problem in L2 POisson extension}
    \lim_{t\to 0^{+}}\fiint_{W(t,x)}\abs{u(s,y)-f(x)}^{2}\dd s\dd y=0
    \end{equation}
    for almost every $x\in\Rn$. If $p>1$, or if $V\in\textup{RH}^{\infty}$ and $\frac{n}{n+1}<p\leq 1$, then the reverse estimate also holds.
\end{prop}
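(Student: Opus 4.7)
The plan is to establish separately the forward bound, the nontangential convergence, and the reverse bound. For the forward bound, observe that by the block functional calculus \eqref{decomposition property of BD into H and M} and similarity between $H$ and $\widetilde{H}$, we can identify $u(t, \cdot)$ with the $\perp$-component of $e^{-t[BD]}(f,0)^T$. The $p=2$ case then follows immediately from Theorem \ref{non tangential function L2 estimates Morris Turner}. For $p_-(H) \vee \tfrac{n}{n+\delta/2} < p < 2$, I would combine the identification $bH^p_{V,\textup{pre}} = \mathbb{H}^p_H$ from Corollary \ref{characterisation abstract H^p p<2} with the molecular decomposition of $\mathbb{H}^p_H$ described in Section \ref{subsubsection on molecular characterisation of op adapted hardy spaces} to reduce matters to a uniform estimate on $N_*(e^{-tH^{1/2}}bm)$ for each molecule $m$. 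The local part of this estimate uses Hölder and the $\El{2}$ bound; the global part uses off-diagonal bounds on the Poisson semigroup (derived from those on the resolvents by subordination) together with the reverse-Hölder inequality \eqref{reverse holder property weak solutions} to pass from $\El{2}$- to $\El{p}$-averages, for sufficiently large decay order in the molecule. For $2 < p < p_+(H)^*$, I would apply an interpolation-duality argument paralleling \cite[Proposition 17.3]{AuscherEgert}, invoking Proposition \ref{SQuare function estimate dirichlet poisson semigroup} on the adjoint semigroup and controlling $N_*u$ by the square function $S(t\nabla_\mu u)$ via a Caccioppoli-type bound.

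The nontangential limit \eqref{non tangential limit for sirichlet problem in L2 POisson extension} follows at once, since $f \in bH^p_{V,\textup{pre}} \subseteq \El{2}$: apply the $\El{2}$-Whitney limit \eqref{non tangential convergence of Whitney averages towards the boundary data in first order bisectorial functional calculus} with $T = BD$ and datum $(f, 0)^T$, extract the $\perp$-component, and use Cauchy--Schwarz. For the reverse bound when $p > 1$, Fatou applied to this $\El{2}$-convergence yields $|f(x)| \leq N_*u(x)$ almost everywhere, and the identification $\textup{H}^p_{V,\textup{pre}} = \El{p} \cap \El{2}$ from Lemma \ref{identification of Dziubanski hardy spaces for p>=1}, together with $b^{-1} \in \El{\infty}$, closes this case.

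The remaining case $\tfrac{n}{n+1} < p \leq 1$ with $V \in \textup{RH}^\infty$ is where the main difficulty will lie. Here I would use the equivalent characterization of $\textup{H}^p_{V,\textup{pre}}$ from Lemma \ref{lemma equivalent characterisation Hp_V by DKKP} with an even Schwartz $\varphi$ whose Fourier transform is supported in $(-c_0, c_0)$, so that $\varphi(tH_0^{1/2})$ enjoys the finite propagation speed property of \cite[Lemma 3.5]{HLMMY_HardySpacesDaviesGaffney}. Combining this with Lemma \ref{lemma L^infty estimate nabla mu from L^infty estimate on H_0} produces an $\El{\infty}$ bound on the associated integral kernel supported in $\{|x-y| \leq c_0 t\}$, yielding the pointwise inequality $|\varphi(tH_0^{1/2})f(x)| \lesssim \fiint_{B(x, c_0 t)} |f(y)|\,dy$. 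The hard part will be to show that the resulting maximal function is controlled in $\El{p}$ by $\|N_*u\|_{\El{p}}$ despite the failure of $\El{p}$-boundedness of the Hardy--Littlewood maximal operator for $p \leq 1$; I expect to bypass this via a molecular decomposition of $f$ extracted from the tent-space structure of $u$, reducing to a uniform atomic estimate. This final step is where the $\textup{RH}^\infty$ hypothesis becomes decisive, since it enables Lemma \ref{lemma L^infty estimate nabla mu from L^infty estimate on H_0} to supply the necessary pointwise kernel control unavailable for general reverse Hölder potentials.
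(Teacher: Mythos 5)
Your forward bound, the nontangential limit, and the reverse bound for $p>1$ follow routes that are either the paper's or reasonable variants (the paper in fact routes the $p\le 2$ forward bound through the abstract nontangential estimate \cite[Proposition 8.27]{AuscherEgert} rather than a molecular decomposition, which also covers $1<p<2$ where the molecular decomposition of Section~\ref{subsubsection on molecular characterisation of op adapted hardy spaces} is not available; but the underlying machinery is the same). The genuine problem is in the reverse bound for $\frac{n}{n+1}<p\le 1$.

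There your argument derives the pointwise bound $|\varphi(tH_0^{1/2})(b^{-1}f)(x)| \lesssim \dashint_{B(x,c_0t)}|b^{-1}f|$, which is just the Hardy--Littlewood maximal function of the boundary datum and never sees $u$ or $N_*u$ at all. This inequality cannot lead to $\|b^{-1}f\|_{\textup{H}^p_V}\lesssim\|N_*u\|_{\El{p}}$: the Hardy--Littlewood maximal operator is not $\El{p}$-bounded for $p\le 1$, and even if it were, $\|\mathcal{M}(b^{-1}f)\|_{\El{p}}$ bears no a priori relation to $\|N_*u\|_{\El{p}}$. Your proposed escape — a molecular decomposition of $f$ "extracted from the tent-space structure of $u$", reducing to uniform atomic estimates — is also the wrong idea: once you have a decomposition of $b^{-1}f$ into $\textup{H}^p_V$-atoms or molecules with controlled coefficients, you have already established $\|b^{-1}f\|_{\textup{H}^p_V}<\infty$, which is precisely what you were trying to prove; you would have to show the coefficients are controlled by $\|N_*u\|_{\El{p}}$, and that requires the very argument you are bypassing. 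Also, as a small detail, Lemma~\ref{lemma L^infty estimate nabla mu from L^infty estimate on H_0} controls $\nabla_\mu K_t(x,\cdot)$, not $K_t(x,\cdot)$ itself; the bound $|K_t|\lesssim t^{-n}$ comes from elsewhere, so "combining" with that lemma to obtain your displayed inequality is a misattribution.

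What actually makes the argument work is an integration by parts in the transversal variable against the solution $u$. Set $\Phi^t(s,x,y):=K_t(x,y)\chi_t(s)$ with $\chi_t$ a smooth cutoff equal to $1$ near $s=0$; since $\Phi^t(0,x,\cdot)=K_t(x,\cdot)$, one writes
\begin{equation*}
\varphi(tH_0^{1/2})(b^{-1}f)(x) = -\int_0^\infty\!\!\int_{\Rn}\partial_s\!\left[\Phi^t(s,x,y)(b^{-1}u)(s,y)\right]\dd y\,\dd s,
\end{equation*}
and after a further use of the equation $\mathscr{H}u=0$ this becomes a sum of three terms pairing $u$, $s\partial_s u$, and $s\nabla_\mu^\parallel u$ against $\partial_s\Phi^t$, $s\partial_s\Phi^t$, and $\nabla^\parallel_{\mu,y}\Phi^t$ respectively. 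Finite propagation speed confines these kernels to $(\frac{t}{2},2t)\times B(x,t)$, and the uniform bounds $|\partial_s\Phi^t|\lesssim t^{-n-1}$ and, crucially, $|\nabla^\parallel_{\mu,y}\Phi^t|\lesssim t^{-n-1}$ (the latter is exactly where Lemma~\ref{lemma L^infty estimate nabla mu from L^infty estimate on H_0}, hence $V\in\textup{RH}^\infty$, is needed, because $\nabla^\parallel_{\mu,y}\Phi^t$ contains a factor $V^{1/2}(y)$) turn each term into a Whitney average of $u$, $\partial_s u$, or $\nabla_\mu^\parallel u$. A Caccioppoli estimate then absorbs the gradient terms, and Lemma~\ref{lemma equivalent characterisation Hp_V by DKKP} converts the resulting pointwise bound $\mathcal{M}_V(b^{-1}f,\varphi)(x)\lesssim N_*(u)(x) + \cdots$ into the desired $\textup{H}^p_V$ estimate. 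This pairing-against-$u$ step is the missing idea; without it, the endpoint reverse estimate has no route.
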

\begin{proof}
     For $p\leq 2$, these are obtained as in the proof of \cite[Proposition 17.3]{AuscherEgert}, using Theorem~\ref{non tangential function L2 estimates Morris Turner} (instead of \cite[Theorem 17.2]{AuscherEgert}), \cite[Proposition 8.27]{AuscherEgert}, and Corollary~\ref{characterisation abstract H^p p<2}.

For $2<p<p_{+}(H)^{*}$, the estimate is proved exactly as in \cite[Proposition 17.3]{AuscherEgert}. 
In addition, when $1<p<p_{+}(H)^{*}$, the reverse estimate follows from the fact that the non-tangential limit implies that $\abs{f(x)}\leq N_{*}(u)(x)$ for almost every $x\in\Rn$. 

Finally, let us assume that $V\in\textup{RH}^{\infty}$. We shall prove that $\norm{b^{-1}f}_{\textup{H}^{p}_{V}}\lesssim \norm{N_{*}u}_{\El{p}}$ for $\frac{n}{n+1}<p\leq 1$ and $f\in b\textup{H}^{p}_{V,\textup{pre}}(\Rn)$. Let $\chi:[0,\infty)\to [0,1]$ be smooth, such that $\chi(s)=1$ for $s\in[0,1/2]$, and $\chi(s)=0$ for $s\geq 2$. For all $t>0$, we consider the function $\chi_{t}\in C^{\infty}_{c}([0,\infty))$ defined as $\chi_{t}(s):=\chi(\frac{s}{t})$ for all $s\geq 0$. 
Let us also pick $\varphi\in \mathcal{S}(\R)$ real-valued, even, and such that $\varphi(0)\neq 0$, with Fourier transform $\hat{\varphi}$ supported near the origin, so that the integral operators $\varphi(tH_{0}^{1/2})\in\mathcal{L}(\El{2})$, $t>0$, have integral kernels $K_{t}(x,y)$ satisfying $K_{t}(x,y)=0$ for $\abs{x-y}\geq t$, by \cite[Lemma 3.5]{HLMMY_HardySpacesDaviesGaffney}.
It follows from \cite[Proposition 5.3]{KP_HeatKernelBasedDecomposition} that for all $x\in\Rn$ and $t>0$, the function $K_{t}(x,\cdot)$ belongs to $\cap_{m\geq 1}\dom{H_{0}^{m}}\subseteq \dom{H_{0}}$.

For fixed $t>0$, let us introduce the function $\Phi^{t}:[0,\infty)\times\Rn\times\Rn\to \R$ defined as $\Phi^{t}(s,x,y):=K_{t}(x,y)\chi_{t}(s)$. Following the proof of \cite[Proposition 17.3]{AuscherEgert} we obtain for fixed $x\in\Rn$:
\begin{align*}
    \abs{\left[\varphi(tH_{0}^{1/2})(b^{-1}f)\right](x)}&\lesssim \iint_{\Hn}\abs{u(s,y)}\abs{\partial_{s}\Phi^{t}(s,x,y)}\dd s\dd y \\
    &+\iint_{\Hn}\abs{s\partial_{s}\Phi^{t}(s,x,y)}\abs{\partial_{s}u(s,y)}\dd s\dd y\\
    &+\iint_{\Hn}\abs{s\nabla_{\mu}^{\parallel}u(s,y)}\abs{\nabla_{\mu,y}^{\parallel}\Phi^{t}(s,x,y)}\dd s\dd y=:\RNum{1}+\RNum{2}+\RNum{3}.
\end{align*}
The notation $\nabla_{\mu,y}^{\parallel}$ refers to the operator $\nabla_{\mu}^{\parallel}$ acting with respect to the variable $y$. To estimate terms $\RNum{1}$ and $\RNum{2}$, we can use the support properties of $K_{t}(x,\cdot)$ and the derivative $\chi'$, as well as the uniform bound $\abs{K_{t}(x,y)}\lesssim t^{-n}$ (see \cite[Corollary 3.5]{KP_HeatKernelBasedDecomposition}), to obtain (recall $W(t,x):=(t/2,2t)\times B(x,t)$)
\begin{align*}
    \abs{\partial_{s}\Phi^{t}(s,x,y)}&=\abs{t^{-1}\chi'\left(\frac{s}{t}\right)K_{t}(x,y)}\lesssim t^{-n-1}\ind{B(x,t)}(y)\ind{(\frac{t}{2},2t)}(s)\lesssim \frac{1}{\meas{W(t,x)}}\ind{W(t,x)}(s,y)
\end{align*}
for all $s>0$ and $y\in\Rn$. To estimate term $\RNum{3}$, we note that the function $F_{t}:\Rn\to\C$, $F(y):=K_{t}(x,y)$, satisfies $F_{t}\in\dom{H_{0}}$, $\supp{F_{t}}\subseteq B(x,t)$ and $\norm{H_{0}F_{t}}_{\El{\infty}}\lesssim t^{-n-2}$ by \cite[Corollary 3.5]{KP_HeatKernelBasedDecomposition}. It follows from Lemma~\ref{lemma L^infty estimate nabla mu from L^infty estimate on H_0} that $\norm{\nabla_{\mu}F_{t}}_{\El{\infty}}\lesssim t^{-n-1}$. 
Since $\nabla_{\mu}F_{t}$ has compact support in $B(x,t)$, this implies the uniform estimate
\begin{align}\label{uniform estimate on Phi for reverse non tangential max function estimates}
    \abs{\nabla_{\mu,y}^{\parallel}\Phi^{t}(s,x,y)}\lesssim t^{-n-1}\ind{B(x,t)}(y)\ind{(0,2t)}(s)
\end{align}
for all $y\in\Rn$ and $s>0$. By Lemma~\ref{lemma equivalent characterisation Hp_V by DKKP}, these estimates allow us to conclude the proof as in \cite[Proposition 17.3]{AuscherEgert}, using Caccioppoli's inequality \eqref{Caccioppoli's inequality} in the final step.
\end{proof}

\subsubsection{Uniform estimates and strong continuity}
Next, we establish uniform bounds in $t>0$, and strong continuity at $t=0$ in $\El{p}$. As in \cite[Proposition 17.4]{AuscherEgert}, the following result follows from the identification $\bb{H}_{H}^{p}=b\textup{H}^{p}_{V,\textup{pre}}$ from Theorem~\ref{summary of identifications of H adapted Hardys spaces}.
\begin{prop}\label{uniform bounds Lp dirichlet problem poisson semigroup}
    Let $p\in (p_{-}(H)\vee \frac{n}{n+\delta/2}\,,\,p_{+}(H))$. If $f\in b\textup{H}^{p}_{V,\textup{pre}}$ and $u(t,\cdot)=e^{-tH^{1/2}}f$, then $b^{-1}u\in C_{0}([0,\infty);\textup{H}^{p}_{V,\textup{pre}})\cap C^{\infty}((0,\infty);\textup{H}^{p}_{V,\textup{pre}})$ with $u(0,\cdot)=f$, and satisfies 
    \begin{equation*}
\sup_{t>0}\norm{b^{-1}u(t,\cdot)}_{\textup{H}^{p}_{V}}\eqsim \norm{b^{-1}f}_{\textup{H}^{p}_{V}} \quad \textup{and}\quad \sup_{t>0}\norm{t^{k}\partial_{t}^{k}\left(b^{-1}u(t,\cdot)\right)}_{\textup{H}^{p}_{V}}\lesssim k^{k}e^{-k}\norm{b^{-1}f}_{\textup{H}^{p}_{V}}
    \end{equation*}
for all integers $k\geq 1$.
\end{prop}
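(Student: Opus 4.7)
The plan is to transfer the statement to the $H$-adapted Hardy space $\bb{H}^p_H$ via Theorem~\ref{summary of identifications of H adapted Hardys spaces}, which furnishes the identification $\bb{H}^p_H = b\textup{H}^p_{V,\textup{pre}}$ with equivalent $p$-(quasi)norms throughout the stated range. Since $b$ is $t$-independent, the conclusion reduces to showing $u \in C_0([0,\infty);\bb{H}^p_H)\cap C^\infty((0,\infty);\bb{H}^p_H)$ with $u(0,\cdot)=f$, together with the two-sided bound $\sup_{t>0}\|u(t,\cdot)\|_{\bb{H}^p_H}\eqsim\|f\|_{\bb{H}^p_H}$ and the derivative bound $\sup_{t>0}\|t^k\partial_t^k u(t,\cdot)\|_{\bb{H}^p_H}\lesssim k^k e^{-k}\|f\|_{\bb{H}^p_H}$.

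The uniform boundedness will be obtained by a change of auxiliary function in the definition of $\bb{H}^p_H$. Fix $\psi\in\Psi^{\infty}_{\infty}(S^+_\mu)$ with $2\omega(B)<\mu<\pi$ so that $\|\cdot\|_{\bb{H}^p_H}\eqsim\|\bb{Q}_{\psi,H}(\cdot)\|_{\tent{p}}$. From the functional-calculus composition rule and $\sqrt{s^2 z}=s\sqrt z$, one has for all $s,t>0$
\[
\psi(s^2 H)e^{-tH^{1/2}}=\eta_{t/s}(s^2 H),\qquad \eta_\tau(w):=\psi(w)e^{-\tau\sqrt w}.
\]
Since $|e^{-\tau\sqrt w}|\leq 1$ on $S^+_\mu$ for all $\tau\geq 0$, the family $\{\eta_\tau\}_{\tau\geq 0}$ is uniformly bounded in $\Psi^{\infty}_{\infty}(S^+_\mu)$ and retains the decay parameters of $\psi$; by \cite[Proposition~8.27]{AuscherEgert} (adapted to the sectorial case as in Section~\ref{subsubsection on mapping properties ofa dapted hardy spaces}), the family of operators $\{e^{-tH^{1/2}}\}_{t>0}$ is uniformly $\bb{H}^p_H$-bounded. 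The derivative bound is obtained identically by noting that $t^k \partial_t^k u(t,\cdot) = (-1)^k \psi_k(tH^{1/2})f$ with $\psi_k(w)=w^k e^{-w}$, which satisfies $\sup|\psi_k|=(k/e)^k$ and belongs uniformly to $\Psi^{\infty}_{\infty}$ when composed with $\psi$ as above.

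For the $C^\infty$-regularity on $(0,\infty)$, I would combine the semigroup identity $u(t+h,\cdot)=e^{-hH^{1/2}}u(t,\cdot)$ with the fact that $u(t,\cdot)\in\bigcap_{m\geq 1}\dom{H^{m/2}}$ for every $t>0$ (since $e^{-tH^{1/2}}$ is smoothing), so the Taylor expansion in $h$ converges in $\bb{H}^p_H$ thanks to the derivative estimates just obtained. For strong continuity at $t=0$ and decay as $t\to\infty$, I would argue by density. On the dense subspace of $f\in\bb{H}^p_H\cap\ran{H^m}\cap\dom{H^m}$ (dense in $\bb{H}^p_H$ by Lemma~\ref{density lemma range dom L^p} for $p>1$ and by the molecular decomposition from Theorem~\ref{abstract atomic decomposition DKKP} combined with Corollary~\ref{characterisation abstract H^p p<2} for $p\leq 1$), writing $f=H^m g$ yields $u(t,\cdot)=t^{-2m}\psi_{2m}(tH^{1/2})g$ with $\psi_{2m}(w)=w^{2m}e^{-w}$, hence $\|u(t,\cdot)\|_{\bb{H}^p_H}\lesssim t^{-2m}\|g\|_{\bb{H}^p_H}\to 0$ as $t\to\infty$; similarly, $u(t,\cdot)-f=-\int_0^t \partial_s u(s,\cdot)\,\dd s$ together with the $k=1$ derivative bound applied to $f=H^m g$ gives a quantitative convergence $u(t,\cdot)\to f$ in $\bb{H}^p_H$ as $t\to 0^+$. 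The uniform bound then extends both properties to all of $\bb{H}^p_H$ by a $3\epsilon$-type approximation, which works in the quasi-Banach setting since the $p$-quasinorm satisfies a $p$-triangle inequality. Finally, $u(0,\cdot)=f$ is automatic from the strong continuity, and the lower bound $\|u(t,\cdot)\|_{\bb{H}^p_H}\gtrsim\|f\|_{\bb{H}^p_H}$ follows from strong continuity at $t=0$.

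The main obstacle is the $C_0$ continuity at $t=0$ in the quasi-Banach regime $p\leq 1$: the density step relies on a subspace where $H^{1/2}$ acts nicely, so one must verify that the atomic/molecular machinery underpinning the identification $\bb{H}^p_H=b\textup{H}^p_{V,\textup{pre}}$ produces enough density in $\ran{H^m}\cap\bb{H}^p_H$ — specifically, that the abstract $H_0$-atoms of Definition~\ref{definition abstract atoms DKKP} (which lie in $\ran{H_0^m}$ and hence, via the Kato estimate \eqref{Kato estimate} and the similarity between $H$ and $H_0$ through $b$, contribute to $\ran{H^m}$ after the identification) form a dense enough class to run the approximation argument uniformly.
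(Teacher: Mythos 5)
Your proposal follows the same route as the paper: the entire content of the paper's proof is ``apply the identification $\bb{H}^{p}_{H}=b\textup{H}^{p}_{V,\textup{pre}}$ from Theorem~\ref{summary of identifications of H adapted Hardys spaces} and then invoke the abstract semigroup theory for sectorial operators on adapted Hardy spaces, namely \cite[Propositions~8.10 and 8.13]{AuscherEgert}.'' What you have done is reconstruct the content of those two abstract propositions rather than cite them, and your reconstruction is essentially sound, so the proposal is correct in substance.

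Three remarks on the reconstruction. First, the uniform-boundedness step as written is slightly off: $\bb{Q}_{\psi,H}(e^{-tH^{1/2}}f)(s)=\eta_{t/s}(s^{2}H)f$ has an auxiliary function depending on the tent-space variable $s$, so this is not a single change of auxiliary function; the clean statement is that $H$ has a bounded $\textup{H}^{\infty}$-calculus on $\bb{H}^{p}_{H}$ (this is \cite[Proposition~8.10]{AuscherEgert}) together with $\sup_{t>0}\norm{e^{-t\sqrt{\cdot}}}_{\textup{H}^{\infty}(S^{+}_{\mu})}\leq 1$. Relatedly, $\sup_{w>0}\abs{w^{k}e^{-w}}=(k/e)^{k}$ only on the positive half-line; on a sector $S^{+}_{\mu/2}$ one picks up a factor $(\cos(\mu/2))^{-k}$, so obtaining the constant $k^{k}e^{-k}$ with a $k$-independent implicit constant requires the more careful argument in \cite[Proposition~8.13]{AuscherEgert} rather than a bare sup-norm bound. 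Second, the ``main obstacle'' you flag at the end is not actually an obstacle: the required density of $\bb{H}^{p}_{H}\cap\ran{H^{m}}\cap\dom{H^{m}}$ in $\bb{H}^{p}_{H}$ (for both the $\bb{H}^{p}_{H}$ and $\El{2}$ topologies, and for all $p$ including $p\leq 1$) is supplied by the abstract \cite[Lemma~8.7]{AuscherEgert}, which the paper uses elsewhere; there is no need to route through the concrete $H_{0}$-atoms of Definition~\ref{definition abstract atoms DKKP}, and Lemma~\ref{density lemma range dom L^p} is only needed for the $\El{p}$-density when $p>1$. Third, in the decay argument at $t\to\infty$ one should be careful that writing $f=H^{m}g$ only gives $g\in\El{2}$ a priori; the approximation has to be set up so that the quantity one sends to zero is controlled by a fixed $\bb{H}^{p}_{H}$-quantity of the approximant, which is again exactly how \cite[Proposition~8.13]{AuscherEgert} organises the argument.
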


For exponents $p\in[p_{+}(H), p_{+}(H)^{*})$, we don't have the identification $\bb{H}^{p}_{H}=\El{p}\cap\El{2}$, but we can still obtain some control of the solution in $\El{2}_{\loc}(\Rn)$. The relevant estimate is obtained by following the method outlined in \cite[Lemma 17.5]{AuscherEgert}. This yields the following result. 
\begin{prop}\label{local estimate time and space Poisson solution p>p+}
    If $p\in[p_{+}(H),p_{+}(H)^{*})$, then for all $f\in\El{p}\cap\El{2}$, all balls $B\subset\Rn$ and all $t>0$, it holds that 
    \begin{equation*}
        \norm{e^{-tH^{1/2}}f-f}_{\El{2}(B)}\lesssim \meas{B}^{\frac{1}{2}-\frac{1}{p}}\left(1+\frac{t}{r_{B}}
        \right)\norm{f}_{\El{p}},
    \end{equation*}
    and $\norm{e^{-tH^{1/2}}f}_{\El{2}(B)}\lesssim \meas{B}^{\frac{1}{2}-\frac{1}{p}}\left(2+\frac{t}{r_B}\right)\norm{f}_{\El{p}}$.
\end{prop}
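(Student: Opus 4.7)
The proof is an adaptation of \cite[Lemma 17.5]{AuscherEgert} to the Schrödinger setting, exploiting that the hypothesis $p\in[p_{+}(H),p_{+}(H)^{*})$ forces the lower Sobolev exponent $p_{*}:=np/(n+p)$ to lie in $[p_{+}(H)_{*},p_{+}(H))\subseteq \mathcal{J}(H)\cap(1,n)$. Indeed $p_+(H)\geq 2$ and $n\geq 3$ give $p_+(H)>n/(n-1)$, so $p_+(H)_*>1$; and $p_*<p\leq p_+(H)$ is automatic in $(1,n)$. This will give access to $\El{p_{*}}$ off-diagonal estimates of arbitrarily large order for resolvents of $H$ and their $\nabla_{\mu}$ counterparts, which combined with the $V$-adapted Sobolev embedding of Proposition~\ref{embedding homogeneous V spaces in L ^p* for p<n} (recall $(p_{*})^{*}=p$) yields the required local $\El{p}$-to-$\El{2}(B)$ control.

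The plan has three steps. \emph{Step~1 (Sobolev-boosted resolvent bounds).} Using Lemma~\ref{Obtaining L^2-L^q or L^p-L^2 odes for powers of resolvents} and composition of $\El{p_{*}}$ off-diagonal estimates as in Section~\ref{subsubsection on mapping properties ofa dapted hardy spaces}, I show that for every integer $k\geq 1$, both $\set{(1+s^{2}H)^{-k}}_{s>0}$ and $\set{s\nabla_{\mu}(1+s^{2}H)^{-k}}_{s>0}$ enjoy $\El{p_{*}}$ off-diagonal estimates of arbitrarily large order. For $f\in\El{p}\cap\El{2}$, these estimates place $(1+s^{2}H)^{-k}f$ in $\dot{\mathcal{V}}^{1,p_{*}}(\Rn)\cap\El{2}(\Rn)$, and Proposition~\ref{embedding homogeneous V spaces in L ^p* for p<n} then upgrades the bound to an $\El{p_{*}}$-to-$\El{p}$ estimate with operator norm scaling like $s^{-1}$, together with the corresponding annular decay. \emph{Step~2 (semigroup representation).} I use the Calderón--McIntosh reproducing formula for $H^{1/2}$ (with auxiliary function $\psi(z)=z^{\alpha-1/2}(1+z)^{-3\alpha}$ for $\alpha$ large) to write $e^{-tH^{1/2}}$ and its derivative $\partial_{s}e^{-sH^{1/2}}=-s^{-1}\psi(s^{2}H)$ as weighted scale integrals of the high-power resolvents from Step~1. \emph{Step~3 (annular decomposition).} Fix a ball $B=B(x_{B},r_{B})$ and decompose $f=\sum_{j\geq 1}f\ind{C_{j}(B)}$. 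For each $j$, I apply the off-diagonal bound of Step~1 locally on $B$, convert $\El{p_{*}}$ to $\El{p}$ via Hölder on $C_{j}(B)$ at a cost of $\meas{C_{j}(B)}^{1/p_{*}-1/p}\eqsim(2^{j}r_{B})$, and sum the resulting geometric series. Integrating the representations of Step~2 in $s$ then produces the two claimed inequalities: the factor $\meas{B}^{1/2-1/p}$ comes from Hölder on the innermost annulus together with the $\El{p_{*}}-\El{2}(B)$ rescaling, while the linear factor $t/r_{B}$ emerges from integrating $\partial_{s}e^{-sH^{1/2}}f$ over $s\in(0,t)$ and balancing the two regimes $s\lesssim r_B$ and $s\gtrsim r_B$ via the off-diagonal decay.

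The second inequality follows from the first by the triangle inequality and Hölder, since $\norm{f}_{\El{2}(B)}\leq \meas{B}^{1/2-1/p}\norm{f}_{\El{p}}$.

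The main obstacle is the Sobolev embedding step: since $f\in\El{p}$ is only \emph{locally} in $\El{p_{*}}$, one cannot apply Proposition~\ref{embedding homogeneous V spaces in L ^p* for p<n} directly to $(1+s^{2}H)^{-k}f$ without the annular decomposition, and the $V^{1/2}(1+s^{2}H)^{-k}f$ component of $\nabla_{\mu}$ must be absorbed carefully into the $\El{p_{*}}$-side of the embedding, which requires being attentive to the density result of Proposition~\ref{density lemma in homogeneous V adapted Sobolev spaces} when justifying membership in $\dot{\mathcal{V}}^{1,p_{*}}(\Rn)$. A subsidiary subtlety, absent in the $V\equiv 0$ analogue, is that $(1+s^{2}H)^{-k}$ does not preserve constants; fortunately the structure of the argument never requires this conservation property, only the off-diagonal and resolvent bounds.
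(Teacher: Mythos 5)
There is a genuine gap in Step~1, and it propagates through the whole argument. You correctly observe that $p_{*}\in\mathcal{J}(H)\cap(1,n)$, but the mechanism you then invoke — $\El{p_{*}}$ off-diagonal estimates for $\set{s\nabla_{\mu}(1+s^{2}H)^{-k}}_{s>0}$, followed by the Sobolev embedding $\norm{(1+s^{2}H)^{-k}f}_{\El{p}}\lesssim\norm{\nabla_{x}(1+s^{2}H)^{-k}f}_{\El{p_{*}}}\lesssim s^{-1}\norm{f}_{\El{p_{*}}}$ — requires uniform $\El{p_{*}}$-boundedness of the \emph{gradient} family, i.e.\ $p_{*}\in\mathcal{N}(H)$, not merely $p_{*}\in\mathcal{J}(H)$. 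This is exactly what fails in the stated range: by Proposition~\ref{properties of critical numbers proposition} we have $p_{+}(H)\geq q_{+}(H)^{*}$, so $p\geq p_{+}(H)$ forces $p_{*}\geq\big(q_{+}(H)^{*}\big)_{*}=q_{+}(H)$ (the case $q_{+}(H)\geq n$ being vacuous since then $p_{+}(H)=\infty$). Hence $p_{*}$ lies at or beyond the right endpoint of $\mathcal{N}(H)$, and $\set{s\nabla_{\mu}(1+s^{2}H)^{-1}}_{s>0}$ is not uniformly $\El{p_{*}}$-bounded; by Lemma~\ref{decreasing powers in boundedness of resolvents} higher powers $k$ cannot help, and Lemma~\ref{Obtaining L^2-L^q or L^p-L^2 odes for powers of resolvents} only produces gradient bounds at exponents $\leq 2$, whereas here $p_{*}\geq 2$ because $p\geq p_{+}(H)>2^{*}$. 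So the $s^{-1}$-scaling $\El{p_{*}}$-to-$\El{p}$ bound at the heart of your Step~1 is not available, and the density/membership issues you flag as the "main obstacle" are beside the point.

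The method of \cite[Lemma 17.5]{AuscherEgert} that the paper follows avoids gradients entirely: it exploits only that $p_{*}\in\mathcal{J}(H)$, so that high powers of resolvents (and the functions of $H$ obtained by subtracting them from the Poisson semigroup) satisfy $\El{p_{*}}$ and $\El{2}$ off-diagonal estimates. One then bounds $\norm{T_{t}f}_{\El{2}(B)}\leq\meas{B}^{\frac12-\frac{1}{p_{*}}}\norm{T_{t}f}_{\El{p_{*}}(B)}$, decomposes $f$ over annuli $C_{j}$ at scale $r_{B}\vee t$, and uses H\"older in the form $\norm{f}_{\El{p_{*}}(C_{j})}\leq\meas{C_{j}}^{\frac{1}{p_{*}}-\frac1p}\norm{f}_{\El{p}}\eqsim 2^{j}(r_{B}\vee t)\norm{f}_{\El{p}}$; the numerical identity $\frac{n}{p_{*}}-\frac{n}{p}=1$ (rather than the Sobolev embedding of Proposition~\ref{embedding homogeneous V spaces in L ^p* for p<n}) is what produces the single linear factor $1+t/r_{B}$ after summing the series and absorbing $\meas{B}^{\frac12-\frac{1}{p_{*}}}=\meas{B}^{\frac12-\frac1p}r_{B}^{-1}$. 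Your reduction of the second inequality to the first via the triangle inequality and H\"older is correct, and so is the identification of $p_{*}$ as the pivotal exponent; the fix is to re-route the argument through resolvent bounds at $p_{*}$ and this H\"older arithmetic instead of through $\nabla_{\mu}$ and the Sobolev embedding.
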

\subsection{Regularity data}
We now consider estimates for the Regularity problem. 
\subsubsection{Square function estimates}
The following result is adapted from \cite[Proposition 17.6]{AuscherEgert}.
\begin{prop}\label{proposition  L^p square function estimate regularity problem}
Let $f\in\El{2}$ and $u(t,\cdot):=e^{-tH^{1/2}}f$. The following properties hold:
\begin{enumerate}[label=\emph{(\roman*)}]
\item If $p\in (p_{-}(H)_{*}, q_{+}(H))\cap(1,2q]$ and $f\in\mathcal{V}^{1,2}\cap\dot{\mathcal{V}}^{1,p}$, then $\norm{S\left(t\nabla_{\mu}\partial_{t}u\right)}_{\El{p}}\eqsim \norm{\nabla_{\mu}f}_{\El{p}}$. 
\item If $p\in\left( p_{-}(H)_{*}\vee \frac{n}{n+1} , 1\right]$ and $f\in\dot{\textup{H}}^{1,p}_{V,\textup{pre}}$, then $\norm{S\left(t\nabla_{\mu}\partial_{t}u\right)}_{\El{p}}\lesssim\norm{f}_{\dot{\textup{H}}^{1,p}_{V}}$. 
\item If $p\in\left( p_{-}(H)_{*}\vee \frac{n}{n+\delta/2} , 1\right]\cap\mathcal{I}(V)$ and $f\in\dot{\textup{H}}^{1,p}_{V,\textup{pre}}$, then $\norm{S\left(t\nabla_{\mu}\partial_{t}u\right)}_{\El{p}}\gtrsim\norm{f}_{\dot{\textup{H}}^{1,p}_{V}}$. 
\end{enumerate}
\end{prop}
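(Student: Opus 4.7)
The approach parallels the Dirichlet case treated in Proposition~\ref{SQuare function estimate dirichlet poisson semigroup}: the plan is to express $t\nabla_{\mu}\partial_{t}u$ in terms of the functional calculi of $H$ and $\widetilde{M}$, establish the equivalence $\norm{S(t\nabla_{\mu}\partial_{t}u)}_{\El{p}} \eqsim \norm{f}_{\bb{H}^{1,p}_{H}}$, and then pass to the concrete function spaces via Theorem~\ref{summary of identifications of H adapted Hardys spaces}.

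First I would decompose $t\nabla_{\mu}\partial_{t}u$ into its $t$-derivative component $t\partial_{t}^{2}u$ and its boundary components $t\nabla_{\mu}^{\parallel}\partial_{t}u$ (where $\nabla_{\mu}^{\parallel}=(\nabla_{x},V^{1/2})$). Since $\partial_{t}^{2}u=Hu$, setting $\tilde{\psi}(z):=ze^{-\sqrt{z}}$ yields $t\partial_{t}^{2}u = t^{-1}\tilde{\psi}(t^{2}H)f$. For the boundary part, observe that $tH^{1/2}e^{-tH^{1/2}}f = \eta(t^{2}H)f$ with $\eta(z):=\sqrt{z}e^{-\sqrt{z}}\in \textup{H}^{\infty}(S_{\mu}^{+})$ for any $\mu<\pi$. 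Since $f\in\mathcal{V}^{1,2}(\Rn)=\dom{H^{1/2}}$ in all three hypotheses (using $\dom{H_{0}^{1/2}}=\mathcal{V}^{1,2}(\Rn)$ from \eqref{Kato estimate}), the intertwining relation of Lemma~\ref{intertwining relations} gives
\begin{equation*}
t\nabla_{\mu}^{\parallel}\partial_{t}u = -\nabla_{\mu}\eta(t^{2}H)f = -\eta(t^{2}\widetilde{M})\nabla_{\mu}f.
\end{equation*}

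Next I would establish $\norm{S(t\nabla_{\mu}\partial_{t}u)}_{\El{p}} \eqsim \norm{f}_{\bb{H}^{1,p}_{H}}$. When $p\leq 2$, the decay parameters of $\tilde{\psi}\in\Psi_{1}^{\infty}$ and $\eta\in\Psi_{1/2}^{\infty}$ satisfy the admissibility conditions of Table~\ref{decay parameters conditions bisectorial Hardy adapted space} for $\bb{H}^{1,p}_{H}$ and $\bb{H}^{p}_{\widetilde{M}}$ respectively, so by definition of the adapted norms
\begin{equation*}
\norm{S(t\partial_{t}^{2}u)}_{\El{p}}=\norm{\bb{Q}_{\tilde{\psi},H}f}_{\tent{1,p}}\eqsim\norm{f}_{\bb{H}^{1,p}_{H}},
\end{equation*}
and similarly $\norm{S(t\nabla_{\mu}^{\parallel}\partial_{t}u)}_{\El{p}}\eqsim\norm{\nabla_{\mu}f}_{\bb{H}^{p}_{\widetilde{M}}}$; the latter equals $\norm{f}_{\bb{H}^{1,p}_{H}}$ through the bijection $-\nabla_{\mu}:\bb{H}^{1,p}_{H}\cap\dom{H^{1/2}}\to\bb{H}^{p}_{\widetilde{M}}\cap\ran{\nabla_{\mu}}$ from Figure~\ref{figure relations between adapted Hardy spaces}. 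Combining with the elementary pointwise bound $S(t\nabla_{\mu}\partial_{t}u)^{2}=S(t\partial_{t}^{2}u)^{2}+S(t\nabla_{\mu}^{\parallel}\partial_{t}u)^{2}$ and a $p$-triangle inequality would give the two-sided estimate for $p\leq 2$. For $p>2$, the admissibility conditions may fail once $p$ exceeds $2n/(n-2)$, and I would adapt the Caccioppoli-based duality argument from the proof of Proposition~\ref{SQuare function estimate dirichlet poisson semigroup}: introduce an auxiliary function with additional decay, apply Caccioppoli's inequality \eqref{Caccioppoli's inequality} to the weak solution $\phi(t^{2}H)f$, and control the remainder resolvent term via \cite[Remark 9.9]{AuscherEgert} combined with the Kato estimate~\eqref{Kato estimate}, exactly mirroring the Dirichlet treatment but with $\nabla_{\mu}$ in place of $\nabla_{x}$.

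The three items would then follow by specialising $\norm{f}_{\bb{H}^{1,p}_{H}}$ using Theorem~\ref{summary of identifications of H adapted Hardys spaces}: for (i), the identification $\bb{H}^{1,p}_{H}=\dot{\mathcal{V}}^{1,p}\cap\El{2}$ on $(p_{-}(H)_{*},q_{+}(H))\cap(1,2q]$ gives $\norm{f}_{\bb{H}^{1,p}_{H}}\eqsim\norm{\nabla_{\mu}f}_{\El{p}}$; for (ii), Lemma~\ref{Lemma continuous inclusion V adapted hardy sobolev space for p<=1} provides the continuous inclusion $\dot{\textup{H}}^{1,p}_{V,\textup{pre}}\subseteq\bb{H}^{1,p}_{H}\cap\mathcal{V}^{1,2}$, hence the upper bound $\norm{f}_{\bb{H}^{1,p}_{H}}\lesssim\norm{f}_{\dot{\textup{H}}^{1,p}_{V}}$; and for (iii), the extra hypothesis $p\in\mathcal{I}(V)$ together with Proposition~\ref{lemma continuous inclusions abstract hardy spaces for p<=1} yields the reverse $\norm{f}_{\dot{\textup{H}}^{1,p}_{V}}\lesssim\norm{f}_{\bb{H}^{1,p}_{H}}$. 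The main obstacle is handling the range $p>2$ in (i), where the natural auxiliary functions $\tilde{\psi}$ and $\eta$ lose admissibility in high dimensions; resolving this via the Caccioppoli substitution is the key technical hurdle transposed from the Dirichlet to the regularity setting.
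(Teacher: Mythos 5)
Your proposal is correct and takes essentially the same route as the paper, whose proof simply defers to \cite[Proposition 17.6]{AuscherEgert} combined with exactly the ingredients you name: the intertwining relations of Lemma~\ref{intertwining relations}, the bijections of Figure~\ref{figure relations between adapted Hardy spaces}, the identifications of Theorem~\ref{summary of identifications of H adapted Hardys spaces} for (i), and the inclusions of Lemma~\ref{Lemma continuous inclusion V adapted hardy sobolev space for p<=1} and Proposition~\ref{lemma continuous inclusions abstract hardy spaces for p<=1} for (ii) and (iii). The only remark worth adding is that your "key technical hurdle" for $p>2$ in (i) can be dispatched without re-running the Caccioppoli/duality machinery: since $\partial_{t}u=e^{-tH^{1/2}}g$ with $g=-H^{1/2}f$ and $\norm{b^{-1}g}_{\El{p}}\eqsim\norm{H^{1/2}f}_{\bb{H}^{p}_{H}}\eqsim\norm{\nabla_{\mu}f}_{\El{p}}$ by the identification $\bb{H}^{p}_{H}=\El{p}\cap\El{2}$ and Figure~\ref{figure relations between adapted Hardy spaces}, the two-sided bound follows by applying Proposition~\ref{SQuare function estimate dirichlet poisson semigroup} directly to the data $g$.
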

\begin{proof}
    For the first item, we note that the assumptions on $p$ imply that $\dot{\mathcal{V}}^{1,p}\cap\El{2}=\bb{H}^{1,p}_{H}$ and $\bb{H}^{p}_{H}=\El{p}\cap\El{2}$ with equivalent $p$-norms (see Theorem~\ref{summary of identifications of H adapted Hardys spaces}).   
We can now follow the proof of \cite[Proposition 17.6]{AuscherEgert}, using Figure \ref{figure relations between adapted Hardy spaces} and the intertwining relations of Lemma~\ref{intertwining relations} to obtain the desired conclusion.
For the remaining items, we use the continuous inclusion $\dot{\textup{H}}^{1,p}_{V,\textup{pre}}\subseteq\bb{H}^{1,p}_{H}\cap\dom{H^{1/2}}$ from Lemma~\ref{Lemma continuous inclusion V adapted hardy sobolev space for p<=1} and the converse inclusion from Proposition~\ref{lemma continuous inclusions abstract hardy spaces for p<=1} along with the argument of the proof of \cite[Proposition 17.6]{AuscherEgert}.
\end{proof}

\subsubsection{Non-tangential maximal function estimates}
We now turn to the non-tangential maximal function bounds for the Regularity problem.
\begin{prop}\label{proposition estimate non tangential ùax function regularity problem p>1}
    Let $p\in(p_{-}(H)_{*}\vee 1 \,,\, q_{+}(H))$ with $p\leq 2q$. If $f\in\mathcal{V}^{1,2}\cap\dot{\mathcal{V}}^{1,p}$, then $u(t,\cdot):=e^{-tH^{1/2}}f$ satisfies $\norm{N_{*}\left(\nabla_{\mu}u\right)}_{\El{p}}\eqsim \norm{\nabla_{\mu}f}_{\El{p}}$
    and 
    \begin{equation*}
        \lim_{t\to 0^{+}}\fiint_{W(t,x)}\abs{\begin{bmatrix}
            A_{\perp\perp}\partial_{t}u\\
            \nabla_{\mu}^{\parallel}u
        \end{bmatrix}-\begin{bmatrix}
            -(A_{\perp\perp}H^{1/2}f)(x)\\
            \nabla_{\mu}f(x)
        \end{bmatrix}}^{2}\dd s\dd y=0
    \end{equation*}
    for almost every $x\in\Rn$.
\end{prop}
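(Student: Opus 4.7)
The plan is to rewrite the Regularity data pair $(A_{\perp\perp}\partial_t u,\nabla_\mu^\parallel u)$ as a single Poisson semigroup $e^{-t[DB]}$ applied to the first-order boundary data
\[
F_0:=\begin{bmatrix}-A_{\perp\perp}H^{1/2}f\\ \nabla_\mu f\end{bmatrix},
\]
and then to reduce the proposition to Theorem~\ref{non tangential function L2 estimates Morris Turner} for $T=DB$, extrapolated to $\El{p}$ in parallel with the Dirichlet case handled in Proposition~\ref{estimate non tangential maximal function on poisson extension solution}.

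Setting $F(t,\cdot):=\begin{bmatrix}A_{\perp\perp}\partial_t u(t,\cdot)\\ \nabla_\mu^\parallel u(t,\cdot)\end{bmatrix}$, the intertwining identity of Lemma~\ref{intertwining relations} with $\psi(z)=e^{-t\sqrt z}$ gives $\nabla_\mu^\parallel u(t,\cdot)=e^{-t\widetilde{M}^{1/2}}\nabla_\mu f$, and the similarity $\widetilde{H}=A_{\perp\perp}H A_{\perp\perp}^{-1}$ from Section~\ref{subsubsection on adjoints and similarity} yields $A_{\perp\perp}\partial_t u(t,\cdot)=e^{-t\widetilde{H}^{1/2}}(-A_{\perp\perp}H^{1/2}f)$. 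Together with the block representation $e^{-t[DB]}=\textup{diag}(e^{-t\widetilde{H}^{1/2}},e^{-t\widetilde{M}^{1/2}})$ coming from~\eqref{identities square BD; H, M operators}, this identifies $F(t,\cdot)=e^{-t[DB]}F_0$; moreover~\eqref{characterisation range D} combined with Proposition~\ref{embedding homogeneous V spaces in L ^p* for p<n} places $F_0$ in $\clos{\ran{D}}=\clos{\ran{DB}}$. Applying Theorem~\ref{non tangential function L2 estimates Morris Turner} to $T=DB$ then simultaneously delivers the $\El{2}$-comparison $\norm{N_*F}_{\El{2}}\eqsim\norm{F_0}_{\El{2}}$ and the non-tangential convergence of $F$ to $F_0$ claimed in the proposition.

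For the $\El{p}$-upper bound $\norm{N_*(\nabla_\mu u)}_{\El{p}}\lesssim\norm{\nabla_\mu f}_{\El{p}}$ I would mirror the Dirichlet argument. When $1<p\leq 2$, the molecular decomposition of $\bb{H}^{p}_{DB}$ from Section~\ref{subsubsection on molecular characterisation of op adapted hardy spaces} combined with the off-diagonal estimate \cite[Proposition 8.27]{AuscherEgert} and Theorem~\ref{non tangential function L2 estimates Morris Turner} produces $\norm{N_{*}(e^{-t[DB]}\,\cdot\,)}_{\El{p}}\lesssim\norm{\cdot}_{\bb{H}^{p}_{DB}}$; for $2<p<q_+(H)$ with $p\leq 2q$, Caccioppoli's inequality~\eqref{Caccioppoli's inequality} and the reverse Hölder estimate~\eqref{reverse holder property weak solutions} permit one to run the argument of \cite[Proposition 17.3]{AuscherEgert} almost verbatim. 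To translate this into $\norm{\nabla_\mu f}_{\El{p}}$ I would invoke the block decomposition $\bb{H}^{p}_{DB}=\bb{H}^{p}_{\widetilde{H}}\oplus\bb{H}^{p}_{\widetilde{M}}$, the $b$-similarity and $D$-bijection in Figure~\ref{figure relations between adapted Hardy spaces}, the regularity shift~\eqref{elementary regularity shift}, and the identification $\bb{H}^{1,p}_{H}=\dot{\mathcal{V}}^{1,p}\cap\El{2}$ from Theorem~\ref{summary of identifications of H adapted Hardys spaces}, chaining to
\[
\norm{F_0}_{\bb{H}^{p}_{DB}}\eqsim\norm{H^{1/2}f}_{\bb{H}^{p}_{H}}+\norm{\nabla_\mu f}_{\El{p}}\eqsim\norm{\nabla_\mu f}_{\El{p}},
\]
where the $\El{p}$-Riesz transform bound of Theorem~\ref{thm boundedness of Riesz transforms on Lp full range} is packaged into the last equivalence via these Hardy space identifications.

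The reverse bound is then immediate: the non-tangential convergence yields $\abs{\nabla_\mu f(x)}\leq\abs{F_0(x)}\leq N_*(F)(x)$ for almost every $x\in\Rn$, and the bounded invertibility of $A_{\perp\perp}$ gives $N_*(F)\eqsim N_*(\nabla_\mu u)$ pointwise. The main technical step is therefore the extrapolation $\norm{N_{*}(e^{-t[DB]}\,\cdot\,)}_{\El{p}}\lesssim\norm{\cdot}_{\bb{H}^{p}_{DB}}$: for $p\leq 2$ its proof hinges on checking that the exponential-order $\El{2}$ off-diagonal estimates for $e^{-t[DB]}$ recorded in Section~\ref{subsubsection on mapping properties ofa dapted hardy spaces} interact well with the molecular structure of $\bb{H}^{p}_{DB}$, and for $p>2$ relies crucially on the $\El{p}$-boundedness of the Riesz transform from Theorem~\ref{thm boundedness of Riesz transforms on Lp full range}.
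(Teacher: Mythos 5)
Your proposal is correct and follows essentially the same route as the paper, whose proof simply adapts \cite[Proposition 17.7]{AuscherEgert}: it uses the conormal-gradient representation $F(t,\cdot)=e^{-t[DB]}F_0$ with $F_0=\bigl(-A_{\perp\perp}H^{1/2}f,\nabla_{\mu}f\bigr)$ coming from the intertwining relations and \eqref{identities square BD; H, M operators}, Theorem~\ref{non tangential function L2 estimates Morris Turner} for the $\El{2}$ comparison and the non-tangential convergence, \cite[Proposition 8.27]{AuscherEgert} together with the identifications of Theorem~\ref{summary of identifications of H adapted Hardys spaces} and Figure~\ref{figure relations between adapted Hardy spaces} to convert $\norm{F_0}_{\bb{H}^{p}_{DB}}$ into $\norm{\nabla_{\mu}f}_{\El{p}}$, and the pointwise bound $\abs{\nabla_{\mu}f}\leq N_{*}(F)$ from the convergence for the reverse inequality. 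The one small imprecision is attributing the $1<p\leq 2$ extrapolation to a molecular decomposition of $\bb{H}^{p}_{DB}$ — molecules are a $p\leq 1$ device, and in this range \cite[Proposition 8.27]{AuscherEgert} is applied directly via tent-space and off-diagonal arguments — but this does not affect the validity of the argument.
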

\begin{proof}
    We follow the proof of \cite[Proposition 17.7]{AuscherEgert}, using Lemma~\ref{intertwining relations}, Theorem~\ref{summary of identifications of H adapted Hardys spaces} and \eqref{identities square BD; H, M operators}. We also use Theorem~\ref{non tangential function L2 estimates Morris Turner} instead of \cite[Theorem 17.2]{AuscherEgert}.
\end{proof}
For $p\leq 1$, we have the following analogous result.
\begin{prop}\label{proposition non tangential max function estimate regularity problem for p<1}
    Let $p\in\left(p_{-}(H)_{*}\vee\frac{n}{n+1}\,,\,1\right]$. If $f\in\dot{\textup{H}}^{1,p}_{V,\textup{pre}}$, then $u(t,\cdot):=e^{-tH^{1/2}}f$ satisfies the estimate
   $\norm{N_{*}\left(\nabla_{\mu}u\right)}_{\El{p}}\lesssim \norm{f}_{\dot{\textup{H}}^{1,p}_{V}}$, and
    \begin{equation*}
        \lim_{t\to 0^{+}}\fiint_{W(t,x)}\abs{\begin{bmatrix}
            A_{\perp\perp}\partial_{t}u\\
            \nabla_{\mu}^{\parallel}u
        \end{bmatrix}-\begin{bmatrix}
            -(A_{\perp\perp}H^{1/2}f)(x)\\
            \nabla_{\mu}f(x)
        \end{bmatrix}}^{2}\dd s\dd y=0
    \end{equation*}
    for almost every $x\in\Rn$. If $V\in\textup{RH}^{\infty}$ and $p\in\mathcal{I}(V)$, then the reverse estimate also holds.
\end{prop}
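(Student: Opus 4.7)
The plan is to mirror Proposition~\ref{proposition estimate non tangential ùax function regularity problem p>1} but substitute the continuous inclusion $\dot{\textup{H}}^{1,p}_{V,\textup{pre}}\subseteq \bb{H}^{1,p}_{H}\cap\mathcal{V}^{1,2}$ from Lemma~\ref{Lemma continuous inclusion V adapted hardy sobolev space for p<=1} for the identification that holds when $p>1$. For the reverse inequality, combine the converse inclusion from Proposition~\ref{lemma continuous inclusions abstract hardy spaces for p<=1} with the finite propagation speed machinery used in the reverse part of Proposition~\ref{estimate non tangential maximal function on poisson extension solution}.

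For the upper estimate, first use the intertwining relations (Lemma~\ref{intertwining relations}) together with the similarity $\widetilde{H}=b^{-1}Hb$ from Section~\ref{subsubsection on adjoints and similarity} and the block decomposition $e^{-t[DB]}=\textup{diag}(e^{-t\widetilde{H}^{1/2}},e^{-t\widetilde{M}^{1/2}})$ to obtain, for $f\in\mathcal{V}^{1,2}(\Rn)$,
\[
\begin{bmatrix}-A_{\perp\perp}\partial_{t}u\\ \nabla_{\mu}^{\parallel}u\end{bmatrix}(t,\cdot)=e^{-t[DB]}F,\qquad F:=\begin{bmatrix}A_{\perp\perp}H^{1/2}f\\ \nabla_{\mu}f\end{bmatrix}.
\]
By boundedness of $b$ this yields $N_{*}(\nabla_{\mu}u)\lesssim N_{*}(e^{-t[DB]}F)$ pointwise. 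Reading Figure~\ref{figure relations between adapted Hardy spaces} along its rows shows that $f\in \bb{H}^{1,p}_{H}\cap\mathcal{V}^{1,2}$ implies $H^{1/2}f\in \bb{H}^{p}_{H}$ (hence $A_{\perp\perp}H^{1/2}f\in\bb{H}^{p}_{\widetilde{H}}$ by similarity) and $\nabla_{\mu}f\in\bb{H}^{p}_{\widetilde{M}}$, with equivalent quasinorms. Thus $F\in\bb{H}^{p}_{DB}$ with $\norm{F}_{\bb{H}^{p}_{DB}}\eqsim \norm{f}_{\bb{H}^{1,p}_{H}}\lesssim \norm{f}_{\dot{\textup{H}}^{1,p}_{V}}$, the last inequality being Lemma~\ref{Lemma continuous inclusion V adapted hardy sobolev space for p<=1}. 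The core task is then to establish the Hardy-space nontangential estimate
\[
\norm{N_{*}(e^{-t[DB]}G)}_{\El{p}}\lesssim \norm{G}_{\bb{H}^{p}_{DB}}\qquad (G\in\bb{H}^{p}_{DB}\cap\clos{\ran{DB}}).
\]
This I would prove via the molecular decomposition of $\bb{H}^{p}_{DB}$ from~\cite[Theorem 8.17]{AuscherEgert}: write $G=\sum_{k}\lambda_{k}m_{k}$ with $(\bb{H}^{p}_{DB},\eps,M)$-molecules and $M,\eps$ chosen sufficiently large, then prove the uniform bound $\norm{N_{*}(e^{-t[DB]}m)}_{\El{p}}\lesssim 1$ for each molecule. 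The local part on $4Q$ is handled by H\"older's inequality combined with the $\El{2}$-bound from Theorem~\ref{non tangential function L2 estimates Morris Turner}, while the global part on $\Rn\setminus 4Q$ is controlled by the $\El{2}$ off-diagonal estimates of arbitrarily large order for $\{(1+itDB)^{-1}\}_{t\in\R\setminus\{0\}}$, transferred to $e^{-t[DB]}$ by writing the Poisson semigroup as a Cauchy contour integral of resolvents. The a.e. nontangential convergence of $\El{2}$-averages then follows by applying the $\El{2}$-version in Theorem~\ref{non tangential function L2 estimates Morris Turner} to each molecule of $F$ and passing to the limit in the atomic decomposition by means of the just-established $\El{p}$-bound on $N_{*}(e^{-t[DB]}\cdot)$.

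For the reverse bound, suppose $V\in\textup{RH}^{\infty}$ and $p\in\mathcal{I}(V)$. Proposition~\ref{lemma continuous inclusions abstract hardy spaces for p<=1} gives $\norm{f}_{\dot{\textup{H}}^{1,p}_{V}}\eqsim \norm{H_{0}^{1/2}f}_{\textup{H}^{p}_{V}}$, and Lemma~\ref{lemma equivalent characterisation Hp_V by DKKP} reduces the task to estimating $\norm{\mathcal{M}_{V}(H_{0}^{1/2}f,\varphi)}_{\El{p}}$ for a fixed real-valued, even $\varphi\in\mathcal{S}(\R)$ with $\varphi(0)\ne 0$ and $\hat{\varphi}$ supported near the origin; this choice ensures finite propagation speed of $\varphi(tH_{0}^{1/2})$ via~\cite[Lemma 3.5]{HLMMY_HardySpacesDaviesGaffney}. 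I would then follow the scheme of the reverse bound in Proposition~\ref{estimate non tangential maximal function on poisson extension solution}: represent $[\varphi(tH_{0}^{1/2})(H_{0}^{1/2}f)](x)$ as an integral over $\Hn$ against the cutoff kernel $\Phi^{t}(s,x,y)=K_{t}(x,y)\chi_{t}(s)$, integrate by parts twice in $s$ using $\partial_{t}u(0,\cdot)=-H^{1/2}f$, the equation $\mathscr{H}u=0$, and the Kato equivalence~\eqref{Kato estimate} to rewrite everything in terms of $\partial_{s}u$ and $\nabla_{\mu}^{\parallel}u$ integrated against $\partial_{s}\Phi^{t}$ and $\nabla_{\mu,y}^{\parallel}\Phi^{t}$. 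The kernel estimate~\eqref{uniform estimate on Phi for reverse non tangential max function estimates} on $\nabla_{\mu,y}^{\parallel}\Phi^{t}$, obtained from Lemma~\ref{lemma L^infty estimate nabla mu from L^infty estimate on H_0}, is exactly where the $V\in\textup{RH}^{\infty}$ hypothesis is used. Controlling each integral by Whitney-box averages of $\abs{\nabla_{\mu}u}$ and applying Lemma~\ref{lemma equivalent characterisation Hp_V by DKKP} concludes.

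The main obstacle will be the reverse bound: passing from the pointwise identity for $\varphi(tH_{0}^{1/2})(H_{0}^{1/2}f)(x)$ to a manageable $\Hn$-integral involving $\partial_{s}u$ and $\nabla_{\mu}^{\parallel}u$ requires one to transfer the extra factor $H_{0}^{1/2}$ (associated with $H_{0}=-\Delta+V$, not $H$) onto the boundary through integration by parts, arranging for all boundary terms to vanish and for the Kato equivalence to cleanly replace $H^{1/2}f$ with quantities controlled by $\nabla_{\mu}u$. This is markedly more delicate than in the Dirichlet analogue of Proposition~\ref{estimate non tangential maximal function on poisson extension solution}, and is the reason the reverse bound is only claimed under the extra assumptions $V\in\textup{RH}^{\infty}$ and $p\in\mathcal{I}(V)$.
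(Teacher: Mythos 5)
Your treatment of the forward estimate is sound and close in spirit to the paper: the paper also passes through the intertwining relations and the norm equivalences of Figure~\ref{figure relations between adapted Hardy spaces} together with the embedding of Lemma~\ref{Lemma continuous inclusion V adapted hardy sobolev space for p<=1}, and then simply cites \cite[Proposition 8.27]{AuscherEgert} for the bound $\norm{N_{*}(e^{-t[DB]}G)}_{\El{p}}\lesssim\norm{G}_{\bb{H}^{p}_{DB}}$, which is exactly the statement you propose to re-derive by molecular decomposition. Your sketch of that re-derivation (local part by H\"older plus the $\El{2}$ bound of Theorem~\ref{non tangential function L2 estimates Morris Turner}, global part by off-diagonal decay) is plausible, just more work than necessary. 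The density argument for the a.e.\ convergence of Whitney averages is also the intended one.

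The reverse estimate, however, has a genuine gap, and it sits exactly where you flagged "the main obstacle". You set out to bound $\norm{\mathcal{M}_{V}(H_{0}^{1/2}f,\varphi)}_{\El{p}}$ directly and propose to relate $H_{0}^{1/2}f$ to the solution by combining $\partial_{t}u(0,\cdot)=-H^{1/2}f$ with "the Kato equivalence" \eqref{Kato estimate}. This cannot work: \eqref{Kato estimate} is an equivalence of $\El{2}$ \emph{norms} between $\norm{H^{1/2}f}_{\El{2}}$ and $\norm{\nabla_{\mu}f}_{\El{2}}$; it gives no pointwise identity and no comparison at the level of $\textup{H}^{p}_{V}$ quasinorms for $p\leq 1$, and in any case $H_{0}^{1/2}$ and $H^{1/2}$ are square roots of different operators, so no integration by parts in $s$ will convert $\varphi(tH_{0}^{1/2})(H_{0}^{1/2}f)$ into boundary values of $\partial_{s}u$. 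The hypothesis $p\in\mathcal{I}(V)$, which your argument never actually invokes, is precisely the device that closes this gap: by definition it yields $\norm{H_{0}^{1/2}f}_{\textup{H}^{p}_{V}}\lesssim\norm{\nabla_{\mu}f}_{\textup{H}^{p}_{V}}$, so the correct target is the estimate $\norm{\nabla_{\mu}f}_{\textup{H}^{p}_{V}}\lesssim\norm{N_{*}(\nabla_{\mu}u)}_{\El{p}}$. For that, one applies $\varphi(tH_{0}^{1/2})$ \emph{componentwise to $\nabla_{\mu}f$} (not to $H_{0}^{1/2}f$), uses the intertwining relation $\nabla_{\mu}^{\parallel}u(t,\cdot)=e^{-t\widetilde{M}^{1/2}}(\nabla_{\mu}f)$ so that $\nabla_{\mu}^{\parallel}u(t,\cdot)\to\nabla_{\mu}f$ in $\El{2}$, and then carries out the integration by parts against $\Phi^{t}$ exactly as in the reverse bound of Proposition~\ref{estimate non tangential maximal function on poisson extension solution}, producing
\begin{align*}
\abs{\varphi(tH_{0}^{1/2})(\nabla_{\mu}f)(x)}\leq \iint_{\Hn}\abs{\partial_{s}\Phi^{t}\,\nabla_{\mu}^{\parallel}u(s,y)}\dd s\dd y+\iint_{\Hn}\abs{\partial_{s}u\,\nabla_{\mu,y}^{\parallel}\Phi^{t}(s,x,y)}\dd s\dd y,
\end{align*}
after which the kernel bound \eqref{uniform estimate on Phi for reverse non tangential max function estimates} (this is where $V\in\textup{RH}^{\infty}$ enters, via Lemma~\ref{lemma L^infty estimate nabla mu from L^infty estimate on H_0}) and Lemma~\ref{lemma equivalent characterisation Hp_V by DKKP} give $\norm{\nabla_{\mu}f}_{\textup{H}^{p}_{V}}\lesssim\norm{N_{*}(\nabla_{\mu}u)}_{\El{p}}$. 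Only at the final step does $p\in\mathcal{I}(V)$ convert this into the claimed bound on $\norm{f}_{\dot{\textup{H}}^{1,p}_{V}}$. Incidentally, the identity $\norm{f}_{\dot{\textup{H}}^{1,p}_{V}}=\norm{H_{0}^{1/2}f}_{\textup{H}^{p}_{V}}$ is the definition of the quasinorm, not a consequence of Proposition~\ref{lemma continuous inclusions abstract hardy spaces for p<=1}.
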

\begin{proof}
    We follow the proof of the previous proposition, using \cite[Proposition 8.27]{AuscherEgert} in the range $p\leq 1$, as well as the norm equivalences of Figure \ref{figure relations between adapted Hardy spaces}. It then remains to prove the estimate $\norm{f}_{\dot{\textup{H}}^{1,p}_{V}}\lesssim \norm{N_{*}\left(\nabla_{\mu}u\right)}_{\El{p}}$, which we claim holds for all $\frac{n}{n+1}<p\leq 1$, under the assumption that $V\in\textup{RH}^{\infty}$ and $p\in\mathcal{I}(V)$. 
    We follow and adapt the third step of the proof of \cite[Proposition 17.7]{AuscherEgert}. Recall the function $\varphi\in\mathcal{S}(\R)$ and, for fixed $t>0$, the function $\Phi^{t}$ from the proof of Proposition~\ref{estimate non tangential maximal function on poisson extension solution}.
For $t>0$, we let $\varphi(tH_{0}^{1/2})$ act componentwise on $\nabla_{\mu}f\in\El{2}(\Rn;\C^{n+1})$. By the intertwining relations of Proposition~\ref{intertwining relations}, we note that $u(t,\cdot)\in\mathcal{V}^{1,2}(\Rn)$ and $\nabla_{\mu}^{\parallel}u(t,\cdot)=e^{-t\tilde{M}^{1/2}}\left(\nabla_{\mu}f\right)$, and therefore $\nabla_{\mu}^{\parallel}u(t,\cdot)=e^{-t\tilde{M}^{1/2}}\left(\nabla_{\mu}f\right)\to\nabla_{\mu}f$ as $t\to 0$, in the $\El{2}$ norm. We can therefore follow the proof of \cite[Proposition 17.7]{AuscherEgert} to obtain that for all fixed $t>0$ and $x\in\Rn$ it holds that  
\begin{align*}
    \abs{\varphi(tH_{0}^{1/2})\left(\nabla_{\mu}f\right)(x)}\leq \iint_{\Hn}\abs{\partial_{s}\Phi^{t}\; \nabla_{\mu}^{\parallel}u(s,y)}\dd s\dd y + \iint_{\Hn}\abs{\partial_{s}u \;\nabla_{\mu,y}^{\parallel} \Phi^{t}(x,y,s)}\dd s\dd y.
\end{align*}
Since $V\in\textup{RH}^{\infty}$, we can apply the pointwise estimate (\ref{uniform estimate on Phi for reverse non tangential max function estimates}) and conclude the proof as in \cite[Proposition 17.7]{AuscherEgert} to obtain (recall Lemma~\ref{lemma equivalent characterisation Hp_V by DKKP})
\begin{align*}
\norm{\nabla_{\mu}f}_{\textup{H}^{p}_{V}}\eqsim\norm{\mathcal{M}_{V}(\nabla_{\mu}f\, ,\, \varphi)}_{\El{p}}\lesssim \norm{N_{*}\left(\nabla_{\mu}u\right)}_{\El{p}}.
\end{align*}
The conclusion follows from the assumption $p\in\mathcal{I}(V)$ (see Section~\ref{subsection on reverse riesz transform bounds on Dziubanski hardy space for no coefficients case}).
\end{proof}
\subsubsection{Uniform estimates and strong continuity}
Finally, we follow \cite[Proposition 17.8]{AuscherEgert} to obtain uniform boundedness and strong continuity at $t=0$.
\begin{prop}\label{proposition strong continuity of regularity poisson solution p>1}
    Let $p\in (p_{-}(H)_{*}\vee 1 \,,\, q_{+}(H))$ with $p\leq 2q$. If $f\in\dot{\mathcal{V}}^{1,p}\cap\mathcal{V}^{1,2}$, then $u(t,\cdot):=e^{-tH^{1/2}}f$ has the following properties:
    \begin{enumerate}[label=(\roman*)]
        \item It holds that  $\nabla_{\mu}^{\parallel}u\in C_{0}([0,\infty) ; \El{p})\cap C^{\infty}((0,\infty);\El{p})$, with
        \begin{equation*}
            \sup_{t>0}\norm{\nabla_{\mu}^{\parallel}u(t,\cdot)}_{\El{p}}\eqsim \norm{\nabla_{\mu}f}_{\El{p}}\quad \textup{and}\quad
\sup_{t>0}\norm{t^{k}\partial_{t}^{k}\nabla_{\mu}^{\parallel}u(t,\cdot)}_{\El{p}}\lesssim k^{k}e^{-k}\norm{\nabla_{\mu}f}_{\El{p}}
        \end{equation*}
        for all integers $k\geq 1$.
        \item If $p<n$ and $V\not\equiv 0$, then $f\in\El{p^{*}}$ and $u\in C_{0}([0,\infty) ; \El{p^{*}})\cap C^{\infty}((0,\infty);\El{p^{*}})$, with 
        \begin{equation*}
             \sup_{t>0}\norm{u(t,\cdot)}_{\El{p^{*}}}\eqsim\norm{f}_{\El{p^{*}}}.
        \end{equation*}
    \end{enumerate}
\end{prop}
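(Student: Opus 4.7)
The strategy is to handle (i) via the intertwining relations, transferring the Poisson semigroup from $H$ to the block operator $\widetilde{M}$ acting on the gradient, and then to appeal to the general theory of Poisson semigroups on operator-adapted Hardy spaces (the analog of Proposition~\ref{uniform bounds Lp dirichlet problem poisson semigroup}, itself a consequence of \cite[Proposition 8.27]{AuscherEgert}). Part (ii) will follow directly from Proposition~\ref{uniform bounds Lp dirichlet problem poisson semigroup} applied at the Sobolev exponent $p^{*}$ after invoking the $\El{p^{*}}$-embedding of $\dot{\mathcal{V}}^{1,p}(\Rn)$.

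For (i), I would first apply Lemma~\ref{intertwining relations} with $\psi(z)=e^{-\sqrt{z}}$ (taken from $\textup{H}^{\infty}$ of the relevant sector) to obtain, for each $t>0$, the identity
\[
    \nabla_{\mu}u(t,\cdot)=\nabla_{\mu}\left(e^{-tH^{1/2}}f\right)=e^{-t\widetilde{M}^{1/2}}(\nabla_{\mu}f)
\]
in $\El{2}(\Rn;\C^{n+1})$. This is legitimate because $f\in\mathcal{V}^{1,2}(\Rn)=\dom{H^{1/2}}$ and $\nabla_{\mu}f\in\clos{\ran{\nabla_{\mu}}}=\clos{\ran{\widetilde{M}^{1/2}}}$ by \eqref{identity closure of ranges of schrodinger operators widetilde}. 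Next, Figure~\ref{figure relations between adapted Hardy spaces} (third row) provides the equivalence $\norm{\nabla_{\mu}g}_{\bb{H}^{p}_{\widetilde{M}}}\eqsim \norm{g}_{\bb{H}^{1,p}_{H}}$ for $g\in\bb{H}^{1,p}_{H}\cap\dom{H^{1/2}}$, and under the hypothesis $p\in(p_{-}(H)_{*}\vee 1, q_{+}(H))\cap(1,2q]$, Theorem~\ref{summary of identifications of H adapted Hardys spaces} supplies $\bb{H}^{1,p}_{H}=\dot{\mathcal{V}}^{1,p}\cap\El{2}$ with equivalent norms, yielding $\norm{\nabla_{\mu}f}_{\bb{H}^{p}_{\widetilde{M}}}\eqsim\norm{\nabla_{\mu}f}_{\El{p}}$.

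I would then invoke \cite[Proposition 8.27]{AuscherEgert} applied to the injective sectorial operator $\widetilde{M}$ on $\clos{\ran{\widetilde{M}}}$ (which satisfies the standard assumptions by Theorem~\ref{bounded H_infty functional calculus of H}, Theorem~\ref{McIntosh Theorem}, and the off-diagonal estimates recalled in Section~\ref{subsubsection on mapping properties ofa dapted hardy spaces}). This delivers uniform boundedness, $C_{0}$-continuity at $t=0$, and the derivative bounds $\norm{t^{k}\partial_{t}^{k}e^{-t\widetilde{M}^{1/2}}g}_{\bb{H}^{p}_{\widetilde{M}}}\lesssim k^{k}e^{-k}\norm{g}_{\bb{H}^{p}_{\widetilde{M}}}$. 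Translating back through the norm equivalence proves all estimates in (i); the lower bound $\norm{\nabla_{\mu}f}_{\El{p}}\lesssim \sup_{t>0}\norm{\nabla_{\mu}^{\parallel}u(t,\cdot)}_{\El{p}}$ follows from strong continuity at $t=0$ and Fatou. The smoothness on $(0,\infty)$ transfers from the holomorphy of the semigroup on $\El{2}$ combined with the Hardy-space continuity (as in Theorem~\ref{Poisson semigroup extension is a weak solution}).

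For (ii), Proposition~\ref{embedding homogeneous V spaces in L ^p* for p<n} applied to $f\in\dot{\mathcal{V}}^{1,p}(\Rn)$ (legitimate since $p<n$, $V\in\textup{RH}^{q}$ with $q\geq \frac{n}{2}$, $V\not\equiv 0$) gives $f\in\El{p^{*}}$ with $\norm{f}_{\El{p^{*}}}\lesssim\norm{\nabla_{\mu}f}_{\El{p}}$. I would then verify that $p^{*}$ lies in the admissible range of Proposition~\ref{uniform bounds Lp dirichlet problem poisson semigroup}: $p^{*}>1\geq \frac{n}{n+\delta/2}$ and $p^{*}>p_{-}(H)$ (since $p>p_{-}(H)_{*}$), while $p<q_{+}(H)$ combined with Proposition~\ref{properties of critical numbers proposition} ($q_{+}(H)^{*}\leq p_{+}(H)$) yields $p^{*}<p_{+}(H)$. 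Applying Proposition~\ref{uniform bounds Lp dirichlet problem poisson semigroup} at exponent $p^{*}$ and using the identification $\textup{H}^{p^{*}}_{V,\textup{pre}}=\El{p^{*}}\cap\El{2}$ from Lemma~\ref{identification of Dziubanski hardy spaces for p>=1} (together with the bounded invertibility of $b$ on $\El{p^{*}}$) yields the desired conclusion.

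The main obstacle is the clean identification in (i) of the image of $\nabla_{\mu}$ inside $\bb{H}^{p}_{\widetilde{M}}$ and the verification that $e^{-t\widetilde{M}^{1/2}}$ preserves this range; this is already encoded in the commutative diagram of Figure~\ref{figure relations between adapted Hardy spaces} together with the intertwining relations, but requires one to be careful that $\nabla_{\mu}u(t,\cdot)$ indeed stays in the appropriate intersection $\bb{H}^{p}_{\widetilde{M}}\cap\ran{\nabla_{\mu}}$ so that the equivalence of norms can be applied fibrewise in $t$. A minor additional check is the strict inclusion $p^{*}<p_{+}(H)$ in (ii), handled by noting $p<q_{+}(H)$ is strict and $\big(\cdot\big)^{*}$ is continuous and monotone in its argument below $n$.
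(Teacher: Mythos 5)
Your proposal is correct and takes essentially the same route as the paper: intertwining $\nabla_{\mu}^{\parallel}u(t,\cdot)=e^{-t\widetilde{M}^{1/2}}(\nabla_{\mu}f)$, transferring the $\El{p}$-norm equivalence via Figure~\ref{figure relations between adapted Hardy spaces} and Theorem~\ref{summary of identifications of H adapted Hardys spaces}, applying the abstract Hardy-space theory (the paper cites \cite[Propositions 8.10, 8.13]{AuscherEgert} rather than 8.27, but this is a citation detail, not a conceptual difference), and in (ii) using the embedding from Proposition~\ref{embedding homogeneous V spaces in L ^p* for p<n} together with the identification $\bb{H}^{p^{*}}_{H}=\El{p^{*}}\cap\El{2}$. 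Your range verification $p^{*}\in(p_{-}(H),p_{+}(H))$ via $p>p_{-}(H)_{*}$ and $q_{+}(H)^{*}\leq p_{+}(H)$ matches the paper's use of Proposition~\ref{properties of critical numbers proposition}.
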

\begin{proof} 
    Since $f\in\mathcal{V}^{1,2}(\Rn)$, the intertwining relations (Lemma~\ref{intertwining relations}) imply that $u(t,\cdot)\in\mathcal{V}^{1,2}(\Rn)$ for each $t\geq 0$, with
    \begin{equation*}
        \nabla_{\mu}^{\parallel}u(t,\cdot)=\nabla_{\mu}\left(e^{-tH^{1/2}}f\right)=e^{-t\widetilde{M}^{1/2}}(\nabla_{\mu}f).
    \end{equation*}
    The argument of the proof of Proposition~\ref{proposition  L^p square function estimate regularity problem} shows that, in the range of exponents $p$ considered, it holds that  $\norm{g}_{\bb{H}^{p}_{\widetilde{M}}}\eqsim \norm{g}_{\El{p}}$ for all $g\in\bb{H}^{p}_{\widetilde{M}}\cap\ran{\nabla_{\mu}}$. Moreover, it shows that $\nabla_{\mu}f\in\bb{H}^{p}_{\widetilde{M}}$. We can therefore apply the abstract theory of \cite[Propositions 8.10 and 8.13]{AuscherEgert} to the sectorial operator $\widetilde{M}$, as we did in the proof of Proposition~\ref{uniform bounds Lp dirichlet problem poisson semigroup}, in order to obtain the first item.
    
    Next, if we assume that $p<n$ and $V\not\equiv 0$, then Proposition~\ref{embedding homogeneous V spaces in L ^p* for p<n} shows that there is a continuous embedding $\dot{\mathcal{V}}^{1,p}\subseteq \El{p^{*}}$. 
    The first item in the proposition means that $u\in C_{0}([0,\infty);\dot{\mathcal{V}}^{1,p})\cap C^{\infty}((0,\infty) ; \dot{\mathcal{V}}^{1,p})$ with the boundary limit $\lim_{t\to 0^{+}}u(t,\cdot)= f$ in the $\dot{\mathcal{V}}^{1,p}$ norm. Together with the embedding this implies the required regularity.
    Moreover, since $u(t,\cdot)\to f $ in $\El{p^{*}}$ as $t\to 0$, we obtain $\norm{f}_{\El{p^{*}}}\leq \sup_{t>0}\norm{u(t,\cdot)}_{\El{p^{*}}}$.
    In addition, it follows from Proposition~\ref{properties of critical numbers proposition} and Theorem~\ref{summary of identifications of H adapted Hardys spaces} that $\bb{H}^{p^{*}}_{H}=\El{p^{*}}\cap\El{2}$ with equivalent $p^{*}$-norms. Therefore, the sectorial counterpart of \cite[Proposition 8.10]{AuscherEgert} shows that $\norm{u(t,\cdot)}_{\El{p^{*}}}\lesssim \norm{f}_{\El{p^{*}}}$, uniformly for all $t>0$. We also note that the embedding $\dot{\mathcal{V}}^{1,p}\subseteq \El{p^{*}}$ implies that $\norm{u(t,\cdot)}_{\El{p^{*}}}\lesssim \norm{\nabla_{\mu}^{\parallel}u(t,\cdot)}_{\El{p}}\lesssim\norm{\nabla_{\mu}f}_{\El{p}}$ for all $t>0$.
\end{proof}
In the case $p\leq 1$, we have the following analogous result.
\begin{prop}\label{proposition strong continuity estimate regularity problem for p<1}
    Let $p\in(p_{-}(H)_{*}\vee \frac{n}{n+1},1]$. If $f\in\dot{\textup{H}}^{1,p}_{V,\textup{pre}}$, then $u(t,\cdot):=e^{-tH^{1/2}}f$ has the following properties:
    \begin{enumerate}[label=(\roman*)]
    \item It holds that $f\in\El{p^{*}}$ and $u\in C_{0}([0,\infty) ; \El{p^{*}})\cap C^{\infty}((0,\infty);\El{p^{*}})$, with 
        \begin{equation*}
             \sup_{t>0}\norm{u(t,\cdot)}_{\El{p^{*}}}\eqsim \norm{f}_{\El{p^{*}}}.
        \end{equation*}
        \item If $p\in(p_{-}(H)_{*}\vee \frac{n}{n+\delta/2},1]\cap\, \mathcal{I}(V)$, then $u\in C_{0}([0,\infty) ; \dot{\textup{H}}^{1,p}_{V,\textup{pre}})\cap C^{\infty}((0,\infty);\dot{\textup{H}}^{1,p}_{V,\textup{pre}})$, with
        \begin{equation*}
\sup_{t>0}\norm{u(t,\cdot)}_{\dot{\textup{H}}^{1,p}_{V}}\eqsim \norm{f}_{\dot{\textup{H}}^{1,p}_{V}}\quad \textup{and}\quad \sup_{t>0}\norm{t^{k}\partial_{t}^{k}u(t,\cdot)}_{\dot{\textup{H}}^{1,p}_{V}}\lesssim k^{k}e^{-k}\norm{f}_{\dot{\textup{H}}^{1,p}_{V}}
        \end{equation*}
        for all integers $k\geq 1$.
    \end{enumerate}
\end{prop}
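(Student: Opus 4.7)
The plan is to follow the approach of Proposition~\ref{proposition strong continuity of regularity poisson solution p>1} almost verbatim, but now base every step on the Hardy-space identifications of Theorem~\ref{summary of identifications of H adapted Hardys spaces} combined with the abstract semigroup machinery on operator-adapted Hardy spaces already exploited in the proofs of Propositions~\ref{uniform bounds Lp dirichlet problem poisson semigroup} and~\ref{proposition strong continuity of regularity poisson solution p>1}.

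For part (i), the inclusion $f \in \El{p^{*}}$ together with $\|f\|_{\El{p^{*}}} \lesssim \|f\|_{\dot{\textup{H}}^{1,p}_{V}}$ is immediate from Proposition~\ref{continuous embedding V adapted Hardy sobolev space in Ell^{p*}}. To treat the semigroup orbit in $\El{p^{*}}$, I first check that $p^{*}$ falls in the identification range of Theorem~\ref{summary of identifications of H adapted Hardys spaces}: the assumption $p > p_{-}(H)_{*}$ gives $p^{*} > p_{-}(H)$; since $p \leq 1 < 2 \leq q_{+}(H)$ (Corollary~\ref{improvement on property of interval J(H)}), Proposition~\ref{properties of critical numbers proposition} yields $p^{*} \leq q_{+}(H)^{*} \leq p_{+}(H)$; and $p^{*} > 1 > \frac{n}{n+\delta/2}$ is trivial. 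Hence $\bb{H}^{p^{*}}_{H} = b\textup{H}^{p^{*}}_{V,\textup{pre}} = \El{p^{*}} \cap \El{2}$ with equivalent $p^{*}$-norms (using Lemma~\ref{identification of Dziubanski hardy spaces for p>=1} together with $b^{\pm 1} \in \El{\infty}$). Applying the sectorial abstract semigroup theory to $H$ on $\bb{H}^{p^{*}}_{H}$, exactly as in the proof of Proposition~\ref{uniform bounds Lp dirichlet problem poisson semigroup}, produces $u \in C_{0}([0,\infty); \bb{H}^{p^{*}}_{H}) \cap C^{\infty}((0,\infty); \bb{H}^{p^{*}}_{H})$ with $\sup_{t>0}\|u(t,\cdot)\|_{\bb{H}^{p^{*}}_{H}} \eqsim \|f\|_{\bb{H}^{p^{*}}_{H}}$, which transfers through the identification to the claimed $\El{p^{*}}$-regularity and norm equivalence.

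For part (ii), the additional hypotheses $p \in \mathcal{I}(V)$ and $p > \frac{n}{n+\delta/2}$ activate the identification $\bb{H}^{1,p}_{H} \cap \mathcal{V}^{1,2} = \dot{\textup{H}}^{1,p}_{V,\textup{pre}}$ with equivalent $p$-quasinorms from Theorem~\ref{summary of identifications of H adapted Hardys spaces}. Since $f \in \dot{\textup{H}}^{1,p}_{V,\textup{pre}}$, the orbit $u(t,\cdot) = e^{-tH^{1/2}}f$ lies in $\bb{H}^{1,p}_{H}$, and by Theorem~\ref{Poisson semigroup extension is a weak solution} also in $\mathcal{V}^{1,2} = \dom{H^{1/2}}$ for every $t \geq 0$, so the identification applies uniformly along the orbit. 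Invoking the same abstract semigroup theory, now for $H$ on $\bb{H}^{1,p}_{H}$ with smoothness index $s=1$, supplies the regularity $u \in C_{0}([0,\infty); \bb{H}^{1,p}_{H}) \cap C^{\infty}((0,\infty); \bb{H}^{1,p}_{H})$, the uniform bound $\sup_{t>0}\|u(t,\cdot)\|_{\bb{H}^{1,p}_{H}} \lesssim \|f\|_{\bb{H}^{1,p}_{H}}$, and the derivative estimate $\sup_{t>0}\|t^{k}\partial_{t}^{k}u(t,\cdot)\|_{\bb{H}^{1,p}_{H}} \lesssim k^{k}e^{-k}\|f\|_{\bb{H}^{1,p}_{H}}$ for integers $k \geq 1$. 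The matching lower bound $\sup_{t>0}\|u(t,\cdot)\|_{\dot{\textup{H}}^{1,p}_{V}} \gtrsim \|f\|_{\dot{\textup{H}}^{1,p}_{V}}$ follows from lower semicontinuity once strong continuity at $t=0$ is known, since $u(t,\cdot) \to f$ in $\dot{\textup{H}}^{1,p}_{V,\textup{pre}}$ as $t \to 0^{+}$.

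I expect the main delicacy to lie not in any single estimate but in checking that the abstract machinery transfers cleanly, namely that the Poisson orbit stays inside the intersection $\bb{H}^{1,p}_{H} \cap \mathcal{V}^{1,2}$ at every $t \geq 0$ so that Theorem~\ref{summary of identifications of H adapted Hardys spaces} may be used to convert bounds in $\bb{H}^{1,p}_{H}$ into bounds in $\dot{\textup{H}}^{1,p}_{V,\textup{pre}}$. For $t > 0$ this is ensured by the regularising effect of the holomorphic semigroup, which lands in $\dom{H^{k}}$ for all $k \geq 1$ and hence in $\mathcal{V}^{1,2}$; at $t=0$ it is given by the hypothesis $f \in \dot{\textup{H}}^{1,p}_{V,\textup{pre}}$ itself.
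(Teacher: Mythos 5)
Your proposal is correct and follows essentially the same route as the paper: item (i) via the identification $\bb{H}^{p^{*}}_{H}=\El{p^{*}}\cap\El{2}$ (exactly as in the second part of Proposition~\ref{proposition strong continuity of regularity poisson solution p>1}), and item (ii) via the identification $\dot{\textup{H}}^{1,p}_{V,\textup{pre}}=\bb{H}^{1,p}_{H}\cap\mathcal{V}^{1,2}$ from Corollary~\ref{characterisation abstract H^p p<2} combined with the abstract semigroup theory of \cite[Propositions 8.10 and 8.13]{AuscherEgert}, as in Proposition~\ref{uniform bounds Lp dirichlet problem poisson semigroup}. Your verification that $p^{*}$ lands in the identification range and that the orbit stays in $\mathcal{V}^{1,2}$ for $t>0$ supplies details the paper leaves implicit, but the argument is the same.
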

\begin{proof}
The first item follows from the identification $\bb{H}^{p^{*}}_{H}=\El{p^{*}}\cap\El{2}$, as in the proof of the second part of Proposition~\ref{proposition strong continuity of regularity poisson solution p>1} above. The second item uses the identification $\dot{\textup{H}}^{1,p}_{V,\textup{pre}}=\bb{H}^{1,p}_{H}\cap\mathcal{V}^{1,2}$ with equivalent $p$-(quasi)norms (see Corollary~\ref{characterisation abstract H^p p<2}), along with the abstract theory of \cite[Propositions 8.10 and 8.13]{AuscherEgert} for the sectorial operator $H$, as in the proof of Proposition~\ref{uniform bounds Lp dirichlet problem poisson semigroup}. 
\end{proof}

\section{Solvability and estimates for Theorems~\ref{existence and uniqueness result Dirichlet Lp} and \ref{existence and uniqueness result Regularity Lp}}\label{section on existence for the dirichlet and regularit problems}
With the estimates of Section~\ref{section estimates towards the dirichelt and regularity problems} at hand, we shall now be able to prove the existence results for the Dirichlet and Regularity problems. Throughout this section, we assume that $n\geq 3$, $q\geq \max\set{\frac{n}{2},2}$ and $V\in\textup{RH}^{q}(\Rn)$.

\subsection{The Dirichlet problem}
The following result contains solvability and estimates for Theorem~\ref{existence and uniqueness result Dirichlet Lp}. We follow the proof of \cite[Theorem 1.1]{AuscherEgert} outlined in \cite[Section~17.3]{AuscherEgert}. For notational convenience, we introduce $\textup{H}^{p}_{V}(\Rn):=\Ell{p}$, with the corresponding norm, for $p\in(1,\infty)$. This is in accordance with Lemma~\ref{identification of Dziubanski hardy spaces for p>=1}.
\begin{thm}\label{existence result for Dirichlet problem again}
    Let $n\geq 3$, $q\geq\max\{\frac{n}{2},2\}$ and $V\in\textup{RH}^{q}(\Rn)$. Suppose that $p\geq 1$ and $p\in (p_{-}(H), p_{+}(H)^{*})$. If either $p>1$ and $f\in \El{p}$, or $p=1$ and $f\in b\textup{H}^{1}_{V}$, then the Dirichlet problem $\left(\mathcal{D}\right)^{\mathscr{H}}_{p}$ for data $f\in b\textup{H}^{p}_{V}$ admits a weak solution $u\in\mathcal{V}^{1,2}_{\textup{loc}}(\Hn)$, with the following additional properties:
    \begin{enumerate}[label=\emph{(\roman*)}]
\item There is comparability: $\norm{N_{*}(u)}_{\El{p}}\lesssim \norm{b^{-1}f}_{\textup{H}^{p}_{V}}\eqsim \norm{S\left(t\nabla_{\mu}u\right)}_{\El{p}}$.
    If either $p>1$, or $p=1$ and $V\in\textup{RH}^{\infty}$, then the reverse non-tangential maximal function bound holds.
\item The non-tangential convergence improves to $\El{2}$ averages:
\begin{equation*}
        \lim_{t\to 0^{+}}\fiint_{W(t,x)}\abs{u(s,y) - f(x)}^{2}\dd s\dd y =0 \text{ for almost every  }x\in\Rn.
    \end{equation*}
\item If $p<p_{+}\left(H\right)$, then $b^{-1}u\in C_{0}\left(\left[0,\infty \right) ;\textup{H}^{p}_{V} \right)\cap C^{\infty}\left((0,\infty);\textup{H}^{p}_{V}\right)$, where $t$ is the distinguished variable, and $u(0,\cdot)=f$, with $\sup_{t\geq 0}\norm{b^{-1}u(t,\cdot)}_{\textup{H}^{p}_{V}}\eqsim \norm{b^{-1}f}_{\textup{H}^{p}_{V}}$.
\item If $p\geq p_{+}(H)$, then for all\hspace{0.1cm} $T>0$ and all compact subsets $K\subset\Rn$, it holds that  $u\in C\left([0,T];\El{2}(K)\right)$, with $u(0,\cdot)=\left.f\right|_{K}$, and $\sup_{0<t\leq T}\norm{u(t,\cdot)}_{\El{2}(K)}\lesssim \norm{f}_{\Ell{p}}$,
where the implicit constant depends on $T$ and $K$.
\end{enumerate}
\end{thm}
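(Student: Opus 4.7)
The plan is to establish the theorem first on the dense subspace $f \in b\textup{H}^{p}_{V,\textup{pre}}$ and then extend by approximation. In the dense case, $b^{-1}f \in \textup{H}^{p}_{V,\textup{pre}} \subseteq \El{2}(\Rn)$, so $u(t,\cdot) := e^{-tH^{1/2}}f$ is well-defined and is a weak solution of $\mathscr{H}u = 0$ belonging to $\mathcal{V}^{1,2}_{\loc}(\Hn)$ by Theorem \ref{Poisson semigroup extension is a weak solution}. Properties (i)--(iv) then follow directly from the estimates of Section \ref{section estimates towards the dirichelt and regularity problems}: (i) is the combination of Propositions \ref{SQuare function estimate dirichlet poisson semigroup} and \ref{estimate non tangential maximal function on poisson extension solution} (the reverse nontangential maximal bound being provided by the latter when $p>1$, or when $p=1$ and $V \in \textup{RH}^{\infty}$); (ii) is the almost everywhere convergence in Proposition \ref{estimate non tangential maximal function on poisson extension solution}; (iii) is Proposition \ref{uniform bounds Lp dirichlet problem poisson semigroup} when $p < p_{+}(H)$; and (iv), which covers $p \in [p_{+}(H), p_{+}(H)^{*})$, follows from Proposition \ref{local estimate time and space Poisson solution p>p+} together with strong $\El{2}$-continuity of the Poisson semigroup at $t=0$ from Theorem \ref{Poisson semigroup extension is a weak solution} localised to compact subsets $K \subset \Rn$.

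For general $f \in b\textup{H}^{p}_{V}$, I would pick an approximating sequence $f_k \in b\textup{H}^{p}_{V,\textup{pre}}$ with $f_k \to f$ in $b\textup{H}^{p}_{V}$ (this uses density of $\El{p}(\Rn) \cap \Ell{2}$ in $\Ell{p}$ for $p>1$, and the very definition of the completion in the $p=1$ case). Setting $u_k(t,\cdot) := e^{-tH^{1/2}}f_k$ and applying the dense-case estimates to $u_k - u_j$,
\[
\norm{N_{*}(u_k - u_j)}_{\El{p}} + \norm{S(t\nabla_{\mu}(u_k - u_j))}_{\El{p}} \lesssim \norm{b^{-1}(f_k - f_j)}_{\textup{H}^{p}_{V}} \to 0.
\]
When $p < p_{+}(H)$, Proposition \ref{uniform bounds Lp dirichlet problem poisson semigroup} gives $\sup_{t>0}\norm{b^{-1}(u_k - u_j)(t,\cdot)}_{\textup{H}^{p}_{V}} \to 0$, which combined with the embedding $\textup{H}^{p}_{V} \hookrightarrow \Ell{p}$ (Lemma \ref{identification of Dziubanski hardy spaces for p>=1}) and Fubini produces Cauchyness in $\El{1}_{\loc}(\Hn)$; when $p \geq p_{+}(H)$, Proposition \ref{local estimate time and space Poisson solution p>p+} applied to $u_k - u_j$ delivers the same Cauchyness directly in $\El{2}_{\loc}(\Hn)$. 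Lemma \ref{limit of weak solutions is weak solution to elliptic equation} then produces $u \in \mathcal{V}^{1,2}_{\loc}(\Hn)$ weakly solving $\mathscr{H}u = 0$, with $u_k \to u$ in $\mathcal{V}^{1,2}_{\loc}(\Hn)$. Passing to the limit via Fatou yields the upper bounds in (i); the reverse square function bound is recovered by combining the dense-case equivalence $\norm{S(t\nabla_\mu u_k)}_{\El{p}} \eqsim \norm{b^{-1}f_k}_{\textup{H}^{p}_{V}}$ with $\norm{S(t\nabla_{\mu}(u - u_k))}_{\El{p}} \to 0$, obtained by letting $j \to \infty$ in the display above via Fatou on the square function. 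Properties (iii) and (iv) transfer to the limit via the corresponding uniform-in-$t$ controls.

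The delicate step, and the main technical obstacle, is the nontangential convergence (ii) for the limit $u$, because such convergence is not automatically stable under approximation of boundary data. The plan is to decompose, for fixed $k \geq 1$ and $(t,x) \in \Hn$,
\[
\left(\fiint_{W(t,x)} |u(s,y) - f(x)|^{2}\dd s\dd y \right)^{1/2} \leq N_{*}(u - u_k)(x) + \left(\fiint_{W(t,x)} |u_k(s,y) - f_k(x)|^{2}\dd s\dd y\right)^{1/2} + |f_k(x) - f(x)|.
\]
Since $\norm{N_{*}(u - u_k)}_{\El{p}} \to 0$ and $\norm{b^{-1}(f_k - f)}_{\textup{H}^{p}_{V}} \to 0$, Chebyshev together with Borel--Cantelli supplies a subsequence along which $N_{*}(u - u_{k_j})(x) \to 0$ and $f_{k_j}(x) \to f(x)$ simultaneously for almost every $x \in \Rn$, using the embedding $\textup{H}^{p}_{V} \hookrightarrow \Ell{p}$ (respectively $\textup{H}^{1}_{V} \hookrightarrow \Ell{1}$) from Lemma \ref{identification of Dziubanski hardy spaces for p>=1} to pointwise control $f_k - f$. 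For such an $x$ and any $\varepsilon > 0$, choose $j$ large enough so that the first and third terms are below $\varepsilon/3$, and then invoke (ii) for $u_{k_j}$ from the dense case to make the middle term below $\varepsilon/3$ for all sufficiently small $t>0$. Taking $\limsup$ as $t \to 0^{+}$ and letting $\varepsilon \to 0$ yields the desired convergence. The subtlety is orchestrating the two independent subsequence extractions so that the threshold for $t$ can be chosen measurably in $x$; this is handled cleanly by a $\limsup$ argument, but it is the point of the proof requiring the most care.
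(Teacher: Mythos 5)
Your proposal is correct and follows essentially the same route as the paper: Poisson semigroup solutions for the dense class via Theorem~\ref{Poisson semigroup extension is a weak solution} and Propositions~\ref{SQuare function estimate dirichlet poisson semigroup}--\ref{local estimate time and space Poisson solution p>p+}, a Cauchy-sequence argument combined with Lemma~\ref{limit of weak solutions is weak solution to elliptic equation} to pass to general data, and the standard Chebyshev/superlevel-set density argument for the almost everywhere non-tangential convergence in (ii). The only cosmetic difference is that the paper obtains $\El{2}_{\loc}(\Hn)$-Cauchyness directly from the fact that the $\tent{0,p}_{\infty}$ topology is finer than that of $\El{2}_{\loc}(\Hn)$, whereas you route it through the uniform-in-$t$ bounds of Propositions~\ref{uniform bounds Lp dirichlet problem poisson semigroup} and \ref{local estimate time and space Poisson solution p>p+}; both are valid.
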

\begin{proof}
    Let $f\in b\textup{H}^{p}_{V}$. If $1<p<\infty$, then $f\in\Ell{p}$ and we pick a sequence $(f_{k})_{k}\subset \El{p}\cap\El{2}$ such that $f_k \to f$ in $\El{p}$. If $p=1$, we choose a sequence $(f_k)_k\subset b\textup{H}^{1}_{V,\textup{pre}}(\Rn)$ such that $b^{-1}f_{k}\to b^{-1}f$ in $\textup{H}^{1}_{V}(\Rn)$, hence also in $\Ell{1}$ (see the definition of $\textup{H}^{1}_{V}$ in Section~\ref{subsubsection with definition of completion H^{1}_{V}}). For each $k\geq 1$, we obtain a weak solution $u_{k}\in\mathcal{V}^{1,2}_{\loc}(\Hn)$ corresponding to the boundary data $f_k$, given by the Poisson extension $u_{k}(t,\cdot):=e^{-tH^{1/2}}f_k$ (see Theorem~\ref{Poisson semigroup extension is a weak solution}). For all $j,k\geq 1$, it follows from linearity of $e^{-tH^{1/2}}$ that 
    \begin{equation*}
        u_{k,j}:=u_k(t,\cdot)-u_j(t,\cdot)=e^{-tH^{1/2}}(f_k -f_j).
    \end{equation*}
    Consequently, it follows from Propositions \ref{SQuare function estimate dirichlet poisson semigroup} and \ref{estimate non tangential maximal function on poisson extension solution} that for all $j,k\geq 1$ it holds that 
    \begin{align*}\label{estimate non tangential max function Cauchy sequence Poisson solutions}
        \norm{u_k-u_j}_{\tent{0,p}_{\infty}}&=\norm{N_{*}(u_k-u_j)}_{\El{p}}\lesssim\norm{b^{-1}(f_k-f_j)}_{\textup{H}^{p}_{V}}\eqsim \norm{t\nabla_{\mu}(u_k-u_j)}_{\tent{p}}.
    \end{align*}
    It follows that the sequence $(u_k)_{k\geq 1}$ is Cauchy in $\tent{0,p}_{\infty}$ and that the sequence $(t\nabla_{\mu}u_k)_{k\geq 1}$ is Cauchy in $\tent{p}$. By completeness, there are $u\in\tent{0,p}_{\infty}$ and $v\in\tent{p}$ such that $u_k\to u$ in $\tent{0,p}_{\infty}$ and $t\nabla_{\mu}u_k\to v$ in $\tent{p}$. Both convergences also hold in $\El{2}_{\loc}(\Hn)$. Lemma~\ref{limit of weak solutions is weak solution to elliptic equation} shows that $u\in\mathcal{V}^{1,2}_{\loc}(\Hn)$ is a weak solution in $\Hn$, and that $u_k\to u$ in the $\mathcal{V}^{1,2}_{\loc}(\Hn)$ topology. Consequently, $v=t\nabla_{\mu}u$. We can therefore pass to the limit in the estimates to obtain 
    \begin{align*}
\norm{u}_{\tent{0,p}_{\infty}}=\norm{N_{*}u}_{\El{p}}\lesssim\norm{b^{-1}f}_{\textup{H}^{p}_{V}}\eqsim \norm{S\left(t\nabla_{\mu}u\right)}_{\El{p}}=\norm{t\nabla_{\mu}u}_{\tent{p}}.
    \end{align*}
    The last statement of Proposition~\ref{estimate non tangential maximal function on poisson extension solution} shows that the reverse estimate $\norm{b^{-1}f}_{\textup{H}^{p}_{V}}\lesssim \norm{N_{*}u}_{\El{p}}$ holds if $p>1$, and it also holds for $p=1$ if $V\in\textup{RH}^{\infty}$. This proves (i).

    To prove (iii), let us now assume that $p<p_{+}(H)$. By Proposition~\ref{uniform bounds Lp dirichlet problem poisson semigroup}, for each $k,j\geq 1$, it holds that  $b^{-1}(u_k-u_j)\in C_{0}([0,\infty);\textup{H}^{p}_{V,\textup{pre}})\cap \, C^{\infty}((0,\infty);\textup{H}^{p}_{V,\textup{pre}})$ in the variable $t$, with 
    \begin{align*}
        \norm{b^{-1}(u_k-u_j)}_{C_{0}\left([0,\infty);\,\textup{H}^{p}_{V,\textup{pre}}\right)}:=\sup_{t\geq 0}\norm{b^{-1}(u_{k}(t,\cdot)-u_{j}(t,\cdot))}_{\textup{H}^{p}_{V}}\eqsim \norm{b^{-1}(f_k-f_j)}_{\textup{H}^{p}_{V}}.
    \end{align*}
    Since $\textup{H}^{p}_{V}(\Rn)$ is complete, the space $C_{0}([0,\infty);\textup{H}^{p}_{V})$ is a Banach space, hence there is $w\in C_{0}([0,\infty);\textup{H}^{p}_{V})$ such that $b^{-1}u_k\to w$ in $C_{0}([0,\infty);\textup{H}^{p}_{V})$. Since $p\geq 1$, this implies that $b^{-1}u_k\to w$ in the $\El{1}_{\loc}(\Hn)$ topology as well, hence $w=b^{-1}u\in C_{0}([0,\infty);\textup{H}^{p}_{V})$.
Moreover, taking the limit as $k\to\infty$ in the equivalence $\norm{b^{-1}u_k}_{C_{0}([0,\infty);\;\textup{H}^{p}_{V})}\eqsim \norm{b^{-1}f_{k}}_{\textup{H}^{p}_{V}}$ yields that $\norm{b^{-1}u}_{C_{0}([0,\infty);\textup{H}^{p}_{V})}\eqsim \norm{b^{-1}f}_{\textup{H}^{p}_{V}}$, and that $b^{-1}u(0,\cdot)=b^{-1}f$ in $\textup{H}^{p}_{V}$ (by Proposition~\ref{uniform bounds Lp dirichlet problem poisson semigroup}).
    We obtain in a similar manner that $u\in C^{\infty}\left((0,\infty);\textup{H}^{p}_{V}\right)$, and this proves (iii).

    To prove (iv), let us now assume that $p\geq p_{+}(H)$. Let $T>0$ and $K\subset \Rn$ be compact. It follows from Proposition~\ref{local estimate time and space Poisson solution p>p+} that $u_{k}\in C([0,T];\El{2}(K))$ for all $k\geq 1$, and that
    \begin{equation*}
        \norm{u_k-u_j}_{C([0,T];\;\El{2}(K))}:=\sup_{0\leq t\leq T}\norm{u_{k}(t,\cdot)-u_{j}(t,\cdot)}_{\El{2}(K)}\leq c_{K,T}\norm{f_k -f_j}_{\El{p}}
    \end{equation*}
    for all $j,k\geq 1$, where $c_{K,T}$ is a constant depending on $K$ and $T$. It follows from completeness of $C([0,T];\El{2}(K))$ that there exists some $w\in C([0,T];\El{2}(K))$ such that $\norm{u_k- w}_{C([0,T];\El{2}(K))}\to 0$ as $k\to\infty$. Since $u_k\to u $ in $\El{2}_{\loc}(\Hn)$, this implies that $u|_{(0,T]\times K}=w$ and therefore $u\in C((0,T] ; \El{2}(K))$. We can pass to the limit as $k\to\infty$ to obtain the estimate
    \begin{equation*}
        \sup_{0\leq t\leq T}\norm{u(t,\cdot)}_{\El{2}(K)}\leq c_{K,T}\norm{f}_{\El{p}}.
    \end{equation*}
    This proves (iv). Finally, we obtain the non-tangential almost everywhere convergence in (ii) using the well-known density argument (recalled in \cite[Section~17.3]{AuscherEgert}) relying on the estimate
    \begin{equation*}
        \norm{N_{*}(u-u_k)}_{\El{p}}\lesssim \norm{b^{-1}(f-f_k)}_{\textup{H}^{p}_{V}}
    \end{equation*}
and the identity \eqref{non tangential limit for sirichlet problem in L2 POisson extension} applied to $u_k$.
\end{proof}
\subsection{The Regularity problem}\label{subsection on existence for the regularity problem}
Before proving the solvability and estimates for Theorem~\ref{existence and uniqueness result Regularity Lp}, we provide justification for the statement made in the introduction that \cite[Theorem 1.1]{MorrisTurner} already implies that the Regularity problem $\left(\mathcal{R}\right)^{\mathscr{H}}_{2}$ is well-posed if $n\geq 3$, $q\geq \max\set{\frac{n}{2},2}$, $V\in\textup{RH}^{q}(\Rn)$ and $V\not\equiv 0$. The reason we need an extra argument is because the type of convergence of the solution towards the boundary data that was obtained in \cite[Theorem 1.1]{MorrisTurner} is \emph{a priori} different from the one considered in the problem $\left(\mathcal{R}\right)^{\mathscr{H}}_{2}$ stated in the introduction. 

To this end, we first note that Proposition~\ref{embedding homogeneous V spaces in L ^p* for p<n} shows that any $f\in\dot{\mathcal{V}}^{1,2}(\Rn)$ belongs to $\Ell{2^{*}}$ and therefore belongs to the space $\dot{\mathcal{V}}^{1,2}(\Rn)$ defined in \cite[Section~2.1]{MorrisTurner} (recall Remark \ref{remark about different definition of V^{11,2} in Morris Turner}). Now, let $u\in\mathcal{V}^{1,2}_{\loc}(\Hn)$ be the solution to the regularity problem provided by \cite[Theorem 1.1]{MorrisTurner} for data $f\in\dot{\mathcal{V}}^{1,2}(\R^n)$. Since $N_{*}(\nabla_{t,x}u)\in\Ell{2}$, we claim that a.e. convergence of $\El{2}$-Whitney averages $\scriptstyle{N_{*}}$-$\lim_{t\to 0^{+}}\nabla_{\mu}^{\parallel}u(t,\cdot)=\nabla_{\mu}f$ and the $\El{2}$ convergence $\lim_{t\to 0^{+}}\norm{\nabla_{\mu}^{\parallel}u(t,\cdot)-\nabla_{\mu}f}_{\El{2}}=0$ obtained in \cite[Theorem 1.1]{MorrisTurner} together imply the convergence 
\begin{equation}\label{non tangential convergence as a consequence of convergence of nabla mu of solution}
\lim_{t\to 0^{+}}\fiint_{W(t ,x)}\abs{u(s,y)-f(x)}^{2}\dd s\dd y = 0,
\end{equation}
for almost every $x\in\Rn$. To see this, we note that \cite[Proposition A.5]{AuscherEgert} with $q=r=p=2$ shows that there exists some $u_{0}\in\dot{\textup{W}}^{1,2}(\Rn)$ such that 
\begin{equation}\label{non tangential convergence of a function u to a trace u_0}
    \left(\fiint_{W(t,x)}\abs{u(s,y)-u_{0}(x)}^{2}\dd s\dd y\right)^{1/2}\lesssim t N_{*}(\nabla_{t,x}u)(x)
\end{equation}
for almost every $x\in\Rn$ and all $t>0$. In addition, $\lim_{t\to 0^{+}}\dashint_{t/2}^{2t}u(s,\cdot)\dd s = u_{0}$ in $\mathcal{D}'(\Rn)$. The later identity implies that $\lim_{t\to 0^{+}}\dashint_{t/2}^{2t}\langle\nabla_x u(s,\cdot),\varphi\rangle\dd s =  \langle\nabla_x u_0,\varphi\rangle$ for all $\varphi \in C_c^\infty(\R^n;\C^n)$, which combined with the convergence $\lim_{t\to 0^{+}}\norm{\nabla_{x}u(t,\cdot)-\nabla_{x}f}_{\El{2}}=0$ shows that $\nabla_{x}u_{0}=\nabla_{x}f$ so there is $c\in\C$ such that $u_{0}=f+c$. To see that $c=0$, we note that \eqref{non tangential convergence of a function u to a trace u_0} and Lebesgue's differentiation theorem imply that 
    \begin{align*}
        &\fiint_{W(t,x)}\abs{V^{1/2}(y)u(s,y)-V^{1/2}(x)u_{0}(x)}\dd y\dd s\\
        &\lesssim\left(\dashint_{B(x,t)}V\right)^{1/2}tN_{*}\left(\nabla_{t,x}u\right)(x)+ \abs{u_{0}(x)}\left(\dashint_{B(x,t)}\abs{V^{1/2}(y)-V^{1/2}(x)}\dd y\right)\to 0
    \end{align*}
as $t\to 0^{+}$, for almost every $x\in\Rn$. When combined with the non-tangential convergence $\scriptstyle{N_{*}}$-$\lim_{t\to 0^{+}}V^{1/2}u(t,\cdot)=V^{1/2}f$, this shows that $V^{1/2}(x)u_{0}(x)=V^{1/2}(x)f(x)$ for almost every $x\in\Rn$. Since $V\not\equiv 0$, it follows that $c=0$, and this proves \eqref{non tangential convergence as a consequence of convergence of nabla mu of solution}, so $\left(\mathcal{R}\right)^{\mathscr{H}}_{2}$ is well-posed.

The following result contains the solvability and estimates for Theorem~\ref{existence and uniqueness result Regularity Lp} in the range $p>1$. We follow the proof of \cite[Theorem 1.2]{AuscherEgert} outlined in \cite[Section~17.3]{AuscherEgert}.
\begin{thm}\label{existence and uniqueness result Regularity Lp again}
    Let $n\geq 3$, $q\geq\max\{\frac{n}{2},2\}$ and $V\in\textup{RH}^{q}(\Rn)$. Let ${p\in (p_{-}(H)_{*}\vee 1,q_{+}(H))}$ and $p\leq 2q$. If $f\in\dot{\mathcal{V}}^{1,p}\left(\Rn\right)$, then the Regularity problem $\left(\mathcal{R}\right)^{\mathscr{H}}_{p}$ for data $f\in\dot{\mathcal{V}}^{1,p}(\Rn)$ admits a weak solution $u\in\mathcal{V}^{1,2}_{\textup{loc}}(\Hn)$ with the following additional properties:
    \begin{enumerate}[label=\emph{(\roman*)}]
        \item The following estimates hold:
       \begin{equation*}
        \norm{S(t\nabla_{\mu}\partial_{t}u)}_{\El{p}}\eqsim\norm{\nabla_{\mu}f}_{\El{p}}\eqsim \norm{N_{*}\left(\nabla_{\mu}u\right)}_{\El{p}}\gtrsim  \norm{A_{\perp\perp}H^{1/2}f}_{\El{p}},
    \end{equation*}
    where the square root $H^{1/2}$ extends by density to ${H^{1/2}:\dot{\mathcal{V}}^{1,p}\left(\Rn\right)\to\Ell{p}}$.
    \item For almost every $x\in\Rn$ and all $t>0$ it holds that 
    \begin{equation*}
        \left(\fiint_{W(t,x)}\abs{u(s,y) - f(x)}^{2}\dd s\dd y\right)^{1/2}\lesssim t N_{*}\left(\nabla_{t,x}u\right)(x),
    \end{equation*}
    hence $\lim_{t\to 0^{+}}\fiint_{W(t,x)}\abs{u(s,y) - f(x)}^{2}\dd s\dd y=0$ for almost every $x\in\Rn$. Moreover, 
    \begin{equation*}
        \lim_{t\to 0^{+}}u(t,\cdot)=f \text{ in }\mathcal{D}'\left(\Rn\right).
    \end{equation*}
    \item For almost every $x\in\Rn$ it holds that 
    \begin{equation*}
        \lim_{t\to 0^{+}}\fiint_{W(t,x)}\abs{\begin{bmatrix}
            A_{\perp\perp}\partial_{t}u\\
            \nabla_{\mu}^{\|} u
        \end{bmatrix}\left(s,y\right) - \begin{bmatrix}
            (-A_{\perp\perp}H^{1/2}f)(x)\\
            \left(\nabla_{\mu}f\right)(x)
        \end{bmatrix}}^{2}\dd s\dd y =0.
    \end{equation*}
    \item It holds that  $\nabla_{\mu}^{\parallel} u\in C_{0}\left(\left[0,\infty \right) ;\El{p} \right)\cap C^{\infty}\left((0,\infty);\El{p}\right)$, where $t$ is the distinguished variable, and $\nabla_{\mu}^{\parallel}u\left(0,\cdot\right)=\nabla_{\mu}f$, with
    \begin{equation*}
        \sup_{t\geq 0}\norm{\nabla_{\mu}^{\parallel}u\left(t,\cdot\right)}_{\El{p}}\eqsim \norm{\nabla_{\mu}f}_{\El{p}}. 
    \end{equation*}
    \item If $p<n$, then $u\in C_{0}\left(\left[0,\infty \right) ;\El{p^{*}} \right)\cap C^{\infty}\left((0,\infty);\El{p^{*}}\right)$, and $u(0,\cdot)=f$, with 
    \begin{equation*}
        \norm{f}_{\El{p^{*}}}\leq \sup_{t\geq 0}\norm{u(t,\cdot)}_{\El{p^{*}}}\lesssim \norm{f}_{\El{p^{*}}}.
    \end{equation*}
    \item If $p>p_{-}(H)$, then $\partial_{t}u\in C_{0}\left([0,\infty); \El{p}\right)$ and 
    \begin{equation*}
        \norm{N_{*}\left(\partial_{t}u\right)}_{\El{p}}\eqsim \sup_{t\geq 0}\norm{\partial_{t}u\left(t,\cdot\right)}_{\El{p}}\eqsim \norm{A_{\perp\perp}H^{1/2}f}_{\El{p}}\eqsim \norm{\nabla_{\mu}f}_{\El{p}}.
    \end{equation*}
    \end{enumerate}
\end{thm}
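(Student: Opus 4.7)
The plan is to mimic the density argument in the proof of Theorem~\ref{existence result for Dirichlet problem again}. Starting from data $f\in\dot{\mathcal{V}}^{1,p}$, I first use Corollary~\ref{density in mixed V adapted sobolev spaces} to obtain a sequence $(f_k)_{k\geq 1}\subset\smcp\subset\dot{\mathcal{V}}^{1,p}\cap\mathcal{V}^{1,2}$ converging to $f$ in $\dot{\mathcal{V}}^{1,p}$. Theorem~\ref{Poisson semigroup extension is a weak solution} then produces weak solutions $u_k(t,\cdot):=e^{-tH^{1/2}}f_k$ of $\mathscr{H}u_k=0$ in $\Hn$. Applying Propositions~\ref{proposition  L^p square function estimate regularity problem}(i), \ref{proposition estimate non tangential ùax function regularity problem p>1}, and~\ref{proposition strong continuity of regularity poisson solution p>1} to the differences $u_k-u_j$ shows that $(\nabla_{\mu}u_k)_k$ is Cauchy in $\tent{0,p}_{\infty}$, $(t\nabla_{\mu}\partial_{t}u_k)_k$ is Cauchy in $\tent{p}$, $(\nabla_{\mu}^{\parallel}u_k)_k$ is Cauchy in $C_{0}([0,\infty);\El{p})$, and, when $p<n$, $(u_k)_k$ is Cauchy in $C_{0}([0,\infty);\El{p^{*}})$. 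Since $\tent{0,p}_{\infty}$-convergence refines the $\El{2}_{\loc}(\Hn)$ topology, Lemma~\ref{limit of weak solutions is weak solution to elliptic equation} produces a single weak solution $u\in\mathcal{V}^{1,2}_{\loc}(\Hn)$ of $\mathscr{H}u=0$ with $u_k\to u$ in $\mathcal{V}^{1,2}_{\loc}(\Hn)$, to which all the Cauchy limits can then be identified.

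Passing to the limit in the estimates above yields items~(i), (iv) and~(v). The extension of $H^{1/2}$ to $\dot{\mathcal{V}}^{1,p}\to\El{p}$ is obtained from the identity $\partial_{t}u_k(0,\cdot)=-H^{1/2}f_k$ in $\El{p}$ (from Proposition~\ref{proposition strong continuity of regularity poisson solution p>1}) combined with the bound $\norm{H^{1/2}f_k}_{\El{p}}\lesssim\norm{\nabla_{\mu}f_k}_{\El{p}}$ derived from item~(iv), followed by the density of $\smcp$ in $\dot{\mathcal{V}}^{1,p}$ from Proposition~\ref{density lemma in homogeneous V adapted Sobolev spaces}. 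For item~(ii), I apply \cite[Proposition A.5]{AuscherEgert} as in the $\El{2}$-argument preceding the theorem statement: this yields the pointwise non-tangential bound and a trace $u_{0}\in\dot{\textup{W}}^{1,2}_{\loc}(\Rn)$ with $\nabla_{x}u_{0}=\nabla_{x}f$, so $u_{0}=f+c$ for a constant $c$; using $V^{1/2}u_{0}=V^{1/2}f$ (from non-tangential convergence of $V^{1/2}u$) and $V\not\equiv 0$ forces $c=0$, while the case $V\equiv 0$ is already covered by \cite[Theorem 1.2]{AuscherEgert}.

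Item~(iii) will follow from the non-tangential convergence of Proposition~\ref{proposition estimate non tangential ùax function regularity problem p>1} for each $u_k$ together with the $C_{0}([0,\infty);\El{p})$-convergence of $\nabla_{\mu}^{\parallel}u_k$, by the density argument used at the end of the proof of Theorem~\ref{existence result for Dirichlet problem again}. For item~(vi), observe that $\partial_{t}u(t,\cdot)=-e^{-tH^{1/2}}(H^{1/2}f)$ realises $\partial_{t}u$ as the Poisson extension of boundary datum $-H^{1/2}f\in\El{p}$; since $p>p_{-}(H)$, Propositions~\ref{estimate non tangential maximal function on poisson extension solution} and~\ref{uniform bounds Lp dirichlet problem poisson semigroup} applied to this datum, combined with the $\El{p}$-boundedness of $R_{H}$ from Theorem~\ref{thm boundedness of Riesz transforms on Lp full range} (which yields $\norm{\nabla_{\mu}f}_{\El{p}}\lesssim\norm{H^{1/2}f}_{\El{p}}$) and item~(i), deliver the three-way equivalence. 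The main obstacle is coordinating the various modes of convergence so that Lemma~\ref{limit of weak solutions is weak solution to elliptic equation} pins down a single weak solution $u$ to which all limiting estimates refer; once this consistency is established, the remaining identifications are routine.
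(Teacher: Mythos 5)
Your overall strategy (approximate $f$ by $C^{\infty}_{c}$ data, take Poisson extensions, pass to the limit using the estimates of Section~\ref{section estimates towards the dirichelt and regularity problems} and Lemma~\ref{limit of weak solutions is weak solution to elliptic equation}) is the same as the paper's, and most of the limiting identifications you describe are carried out correctly. The minor variation in how you derive $\norm{H^{1/2}f_k}_{\El{p}}\lesssim\norm{\nabla_{\mu}f_k}_{\El{p}}$ (via the boundary trace of $\partial_t u_k$ rather than via the chain $\norm{\nabla_{\mu}f}_{\El{p}}\eqsim\norm{f}_{\bb{H}^{1,p}_{H}}\eqsim\norm{H^{1/2}f}_{\bb{H}^{p}_{H}}\gtrsim\norm{H^{1/2}f}_{\El{p}}$ from Figure~\ref{figure relations between adapted Hardy spaces} and Proposition~\ref{two continuous inclusions abstract H^p and H^1,p}) is acceptable.

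There is, however, a genuine gap when $p\geq n$ (a case permitted by the hypotheses, since $p\leq 2q$ and $2q\geq n$). Lemma~\ref{limit of weak solutions is weak solution to elliptic equation} requires convergence of the \emph{functions} $u_k$ in $\El{1}_{\loc}(\Hn)$, not of their gradients. Your Cauchy sequences are $(\nabla_{\mu}u_k)_k$ in $\tent{0,p}_{\infty}$ and, only when $p<n$, $(u_k)_k$ in $C_{0}([0,\infty);\El{p^{*}})$ via the Sobolev embedding of Proposition~\ref{embedding homogeneous V spaces in L ^p* for p<n}. When $p\geq n$ that embedding is unavailable, and saying that ``$\tent{0,p}_{\infty}$-convergence refines the $\El{2}_{\loc}$ topology'' only gives convergence of $\nabla_{\mu}u_k$, so the hypothesis of Lemma~\ref{limit of weak solutions is weak solution to elliptic equation} is not verified and the solution $u$ is never constructed. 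The paper closes this by applying the local Fefferman--Phong inequality (Proposition~\ref{improved Fefferman--Phong inequality}) to the $t$-independent extension of $V$ on large cubes $(a,b)\times Q$: since $V\not\equiv 0$, the constant $\min\{\ell(Q)^{-1},(\dashint_{Q}V)^{1/2}\}$ is positive for large $Q$, so Cauchy-ness of $(\nabla_{\mu}u_k)_k$ in $\El{2}_{\loc}$ forces Cauchy-ness of $(u_k)_k$ in $\El{2}_{\loc}$ (the case $V\equiv 0$ being delegated to \cite[Theorem 1.2]{AuscherEgert}, where one works modulo constants). A related adjustment is needed for item~(ii) when $p\geq n$: one cannot conclude $u(0,\cdot)=f$ from $\El{p^{*}}$-continuity, and the paper instead shows $\{u(t,\cdot)\}_{0<t\leq1}$ is Cauchy in $\El{2}_{\loc}$ via the fundamental theorem of calculus and item~(vi), identifies the limit as $f+c$, and uses $V\not\equiv 0$ to force $c=0$; your $V^{1/2}$-argument for killing the constant is in the right spirit but must be preceded by this construction of the limit.
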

\begin{proof}
    If $V\equiv 0$, then this is \cite[Theorem 1.2]{AuscherEgert}. We can therefore assume that $V\not\equiv 0$. Let $p\in\left(p_{-}(H)_{*}\;,q_{+}(H)\right)\cap(1,2q]$. We first treat the case when $f\in\dot{\mathcal{V}}^{1,p}(\Rn)\cap\mathcal{V}^{1,2}(\Rn)$. The identifications of Theorem~\ref{summary of identifications of H adapted Hardys spaces} (and $\dom{H^{1/2}}=\mathcal{V}^{1,2}$) then imply that $f\in\bb{H}^{1,p}_{H}\cap\dom{H^{1/2}}$. It follows from Figure \ref{figure relations between adapted Hardy spaces} that $H^{1/2}f\in\bb{H}^{p}_{H}\cap\ran{H^{1/2}}$ with the estimates 
    \begin{align}\label{intermediate estimates on the square root of H in Lp}
\norm{f}_{\dot{\mathcal{V}}^{1,p}}=\norm{\nabla_{\mu}f}_{\El{p}}\eqsim\norm{f}_{\bb{H}^{1,p}_{H}}\eqsim \norm{H^{1/2}f}_{\bb{H}^{p}_{H}}\gtrsim \norm{H^{1/2}f}_{\El{p}}.
    \end{align}
    The last inequality follows from the continuous inclusion $\bb{H}^{p}_{H}\subseteq \El{p}\cap\El{2}$ for $p\in (1,p_{+}(H))$, which follows from Proposition~\ref{two continuous inclusions abstract H^p and H^1,p} and Proposition~\ref{identification abstract hardy space H^p p>2}, as well as the fact that $p<q_{+}(H)<p_{+}(H)$ (Proposition~\ref{properties of critical numbers proposition}). Since $1<p\leq 2q$, Proposition~\ref{density lemma in homogeneous V adapted Sobolev spaces} and the estimate \eqref{intermediate estimates on the square root of H in Lp} show that $H^{1/2}$ extends by density to a bounded mapping from $\dot{\mathcal{V}}^{1,p}(\Rn)$ to $\Ell{p}$.
As before, Theorem~\ref{Poisson semigroup extension is a weak solution} shows that $u(t,\cdot):=e^{-tH^{1/2}}f$ is a weak solution of $\mathscr{H}u=0$ in $\Hn$. Also, note that 
    \begin{equation*}
        \partial_{t}u=-H^{1/2}e^{-tH^{1/2}}f=e^{-tH^{1/2}}(-H^{1/2}f)=e^{-tH^{1/2}}g,
    \end{equation*}
    where $g:=-H^{1/2}f\in\Ell{p}\cap\Ell{2}$ with $\norm{g}_{\El{p}}\lesssim\norm{\nabla_{\mu}f}_{\El{p}}$ by \eqref{intermediate estimates on the square root of H in Lp}.

    Now let $f\in\dot{\mathcal{V}}^{1,p}(\Rn)$ be arbitrary. By Proposition~\ref{density lemma in homogeneous V adapted Sobolev spaces} again, we can find a sequence $(f_{k})_{k\geq1}$ in $C^{\infty}_{c}(\Rn)\subset \dot{\mathcal{V}}^{1,p}\cap\mathcal{V}^{1,2}$ such that $f_k\to f$ in $\dot{\mathcal{V}}^{1,p}(\Rn)$ as $k\to\infty.$ 
    We now consider two cases:
    \begin{enumerate}[wide, labelwidth=!, labelindent=0pt, label={\textit{{Case} \arabic*.}}]
    \item Let us first assume that $p<n$. Then it follows from Proposition~\ref{embedding homogeneous V spaces in L ^p* for p<n} that $f\in\Ell{p^{*}}$ and the sequence $(f_{k})_{k\geq 1}\subset\El{p^{*}}$ also converges to $f$ in the $\El{p^{*}}$ norm. For each $k\geq 1$, let $u_k(t,\cdot):=e^{-tH^{1/2}}f_k$ be the associated weak solution. For all $j,k\geq 1$, we have that ${(u_k-u_j)(t,\cdot)=e^{-tH^{1/2}}(f_k-f_j)}$, and it follows from Proposition~\ref{proposition strong continuity of regularity poisson solution p>1} that 
    \begin{equation*}
        \norm{u_k-u_j}_{C_{0}([0,\infty);\, \El{p^{*}})}\eqsim\norm{f_k-f_j}_{\El{p^{*}}}.
    \end{equation*}
    This implies that $(u_k)_{k\geq 1}$ is Cauchy in $C_{0}([0,\infty);\El{p^{*}})$, and consequently also in $\El{1}_{\loc}(\Hn)$. By Lemma~\ref{limit of weak solutions is weak solution to elliptic equation}, there is a weak solution $u\in\mathcal{V}^{1,2}_{\loc}(\Hn)$ such that $u_k\to u$ in $\mathcal{V}^{1,2}_{\loc}(\Hn)$. Moreover, $u\in C_{0}([0,\infty);\El{p^{*}})$ with $\norm{u}_{C_{0}([0,\infty);\, \El{p^{*}})}\eqsim \norm{f}_{\El{p^{*}}}$.
    Next, we observe that for all $j,k\geq 1$, by Proposition~\ref{proposition estimate non tangential ùax function regularity problem p>1}, it holds that
    \begin{align*}
        \norm{\nabla_{\mu}u_k-\nabla_{\mu}u_j}_{\tent{0,p}_{\infty}}=\norm{N_{*}(\nabla_{\mu}(u_k-u_j))}_{\El{p}}\eqsim\norm{f_k-f_j}_{\dot{\mathcal{V}}^{1,p}}.
    \end{align*}
    Consequently, the sequence $(\nabla_{\mu}u_k)_{k\geq 1}$ is Cauchy in $\tent{0,p}_{\infty}$ and there is $v\in\tent{0,p}_{\infty}$ such that $\nabla_{\mu}u_k\to v$ in $\tent{0,p}_{\infty}$. In particular, convergence also holds in $\El{2}_{\loc}(\Hn)$. Since $u_k\to u$ in $\mathcal{V}^{1,2}_{\loc}(\Hn)$, we obtain that $v=\nabla_{\mu}u\in\tent{0,p}_{\infty}$, with $\norm{N_{*}(\nabla_{\mu}u)}_{\Ell{p}}\eqsim\norm{f}_{\dot{\mathcal{V}}^{1,p}}$.
    Next, we note that \cite[Lemma 4.2]{MorrisTurner} shows that $\partial_{t}u\in\mathcal{V}^{1,2}_{\loc}(\Hn)$ since it is also a weak solution (by $t$-independence of the coefficients). Proposition~\ref{proposition  L^p square function estimate regularity problem} shows that the sequence $(t\nabla_{\mu}\partial_{t}u_k)_{k\geq1}$ is Cauchy in $\tent{p}$, so there is $w\in\tent{p}$ such that $t\nabla_{\mu}\partial_{t}u_k\to w$ in $\tent{p}$ as $k\to\infty$. Convergence also holds in the $\El{2}_{\loc}(\Hn)$ topology. The estimate of \cite[Lemma 4.2]{MorrisTurner} can be applied to obtain that 
    \begin{align*}
        \iint_{W}\abs{\nabla_{\mu}\partial_{t}(u_k-u)}^{2}\dd s\dd y\lesssim \frac{1}{\ell(W)^{2}}\iint_{2W}\abs{\nabla_{x,t}(u_k-u)}^{2}\dd s\dd y
    \end{align*}
    for all Whitney cubes $W\subset \Hn$. The convergence $u_k\to u$ in $\mathcal{V}^{1,2}_{\loc}(\Hn)$ therefore implies that $\nabla_{\mu}\partial_{t}u_k\to \nabla_{\mu}\partial_{t}u$ in $\El{2}_{\loc}(\Hn)$. We conclude that $w=t\nabla_{\mu}\partial_{t}u\in\tent{p}$ and we obtain the estimate
    \begin{equation*}
        \norm{S(t\nabla_{\mu}\partial_{t}u)}_{\El{p}}\eqsim \norm{f}_{\dot{\mathcal{V}}^{1,p}}.
    \end{equation*}
    We now prove strong continuity of the solutions in the variable $t$. It follows from Proposition~\ref{proposition strong continuity of regularity poisson solution p>1} that $\nabla_{\mu}^{\parallel}u_k-\nabla_{\mu}^{\parallel}u_j\in C_{0}([0,\infty) ; \Ell{p})$ with $\norm{\nabla_{\mu}^{\parallel}u_k -\nabla_{\mu}^{\parallel}u_j}_{C_{0}([0,\infty) ; \El{p})}\eqsim \norm{f_k-f_j}_{\dot{\mathcal{V}}^{1,p}}$, and $\nabla_{\mu}^{\parallel}u_k-\nabla_{\mu}^{\parallel}u_j\in C^{\infty}((0,\infty) ; \El{p})$
    with $\norm{\nabla_{\mu}^{\parallel}u_k-\nabla_{\mu}^{\parallel}u_j}_{C^{m}((0,\infty);\El{p})}\lesssim_{m}\norm{f_k-f_j}_{\dot{\mathcal{V}}^{1,p}}$ for all $m\geq 1$. There is $v\in C_{0}([0,\infty) ; \El{p})$ such that $\nabla_{\mu}u_k\to v $ in $C_{0}([0,\infty) ; \El{p})$. Convergence also holds in $\El{1}_{\loc}(\Hn)$ and therefore $v=\nabla_{\mu}^{\parallel}u\in C_{0}([0,\infty) ; \El{p})$, with $\nabla_{\mu}^{\parallel}u(0,\cdot)=\nabla_{\mu}f$ and $\norm{\nabla_{\mu}^{\parallel}u}_{C_{0}([0,\infty) ; \El{p})}\eqsim \norm{f}_{\dot{\mathcal{V}}^{1,p}}$.
    Similarly, we obtain that $\nabla_{\mu}^{\parallel}u\in C^{m}((0,\infty);\El{p})$ for all $m\geq 1$, with the corresponding estimates, whence $\nabla_{\mu}^{\parallel}u\in C^{\infty}((0,\infty);\El{p})$. The continuous embedding $\dot{\mathcal{V}}^{1,p}\subseteq \Ell{p^{*}}$ implies that $u\in C_{0}([0,\infty) ; \El{p^{*}})\cap C^{\infty}((0,\infty);\El{p^{*}})$. 
    Since $\lim_{t\to 0^{+}} u(t,\cdot)=f$ in $\Ell{p^{*}}$, this trivially implies that $\lim_{t\to 0^{+}} u(t,\cdot)=f$ in the distributional sense. 

    Now, to prove (iii), we use the same argument as in the proof of Theorem~\ref{existence result for Dirichlet problem again}, relying instead on the second part of Proposition~\ref{proposition estimate non tangential ùax function regularity problem p>1} and the estimates 
    \begin{equation}\label{intermediate step to prove step (iii) in regularity problem existence}
        \norm{N_{*}(\nabla_{\mu}u-\nabla_{\mu}u_k)}_{\El{p}}\eqsim \norm{f-f_k}_{\dot{\mathcal{V}}^{1,p}}\gtrsim \norm{H^{1/2}f- H^{1/2}f_k}_{\El{p}}
    \end{equation}
    for all $k\geq 1$. Item (ii) is proved as in \cite[Section~17.3]{AuscherEgert}, once we note that $u\in\mathcal{V}^{1,2}_{\loc}(\Hn)\subseteq \textup{W}^{1,2}_{\loc}(\Hn)$, and that 
    \begin{equation*}
        \norm{N_{*}(\nabla_{t,x}u)}_{\El{p}}\leq \norm{N_{*}(\nabla_{\mu}u)}_{\El{p}}\lesssim \norm{f}_{\dot{\mathcal{V}}^{1,p}}<\infty.
    \end{equation*}
Let us now treat item (vi) and assume that $p>p_{-}(H)$. Then, we can use Propositions \ref{estimate non tangential maximal function on poisson extension solution} and \ref{uniform bounds Lp dirichlet problem poisson semigroup} to obtain that $\partial_{t}u_k=e^{-tH^{1/2}}(-H^{1/2}f_k)\in C_{0}([0,\infty) ; \El{p})$ for all $k\geq 1$, with 
    \begin{align*}
        \norm{N_{*}(\partial_{t}u_k - \partial_{t}u_j)}_{\El{p}}&\eqsim \sup_{t\geq 0}\norm{\partial_{t}u_k(t,\cdot) - \partial_{t}u_j(t,\cdot)}_{\El{p}}\eqsim \norm{H^{1/2}f_k - H^{1/2}f_j}_{\El{p}}\eqsim\norm{f_k-f_j}_{\dot{\mathcal{V}}^{1,p}}
    \end{align*}
    for all $k,j\geq 1$. We have used Theorem~\ref{thm boundedness of Riesz transforms on Lp full range} in addition to \eqref{intermediate step to prove step (iii) in regularity problem existence} to obtain the last equivalence. The sequence $(\partial_{t}u_k)_{k\geq 1}$ is Cauchy in $\tent{0,p}_{\infty}$, so there is $v\in\tent{0,p}_{\infty}$ such that $\partial_{t}u_k\to v $ in $\tent{0,p}_{\infty}$. Convergence also holds in $\El{2}_{\loc}(\Hn)$, and thus $v=\partial_{t}u\in\tent{0,p}_{\infty}$, with $\norm{N_{*}(\partial_{t}u)}_{\El{p}}\eqsim \norm{f}_{\dot{\mathcal{V}}^{1,p}}$. In addition, the sequence $(\partial_{t}u_k)_{k\geq 1}$ is Cauchy in $C_{0}([0,\infty) ; \El{p})$, hence there exists $w\in C_{0}([0,\infty) ; \El{p})$ such that $\partial_{t}u_k\to w $ in $C_{0}([0,\infty) ; \El{p})$. Convergence also holds in $\El{1}_{\loc}(\Hn)$, whence $w=\partial_{t}u\in C_{0}([0,\infty) ; \El{p})$, with the estimate $\norm{\partial_{t}u}_{C_{0}([0,\infty) ; \El{p})}\eqsim \norm{f}_{\dot{\mathcal{V}}^{1,p}}\eqsim \norm{g}_{\El{p}}$.

    \item Let us now assume that $p\geq n$. We can still find a sequence $(f_k)_{k\geq 1}\subset C^{\infty}_{c}(\Rn)\subset \mathcal{V}^{1,2}\cap\dot{\mathcal{V}}^{1,p}$ such that $f_k\to f$ in $\dot{\mathcal{V}}^{1,p}$, but, in contrast with Case 1, convergence no longer holds in the $\Ell{p^{*}}$ norm. As usual, we let $u_k(t,\cdot):=e^{-tH^{1/2}}f_k$ be the weak solution associated to the data $f_k$. As in Case 1, it follows from the first estimate of Proposition~\ref{proposition estimate non tangential ùax function regularity problem p>1} that the sequence $(\nabla_{\mu}u_k)_{k\geq 1}$ is Cauchy in $\tent{0,p}_{\infty}$, hence we can find some $v\in\tent{0,p}_{\infty}\subseteq \El{2}_{\loc}(\Hn)$ such that $\nabla_{\mu}u_k\to v$ in $\tent{0,p}_{\infty}$, whence also in the $\El{2}_{\loc}(\Hn)$ topology.
    Let $Q\subset\Rn$ and $0<a<b$ such that $b-a=\ell(Q)$.
    For arbitrary $u\in\mathcal{V}^{1,2}_{\loc}(\Hn)$, the local Fefferman--Phong inequality for the $t$-independent extension $V\in\textup{RH}^{2}(\R^{1+n})$ and the cube $(a,b)\times Q\subset \Hn$ yields
    \begin{equation*}
        \min\set{\ell(Q)^{-1},\left(\dashint_{Q}V\right)^{1/2}}\norm{u}_{\El{2}((a,b)\times Q)}\lesssim \norm{\nabla_{\mu}u}_{\El{2}((a,b)\times Q)}. 
    \end{equation*}
    Since $V$ is not identically zero, for all sufficiently large cubes $Q\subset \Rn$, the constant $C(Q,V):=\min\set{\ell(Q)^{-1},\left(\dashint_{Q}V\right)^{1/2}}$ is strictly positive. Since $(\nabla_{\mu}u_k)_{k\geq 1}$ converges in $\El{2}_{\loc}(\Hn)$, this shows that $(u_k)_{k\geq 1}$ is Cauchy in $\El{2}_{\loc}(\Hn)$. Let $u\in\El{2}_{\loc}(\Hn)$ be such that $u_k\to u$ in $\El{2}_{\loc}(\Hn)$ as $k\to \infty$. It follows from Lemma~\ref{limit of weak solutions is weak solution to elliptic equation} that $u$ is a weak solution and that $u_k\to u$ in $\mathcal{V}^{1,2}_{\loc}(\Hn)$, whence $v=\nabla_{\mu}u\in\tent{0,p}_{\infty}$, with the estimate $\norm{N_{*}(\nabla_{\mu}u)}_{\El{p}}\eqsim \norm{f}_{\dot{\mathcal{V}}^{1,p}}$.
    All properties of the solution $u$ follow as in the case $p<n$, except for (ii), where we have explicitly used the fact that $u\in C_{0}([0,\infty),\El{p^{*}})$. We must proceed differently. Since $p\geq n\geq 2>p_{-}(H)$, (vi) can be applied to obtain that $\partial_{t}u\in C_{0}([0,\infty) ; \Ell{p})$. Moreover, it follows from \cite[Lemma 4.2]{MorrisTurner} that $u\in C^{\infty}((0,\infty);\El{2}_{\loc}(\Rn))$, hence for a fixed compact subset $K\subset \Rn$ and for all $0<t_0<t_1\leq 1$ we can use the fundamental theorem of calculus to estimate
    \begin{align*}
        \norm{u(t_1,\cdot) - u(t_0,\cdot)}_{\El{2}(K)}&\leq \int_{t_0}^{t_1}\norm{\partial_{t}u(s,\cdot)}_{\El{2}(K)}\dd s
        \leq \meas{K}^{\frac{1}{2}-\frac{1}{p}}\int_{t_0}^{t_1}\norm{\partial_{t}u(s,\cdot)}_{\El{p}}\dd s\\
        &\leq \meas{K}^{\frac{1}{2}-\frac{1}{p}}\abs{t_1-t_0}^{1/2}\left(\int_{0}^{1}\norm{\partial_{t}u(s,\cdot)}_{\El{p}}^{2}\dd s\right)^{1/2},
    \end{align*}
    where we have used Minkowski's inequality. This shows that $\set{u(t,\cdot)}_{0<t\leq 1}$ is a Cauchy net in $\El{2}_{\loc}(\Rn)$. Consequently, there is some $F\in\El{2}_{\loc}(\Rn)$ such that $\lim_{t\to 0^{+}}u(t,\cdot)=F$ in the $\El{2}_{\loc}(\Rn)$ topology, so $\lim_{t\to 0^{+}}\nabla_{x}u(t,\cdot)=\nabla_{x}F$ in the distributional sense. Since we know that $\nabla_{x}u(t,\cdot)\to \nabla_{x}f$ in $\Ell{p}$ as $t\to 0$, we find that $\nabla_{x}F=\nabla_{x}f\in\El{p}$. In particular, there is a $c\in\C$ such that $F=f+c$. For all $x\in\Rn$ and $t>0$, we can therefore decompose 
    \begin{align*}
        cV(x)^{1/2}=V(x)^{1/2}(F(x)-u(t,x))+V(x)^{1/2}(u(t,x)-f(x)).
    \end{align*}
    Thus, by Hölder's inequality, we obtain for all $t>0$ and all compact subsets $K\subset \Rn$:
    \begin{equation*}
        \abs{c}\int_{K}V^{1/2} \leq \left(\int_{K}V\right)^{1/2}\norm{u(t,\cdot)-F}_{\El{2}(K)}+\meas{K}^{1-\frac{1}{p}}\norm{\nabla_{\mu}^{\parallel}u(t,\cdot)-\nabla_{\mu}f}_{\El{p}}.
    \end{equation*}
    Letting $t\to 0$ implies that $\abs{c}\int_{K}V^{1/2}=0$. Since $V$ is not identically zero, choosing $K\subset\Rn$ sufficiently large implies that $c=0$. Consequently, $F=f$ and $\lim_{t\to 0}u(t,\cdot)=f$ in $\El{2}_{\loc}(\Rn)$.
    We can finally apply \cite[Proposition A.5]{AuscherEgert} as in the case $p<n$ to find a non-tangential trace $u_{0}\in\El{1}_{\loc}(\Rn)$ such that $\lim_{t\to 0^{+}}\dashint_{t/2}^{2t}u(s,\cdot)\dd s=u_0$ in the distributional sense. The $\El{2}_{\loc}(\Rn)$ convergence implies that the limit above also holds in the $\El{2}_{\loc}(\Rn)$ topology. It follows that $u_0=f$ almost everywhere on $\Rn$, and this finishes the proof.\qedhere
    \end{enumerate}
\end{proof}
We conclude this section with the solvability and estimates for Theorem~\ref{existence and uniqueness result Regularity Lp} in the range $p\in(\frac{n}{n+1},1]$.
\begin{thm}\label{existence and uniqueness result Regularity for p<1}
    Let $n\geq 3$, $q\geq\max\{\frac{n}{2},2\}$ and $V\!\in\!\textup{RH}^{q}(\Rn)$. Suppose that $p\!\in\!\left(p_{-}(H)_{*}\!\vee\! \frac{n}{n+1}, 1\right]$. If $f\in\dot{\textup{H}}^{1,p}_{V}(\Rn)$, then the Regularity problem $\left(\mathcal{R}\right)^{\mathscr{H}}_{p}$ for data $f\in\dot{\textup{H}}^{1,p}_{V}$ admits a weak solution $u\in\mathcal{V}^{1,2}_{\textup{loc}}(\Hn)$ with the following additional properties:
    \begin{enumerate}[label=\emph{(\roman*)}]
        \item The following estimates hold:
        \begin{align*}
\norm{S(t\nabla_{\mu}\partial_{t}u)}_{\El{p}}\lesssim\norm{f}_{\dot{\textup{H}}^{1,p}_{V}}\quad \textup{and}\quad
\norm{N_{*}\left(\nabla_{\mu}u\right)}_{\El{p}}\lesssim\norm{f}_{\dot{\textup{H}}^{1,p}_{V}}.
        \end{align*}
        Moreover, if ${p\in\left(p_{-}(H)_{*}\vee \frac{n}{n+\delta/2}, 1\right]\cap\mathcal{I}(V)}$, then the reverse square function estimate holds,
    and if $V\in\textup{RH}^{\infty}$ and $p\in\mathcal{I}(V)$, then the reverse non-tangential maximal function estimate holds.
    \item Properties \emph{(ii)} and \emph{(v)} from Theorem~\ref{existence and uniqueness result Regularity Lp again} hold.
    \item If $p\in\left(p_{-}(H)_{*}\vee \frac{n}{n+\delta/2}, 1\right]\cap\,\mathcal{I}(V)$, then $u\in C_{0}\left(\left[0,\infty \right) ;\dot{\textup{H}}^{1,p}_{V} \right)\cap C^{\infty}\left((0,\infty);\dot{\textup{H}}^{1,p}_{V}\right)$, where $t$ is the distinguished variable, and $u\left(0,\cdot\right)=f$, with $\sup_{t\geq 0}\norm{u\left(t,\cdot\right)}_{\dot{\textup{H}}^{1,p}_{V}}\eqsim \norm{f}_{\dot{\textup{H}}^{1,p}_{V}}$. 
    \end{enumerate}
\end{thm}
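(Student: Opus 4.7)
The plan is to adapt the proof of Theorem~\ref{existence and uniqueness result Regularity Lp again} to the quasi-Banach regime $p\leq 1$ by using $\dot{\textup{H}}^{1,p}_{V,\textup{pre}}(\Rn)$ as the substitute for $\dot{\mathcal{V}}^{1,p}\cap\mathcal{V}^{1,2}$ and invoking the tailored estimates of Section~\ref{section estimates towards the dirichelt and regularity problems}. By construction of the completion in Section~\ref{subsection definition of the completion of DZiubanski H^{1,p} spaces}, any $f\in\dot{\textup{H}}^{1,p}_V(\Rn)$ is the $\Ell{p^{*}}$-limit of a Cauchy sequence $(f_k)_{k\geq 1}\subset\dot{\textup{H}}^{1,p}_{V,\textup{pre}}(\Rn)\subseteq\mathcal{V}^{1,2}(\Rn)$ with $\norm{f}_{\dot{\textup{H}}^{1,p}_{V}}=\lim_{k\to\infty}\norm{f_k}_{\dot{\textup{H}}^{1,p}_{V}}$. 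Theorem~\ref{Poisson semigroup extension is a weak solution} then provides weak solutions $u_k(t,\cdot):=e^{-tH^{1/2}}f_k\in\mathcal{V}^{1,2}_{\loc}(\Hn)$, and linearity gives $(u_k-u_j)(t,\cdot)=e^{-tH^{1/2}}(f_k-f_j)$ for all $j,k\geq 1$.

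The second step is to show that $(u_k)_{k\geq 1}$, $(\nabla_{\mu}u_k)_{k\geq 1}$ and $(t\nabla_{\mu}\partial_{t}u_k)_{k\geq 1}$ are Cauchy in the appropriate spaces. Applying Propositions~\ref{proposition  L^p square function estimate regularity problem}(ii), \ref{proposition non tangential max function estimate regularity problem for p<1} and \ref{proposition strong continuity estimate regularity problem for p<1}(i) to the differences $f_k-f_j\in\dot{\textup{H}}^{1,p}_{V,\textup{pre}}$ yields
\[\norm{N_{*}(\nabla_{\mu}(u_k-u_j))}_{\El{p}}+\norm{S(t\nabla_{\mu}\partial_{t}(u_k-u_j))}_{\El{p}}+\norm{u_k-u_j}_{C_{0}([0,\infty);\,\Ell{p^{*}})}\lesssim\norm{f_k-f_j}_{\dot{\textup{H}}^{1,p}_{V}}.\]
Since the topologies of $\tent{0,p}_{\infty}$, $\tent{p}$ and $C_{0}([0,\infty);\Ell{p^{*}})$ all embed continuously into $\El{1}_{\loc}(\Hn)$, Lemma~\ref{limit of weak solutions is weak solution to elliptic equation} produces a weak solution $u\in\mathcal{V}^{1,2}_{\loc}(\Hn)$ with $u_k\to u$ in $\mathcal{V}^{1,2}_{\loc}(\Hn)$, identified as the common limit in all three spaces. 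Passing to the limit via the usual Fatou argument on the integrands defining $S$ and $N_{*}$ yields the upper bounds in (i) and property (v). The reverse square function bound for $p\in(p_{-}(H)_{*}\vee \frac{n}{n+\delta/2},1]\cap\mathcal{I}(V)$ from Proposition~\ref{proposition  L^p square function estimate regularity problem}(iii), and the reverse non-tangential bound for $V\in\textup{RH}^{\infty}$ with $p\in\mathcal{I}(V)$ from Proposition~\ref{proposition non tangential max function estimate regularity problem for p<1}, both transfer through the same limiting procedure.

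For property (ii), the Whitney-average estimate follows from \cite[Proposition A.5]{AuscherEgert} applied to $u\in\textup{W}^{1,2}_{\loc}(\Hn)$ since $N_{*}(\nabla_{t,x}u)\leq N_{*}(\nabla_{\mu}u)\in\Ell{p}$, and the distributional convergence $\lim_{t\to 0^{+}}u(t,\cdot)=f$ is inherited from the stronger $\Ell{p^{*}}$-convergence in (v). For property (iii), under $p\in(p_{-}(H)_{*}\vee\frac{n}{n+\delta/2},1]\cap\mathcal{I}(V)$, Proposition~\ref{proposition strong continuity estimate regularity problem for p<1}(ii) yields the equivalence $\norm{u_k-u_j}_{C_0([0,\infty);\,\dot{\textup{H}}^{1,p}_{V})}\eqsim\norm{f_k-f_j}_{\dot{\textup{H}}^{1,p}_{V}}$, so $(u_k)_{k\geq 1}$ is Cauchy in $C_0([0,\infty);\dot{\textup{H}}^{1,p}_{V,\textup{pre}})\cap C^{\infty}((0,\infty);\dot{\textup{H}}^{1,p}_{V,\textup{pre}})$; the continuous embedding $\dot{\textup{H}}^{1,p}_{V,\textup{pre}}\subseteq\Ell{p^{*}}$ from Proposition~\ref{continuous embedding V adapted Hardy sobolev space in Ell^{p*}} identifies its limit as $u$, giving the stated regularity, the norm equivalence, and $u(0,\cdot)=f$ in $\dot{\textup{H}}^{1,p}_{V}$.

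The main technical obstacle is the quasi-Banach passage to the limit: the tent spaces $\tent{0,p}_{\infty}$ and $\tent{p}$ are only $p$-quasi-Banach and the pre-completed Hardy--Sobolev space is not a priori complete, so all identifications of limits must ultimately be routed through $\Ell{p^{*}}$-convergence (guaranteed by the construction of $\dot{\textup{H}}^{1,p}_{V}$ as a subspace of $\Ell{p^{*}}$) rather than intrinsic reflexivity or duality. Once this compatibility is set up, the remainder of the argument is a direct transcription of the strategy of Theorem~\ref{existence and uniqueness result Regularity Lp again} Case~1, with the pre-completed quasi-norms playing the role previously played by the $\dot{\mathcal{V}}^{1,p}$ norm.
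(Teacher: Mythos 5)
Your proof is correct and takes essentially the same approach as the paper. The paper's own proof is terse---after reducing to $V\not\equiv 0$ it takes a Cauchy sequence $(f_k)\subset\dot{\textup{H}}^{1,p}_{V,\textup{pre}}$ converging to $f$ in both $\dot{\textup{H}}^{1,p}_{V}$ and $\Ell{p^{*}}$, and then says to follow Case~1 of the proof of Theorem~\ref{existence and uniqueness result Regularity Lp again}, invoking Propositions~\ref{proposition  L^p square function estimate regularity problem}, \ref{proposition non tangential max function estimate regularity problem for p<1} and \ref{proposition strong continuity estimate regularity problem for p<1} in place of their $p>1$ analogues---and your write-up fills in exactly those details: the Cauchy bounds in $\tent{0,p}_{\infty}$, $\tent{p}$ and $C_{0}([0,\infty);\Ell{p^{*}})$, identification of the limit via $\El{2}_{\loc}$-convergence and Lemma~\ref{limit of weak solutions is weak solution to elliptic equation}, passage to the limit in the quasi-norm estimates, and recovery of (ii), (iii) via \cite[Proposition A.5]{AuscherEgert} and Proposition~\ref{proposition strong continuity estimate regularity problem for p<1}.(ii). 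One small presentational note: the paper first dispatches the case $V\equiv 0$ to \cite[Theorem~1.2]{AuscherEgert}, as it does throughout, whereas you do not make this reduction explicit; since $p\leq 1<n$ you never need the $V\not\equiv 0$-dependent steps from Case~2, so this is cosmetic rather than a gap, but it is worth recording the reduction for consistency with the conventions set out in the introduction.
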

\begin{proof}
    Again, if $V\equiv 0$, this follows from \cite[Theorem 1.2]{AuscherEgert}, and we can therefore assume that $V\not\equiv 0$. Let $p\in\left(p_{-}(H)_{*}\vee \frac{n}{n+1}, 1\right]$ and $f\in\dot{\textup{H}}^{1,p}_{V}(\Rn)$. By definition (recall Section~\ref{subsection definition of the completion of DZiubanski H^{1,p} spaces}),  we have $f\in\Ell{p^{*}}$ and there exists a sequence $(f_k)_{k\geq 1}\subset \dot{\textup{H}}^{1,p}_{V,\textup{pre}}(\Rn)$ such that $f_k\to f$ in both $\dot{\textup{H}}^{1,p}_{V}(\Rn)$ and $\Ell{p^{*}}$. We can then follow the proof of Theorem~\ref{existence and uniqueness result Regularity Lp again} (in Case 1, i.e. $p<n$), using Propositions~\ref{proposition  L^p square function estimate regularity problem}, \ref{proposition non tangential max function estimate regularity problem for p<1} and \ref{proposition strong continuity estimate regularity problem for p<1} in the range $\frac{n}{n+1}<p\leq 1$.
\end{proof}

\section{Uniqueness for Theorems~\ref{existence and uniqueness result Dirichlet Lp} and \ref{existence and uniqueness result Regularity Lp}}\label{section on Uniqueness for the Dirichlet and Regularity problems}
In this last section, we prove uniqueness results for solutions of the Dirichlet problem $\left(\mathcal{D}\right)^{\mathscr{H}}_{p}$ and of the Regularity problem $\left(\mathcal{R}\right)^{\mathscr{H}}_{p}$. We apply the strategy used in \cite[Chapter 21]{AuscherEgert}, which originates in \cite{Auscher_Egert_Uniqueness}, to prove the following two theorems, which complete the proofs of Theorems \ref{existence and uniqueness result Dirichlet Lp} and \ref{existence and uniqueness result Regularity Lp}. Throughout this section, we assume that $n\geq 3$, $q\geq \max\set{\frac{n}{2},2}$ and $V\in\textup{RH}^{q}(\Rn)$.
\begin{thm}\label{uniqueness result for dirichlet problem}
    Suppose that $V\not\equiv 0$ and $p\in(p_{-}(H)_{*}\vee \frac{n}{n+1}\,, p_{+}(H))$. 
    If either $p\leq 1$ and $f\in\dot{\textup{H}}^{1,p}_{V}(\Rn)$, or $p>1$ and $f\in\dot{\mathcal{V}}^{1,p}(\Rn)$, then the modified Regularity problem for data $f$
    \begin{equation*}
\left(\widetilde{\mathcal{R}}\right)^{\mathscr{H}}_{p}\hspace{0.5cm}\left\{\begin{array}{l}
        \mathscr{H}u=0  \text{ weakly in } \Hn;\\
        N_{*}(\nabla_{t,x}u)\in\Ell{p};\\
        \lim_{t\to 0^{+}}\fiint_{W(t ,x)}\abs{u(s,y)-f(x)}\dd s\dd y = 0 \text{ for a.e. } x\in\Rn;
    \end{array}\right.
\end{equation*}
admits at most one solution in $\mathcal{V}^{1,2}_{\loc}(\Hn)$. 
\end{thm}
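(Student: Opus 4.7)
By linearity, the plan reduces to showing that any $u\in\mathcal{V}^{1,2}_{\loc}(\Hn)$ satisfying $\mathscr{H}u=0$ weakly in $\Hn$, $N_{*}(\nabla_{t,x}u)\in\El{p}$, and $\lim_{t\to 0^{+}}\fiint_{W(t,x)}\abs{u(s,y)}\,\dd s\,\dd y = 0$ for a.e.\ $x\in\Rn$ must vanish identically. Following the first-order strategy from Chapters 20 and 21 of \cite{AuscherEgert}, I would introduce the conormal gradient
\[
U(t,\cdot) := \begin{bmatrix} A_{\perp\perp}\partial_{t}u(t,\cdot) \\ \nabla_{\mu}^{\parallel}u(t,\cdot) \end{bmatrix} \in \El{2}_{\loc}(\Hn;\C^{n+2}).
\]
A direct computation using the weak equation $\mathscr{H}u=0$ and the $t$-independence of the coefficients shows that $\partial_{t}U + DB\,U = 0$ weakly on $\Hn$, with $U(t,\cdot)\in\clos{\ran{DB}}$ for almost every $t>0$ by \eqref{characterisation range D}. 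The bounded $\textup{H}^{\infty}$-calculus of $DB$ from Theorem~\ref{bounded H_infty functional calculus of H} provides the spectral decomposition $\clos{\ran{DB}} = \mathcal{H}^{+}_{DB}\oplus\mathcal{H}^{-}_{DB}$ and the semigroup representation $U(t,\cdot) = e^{-(t-\epsilon)[DB]}U(\epsilon,\cdot)$ for $t\geq\epsilon>0$, with $U(\epsilon,\cdot)$ necessarily lying in $\mathcal{H}^{+}_{DB}$ because the finiteness of $\norm{N_{*}(\nabla_{t,x}u)}_{\El{p}}$ precludes the exponential growth at $t=\infty$ of the $\mathcal{H}^{-}_{DB}$-component.

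Using the interior Caccioppoli estimate \eqref{Caccioppoli's inequality}, the reverse Hölder property \eqref{reverse holder property weak solutions}, and the vanishing of the $\El{1}$-averages of $u$ at the boundary, one would establish enough $t$-regularity of $U$ near $t=0$ to pass to the limit $\epsilon\to 0^{+}$, obtaining $U(t,\cdot) = e^{-t[DB]}F$ for a trace $F\in\bb{H}^{p}_{DB}\cap\mathcal{H}^{+}_{DB}$. The identifications of Theorem~\ref{summary of identifications of H adapted Hardys spaces}, together with Figure~\ref{figure relations between adapted Hardy spaces} and the intertwining relations of Lemma~\ref{intertwining relations}, would then identify $F$ in terms of the boundary data of $u$: its last $n+1$ coordinates correspond to $\nabla_{\mu}f$, while its first coordinate corresponds to $-A_{\perp\perp}H^{1/2}f$. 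Since the boundary hypothesis forces $f=0$, one gets $F=0$, hence $U\equiv 0$. From $\partial_{t}u=0$ and $\nabla_{\mu}^{\parallel}u=0$, the function $u$ is $t$-independent with $\nabla_{x}u=0$, hence constant in $(t,x)$, and $V^{1/2}u=0$ combined with $V\not\equiv 0$ then forces $u\equiv 0$.

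The principal obstacle is the rigorous passage to the limit $\epsilon\to 0^{+}$ and the identification of $F$ in the correct adapted Hardy space, particularly in the range $p\leq 1$ where $F$ must be interpreted in $\bb{H}^{p}_{DB}$ rather than in $\El{p}(\Rn;\C^{n+2})$. Here molecular decompositions from Section~\ref{subsubsection on molecular characterisation of op adapted hardy spaces} and the Fefferman--Phong inequality (Proposition~\ref{improved Fefferman--Phong inequality}) are needed to recover sufficient control on the $V^{1/2}u$-component of $U$, which is not directly bounded by $N_{*}(\nabla_{t,x}u)$. A further subtle point is the extraction of the spectral piece in $\mathcal{H}^{+}_{DB}$ compatibly with the Hardy space norms, since the classical conservation identity $(1+t^{2}H)^{-1}1 = 1$ available when $V\equiv 0$ fails here and must be replaced by the cancellation bound \eqref{cancellation bound} established in Section~\ref{subsection on the proof of Lp bounds for riesz trsform}.
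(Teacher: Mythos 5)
Your proposal follows the first-order semigroup--trace strategy: extract the conormal gradient, push it through the $DB$-ODE, isolate the $\mathcal{H}^{+}_{DB}$ component, recover a trace in $\bb{H}^p_{DB}$ as $\eps\to 0^+$, and match it to the boundary data. The paper's proof in Section~\ref{section on proof of uniqueness} runs along a genuinely different, duality-based line: reduce via Lemma~\ref{first elementary basic uniqueness result} to showing $\langle u, G\rangle_{\El{2}(\Hn)}=0$ for all $G=\partial_t\widetilde{G}$ with $\widetilde{G}\in C^\infty_c(\Hn)$, construct $F=(\dot{\mathscr{H}}^*)^{-1}(G)\in\dot{\mathcal{X}}^{1,2}(\R^{1+n})$ by Lax--Milgram, write $\langle u,G\rangle$ through a Lipschitz cut-off $\theta$ as a pairing of $u$ and $\nabla_{t,x}u$ against $F-F_1$ (where $F_1$ is the Poisson extension of $F(0,\cdot)$ for $\mathscr{H}^*$), and kill the pairing via the decay of Lemma~\ref{quantitative comparison between solution and Poisson extension of its trace} together with the weighted estimates of Lemma~\ref{technical lemma for uniqueness of solutions}. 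The choice is not cosmetic: the resulting Green's identity involves only $\nabla_{t,x}u$, which is exactly why the hypothesis $N_*(\nabla_{t,x}u)\in\El{p}$ suffices, as the remark after Theorem~\ref{uniqueness result for dirichlet problem} highlights.

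Your route has two genuine gaps that the paper's route is built to avoid. First, passing to a trace $F\in\bb{H}^p_{DB}\cap\mathcal{H}^+_{DB}$ as $\eps\to 0^+$ from nontangential control alone requires the abstract functional-calculus theory of \cite{Auscher_Mourgoglou_BoundaryValueProblems} and \cite{AuscherStahlhut_FunctionalCalculus_Dirac}, which this paper explicitly avoids relying on (see the introduction near Theorem~\ref{existence and uniqueness result Neumann Lp}); and $N_*(\nabla_{t,x}u)\in\El{p}$ omits the $V^{1/2}u$-component of the conormal gradient. Your proposed fix — molecular decompositions plus the Fefferman--Phong inequality — does not close this: Proposition~\ref{improved Fefferman--Phong inequality} controls $|u|$ by $|\nabla_\mu u|$, the wrong direction. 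The viable repair, which you do not invoke, is Caccioppoli's inequality \eqref{Caccioppoli's inequality} combined with the trace estimate of \cite[Proposition A.5]{AuscherEgert} for the zero-data solution, giving $\bigl(\fiint_{W(t,x)}|u|^2\bigr)^{1/2}\lesssim t\,N_*(\nabla_{t,x}u)(x)$ and hence pointwise control of $N_*(\nabla_\mu u)$. Second, for $p\leq 1$ the identification of the $(\parallel,\mu)$-components of the trace with $\nabla_\mu f$ requires $\bb{H}^{1,p}_H\cap\mathcal{V}^{1,2}=\dot{\textup{H}}^{1,p}_{V,\textup{pre}}$, which Theorem~\ref{summary of identifications of H adapted Hardys spaces} gives only for $p\in\bigl(p_-(H)_*\vee\frac{n}{n+\delta/2},1\bigr]\cap\mathcal{I}(V)$; Theorem~\ref{uniqueness result for dirichlet problem} permits all $p>p_-(H)_*\vee\frac{n}{n+1}$ with no $\mathcal{I}(V)$ restriction, so your argument would leave part of the claimed range uncovered.
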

Note that only the non-tangential estimate on the gradient of the solution is required to obtain uniqueness, and not the full control $N_{*}(\nabla_{\mu}u)\in\El{p}$.
\begin{thm}\label{uniqueness result for regularity problem}
    Suppose that $p\in[1,\infty)\cap(p_{-}(H), p_{+}(H)^{*})$. If either $p>1$ and $f\in\Ell{p}$, or $p=1$ and $f\in b\textup{H}^{1}_{V}(\Rn)$, then the Dirichlet problem $\left(\mathcal{D}\right)^{\mathscr{H}}_{p}$ admits at most one solution in $\mathcal{V}^{1,2}_{\loc}(\Hn)$.
\end{thm}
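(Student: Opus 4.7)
By linearity, it suffices to show that any weak solution $u\in\mathcal{V}^{1,2}_{\loc}(\Hn)$ of $\mathscr{H}u=0$ with $N_{*}(u)\in\Ell{p}$ and vanishing averaged non-tangential limit at the boundary must be identically zero. I follow the method of Auscher--Egert \cite[Chapters 20--21]{AuscherEgert}, originating in \cite{Auscher_Egert_Uniqueness}: represent $u$ via the Poisson semigroup $\set{e^{-tH^{1/2}}}_{t\geq 0}$ and then deduce $u\equiv 0$ from the vanishing of its boundary trace.

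The key tool is the first-order Dirac formalism of Section~\ref{subsection on schrodinger operators H and DB BD}. By $t$-independence of the coefficients and the block structure~\eqref{block structure of the coefficients}, the conormal gradient
\[
F(t,\cdot):=\begin{bmatrix} A_{\perp\perp}\partial_{t}u(t,\cdot)\\ \nabla^{\parallel}_{\mu}u(t,\cdot)\end{bmatrix}
\]
satisfies the first-order Cauchy--Riemann-type system $\partial_{t}F+DBF=0$ in $\Hn$. Fix any $\varepsilon>0$: the Caccioppoli inequality~\eqref{Caccioppoli's inequality} and the reverse Hölder estimate~\eqref{reverse holder property weak solutions}, applied in a slab $(\varepsilon/2,3\varepsilon/2)\times\Rn$, upgrade $N_{*}(u)\in\Ell{p}$ to local $\El{2}$ control on $F(\varepsilon,\cdot)$, showing in particular that $F(\varepsilon,\cdot)\in\clos{\ran{DB}}$ with appropriate $\El{2}_{\loc}$ bounds. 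The bounded $\textup{H}^{\infty}$-calculus of $DB$ (Theorem~\ref{bounded H_infty functional calculus of H}) combined with the non-tangential estimates for the first-order semigroup (Theorem~\ref{non tangential function L2 estimates Morris Turner}) then yield the semigroup representation $F(t+\varepsilon,\cdot)=e^{-t[DB]}F(\varepsilon,\cdot)$ for $t\geq 0$. Projecting onto the $\perp$-component and using the block decomposition~\eqref{identities square BD; H, M operators} of $(BD)^{2}$ produces the Poisson extension identity $u(t+\varepsilon,\cdot)=e^{-tH^{1/2}}u(\varepsilon,\cdot)$ on $\Hn$.

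It then remains to take $\varepsilon\to 0^{+}$. In the main range $p\in(p_{-}(H),p_{+}(H))$, the identification $\bb{H}^{p}_{H}=b\textup{H}^{p}_{V,\textup{pre}}$ from Theorem~\ref{summary of identifications of H adapted Hardys spaces}, together with the uniform bounds and strong continuity of Proposition~\ref{uniform bounds Lp dirichlet problem poisson semigroup}, imply that the slices $\set{u(\varepsilon,\cdot)}_{\varepsilon>0}$ form a Cauchy net in $b\textup{H}^{p}_{V,\textup{pre}}$, with some limit $f$ in $b\textup{H}^{p}_{V}$. Comparing the non-tangential convergence of the Poisson extension of $f$ guaranteed by Proposition~\ref{estimate non tangential maximal function on poisson extension solution} with the hypothesised vanishing non-tangential limit of $u$ forces $f=0$, whence $u\equiv 0$. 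For $p_{+}(H)\leq p<p_{+}(H)^{*}$, the Hardy space identification fails, but Proposition~\ref{local estimate time and space Poisson solution p>p+} provides a local $\El{2}$-substitute with quantitative $\Ell{p}$-bounds that suffices to close the limiting argument in the same way.

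The main obstacle is the trace extraction at $t=0^{+}$: the hypothesis $N_{*}(u)\in\Ell{p}$ yields only averaged information on $u$ near the boundary and does not directly control the gradient $F$, whereas the first-order semigroup theory demands $\El{2}$-valued boundary data for $F$. Bridging this gap -- from boundary control of $u$ to boundary data for $F$ that is compatible with the $DB$-functional calculus -- requires a careful interplay between the interior estimates~\eqref{Caccioppoli's inequality}--\eqref{reverse holder property weak solutions} and the Hardy space identifications of Theorem~\ref{summary of identifications of H adapted Hardys spaces}. The endpoint $p=1$ is particularly delicate because $\textup{H}^{1}_{V}$ is defined only as an $\Ell{1}$-completion (Section~\ref{subsubsection with definition of completion H^{1}_{V}}), so the limit $\varepsilon\to 0^{+}$ must first be taken in $\Ell{1}$ via the continuous embedding of Lemma~\ref{identification of Dziubanski hardy spaces for p>=1} before the trace can be identified as an element of $b\textup{H}^{1}_{V}$ and shown to vanish.
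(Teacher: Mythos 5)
Your overall plan---represent $u$ itself by the Poisson semigroup, extract a trace at $t=0$, and conclude from the vanishing boundary limit---is not the route the paper takes, and it has a genuine gap at precisely the point you flag as ``the main obstacle''. The semigroup representation $F(t+\varepsilon,\cdot)=e^{-t[DB]}F(\varepsilon,\cdot)$ for the conormal gradient requires two things that the Dirichlet hypotheses do not supply: first, $F(\varepsilon,\cdot)$ must belong to $\El{2}(\Rn;\C^{n+2})$ \emph{globally} for the $DB$-functional calculus to act on it at all, and Caccioppoli~\eqref{Caccioppoli's inequality} plus reverse H\"older~\eqref{reverse holder property weak solutions} only convert $N_{*}(u)\in\Ell{p}$ into \emph{local} $\El{2}$ bounds on $F(\varepsilon,\cdot)$ --- for $p\neq 2$ the slices are simply not globally square integrable. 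Second, even granting global $\El{2}$ membership, the first-order equation $\partial_{t}F+DBF=0$ admits solutions generated by $e^{+t[DB]}$ on the spectral subspace $\mathcal{H}^{-}_{DB}$; ruling these out and placing $F$ in $\mathcal{H}^{+}_{DB}$ requires global interior control of the \emph{gradient} (e.g.\ $N_{*}(\nabla_{\mu}u)\in\Ell{p}$ or a square-function bound), which is the Regularity/Neumann hypothesis, not the Dirichlet one. The subsequent limiting argument is also circular: Proposition~\ref{uniform bounds Lp dirichlet problem poisson semigroup} gives uniform $\textup{H}^{p}_{V}$-bounds for Poisson extensions of \emph{given} boundary data, so it cannot be invoked to show the slices $u(\varepsilon,\cdot)$ form a Cauchy net before the semigroup representation is established.

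The paper avoids all of this by a duality argument (Section~\ref{section on proof of uniqueness}, following \cite{Auscher_Egert_Uniqueness}). One reduces, via Lemma~\ref{first elementary basic uniqueness result}, to showing $\langle u,G\rangle_{\El{2}(\Hn)}=0$ for every $G=\partial_{t}\widetilde{G}$ with $\widetilde{G}\in C^{\infty}_{c}(\Hn)$. The global solution $F:=(\dot{\mathscr{H}}^{*})^{-1}(G)$ of the \emph{adjoint} problem is produced by Lax--Milgram on the energy space $\dot{\mathcal{X}}^{1,2}(\R^{1+n})$, so it is $F$ --- not $u$ --- that enjoys global $\El{2}$ bounds, solves the first-order system, and admits the semigroup representation and trace comparison of Lemmas~\ref{representation formula of slices of weak solution by Poisson semigroup} and~\ref{quantitative comparison between solution and Poisson extension of its trace}. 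Inserting a cut-off $\theta$ and using that both $u$ and the Poisson extension $F_{1}$ of $F(0,\cdot)$ are solutions yields the identity $\langle u,G\rangle_{\El{2}}=\langle Au\nabla_{t,x}\theta,\nabla_{t,x}(F-F_{1})\rangle_{\El{2}}-\langle A\nabla_{t,x}u,(F-F_{1})\nabla_{t,x}\theta\rangle_{\El{2}}$, and the decay of $F-F_{1}$ from Lemma~\ref{quantitative comparison between solution and Poisson extension of its trace}, played against the interior bounds on $u$ from Lemma~\ref{technical lemma for uniqueness of solutions} together with~\eqref{Caccioppoli's inequality}--\eqref{reverse holder property weak solutions}, shows the right-hand side vanishes as the cut-off parameters are sent to their limits. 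If you wish to pursue your route instead, you would need to prove a semigroup representation theorem for Dirichlet-type solutions under the hypothesis $N_{*}(u)\in\Ell{p}$ alone, which is a substantial result not contained in (and not needed by) this paper.
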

\subsection{Further estimates and representations for solutions}
In order to prove these theorems, first note that since the equation $\mathscr{H}u=0$ is linear, it suffices to prove that a solution $u$ to $\mathscr{H}u=0$ on $\Hn$ with zero boundary data vanishes identically (almost everywhere) on $\Hn$. The first basic result in this direction is the following.
\begin{lem}\label{first elementary basic uniqueness result}
    Let $u\in\mathcal{V}^{1,2}_{\loc}(\Hn)$ be a weak solution of $\mathscr{H}u=0$ on $\Hn$ such that
    \begin{align}\label{zero boundary condition in non tangential almost everywhere sense}
        \lim_{t\to 0^{+}}\fiint_{W(t,x)}\abs{u(s,y)}\dd s\dd y=0
    \end{align}
    for almost every $x\in\Rn$. If $\langle u, G\rangle_{\El{2}(\Hn)}=0$ for all $G=\partial_{t}\widetilde{G}$, $\widetilde{G}\in C^{\infty}_{c}(\Hn)$, then $u=0$ almost everywhere.
\end{lem}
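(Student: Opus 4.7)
The plan is to recognise that the hypothesis on inner products with test functions of the form $G = \partial_t \widetilde{G}$ is precisely the distributional identity $\partial_t u = 0$ on $\Hn$, and then to use the resulting $t$-independence to read off $u = 0$ from the boundary trace condition by Lebesgue differentiation. Notably, neither the equation $\mathscr{H}u = 0$ nor the reverse Hölder properties of $V$ will play any role.

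First I would translate the hypothesis into $\partial_t u = 0$ in $\mathcal{D}'(\Hn)$, which is immediate from integration by parts (complex conjugation is harmless since $C^\infty_c(\Hn)$ is closed under it, and one can first test against real-valued $\widetilde{G}$). Because $u \in \mathcal{V}^{1,2}_{\loc}(\Hn) \subset L^1_{\loc}(\Hn)$ and the half-line $(0,\infty)$ is connected, a classical fact about distributions with vanishing partial derivative then produces a function $v \in L^1_{\loc}(\Rn)$ such that $u(t,x) = v(x)$ for almost every $(t,x) \in \Hn$. In practice one mollifies $u$ in the $x$-variable, notes that the smooth-in-$x$ representative still has zero distributional $t$-derivative and hence is $t$-independent by the fundamental theorem of calculus applied fibrewise, and then passes to the limit as the mollification parameter tends to zero.

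With this representation, the Whitney averages reduce to spatial averages,
\[
\fiint_{W(t,x)} |u(s,y)| \, \dd s \, \dd y = \dashint_{B(x,t)} |v(y)| \, \dd y,
\]
so the boundary hypothesis becomes $\lim_{t \to 0^+} \dashint_{B(x,t)} |v(y)| \, \dd y = 0$ for almost every $x \in \Rn$. Lebesgue's differentiation theorem, valid since $v \in L^1_{\loc}(\Rn)$, identifies the left-hand side with $|v(x)|$ almost everywhere, forcing $v = 0$ a.e.\ on $\Rn$ and therefore $u = 0$ a.e.\ on $\Hn$. The only step that warrants any care is the identification of $u$ with a $t$-independent function; this is classical but worth spelling out briefly, as it is precisely there that the connectedness of the time interval $(0,\infty)$ enters.
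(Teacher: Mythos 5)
Your proposal is correct and arrives at the same conclusion through the same skeleton (integration by parts to get $\partial_t u = 0$, deduce $t$-independence, apply Lebesgue differentiation), but it diverges from the paper in one substantive way: the paper invokes interior regularity of weak solutions to $\mathscr{H}u=0$ (iterating \cite[Lemma 4.2]{MorrisTurner} to get $u\in C^\infty((0,\infty);\El{2}_{\loc}(\Rn))$) in order to upgrade $\partial_t u = 0$ a.e.\ to a genuinely $t$-independent representative $u(t,\cdot)=g$ \emph{for every} $t>0$. You instead observe that the PDE is never needed: the classical distributional lemma — if $u\in L^1_{\loc}$ on a product $(0,\infty)\times\Rn$ and $\partial_t u=0$ in $\mathcal{D}'$, then $u$ agrees a.e.\ with a function of $x$ alone — already reduces the Whitney average to the spatial average $\dashint_{B(x,t)}|v|$, after which Lebesgue differentiation finishes. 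This is a valid simplification: the hypothesis that $u$ solves $\mathscr{H}u=0$ is in fact superfluous for this lemma, and your elementary route makes that transparent. The trade-off is that the paper's appeal to $t$-smoothness gives equality for \emph{all} $t$ rather than a.e.\ $t$ at no extra cost (since the regularity theory is already developed), whereas your argument is self-contained and works for an arbitrary $L^1_{\loc}$ function with vanishing distributional $t$-derivative. The one step you correctly flag as needing care — the mollification/Fubini argument to extract the $t$-independent representative — is standard but worth the brief elaboration you give.
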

\begin{proof}
    Since $u\in\textup{W}^{1,2}_{\loc}(\Hn)$, we can integrate by parts to find that $\langle \partial_{t}u , \tilde{G}\rangle_{\El{2}(\Hn)}=0$ for all $\tilde{G}\in C^{\infty}_{c}(\Hn)$. It follows that $\partial_{t}u=0$ almost everywhere on $\Hn$. Since $u\in C^{\infty}((0,\infty);\El{2}_{\loc}(\Rn))$ (iterate \cite[Lemma 4.2]{MorrisTurner} as in the proof of \cite[Lemma 16.9]{AuscherEgert}), there exists $g\in\El{2}_{\loc}(\Rn)$ such that $u(t,\cdot)=g$ for all $t>0$. It follows from (\ref{zero boundary condition in non tangential almost everywhere sense}) and the equality 
    \begin{align*}
        \fiint_{W(t,x)}\abs{u(s,y)}\dd s\dd y=\dashint_{B(x,t)}\abs{g(y)}\dd y
    \end{align*}
    that $g(x)=0$ for almost every $x\in\Rn$, by Lebesgue's differentiation theorem. 
\end{proof}

We now introduce the Hilbert space 
\begin{equation*}
    \dot{\mathcal{X}}^{1,2}(\R^{1+n}):=\dot{\mathcal{V}}^{1,2}(\R^{1+n})\cap \El{2^{*}_{+}}(\R^{1+n}),
\end{equation*}
with the inner-product $\langle u,v\rangle_{\dot{\mathcal{X}}^{1,2}}:={\langle \nabla_{\mu}u,\nabla_{\mu}v\rangle}_{\El{2}}$. Again,  $2^{*}_{+}=\frac{2(n+1)}{(n+1)-2}$ is the upper Sobolev exponent in dimension $n+1$. Note that $\dot{\mathcal{X}}^{1,2}(\R^{1+n})$ coincides with the space $\dot{\mathcal{V}}^{1,2}(\R^{1+n})$ defined in \cite[Section~2]{MorrisTurner}, which shows that $C^{\infty}_{c}(\R^{1+n})$ is dense in $\dot{\mathcal{X}}^{1,2}(\R^{1+n})$, so using the argument outlined at the start of Section~\ref{section on properties of weak solutions and interior estimates}, the following ellipticity estimate holds for all $u\in\dot{\mathcal{X}}^{1,2}(\R^{1+n})$:
\begin{equation}\label{ellipticity extended to whole space for u in Y^1,2}
    \Re{\langle \mathcal{A}\nabla_{\mu}u , \nabla_{\mu}u \rangle_{\El{2}(\R^{1+n};\C^{n+2})}}\geq \lambda \norm{\nabla_{\mu}u}_{\El{2}(\R^{1+n}; \C^{n+2})}^{2}.
\end{equation}
Recall the definition of $\mathcal{A}$ in \eqref{definition of mathcal A coefficients on the full space}, and that $V$ and $A$ are extended to $\R^{1+n}$ by $t$-independence.

We now consider the sesquilinear form $\dot{h}: \dot{\mathcal{X}}^{1,2}(\R^{1+n})\times \dot{\mathcal{X}}^{1,2}(\R^{1+n})\to \C$,
defined as $\dot{h}(u,v):=\langle \mathcal{A}\nabla_{\mu}u,\nabla_{\mu}v\rangle_{\El{2}(\R^{1+n};\C^{n+2})}$ for all $u,v\in\dot{\mathcal{X}}^{1,2}(\R^{1+n})$. 
For fixed $u\in\dot{\mathcal{X}}^{1,2}(\R^{1+n})$, we define $\dot{\mathscr{H}}u\in \dot{\mathcal{X}}^{1,2}(\R^{1+n})^{*}$ by 
\begin{equation}\label{definition of mathscrH by the form dot h}
\langle \dot{\mathscr{H}}u , v\rangle:=\dot{h}(u,v)
\end{equation}
for all $v\in\dot{\mathcal{X}}^{1,2}(\R^{1+n})$, where the angled brackets represent the action of the dual space $\dot{\mathcal{X}}^{1,2}(\R^{1+n})^{*}$.
By the ellipticity \eqref{ellipticity extended to whole space for u in Y^1,2}, we have $\Re{\dot{h}(u,u)}\gtrsim \norm{\nabla_{\mu}u}_{\El{2}}^{2}=\norm{u}_{\dot{\mathcal{X}}^{1,2}(\R^{1+n})}^{2}$, so the Lax--Milgram theorem shows that the mapping $\dot{\mathscr{H}} : \dot{\mathcal{X}}^{1,2}(\R^{1+n})\to \dot{\mathcal{X}}^{1,2}(\R^{1+n})^{*}$ is a (bounded) linear isomorphism. In particular, we can consider the inverse operator
$\dot{\mathscr{H}}^{-1}: \dot{\mathcal{X}}^{1,2}(\R^{1+n})^{*} \to \dot{\mathcal{X}}^{1,2}(\R^{1+n})$. 

Let us note that any test function $G\in C^{\infty}_{c}(\R^{1+n})$ is naturally identified with an element of the dual $\dot{\mathcal{X}}^{1,2}(\R^{1+n})^{*}$, as the linear functional
\begin{equation}\label{definition identification of smcp function with element of dual of V1,2dot}
    \dot{\mathcal{X}}^{1,2}(\R^{1+n})\to \C : \varphi \mapsto \langle \varphi, G\rangle_{\El{2}(\R^{1+n})}.
\end{equation}
Continuity of the functional follows by a Sobolev embedding in $1+n>2$ dimensions as follows:  
\begin{align*}
    \abs{\langle \varphi, G\rangle_{\El{2}}}&\leq \norm{G}_{\El{\frac{2n+2}{n+3}}}\norm{\varphi}_{\El{\frac{2n+2}{n-1}}}\lesssim \norm{G}_{\El{\frac{2n+2}{n+3}}}\norm{\nabla_{t,x}\varphi}_{\El{2}}\lesssim\norm{\varphi}_{\dot{\mathcal{X}}^{1,2}}.
\end{align*}
We shall need the following analogue of \cite[Lemma 3.1]{Auscher_Egert_Uniqueness} for Schrödinger operators.
\begin{lem}\label{smoothness of weak solutions in the t variable}
    Let $n\geq 3$ and $G\in C^{\infty}_{c}(\R^{1+n})$. If $u:=\dot{\mathscr{H}}^{-1}(G)\in\dot{\mathcal{X}}^{1,2}(\R^{1+n})$, then the following properties hold for all integers $k\geq 1$:
    \begin{enumerate}[label=\emph{(\roman*)}]
        \item $\partial_{t}^{k}u\in\mathcal{V}^{1,2}(\R^{1+n})$, and $\dot{\mathscr{H}}(\partial_{t}^{k}u)=\partial_{t}^{k}G$. 
        \item $\partial_{t}^{k}u\in C_{0}(\R ; \mathcal{V}^{1,2}(\Rn))$, with $t\in\R$ being the distinguished variable.
    \end{enumerate}
\end{lem}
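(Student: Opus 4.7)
The plan is to prove both items by induction on $k$ via a difference-quotient argument in the $t$-variable that crucially exploits the $t$-independence of the coefficients $\mathcal{A}$ and $V$. The key conceptual point is that membership of $w$ in the homogeneous space $\dot{\mathcal{X}}^{1,2}(\R^{1+n})$ already encodes $\partial_t w \in L^{2}(\R^{1+n})$, since $\nabla_\mu$ on $\R^{1+n}$ includes the $t$-component. Thus once the difference-quotient argument yields $\partial_t^k u \in \dot{\mathcal{X}}^{1,2}$ for all $k \geq 0$, the $L^{2}$-integrability of $\partial_t^k u$ itself (needed to upgrade from $\dot{\mathcal{X}}^{1,2}$ to $\mathcal{V}^{1,2}(\R^{1+n})$) comes for free from applying this observation at the previous level $k-1$.

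For the difference-quotient step, I would set $D_h v(t,x) := h^{-1}(v(t+h,x) - v(t,x))$ for $h \neq 0$. A change of variables using the $t$-independence of $\mathcal{A}$ and $V$ shows that $\dot{h}$ is invariant under translations in $t$, so $\dot{\mathscr{H}}$ commutes with $D_h$: one obtains $D_h u \in \dot{\mathcal{X}}^{1,2}(\R^{1+n})$ and $\dot{\mathscr{H}}(D_h u) = D_h G$ in $\dot{\mathcal{X}}^{1,2}(\R^{1+n})^*$. Testing against $D_h u$ and using the coercivity \eqref{ellipticity extended to whole space for u in Y^1,2} together with the dual embedding \eqref{definition identification of smcp function with element of dual of V1,2dot} yields
\begin{equation*}
\lambda \|D_h u\|_{\dot{\mathcal{X}}^{1,2}}^2 \leq \Re\, \dot{h}(D_h u, D_h u) = \Re\langle D_h G, D_h u\rangle_{L^2} \lesssim \|D_h G\|_{L^{\frac{2n+2}{n+3}}} \|D_h u\|_{\dot{\mathcal{X}}^{1,2}}.
\end{equation*}
Since $G \in C^\infty_c(\R^{1+n})$, Minkowski's inequality gives $\|D_h G\|_{L^{(2n+2)/(n+3)}} \leq \|\partial_t G\|_{L^{(2n+2)/(n+3)}}$ uniformly in $h$, so $\{D_h u\}_{h\neq 0}$ is bounded in the Hilbert space $\dot{\mathcal{X}}^{1,2}(\R^{1+n})$. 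Weak compactness extracts $D_{h_j} u \weakto v$; identifying the distributional limit as $v = \partial_t u$ gives $\partial_t u \in \dot{\mathcal{X}}^{1,2}$ with $\dot{\mathscr{H}}(\partial_t u) = \partial_t G$, and iterating the argument yields $\partial_t^k u \in \dot{\mathcal{X}}^{1,2}(\R^{1+n})$ with $\dot{\mathscr{H}}(\partial_t^k u) = \partial_t^k G$ for all $k \geq 1$. Applying the $L^2$-observation above to $\partial_t^{k-1} u$ then gives $\partial_t^k u \in L^2(\R^{1+n})$, which combined with $\nabla_\mu \partial_t^k u \in L^2$ at level $k$ completes item (i).

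For item (ii), the Fubini--Tonelli identification shows that $\partial_t^k u \in \mathcal{V}^{1,2}(\R^{1+n})$ is equivalent to $\partial_t^k u \in L^2(\R; \mathcal{V}^{1,2}(\R^n))$ together with $\partial_t^{k+1} u \in L^2(\R; L^2(\R^n))$. Since item (i) is established for both $k$ and $k+1$, both $\partial_t^k u$ and its $t$-derivative lie in $L^2(\R; \mathcal{V}^{1,2}(\R^n))$, so $\partial_t^k u \in H^1(\R; \mathcal{V}^{1,2}(\R^n))$. The one-dimensional Bochner--Sobolev embedding $H^1(\R; X) \hookrightarrow C_0(\R; X)$, valid for any Hilbert space $X$, then yields $\partial_t^k u \in C_0(\R; \mathcal{V}^{1,2}(\R^n))$. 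The principal technical point that I expect to require the most care is the rigorous justification of the commutation identity $\dot{\mathscr{H}}(D_h u) = D_h G$ in the dual space $\dot{\mathcal{X}}^{1,2,*}$ and, relatedly, the verification that the weak-$\dot{\mathcal{X}}^{1,2}$ limit of the difference quotients coincides with the distributional $\partial_t u$; once these ingredients are secured, the remainder of the argument follows the template of \cite[Lemma 3.1]{Auscher_Egert_Uniqueness} adapted to the potential-dependent space $\dot{\mathcal{X}}^{1,2}(\R^{1+n})$.
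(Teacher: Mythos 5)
Your argument is correct, and the overall skeleton (difference quotients in $t$, exploiting $t$-independence of $\mathcal{A}$ and $V$, iterating, then the Bochner--Sobolev embedding $W^{1,2}(\R;X)\hookrightarrow C_{0}(\R;X)$ for item (ii)) coincides with the paper's. Where you genuinely diverge is in how the uniform bound on the difference quotients is obtained. The paper's proof never tests the equation for $\bb{D}_{0}^{h}u$ globally against itself; instead it uses the a priori bound $\|\bb{D}_{0}^{h}u\|_{\El{2}}\leq\|\partial_{t}u\|_{\El{2}}<\infty$ (available precisely because $u\in\dot{\mathcal{X}}^{1,2}$ forces $\partial_{t}u\in\El{2}$ --- the same observation you isolate as your ``key conceptual point''), feeds it into Caccioppoli's inequality \eqref{Caccioppoli's inequality} on a cube $Q$, and lets $\ell(Q)\to\infty$ to kill the lower-order term; the limit is then taken via the Sobolev difference-quotient characterisation and the machinery of \cite[Lemma 4.2]{MorrisTurner} to interchange $\nabla_{\mu}$ and $\partial_{t}$. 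Your route --- testing $\dot{\mathscr{H}}(D_{h}u)=D_{h}G$ against $D_{h}u$ itself, using the coercivity \eqref{ellipticity extended to whole space for u in Y^1,2} on the left and the Sobolev-embedding bound $\|D_{h}G\|_{\El{(2n+2)/(n+3)}}\leq\|\partial_{t}G\|_{\El{(2n+2)/(n+3)}}$ on the right, then extracting a weak limit in the Hilbert space $\dot{\mathcal{X}}^{1,2}$ --- is a legitimate and arguably cleaner alternative: it avoids Caccioppoli and the growing-cube limit entirely, and weak compactness in $\dot{\mathcal{X}}^{1,2}$ automatically delivers $\partial_{t}u\in\dot{\mathcal{X}}^{1,2}$ together with the identity $\nabla_{\mu}(\partial_{t}u)=\partial_{t}(\nabla_{\mu}u)$ (the third component $V^{1/2}D_{h}u=D_{h}(V^{1/2}u)$ passes to the limit on both sides because $V$ is $t$-independent), so no separate appeal to \cite[Lemma 4.2]{MorrisTurner} is needed. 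The two technical points you flag are indeed the ones requiring care, and both go through: the commutation $\dot{\mathscr{H}}(D_{h}u)=D_{h}G$ follows from translation invariance of the form $\dot{h}$ in $t$ together with summation by parts for difference quotients acting on the $\El{2}$-pairing, and the identification of the weak limit with $\partial_{t}u$ follows since the continuous embedding $\dot{\mathcal{X}}^{1,2}\hookrightarrow\El{2^{*}_{+}}$ turns weak $\dot{\mathcal{X}}^{1,2}$-convergence into distributional convergence, while $D_{h}u\to\partial_{t}u$ in $\mathcal{D}'$ for any $u\in\El{1}_{\loc}$. Your treatment of (ii), placing $\partial_{t}^{k}u$ in $W^{1,2}(\R;\mathcal{V}^{1,2}(\Rn))$ by using (i) at levels $k$ and $k+1$, is also a slightly sharper formulation than the paper's appeal to the embedding with $X=\El{2}$.
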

\begin{proof}
    We first prove (i). This is similar to the proof of \cite[Lemma 4.2]{MorrisTurner}, except that $G$ is not identically zero.
    Let $e_{0}\in\R^{1+n}$ be the unit vector in the positive $t$ direction. We consider the difference quotient $\bb{D}_{0}^{h}u:\R^{1+n}\to\C$, defined as
    \begin{align*}
        \bb{D}_{0}^{h}u(s,y):=h^{-1}\left[(u((s,y)+h e_{0})-u((s,y))\right]
    \end{align*}
    for all $(s,y)\in\R^{1+n}$ and $h\in\R\setminus\set{0}$.
By $t$-independence of $\mathcal{A}$, we get $\dot{\mathscr{H}}\left(\bb{D}^{h}_{0}u\right)=\bb{D}^{h}_{0}G$ for all $h\in\R\setminus\set{0}$.
It follows from Caccioppoli's inequality \eqref{Caccioppoli's inequality} that for all cubes $Q\subset\R^{1+n}$ we have
    \begin{align*}
        \iint_{Q}\abs{\nabla_{\mu}(\bb{D}^{h}_{0}u)}^{2}&\lesssim \ell(Q)^{-2}\iint_{2Q}\abs{\bb{D}^{h}_{0}u}^{2} + \left(\iint_{2Q}\abs{\bb{D}^{h}_{0}u}^{2}\right)^{1/2}\left(\iint_{2Q}\abs{\bb{D}^{h}_{0}G}^{2}\right)^{1/2}\\
        &\lesssim\ell(Q)^{-2}\norm{\partial_{t}u}_{\El{2}}^{2} + \norm{\partial_{t}u}_{\El{2}}\norm{\partial_{t}G}_{\El{2}},
    \end{align*}
    where we have used \cite[Lemma 7.23]{Gilbarg_Trudinger_Elliptic}. Letting $\ell(Q)\to\infty$, this yields 
    \begin{equation}\label{uniform estimate on difference quotients}
    \iint_{\R^{1+n}}\abs{\nabla_{\mu}(\bb{D}_{0}^{h}u)}^{2}\lesssim\norm{\partial_{t}u}_{\El{2}}\norm{\partial_{t}G}_{\El{2}}
    \end{equation}
    uniformly for all $h\in\R\setminus\set{0}$. 
    Now, since $u\in\dot{\mathcal{X}}^{1,2}(\R^{1+n})$ and $V$ is $t$-independent, we clearly have that $\bb{D}^{h}_{0}u\in\dot{\mathcal{X}}^{1,2}(\R^{1+n})$, with $\nabla_{\mu}\left(\bb{D}^{h}_{0}u\right)=\bb{D}^{h}_{0}(\nabla_{\mu}u)$.
    It follows from this, \eqref{uniform estimate on difference quotients} and the Sobolev difference quotient characterisation (see, e.g., \cite[Lemma 7.24]{Gilbarg_Trudinger_Elliptic}) that $\nabla_{\mu}u$ has a weak $\partial_{t}$-derivative in $\El{2}(\R^{1+n})$, denoted $\partial_{t}\nabla_{\mu}u\in\El{2}(\R^{1+n})$. 
    It follows from the proof of \cite[Lemma 4.2]{MorrisTurner} that $\partial_{t}u\in\mathcal{V}^{1,2}(\R^{1+n})$, with
    \begin{align}\label{interchange derivatives nabla mu partial t for weak solutions}
\nabla_{\mu}\left(\partial_{t}u\right)=\partial_{t}\left(\nabla_{\mu}u\right)\in\El{2}(\R^{1+n}),
    \end{align}
    from which it follows that $\dot{\mathscr{H}}(\partial_{t}u)=\partial_{t}G$. 
    Since $\partial_{t}^{k}G\in C^{\infty}_{c}(\R^{1+n})$ for all $k\geq 1$, we can proceed by induction to obtain that $\partial_{t}^{k}u\in\mathcal{V}^{1,2}(\R^{1+n})$ with $\nabla_{\mu}(\partial_{t}^{k}u)=\partial_{t}^{k}(\nabla_{\mu}u)$ and $\dot{\mathscr{H}}(\partial_{t}^{k}u)=\partial_{t}^{k}G$.
    The proof of (ii) follows as in the proof of \cite[Lemma 3.1]{Auscher_Egert_Uniqueness} from the vector-valued Sobolev embedding $\textup{W}^{1,2}(\R;X)\subseteq C_{0}(\R ; X)$ for the Banach space $X=\Ell{2}$. 
    \end{proof}

We shall now use the qualitative information provided by the previous lemma to derive a representation formula for solutions $u$ of $\dot{\mathscr{H}}u=G\in C^{\infty}_{c}(\R^{1+n})$, which is the analogue of \cite[Corollary 20.3]{AuscherEgert} in our context. Recall that $b=A_{\perp\perp}^{-1}$.
\begin{lem}\label{representation formula of slices of weak solution by Poisson semigroup}
    If $\widetilde{G}\in C^{\infty}_{c}(\R^{1+n})$ and $G=\partial_{t}\widetilde{G}$, then for all $t\in\R$ it holds that  
    \begin{equation*}\label{representation formula for weak solutions of schrodinger by poisson semigroup}
        \dot{\mathscr{H}}^{-1}(G)(t,\cdot)=-\frac{1}{2}\int_{\R}\sgn{t-s}e^{-\abs{t-s}H^{1/2}}\left(b\widetilde{G}(s,\cdot)\right)\dd s,
    \end{equation*}
    as an absolutely convergent $\Ell{2}$-valued Bochner integral.
\end{lem}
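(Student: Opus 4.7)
The plan is to define
\[
w(t,\cdot):=-\frac{1}{2}\int_{\R}\sgn{t-s}\,e^{-\abs{t-s}H^{1/2}}\left(b\widetilde{G}(s,\cdot)\right)\dd s
\]
as an $\El{2}(\Rn)$-valued Bochner integral for each fixed $t\in\R$, to show that $w\in\dot{\mathcal{X}}^{1,2}(\R^{1+n})$, and to verify the weak identity $\dot{h}(w,v)=\langle G,v\rangle_{\El{2}(\R^{1+n})}$ for every $v\in C^{\infty}_{c}(\R^{1+n})$. Combined with the density of $C^{\infty}_{c}(\R^{1+n})$ in $\dot{\mathcal{X}}^{1,2}(\R^{1+n})$ and the unique solvability of $\dot{\mathscr{H}}u=G$ in $\dot{\mathcal{X}}^{1,2}(\R^{1+n})$ furnished by the Lax--Milgram theorem applied to the form in \eqref{definition of mathscrH by the form dot h}, this will force $w=u$. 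Absolute convergence of the defining Bochner integral is immediate: $\widetilde{G}$ is supported in $[-R,R]\times\Rn$ for some $R>0$, the map $s\mapsto\norm{b\widetilde{G}(s,\cdot)}_{\El{2}}$ is uniformly bounded, and $\sup_{r\geq 0}\norm{e^{-rH^{1/2}}}_{\mathcal{L}(\El{2})}<\infty$ because $-H^{1/2}$ generates a bounded holomorphic semigroup on $\El{2}$.

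The membership $w\in\dot{\mathcal{X}}^{1,2}(\R^{1+n})$ is then established via a partial Fourier transform in the variable $t$. The Laplace transform identities $\int_{0}^{\infty}e^{\pm i\xi r}e^{-rH^{1/2}}\dd r=(\mp i\xi+H^{1/2})^{-1}$ (valid for $\xi\in\R$, since $\pm i\xi\notin\clos{S^{+}_{\omega(B)}}$) yield the multiplier representation
\[
\hat{w}(\xi,\cdot)=\frac{i\xi}{\xi^{2}+H}\widehat{b\widetilde{G}}(\xi,\cdot),
\]
with $(\xi^{2}+H)^{-1}$ interpreted through the holomorphic functional calculus for $H$. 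The uniform bounds
\[
\sup_{\xi\in\R}\norm{\tfrac{\xi^{2}}{\xi^{2}+H}}_{\mathcal{L}(\El{2})}+\sup_{\xi\in\R}\norm{\tfrac{\xi H^{1/2}}{\xi^{2}+H}}_{\mathcal{L}(\El{2})}<\infty,
\]
which follow at once from the bounded $\textup{H}^{\infty}$-calculus of $H$ (Theorem~\ref{bounded H_infty functional calculus of H}), combined with the Kato estimate~\eqref{Kato estimate} and Plancherel's theorem in $t$, give $\norm{\partial_{t}w}_{\El{2}(\R^{1+n})}+\norm{\nabla_{\mu}^{\parallel}w}_{\El{2}(\R^{1+n})}\lesssim\norm{b\widetilde{G}}_{\El{2}(\R^{1+n})}$, so $\nabla_{\mu}w\in\El{2}(\R^{1+n};\C^{n+2})$. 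The Sobolev embedding into $\El{2^{*}_{+}}(\R^{1+n})$ then places $w$ in $\dot{\mathcal{X}}^{1,2}(\R^{1+n})$.

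The same Fourier representation also shows that $Hw,\partial_{t}^{2}w\in\El{2}(\R^{1+n})$ and that $w$ satisfies the strong $\El{2}$-identity $-\partial_{t}^{2}w+Hw=bG$, equivalently $-A_{\perp\perp}\partial_{t}^{2}w+\mathsf{H}w=G$ (using $\mathsf{H}=A_{\perp\perp}H$ and that $A_{\perp\perp}$ is $t$-independent). Testing this identity in $\El{2}(\R^{1+n})$ against $v\in C^{\infty}_{c}(\R^{1+n})$, integrating by parts in $t$ (boundary contributions vanish since $v$ has compact support), using the form definition of $\mathsf{H}$ slicewise in $t$, and invoking Fubini, recovers $\dot{h}(w,v)=\langle G,v\rangle_{\El{2}(\R^{1+n})}$, which closes the argument. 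The main delicate point will be the rigorous justification of the Fourier representation and the commutation of the inverse Fourier transform in $\xi$ with the unbounded operators $H$ and $\partial_{t}^{2}$: this is carried out by combining the $\textup{H}^{\infty}$-calculus bounds above with the rapid decay in $\xi$ of $\widehat{b\widetilde{G}}(\xi,\cdot)$ inherited from the smoothness of $\widetilde{G}$, ensuring that $Hw$ and $\partial_{t}^{2}w$ are genuine $\El{2}(\R^{1+n})$ functions and that the slicewise manipulations in the closing step are fully legitimate.
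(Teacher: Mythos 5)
Your argument is correct in outline but takes a genuinely different route from the paper. The paper passes to the first-order formalism: it shows that the conormal gradient $F_t=\bigl(A_{\perp\perp}\partial_t u,\nabla_\mu^{\parallel}u\bigr)$ of $u=\dot{\mathscr{H}}^{-1}(G)$ lies in $\clos{\ran{D}}\cap\dom{DB}$ and solves $\partial_tF_t+DBF_t=-(G(t,\cdot),0)$ strongly, and then imports the Duhamel/spectral-projection representation for such systems from Auscher--Egert to read off the formula for $u$. You instead stay at the second-order level: you define the candidate $w$ directly, identify it as the Fourier multiplier $\tfrac{i\xi}{\xi^{2}+H}$ in $t$ acting on $b\widetilde{G}$, use the bounded $\textup{H}^{\infty}$-calculus together with the Kato estimate to place $w$ in $\dot{\mathcal{X}}^{1,2}(\R^{1+n})$ and to verify $\dot{\mathscr{H}}w=G$, and conclude $w=\dot{\mathscr{H}}^{-1}(G)$ from Lax--Milgram uniqueness. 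This buys independence from the $DB$ machinery and from the external references the paper leans on, at the price of the distributional bookkeeping you yourself flag.

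Two points in that bookkeeping need genuine care. First, $w$ is in general \emph{not} in $\El{2}(\R;\El{2})$: the kernel $\sgn{r}e^{-\abs{r}H^{1/2}}$ is bounded but not integrable, and the formal bound $\norm{\hat{w}(\xi,\cdot)}_{\El{2}}\lesssim\abs{\xi}^{-1}\Vert\widehat{b\widetilde{G}}(\xi,\cdot)\Vert_{\El{2}}$ is not locally integrable near $\xi=0$ in one dimension. So Plancherel cannot be applied to $w$ itself; it must be applied directly to $\partial_tw$, $\nabla_{\mu}^{\parallel}w$, $Hw$ and $\partial_t^{2}w$, whose multipliers $\tfrac{-\xi^{2}}{\xi^{2}+H}$, $R_H\tfrac{i\xi H^{1/2}}{\xi^{2}+H}$, $\tfrac{i\xi H}{\xi^{2}+H}$, $\tfrac{-i\xi^{3}}{\xi^{2}+H}$ are uniformly bounded and regular at $\xi=0$; the cleanest justification is to compute these transforms by testing against $\phi\in C^{\infty}_{c}(\R)$ after regularising the kernel to $\sgn{r}\,e^{-\abs{r}(\eps+H^{1/2})}$ and letting $\eps\to0$. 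Second, the Sobolev embedding only yields $w=v+c$ with $v\in\El{2^{*}_{+}}(\R^{1+n})$ and $c\in\C$; to place $w$ in $\dot{\mathcal{X}}^{1,2}(\R^{1+n})$ you must rule out the constant, which follows from $\norm{w(t,\cdot)}_{\El{2}}\to0$ as $\abs{t}\to\infty$ (the $C_{0}$ property of the Poisson semigroup for the injective operator $H^{1/2}$). With these two repairs the argument closes.
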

\begin{proof}
Let $u:=\dot{\mathscr{H}}^{-1}(G)$, where $G=\partial_{t}\widetilde{G}$ for some $\widetilde{G}\in C^{\infty}_{c}(\R^{1+n})$, and consider $F:=\begin{bmatrix}
        A_{\perp\perp}\partial_{t}u\\
        \nabla_{\mu}^{\parallel}u
    \end{bmatrix}\in\El{2}(\R^{1+n} ; \C^{n+2})$. The proof of Lemma~\ref{smoothness of weak solutions in the t variable} shows that $F$ can identified with an element of $C^{\infty}(\R ;\El{2}(\Rn;\C^{n+2}))$, where $t$ is the distinguished variable. We claim that $F_{t}:=F(t,\cdot)\in\clos{\ran{D}}\cap\dom{DB}$ for all $t\in\R$, and that it solves the first-order system
    \begin{equation}\label{F on slices solves 1st order ode}
        \partial_{t}F_{t}+DBF_{t}=-\begin{bmatrix}
            G(t,\cdot)\\
            0
        \end{bmatrix}
    \end{equation}
    in the strong sense for all $t\in\R$ (recall that $B\in\El{\infty}(\Rn; \mathcal{L}(\C^{n+2}))$ is defined in \eqref{definition of multiplication operator B associated to elliptic coefficients A}). 
    
    Let us first prove that $F_{t}\in\clos{\ran{D}}$ for all $t\in\R$. To this end, note that $u\in\El{2^{*}_{+}}(\R^{1+n})$ can be identified with an element of $\El{2^{*}_{+}}(\R;\El{2^{*}_{+}}(\Rn))$ via Fubini's theorem and therefore $u\in\El{2}_{\loc}(\R;\El{2}_{\loc}(\Rn))$. By Lemma~\ref{smoothness of weak solutions in the t variable} (and by one-dimensional Sobolev embeddings, see also \cite[Corollary 16.9]{AuscherEgert}), we deduce that $u\in C^{\infty}(\R;\El{2}_{\loc}(\Rn))$. It then follows from the smoothness (or just the continuity) of both $u$ and $F$ in the variable $t$ that $u(t,\cdot)\in\dot{\mathcal{V}}^{1,2}(\Rn)$ with $\nabla_{\mu}\left(u(t,\cdot)\right)=\left(\nabla_{\mu}^{\parallel}u\right)(t,\cdot)$ for all $t\in\R$.
    Since our assumptions on $V$ imply that $\dot{\mathcal{V}}^{1,2}(\Rn)\subseteq \El{2^{*}}(\Rn)$ by Proposition~\ref{embedding homogeneous V spaces in L ^p* for p<n}, it follows from \eqref{characterisation range D} that $F_{t}\in\clos{\ran{D}}$ for all $t\in\R$, as claimed.

We now show that $BF_{t}\in\dom{D}=\mathcal{V}^{1,2}(\Rn)\times \dom{\nabla_{\mu}^{*}}$ for all $t\in\R$. Note that the argument from the previous paragraph shows that
$BF_{t}=\begin{bmatrix}
        \partial_{t}u(t,\cdot)\\
        A_{\parallel\parallel}\nabla_{x}u(t,\cdot)\\
        aV^{1/2}u(t,\cdot)
    \end{bmatrix}.$
We already know from Lemma~\ref{smoothness of weak solutions in the t variable} that $\partial_{t}u(t,\cdot)\in\mathcal{V}^{1,2}(\Rn).$ For the other components, we let $A_{\parallel,\mu}:=\begin{bmatrix}
    A_{\parallel\parallel} & 0\\
    0 & a
\end{bmatrix}\in\El{\infty}(\Rn;\mathcal{L}(\C^{n+1}))$,
and we note that for $\phi\in C^{\infty}_{c}(\R)$ and $\varphi\in \mathcal{V}^{1,2}(\Rn)$, the function $(\phi\varphi)(t,x):=\phi(t)\varphi(x)$ satisfies $\nabla_{\mu}(\varphi\phi)(t,x)=\begin{bmatrix}
    \varphi(x)\phi'(t)\\
    \phi(t)\nabla_{\mu}\varphi(x)
\end{bmatrix}$. It follows from an integration by parts in $t$ that
\begin{align*}
    &\int_{\R}\left(\langle A_{\parallel,\mu}\nabla_{\mu}^{\parallel}u(t,\cdot) , \nabla_{\mu}\varphi\rangle_{\Ell{2}} - \langle A_{\perp\perp}\partial_{t}^{2}u(t,\cdot) , \varphi\rangle_{\El{2}}\right)\clos{\phi(t)}\dd t\\
    &=\int_{\R}\langle A_{\parallel,\mu}\nabla_{\mu}^{\parallel}u(t,\cdot) , \nabla_{\mu}\varphi\rangle_{\Ell{2}}\clos{\phi(t)}\dd t + \int_{\R}\langle A_{\perp\perp}\partial_{t}u(t,\cdot), \varphi\rangle_{\Ell{2}}\clos{\phi'(t)}\dd t\\
    &=\iint_{\R^{1+n}}\mathcal{A}\nabla_{\mu}u \cdot \clos{\nabla_{\mu}(\varphi\phi)}\dd x\dd t=\langle \dot{\mathscr{H}}u,\phi\varphi\rangle=\int_{\R}\left(\int_{\R^{n}}G(t,\cdot)\clos{\varphi} \dd x\right) \clos{\phi(t)} \dd t.
\end{align*}
By the fundamental lemma of the calculus of variations, and smoothness in the variable $t$, it follows that 
\begin{align*}
    \langle A_{\parallel,\mu}\nabla_{\mu}^{\parallel}u(t,\cdot) , \nabla_{\mu}\varphi\rangle_{\El{2}} = \langle G(t,\cdot) , \varphi\rangle_{\El{2}} + \langle A_{\perp\perp}\partial_{t}^{2}u(t,\cdot),\varphi\rangle_{\El{2}}
\end{align*}
for all $t\in\R$. This means that $A_{\parallel,\mu}\nabla_{\mu}^{\parallel}u(t,\cdot)\in\dom{\nabla_{\mu}^{*}}$, with $\nabla_{\mu}^{*}\left(A_{\parallel,\mu}\nabla_{\mu}^{\parallel}u(t,\cdot)\right)=G(t,\cdot)+A_{\perp\perp}\partial_{t}^{2}u(t,\cdot)$. Consequently, $BF_{t}\in\dom{D}$ and by \eqref{interchange derivatives nabla mu partial t for weak solutions} we are justified in writing 
\begin{align*}
    DBF_{t}&=\begin{bmatrix}
        0 & -\nabla_{\mu}^{*}\\
        -\nabla_{\mu} & 0
    \end{bmatrix}\begin{bmatrix}
        \partial_{t}u\\
        A_{\parallel,\mu}\nabla_{\mu}^{\parallel}u
    \end{bmatrix}_{t}=\begin{bmatrix}
        -A_{\perp\perp}\partial_{t}^{2}u(t,\cdot) - G(t,\cdot)\\
        -\nabla_{\mu}^{\parallel}(\partial_{t}u)
    \end{bmatrix}=-\partial_{t}F_{t}-\begin{bmatrix}
        G(t,\cdot)\\
        0
    \end{bmatrix}
    \end{align*}
     for all $t\in\R$, which proves \eqref{F on slices solves 1st order ode}. We can then follow the Proof of Proposition~4.5 in \cite[Section~7]{Auscher_Egert_Uniqueness}, along with that of \cite[Proposition 20.2]{AuscherEgert} to obtain \eqref{representation formula for weak solutions of schrodinger by poisson semigroup}.
\end{proof}

Our next goal is to be able to relate $\dot{\mathscr{H}}^{-1}(G)(t,\cdot)$ to the boundary value $\dot{\mathscr{H}}^{-1}(G)(0,\cdot)$. This requires the following boundedness properties of the Poisson semigroup $\set{e^{-tH^{1/2}}}_{t>0}$, analogous to \cite[Proposition 12.5]{AuscherEgert}.
\begin{lem}
    If $p_{-}(H)\vee \frac{n}{n+\delta/2}<p\leq r<p_{+}(H)$, then the Poisson semigroup $\set{e^{-tH^{1/2}}}_{t>0}$ is $b\textup{H}^{p}_{V}-b\textup{H}^{r}_{V}$-bounded. If in addition $p_{-}(H^{\sharp})<1$, then it is $b\textup{H}^{p}_{V}-\El{\infty}$-bounded. 
\end{lem}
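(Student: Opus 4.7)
The plan is to translate the claim into the language of $H$-adapted Hardy spaces and then exploit the identifications of Section \ref{section on identification of H adapted Hardy spaces}. Since $p_{-}(H)\vee \frac{n}{n+\delta/2}<p\leq r<p_{+}(H)$, Theorem \ref{summary of identifications of H adapted Hardys spaces} gives $\bb{H}^{p}_{H}=b\textup{H}^{p}_{V,\textup{pre}}$ and $\bb{H}^{r}_{H}=b\textup{H}^{r}_{V,\textup{pre}}$ with equivalent (quasi)norms. The claim therefore becomes an $\bb{H}^{p}_{H}-\bb{H}^{r}_{H}$-boundedness of $\set{e^{-tH^{1/2}}}_{t>0}$ with scaling $t^{\frac{n}{r}-\frac{n}{p}}$. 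Writing $e^{-tH^{1/2}}=\tilde\psi(t^{2}H)$ with $\tilde\psi(z):=e^{-\sqrt{z}}\in \textup{H}^{\infty}(S^{+}_{\mu})$ for any $\mu<\pi$ places the operator inside the $\textup{H}^{\infty}$-calculus of $H$.

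The diagonal case $p=r$ follows at once from the boundedness of the $\textup{H}^{\infty}$-calculus of $H$ on $\bb{H}^{p}_{H}$, a general feature of operators satisfying the standard assumptions of Section \ref{subsection on operator adapted hardy spaces for general operators with standard assumptions}. For the off-diagonal improvement $p<r$ I would use the factorisation
\[
e^{-tH^{1/2}} \;=\; h_{N}(t^{2}H)\,(1+t^{2}H)^{-N}
\]
for a sufficiently large integer $N=N(p,r)$, where $h_{N}(z):=(1+z)^{N}e^{-\sqrt{z}}$ still belongs to $\textup{H}^{\infty}(S^{+}_{\mu})$ for any $\mu<\pi$ thanks to the exponential decay of $e^{-\sqrt{z}}$. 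The factor $h_{N}(t^{2}H)$ is then uniformly bounded on $\bb{H}^{r}_{H}$ by functional calculus, and the integrability improvement is carried by $(1+t^{2}H)^{-N}$. The required $\bb{H}^{p}_{H}-\bb{H}^{r}_{H}$-bound on that resolvent factor with scaling $t^{\frac{n}{r}-\frac{n}{p}}$ follows by combining the $\El{2}$ off-diagonal estimates of arbitrarily large order for $\set{(1+t^{2}H)^{-N}}_{t>0}$ (Lemma \ref{Obtaining L^2-L^q or L^p-L^2 odes for powers of resolvents} together with composition), the Hardy space identifications already invoked, and the interpolation Lemma \ref{lemma notion of HpV boundedness interpolates}, in the same spirit as the extrapolation arguments used to prove Theorem \ref{thm boundedness of Riesz transforms on Lp full range}.

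The $b\textup{H}^{p}_{V}-\El{\infty}$ statement under the additional hypothesis $p_{-}(H^{\sharp})<1$ I would obtain by duality. The Campanato duality of Proposition \ref{duality DZiubanski hardy space and Campanato spaces} realises $\El{\infty}$ (modulo constants) inside the dual of $\textup{H}^{1}_{V,\textup{pre}}$, so testing $b^{-1}e^{-tH^{1/2}}(bf)$ against $g\in \textup{H}^{1}_{V,\textup{pre}}\cap\El{2}$ transfers matters to a bound on the dual semigroup $e^{-t(H^{*})^{1/2}}$. The similarity with $H^{\sharp}$ recalled in Section \ref{subsubsection on adjoints and similarity} turns this into a $b^{*}\textup{H}^{1}_{V}-b^{*}\textup{H}^{p'}_{V}$ bound for $e^{-t(H^{\sharp})^{1/2}}$, which is precisely what the first part of the lemma delivers when applied to $H^{\sharp}$, whose Hardy identifications extend down to $p=1$ exactly because of the assumption $p_{-}(H^{\sharp})<1$.

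The main obstacle I expect lies in the endpoint regime $p\leq 1<r$ of the resolvent step: establishing the $\bb{H}^{p}_{H}-\bb{H}^{r}_{H}$-bound of $(1+t^{2}H)^{-N}$ with the correct scaling requires a careful analysis of its action on molecules for $\bb{H}^{p}_{H}\eqsim b\textup{H}^{p}_{V,\textup{pre}}$, combining the $\El{2}$ off-diagonal decay with the molecular cancellation calibrated by the critical radius function through the Fefferman--Phong inequality (Proposition \ref{improved Fefferman--Phong inequality}, Definition \ref{molecule Dziubanski hardy space} and Theorem \ref{molecules are in Dziubanski hardy space with uniform bound}), while respecting the reverse Hölder doubling constants throughout.
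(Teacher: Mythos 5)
Your overall strategy is in the right spirit: both the paper and your proposal reduce matters to the adapted Hardy spaces $\bb{H}^{p}_{H}$ via the identifications of Theorem~\ref{summary of identifications of H adapted Hardys spaces}, and both prove the $\El{\infty}$ conclusion by dualising into Campanato spaces via Proposition~\ref{duality DZiubanski hardy space and Campanato spaces} (equivalently Lemma~\ref{dualisation of boundedness of families in Lp spaces and Hp spaces-campanato}) together with the similarity $H^{*}\sim H^{\sharp}$. The essential difference lies in how the smoothing from $p$ to $r>p$ is produced. The paper follows \cite[Proposition 12.5]{AuscherEgert} directly: the Poisson semigroup is estimated by combining analyticity with the Riesz transform bound $R_{H}\in\mathcal{L}(\El{2_{*}})$ (from Theorem~\ref{thm boundedness of Riesz transforms on Lp full range} and Corollary~\ref{improvement on property of interval J(H)}), which gives the Sobolev step $\El{\rho}\to\El{\rho^{*}}$, and then iterates with Proposition~\ref{bound on power of family}. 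Your route instead peels off $(1+t^{2}H)^{-N}$ via the factorisation $e^{-tH^{1/2}}=h_{N}(t^{2}H)(1+t^{2}H)^{-N}$ and tries to hang all the smoothing on that resolvent factor. The factor $h_{N}(t^{2}H)$ is indeed uniformly bounded on $\bb{H}^{r}_{H}$ (the $\textup{H}^{\infty}$-calculus bound on adapted Hardy spaces is quantitative in $\|\cdot\|_{\infty}$, and $\|h_{N}(t^{2}\,\cdot)\|_{\infty}=\|h_{N}\|_{\infty}$), so that part is correct.

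However, the step you flag as the ``main obstacle'' is precisely where your argument is incomplete, and where it in fact needs the same tool the paper cites. Establishing the $\bb{H}^{p}_{H}\to\bb{H}^{r}_{H}$ bound for $(1+t^{2}H)^{-N}$ with scaling $t^{\frac{n}{r}-\frac{n}{p}}$, especially when $p\leq 1<r$, is not obtained by simply combining the $\El{2}$ off-diagonal estimates from Lemma~\ref{Obtaining L^2-L^q or L^p-L^2 odes for powers of resolvents} with the interpolation Lemma~\ref{lemma notion of HpV boundedness interpolates}: interpolation between two endpoint families only produces exponent pairs on a single line segment in $(1/p,1/r)$-space, and you would still need a $b\textup{H}^{p}_{V}-\El{2}$ endpoint for the iterated resolvent with the correct power scaling. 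That endpoint is exactly what Proposition~\ref{bound on power of family} delivers (iterating the diagonal $b\textup{H}^{p}_{V}$-bound for $p\in\mathcal{J}(H)$ with the $\El{2_{*}}-\El{2}$ bound from Lemma~\ref{property J(H)}), and it is notably absent from your list of tools. So the gap is concrete: you would need to invoke Proposition~\ref{bound on power of family}, which your ``careful molecular analysis'' comment gestures at but does not supply. Once that is added, your factorisation route becomes a viable alternative to the paper's, perhaps slightly less economical than following \cite[Proposition 12.5]{AuscherEgert} verbatim but arguably more transparent because it isolates the smoothing in a single resolvent factor.
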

\begin{proof}
    We follow the proof of \cite[Proposition 12.5]{AuscherEgert}, using the fact that the Riesz transform $R_{H}$ is $\El{2_{*}}$-bounded by Theorem~\ref{thm boundedness of Riesz transforms on Lp full range} and Corollary~\ref{improvement on property of interval J(H)}. We also rely on Corollary~\ref{characterisation abstract H^p p<2} and Proposition~\ref{bound on power of family} to prove the first part. 
    For the second part, we proceed as in the aforementioned reference, except that for $0<\alpha<n\left(\frac{1}{p_{-}(H^{\sharp})}-1\right)$, we pick $\rho\in (p_{-}(H^{\sharp})\vee \frac{n}{n+\delta/2},1)$ such that $\alpha=n\left(\frac{1}{\rho}-1\right)$. Then, we proceed by similarity and duality by using Lemma~\ref{dualisation of boundedness of families in Lp spaces and Hp spaces-campanato} (noting that $\frac{n}{n+1\wedge \delta}\leq \frac{n}{n+\delta/2}$). We conclude with the interpolation inequality of \cite[Lemma 12.6]{AuscherEgert} and the fact that $\Lambda^{\alpha}_{V}$ embeds continuously into the space of $\alpha$-Hölder continuous functions on $\Rn$.
\end{proof}
The previous two lemmas allow us to obtain the following analogue of \cite[Lemma 20.4]{AuscherEgert}. 
\begin{lem}\label{quantitative comparison between solution and Poisson extension of its trace}
    Let $\widetilde{G}\in C^{\infty}_{c}(\R^{1+n})$, and consider $G:=\partial_{t}\widetilde{G}\in C^{\infty}_{c}(\R^{1+n})$. Let $F:=\dot{\mathscr{H}}^{-1}(G)\in\dot{\mathcal{X}}^{1,2}(\R^{1+n}).$ Assume that $\supp{\widetilde{G}}\subseteq [\beta^{-1},\beta]\times\Rn$ for some $\beta>1$. Let $f:=F(0,\cdot)\in\Ell{2}$. Then, for $r\in (p_{-}(H)\vee 1 , p_{+}(H))$, there exists $\gamma>0$ such that the following estimates hold for all $t>0$:
    \begin{equation*}
        \norm{F(t,\cdot)-e^{-tH^{1/2}}f}_{\El{r}}\lesssim t\wedge t^{-\gamma}\quad\textup{and}\quad
        \norm{\partial_{t}\left(F(t,\cdot)-e^{-tH^{1/2}}f\right)}_{\El{r}}\lesssim 1\wedge t^{-1-\gamma},
    \end{equation*}
    where the implicit constants also depend on $\beta$. If in addition $p_{-}(H^{\sharp})<1$, then both estimates also hold for $r=\infty.$
\end{lem}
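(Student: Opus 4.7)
The strategy is to combine the explicit representation formula from Lemma~\ref{representation formula of slices of weak solution by Poisson semigroup} with the $b\textup{H}^{p}_{V}-b\textup{H}^{r}_{V}$-boundedness of the Poisson semigroup just established, following the template of the corresponding result \cite[Lemma 20.4]{AuscherEgert}. First, I would evaluate the representation formula at $t=0$. Since $\supp{\widetilde{G}}\subseteq [\beta^{-1},\beta]\times\Rn$, the sign function is constant on the support, giving
\[
f=F(0,\cdot)=\frac{1}{2}\int_{\beta^{-1}}^{\beta}e^{-sH^{1/2}}(b\widetilde{G}(s,\cdot))\,\dd s,
\]
so by the semigroup property $e^{-tH^{1/2}}f=\frac{1}{2}\int_{\beta^{-1}}^{\beta}e^{-(t+s)H^{1/2}}(b\widetilde{G}(s,\cdot))\,\dd s$ for $t\geq 0$. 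For $t>\beta$, the representation formula for $F(t,\cdot)$ also becomes an integral over $[\beta^{-1},\beta]$ with no sign change, and subtracting gives the clean identity
\[
F(t,\cdot)-e^{-tH^{1/2}}f=-\frac{1}{2}\int_{\beta^{-1}}^{\beta}\left[e^{-(t-s)H^{1/2}}+e^{-(t+s)H^{1/2}}\right](b\widetilde{G}(s,\cdot))\,\dd s.
\]
For $t\in[0,\beta^{-1}]$, an entirely analogous computation gives the identity
\[
F(t,\cdot)-e^{-tH^{1/2}}f=\frac{1}{2}\int_{\beta^{-1}}^{\beta}\left[e^{-(s-t)H^{1/2}}-e^{-(s+t)H^{1/2}}\right](b\widetilde{G}(s,\cdot))\,\dd s,
\]
while for $t$ in the compact interval $[\beta^{-1},\beta]$ the result will follow from a uniform bound.

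To obtain the \textit{large-$t$ decay}, I would fix $p\in(p_{-}(H)\vee \frac{n}{n+\delta/2}\, ,\,r)$ and apply the preceding lemma, which yields
\[
\norm{e^{-\tau H^{1/2}}(b\widetilde{G}(s,\cdot))}_{\El{r}}\lesssim \tau^{n/r-n/p}\norm{\widetilde{G}(s,\cdot)}_{\textup{H}^{p}_{V}}\lesssim \tau^{n/r-n/p}
\]
for all $\tau>0$ and $s\in[\beta^{-1},\beta]$, using Lemma~\ref{identification of Dziubanski hardy spaces for p>=1} and the smoothness and compact support of $\widetilde{G}$. For $t\geq 2\beta$ and $s\in[\beta^{-1},\beta]$ we have $t\pm s\eqsim t$, so applying this bound pointwise in $s$ in the identity above and integrating produces $\norm{F(t,\cdot)-e^{-tH^{1/2}}f}_{\El{r}}\lesssim t^{n/r-n/p}=t^{-\gamma}$ with $\gamma:=n/p-n/r>0$. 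For the derivative estimate, I would differentiate under the integral sign (justified by dominated convergence using the off-diagonal estimates), producing an extra factor $-H^{1/2}$, and then factorise $H^{1/2}e^{-\tau H^{1/2}}=\tau^{-1}(\tau H^{1/2})e^{-(\tau/2)H^{1/2}}\circ e^{-(\tau/2)H^{1/2}}$. Uniform $\El{r}$-boundedness of $(\tau H^{1/2})e^{-(\tau/2)H^{1/2}}$ follows from the bounded $\textup{H}^{\infty}$-calculus of $H$ on $\El{r}$ (available because $r\in\mathcal{J}(H)$; see also Figure~\ref{figure relations between adapted Hardy spaces} and the identifications of Theorem~\ref{summary of identifications of H adapted Hardys spaces}), giving the extra $\tau^{-1}\sim t^{-1}$ factor and thus the $t^{-1-\gamma}$ bound.

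For the \textit{small-$t$ bound} ($t\in[0,\beta^{-1}/2]$), I would rewrite the difference of semigroups as
\[
e^{-(s-t)H^{1/2}}-e^{-(s+t)H^{1/2}}=\int_{s-t}^{s+t}H^{1/2}e^{-\sigma H^{1/2}}\,\dd\sigma,
\]
noting that $\sigma$ ranges over a compact interval $[\beta^{-1}/2,\,2\beta]$ bounded away from $0$, on which $\norm{H^{1/2}e^{-\sigma H^{1/2}}(b\widetilde{G}(s,\cdot))}_{\El{r}}$ is uniformly bounded by the same functional calculus plus semigroup factorisation argument. The interval of integration has length $2t$, which yields $\norm{F(t,\cdot)-e^{-tH^{1/2}}f}_{\El{r}}\lesssim t$. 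For the derivative, I would differentiate the identity in $t$, which produces an integrand uniformly bounded in $\El{r}$ (on the compact $\sigma$-range), yielding $\norm{\partial_{t}(F-e^{-tH^{1/2}}f)}_{\El{r}}\lesssim 1$. The intermediate range $t\in[\beta^{-1}/2,2\beta]$ is handled by continuity and a uniform estimate, and the two regimes are then glued using $t\wedge t^{-\gamma}$ and $1\wedge t^{-1-\gamma}$.

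The $r=\infty$ endpoint will be obtained by the same argument, with the preceding lemma's $b\textup{H}^{p}_{V}-\El{\infty}$-boundedness replacing the $\El{r}$-valued estimate, exploiting the assumption $p_{-}(H^{\sharp})<1$ to secure the existence of an admissible small $p$. The main obstacle is ensuring that the bounded $\textup{H}^{\infty}$-calculus on $\El{r}$ gives uniform $\El{r}$-boundedness of $(\tau H^{1/2})e^{-\tau H^{1/2}}$, needed to absorb the $H^{1/2}$ arising from differentiation in $t$; this is where the restriction $r\in(p_{-}(H)\vee 1,p_{+}(H))$ is essential, since on this interval the identifications of Theorem~\ref{summary of identifications of H adapted Hardys spaces} and the sectorial counterpart of~\cite[Proposition 8.10]{AuscherEgert} give $H$ a bounded $\textup{H}^{\infty}$-calculus on $\El{r}$.
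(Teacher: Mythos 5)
Your proof is correct and takes the approach the paper intends: the lemma is stated in the paper without an explicit proof precisely because it follows from Lemma~\ref{representation formula of slices of weak solution by Poisson semigroup} and the preceding Poisson--semigroup boundedness, exactly as in \cite[Lemma 20.4]{AuscherEgert}, which is the template you reproduce. The case split at $t=\beta^{-1}$, $t=\beta$, the cancellation of the $\sgn$ term, the semigroup identity $e^{-tH^{1/2}}f=\frac{1}{2}\int e^{-(t+s)H^{1/2}}(b\widetilde{G}(s,\cdot))\dd s$, and the fundamental-theorem representation $e^{-(s-t)H^{1/2}}-e^{-(s+t)H^{1/2}}=\int_{s-t}^{s+t}H^{1/2}e^{-\sigma H^{1/2}}\dd\sigma$ are all correct, and the exponent $\gamma=n/p-n/r>0$ is right.

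Two small points worth tightening. First, when you claim uniform $\El{r}$-boundedness of $\{\tau H^{1/2}e^{-(\tau/2)H^{1/2}}\}_{\tau>0}$ you write ``bounded $\textup{H}^{\infty}$-calculus of $H$ on $\El{r}$ (available because $r\in\mathcal{J}(H)$)''; membership in $\mathcal{J}(H)$ only gives uniform resolvent bounds, not a full $\El{r}$-calculus. The clean justification, which you also gesture at, is the identification $\bb{H}^{r}_{H}=\El{r}\cap\El{2}$ from Theorem~\ref{summary of identifications of H adapted Hardys spaces} together with the sectorial counterpart of \cite[Proposition 8.10]{AuscherEgert}; this is the same mechanism the paper uses in Propositions~\ref{uniform bounds Lp dirichlet problem poisson semigroup} and~\ref{proposition strong continuity of regularity poisson solution p>1}. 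Second, for $r=\infty$ your factorisation $H^{1/2}e^{-\tau H^{1/2}}=\tau^{-1}\bigl[(\tau H^{1/2})e^{-(\tau/2)H^{1/2}}\bigr]\circ e^{-(\tau/2)H^{1/2}}$ places the ``$H^{1/2}$-factor'' \emph{outside}, acting on $\El{\infty}$, where the abstract framework does not supply uniform boundedness. Reverse the order: write $H^{1/2}e^{-\tau H^{1/2}}=e^{-(\tau/2)H^{1/2}}\circ H^{1/2}e^{-(\tau/2)H^{1/2}}$, bound the inner factor $b\textup{H}^{p}_{V}\to b\textup{H}^{p}_{V}$ by $\lesssim\tau^{-1}$ via Proposition~\ref{uniform bounds Lp dirichlet problem poisson semigroup}, and then apply the $b\textup{H}^{p}_{V}\to\El{\infty}$ smoothing from the preceding lemma, which requires $p_{-}(H^{\sharp})<1$ and gives the extra $\tau^{-n/p}$. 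Neither point changes the strategy; both are fixable in a line.
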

Finally, we shall need a version of the technical lemma \cite[Lemma 21.3]{AuscherEgert}. In our setting, it trivially extends to the following result.
\begin{lem}\label{technical lemma for uniqueness of solutions}
    Let $0<p<r\leq 2$ and let $u\in\mathcal{V}^{1,2}_{\loc}(\Hn)$ be a weak solution of $\dot{\mathscr{H}}u=0$ in $\Hn$ such that $\norm{N_{*}(\nabla_{t,x}u)}_{\El{p}}<\infty$. Then, 
    \begin{equation*}
        \left(\iint_{\Hn}\abs{\nabla_{t,x}u}^{r}t^{n(\frac{r}{p}-1)}\frac{\dd x\dd t}{t}\right)^{1/r}\lesssim \norm{N_{*}(\nabla_{t,x}u)}_{\El{p}}.
    \end{equation*}
If in addition $p>\frac{n}{n+1}$ and the boundary limit \eqref{zero boundary condition in non tangential almost everywhere sense} holds, then also
    \begin{equation*}
        \left(\iint_{\Hn}\abs{u}^{r}t^{n(\frac{r}{p}-1)-r}\frac{\dd x\dd t}{t}\right)^{1/r}\lesssim \norm{N_{*}(\nabla_{t,x}u)}_{\El{p}}.
    \end{equation*}
\end{lem}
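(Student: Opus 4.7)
The statement is a routine extension of \cite[Lemma 21.3]{AuscherEgert} from the $V\equiv 0$ setting to the Schr\"odinger operator $\mathscr{H}$. The only features of weak solutions that the argument of that reference uses are the interior Caccioppoli inequality and the reverse H\"older estimate on Whitney cylinders, both of which remain valid in our context: \eqref{Caccioppoli's inequality} applies with $g = 0$, and \eqref{reverse holder property weak solutions} holds for arbitrary $\delta > 0$, which in particular allows us to control $\El{r}$-Whitney averages of $\nabla_{t,x}u$ by $\El{2}$-Whitney averages since $r \leq 2 < 2^{*}_{+}$.

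For the first estimate, the plan is to proceed in three steps. First, a Fubini manipulation based on $|B(x,t)| = c_n t^n$ rewrites the flat integral as a cone integral,
\begin{equation*}
\iint_{\Hn} |\nabla_{t,x}u|^r t^{n(r/p-1)} \frac{\dd t \dd x}{t} \sim \int_{\Rn} \iint_{\Gamma(y)} |\nabla_{t,x}u(t,x)|^r t^{n(r/p-1) - n - 1} \dd t\, \dd x\, \dd y.
\end{equation*}
Second, the reverse H\"older estimate gives a pointwise bound $|\nabla_{t,x} u(t,x)|^r \lesssim \fiint_{W(t,x)} |\nabla_{t,x} u|^r$, which combined with another application of reverse H\"older (from $\El{r}$ to $\El{2}$) and the definition of $N_*$ yields $|\nabla_{t,x}u(t,x)|^r \lesssim N_*(\nabla_{t,x} u)(y)^r$ for every $y$ in a neighbourhood of $x$ at scale $t$. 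Third, a Kolmogorov-type rearrangement using $p < r$ converts the resulting cone integral into $\norm{N_*(\nabla_{t,x}u)}_{\El{p}}^r$.

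For the second estimate, the factor $t^{-r}$ distinguishing the weights of the two estimates will be recovered from a Poincar\'e-type comparison between $u$ and $\nabla_{t,x} u$ on truncated cones. Concretely, the boundary vanishing \eqref{zero boundary condition in non tangential almost everywhere sense} and the fundamental theorem of calculus in the $t$-variable should combine to give, on Whitney averages,
\begin{equation*}
\left(\fiint_{W(t,x)} |u|^2 \right)^{1/2} \lesssim t \cdot N_*(\nabla_{t,x} u)(x),
\end{equation*}
so that after applying reverse H\"older once more to pass from $\El{2}$-Whitney averages to $\El{r}$-Whitney averages of $u$, the pointwise bound $|u(t,x)|^r t^{-r} \lesssim N_*(\nabla_{t,x}u)(x)^r$ would hold almost everywhere, and the second estimate would reduce to the first.

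I expect the main obstacle to be the rigorous derivation of this Poincar\'e-type comparison: one must pass from the hypothesis \eqref{zero boundary condition in non tangential almost everywhere sense} on $\El{1}$-Whitney averages to an $\El{2}$-Whitney-average control on $u$, and telescope dyadically between scales down to the boundary. The threshold $p > \frac{n}{n+1}$ is expected to appear precisely at this step in order to guarantee summability of the resulting series, after invoking the weak-type Hardy--Littlewood maximal inequality and the assumption $N_*(\nabla_{t,x}u) \in \El{p}$.
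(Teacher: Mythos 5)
Your overall skeleton is the right one --- it is essentially the route of \cite[Lemma 21.3]{AuscherEgert}, whose ingredients (their Proposition A.5, Lemma A.3, and the reverse H\"older estimates for weak solutions) are exactly what the paper cites. But there is a genuine error that recurs in both halves of your argument: you assert pointwise bounds, namely $\abs{\nabla_{t,x}u(t,x)}^{r}\lesssim \fiint_{W(t,x)}\abs{\nabla_{t,x}u}^{r}$ in Step 2 and, for the second estimate, $\abs{u(t,x)}^{r}t^{-r}\lesssim N_{*}(\nabla_{t,x}u)(x)^{r}$ almost everywhere. Neither follows from the reverse H\"older estimate \eqref{reverse holder property weak solutions}, which only compares \emph{averages} at different exponents; and neither is true in the generality considered here. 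For bounded measurable complex coefficients, $\nabla_{t,x}u$ is only in $\El{2+\eps}_{\loc}$ (Meyers), not $\El{\infty}_{\loc}$, and even $u$ itself need not be locally bounded (De Giorgi--Nash--Moser is unavailable for complex $A$ in dimension $1+n\geq 4$). So the pointwise reduction to $N_{*}$ on a neighbourhood of $x$, as written, fails.

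The fix is standard and makes the pointwise bounds unnecessary: never descend to pointwise values. Since the weight $t^{n(\frac{r}{p}-1)}$ is comparable to a constant on each Whitney box, Fubini gives
\begin{equation*}
\iint_{\Hn}\abs{F}^{r}\,t^{n(\frac{r}{p}-1)}\frac{\dd x\dd t}{t}\eqsim \iint_{\Hn}\Bigl(\fiint_{W(t,x)}\abs{F}^{r}\Bigr)t^{n(\frac{r}{p}-1)}\frac{\dd x\dd t}{t},
\end{equation*}
and since $r\leq 2$, Jensen's inequality bounds the $\El{r}$-Whitney average by the $\El{2}$-Whitney average, which is $\leq N_{*}(F)(y)$ for every $y\in B(x,t)$ by definition. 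Your Step 3 (the distribution-function argument using $p<r$) then applies to these averaged quantities; this is precisely \cite[Lemma A.3]{AuscherEgert}. For the second estimate, the comparison $\bigl(\fiint_{W(t,x)}\abs{u}^{2}\bigr)^{1/2}\lesssim t\,N_{*}(\nabla_{t,x}u)(x)$ that you identify as the main obstacle is \cite[Proposition A.5]{AuscherEgert} combined with \eqref{zero boundary condition in non tangential almost everywhere sense} to kill the trace (this is also where the reverse H\"older estimate for weak solutions and the threshold $p>\frac{n}{n+1}$ enter); one then feeds $F=u/t$ into the same averaged Lemma A.3 rather than passing through a pointwise bound on $u$.
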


\begin{proof}
    Apply \cite[Proposition A.5]{AuscherEgert} and \cite[Lemma A.3]{AuscherEgert} along with the reverse Hölder estimates \eqref{reverse holder property weak solutions} for weak solutions.
\end{proof}

\subsection{The Dirichlet and Regularity problems}\label{section on proof of uniqueness}
We now follow the method described in \cite[Chapter 21]{AuscherEgert} (or, equivalently, \cite{Auscher_Egert_Uniqueness}) to simultaneously establish uniqueness of solutions for the problems $(\mathcal{D})^{\mathscr{H}}_{p}$ and $(\mathcal{R})^{\mathscr{H}}_{p}$, using the results collected in the previous section. 

\begin{proof}[Proof of Theorems \ref{uniqueness result for dirichlet problem} and \ref{uniqueness result for regularity problem}]
Let $u\in\mathcal{V}^{1,2}_{\loc}(\Hn)$ be a weak solution of $\mathscr{H}u=0$ on $\Hn$. Let $\widetilde{G}\in C_{c}^{\infty}(\Hn)$, and consider $G:=\partial_{t}\widetilde{G}$. By linearity we only need to show that $\langle u ,G\rangle_{\El{2}(\Hn)}=0$ provided $u$ has some interior control (either $N_{*}(u)\in\El{p}$ or $N_{*}(\nabla_{t,x}u)\in\El{p}$ for some $p$) and vanishes at the boundary in the sense of (\ref{zero boundary condition in non tangential almost everywhere sense}), since then Lemma~\ref{first elementary basic uniqueness result} shows that $u=0$ almost everywhere. 

As in \cite[Chapter 21]{AuscherEgert}, in order to compute $\langle u,G\rangle_{\El{2}(\Hn)}$, we introduce a real-valued Lipschitz continuous cut-off function $\theta$ with compact support in $\Hn$, such that $\theta=1$ on $\supp{G}$. We consider the operator $\dot{\mathscr{H}}^{*}:\dot{\mathcal{X}}^{1,2}(\R^{1+n})\to \dot{\mathcal{X}}^{1,2}(\R^{1+n})^{*}$ defined as in \eqref{definition of mathscrH by the form dot h} above with $(A^{*},a^{*})$ in place of $(A,a)$. By definition and \eqref{definition identification of smcp function with element of dual of V1,2dot}, the function $F:=(\dot{\mathscr{H}}^{*})^{-1}(G)\in\dot{\mathcal{X}}^{1,2}(\R^{1+n})$ satisfies 
\begin{equation}\label{intermediate step F is a weak solution with rhs equal to G}
    \langle \mathcal{A}^{*}\nabla_{\mu}F,\nabla_{\mu}\varphi\rangle_{\El{2}}=\langle G,\varphi\rangle_{\El{2}}
\end{equation}
for all $\varphi\in\dot{\mathcal{X}}^{1,2}(\R^{1+n})$. 
Observe that $\mathcal{V}^{1,2}_{\loc}(\Hn)\subset \El{\frac{2n+2}{n-1}}_{\loc}(\Hn)$ by Sobolev embeddings on bounded smooth domains (see, e.g., \cite[Theorem 7.26]{Gilbarg_Trudinger_Elliptic}). As a consequence, $u\theta\in\dot{\mathcal{X}}^{1,2}(\R^{1+n})$, and it can be used as a test function in \eqref{intermediate step F is a weak solution with rhs equal to G} to obtain 
\begin{align}\label{expression for inner product u with G}
    \langle u,G\rangle_{\El{2}}&=\langle u ,G\theta\rangle_{\El{2}}=\langle u\theta ,G\rangle_{\El{2}}=\clos{\langle \mathcal{A}^{*}\nabla_{\mu}F,\nabla_{\mu}(u\theta)\rangle_{\El{2}}}=\langle \mathcal{A}\nabla_{\mu}(u\theta),\nabla_{\mu}F\rangle_{\El{2}}.
\end{align}
Similarly, note that $F\theta\in\dot{\mathcal{X}}^{1,2}(\R^{1+n})$, with
\begin{equation}\label{product rule for weak gradient nabla mu}
\nabla_{\mu}(F\theta)=\begin{bmatrix}
        F\nabla_{t,x}\theta\\
        0
    \end{bmatrix} +\theta\nabla_{\mu}F.
    \end{equation}
    
Since $F\theta$ has compact support, there is a sequence of suitably supported functions $(F_{k})_{k\geq 1}\subset C^{\infty}_{c}(\Hn)$ such that $\nabla_{\mu}F_{k}\to\nabla_{\mu}(F\theta)$ in $\El{2}(\Hn)$ (see the proof of \cite[Lemma 2.2]{MorrisTurner}). As $\mathscr{H}u=0$ in $\Hn$ (in the sense of \ref{definition of weak solutions in general open subset of R1+n}), we get
\begin{align*}
    \langle \mathcal{A}(\nabla_{\mu}u)\theta ,\nabla_{\mu}F\rangle_{\El{2}}&=\langle \mathcal{A}\nabla_{\mu}u,\nabla_{\mu}(F\theta)\rangle_{\El{2}}-\langle \mathcal{A}\nabla_{\mu}u, \begin{bmatrix}
        F\nabla_{t,x}\theta \\
        0
    \end{bmatrix}\rangle_{\El{2}}=-\langle \mathcal{A}\nabla_{\mu}u, \begin{bmatrix}
        F\nabla_{t,x}\theta \\
        0
    \end{bmatrix}\rangle_{\El{2}}.
\end{align*}
Using \eqref{expression for inner product u with G}, and applying \eqref{product rule for weak gradient nabla mu} with $u$ instead of $F$, this implies that 
\begin{align}
\begin{split}
\label{intermediate step towards identity for u against G}
    \langle u,G\rangle_{\El{2}}&=\langle \mathcal{A}\begin{bmatrix}
        u\nabla_{t,x}\theta\\
        0
    \end{bmatrix} , \nabla_{\mu}F\rangle_{\El{2}}-\langle \mathcal{A}\nabla_{\mu}u,\begin{bmatrix}
        F\nabla_{t,x}\theta\\
        0
    \end{bmatrix}\rangle_{\El{2}}\\
    &=\langle Au\nabla_{t,x}\theta ,\nabla_{t,x}F\rangle_{\El{2}}-\langle A\nabla_{t,x}u , F\nabla_{t,x}\theta\rangle_{\El{2}}.
    \end{split}
\end{align}
By Lemma~\ref{representation formula of slices of weak solution by Poisson semigroup}, we can consider the trace $f:=F(0,\cdot)\in\Ell{2}$, and its Poisson extension $F_{1}(t,\cdot):=e^{-t(H^{\sharp})^{1/2}}f$ for $t>0$. We know from Theorem~\ref{Poisson semigroup extension is a weak solution} that $F_{1}\in C_{0}([0,\infty);\El{2})\cap C^{\infty}((0,\infty);\El{2})$, with $F_{1}(0,\cdot)=f$, and that $F_{1}\in\mathcal{V}^{1,2}_{\loc}(\Hn)$ is a weak solution of $\mathscr{H}^{*}F_{1}=0$ in $\Hn$ (in the sense of \ref{definition of weak solutions in general open subset of R1+n}). As above, we can use the fact that both $u\theta$ and $F_{1}\theta\in \dot{\mathcal{X}}^{1,2}(\Hn)$ have compact support and that $u$ and $F_{1}$ are solutions to write 
\begin{align*}
    0&=\langle \nabla_{\mu}(u\theta) , \mathcal{A}^{*}\nabla_{\mu}F_1\rangle_{\El{2}}=\langle \mathcal{A}\nabla_{\mu}(u\theta),\nabla_{\mu}F_{1}\rangle_{\El{2}}\\
    &=\langle \mathcal{A}\begin{bmatrix}
        u\nabla_{t,x}\theta\\
        0
    \end{bmatrix},\nabla_{\mu}F_{1}\rangle_{\El{2}}+\langle \mathcal{A}\nabla_{\mu}u , \nabla_{\mu}(F_{1}\theta)\rangle_{\El{2}}-\langle \mathcal{A}\nabla_{\mu}u , \begin{bmatrix}
        F_{1}\nabla_{t,x}\theta\\
        0
    \end{bmatrix}\rangle_{\El{2}}\\
    &=\langle Au\nabla_{t,x}\theta , \nabla_{t,x}F_{1}\rangle_{\El{2}}-\langle A\nabla_{t,x}u,F_{1}\nabla_{t,x}\theta\rangle_{\El{2}},
\end{align*}
where we have used \eqref{product rule for weak gradient nabla mu} twice. We can therefore subtract this quantity from $\langle u ,G\rangle_{\El{2}}$ in \eqref{intermediate step towards identity for u against G} to obtain the following:
\begin{align*}
    \langle u,G\rangle_{\El{2}}=\langle Au\nabla_{t,x}\theta ,\nabla_{t,x}(F-F_1)\rangle_{\El{2}}-\langle A\nabla_{t,x}u , (F-F_1)\nabla_{t,x}\theta\rangle_{\El{2}}.
\end{align*}

This is precisely the same identity as in \cite[Equation (21.4)]{AuscherEgert}. We can proceed exactly as in \cite[Chapter 21]{AuscherEgert}, assuming that $\widetilde{G}\in C^{\infty}_{c}(\Hn)$ is supported on $[\beta^{-1} ,\beta]\times B(0,\beta)$ for some $\beta>0$ and picking a cut-off function $\theta=\theta_{\eps, M,R}$ for suitable parameters $\eps,M,R>0$. 
In particular, using Lemma~\ref{technical lemma for uniqueness of solutions}, the reverse Hölder estimate \eqref{reverse holder property weak solutions} and Caccioppoli's inequality \eqref{Caccioppoli's inequality} for weak solutions, as well as the estimates of Lemma~\ref{quantitative comparison between solution and Poisson extension of its trace} instead of \cite[Lemma 20.4]{AuscherEgert}, we obtain uniqueness of solutions to the Regularity problem $(\mathcal{R})^{\mathscr{H}}_{p}$ in the range $p_{-}(H)_{*}\vee \frac{n}{n+1} <p<p_{+}(H)$, and uniqueness of solutions to the Dirichlet problem $(\mathcal{D})^{\mathscr{H}}_{p}$ in the range $1\leq p <p_{+}(H)^{*}$ if $p_{-}(H)<1$, or in the range $p_{-}(H)<p<p_{+}(H)^{*}$ if $p_{-}(H)\geq 1$. Let us observe that we only need the non tangential control of the gradient $\nabla_{t,x}u$ of the solution $u$, i.e. $\norm{N_{*}\left(\nabla_{t,x}u\right)}_{\El{p}}<\infty$, and not the full control $\norm{N_{*}\left(\nabla_{\mu}u\right)}_{\El{p}}<\infty$ to obtain uniqueness.
This concludes the proof.
\end{proof}

\section{The Neumann problem and Theorem \ref{existence and uniqueness result Neumann Lp}}\label{section proof well posedness neumann}
In this final section, we combine Theorems~\ref{existence and uniqueness result Dirichlet Lp} and \ref{existence and uniqueness result Regularity Lp} with identifications from Theorem~\ref{summary of identifications of H adapted Hardys spaces} to prove well-posedness of the Neumann problem $\left(\mathcal{N}\right)^{\mathscr{H}}_{p}$ and Theorem \ref{existence and uniqueness result Neumann Lp}. The cornerstone of our argument is the following lemma, which enables us to go from Neumann-type boundary data to Regularity-type boundary data. Note that the inverse of the mapping $T_p$ below is related to the so-called Neumann to Dirichlet map (see, e.g., \cite[Section 3]{AMM_solvability_BVP_elliptic_systems}).
\begin{lem}\label{lemma extension of square root of H to isomorphism}
    Let $n\geq 3$, $q\geq\max\{\frac{n}{2},2\}$, and $V\in\textup{RH}^{q}(\Rn)$. If $p\in(p_{-}(H),q_{+}(H))\cap(1,2q]$, then the linear mapping $-A_{\perp\perp}H^{1/2} : \mathcal{V}^{1,2}(\Rn)\to \Ell{2}$ extends to an isomorphism \[T_{p}: \dot{\mathcal{V}}^{1,p}(\Rn)\to \Ell{p}.\] If $p_{-}(H)<1$ and $1\in \mathcal{I}(V)$, then it extends to an isomorphism \[T_{1}: \dot{\textup{H}}^{1,1}_{V}(\Rn)\to \textup{H}^{1}_{V}(\Rn).\] 
\end{lem}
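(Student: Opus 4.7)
The plan is to realize $T_p$ via the Hardy space identifications. For $f$ in the dense subspace $\mathcal{V}^{1,2}\cap\dot{\mathcal{V}}^{1,p}$ (resp.\ $\dot{\textup{H}}^{1,1}_{V,\textup{pre}}$ when $p=1$), I will establish the chain of norm equivalences
\[
\norm{A_{\perp\perp}H^{1/2}f}_{\textup{H}^p_V} = \norm{H^{1/2}f}_{b\textup{H}^p_V} \eqsim \norm{H^{1/2}f}_{\bb{H}^p_H} \eqsim \norm{f}_{\bb{H}^{1,p}_H} \eqsim \norm{\nabla_\mu f}_{\El{p}}
\]
(with the $\dot{\textup{H}}^{1,1}_V$-norm replacing the last term when $p=1$). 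The first equality is by definition of the $b\textup{H}^p_V$-norm; the second and fourth equivalences are the identifications in Theorem~\ref{summary of identifications of H adapted Hardys spaces}; the third is the vertical $H^{1/2}$ arrow in Figure~\ref{figure relations between adapted Hardy spaces}. The hypotheses place $p$ in the required ranges, using $q_{+}(H)\leq p_{+}(H)$ and $p_{-}(H)_{*}\leq p_{-}(H)$ from Proposition~\ref{properties of critical numbers proposition} for the first part, and $p_{-}(H)<1$ together with $1\in\mathcal{I}(V)$ for the second.

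Armed with this equivalence, I will extend $-A_{\perp\perp}H^{1/2}$ to $T_p$ by density---using Corollary~\ref{density in mixed V adapted sobolev spaces} for $p>1$, and the construction in Section~\ref{subsection definition of the completion of DZiubanski H^{1,p} spaces} for $p=1$---yielding a bounded injection with closed range. To obtain surjectivity, I will construct an explicit inverse by running the diagram backwards: set $S_p(g):=-\dot{H}^{-1/2}(bg)$, where $\dot{H}^{-1/2}:\Ell{2}\to\dot{\mathcal{V}}^{1,2}$ is the Kato-type isomorphism recalled at the start of Section~\ref{section on riesz transform bounds}. For $g\in\Ell{p}\cap\Ell{2}$ with $p>1$, the $\El{p}$-boundedness of the Riesz transform $R_H$ from Theorem~\ref{thm boundedness of Riesz transforms on Lp full range} gives $\nabla_\mu S_p(g)=-R_H(bg)\in\El{p}$ (our range $p\in(p_{-}(H),q_{+}(H))\cap(1,2q]$ fits), so $S_p$ maps $\Ell{p}\cap\Ell{2}$ into $\dot{\mathcal{V}}^{1,p}\cap\dot{\mathcal{V}}^{1,2}$ with the required bound. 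Density then yields $S_p:\Ell{p}\to\dot{\mathcal{V}}^{1,p}$, and the identities $T_p S_p=\textup{Id}$ and $S_p T_p=\textup{Id}$ are immediate on the respective pre-completed subspaces and persist under completion.

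The main obstacle lies in the $p=1$ case, where direct $\El{p}$-type Riesz bounds no longer suffice. Surjectivity of $T_1$ must instead be extracted from the Hardy-space bijection ${H^{1/2}:\bb{H}^{1,1}_{H}\cap\mathcal{V}^{1,2}\to \bb{H}^{1}_{H}\cap\ran{H^{1/2}}}$ recorded in Figure~\ref{figure relations between adapted Hardy spaces}. Combining this bijection with the identifications $\bb{H}^{1,1}_{H}\cap\mathcal{V}^{1,2}=\dot{\textup{H}}^{1,1}_{V,\textup{pre}}$ (which requires $1\in\mathcal{I}(V)$) and $\bb{H}^{1}_{H}=b\textup{H}^{1}_{V,\textup{pre}}$, together with the density of $\ran{H^{1/2}}\cap\bb{H}^{1}_{H}$ in $\bb{H}^{1}_{H}$ for both the $\El{2}$-norm and the Hardy-space norm (noted in the paragraph following Figure~\ref{figure relations between adapted Hardy spaces}), the completion argument should deliver an isomorphism $H^{1/2}:\dot{\textup{H}}^{1,1}_{V}\to b\textup{H}^{1}_{V}$. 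Composing with the trivial isometric isomorphism $-A_{\perp\perp}:b\textup{H}^{1}_{V}\to \textup{H}^{1}_{V}$---which holds by definition of the $b\textup{H}^{1}_{V}$-norm---then produces $T_1:\dot{\textup{H}}^{1,1}_{V}\to \textup{H}^{1}_{V}$ as claimed.
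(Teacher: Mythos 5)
Your proposal is correct and, for the core of the argument --- the chain of norm equivalences via Theorem~\ref{summary of identifications of H adapted Hardys spaces} and Figure~\ref{figure relations between adapted Hardy spaces}, the density extension, and the $p=1$ surjectivity via the Hardy-space bijection --- it coincides with the paper's proof. The one genuine divergence is your surjectivity argument for $p>1$: the paper simply observes that $T_p$ is bounded below (hence injective with closed range) and that its range is dense, the latter via Lemma~\ref{density lemma range dom L^p}; you instead build an explicit inverse $S_p(g)=-\dot{H}^{-1/2}(bg)$ and invoke the $\El{p}$-boundedness of $R_H$ from Theorem~\ref{thm boundedness of Riesz transforms on Lp full range}. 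Both work and the exponent ranges match exactly, but your route uses a heavier tool where a density lemma suffices, and the identity $T_pS_p=\mathrm{Id}$ is not quite ``immediate'': $S_p g$ need not lie in the pre-completed subspace $\mathcal{V}^{1,2}\cap\dot{\mathcal{V}}^{1,p}$ on which $T_p$ is defined classically, so you should approximate $S_pg$ by $C^{\infty}_{c}$ functions simultaneously in $\dot{\mathcal{V}}^{1,p}$ and $\dot{\mathcal{V}}^{1,2}$ (Proposition~\ref{density lemma in homogeneous V adapted Sobolev spaces} with $p_0=p$, $p_1=2$) and identify the $\El{p}$- and $\El{2}$-limits. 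One further small point for $p=1$: the identifications $\bb{H}^{1}_{H}=b\textup{H}^{1}_{V,\textup{pre}}$ and $\bb{H}^{1,1}_{H}\cap\mathcal{V}^{1,2}=\dot{\textup{H}}^{1,1}_{V,\textup{pre}}$ require $1>\frac{n}{n+\delta/2}$, i.e.\ $\delta>0$, whereas the lemma only assumes $q\geq\max\{\tfrac{n}{2},2\}$; the paper first upgrades to $V\in\textup{RH}^{q_0}$ with $q_0>\tfrac{n}{2}$ by the self-improvement property (Lemma~\ref{self-improvement property of reverse holder weights}.(i)), and you should do the same.
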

\begin{proof}
    Let us first consider the case when $p\in(p_{-}(H),q_{+}(H))\cap(1,2q]$. It then follows from Theorem \ref{summary of identifications of H adapted Hardys spaces} and Figure~\ref{figure relations between adapted Hardy spaces} that for all $f\in\dot{\mathcal{V}}^{1,p}(\Rn)\cap\mathcal{V}^{1,2}(\Rn)$ there holds
    \begin{equation*}
        \norm{f}_{\dot{\mathcal{V}}^{1,p}}\eqsim \norm{f}_{\bb{H}^{1,p}_{H}}\eqsim \norm{H^{1/2}f}_{\bb{H}^{p}_{H}}\eqsim \norm{-A_{\perp\perp}H^{1/2}f}_{\El{p}}.
    \end{equation*}
    This estimate and Proposition~\ref{density lemma in homogeneous V adapted Sobolev spaces} show that $-A_{\perp\perp}H^{1/2}$ can be extended to a bounded mapping $T_{p}: \dot{\mathcal{V}}^{1,p}(\Rn)\to \Ell{p}$ satisfying $\norm{T_{p}f}_{\El{p}}\eqsim \norm{f}_{\dot{\mathcal{V}}^{1,p}}$ for all $f\in\dot{\mathcal{V}}^{1,p}(\Rn)$. The invertibility of $T_{p}$ follows from the density of $\ran{H^{1/2}}\cap \Ell{p}$ in $\Ell{p}$ (see Lemma~\ref{density lemma range dom L^p}). 
    
    Let us now suppose that $p_{-}(H)<1$ and $1\in\mathcal{I}(V)$. The self-improvement property from Lemma \ref{self-improvement property of reverse holder weights}.(i) shows that there exists $q_0> \frac{n}{2}$ such that $V\in\textup{RH}^{q_{0}}(\Rn)$. As a consequence, we have that $1\in\mathcal{I}(V)\cap(p_{-}(H)\vee \frac{n}{n+\delta_{0}/2},1]$, where $\delta_{0}:=2-\frac{n}{q_{0}}>0$, and it again follows from Theorem~\ref{summary of identifications of H adapted Hardys spaces} and Figure~\ref{figure relations between adapted Hardy spaces} that for all $f\in\dot{\textup{H}}^{1,1}_{V,\textup{pre}}(\Rn)$ there holds
    \begin{equation*}
        \norm{f}_{\dot{\textup{H}}^{1,1}_{V}}\eqsim \norm{f}_{\bb{H}^{1,1}_{H}}\eqsim \norm{H^{1/2}f}_{\bb{H}^{1}_{H}}\eqsim \norm{-A_{\perp\perp}H^{1/2}f}_{\textup{H}^{1}_{V}}.
    \end{equation*}
    By completeness of $\textup{H}^{1}_{V}(\Rn)$ and density of $\dot{\textup{H}}^{1,1}_{V,\textup{pre}}(\Rn)$ in $\dot{\textup{H}}^{1,1}_{V}(\Rn)$ (see Sections \ref{subsubsection with definition of completion H^{1}_{V}} and \ref{subsection definition of the completion of DZiubanski H^{1,p} spaces}), this allows us to extend $-A_{\perp\perp}H^{1/2}$ to a bounded mapping $T_{1}: \dot{\textup{H}}^{1,1}_{V}(\Rn) \to \textup{H}^{1}_{V}(\Rn)$ satisfying $\norm{T_{1}f}_{\textup{H}^{1}_{V}}\eqsim \norm{f}_{\dot{\textup{H}}^{1,1}_{V}}$ for all $f\in\dot{\textup{H}}^{1,1}_{V}(\Rn)$. As above, the invertibility of $T_{1}$ follows from the density of $\ran{H^{1/2}}\cap \left(-A_{\perp\perp}^{-1}\textup{H}^{1}_{V,\textup{pre}}(\Rn)\right)$ in $-A_{\perp\perp}^{-1}\textup{H}^{1}_{V}(\Rn)$ (when equipped with the natural norm mentioned at the start of Section \ref{section on identification of H adapted Hardy spaces}), which itself follows from the fact that $\bb{H}^{1}_{H}=-A_{\perp\perp}^{-1}\textup{H}^{1}_{V,\textup{pre}}(\Rn)$ with equivalent norms (see Theorem~\ref{summary of identifications of H adapted Hardys spaces}) and that $\ran{H^{1/2}}\cap \bb{H}^{1}_{H}$ is dense in $\bb{H}^{1}_{H}$ (see \cite[Lemma~8.7]{AuscherEgert}).
\end{proof}
We may now prove the existence and uniqueness of a solution to the Neumann problem. 
\begin{proof}[Proof of Theorem \ref{existence and uniqueness result Neumann Lp}]
    Let us first prove the existence of a solution with the desired properties. We first consider the case when $p\in(p_{-}(H),q_{+}(H))\cap(1,2q]$. Let $g\in\Ell{p}$, and let $f:=T_{p}^{-1}g$, where $T_{p}:\dot{\mathcal{V}}^{1,p}(\Rn)\to\Ell{p}$ is as in Lemma~\ref{lemma extension of square root of H to isomorphism} above. Applying Theorem~\ref{existence and uniqueness result Regularity Lp again} with the boundary data $f\in\dot{\mathcal{V}}^{1,p}(\Rn)$ yields a weak solution $u\in\mathcal{V}^{1,2}_{\loc}(\Hn)$ of $\mathscr{H}u=0$ such that
  \begin{equation*}
        \norm{S(t\nabla_{\mu}\partial_{t}u)}_{\El{p}}\eqsim \norm{N_{*}\left(\nabla_{\mu}u\right)}_{\El{p}}\eqsim\norm{f}_{\dot{\mathcal{V}}^{1,p}}\eqsim \norm{g}_{\El{p}}.
    \end{equation*}
    Items (iii) and (vi) of the same theorem also yield the non-tangential convergence
    \begin{equation*}
        \lim_{t\to 0^{+}}\left(\fiint_{W(t,x)}\abs{(A_{\perp\perp}\partial_{t}u)(s,y)-g(x)}^{2}\dd s\dd y\right)^{1/2}=0
    \end{equation*}
    for almost every $x\in\Rn$, and the strong continuity $A_{\perp\perp}\partial_{t}u\in C_{0}([0,\infty); \El{p})$, where $t$ is the distinguished variable, along with the estimate
    \begin{equation*}
    \norm{N_{*}(\partial_{t}u)}_{\El{p}}\eqsim \sup_{t>0}\norm{A_{\perp\perp}\partial_{t}u(t,\cdot)}_{\El{p}}\eqsim \norm{g}_{\El{p}}.
    \end{equation*}
    We also obtain the convergence $A_{\perp\perp}\partial_{t}u(t,\cdot)\to g$ in the $\El{p}$ norm, as $t\to 0^{+}$.

    We now treat the case when $p=1$. Let $g\in\textup{H}^{1}_{V}(\Rn)$, and let us apply Theorem~\ref{existence and uniqueness result Regularity for p<1} with the boundary data $f:=T_{p}^{-1}g\in\dot{\textup{H}}^{1,1}_{V}(\Rn)$, where $T_{p}$ has the same meaning as in the previous lemma. This provides us with a weak solution $u\in\mathcal{V}^{1,2}_{\loc}(\Hn)$ of $\mathscr{H}u=0$, satisfying 
    \begin{equation*}
        \norm{S(t\nabla_{\mu}\partial_{t}u)}_{\El{1}}\eqsim\norm{f}_{\dot{\textup{H}}^{1,1}_{V}}\eqsim \norm{g}_{\textup{H}^{1}_{V}}\gtrsim \norm{N_{*}\left(\nabla_{\mu}u\right)}_{\El{1}}.
    \end{equation*}
    Moreover, we note that the reverse estimate on the non-tangential maximal function also holds provided that $V\in\textup{RH}^{\infty}(\R^n)$. 
    We observe that the proof of item (iii) of Theorem~\ref{existence and uniqueness result Regularity Lp again} now goes through to show that the solution $u$ satisfies the non-tangential convergence 
    \begin{equation*}
        \lim_{t\to 0^{+}}\left(\fiint_{W(t,x)}\abs{(A_{\perp\perp}\partial_{t}u)(s,y)-(T_{1}f)(x)}^{2}\dd s\dd y\right)^{1/2}=0
    \end{equation*}
for almost every $x\in\Rn$, which is the desired conclusion.

Let us now consider the question of uniqueness. Let $p\in (p_{-}(H),q_{+}(H))\cap [1,2q]$ and let $u\in\mathcal{V}^{1,2}_{\loc}(\Hn)$ be a solution to the Neumann problem $\left(\mathcal{N}\right)^{\mathscr{H}}_{p}$ with zero boundary data. In other words, $u$ is a weak solution of $\mathscr{H}u=0$ with the nontangential control $N_{*}(\nabla_{\mu}u)\in\Ell{p}$, and it satisfies the nontangential convergence 
\begin{equation}\label{non tangential convergence for conormal gradient of solution neumann}
        \lim_{t\to 0^{+}}\fiint_{W(t,x)}\abs{(A_{\perp\perp}\partial_{t}u)(s,y)}\dd s\dd y=0
    \end{equation}
    for almost every $x\in\Rn$.
It follows from the $t$-independence of the coefficients that $\mathscr{H}(\partial_{t}u)=0$ (see, e.g., \cite[Lemma 4.2]{MorrisTurner}). Moreover, we have the nontangential control $N_{*}(\partial_{t}u)\in\Ell{p}$. Since $A_{\perp\perp}^{-1}$ is bounded, these observations and \eqref{non tangential convergence for conormal gradient of solution neumann} imply that $\partial_{t}u$ solves the Dirichlet problem $(\mathcal{D})^{\mathscr{H}}_{p}$ with zero boundary data. It follows from Theorem~\ref{uniqueness result for regularity problem} that $\partial_{t}u=0$. As a consequence, we know that there is some $f\in\El{2}_{\loc}(\Rn)$ such that $u(t,x)=f(x)$ for almost every $t>0$ and $x\in\Rn$ (see the proof of Lemma \ref{first elementary basic uniqueness result}). Since $N_{*}(\nabla_{t,x}u)\in\Ell{p}$, \cite[Proposition A.5]{AuscherEgert} shows that there exists a nontangential trace $u_{0}\in\dot{\textup{H}}^{1,p}(\Rn)$ (i.e. $u_{0}\in\El{1}_{\loc}(\Rn)$ and $\nabla_{x}u_{0}\in\textup{H}^{p}(\Rn)$) such that 
\begin{equation}\label{nontangential trace of u is u_0}
\lim_{t\to 0}\left(\fiint_{W(t,x)}\abs{u(s,y)-u_{0}(x)}^{2}\dd s\dd y\right)^{1/2}=0
\end{equation}
for almost every $x\in\Rn$. It follows from Lebesgue's differentiation theorem that $u_{0}(x)=f(x)$ for almost every $x\in\Rn$, and therefore $f\in\El{2}_{\loc}(\Rn)\cap\dot{\textup{H}}^{1,p}(\Rn).$ In addition, the nontangential control $N_{*}(V^{1/2}u)\in\Ell{p}$ and the $t$-independence of $V$ and $u$ imply that the Hardy--Littlewood maximal operator satisfies $\mathcal{M}(V^{1/2}f)\in\Ell{p}$, and therefore $V^{1/2}f\in\textup{H}^{p}(\Rn)$ (see \cite[Chapter \RNum{2}, Section 2.1 and Chapter \RNum{3}, Theorem 1]{Stein_HarmonicAnalysis_93}). 
If $p>1$, this means that $f\in\dot{\mathcal{V}}^{1,p}(\Rn)$ and that $u$ solves the Regularity problem $(\mathcal{R})^{\mathscr{H}}_{p}$ with boundary data $f$. It follows from Theorem~\ref{existence and uniqueness result Regularity Lp} that $\norm{\nabla_{\mu}f}_{\El{p}}\eqsim \norm{S(t\nabla_{\mu}\partial_{t}u)}_{\El{p}}=0$, hence $f$ is constant and therefore so is $u$, and that constant is zero if $V\not\equiv 0$.
\end{proof}
\begin{rem}\label{remark uniqueness Neumann p=1}
Let us remark that, in the context of the previous proof in the case when $p=1$, we can only infer that $f\in\El{2}_{\loc}(\Rn)$ is such that $\nabla_{\mu}f\in\textup{H}^{1}(\Rn)^{n+1}$. If this was enough to imply that $f\in\dot{\textup{H}}^{1,1}_{V}(\Rn)$, then the same argument as in the case $p>1$ would yield uniqueness for $\left(\mathcal{N}\right)^{\mathscr{H}}_{1}$. In this regard, we note that every $g\in\mathcal{V}^{1,2}(\Rn)$ such that $\nabla_{\mu}g\in\textup{H}^{1}(\Rn)^{n+1}$ satisfies
\begin{equation}\label{embedding of nabla mu in H1 into dotH11V}
    \norm{g}_{\dot{\textup{H}}^{1,1}_{V}}\lesssim \norm{\nabla_{\mu}g}_{\textup{H}^{1}_{V}}\lesssim\norm{\nabla_{\mu}g}_{\textup{H}^{1}},
\end{equation}
which follows from the assumption $1\in\mathcal{I}(V)$ and Corollary~\ref{thm continuous inclusion of classical hardy into Dziubanski hardy space}. Consequently, if one could prove existence of a sequence $(g_{k})_{k\geq 1}\subset \mathcal{V}^{1,2}(\Rn)$ such that $\nabla_{\mu}g_{k}\to\nabla_{\mu}f$ in $\textup{H}^{1}(\Rn)^{n+1}$, then \eqref{embedding of nabla mu in H1 into dotH11V} and Proposition \ref{embedding homogeneous V spaces in L ^p* for p<n} would yield the desired conclusion $f\in\dot{\textup{H}}^{1,1}_{V}(\Rn)$.
\end{rem}
\bibliographystyle{abbrv}
\bibliography{refs}

\begin{thebibliography}{10}

\bibitem{Amenta_tent_spaces_doubling}
A.~Amenta.
\newblock Tent spaces over metric measure spaces under doubling and related
  assumptions.
\newblock In {\em Operator Theory in Harmonic and Non-commutative Analysis},
  pages 1--29, Cham, 2014. Springer International Publishing.

\bibitem{Amenta_tent_spaces2018}
A.~Amenta.
\newblock Interpolation and embeddings of weighted tent spaces.
\newblock {\em J. Fourier Anal. Appl.}, 24:108--140, 2018.

\bibitem{Amenta_Auscher_2018elliptic}
A.~Amenta and P.~Auscher.
\newblock {\em Elliptic Boundary Value Problems with Fractional Regularity
  Data}.
\newblock CRM Monograph Series. American Mathematical Society, 2018.

\bibitem{Arendt_Bukhvalov_1994}
W.~Arendt and A.~V. Bukhvalov.
\newblock Integral representations of resolvents and semigroups.
\newblock {\em Forum Math.}, 6(1):111--136, 1994.

\bibitem{Auscher2004}
P.~Auscher.
\newblock {On $\textup{L}^{p}$ estimates for square roots of second order
  elliptic operators on $\mathbb {R}^{n}$}.
\newblock {\em Publ. Mat.}, 48(1):159--186, 2004.

\bibitem{Auscher_2007}
P.~Auscher.
\newblock {\em On Necessary and Sufficient Conditions for
  $\textup{L}^{p}$-Estimates of Riesz Transforms Associated to Elliptic
  Operators on $\mathbb{R}^{n}$ and Related Estimates}.
\newblock Memoirs of the American Mathematical Society. American Mathematical
  Society, 2007.

\bibitem{Auscher_erratum_Calderon_Sobolev}
P.~Auscher.
\newblock {On the Calderón--Zygmund lemma for Sobolev functions}, 2008.

\bibitem{AAM_2008}
P.~Auscher, A.~Axelsson, and A.~{McIntosh}.
\newblock {On a quadratic estimate related to the Kato conjecture and boundary
  value problems}.
\newblock {\em Contemp. Math.}, 505:105--129, 2010.

\bibitem{Auscher-BenAli}
P.~Auscher and B.~Ben~Ali.
\newblock {Maximal inequalities and Riesz transform estimates on $L^p$ spaces
  for Schrödinger operators with nonnegative potentials}.
\newblock {\em Ann. Inst. Fourier (Grenoble)}, 57(6):1975--2013, 2007.

\bibitem{Auscher_Egert_Uniqueness}
P.~Auscher and M.~Egert.
\newblock {On uniqueness results for Dirichlet problems of elliptic systems
  without de Giorgi–Nash–Moser regularity}.
\newblock {\em Anal. PDE}, 13(6):1605 -- 1632, 2020.

\bibitem{AuscherEgert}
P.~Auscher and M.~Egert.
\newblock {\em {Boundary Value Problems and Hardy Spaces for Elliptic Systems
  with Block Structure}}.
\newblock Birkhäuser Cham, 2023.

\bibitem{Auscher_Hofmann_Lacey_McIntosh_Tchamitchian2002}
P.~Auscher, S.~Hofmann, M.~Lacey, A.~{McIntosh}, and P.~Tchamitchian.
\newblock {The solution of the Kato square root problem for second order
  elliptic operators on $\mathbb{R}^{n}$}.
\newblock {\em Ann. of Math. (2)}, 156(2):633--654, 2002.

\bibitem{AMM_2015_CalderonReproducingFormulas}
P.~Auscher, A.~{McIntosh}, and A.~Morris.
\newblock {Calder{\'o}n reproducing formulas and applications to Hardy spaces}.
\newblock {\em Rev. Mat. Iberoam}, 31(3):865--900, 2015.

\bibitem{AMM_solvability_BVP_elliptic_systems}
P.~Auscher, A.~McIntosh, and M.~Mourgoglou.
\newblock On {$L^2$} solvability of {BVP}s for elliptic systems.
\newblock {\em J. Fourier Anal. Appl.}, 19(3):478--494, 2013.

\bibitem{Auscher_McIntosh_Nahmod_97}
P.~Auscher, A.~{McIntosh}, and A.~Nahmod.
\newblock {The square root problem of Kato in one dimension, and first order
  elliptic systems}.
\newblock {\em Indiana Univ. Math. J.}, 46(3):659--695, 1997.

\bibitem{Auscher_Mourgoglou_BoundaryValueProblems}
P.~Auscher and M.~Mourgoglou.
\newblock Representation and uniqueness for boundary value elliptic problems
  via first order systems.
\newblock {\em Rev. Mat. Iberoam.}, 35(1):241--315, 2019.

\bibitem{AuscherStahlhut_FunctionalCalculus_Dirac}
P.~Auscher and S.~Stahlhut.
\newblock {Functional calculus for first order systems of Dirac type and
  boundary value problems}.
\newblock {\em Mém. Soc. Math. Fr. (N.S.)}, 2016.

\bibitem{BHS2008}
B.~Bongioanni, E.~Harboure, and O.~Salinas.
\newblock {Weighted inequalities for negative powers of Schrödinger
  operators}.
\newblock {\em J. Math. Anal. Appl}, 348(1):12--27, 2008.

\bibitem{Bortz_Hofmann_Garcia_Mayboroda_Poggi_PART1}
S.~Bortz, S.~Hofmann, J.~L. Luna~Garc\'ia, S.~Mayboroda, and B.~Poggi.
\newblock Critical perturbations for second order elliptic operators—{Part
  I}: Square function bounds for layer potentials.
\newblock {\em Anal. PDE}, 15(5):1215--1286, 2022.

\bibitem{Bortz_Hofmann_Garcia_Mayboroda_Poggi_PART2}
S.~Bortz, S.~Hofmann, J.~L. Luna~Garc\'ia, S.~Mayboroda, and B.~Poggi.
\newblock Critical perturbations for second order elliptic operators—{Part
  II}: Non-tangential maximal function estimates.
\newblock {\em Arch. Ration. Mech. Anal.}, 248(3), June 2024.

\bibitem{BJL_T1_Schrodinger}
T.~Bui, J.~Li, and F.~Ly.
\newblock {T1 criteria for generalised Calder{\'o}n--Zygmund type operators on
  Hardy and BMO spaces associated to Schr{\"o}dinger operators and
  applications}.
\newblock {\em Ann. Sc. Norm. Super. Pisa Cl. Sci. (5)}, 18(1):203--239, 2018.

\bibitem{CFK}
L.~A. Caffarelli, E.~B. Fabes, and C.~E. Kenig.
\newblock Completely singular elliptic-harmonic measures.
\newblock {\em Indiana Univ. Math. J.}, 30(6):917--924, 1981.

\bibitem{Coifman_Meyer_Stein_TentSpaces}
R.~R. Coifman, Y.~Meyer, and E.~M. Stein.
\newblock {Some new function spaces and their applications to harmonic
  analysis}.
\newblock {\em J. Funct. Anal.}, 62(2):304--335, 1985.

\bibitem{Davies_Heat_Kernels_Spectral_Theory}
E.~B. Davies.
\newblock {\em Heat Kernels and Spectral Theory}.
\newblock Cambridge University Press, 1990.

\bibitem{DKKP}
S.~Dekel, G.~Kerkyacharian, G.~Kyriazis, and P.~Petrushev.
\newblock {Hardy spaces associated with non-negative self-adjoint operators}.
\newblock {\em Studia Math.}, 239:17--54, 2017.

\bibitem{DenyLions_BeppoLevi}
J.~Deny and J.-L. Lions.
\newblock Les espaces du type de {Beppo} {Levi}.
\newblock {\em Ann. Inst. Fourier (Grenoble)}, 5:305--370, 1954.

\bibitem{DHP}
M.~Dindo{\v s}, S.~Hofmann, and J.~Pipher.
\newblock Regularity and {N}eumann problems for operators with real
  coefficients satisfying {C}arleson conditions.
\newblock {\em J. Funct. Anal.}, 285(6):Paper No. 110024, 32, 2023.

\bibitem{Duong_Li_2013}
X.~T. Duong and J.~Li.
\newblock {Hardy spaces associated to operators satisfying Davies–Gaffney
  estimates and bounded holomorphic functional calculus}.
\newblock {\em J. Funct. Anal.}, 264(6):1409--1437, 2013.

\bibitem{Duong_Robinson_Semigroup_Kernels}
X.~T. Duong and D.~W. Robinson.
\newblock {Semigroup kernels, Poisson bounds, and holomorphic functional
  calculus}.
\newblock {\em J. Funct. Anal.}, 142(1):89--128, 1996.

\bibitem{Dziubanski_Hp_Spaces}
J.~Dziuba\'{n}ski and J.~Zienkiewicz.
\newblock {$H^p$} spaces for {S}chrödinger operators.
\newblock In {\em {Fourier analysis and related topics ({B}polhk{e}dlewo,
  2000)}}, volume~56 of {\em Banach Center Publ.}, pages 45--53. Polish Acad.
  Sci. Inst. Math., Warsaw, 2002.

\bibitem{DGMTZ_2005}
J.~Dziubański, G.~Garrigós, T.~Martínez, J.~L. Torrea, and J.~Zienkiewicz.
\newblock {BMO spaces related to Schrödinger operators with potentials
  satisfying a reverse Hölder inequality}.
\newblock {\em Math. Z.}, 249:329--356, 2005.

\bibitem{Dziubanski2003}
J.~Dziubański and J.~Zienkiewicz.
\newblock {$H^{p}$ spaces associated with Schr{ö}dinger operators with
  potentials from reverse H{\"o}lder classes}.
\newblock {\em Colloq. Math.}, 98:5--38, 2003.

\bibitem{Egert_Thesis}
M.~Egert.
\newblock {\em {On Kato's conjecture and mixed boundary conditions}}.
\newblock PhD thesis, Sierke Verlag, Göttingen, 2015.

\bibitem{geng2024schrodingerequationsbinftypotentials}
J.~Geng and Z.~Xu.
\newblock On the schr\"odinger equations with $b_\infty$ potentials in the
  region above a lipschitz graph, 2024.

\bibitem{Gilbarg_Trudinger_Elliptic}
D.~Gilbarg and N.~S. Trudinger.
\newblock {\em Elliptic Partial Differential Equations of Second Order}.
\newblock Classics in Mathematics. Springer Berlin Heidelberg, 2001.

\bibitem{Goldberg_Local_Hardy_Spaces}
D.~Goldberg.
\newblock {A local version of real Hardy spaces}.
\newblock {\em Duke Math. J.}, 46(1):27 -- 42, 1979.

\bibitem{Gong-Yan_Weighted_area_integral_estimates}
R.~Gong and L.~Yan.
\newblock {Weighted $L^{p}$ estimates for the area integral associated to
  self-adjoint operators}.
\newblock {\em Manuscripta Math.}, 144:25--49, 2014.

\bibitem{HaaseFunctionalCalculusBook}
M.~Haase.
\newblock {\em The Functional Calculus for Sectorial Operators}.
\newblock Birkhäuser Basel, 2006.

\bibitem{HaaseLectureNotes}
M.~Haase.
\newblock {\em Lectures on Functional Calculus}.
\newblock 21st International Internet Seminar, 2018.

\bibitem{Hajłasz1995}
P.~Hajłasz and A.~Kałamajska.
\newblock {Polynomial asymptotics and approximation of Sobolev functions}.
\newblock {\em Studia Math.}, 113(1):55--64, 1995.

\bibitem{Heil2011}
C.~Heil.
\newblock {\em A Basis Theory Primer: Expanded Edition}.
\newblock Applied and Numerical Harmonic Analysis. Birkh{\"a}user Boston, 2011.

\bibitem{Hille_Phillips}
E.~Hille and R.~S. Phillips.
\newblock {\em Functional Analysis and Semi-groups}, volume~31 of {\em American
  Mathematical Society: Colloquium publications}.
\newblock American Mathematical Society, 1996.

\bibitem{HKMP}
S.~Hofmann, C.~Kenig, S.~Mayboroda, and J.~Pipher.
\newblock Square function/non-tangential maximal function estimates and the
  {D}irichlet problem for non-symmetric elliptic operators.
\newblock {\em J. Amer. Math. Soc.}, 28(2):483--529, 2015.

\bibitem{HLMMY_HardySpacesDaviesGaffney}
S.~Hofmann, G.~Lu, D.~Mitrea, M.~Mitrea, and L.~Yan.
\newblock {Hardy spaces associated to non-negative self-adjoint operators
  satisfying Davies-Gaffney estimates}.
\newblock {\em Mem. Amer. Math. Soc.}, 214(1007):1--84, Nov. 2011.

\bibitem{KP_HeatKernelBasedDecomposition}
G.~Kerkyacharian and P.~Petrushev.
\newblock {Heat kernel based decomposition of spaces of distributions in the
  framework of Dirichlet spaces}.
\newblock {\em Trans. Amer. Math. Soc.}, 367(1):121--189, 2015.

\bibitem{Kurata1999}
K.~Kurata.
\newblock {An estimate on the heat kernel of magnetic Schr{\"o}dinger operators
  and uniformly elliptic operators with non‐negative potentials}.
\newblock {\em J. Lond. Math. Soc. (2)}, 62, 1999.

\bibitem{Ma_etAl}
T.~Ma, P.~R. Stinga, J.~L. Torrea, and C.~Zhang.
\newblock {Regularity estimates in Hölder spaces for Schrödinger operators
  via a $T1$ theorem}.
\newblock {\em Ann. Mat. Pura Appl. (4)}, 2014.

\bibitem{MayborodaPoggi_ExponDecayEstimates}
S.~Mayboroda and B.~Poggi.
\newblock {Exponential decay estimates for fundamental solutions of
  Schr{\"o}dinger-type operators}.
\newblock {\em Trans. Amer. Math. Soc.}, 372(6):4313--4357, Sept. 2019.

\bibitem{McIntosh_HInfty_Calculus}
A.~{McIntosh}.
\newblock {Operators which have an $H_{\infty}$ calculus}.
\newblock {\em Proc. Centre Math. Anal. Austral. Nat. Univ.}, 14:210--231,
  1986.

\bibitem{meyer-Nieberg-1991banach_lattices}
P.~Meyer-Nieberg.
\newblock {\em Banach Lattices}.
\newblock Universitext (Berlin. Print). Springer Berlin Heidelberg, 1991.

\bibitem{MorrisTurner}
A.~J. Morris and A.~J. Turner.
\newblock Solvability for non-smooth {S}chr\"odinger equations with singular
  potentials and square integrable data.
\newblock {\em J. Funct. Anal.}, 288(1):Paper No. 110680, 95, 2025.

\bibitem{OuhabazHeatEquonDomains}
E.-M. Ouhabaz.
\newblock {\em Analysis of Heat Equations on Domains}.
\newblock Princeton University Press, 2009.

\bibitem{Rudin1991_functional_analysis}
W.~Rudin.
\newblock {\em Functional Analysis}.
\newblock {International series in Pure and Applied Mathematics}. McGraw-Hill,
  1991.

\bibitem{Shen_Neumann_SchrodingerBVP}
Z.~Shen.
\newblock {On the Neumann problem for Schrödinger operators in Lipschitz
  domains}.
\newblock {\em Indiana Univ. Math. J.}, 43(1):143--176, 1994.

\bibitem{Shen_Schrodinger}
Z.~Shen.
\newblock {$L^p$ estimates for {Schr\"odinger} operators with certain
  potentials}.
\newblock {\em Ann. Inst. Fourier (Grenoble)}, 45(2):513--546, 1995.

\bibitem{Shen_1996}
Z.~Shen.
\newblock {On the number of negative eigenvalues for a Schrödinger operator
  with magnetic field}.
\newblock {\em Comm. Math. Phys.}, 182(3):637--660, 1996.

\bibitem{SONG_YAN_2016}
L.~Song and L.~Yan.
\newblock {A maximal function characterization for Hardy spaces associated to
  nonnegative self-adjoint operators satisfying Gaussian estimates}.
\newblock {\em Adv. Math.}, 287:463--484, 2016.

\bibitem{SteinSingularIntegrals}
E.~M. Stein.
\newblock {\em Singular Integrals and Differentiability Properties of Functions
  (PMS-30)}.
\newblock Princeton University Press, 1970.

\bibitem{Stein_HarmonicAnalysis_93}
E.~M. Stein.
\newblock {\em Harmonic Analysis (PMS-43): Real-Variable Methods,
  Orthogonality, and Oscillatory Integrals.}
\newblock With the assistance of Timothy S.Murphy. Princeton University Press,
  1993.

\bibitem{Tao_Regularity_Problems}
X.~Tao.
\newblock {The regularity problems with data in Hardy–-Sobolev spaces for
  singular Schrödinger equation in Lipschitz domains}.
\newblock {\em Potential Anal.}, 36:405–428, 2012.

\bibitem{Tao_Wang_2004}
X.~Tao and H.~Wang.
\newblock {On the Neumann problem for the Schrödinger equations with singular
  potentials in Lipschitz domains}.
\newblock {\em Canad. J. Math.}, 56(3):655–672, 2004.

\bibitem{Yang_Yang_Zhou_2010}
D.~Yang, D.~Yang, and Y.~Zhou.
\newblock {Localized Morrey--Campanato spaces on metric measure spaces and
  applications to Schrödinger operators}.
\newblock {\em Nagoya Math. J.}, 198:77–119, 2010.

\end{thebibliography}
\end{document}